\newtheorem{thm}{Theorem}[section]
\newtheorem{lem}[thm]{Lemma}
\newtheorem*{clm}{Claim}
\newtheorem{cor}[thm]{Corollary}
\newtheorem{prop}[thm]{Proposition}
\newtheorem{example}[thm]{Example}
\theoremstyle{definition}
\newtheorem{defi}[thm]{Definition}
\theoremstyle{remark}
\newtheorem{rem}{Remark}
\newcommand{\N}
{\mathbb N}
\newcommand{\ov}{\overline}
\newcommand{\Z}{\mathbb Z}
\newcommand{\E}{\mathcal{E}}
\newcommand{\F}{\mathcal{F}}
\newcommand{\G}{\mathcal{G}}
\newcommand{\U}{\mathcal{U}}
\newcommand{\h}{\mathcal{H}}
\newcommand{\HA}{\mathcal{H}_A}
\newcommand{\K}{\mathbb{K}}
\newcommand{\com}{\mathcal{K}}
\newcommand{\s}{\mathcal{S}}
\newcommand{\cc}{\mathcal{C}}
\newcommand{\sr}{C_0(-1,1)}
\newcommand{\R}{\mathbb R}
\newcommand{\C}{\mathbb C}
\DeclarePairedDelimiter\bra{\langle}{\rvert}
\DeclarePairedDelimiter\set{\lbrace}{\rbrace}
\DeclarePairedDelimiter\ket{\lvert}{\rangle}
\DeclarePairedDelimiterX\braket[2]{\langle}{\rangle}{#1 \delimsize\vert #2}
\DeclarePairedDelimiterX\inr[2]{\langle}{\rangle}{#1 \delimsize, #2}
\DeclarePairedDelimiter{\norm}{\lVert}{\rVert}
\newcommand*\closure[1]{\overline{#1}}
\title{The spectral model of (Real) K-theory}
\author{Anupam Datta\thanks{Universit\"at Bonn \\anupamd@math.uni-bonn.de}}
\date{}
\begin{document}

\maketitle

\begin{abstract}
    We use homotopy theoretic ideas to study the $K$-theory of (graded, Real) $C^*$-algebras in detail. After laying the foundations, and deriving the formal properties, the comparison of the model with the Kasparov picture of $K$-theory has been made, and Bott periodicity has been proven using a Dirac-dual Dirac method.
\end{abstract}

%%%%%%%%%%%%%%%%%%%%%%%%%%%%%%%%%%%%%%%%%%%%%%%%%%%%%%%%%

\tableofcontents

\newpage
\pagenumbering{arabic}
\section{Introduction}
This work is devoted to understanding the ``spectral model of K-theory", originally due to Nigel Higson and Erik Guentner (\cite{NIG}). This model is quite general in its setup; it works with $\Z/2\Z$-graded $C^*$-algebras with a possible Real structure. The spectral K-groups are defined as the homotopy groups of a ``spectral K-theory space". As a consequence, we can use tools from homotopy theory to deduce results about these groups, which help to streamline a lot of arguments which were otherwise quite technical in the classical models of operator K-theory (see for example section 3.2). However, the fact that in the unital and ungraded case, spectral K-theory is naturally isomorphic to the well-known projective module picture of K-theory is not present in the literature with sufficient details. We establish this isomorphism with the necessary details. Also, the proofs presented in \cite{NIG} don't carry over to the Real case. One of the main results of this thesis is to show that the Real spectral model "classifies" Real K-theory, and can be used to deduce fundamental results about the same.

Here's a rough idea about the contents of the article:  In section 2, we give the necessary background and examples of the objects we study. Then, we cast spectral K-theory as a topologically enriched functor, and deduce some corollaries to this formalism. 
In section 3, we dicuss stability and long exact sequences in spectral K-theory. Sections 4,5 and 6 are framed to carefully develop sums and products in the spectral picture. We do most of the work at the space level, which has not been done systematically in the literature. After all this, we finally define the spectral K-theory groups in section 7, and deduce some statements about them from the work done previously. Then in section 8, we prove our main result, deducing that spectral K-theory is isomorphic to the projective module picture of K-theory for unital and ungraded $C^*$-algebras. In fact, we show that both these pictures are naturally isomorphic to a third picture, what we call the ``Fredholm picture of K-theory", and it stems primarily from ideas in KK-theory. Finally, in section 9, we develop Bott periodicity in the spectral model, and deduce the 2-fold and 8-fold periodicity of complex and Real K-theory from it.

\section{The first definitions, examples, and properties}
As mentioned in the introduction, we want to develop a $K$-theory for $\Z/2\Z$-graded and Real $C^*$-algebras. We define these terms in the first part of this section, and give enough examples and counterexamples to motivate them. We will assume familiarity with the theory of (complex, ungraded) $C^*$-algebras throughout this article.
\subsection{Graded and Real C*-algberas}
We begin our considerations by recalling the notion of a $\Z/2\Z$-grading on a $C^*$-algebra.
\begin{defi} \label{defi: Z/2Z graded}
    A $\Z/2\Z$-graded complex $C^*$-algebra $(A,\varepsilon)$ is a complex $C^*$-algebra $A$ equipped with a $*$-automorphism $\varepsilon$ such that $\varepsilon^2=1$. $\varepsilon$ is called the grading on $A$. The $+1$ and $-1$ eigenspaces of $A$ are called the even and odd elements of $A$ respectively.
\end{defi}
\begin{example} \label{ex: grading}
    \,
    \begin{enumerate}
        \item Every $C^*$-algebra can be ``trivally graded" by taking the grading operator to be the identity.
        \item Let $A$ be a $C^*$-algebra, and $u$ a self-adjoint unitary in $\mathcal{M}(A)$, the multiplier algebra of $A$. Then, $A$ can be   ``inner graded" by setting $\varepsilon(a)=uau ~\forall~ a \in A$.
        \item The $C^*$-algebra $C_0(\R)$ can be equipped with a grading given by $f(x) \mapsto f(-x)$. The even and odd parts under this grading precisely correspond to the set of even and odd functions respectively. This is not an inner grading.
        \item Let $\h$ be a $\Z/2\Z$-graded Hilbert space, that is, a Hilbert space equipped with a direct sum decomposition $\h=\h_0 \oplus \h_1$. Then, $\mathcal{B}(\h)$ and $\mathcal{K}(\h)$ become graded $C^*$-algebras with the diagonal-off diagonal grading. These gradings are actually inner; they are implemented by the operator $\begin{pmatrix}
            1 & 0 \\
            0 & -1
        \end{pmatrix}:\h_0 \oplus \h_1 \rightarrow \h_0 \oplus \h_1$.
        \item The maximal tensor product of two graded $C^*$-algebras (denoted by $- \hat\otimes -$) can be equipped with a grading; see the references below for the construction.
    \end{enumerate}
\end{example}

More examples and elaborations on gradings can be found in \cite{NIG}, \cite{BLA}, and \cite{RUF}. At this point, the reader might have the question\footnote{just like the author had, too!}: Why should we care about graded $C^*$-algebras if we are interested in only the ungraded ones? One of the reasons is, we have a natural identification between $\mathcal{B}(\h)$ and the set of odd, self-adjoint operators on $\mathcal{B}(\hat\h)$, where $\h$ is an arbitrary Hilbert space, and $\hat\h$ denotes the graded Hilbert space $\h \oplus \h$ (and similarly for the compact operators). This way, we can apply tools like the functional calculus on these self-adjoint operators, which would otherwise not be available to us in the ungraded world. See Section 8 to see some of this in action!

Next, we move onto Real $C^*$-algebras.

\begin{defi}
A Real $C^*$-algebra $(A,\tau)$ is a complex $C^*$-algebra $A$ equipped with a $\C$-anti-linear $*$-automorphism $\tau$ such that $\tau^2=1$. $\tau$ is called the Real structure on $A$. \footnote{Sometimes, the Real structure will simply be denoted by $a \mapsto \bar{a}$.} The real $C^*$-algebra associated to $\tau$ is $A_{\tau}:=\{ a \in A \mid a = \tau(a)\}$.
\end{defi}
\begin{example} \label{ex: Real}
    \,
    \begin{enumerate}
        \item The complex $C^*$-algebra $\C$, equipped with the complex conjugation map is a Real $C^*$-algebra. We will denote this Real $C^*$-algebra by $\mathbf{C}$ in this article. The real $C^*$-algebra associated to this is simply $\R$.
        \item Let $X$ be a Real compact Hausdorff space, that is, equipped with a homeomorphism $\tau_X$ such that $\tau_X^2=1$. Then, $C(X,\C)$ can be equipped with a Real structure given by $\tau(f)(x)=\overline{f(\tau_X(x))}$.
        \item More generally, if $(A,\tau_A)$ is a Real $C^*$-algebra and $(X,\tau_X)$ is a Real compact Hausdorff space, then  $C(X,A)$ can be equipped with a Real structure given by $\tau(f)(x)=\tau_A(f(\tau_X(x)))$. Note that $C(X,A)_\tau=\{f \in C(X,A) \mid \tau_A \circ f = f \circ \tau_X\}$. 
        \item Let $\h$ be a Real Hilbert space, that is, a complete complex inner product space equipped with a $\C$-antilinear involution $\overline{(.)}$ such that $\overline{\langle x,y \rangle}=\langle \overline{x},\overline{y} \rangle$. Then, $\mathcal{B}(\h)$ and $\mathcal{K}(\h)$ can be equipped with a Real structure given by $\tau(T)(h)=\overline{T(\overline{h})}$.
        \item More generally, if $\h$ is a Real Hilbert module over a Real $C^*$-algebra $(A,\tau_A)$, then $\mathcal{B}_A(\h)$ and $\mathcal{K}_A(\h)$ can be equipped with a Real structure given by $\tau(T)h=\overline{T(\overline{h})}$, where $\overline{(.)}$ is the $A$-antilinear involution on $\h$ satisfying $\tau(\langle x,y \rangle)=\langle \overline{x},\overline{y} \rangle$. 
        \item The maximal tensor product of two Real $C^*$-algebras can be equipped with a Real structure given by the tensor product of the individual Real structures. 
        \item Not every complex $C^*$-algebras can be equipped with a Real structure, see for example \cite{CON}.
    \end{enumerate}
\end{example}
More examples and elaborations on Real $C^*$-algebras can be found in \cite{SCH} and \cite{ROS}. One might wonder how do real and Real $C^*$-algebras compare? To that end, we have the following result:
\begin{thm}
    (See \cite{GOOD}) There is an equivalence between the category of Real $C^*$-algebras with Real structure preserving $*$-homomorphisms, and the category of real $C^*$-algebras and $*$-homomorphisms. The correspondence goes as follows\footnote{the process of assigning a Real $C^*$-algebra to a real one as given here is called complexification}:
    \[
    % https://q.uiver.app/#q=WzAsNixbMCwwLCJcXHtcXHRleHR7UmVhbCBDKi1hbGdlYnJhc31cXH0iXSxbMiwwLCJcXHtcXHRleHR7cmVhbCBDKi1hbGdlYnJhc31cXH0iXSxbMCwxLCIoQSxcXHRhdSkiXSxbMiwxLCJBX1xcdGF1Il0sWzIsMiwiQiJdLFswLDIsIihCIFxcb3RpbWVzX3tcXG1hdGhiYntSfX1cXG1hdGhiYntDfSwxXFxvdGltZXMgXFxvdmVybGluZXsoLil9KSJdLFsyLDMsIiIsMCx7InN0eWxlIjp7InRhaWwiOnsibmFtZSI6Im1hcHMgdG8ifX19XSxbNCw1LCIiLDAseyJzdHlsZSI6eyJ0YWlsIjp7Im5hbWUiOiJtYXBzIHRvIn19fV1d
\begin{tikzcd}
	{\{\text{Real C*-algebras}\}} && {\{\text{real C*-algebras}\}} \\
	{(A,\tau)} && {A_\tau} \\
	{(B \otimes_{\mathbb{R}}\mathbb{C},1\otimes \overline{(.)})} && B
	\arrow[maps to, from=2-1, to=2-3]
	\arrow[maps to, from=3-3, to=3-1]
\end{tikzcd}
    \]
\end{thm}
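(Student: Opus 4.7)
The plan is to exhibit functors $F$ and $G$ in both directions and then construct natural isomorphisms $G \circ F \cong \mathrm{id}$ and $F \circ G \cong \mathrm{id}$. Define $F(A,\tau) := A_\tau$, sending a Real-structure-preserving $*$-homomorphism to its restriction to the fixed points; this is well-defined because $\tau$ is norm-continuous, $\R$-linear on $A_\tau$, and commutes with the involution and product, so $A_\tau$ is a norm-closed real $*$-subalgebra of $A$ inheriting the $C^*$-identity. Define $G(B) := (B \otimes_\R \C,\, 1 \otimes \overline{(\cdot)})$, with a morphism $\psi : B \to B'$ sent to $\psi \otimes \mathrm{id}_\C$. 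The map $1 \otimes \overline{(\cdot)}$ is manifestly a $\C$-antilinear $*$-automorphism of order $2$.

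The key technical point is equipping $B \otimes_\R \C$ with a $C^*$-norm making it into a complex $C^*$-algebra. I would invoke the real Gelfand-Naimark theorem to obtain a faithful representation $B \hookrightarrow \mathcal{B}(\h)$ on a real Hilbert space $\h$, then complexify to a faithful $*$-representation $B \otimes_\R \C \hookrightarrow \mathcal{B}(\h \otimes_\R \C)$; the operator norm inherited from the target then equips $B \otimes_\R \C$ with the structure of a complex $C^*$-algebra. Alternatively, one can show directly that the algebraic complexification admits a unique $C^*$-norm via the $*$-algebra structure together with the formula $\|b_0 + ib_1\|^2 = \|(b_0 + ib_1)^*(b_0 + ib_1)\|$ and the known uniqueness of $C^*$-norms on complex $*$-algebras. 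This is where essentially all the analytic work lies, and it is the main obstacle in the proof.

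For the natural isomorphism $GF \Rightarrow \mathrm{id}$, define $\eta_{(A,\tau)} : A_\tau \otimes_\R \C \to A$ on elementary tensors by $a \otimes z \mapsto za$; this is a $\C$-linear $*$-homomorphism commuting with the Real structures. Surjectivity follows from the decomposition
\[
a = \frac{a+\tau(a)}{2} + i \cdot \frac{a - \tau(a)}{2i}, \qquad a \in A,
\]
whose two summands lie in $A_\tau$. For injectivity, any element of $A_\tau \otimes_\R \C$ has a unique expression $b_0 \otimes 1 + b_1 \otimes i$ with $b_0,b_1 \in A_\tau$, and applying $\tau$ to its image $b_0 + ib_1 \in A$ yields $b_0 - ib_1$; the simultaneous vanishing of $b_0 + ib_1$ and $b_0 - ib_1$ forces $b_0 = b_1 = 0$. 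Continuity (and hence isometricity, by uniqueness of $C^*$-norms) of $\eta_{(A,\tau)}$ then upgrades it to an isomorphism of Real $C^*$-algebras. For $FG \Rightarrow \mathrm{id}$, note that the fixed points of $1 \otimes \overline{(\cdot)}$ on $B \otimes_\R \C$ are precisely $B \otimes 1$, which canonically identifies with $B$ as a real $C^*$-algebra. Naturality of both transformations in the respective categories is then a routine diagram chase.
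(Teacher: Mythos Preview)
The paper does not actually prove this theorem: it simply cites \cite{GOOD} and moves on. Your sketch is a correct outline of the standard argument and is presumably close to what the cited reference does. The identification of the analytic content---namely, that the complexification $B \otimes_{\R} \C$ carries a (unique) $C^*$-norm---is exactly right, and your two suggested routes (via the real Gelfand--Naimark theorem, or via uniqueness of $C^*$-norms) are both viable. One small remark: since $B \otimes_{\R} \C = B \oplus iB$ as a real Banach space, the algebraic tensor product is already complete, so there is no separate completion step; the only issue is exhibiting the $C^*$-identity for some norm, which your Gelfand--Naimark argument handles. The rest of your argument (the decomposition $a = \tfrac{a+\tau(a)}{2} + i\cdot\tfrac{a-\tau(a)}{2i}$ for surjectivity, the conjugation trick for injectivity, and the fixed-point computation $(B\otimes_{\R}\C)^{1\otimes\overline{(\cdot)}} = B\otimes 1$) is correct and routine.
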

Thus, while working with Real $C^*$-algebras, one has the tools from the complex world available (such as spectral theory), and the results in the Real world can be transferred over to the real world. On top of that, working directly with real $C^*$-algebras is quite technical as the very definition has more conditions than in the complex world (see \cite{SCH}).

Finally, we consider the situation when our $C^*$-algebras have a grading as well as a Real structure. We want these two structures to interact well, as the following definition shows: 

\begin{defi}\label{defi: Z/2Z graded real}
     A $\Z/2\Z$-graded Real $C^*$-algebra $(A,\varepsilon,\tau)$ is a complex $C^*$-algebra $A$ equipped with a grading $\varepsilon$ and a Real structure $\tau$ which commute, that is, $\varepsilon \circ \tau = \tau \circ \varepsilon$.
\end{defi}
\begin{example} \label{ex: grading and Real}
\,
\begin{enumerate}
    \item Every Real $C^*$-algebra equipped with the trivial grading becomes a Real, $\Z/2\Z$-graded $C^*$-algebra.
    \item  If $(A,\tau_A)$ is a $\Z/2\Z$-graded Real $C^*$-algebra and $(X,\tau_X)$ is a Real compact Hausdorff space, then  $C(X,A)$ can be equipped with a Real structure given by $\tau(f)(x)=\tau_A(f(\tau_X(x)))$ and a grading given pointwise, thereby making it into a $\Z/2\Z$-graded Real $C^*$-algebra. 
    \item The graded maximal tensor product of two $\Z/2\Z$-graded Real $C^*$-algebras can be equipped with a Real structure given by the tensor product of the individual ones. The symbol $- \hat\otimes -$ will stand for the graded maximal tensor product throughout the article.
    \item $C_0(\R)$ becomes a $\Z/2\Z$-graded, Real $C^*$-algebra when equipped with the even-odd grading, and the real structure given by point-wise conjugation. We denote this Real $\Z/2\Z$-graded $C^*$-algebra by $\s$.
    \item Let $\h$ be a $\Z/2\Z$-graded, Real Hilbert space. Then, $\mathcal{B}(\h)$ and $\mathcal{K}(\h)$ become $\Z/2\Z$-graded, Real $C^*$-algebras with the grading and Real structure as discussed in \cref{ex: grading} and \cref{ex: Real}. 
\end{enumerate}
\end{example}
\begin{rem}
    The last two examples will be extremely crucial to us in this article.
\end{rem}
Having given all the relevant definitions, we will now focus only in the generality of $\Z/2\Z$-graded, Real $C^*$-algebras unless otherwise mentioned. We \textit{don't} always mention the adjective ``Real" (but sometimes we do when it's a very important definition / relevant for the proof), it assumed to be there by default. If the reader is only interested in the complex case, they may ignore the Real structure, and the theory goes through. Furthermore, every grading we consider in this article are over $\Z/2\Z$; so at times we drop it too.
\subsection{Hom sets as spaces}

We will be coming up with a homotopy theoretic model of operator $K$-theory, in the sense that the $K$-theory groups of a $\Z/2\Z$-graded (Real) $C^*$-algebra will be defined as the homotopy groups of its ``spectral $K$-theory space". We move towards building this space now, by first coming up with a topology on the space of grading preserving $*$-homomorphisms between two graded $C^*$-algebras $A$ and $B$.
\begin{defi}
    Let $A$ and $B$ be $\Z/2\Z$-graded (Real) $C^*$-algebras. We equip the set of graded (Real) $*$-homomorphisms from $A$ to $B$, denoted $\operatorname{Hom}(A,B)$, with the point-norm topology. That is, a net $\varphi_i \rightarrow \varphi$ in $\operatorname{Hom}(A,B) \iff \varphi_i(a) \rightarrow \varphi(a)$ in the norm topology in $B$ for all $a \in A$. $\operatorname{Hom}(A,B)$ is a pointed space, with $\underline{0}$, the zero homomorphism being the basepoint.
\end{defi}
\begin{rem}
    The point-norm topology is the ``correct" topology to put on the hom sets of $C^*$-algebras. Indeed, it can be shown quite easily that when $X$ and $Y$ are compact Hausdorff spaces, then the Gelfand-Naimark duality 
    \[
    \operatorname{Map}_{~\text{Top}}(X,Y) \rightarrow \operatorname{Hom}(C(Y),C(X))
    \] upgrades to a homeomorphism of topological spaces when $\operatorname{Map}_{~\text{Top}}(X,Y)$ is equipped with the compact open topology in this case, and $\operatorname{Hom}(C(Y),C(X))$ is equipped with the point-norm topology.\footnote{More justifications to the ``correctness" of the point-norm topology will be given in this subsection.}
\end{rem}
Our next goal would be to develop the notion of homotopy between graded $*$-homomorphisms. Before that, we need a few preparatory results concerning mappings into the $\operatorname{Hom}$ spaces from an arbitrary compact Hausdorff space. We will be using some properties of the compact open topology in the upcoming results, all of them can be found in \cite{HAT},\cite{DAVIS}. 

\begin{rem}
    We bring to the attention of the reader a technical point: At this juncture, we have \textit{not} k-ified the point-norm topology on hom sets of $C^*$-algebras. As a result, the hom spaces need not be compactly generated. However, no issues arise here, since we are only mapping from a compact Hausdorff space into them.
\end{rem}

\begin{prop} \label{prop:map into hom space}
    Let $X$ be a compact Hausdorff space, and $A$,$B$ be $\Z/2\Z$-graded $C^*$-algebras. Then, there is a natural homeomorphism of spaces 
    \[
    \operatorname{Map}(X,\operatorname{Hom}(A,B)) \cong \operatorname{Hom}(A,C(X,B)),
    \]
    where the mapping space is equipped with the compact-open topology.
    Furthermore, if $(X,x_0)$ is a pointed space, then the aforementioned homeomorphism restricts to a homeomorphism
    \[
    \operatorname{Map}_*(X,\operatorname{Hom}(A,B)) \cong \operatorname{Hom}(A,C_0(X,B)),
    \]
    where $C_0(X,B)$ is the ideal in $C(X,B)$ consisting of functions which vanish at $x_0$.
\end{prop}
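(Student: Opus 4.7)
The plan is to write down the natural candidate bijection by the usual ``currying'' construction, check that it is well-defined on both sides, and then prove continuity in both directions. Given $F \in \operatorname{Map}(X,\operatorname{Hom}(A,B))$, define $\Phi(F) \colon A \to C(X,B)$ by $\Phi(F)(a)(x) := F(x)(a)$; the map $x \mapsto F(x)(a)$ is continuous because $F$ is continuous in the point-norm topology, and $\Phi(F)$ is a grading- and Real-structure-preserving $*$-homomorphism because each $F(x)$ is one (all checks are pointwise in $x$). Conversely, for $\varphi \in \operatorname{Hom}(A,C(X,B))$, define $\Psi(\varphi)(x)(a) := \varphi(a)(x)$; for fixed $x$ this is visibly a graded $*$-homomorphism $A \to B$, and continuity of $\Psi(\varphi) \colon X \to \operatorname{Hom}(A,B)$ follows because if $x_i \to x$ in $X$ then $\varphi(a)(x_i) \to \varphi(a)(x)$ in $B$ for every $a \in A$ (by continuity of each $\varphi(a)$), which is exactly point-norm convergence. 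The assignments $\Phi$ and $\Psi$ are then evidently mutually inverse, and they are natural in $A$ and $B$ by construction.

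The core of the proof is showing both $\Phi$ and $\Psi$ are continuous. For $\Psi$, the compact-open topology on $\operatorname{Map}(X,\operatorname{Hom}(A,B))$ has as a subbasis the sets $M(K,N(a,V)) := \{F : F(x)(a) \in V \text{ for all } x \in K\}$, with $K \subset X$ compact, $a \in A$, and $V \subset B$ open. Under $\Psi^{-1}$ this pulls back to $\{\varphi \in \operatorname{Hom}(A,C(X,B)) : \varphi(a)(K) \subset V\}$. Given $\varphi_0$ in this set, $\varphi_0(a)(K)$ is a compact subset of the open set $V$, so there is an $\epsilon > 0$ such that the $\epsilon$-neighborhood of $\varphi_0(a)(K)$ lies in $V$; then the point-norm neighborhood $\{\varphi : \|\varphi(a) - \varphi_0(a)\|_\infty < \epsilon\}$ lies in the pulled-back set. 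This gives continuity of $\Psi$ on subbasic opens.

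For $\Phi$, the more delicate direction, I would fix $F$ and a basic point-norm neighborhood $V := \{\varphi : \|\varphi(a) - \Phi(F)(a)\|_\infty < \epsilon\}$ around $\Phi(F)$, and exploit compactness of $X$ to produce a compact-open neighborhood of $F$ mapping into $V$. Since $x \mapsto F(x)(a)$ is continuous on the compact space $X$, I can choose a finite cover of $X$ by compact sets $K_1,\dots,K_n$ with points $x_i \in K_i$ such that $\|F(x)(a) - F(x_i)(a)\| < \epsilon/3$ for all $x \in K_i$. Then the compact-open neighborhood
\[
W := \bigcap_{i=1}^n \bigl\{ G : G(K_i) \subset \{\psi : \|\psi(a) - F(x_i)(a)\| < \epsilon/3\}\bigr\}
\]
contains $F$, and for any $G \in W$ and any $x \in X$, choosing an $i$ with $x \in K_i$ yields $\|G(x)(a) - F(x)(a)\| < 2\epsilon/3 < \epsilon$, so $\Phi(G) \in V$. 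Hence $\Phi$ is continuous.

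The main obstacle, as expected, is precisely this last step: matching up the compact-open topology on the mapping space with the sup-norm-flavored point-norm topology on $\operatorname{Hom}(A,C(X,B))$, which really requires the compactness of $X$ to turn pointwise continuity of $F$ into something uniform enough to control the sup-norm. Finally, for the pointed version, pointed maps $(X,x_0) \to (\operatorname{Hom}(A,B),\underline{0})$ correspond under $\Phi$ exactly to those $\varphi \in \operatorname{Hom}(A,C(X,B))$ whose image consists of functions vanishing at $x_0$, i.e.\ land in the ideal $C_0(X,B)$; the homeomorphism claim then follows by restricting $\Phi$ and $\Psi$ to these subspaces, which is immediate from the first half.
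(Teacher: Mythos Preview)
Your proof is correct, but it takes a more hands-on route than the paper. You work directly with subbasic open sets and $\epsilon/3$ arguments, whereas the paper exploits standard structural properties of the compact-open topology. Concretely, for the continuity of $\Phi$ the paper simply observes that it is enough to check that $\operatorname{ev}_a \circ \Phi$ is continuous for each $a \in A$, and this factors as postcomposition with the continuous map $\operatorname{ev}_a \colon \operatorname{Hom}(A,B) \to B$ (continuous for the compact-open topology) followed by the identification $\operatorname{Map}(X,B) = C(X,B)$. For $\Psi$, the paper uses that since $X$ is compact it suffices to prove continuity of the adjoint $\Psi^{\operatorname{ad}} \colon X \times \operatorname{Hom}(A,C(X,B)) \to \operatorname{Hom}(A,B)$, which again reduces via $\operatorname{ev}_a$ to the evaluation map $X \times C(X,B) \to B$. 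Your approach has the advantage of being completely self-contained and making the role of compactness of $X$ totally explicit; the paper's approach is shorter and shows that the result is really a formal consequence of the exponential law for the compact-open topology. One small point: your assertion that the sets $M(K, N(a,V))$ form a subbasis for the compact-open topology is correct but not entirely trivial (it uses that $X$ is compact Hausdorff, hence normal, to refine an open cover to a compact cover); you might want to flag this if you write it up.
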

\begin{proof}
     We construct two sided natural maps:
    \[
    % https://q.uiver.app/#q=WzAsNCxbMCwwLCJcXFBoaTpcXG9wZXJhdG9ybmFtZXtNYXB9KFgsXFxvcGVyYXRvcm5hbWV7SG9tfShBLEIpKSBcXHJpZ2h0YXJyb3cgXFxvcGVyYXRvcm5hbWV7SG9tfShBLEMoWCxCKSkiXSxbMCwxLCJmIFxcbG9uZ21hcHN0byBhIFxcbWFwc3RvIFxce3ggXFxtYXBzdG8gZih4KShhKVxcfSJdLFsxLDAsIlxcUHNpOlxcb3BlcmF0b3JuYW1le0hvbX0oQSxDKFgsQikpIFxccmlnaHRhcnJvdyBcXG9wZXJhdG9ybmFtZXtNYXB9KFgsXFxvcGVyYXRvcm5hbWV7SG9tfShBLEIpKSJdLFsxLDEsIlxcdmFycGhpIFxcbG9uZ21hcHN0byB4IFxcbWFwc3RvIFxce2EgXFxtYXBzdG8gXFx2YXJwaGkoYSkoeClcXH0iXV0=
\begin{tikzcd}
	{\Phi:\operatorname{Map}(X,\operatorname{Hom}(A,B)) \rightarrow \operatorname{Hom}(A,C(X,B))} & {\Psi:\operatorname{Hom}(A,C(X,B)) \rightarrow \operatorname{Map}(X,\operatorname{Hom}(A,B))} \\
	{f \longmapsto a \mapsto \{x \mapsto f(x)(a)\}} & {\varphi \longmapsto x \mapsto \{a \mapsto \varphi(a)(x)\}}
\end{tikzcd}
    \]
    Clearly, they are well-defined set maps, and are mutual inverses. It remains to show that they are continuous. 

    For the continuity of $\Phi$
    it suffices to show that for each $a \in A$, the map $\operatorname{ev}_a \circ \Phi: \operatorname{Map}(X,\operatorname{Hom}(A,B)) \rightarrow C(X,B)$ is continuous. The rest follows from the commutative diagram below, and since the left map therein, being composition with a single function $\operatorname{Hom}(A,B) \rightarrow A$, is continuous with respect to the compact-open topology:
    \[
    % https://q.uiver.app/#q=WzAsNCxbMCwwLCJcXG9wZXJhdG9ybmFtZXtNYXB9KFgsXFxvcGVyYXRvcm5hbWV7SG9tfShBLEIpKSAiXSxbMSwwLCJcXG9wZXJhdG9ybmFtZXtIb219KEEsQyhYLEIpKSJdLFsxLDEsIkMoWCxCKSJdLFswLDEsIlxcb3BlcmF0b3JuYW1le01hcH0oWCxCKSJdLFswLDEsIlxcUGhpIl0sWzEsMiwiXFxvcGVyYXRvcm5hbWV7ZXZ9X2EiXSxbMCwzLCItIFxcY2lyYyBcXG9wZXJhdG9ybmFtZXtldn1fYSIsMl0sWzMsMiwiPSJdXQ==
\begin{tikzcd}
	{\operatorname{Map}(X,\operatorname{Hom}(A,B)) } & {\operatorname{Hom}(A,C(X,B))} \\
	{\operatorname{Map}(X,B)} & {C(X,B)}
	\arrow["\Phi", from=1-1, to=1-2]
	\arrow["{\operatorname{ev}_a}", from=1-2, to=2-2]
	\arrow["{- \circ \operatorname{ev}_a}"', from=1-1, to=2-1]
	\arrow["{=}", from=2-1, to=2-2]
\end{tikzcd}
    \]
    For the continuity of $\Psi$, note that since $X$ is compact, it suffices to show that the adjoint map $\Psi^{\operatorname{ad}}:X \times \operatorname{Hom}(A,C(X,B)) \rightarrow \operatorname{Hom}(A,B)$, is continuous. It suffices to show that for each $a \in A$, the map $\operatorname{ev}_a \circ \Psi^{\operatorname{ad}}:X \times \operatorname{Hom}(A,C(X,B)) \rightarrow B$ is continuous. But this follows from the following commutative diagram:

\[
% https://q.uiver.app/#q=WzAsNCxbMCwwLCJYIFxcdGltZXMgXFxvcGVyYXRvcm5hbWV7SG9tfShBLEMoWCxCKSkgIl0sWzIsMCwiXFxvcGVyYXRvcm5hbWV7SG9tfShBLEIpIl0sWzIsMSwiQiJdLFswLDEsIlggXFx0aW1lcyBDKFgsQikiXSxbMCwzLCIxIFxcdGltZXNcXG9wZXJhdG9ybmFtZXtldn1fYSJdLFszLDIsIih4LGYpIFxcbWFwc3RvIGYoeCkiXSxbMCwxLCJcXFBzaV57XFxvcGVyYXRvcm5hbWV7YWR9fSJdLFsxLDIsIlxcb3BlcmF0b3JuYW1le2V2fV9hIl1d
\begin{tikzcd}
	{X \times \operatorname{Hom}(A,C(X,B)) } && {\operatorname{Hom}(A,B)} \\
	{X \times C(X,B)} && B
	\arrow["{1 \times\operatorname{ev}_a}", from=1-1, to=2-1]
	\arrow["{(x,f) \mapsto f(x)}", from=2-1, to=2-3]
	\arrow["{\Psi^{\operatorname{ad}}}", from=1-1, to=1-3]
	\arrow["{\operatorname{ev}_a}", from=1-3, to=2-3]
\end{tikzcd}
\]      
    It is direct to see that the maps restrict as asserted in the pointed case.
\end{proof}
\begin{prop} \label{prop: hom enriched over spaces}
    Let $A,B,C$ be $\Z/2\Z$-graded $C^*$-algebras. Then, the composition map
    \[
    M:\operatorname{Hom}(A,B) \times \operatorname{Hom}(B,C) \rightarrow \operatorname{Hom}(A,C)
    \]
    is continuous.
\end{prop}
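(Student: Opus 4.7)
The plan is to verify continuity at each point $(\varphi, \psi) \in \operatorname{Hom}(A,B) \times \operatorname{Hom}(B,C)$ directly from the definition of the point-norm topology, exploiting the crucial fact that every $*$-homomorphism between $C^*$-algebras is norm-contractive.

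First I would unwind what continuity means in this setting. The point-norm topology on $\operatorname{Hom}(A,C)$ is the initial topology with respect to the evaluation maps $\operatorname{ev}_a : \operatorname{Hom}(A,C) \to C$ for $a \in A$, so a subbasic open neighborhood of $\psi \circ \varphi$ has the form $\{\eta : \|\eta(a) - \psi(\varphi(a))\| < \epsilon\}$ for some $a \in A$ and $\epsilon > 0$. Hence it suffices, for each such $a$ and $\epsilon$, to produce an open neighborhood of $(\varphi, \psi)$ whose image under $M$ lies in this set.

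The main estimate I would use is the triangle inequality
\[
\|\psi'(\varphi'(a)) - \psi(\varphi(a))\| \leq \|\psi'(\varphi'(a) - \varphi(a))\| + \|\psi'(\varphi(a)) - \psi(\varphi(a))\|.
\]
Since $\psi'$ is a $*$-homomorphism between $C^*$-algebras, it is contractive, so the first summand is bounded above by $\|\varphi'(a) - \varphi(a)\|$. Consequently, taking
\[
U := \{\varphi' : \|\varphi'(a) - \varphi(a)\| < \epsilon/2\}, \quad V := \{\psi' : \|\psi'(\varphi(a)) - \psi(\varphi(a))\| < \epsilon/2\},
\]
gives an open neighborhood $U \times V$ of $(\varphi,\psi)$ mapping into the prescribed subbasic set, which finishes the proof.

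I do not foresee any serious obstacle here; the argument is essentially contractivity plus the triangle inequality. The only subtle point worth flagging is that the hom spaces have not been $k$-ified, but this causes no trouble since the argument is genuinely pointwise and the ordinary product topology on $\operatorname{Hom}(A,B) \times \operatorname{Hom}(B,C)$ is sufficient.
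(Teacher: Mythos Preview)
Your proof is correct and follows essentially the same approach as the paper: the paper argues with convergent nets rather than subbasic open sets, but uses the identical triangle-inequality split $\|\psi'(\varphi'(a)) - \psi(\varphi(a))\| \leq \|\psi'(\varphi'(a)) - \psi'(\varphi(a))\| + \|\psi'(\varphi(a)) - \psi(\varphi(a))\|$ together with contractivity of $*$-homomorphisms to bound the first term.
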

\begin{proof}
    Let $\varphi_i \rightarrow \varphi$ and $\psi_i \rightarrow \psi$ be convergent nets in $\operatorname{Hom}(A,B)$ and $\operatorname{Hom}(B,C)$ respectively. We have to show that $\psi_i \circ \varphi_i \rightarrow \psi \circ \varphi$ in $\operatorname{Hom}(A,C)$. By the definition of the point-norm topology, it suffices to show that $\psi_i \circ \varphi_i(a) \rightarrow \psi \circ \varphi(a) ~\forall~ a \in A$. Now, for a given $a \in A$, we have 
    \begin{align*}
        &\lVert \psi(\varphi(a)) - \psi_i(\varphi_i(a))\rVert \\
        \leq& \lVert \psi(\varphi(a)) - \psi_i(\varphi(a))\rVert + \lVert \psi_i(\varphi(a)) - \psi_i(\varphi_i(a))\rVert \\
        \leq& \lVert \psi(\varphi(a)) - \psi_i(\varphi(a))\rVert + \lVert \varphi(a) - \varphi_i(a)\rVert ~\text{(as $*$-homomorphisms are contractive.)}.
    \end{align*}
    Both the summands can be arbitrarily small because of the convergences of $\psi_i$ and $\varphi_i$ and the conclusion follows.
\end{proof}
Now, we study the notion of homotopy between graded $*$-homomorphisms.
\begin{defi}
    Let $A$ and $B$ be $\Z/2\Z$-graded $C^*$-algebras. Two graded $*$-homomorphisms $\varphi,\psi: A \rightarrow B$ are said to be homotopic if there exists a path in $\operatorname{Hom}(A,B)$ between them.
\end{defi}
As an immediate consequence of \cref{prop:map into hom space} and \cref{prop: hom enriched over spaces}, we have the following two corollaries:
\begin{cor}\label{cor: homotopic homomorphism}
     Let $A$ and $B$ be $\Z/2\Z$-graded $C^*$-algebras. Two graded $*$-homomorphisms $\varphi_0,\varphi_1: A \rightarrow B$ are homotopic iff there exists a graded $*$-homomorphism $\Phi:A \rightarrow C([0,1],B)$ such that $\operatorname{ev}_i \circ \Phi=\varphi_i$ for $i=0,1$.
\end{cor}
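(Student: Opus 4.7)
The plan is to apply \cref{prop:map into hom space} directly with $X = [0,1]$ (equipped with any or no basepoint, since we are in the unpointed case). A homotopy between $\varphi_0$ and $\varphi_1$ is by definition a path in $\operatorname{Hom}(A,B)$, i.e.\ a continuous map $\gamma \in \operatorname{Map}([0,1], \operatorname{Hom}(A,B))$ with $\gamma(0) = \varphi_0$ and $\gamma(1) = \varphi_1$. Under the natural homeomorphism of the proposition, such a $\gamma$ corresponds to a graded $*$-homomorphism $\Phi = \Phi(\gamma) : A \to C([0,1], B)$ defined by $\Phi(a)(x) = \gamma(x)(a)$.

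The only thing left is to match up the boundary conditions. For $i \in \{0,1\}$ and $a \in A$, we compute
\[
(\operatorname{ev}_i \circ \Phi)(a) = \Phi(a)(i) = \gamma(i)(a),
\]
so the equation $\operatorname{ev}_i \circ \Phi = \varphi_i$ (as $*$-homomorphisms $A \to B$) is equivalent to $\gamma(i) = \varphi_i$ (as elements of $\operatorname{Hom}(A,B)$). Conversely, given $\Phi$ satisfying these evaluation conditions, the inverse homeomorphism $\Psi$ from the proposition produces a continuous path $\gamma = \Psi(\Phi)$ in $\operatorname{Hom}(A,B)$ whose endpoints are precisely $\varphi_0$ and $\varphi_1$.

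There is no real obstacle here: everything technical (continuity of the bijection in both directions, the fact that the exponential law behaves well on hom sets with the point-norm topology) has already been absorbed into \cref{prop:map into hom space}. The corollary is essentially a restatement of that proposition in the special case $X = [0,1]$, together with a trivial unwinding of what evaluation at the endpoints means on each side of the bijection.
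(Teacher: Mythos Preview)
Your proof is correct and is exactly the argument the paper has in mind: the corollary is stated there as an immediate consequence of \cref{prop:map into hom space}, and your unwinding of the bijection with $X=[0,1]$ together with the endpoint check is precisely what that ``immediate consequence'' amounts to.
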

\begin{rem}
    The maps $\operatorname{ev}_t \circ \Phi$ ($t \in [0,1]$) as above will often be denoted as $\Phi_t$.
\end{rem}
\begin{cor} \label{lem: 1st lemma}
    Let $A,B,C$ be graded $C^*$-algebras. Then, given any graded $*$-homomorphism $\varphi:A \rightarrow B$, the induced maps $\operatorname{Hom}(C,A) \rightarrow \operatorname{Hom}(C,B)$ and $\operatorname{Hom}(B,C) \rightarrow \operatorname{Hom}(A,C)$ are continuous. Moreover, homotopic graded $*$-homomorphisms yield based homotopic maps.
\end{cor}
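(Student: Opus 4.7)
The plan is to reduce both assertions to the joint continuity of composition (\cref{prop: hom enriched over spaces}) together with the adjunction from \cref{prop:map into hom space}, without having to revisit the point-norm topology by hand.

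For continuity of the induced maps, I would exhibit them as ``partial compositions''. The map $\varphi_*:\operatorname{Hom}(C,A)\to\operatorname{Hom}(C,B)$ sending $\psi\mapsto \varphi\circ\psi$ factors as
\[
\operatorname{Hom}(C,A)\xrightarrow{\psi\mapsto(\psi,\varphi)}\operatorname{Hom}(C,A)\times\operatorname{Hom}(A,B)\xrightarrow{M}\operatorname{Hom}(C,B),
\]
where the first arrow is the inclusion at the constant section $\varphi$ (clearly continuous) and $M$ is continuous by \cref{prop: hom enriched over spaces}. A symmetric factorization handles $\varphi^*:\operatorname{Hom}(B,C)\to\operatorname{Hom}(A,C)$, using the product $\operatorname{Hom}(A,B)\times\operatorname{Hom}(B,C)$ and then $M$. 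Basedness is automatic since every graded $*$-homomorphism sends $\underline{0}$ to $\underline{0}$.

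For the second claim, I would use \cref{cor: homotopic homomorphism} to repackage a homotopy $\varphi_0\simeq \varphi_1$ as a graded $*$-homomorphism $\Phi:A\to C([0,1],B)$ with $\operatorname{ev}_i\circ \Phi=\varphi_i$. By \cref{prop:map into hom space}, this corresponds to a continuous map $\widetilde\Phi:[0,1]\to\operatorname{Hom}(A,B)$ with $\widetilde\Phi(i)=\varphi_i$. Now define
\[
H:[0,1]\times\operatorname{Hom}(C,A)\to\operatorname{Hom}(C,B),\qquad H(t,\psi):=\widetilde\Phi(t)\circ\psi,
\]
which is continuous as the composition of $(t,\psi)\mapsto(\psi,\widetilde\Phi(t))$ with $M$, again invoking \cref{prop: hom enriched over spaces}. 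Since $\widetilde\Phi(t)\circ\underline{0}=\underline{0}$ for every $t$, the homotopy is based, so $H$ witnesses $(\varphi_0)_*\simeq(\varphi_1)_*$. The argument for $\varphi^*$ is identical, composing on the other side.

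There is essentially no real obstacle: the work was already done in the two preceding propositions, and this corollary is just formal bookkeeping. The only mild point worth noting is that the hom spaces have not been k-ified, but this never interferes, because \cref{prop:map into hom space} is stated precisely for mapping out of a compact Hausdorff space (here $[0,1]$), so the transposition between $\Phi$ and $\widetilde\Phi$ is legitimate.
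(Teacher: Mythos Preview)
Your proposal is correct and matches the paper's approach exactly: the paper states this corollary as an immediate consequence of \cref{prop:map into hom space} and \cref{prop: hom enriched over spaces}, and you have simply spelled out the formal bookkeeping that makes this immediate.
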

\subsection{The Spectral K-theory space underlying a Hilbert space}
We are now ready to give the definition of the spectral $K$-theory space.
\begin{defi} \label{defi: spectral k theory space}
    Let $A$ be $\Z/2\Z$-graded (Real) $C^*$-algebra, and $\mathcal{H}$ be a graded (Real) Hilbert space. Define \textit{the spectral $K$-theory space of $A$ underlying $\h$} to be the space $\K(A,\mathcal{H}):=\operatorname{Hom}(\s,A \hat\otimes \mathcal{K}(\mathcal{H}))$.
\end{defi}
To get a feel of this space, we give examples of elements in it, in the case when $A=\mathbf{C}$ and $\h=\hat{l^2}$. The example needs some familiarity with unbounded operator theory, and especially the spectral theorem for self-adjoint operators; see \cite{USAO} for the relevant background. We denote the space $\K(\mathbf{C},\hat{l^2})$ by $\K(\mathbf{C})$ in what follows (see also \cref{defi: spectral K-theory space}).
\begin{lem}\label{lem: unbounded}
    Let $D$ an odd, self-adjoint Real operator on $\hat{l^2}$ with compact resolvents. Then, the functional calculus of $D$, $\Phi_D:\s \rightarrow \mathcal{K}(\h)$ is an element of $\K(\mathbf{C})$. Moreover, if $P_0$ denotes the projection onto the kernel of $D$, then $\Phi_D$ and the graded Real homomorphism $f \mapsto f(0)P_0$ lie in the same path-component of $\K(\mathbf{C})$.
\end{lem}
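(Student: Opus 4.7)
The plan is to handle the two assertions separately. For the first, the continuous functional calculus for unbounded self-adjoint operators supplies a $*$-homomorphism $\Phi_D : \s \to \mathcal{B}(\h)$. To see the image lies in $\mathcal{K}(\h)$, apply Stone--Weierstrass to the resolvent functions $r_\pm(\lambda) := (\lambda \pm i)^{-1}$: they lie in $\s$, separate points of $\R$, vanish at infinity, and satisfy $\overline{r_+} = r_-$, so the $*$-algebra they generate is norm-dense in $\s$. By hypothesis $\Phi_D(r_\pm) = (D \pm i)^{-1}$ is compact, hence $\Phi_D(\s)$ sits inside the $C^*$-algebra generated by these compact operators, which is contained in $\mathcal{K}(\h)$. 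The grading and Real-structure preservation follow from $\varepsilon D \varepsilon = -D$ and $\tau D \tau = D$, which yield $\varepsilon f(D) \varepsilon = f(-D)$ and $\tau f(D) \tau = \overline{f}(D)$ for all $f \in \s$.

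For the homotopy, define $\Psi : [0, 1] \to \K(\mathbf{C})$ by
\[
\Psi(s)(f) := f(D/s) \text{ for } s \in (0, 1], \qquad \Psi(0)(f) := f(0) P_0.
\]
For each $s \in (0, 1]$, the operator $D/s$ is odd, Real, self-adjoint, and has compact resolvent, so $\Psi(s) \in \K(\mathbf{C})$ by the first part. Since $D$ has compact resolvent, $\ker D$ is finite-dimensional, and because $D$ is odd and Real it is stable under $\varepsilon$ and $\tau$, so $P_0$ is a finite-rank, even, Real projection. Noting that $f(0) = 0$ for odd $f \in \s$, we see $\Psi(0)$ is a Real graded $*$-homomorphism into $\mathcal{K}(\h)$.

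It remains to verify continuity of $s \mapsto \Psi(s)$ in the point-norm topology; this is the main technical step. For $s_0 > 0$, standard resolvent estimates give norm continuity of $s \mapsto s(D + is)^{-1}$ on $(0,1]$, and combined with density of the $r_\pm$-generated $*$-algebra in $\s$ and the uniform bound $\|\Psi(s)\| \leq 1$ this yields norm continuity at $s_0$ of $s \mapsto \Psi(s)(f)$ for every $f \in \s$. The delicate and expected-to-be-hardest case is continuity at $s = 0$. Because $D$ has compact resolvent, $|D|$ has a spectral gap above $0$: there is $\lambda_0 > 0$ with $\mathrm{spec}(|D|) \cap (0, \lambda_0) = \emptyset$. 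Splitting $\h = \ker D \oplus \ker(D)^\perp$ gives, for $s > 0$,
\[
\Psi(s)(f) = f(0) P_0 + f(D/s) P_0^\perp, \qquad \|f(D/s) P_0^\perp\| = \sup_{|\mu| \geq \lambda_0/s} |f(\mu)|,
\]
and the right-hand supremum tends to $0$ as $s \to 0^+$ by the $C_0$-decay of $f$. Hence $\Psi(s)(f) \to \Psi(0)(f)$ in norm. Without the compact-resolvent hypothesis this spectral gap, and hence the continuous extension of the dilation path at $s = 0$, would not be available.
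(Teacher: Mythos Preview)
Your proof is correct and follows essentially the same approach as the paper: both construct the homotopy $s\mapsto\Phi_{D/s}$ (the paper writes $t^{-1}D$), and both establish continuity at the endpoint $s=0$ via the spectral gap that comes from compact resolvent. Your treatment is more detailed in places---you spell out the Stone--Weierstrass argument for why $\Phi_D$ lands in the compacts and explicitly verify compatibility with the grading and Real structure, whereas the paper calls the first assertion ``immediate''; for the endpoint continuity the paper phrases the same estimate via the spectral radius and the inclusion $\sigma(g(D))\subseteq\overline{g(\sigma(D))}$, while you split $\h=\ker D\oplus\ker(D)^\perp$ directly. One small imprecision: your displayed equality $\|f(D/s)P_0^\perp\| = \sup_{|\mu|\geq\lambda_0/s}|f(\mu)|$ should be an inequality $\leq$, since the supremum is only over the spectrum of $D/s$ restricted to $\ker(D)^\perp$, but this does not affect the argument.
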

\begin{proof}
    That $\Phi_D$ is an element of $\K(\mathbf{C})$ is immediate since $D$ has compact resolvents. We therefore focus only on the second assertion.
    
    As $D$ has compact resolvents, $\sigma(D)$ is discrete, countable, and consists of eigenvalues of $D$. By the spectral theorem, we can write $D=\sum_{\lambda \in \sigma(D)} \lambda P_\lambda$, where $P_\lambda$ denotes the projection onto the eigenspace of $\lambda$. We then have \[
    f(D)=\sum_{\lambda \in \sigma(D)} f(\lambda) P_\lambda
    \] for all bounded measurable functions $f$ on $\R$. Consider the map
    \begin{align*}
        [0,1] &\rightarrow \K(\mathbf{C})\\
        t &\longmapsto \begin{cases}
            f &\mapsto f(t^{-1}D) ~\text{if}~ t>0 \\
            f &\mapsto f(0)P_0 ~\text{if}~ t=0
        \end{cases}
    \end{align*}
    It suffices to show that it is continuous. The continuity at all non-zero points are trivial. To see the continuity at $0$, we calculate, for an arbitrary but fixed $f \in \s$, 
    \begin{align*}
        \lVert f(s^{-1}D)-f(0)P_0 \rVert &= \lVert f(s^{-1}D)-f(0)\chi_0(D) \rVert \\
        &= r(f(s^{-1}D)-f(0)\chi_0(D)), ~\text{where $r$ denotes the spectral radius} \\
        &\leq \operatorname{sup}\{\lvert f(s^{-1}\lambda)-f(0)\chi_0(\lambda) \rvert \mid \lambda \in \sigma(D)\} ~\text{as $\sigma(g(D)) \subseteq \overline{g(\sigma(D))}$} \\\
        &= \operatorname{sup}\{\lvert f(s^{-1}\lambda)\rvert \mid \lambda \in \sigma(D), \lambda \neq 0\},
    \end{align*}\footnote{\text{or $0$, if $\sigma(D)=\{0\}$}.}
    which can be made arbitrarily small as $s \rightarrow 0$. The rest follows from the definition of the point-norm topology.
\end{proof}
We want to think about spectral $K$-theory space in a categorical way. To that end, we set up some topologically enriched  categories:
\begin{defi}
    Let $\mathfrak{C}^*$ denote the category of graded (Real) $C^*$-algebras with graded $*$-homomorphisms as morphisms. We equip the $\operatorname{Hom}$ sets with the k-ification of the point-norm topology, thereby making it into a topologically enriched category.
\end{defi}
\begin{rem}
    That $\mathfrak{C}^*$ is indeed topologically enriched follows from \cref{prop: hom enriched over spaces}.
\end{rem}
\begin{defi}
    Let $\mathfrak{H}$ denote the category of graded separable (Real) Hilbert spaces with even (Real) isometries as morphisms. We equip the $\operatorname{Hom}$ sets with the k-ification of the strong-$*$ topology, thereby making it into a topologically enriched category.
\end{defi}
\begin{rem}
    That $\mathfrak{H}$ is indeed topologically enriched follows from the basic functional analysis fact that composition of isometries is continuous with respect to the strong-$*$ topology.
\end{rem}
\begin{rem}
    All Hilbert spaces we consider in this article will be separable even if we don't explicitly mention it.
\end{rem}
\begin{rem}
    Throughout this article, we will be using the well known fact that the strong, strong-* and $\sigma$-strong-* (also called strict) topologies on the spaces of even isometries (and unitaries) coincide. See \cite{BLA} and \cite{JT} for the relevant definitions and properties.
\end{rem}
Note that, we have a functor 
\[
    \K(-,-): \mathfrak{C}^* \times \mathfrak{H} \rightarrow \mathfrak{Top}_*,
\] given as the composition of the functors
\[
    \mathfrak{C}^* \times \mathfrak{H} \xrightarrow{1 \times \com(-)} \mathfrak{C}^* \times \mathfrak{C}^* \xrightarrow{- \hat\otimes -} \mathfrak{C}^* \xrightarrow{\operatorname{Hom}(\s,-)} \mathfrak{Top}_*.
\]
\begin{rem}
    We oftentimes pass only a single argument to $\K(-,-)$. In this case, the other parameter is taken to be the identity by default.
\end{rem}
Our main result of this section is that $\K(-,-)$ is a topologically enriched functor, after k-ification:
\begin{thm} \label{thm: enriched cat}
    The functor 
    \[
        k\K(-,-): \mathfrak{C}^* \times \mathfrak{H} \rightarrow \mathfrak{Top}^{CG}_*
    \]
    is a topologically enriched functor.
\end{thm}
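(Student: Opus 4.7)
The plan is to unwind the definition of topological enrichment, reduce the theorem to the joint continuity of an action map, factor $\K$ as a composite of three simpler functors, and verify enrichment for each factor. By the usual adjunction in $\mathfrak{Top}^{CG}$, showing $k\K$ is topologically enriched amounts to showing that for each pair $(A, \h), (B, \h')$, the action map
\[
    \operatorname{Hom}(A, B) \times_k \operatorname{Iso}(\h, \h') \times_k k\K(A, \h) \longrightarrow k\K(B, \h'), \qquad (\psi, V, \phi) \longmapsto (\psi \hat\otimes \operatorname{Ad}(V)) \circ \phi,
\]
is continuous. Since $\K$ itself factors as $\operatorname{Hom}(\s, -) \circ (- \hat\otimes -) \circ (\operatorname{id} \times \com)$, and composites of enriched functors are enriched, it suffices to verify enrichment of each of the three pieces individually.

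The first piece, $\operatorname{Hom}(\s, -) : \mathfrak{C}^* \to \mathfrak{Top}_*$, is enriched precisely because composition is jointly continuous, which is \cref{prop: hom enriched over spaces}. For the graded tensor product $-\hat\otimes -$, I would verify that $(f, g, x) \mapsto (f \hat\otimes g)(x)$ is jointly continuous from $\operatorname{Hom}(A, A') \times \operatorname{Hom}(B, B') \times (A \hat\otimes B)$ to $A' \hat\otimes B'$: on elementary tensors $x = a \hat\otimes b$ this is immediate from the definition of the point-norm topology, and the general case follows by approximating $x$ by a finite algebraic tensor $x_\epsilon$ with $\|x - x_\epsilon\| < \epsilon$ and using contractivity of $*$-homomorphisms to obtain the uniform estimate $\|(f \hat\otimes g)(x) - (f \hat\otimes g)(x_\epsilon)\| < \epsilon$ for all $(f, g)$.

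The crux of the proof is the enrichment of $\com : \mathfrak{H} \to \mathfrak{C}^*$, namely the joint continuity of $(V, T) \mapsto VTV^*$ from $\operatorname{Iso}(\h, \h') \times \com(\h)$ (with strong-$*$ and norm topologies) to $\com(\h')$ with the norm topology. Given convergent nets $V_i \to V$ strong-$*$ and $T_i \to T$ in norm, I would use the triangle-inequality decomposition
\[
    \|V_i T_i V_i^* - V T V^*\| \leq \|T_i - T\| + \|V_i T - V T\| + \|V T (V_i^* - V^*)\|,
\]
where the first term vanishes by hypothesis. The remaining two terms vanish because $T$ (and hence $VT$) is compact: a bounded net of operators converging strongly converges uniformly on precompact subsets of the Hilbert space, so multiplication by a compact operator upgrades strong-$*$ convergence to norm convergence. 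This is the main obstacle; the argument fails for $\mathcal B(\h)$ in place of $\com(\h)$, which is precisely why one takes compact operators. Finally, applying k-ification at the end preserves continuity from k-ified products, so the composite indeed gives an enriched functor landing in $\mathfrak{Top}^{CG}_*$.
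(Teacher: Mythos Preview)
Your proposal is correct and follows essentially the same route as the paper: factor $\K$ as $\operatorname{Hom}(\s,-)\circ(-\hat\otimes-)\circ(\operatorname{id}\times\com)$, then check enrichment of each factor via, respectively, \cref{prop: hom enriched over spaces}, the elementary-tensor-plus-approximation argument (which is the paper's \cref{lem: cont of tensor product}), and the triangle-inequality estimate for $(V,T)\mapsto VTV^*$ (the paper's \cref{lem: cont isometries}). The only cosmetic difference is that where you invoke compactness of $T$ directly to upgrade strong-$*$ to norm convergence, the paper phrases the same fact as ``strong-$*$ and $\sigma$-strong-$*$ coincide on isometries''; these are the same argument in different clothing.
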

We build towards the proof of this theorem by proving some lemmas, which are of independent interest, and will be needed later too.

We begin with a general lemma on Banach spaces:
    \begin{lem} \label{lem: Ban spaces}
    Let $X,Y$ be Banach spaces, and $Z$ a topological space. Then, a map
    \begin{align*}
        G:Z &\rightarrow B_1(\mathcal{B}(X,Y))
    \end{align*} is continuous $\iff$ the map
    \begin{align*}
        G^{\operatorname{ad}}: Z \times X &\rightarrow Y \\
        (z,x) &\mapsto G(z)x
    \end{align*}
    is continuous, where $B_1(\mathcal{B}(X,Y))$ denotes the unit ball (in the norm topology) inside the space $\mathcal{B}(X,Y)$ of bounded operators between $X$ and $Y$ equipped with the point-norm topology.
\end{lem}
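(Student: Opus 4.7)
The plan is to prove both implications by reducing continuity statements to point-wise norm estimates, using crucially that $\lVert G(z) \rVert \leq 1$ for every $z \in Z$ in the forward direction.

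For the easy direction, suppose $G^{\operatorname{ad}}$ is continuous. I would note that the point-norm topology on $\mathcal{B}(X,Y)$ is, by definition, the initial topology with respect to the family of evaluation maps $\operatorname{ev}_x: T \mapsto Tx$ for $x \in X$. Hence a map into $B_1(\mathcal{B}(X,Y))$ is continuous if and only if its composite with each $\operatorname{ev}_x$ is continuous. But $\operatorname{ev}_x \circ G$ is precisely the restriction of $G^{\operatorname{ad}}$ to the slice $Z \times \{x\}$, which is continuous by hypothesis.

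For the main direction, suppose $G: Z \to B_1(\mathcal{B}(X,Y))$ is continuous, and fix a point $(z_0, x_0) \in Z \times X$ and an $\varepsilon > 0$. The key estimate is
\begin{align*}
\lVert G(z)x - G(z_0)x_0 \rVert
&\leq \lVert G(z)x - G(z)x_0 \rVert + \lVert G(z)x_0 - G(z_0)x_0 \rVert \\
&\leq \lVert G(z) \rVert \cdot \lVert x - x_0 \rVert + \lVert G(z)x_0 - G(z_0)x_0 \rVert \\
&\leq \lVert x - x_0 \rVert + \lVert G(z)x_0 - G(z_0)x_0 \rVert,
\end{align*}
where the last inequality uses the hypothesis $G(z) \in B_1(\mathcal{B}(X,Y))$. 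The first summand is controlled by choosing $x$ in the open ball of radius $\varepsilon/2$ around $x_0$, and the second by choosing $z$ in a neighbourhood $U$ of $z_0$ such that $G(U) \subseteq \{T : \lVert Tx_0 - G(z_0)x_0 \rVert < \varepsilon/2\}$, which exists since the latter set is open in the point-norm topology and $G$ is continuous. Hence $G^{\operatorname{ad}}$ is jointly continuous at $(z_0, x_0)$.

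The only subtle point of the argument is the use of the uniform bound $\lVert G(z) \rVert \leq 1$ in the forward direction; without the restriction to the unit ball, point-norm convergence of $G$ would only yield separate, not joint, continuity of $G^{\operatorname{ad}}$, and the step controlling $\lVert G(z)x - G(z)x_0 \rVert$ would fail. This is the reason the lemma is formulated for the unit ball, and it is exactly the place where the hypothesis is consumed.
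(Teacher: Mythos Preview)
Your proof is correct and follows essentially the same approach as the paper: both directions use the same triangle-inequality splitting, with the unit-ball hypothesis consumed at exactly the same step to bound $\lVert G(z)(x-x_0)\rVert$ by $\lVert x-x_0\rVert$. The only cosmetic difference is that the paper phrases the forward direction with nets rather than $\varepsilon$-neighbourhoods, and splits the triangle inequality in the opposite order.
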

\begin{proof}
    $\implies$ We have to show that if $z_i \rightarrow z$ in $Z$ and $x_i \rightarrow x$ in $X$, then $G(z_i)x_i \rightarrow G(z)x$ in $Y$. We have 
    \begin{align*}
        &\lVert G(z)x - G(z_i)x_i \rVert \\
        \leq&\lVert G(z)x - G(z_i)x \rVert + \lVert G(z_i)x - G(z_i)x_i \rVert \\
        \leq&\lVert G(z)x - G(z_i)x \rVert + \lVert x - x_i \rVert
    \end{align*}
    where for the last inequality, we used that $G$ mapped into the space of contractive operators. By the continuity of $G$, the first summand can be made arbitrarily small, and this concludes the forward implication.

    $\impliedby$ By definition of the point-norm topology, this immediately follows by the continuity of $G^{\operatorname{ad}}$.
\end{proof}
\begin{lem}\label{lem: cont isometries}
    Let $\h_0$,$\h_1$ be graded Hilbert spaces. Then, the following map is continuous:
    \[
    \operatorname{Ad}(-):\operatorname{Iso}^{\operatorname{ev}}(\mathcal{H}_0,\mathcal{H}_1) \rightarrow \operatorname{Hom}(\mathcal{K}(\mathcal{H}_0), \mathcal{K}(\mathcal{H}_1))
    \]
\end{lem}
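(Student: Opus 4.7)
The plan is to verify continuity directly by a net-convergence argument: given a convergent net $U_i \to U$ in $\operatorname{Iso}^{\operatorname{ev}}(\h_0, \h_1)$ (strong-$*$ topology) and any $k \in \mathcal{K}(\h_0)$, I want to show $\|U_i k U_i^* - U k U^*\| \to 0$. This establishes point-norm continuity of $\operatorname{Ad}(-)$ with respect to the underlying (non-k-ified) topologies, and continuity in the k-ified topologies then follows formally, since for any continuous $f : X \to Y$ the induced map $kX \to kY$ is continuous by the universal property of k-ification. Well-definedness of the target --- that $\operatorname{Ad}(U)$ is a graded (Real) $*$-homomorphism $\mathcal{K}(\h_0) \to \mathcal{K}(\h_1)$ --- is immediate: $U$ being an isometry makes $\operatorname{Ad}(U)$ a $*$-homomorphism, $U$ being even makes it grading-preserving, $U$ being Real makes it Real, and compactness of $UkU^*$ follows from $\mathcal{K}$ being an operator ideal.

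For the main estimate I would telescope
\[
U_i k U_i^* - U k U^* = (U_i - U) k U_i^* + U k (U_i^* - U^*),
\]
which, using that isometries have operator norm $1$, gives
\[
\|U_i k U_i^* - U k U^*\| \leq \|(U_i - U) k\| + \|k (U_i^* - U^*)\|.
\]
Taking adjoints in the second summand yields $\|k (U_i^* - U^*)\| = \|(U_i - U) k^*\|$ with $k^*$ again compact. Both summands are thus instances of the classical fact: \emph{if $A_i \to A$ strongly in $\mathcal{B}(\h_0, \h_1)$ with uniformly bounded norms and $T \in \mathcal{K}(\h_0)$, then $\|(A_i - A) T\| \to 0$.}

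To prove this fact I would approximate $T$ in operator norm by finite-rank operators $T_n = \sum_{j=1}^{N_n} \xi_j \langle \eta_j, \cdot \rangle$: for each fixed $n$, $\|(A_i - A) T_n\| \leq \sum_j \|(A_i - A) \xi_j\| \|\eta_j\| \to 0$ by strong convergence on finitely many vectors, and for general $T$ one estimates $\|(A_i - A) T\| \leq \|A_i - A\| \cdot \|T - T_n\| + \|(A_i - A) T_n\|$ and lets $n$ be chosen first. There is no serious obstacle here; the one point to be careful about is reducing the second summand to the same fact by the adjoint trick $\|X\| = \|X^*\|$. Note that the argument uses only strong convergence of $U_i \to U$, which coincides with strong-$*$ convergence on isometries by the remark recalled earlier in the paper, so the hypothesis is used in its full strength only implicitly.
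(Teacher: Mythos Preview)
Your proof is correct and follows essentially the same route as the paper: a net argument with a telescoping estimate, reducing to the fact that strong convergence of a uniformly bounded net against a compact operator yields norm convergence. The paper phrases this last step as ``the strong-$*$ and $\sigma$-strong-$*$ topologies on isometries coincide'' (strict convergence against compacts is norm convergence), whereas you prove it explicitly via finite-rank approximation; the paper also routes through \cref{lem: Ban spaces} to reduce to joint continuity of $(\iota,T)\mapsto \iota T\iota^*$, while you verify point-norm continuity directly --- both cosmetic differences.
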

\begin{proof}
    In the light of \cref{lem: Ban spaces}, our claim will follow immediately once we show that the following map is continuous:
    \begin{align*}
        G:\operatorname{Iso}^{\operatorname{ev}}(\mathcal{H}_0,\mathcal{H}_1) \times  \mathcal{K}(\mathcal{H}_0) \rightarrow  \mathcal{K}(\mathcal{H}_1)\\
        (\iota,T) \mapsto \operatorname{Ad}(\iota)(T)
    \end{align*}
   Let $\iota_i \rightarrow \iota$ in $\operatorname{Iso}^{\operatorname{ev}}(\h_0,\h_1)$, and $T_i \rightarrow T$ in $\mathcal{K}(\h)$. Then, we have
    \begin{align*}
        & \lVert \iota T \iota^* - \iota_i T_i {\iota_i}^* \rVert \\
        \leq & \lVert \iota T \iota^* - \iota_i T {\iota_i}^* \rVert + \lVert \iota_i T {\iota_i}^* - \iota_i T_i {\iota_i}^* \rVert \\
        \leq & \lVert \iota T \iota^* - \iota_i T {\iota_i}^* \rVert + \lVert T-T_i \rVert \\
        \leq & \lVert \iota T \iota^* - \iota_i T {\iota}^* \rVert + \lVert \iota_i T {\iota}^* - \iota_i T {\iota_i}^* \rVert + \lVert T-T_i \rVert
    \end{align*}
    The rest follows as the strong-$*$ and the $\sigma$-strong-$*$ topologies on $\operatorname{Iso}^{\operatorname{ev}}(\h_0,\h_1)$ coincide.
\end{proof}
\begin{lem} \label{lem: cont of tensor product}
    Let $A,B,C,D$ be $\Z/2\Z$-graded $C^*$-algebras. Then, if $\varphi_i \rightarrow \varphi$ in $\operatorname{Hom}(A,B)$, and $\psi_i \rightarrow \psi $ in $\operatorname{Hom}(C,D)$, then $\varphi_i \hat\otimes \psi_i \rightarrow \varphi \hat\otimes \psi$ in $\operatorname{Hom}(A\hat\otimes C, B\hat\otimes D)$.
\end{lem}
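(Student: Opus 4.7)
The plan is to reduce the claim to the case of elementary tensors and then use a density argument. By definition of the point-norm topology, I need to show that for every $z \in A \hat\otimes C$, we have $(\varphi_i \hat\otimes \psi_i)(z) \to (\varphi \hat\otimes \psi)(z)$ in $B \hat\otimes D$.

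First I would handle the case $z = a \hat\otimes c$ of an elementary tensor. Here one rewrites
\[
 \varphi(a)\hat\otimes\psi(c) - \varphi_i(a)\hat\otimes\psi_i(c) = \bigl(\varphi(a)-\varphi_i(a)\bigr)\hat\otimes\psi(c) + \varphi_i(a)\hat\otimes\bigl(\psi(c)-\psi_i(c)\bigr),
\]
and applies the cross-norm property of the (graded) maximal tensor product together with contractivity of $*$-homomorphisms to obtain
\[
 \lVert\varphi(a)\hat\otimes\psi(c) - \varphi_i(a)\hat\otimes\psi_i(c)\rVert \leq \lVert\varphi(a)-\varphi_i(a)\rVert\,\lVert c\rVert + \lVert a\rVert\,\lVert\psi(c)-\psi_i(c)\rVert,
\]
which tends to $0$ by the assumed convergences. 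Linearity then gives the same conclusion for any finite sum $w = \sum_{k=1}^n a_k \hat\otimes c_k$ of elementary tensors.

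To pass to a general $z \in A\hat\otimes C$, fix $\varepsilon > 0$ and choose such a finite sum $w$ with $\lVert z - w\rVert < \varepsilon/3$, which is possible because the algebraic tensor product is dense in $A\hat\otimes C$. Using contractivity of the $*$-homomorphisms $\varphi \hat\otimes \psi$ and $\varphi_i \hat\otimes \psi_i$, I estimate
\begin{align*}
 \lVert (\varphi\hat\otimes\psi)(z) - (\varphi_i\hat\otimes\psi_i)(z)\rVert &\leq \lVert (\varphi\hat\otimes\psi)(z-w)\rVert + \lVert (\varphi\hat\otimes\psi)(w) - (\varphi_i\hat\otimes\psi_i)(w)\rVert \\
 &\quad + \lVert (\varphi_i\hat\otimes\psi_i)(w-z)\rVert \\
 &\leq 2\lVert z - w\rVert + \lVert (\varphi\hat\otimes\psi)(w) - (\varphi_i\hat\otimes\psi_i)(w)\rVert.
\end{align*}
By the previous step, the last term is eventually less than $\varepsilon/3$, so the whole expression is eventually less than $\varepsilon$, which finishes the argument.

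The only potential obstacle is making sure that the tensor product map $\varphi \hat\otimes \psi$ is indeed a well-defined contractive $*$-homomorphism on the \emph{maximal} graded tensor product so that the final density/triangle-inequality step is legitimate; this is a standard property of the (graded) maximal tensor product referenced in Example~\ref{ex: grading}(5), and no $\varepsilon$/$3$ difficulties beyond the one above arise.
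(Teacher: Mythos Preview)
Your proof is correct and follows essentially the same approach as the paper: first handle elementary tensors via the cross-norm inequality and contractivity, extend by linearity to the algebraic tensor product, and then pass to general elements by an $\varepsilon/3$ density argument using contractivity of the tensored $*$-homomorphisms. The paper's proof is structurally identical.
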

\begin{proof}
    We first prove it for simple tensors: Given $a \hat\otimes b$ with $a \in A, b \in B$, we have
    \begin{align*}
        &\lVert \varphi(a) \hat\otimes \psi(b) - \varphi_i(a) \hat\otimes \psi_i(b) \rVert \\
         \leq &\lVert \varphi(a) \hat\otimes \psi(b) - \varphi_i(a) \hat\otimes \psi(b) \rVert + \lVert \varphi_i(a) \hat\otimes \psi(b) - \varphi_i(a) \hat\otimes \psi_i(b) \rVert \\
        =& \lVert (\varphi(a)-\varphi_i(a)) \hat\otimes \psi(b)) \rVert + \lVert \varphi_i(a) \hat\otimes (\psi(b)-\psi_i(b)) \rVert   
    \end{align*} which can be made arbitrarily small as $\lVert x \hat\otimes y \rVert \leq \lVert x \rVert \lVert y \rVert$ and $*$-homomorphisms are contractive.
    
    Now, for a general element $x \in A \hat\odot B \subseteq A \hat\otimes B$, write $x=\sum_{k=1}^{n}a_k \hat\otimes b_k$. Then, we have 
    \[
    (\varphi_i \hat\otimes \psi_i)(\sum_{k=1}^{n}a_k \hat\otimes b_k) = \sum_{k=1}^{n} (\varphi_i\hat\otimes \psi_i)(a_k \hat\otimes b_k) \rightarrow \sum_{k=1}^{n} (\varphi\hat\otimes \psi)(a_k \hat\otimes b_k)=(\varphi \hat\otimes \psi)(\sum_{k=1}^{n}a_k \hat\otimes b_k), 
    \] by the analysis done on elementary tensors before.

    Finally, for a general $x \in A\hat\otimes B$, we can approximate $x$ by an element of $y \in A \hat\odot B$. Then, 
    \begin{align*}
      & \lVert (\varphi \hat\otimes \psi)(x) - (\varphi_i \hat\otimes \psi_i)(x) \rVert \\
      \leq & \lVert (\varphi \hat\otimes \psi)(x) - (\varphi \hat\otimes \psi)(y) \rVert + \lVert (\varphi \hat\otimes \psi)(y) - (\varphi_i \hat\otimes \psi_i)(y) \rVert + \lVert (\varphi_i \hat\otimes \psi_i)(y) - (\varphi_i \hat\otimes \psi_i)(x) \rVert \\
      \leq & \lVert x-y \rVert + \lVert (\varphi \hat\otimes \psi)(y) - (\varphi_i \hat\otimes \psi_i)(y) \rVert + \lVert x-y \rVert,
    \end{align*}
    and the rest follows by what's shown previously.
\end{proof}
\begin{proof}[(Proof of \cref{thm: enriched cat})]
    The most non-trivial part of the assertion is that the map induced between the hom spaces is continuous, i.e., the following map is continuous:
    \[
    k\operatorname{Hom}(A,B) \times_{\operatorname{CG}} k\operatorname{Iso}^{\operatorname{ev}}(\h_0,\h_1) \rightarrow \operatorname{Map}(k\K(A,\h_0),k\K(B,\h_1)).
    \]
    We only show this here; note that the map can be written as a composite
    \[
        % https://q.uiver.app/#q=WzAsNSxbMCwwLCJrXFxvcGVyYXRvcm5hbWV7SG9tfShBLEIpIFxcdGltZXNfe1xcb3BlcmF0b3JuYW1le0NHfX0ga1xcb3BlcmF0b3JuYW1le0lzb31ee1xcb3BlcmF0b3JuYW1le2V2fX0oXFxtYXRoY2Fse0h9XzAsXFxtYXRoY2Fse0h9XzEpIl0sWzAsMSwiayhcXG9wZXJhdG9ybmFtZXtIb219KEEsQikgXFx0aW1lcyBcXG9wZXJhdG9ybmFtZXtJc299XntcXG9wZXJhdG9ybmFtZXtldn19KFxcbWF0aGNhbHtIfV8wLFxcbWF0aGNhbHtIfV8xKSkiXSxbMCwyLCJrKFxcb3BlcmF0b3JuYW1le0hvbX0oQSxCKSBcXHRpbWVzIFxcb3BlcmF0b3JuYW1le0hvbX0oXFxtYXRoY2Fse0t9KFxcbWF0aGNhbHtIfV8wKSxcXG1hdGhjYWx7S30oXFxtYXRoY2Fse0h9XzEpKSkiXSxbMiwyLCJrXFxvcGVyYXRvcm5hbWV7SG9tfShBIFxcaGF0XFxvdGltZXMgXFxtYXRoY2Fse0t9KFxcbWF0aGNhbHtIfV8wKSxCIFxcaGF0XFxvdGltZXNcXG1hdGhjYWx7S30oXFxtYXRoY2Fse0h9XzEpKSJdLFsyLDAsIlxcb3BlcmF0b3JuYW1le01hcH0oa1xcbWF0aGJie0t9KEEsXFxtYXRoY2Fse0h9XzApLGtcXG1hdGhiYntLfShCLFxcbWF0aGNhbHtIfV8xKSkiXSxbMCwxLCI9Il0sWzEsMiwiaygxIFxcdGltZXNcXG9wZXJhdG9ybmFtZXtBZH0oLSkpIl0sWzIsMywiaygtIFxcaGF0XFxvdGltZXMtKSJdLFszLDRdLFswLDRdXQ==
\begin{tikzcd}
	{k\operatorname{Hom}(A,B) \times_{\operatorname{CG}} k\operatorname{Iso}^{\operatorname{ev}}(\mathcal{H}_0,\mathcal{H}_1)} && {\operatorname{Map}(k\mathbb{K}(A,\mathcal{H}_0),k\mathbb{K}(B,\mathcal{H}_1))} \\
	{k(\operatorname{Hom}(A,B) \times \operatorname{Iso}^{\operatorname{ev}}(\mathcal{H}_0,\mathcal{H}_1))} \\
	{k(\operatorname{Hom}(A,B) \times \operatorname{Hom}(\mathcal{K}(\mathcal{H}_0),\mathcal{K}(\mathcal{H}_1)))} && {k\operatorname{Hom}(A \hat\otimes \mathcal{K}(\mathcal{H}_0),B \hat\otimes\mathcal{K}(\mathcal{H}_1))}
	\arrow["{=}", from=1-1, to=2-1]
	\arrow["{k(1 \times\operatorname{Ad}(-))}", from=2-1, to=3-1]
	\arrow["{k(- \hat\otimes-)}", from=3-1, to=3-3]
	\arrow[from=3-3, to=1-3]
	\arrow[from=1-1, to=1-3]
\end{tikzcd},
    \]
    where the right vertical map is the one adjoint to the map 
    \[
    k\operatorname{Hom}(A \hat\otimes \mathcal{K}(\mathcal{H}_0),B \hat\otimes\mathcal{K}(\mathcal{H}_1)) \times k\K(A,\h_0) \rightarrow k\K(B,\h_1).
    \]
    In light of the properties of the compact open topology and the k-ification funtor, we are now done: we have already seen that $(1 \times \operatorname{Ad}(-))$, $- \hat\otimes -$, and compositions of graded $*$-homomorphisms are continuous in \cref{lem: cont isometries}, \cref{lem: cont of tensor product} and \cref{prop: hom enriched over spaces} respectively.
\end{proof}
As a consequence of the results developed in this subsection, we have the following:
\begin{cor}\label{prop: baby functoriality}
   Let $\h_0,\h_1$ be graded Hilbert space, and $A,B$ be graded $C^*$-algebras. Then, 
   \begin{enumerate}
       \item Homotopic $*$-homomorphisms $A \rightarrow B$ induce based homotopic maps $\K(A,\h_0) \rightarrow \K(B,\h_0)$.
       \item Strongly path-connected even isometries $\h_0 \rightarrow \h_1$ induce based homotopic maps $ \K(A,\h_0) \rightarrow \K(A,\h_1)$.
        \item $0:A \rightarrow B$ induces the constant map $\K(A,\h_0) \rightarrow \K(B,\h_0)$.
   \end{enumerate}
\end{cor}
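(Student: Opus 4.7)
The plan is to deduce all three statements as formal consequences of the topological enrichment established in \cref{thm: enriched cat}, together with \cref{cor: homotopic homomorphism} and the adjunction of \cref{prop:map into hom space}.

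For item (1), a homotopy between $\varphi_0,\varphi_1:A\to B$ is, by definition, a path $\gamma:[0,1]\to \operatorname{Hom}(A,B)$ with $\gamma(i)=\varphi_i$. Applying the enriched functor $k\K(-,\h_0)$ to $\gamma$ and using the adjunction of \cref{prop:map into hom space} (or, equivalently, the composition described in the proof of \cref{thm: enriched cat}), we obtain a continuous map $[0,1]\times k\K(A,\h_0)\to k\K(B,\h_0)$ that at $t=i$ restricts to the map induced by $\varphi_i$. The basepoint is preserved at every $t$ because each $\gamma(t)$ is a graded $*$-homomorphism (sending $0$ to $0$), so the homotopy is based.

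Item (2) is entirely analogous: a strong path of even isometries $\iota_t:\h_0\to\h_1$ is a continuous map $[0,1]\to\operatorname{Iso}^{\operatorname{ev}}(\h_0,\h_1)$, and \cref{thm: enriched cat} gives a continuous family of induced maps $\K(A,\h_0)\to \K(A,\h_1)$. Again, since each $\operatorname{Ad}(\iota_t)$ is a graded $*$-homomorphism on the compact operators, tensoring with $1_A$ preserves the basepoint fiberwise, producing a based homotopy.

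For item (3), note that $\K(0,\h_0)$ is obtained by composing $\operatorname{Hom}(\s,-)$ with the map $A\hat\otimes\com(\h_0)\to B\hat\otimes\com(\h_0)$ induced by $0\hat\otimes \operatorname{id}$, which is itself the zero homomorphism. Thus every $\varphi\in\K(A,\h_0)$ is sent to the composition $\s\xrightarrow{\varphi} A\hat\otimes\com(\h_0)\xrightarrow{0} B\hat\otimes\com(\h_0)$, which is the zero $*$-homomorphism; this is precisely the basepoint of $\K(B,\h_0)$.

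None of the three parts presents a real obstacle once \cref{thm: enriched cat} is in hand; the only minor care needed is to observe that the homotopies produced are automatically basepoint-preserving, which follows from the fact that the zero $*$-homomorphism is sent to the zero $*$-homomorphism by every operation (composition, tensor product, and $\operatorname{Ad}$) occurring in the definition of $\K$.
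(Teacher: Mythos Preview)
Your proof is correct and follows essentially the same route as the paper, which presents this corollary without an explicit proof as an immediate consequence of \cref{thm: enriched cat} and the surrounding results (\cref{prop:map into hom space}, \cref{lem: 1st lemma}). Your only extra care---checking that the homotopies are basepoint-preserving---is appropriate and straightforward.
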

\section{Two fundamental properties of spectral K-theory}
In this subsection, we prove a couple of well known properties of $K$-theory from the classical ungraded picture, namely stability and existence of long exact sequences, in the spectral picture. These will also be needed later when we develop products in K-theory. Some of the ideas here were motivated from \cite{EBE}.

 Throughout this section, $H$ denotes an arbitrary (but fixed) graded (Real) Hilbert space. For the proofs, we need to sometimes work with a somewhat simpler model of the spectral $K$-theory space underlying $H$. This, we do in terms of Cayley transforms, as follows:

\begin{lem}\label{lem: cay transform}
    Let $A$ be a $\Z/2\Z$-graded $C^*$-algebra $(A,\varepsilon)$ (with a possible Real structure given by $a \mapsto \bar{a}$). Then there is a natural homeomorphism between $\operatorname{Hom}(\s,A)$ and \[\operatorname{Cay}(A):=\{ u \in \U(A^+) \mid u^*=\varepsilon (u) (=\bar{u}), u \mapsto 1 ~\text{under the canonical map}~ A^+ \rightarrow \C\}\]
\end{lem}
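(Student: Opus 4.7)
The plan is to realize $c(x) := \frac{x-i}{x+i}$ as a canonical ``Cayley element'' of $\s^+$, and then to exploit the universal property of $\s^+$ (which under Cayley is isomorphic to $C(S^1)$) as the unital $C^*$-algebra freely generated by a unitary. First I would verify that $c\in \s^+$ (since $c-1 = \frac{-2i}{x+i} \in \s$), that $c$ is unitary, and that a direct computation gives $c^* = \varepsilon(c) = \bar c$, all three equal to $x\mapsto\frac{x+i}{x-i}$. Under the augmentation $\s^+\to \C$ (evaluation at infinity) one checks that $c\mapsto 1$. Thus $c$ is itself a tautological element of $\operatorname{Cay}(\s)$.

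Next I would construct the bijection. In one direction, send $\varphi\in\operatorname{Hom}(\s,A)$ to $\Theta(\varphi):=\varphi^+(c)\in A^+$, where $\varphi^+:\s^+\to A^+$ is the unique unital extension; since the units in $\s^+$ and $A^+$ are even and Real, $\varphi^+$ automatically preserves grading and Real structure, and applying it to $c$ therefore yields an element of $\operatorname{Cay}(A)$. For the inverse, given $u\in\operatorname{Cay}(A)$ I would use the universal property of $\s^+$ to produce the unique unital $*$-homomorphism $\Phi_u:\s^+\to A^+$ sending $c\mapsto u$. The conditions $u^* = \varepsilon(u) = \bar u$, together with the fact that $c$ generates $\s^+$ as a unital $C^*$-algebra, force $\Phi_u$ to be grading- and Real-preserving (both $\varepsilon\circ\Phi_u$ and $\Phi_u\circ\varepsilon$ agree on the generator $c$, and similarly for the Real structure, so they agree on all of $\s^+$). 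The condition $u\mapsto 1$ in $\C$ then ensures that $\Phi_u$ maps the augmentation ideal $\s=\ker(\s^+\to\C)$ into $A=\ker(A^+\to\C)$, and the restriction $\varphi_u:=\Phi_u|_\s$ is the desired morphism. Uniqueness in the universal property shows that $\varphi\mapsto\Theta(\varphi)$ and $u\mapsto\varphi_u$ are mutually inverse.

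For the topology, note that $\Theta(\varphi)=1+\varphi(c-1)$ with $c-1\in\s$, so $\varphi_i\to\varphi$ in the point-norm topology immediately gives $\Theta(\varphi_i)\to\Theta(\varphi)$ in the norm of $A^+$. Conversely, I would show $u\mapsto\Phi_u(f)$ is norm-continuous for every fixed $f\in\s$: by Stone--Weierstrass, $f$ can be approximated uniformly on $S^1$ by polynomials in $c$ and $c^*$, and for those continuity is immediate; a standard $3\varepsilon$-argument, combined with the contractivity $\|p(u)-p(v)\|\le \|p-q\|_\infty + \|q(u)-q(v)\| + \|q-p\|_\infty$ (for any polynomial approximant $q$ to $p$, and unitaries $u,v$) finishes the job. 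Naturality in $A$ is then routine: for $\alpha:A\to B$ one has $\alpha^+(\varphi^+(c)) = (\alpha\circ\varphi)^+(c)$, so the obvious induced maps on $\operatorname{Cay}$ intertwine with $\Theta$.

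The main subtlety I expect is the bookkeeping around the grading and Real structure across the identification $\s^+\cong C(S^1)$: under Cayley, the even--odd grading of $\s$ pulls back to $g(z)\mapsto g(\bar z)$ on $C(S^1)$ and the Real structure to pointwise conjugation, and one must check that the single condition $c^*=\varepsilon(c)=\bar c$ on the generator is \emph{exactly} what is needed for the induced homomorphism to be graded and Real. No individual verification is hard, but carefully tracking the interplay of the three structures (grading, Real structure, and augmentation) on both sides is where one is most likely to slip.
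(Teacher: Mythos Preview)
Your proposal is correct and follows essentially the same route as the paper: identify $\s^+\cong C(S^1)$ via the Cayley element $c=\frac{x-i}{x+i}$, invoke the universal property of $C(S^1)$ as the unital $C^*$-algebra generated by a single unitary, and check that the grading, Real structure, and augmentation conditions match up. Your version is in fact more explicit than the paper's---particularly the continuity argument via $\Theta(\varphi)=1+\varphi(c-1)$ and Stone--Weierstrass, which the paper leaves implicit.
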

\begin{proof}
     By the universal property of adjunction of units, we get $\operatorname{Hom}(\s,A) \cong \operatorname{Hom^\prime}_{C^*}(\s^+, A^+)$, the space of all unital $*$-homomorphisms preserving the augmentation. Consider the map 
    
    \begin{align*}
         \alpha : \operatorname{Hom^\prime}_{C^*}(\s^+, A^+) & \rightarrow \operatorname{Cay}(A) \\
    \varphi &\mapsto \varphi(\frac{x-i}{x+i})
    \end{align*}
    We claim that this is the required homeomorphism. Indeed, note that $\s^+ \cong C(\R^+) \cong C(\mathbb{S}^1)$, with the later isomorphism being induced from the underlying homeomorphism of spaces $\R^+ \rightarrow \mathbb{S}^1$, given by $x \mapsto \frac{x-i}{x+i}$. The proposition follows once we invoke the fact that $C(\mathbb{S}^1)$ is the universal $C^*$-algebra on a single unitary (see \cite{SUN}), and take into consideration the induced grading and Real structure on $C(\mathbb{S}^1)$. 
\end{proof}
\subsection{Stability}
For this subsection, we assume that the even and odd parts of $H$ are countably infinite dimensional.
Our goal is to show the following:
\begin{thm}\label{thm: stability}
    Let $A$ be a graded $C^*$-algebra, and $\mathcal{H}$ a graded Hilbert space. Then, the map $\Phi:A \rightarrow A \hat\otimes \mathcal{K}(\mathcal{H})$, given by $\Phi(a)=a\hat\otimes e_{11}$, where $e_{11}$ is projection onto an even one-dimensional subspace of $\mathcal{H}$, induces a homotopy equivalence $\K(\Phi):\K(A,H) \rightarrow \K(A \hat\otimes \mathcal{K}(\mathcal{H}),H)$.
\end{thm}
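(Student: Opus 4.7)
The idea is to reduce $\K(\Phi)$, up to post-composition with a homeomorphism, to the endomorphism of $\K(A,H)$ induced functorially by a single even (Real) isometry $W: H \to H$, and then to invoke \cref{prop: baby functoriality}(2) by connecting $W$ to $\mathrm{id}_H$ through a strong path in $\operatorname{Iso}^{\operatorname{ev}}(H,H)$.

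First I would combine the canonical graded (Real) $*$-isomorphism $\mathcal{K}(H)\hat\otimes\mathcal{K}(H)\cong\mathcal{K}(H\hat\otimes H)$ with any even (Real) unitary $U: H\hat\otimes H \to H$, which exists because both sides are graded (Real) separable Hilbert spaces with countably infinite-dimensional even and odd parts. Tensoring with $1_A$ yields an isomorphism $\Theta: A\hat\otimes\mathcal{K}(H)\hat\otimes\mathcal{K}(H)\xrightarrow{\cong}A\hat\otimes\mathcal{K}(H)$, which by \cref{thm: enriched cat} induces a homeomorphism $\Theta_*: \K(A\hat\otimes\mathcal{K}(H),H)\xrightarrow{\cong}\K(A,H)$.

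Next, a direct computation on simple tensors will identify $\Theta\circ(\Phi\hat\otimes 1_{\mathcal{K}(H)})$. Fixing an even (Real) unit vector $e_1\in H$ whose rank-one projection is $e_{11}$, the even (Real) isometry $v: H \to H\hat\otimes H$, $\eta\mapsto e_1\hat\otimes\eta$, satisfies $\operatorname{Ad}(v)(T)=e_{11}\hat\otimes T$ under the identification $\mathcal{K}(H)\hat\otimes\mathcal{K}(H)\cong\mathcal{K}(H\hat\otimes H)$ (no Koszul signs arise since $e_{11}$ is even). Hence $\Theta\circ(\Phi\hat\otimes 1_{\mathcal{K}(H)})=1_A\hat\otimes\operatorname{Ad}(W)$ with $W:=U\circ v: H \to H$ an even (Real) isometry. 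Applying $\operatorname{Hom}(\s,-)$ then shows that $\Theta_*\circ\K(\Phi)=\K(A,W)$, the image of $W$ under the enriched functor $\K(A,-)$.

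By \cref{prop: baby functoriality}(2), any strong path in $\operatorname{Iso}^{\operatorname{ev}}(H,H)$ joining $W$ to $\mathrm{id}_H$ would render $\K(A,W)$ based homotopic to $\mathrm{id}_{\K(A,H)}=\K(A,\mathrm{id}_H)$, so that $\K(\Phi)=\Theta_*^{-1}\circ\K(A,W)$ becomes a based homotopy equivalence. This strong path-connectedness is the main obstacle, and I expect to handle it via a graded (Real) Kuiper / Eilenberg-swindle argument. Since the even and odd parts of $H$ are each countably infinite dimensional (and, in the Real case, admit Real orthonormal bases), one can fix an isomorphism $H\cong H\oplus H\oplus\cdots$ compatible with both the grading and the Real structure; a continuous $2\times 2$ rotation then interpolates between $W\oplus\mathrm{id}_H$ and $\mathrm{id}_H\oplus W$, and iterating this shift down the infinite direct sum absorbs $W$ and pushes it off to infinity, yielding the desired strongly continuous path from $W$ to $\mathrm{id}_H$. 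All intermediate rotations must be chosen even and Real, but this is possible because the auxiliary subspaces can be picked homogeneous and invariant under the Real structure.
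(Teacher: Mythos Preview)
Your approach is essentially the paper's: both factor $\K(\Phi)$ through the map induced by an even isometry (the paper stops at $j:H\to\mathcal{H}\hat\otimes H$ and compares it with a unitary there, while you compose one step further with $U$ to obtain $W:H\to H$ and compare with $\mathrm{id}_H$), and then invoke strong-$*$ path-connectedness of the relevant isometry space. One notational slip to fix: the theorem involves \emph{two} Hilbert spaces, so you need $e_1\in\mathcal{H}$, $v:H\to\mathcal{H}\hat\otimes H$, and $\mathcal{K}(\mathcal{H})\hat\otimes\mathcal{K}(H)\cong\mathcal{K}(\mathcal{H}\hat\otimes H)$ throughout; with that corrected the argument goes through.
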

\begin{proof}
    Our goal is to show that the induced map on the spectral $K$-theory space $\K(\Phi):\K(A,H) \rightarrow \K(A \hat\otimes \mathcal{K}(\mathcal{H}),H)$ is a homotopy equivalence.
    First, note that the following diagram commutes:
    \[% https://q.uiver.app/#q=WzAsNCxbMCwwLCJBIl0sWzEsMCwiQSBcXGhhdFxcb3RpbWVzXFxtYXRoY2Fse0t9KFxcbWF0aGNhbHtIfSkiXSxbMCwxLCJBIFxcaGF0XFxvdGltZXMgXFxtYXRoYmJ7Q30iXSxbMSwxLCJBIFxcaGF0XFxvdGltZXNcXG1hdGhjYWx7S30oXFxtYXRoY2Fse0h9KSJdLFswLDIsImEgXFxtYXBzdG8gYSBcXGhhdFxcb3RpbWVzMSIsMl0sWzAsMSwiXFxQaGkiXSxbMiwzLCJcXG9wZXJhdG9ybmFtZXtBZH0oXFxpb3RhKSIsMl0sWzEsMywiXFxvcGVyYXRvcm5hbWV7aWR9Il1d
\begin{tikzcd}
	A & {A \hat\otimes\mathcal{K}(\mathcal{H})} \\
	{A \hat\otimes \mathbb{C}} & {A \hat\otimes\mathcal{K}(\mathcal{H})}
	\arrow["{a \mapsto a \hat\otimes1}"', from=1-1, to=2-1]
	\arrow["\Phi", from=1-1, to=1-2]
	\arrow["\operatorname{id} \hat\otimes {\operatorname{Ad}(\iota)}"', from=2-1, to=2-2]
	\arrow["{\operatorname{id}}", from=1-2, to=2-2],
\end{tikzcd}\]
where $\iota:\C \rightarrow \mathcal{H}$ is the even isometry onto the range of $e_{11}$.

Now, note that there is a canonical homeomorphism $\K(A \hat\otimes \com(\h),H) \cong \K(A,\h \hat\otimes H)$, and under this identification, $\K(\Phi)$ corresponds to the map $\K(\operatorname{Ad}j): \K(A,H) \rightarrow \K(A,\h \hat\otimes H)$, where $j: H=\C \hat\otimes H \rightarrow \h \hat\otimes H$ is given by $\iota \hat\otimes 1$. But, $H$ and $\mathcal{H} \hat\otimes H$ are both graded Hilbert spaces whose even and odd grading degree parts are countably infinite dimensional, and hence there exists an even unitary isomorphism $u$ between them. Then, $\K(\operatorname{Ad}u):\K(A,H) \rightarrow \K(A,\h \hat\otimes H)$ is a homeomorphism. As $\operatorname{Iso}^{\operatorname{ev}}(H,\h \hat\otimes H)$ is path-connected in the strong-$*$ topology, our result follows.
\end{proof}

\subsection{Long exact sequence}
\begin{thm}\label{thm: les}
    Let $A,B$ be $\Z/2\Z$-graded (Real) $C^*$-algebras, and $\varphi:A \rightarrow B$ be a surjective homomorphism. Let $J:=\ker \varphi$. Then, $\K(A,H) \rightarrow \K(B,H)$ is a Serre fibration, with fiber $\K(J)$.

\end{thm}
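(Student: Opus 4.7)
The plan is to identify the fiber directly and then verify the Serre fibration property via the homotopy lifting property (HLP) for disks, which, after applying the Cayley transform of \cref{lem: cay transform}, reduces to a lifting problem for continuous families of graded unitaries.

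Setting $A' := A \hat\otimes \com(H)$, $B' := B \hat\otimes \com(H)$, and $J' := J \hat\otimes \com(H)$, and using that $\com(H)$ is nuclear so the graded tensor product preserves exactness, the induced map $\pi' := \varphi \hat\otimes \operatorname{id}$ is surjective with kernel $J'$. The fiber of $\operatorname{Hom}(\s, \pi')$ over the basepoint $\underline 0$ consists of graded $*$-homomorphisms $\s \to A'$ whose composite with $\pi'$ vanishes -- equivalently, those factoring through $J'$ -- yielding $\operatorname{Hom}(\s, J') = \K(J)$. For the HLP with respect to a disk $D^n$, given $\tilde g : D^n \to \K(A, H)$ and $F : D^n \times I \to \K(B, H)$ with $\pi'_* \tilde g = F|_{D^n \times \{0\}}$, \cref{prop:map into hom space} recasts this data as a pair $(\varphi, \psi)$ with $\varphi : \s \to C(D^n, A')$ and $\psi : \s \to C(D^n \times I, B')$ matched at $t=0$; the sought filler corresponds to $\tilde\psi : \s \to C(D^n \times I, A')$ extending both. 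The Cayley transform then turns this into the problem of lifting a continuous family of graded unitaries $u_t \in U(C(D^n, B')^+)$ (indexed by $t \in I$) to graded unitaries $\tilde u_t \in U(C(D^n, A')^+)$ with prescribed initial value $\tilde u_0$.

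For $t$ near $0$, the element $w_t := u_0^* u_t$ (where $u_0 = (\pi')^+(\tilde u_0)$) lies close to $1$ in $C(D^n, B')^+$ and hence equals $\exp(i H_t)$ for a self-adjoint $H_t \in C(D^n, B')$ of small norm which, owing to the graded-unitarity of $u_t$, satisfies the compatibility $\operatorname{Ad}_{u_0}(H_t) = -\varepsilon(H_t)$. Lifting $H_t$ continuously to $C(D^n, A')$ via Bartle--Graves and then symmetrizing with respect to the commuting involutions $X \mapsto X^*$ and $X \mapsto -\operatorname{Ad}_{\tilde u_0^*}(\varepsilon(X))$ (both of which fix $H_t$ on the $B'$-side, hence preserve the property of being a lift) yields a self-adjoint grading-compatible lift $\tilde H_t$, so that $\tilde u_t := \tilde u_0 \exp(i \tilde H_t)$ is a graded-unitary lift on some short interval $[0, \epsilon]$.

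The main obstacle is globalizing this to the whole of $I$. By uniform continuity of $u$ on the compact $D^n \times I$, I would choose a finite partition $0 = t_0 < \cdots < t_N = 1$ on which the local construction applies on each subinterval $[t_k, t_{k+1}]$, and extend the lift inductively, using the previously-constructed $\tilde u_{t_k}$ as the initial datum for the next step. Unwinding the Cayley transform returns the required $\tilde \psi$, completing the HLP.
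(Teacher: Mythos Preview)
Your argument follows the same architecture as the paper's: curry via \cref{prop:map into hom space} to reduce to path-lifting for a single surjection of $C^*$-algebras, pass to the Cayley picture via \cref{lem: cay transform}, obtain continuous approximate lifts through Bartle--Graves, partition $[0,1]$ into small subintervals, and build the lift inductively. The only real difference is the local mechanism for turning an approximate lift into an honest unitary lift: the paper lifts the increment $p(t)-p(i/N)$ directly in the real Banach space $A_0=\{a\mid a^*=\varepsilon(a)=\bar a\}$ (where $A_0\twoheadrightarrow B_0$ admits a Bartle--Graves section) and then retracts the resulting invertible onto the unitaries via the polar part $a\mapsto a|a|^{-1}$, checking that this retraction preserves the Cayley constraints; you instead take a logarithm, lift and symmetrize the self-adjoint generator $H_t$, and re-exponentiate. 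Both devices work and the bookkeeping is comparable.

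One omission: in the Real case the Cayley condition also requires $\bar u=u^*$, which for $u_t=u_0\exp(iH_t)$ translates into a third constraint $\bar H_t=-\varepsilon(H_t)$ on the logarithm. You should therefore add the involution $X\mapsto -\overline{\varepsilon(X)}$ to your symmetrization group (it commutes with the other two, and the $\Z/2^3$-average still projects lifts to lifts). The paper's decision to bundle all three symmetries into the single fixed-point space $A_0$ from the outset is precisely what lets it avoid this separate step.
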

The following proposition contains the technical heart of the proof:
\begin{prop}\label{prop: Serre fib}
Let $\sigma:A \rightarrow B$ be a surjective homomorphism of $\Z/2\Z$-graded (possibly Real) $C^*$-algebras. Then the induced map $\sigma_* : \operatorname{Hom}(\s,A) \rightarrow \operatorname{Hom}(\s,B)$ is a Serre fibration.
\end{prop}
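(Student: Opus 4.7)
The approach has two main steps: reformulate the fibration condition via adjunction, then solve the resulting $C^*$-algebraic lifting problem using the Cayley transform of \cref{lem: cay transform} together with a Bartle--Graves continuous section of $\sigma$.

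For the reformulation, a Serre fibration test square for $\sigma_*$ with base $D^n$ translates, via the adjunction of \cref{prop:map into hom space}, into the following lifting problem in graded (Real) $C^*$-algebras: given $\tilde\varphi : \s \to C(D^n, A)$ and $\varphi : \s \to C(D^n \times [0, 1], B)$ with $\sigma \circ \tilde\varphi = r_0 \circ \varphi$ (where $r_0$ is restriction to $D^n \times \{0\}$), construct a $*$-homomorphism $\Phi : \s \to C(D^n \times [0, 1], A)$ satisfying $r_0 \circ \Phi = \tilde\varphi$ and $\sigma \circ \Phi = \varphi$. Equivalently, $\Phi$ must lift the canonical $*$-homomorphism $\s \to P := C(D^n, A) \times_{C(D^n, B)} C(D^n \times [0, 1], B)$ induced by $(\tilde\varphi, \varphi)$ along the surjection $\pi : C(D^n \times [0, 1], A) \to P$ given by $F \mapsto (r_0(F), \sigma \circ F)$. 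Surjectivity of $\pi$ as a $*$-homomorphism follows from Bartle--Graves applied to $\sigma : A \to B$ viewed as a continuous linear surjection of Banach spaces: a continuous section $L : B \to A$ with $L(0) = 0$ yields the explicit preimage $(g, f) \mapsto g + L \circ (f - f(\cdot, 0))$.

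To produce $\Phi$, I would pass to the Cayley picture, identifying $\op{Hom}(\s, C)$ with the space $\operatorname{Cay}(C)$ of specific unitaries in $C^+$. An element $u \in \operatorname{Cay}(C)$ is encoded by the single element $c := u - 1 \in C$, subject to the relations $c^* = \varepsilon(c)$ (graded-adjoint condition) and $c + c^* + cc^* = 0$ (unitarity of $1 + c$). Averaging $L$ over the grading, the $*$-involution, and (in the Real case) the Real structure produces a continuous section that sends $\{b \in B : b^* = \varepsilon(b)\}$ to its $A$-counterpart; applying this averaged $L$ to the Cayley data of $\varphi$, offset by the value of $\tilde\varphi$ at $t = 0$, yields a continuous lift $\tilde c$ to $C(D^n \times [0, 1], A)$ satisfying the first Cayley relation together with the boundary condition $\tilde c|_{t = 0} = \tilde\varphi - 1$.

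The principal obstacle is the nonlinear unitarity relation $\tilde c + \tilde c^* + \tilde c \tilde c^* = 0$, which a raw averaged Bartle--Graves lift will in general fail to satisfy. Observing that the kernel of $\pi$ is $C_0(D^n \times (0, 1], \ker \sigma)$ --- a cone, hence contractible as a graded (Real) $C^*$-algebra --- I would correct the lift into $\operatorname{Cay}$ via a functional-calculus step: writing $1 + c = e^{ih}$ for an odd self-adjoint $h$ and lifting $h$ instead of $c$ (straightforward locally via Bartle--Graves on a neighborhood where the logarithm is single-valued), then patching the local lifts together via a partition of unity subordinate to a finite open cover of the compact space $D^n \times [0, 1]$. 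Executing this patching rigorously while maintaining continuity in $(x, t)$, preserving the Cayley constraints, and matching the boundary condition at $t = 0$ is the technical heart of the proof and the main obstacle I expect to encounter.
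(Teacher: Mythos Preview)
Your overall framework --- adjunction to a $C^*$-algebraic lifting problem, passage to the Cayley picture, and an averaged Bartle--Graves section respecting the grading/Real/$*$-constraints --- matches the paper. The paper, however, pushes the reduction one step further: it absorbs the disk into the coefficient algebras (replacing $A,B$ by $C(D^n,A),C(D^n,B)$, still a surjection of graded Real $C^*$-algebras by Bartle--Graves), so that what remains is the \emph{path-lifting property} for $\operatorname{Cay}(A)\to\operatorname{Cay}(B)$. This reduction to a one-dimensional problem is what makes the correction step clean, and it is the step your approach lacks.

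The substantive gap is in your correction of the nonlinear unitarity constraint. The logarithm-plus-partition-of-unity scheme has a branch-consistency problem: on overlapping patches $U_\alpha,U_\beta\subset D^n\times[0,1]$ the local logarithms $k_\alpha,k_\beta$ of the target unitary need not agree, so $\sigma\big(\sum_\alpha \rho_\alpha h_\alpha\big)=\sum_\alpha \rho_\alpha k_\alpha$ is a genuine mixture of branches and $\exp\big(i\sum_\alpha \rho_\alpha h_\alpha\big)$ will not, in general, cover the given unitary. Nor does the observation that $\ker\pi$ is a cone help: contractibility of the kernel does not by itself produce lifts of $*$-homomorphisms out of $\s$. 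The paper sidesteps all of this. Having reduced to a path $p:[0,1]\to\operatorname{Cay}(B)$ with a prescribed lift $u_0$ at $0$, it subdivides $[0,1]$ so that on each subinterval the Bartle--Graves section (restricted to the real Banach subspace $A_0=\{a:a^*=\varepsilon(a)=\bar a\}$) sends the increment $p(t)-p(t_i)$ to something of norm $<\tfrac12$; adding this to the already-constructed unitary endpoint lands in $\{a\in\operatorname{GL}(A^+):a-1\in A_0\}$, and one then applies the polar-decomposition retraction $a\mapsto a|a|^{-1}$. The only work is the short functional-calculus check that this retraction preserves the conditions $a^*=\varepsilon(a)=\bar a$ and $a\mapsto 1$ under $A^+\to\C$. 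This inductive one-dimensional construction needs no patching and no logarithms, and it replaces precisely the part of your argument you flagged as the main obstacle.
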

\begin{proof}
    We work in the general case when $(A,\varepsilon,\overline{(.)})$ is $\Z/2\Z$-graded and Real. We have to show that for any compact CW complex $X$, the following diagram has a solution: \[ % https://q.uiver.app/#q=WzAsNCxbMCwwLCJYIl0sWzIsMCwiXFxvcGVyYXRvcm5hbWV7SG9tfShcXG1hdGhjYWx7U30sQSkiXSxbMCwxLCJYIFxcdGltZXNbMCwxXSJdLFsyLDEsIlxcb3BlcmF0b3JuYW1le0hvbX0oXFxtYXRoY2Fse1N9LEIpIl0sWzAsMV0sWzAsMl0sWzIsM10sWzEsM10sWzIsMSwiIiwxLHsic3R5bGUiOnsiYm9keSI6eyJuYW1lIjoiZGFzaGVkIn19fV1d
\begin{tikzcd}
	X && {\operatorname{Hom}(\mathcal{S},A)} \\
	{X \times[0,1]} && {\operatorname{Hom}(\mathcal{S},B)}
	\arrow[from=1-1, to=1-3]
	\arrow[from=1-1, to=2-1]
	\arrow[from=2-1, to=2-3]
	\arrow[from=1-3, to=2-3]
	\arrow[dashed, from=2-1, to=1-3]
\end{tikzcd} \]
    Now, in light of the homeomorphisms discussed in \cref{prop:map into hom space}, we conclude that a solution in the above diagram is equivalent to a solution in the following diagram:
    \[
    % https://q.uiver.app/#q=WzAsNCxbMCwwLCJcXHswXFx9Il0sWzIsMCwiXFxvcGVyYXRvcm5hbWV7SG9tfShcXG1hdGhjYWx7U30sQyhYLEEpKSJdLFswLDEsIlswLDFdIl0sWzIsMSwiXFxvcGVyYXRvcm5hbWV7SG9tfShcXG1hdGhjYWx7U30sQyhYLEIpKSJdLFswLDFdLFswLDJdLFsyLDNdLFsxLDNdLFsyLDEsIiIsMSx7InN0eWxlIjp7ImJvZHkiOnsibmFtZSI6ImRhc2hlZCJ9fX1dXQ==
\begin{tikzcd}
	{\{0\}} && {\operatorname{Hom}(\mathcal{S},C(X,A))} \\
	{[0,1]} && {\operatorname{Hom}(\mathcal{S},C(X,B))}
	\arrow[from=1-1, to=1-3]
	\arrow[from=1-1, to=2-1]
	\arrow[from=2-1, to=2-3]
	\arrow[from=1-3, to=2-3]
	\arrow[dashed, from=2-1, to=1-3]
\end{tikzcd}
    \]
Furthermore, the induced map $C(X,A) \rightarrow C(X,B)$ is also surjective as a consequence of the Bartle-Graves selection theorem \cite{BARTLE}. So, without loss of generality, it suffices to show that when $\sigma:A \rightarrow B$ is a surjective homomorphism of $\Z/2\Z$-graded Real $C^*$-algebras, then $\sigma_*:\operatorname{Hom}(\s,A) \rightarrow \operatorname{Hom}(\s,B)$ has the path lifting property.

    In light of \cref{lem: cay transform}, it suffices to show that the induced map $\operatorname{Cay}(A) \rightarrow \operatorname{Cay}(B)$ has the path-lifting property. 
For the remainder of the proof, we need some more notation. Set \[A_0:=\{a \in A \mid a^*=\varepsilon(a)=\bar{a}\}.\] Similarly, we define $B_0$. Note that $A_0$ and $B_0$ are real Banach spaces. A modification of the proof that a surjective homomorphism of $C^*$-algebras restricts to a surjective linear map between their self-adjoint elements yields that $\sigma:A_0 \rightarrow B_0$ is surjective\footnote{more precisely, the grading, Real structure and $*$-operations define an irreducible $\Z/2^3\Z$-action on the spaces involved, and taking its isotypical parts is an exact functor. \label{foot}}. Note that \[\operatorname{Cay}(A)=\{u \in \mathcal{U}(A^+) \mid u-1 \in A_0\},\operatorname{Cay}(B)=\{u \in \mathcal{U}(B^+) \mid u-1 \in B_0\}.\]Let $p:[0,1] \rightarrow \operatorname{Cay}(B)$ be a path with a specified starting point $p(0)=\sigma(u_0)$ for some $u_0 \in \operatorname{Cay}(A)$. By the Bartle-Graves selection theorem, there exists a continuous $s:B_0 \rightarrow A_0$ with $s(0)=0$. Thus, there exists $\delta > 0$ such that $||x||<\delta \implies ||s(x)||<\frac{1}{2}$.  By compactness of $[0,1]$, $~\exists~ N \in \N$ such that $|s-t| \leq \frac{1}{N}$ $\implies ||p(s)-p(t)||<\delta$. We will build our lift $\Tilde{p}:[0,1] \rightarrow \operatorname{Cay}(A)$ inductively. The lift at $0$ is given to be $u_0$, and suppose, by induction, that $\Tilde{p}$ has been constructed on $[0,\frac{i}{N}]$ for some $0 \leq i < N$. Set \[p'(t):=\Tilde{p}(\frac{i}{N})+s(p(t)-p(\frac{i}{N})) ~\forall~ \frac{i}{N}\leq t \leq \frac{i+1}{N}.\] From the well-known ``$1+x$" lemma on invertibles in a Banach algebra, and since $p'(\frac{i}{N})$ is a unitary, we conclude that $p'(t) \in \{a \in \operatorname{GL}(A^+) \mid a-1 \in A_0\}, ~\forall~ \frac{i}{N}<t \leq \frac{i+1}{N} $.

    We have a well known strong deformation retraction from $\operatorname{GL}(A^+)$ onto $\mathcal{U}(A^+)$ as follows:
    \begin{align*}
        H:&\operatorname{GL}(A^+)\times [0,1] \rightarrow \operatorname{GL}(A^+) \\
        &(a,t) \mapsto t.a\sqrt{a^*a}^{-1}+(1-t).a
    \end{align*}
    We first show that $H$ preserves the space $A_{00}:=\{a \in \operatorname{GL}(A^+) \mid a-1 \in A_0\}$. To that end, the following claim is made:

    \textbf{Claim:} $a \in A_{00} \implies a\sqrt{a^*a}^{-1} \in A_{00}$.
    \begin{proof}
        As $a-1\in A_{0}$, we have $a^*=\varepsilon(a)=\bar{a}$. A direct calculation (we need to use the fact that the adjoint, grading and Real structure commutes with the continuous functional calculus of the positive element $a^*a$) yields that \[(a\sqrt{a^*a}^{-1})^*=\sqrt{a^*a}^{-1}a^*,~\varepsilon(a\sqrt{a^*a}^{-1})=\overline{a\sqrt{a^*a}^{-1}}=a^*\sqrt{aa^*}^{-1}.\] We claim that for any invertible $b$ in a unital $C^*$-algebra, $\sqrt{b^*b}^{-1}b^*=b^*\sqrt{bb^*}^{-1}$. To see this, first note that $\sigma(bb^*)=\sigma(b^*b)$. This is because $\sigma(xy) \cup \{0\} = \sigma(yx) \cup \{0\}$ in general, and $b$ is given to be invertible. Thus, by the real Stone-Weierstrass theorem, there is a sequence $\{p_n\}$ of polynomials with real coefficients which converge uniformly to $\frac{1}{\sqrt{x}}$ on $\sigma(bb^*)=\sigma(b^*b)$. Thus, $b^*p_n(bb^*)$ and $p_n(b^*b)b^*$ converges in norm to $b^*\sqrt{bb^*}^{-1}$ and $\sqrt{b^*b}^{-1}b^*$ respectively. But, $b^*p_n(bb^*) = p_n(b^*b)b^* ~\forall~ n \in \N$, as $b^*(bb^*)^m$ and $(b^*b)^mb^* ~\forall~ m \in \N$.
        
        It also remains to note that $a\sqrt{a^*a}^{-1} \mapsto 1$ under the $*$-homomorphism $A^+ \rightarrow \C$. But this is clear.
    \end{proof}
    Thus, as the space $\{a \in A^+ \mid a-1 \in A_0\}$ is closed under convex combinations, we get a well-defined continuous map 
    \begin{align*}
        H:&A_{00}\times [0,1] \rightarrow A_{00} \\
        &(a,t) \mapsto t.a\sqrt{a^*a}^{-1}+(1-t).a
    \end{align*}
    Now, we can set 
    \[
    \Tilde{p}(t)=H_1(p'(t)) ~\forall~ t \in [\frac{i}{N},\frac{i+1}{N}].
    \]
    Thus, $\Tilde{p}$ is defined on $[0,\frac{i+1}{N}]$, and we now build our desired lift of $p$ by induction.
\end{proof}

\begin{proof}[Proof (of \cref{thm: les})]
        Forgetting the grading for a bit, we note that the short exact sequence 
        \[% https://q.uiver.app/#q=WzAsNSxbMCwwLCIwIl0sWzEsMCwiSiJdLFsyLDAsIkEiXSxbMywwLCJCIl0sWzQsMCwiMCJdLFswLDFdLFsxLDJdLFsyLDNdLFszLDRdXQ==
\begin{tikzcd}
	0 & J & A & B & 0
	\arrow[from=1-1, to=1-2]
	\arrow[from=1-2, to=1-3]
	\arrow[from=1-3, to=1-4]
	\arrow[from=1-4, to=1-5]
\end{tikzcd}\]
yields the short exact \begin{equation}\label{eqn:1}
    % https://q.uiver.app/#q=WzAsNSxbMCwwLCIwIl0sWzEsMCwiSiBcXG90aW1lc1xcbWF0aGNhbHtLfSJdLFsyLDAsIkFcXG90aW1lc1xcbWF0aGNhbHtLfSJdLFszLDAsIkJcXG90aW1lc1xcbWF0aGNhbHtLfSJdLFs0LDAsIjAiXSxbMCwxXSxbMSwyXSxbMiwzXSxbMyw0XV0=
\begin{tikzcd}
	0 & {J \otimes\mathcal{K}(H)} & {A\otimes\mathcal{K}(H)} & {B\otimes\mathcal{K}(H)} & 0
	\arrow[from=1-1, to=1-2]
	\arrow[from=1-2, to=1-3]
	\arrow[from=1-3, to=1-4]
	\arrow[from=1-4, to=1-5],
\end{tikzcd}
\end{equation} because of the exactness properties of the maximal tensor product in the ungraded world (see \cite{BO}, Proposition 3.7.1). Now, the underlying Banach spaces of graded and ungraded maximal tensor products are the same (Lemma E.2.18 of \cite{RUF}), and so we have the exact sequence
\[% https://q.uiver.app/#q=WzAsNSxbMCwwLCIwIl0sWzEsMCwiSiBcXGhhdFxcb3RpbWVzXFxtYXRoY2Fse0t9Il0sWzIsMCwiQVxcaGF0XFxvdGltZXNcXG1hdGhjYWx7S30iXSxbMywwLCJCXFxoYXRcXG90aW1lc1xcbWF0aGNhbHtLfSJdLFs0LDAsIjAiXSxbMCwxXSxbMSwyXSxbMiwzXSxbMyw0XV0=
\begin{tikzcd}
	0 & {J \hat\otimes\mathcal{K}(H)} & {A\hat\otimes\mathcal{K}(H)} & {B\hat\otimes\mathcal{K}(H)} & 0
	\arrow[from=1-1, to=1-2]
	\arrow[from=1-2, to=1-3]
	\arrow[from=1-3, to=1-4]
	\arrow[from=1-4, to=1-5]
\end{tikzcd}\]
In particular, we get $A \hat\otimes \mathcal{K} (H)\rightarrow B \hat\otimes \mathcal{K}(H)$ is a surjective homomorphism of graded $C^*$-algebras with kernel $\K(J,H)$. The proof follows.
\end{proof}
\section{Two important graded Real *-homomorphisms}
In this section, we discuss two graded $*$-homomorphisms involving $\s$. These will give birth to a plethora of sums and products between spectral $K$-theory spaces which we will see in the subsequent sections.
\subsection{The diagonal map}
This subsection will be rather small, as the construction is straight-forward. The homomorphism we are considering in this section is the diagonal $*$-homomorphism $\operatorname{dia}:\s \rightarrow \s \oplus \s$, given by $\operatorname{dia}(f)=(f,f) ~\forall~ f \in \s$. Recall that, the notation is a bit misleading since $\oplus$ is not a categorical direct sum, but in fact a categorical direct product. However, this is the notation used in $C^*$-algebra texts (see \cite{MUR}), and we stick to this notation.

We conclude this subsection by stating the following property of the diagonal $*$-homomorphism, the proof of which is obvious.
\begin{lem}\label{lem: dia emb}
    The following diagram commutes:
    \[
   % https://q.uiver.app/#q=WzAsNCxbMCwwLCJcXG1hdGhjYWx7U30iXSxbMiwwLCJcXG1hdGhjYWx7U30gXFxvcGx1cyBcXG1hdGhjYWx7U30iXSxbMCwyLCJcXG1hdGhjYWx7U30gXFxvcGx1cyBcXG1hdGhjYWx7U30iXSxbMiwyLCJcXG1hdGhjYWx7U30gXFxvcGx1cyBcXG1hdGhjYWx7U30gXFxvcGx1cyBcXG1hdGhjYWx7U30iXSxbMCwxLCJcXG9wZXJhdG9ybmFtZXtkaWF9Il0sWzAsMiwiXFxvcGVyYXRvcm5hbWV7ZGlhfSIsMl0sWzIsMywiXFxvcGVyYXRvcm5hbWV7ZGlhfSIsMl0sWzEsMywiMSBcXG9wbHVzXFxvcGVyYXRvcm5hbWV7ZGlhfSJdXQ==
\begin{tikzcd}
	{\mathcal{S}} && {\mathcal{S} \oplus \mathcal{S}} \\
	\\
	{\mathcal{S} \oplus \mathcal{S}} && {\mathcal{S} \oplus \mathcal{S} \oplus \mathcal{S}}
	\arrow["{\operatorname{dia}}", from=1-1, to=1-3]
	\arrow["{\operatorname{dia}}"', from=1-1, to=3-1]
	\arrow["{\operatorname{dia}}"', from=3-1, to=3-3]
	\arrow["{1 \oplus\operatorname{dia}}", from=1-3, to=3-3]
\end{tikzcd}
\] 
\end{lem}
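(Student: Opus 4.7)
The plan is to verify the square commutes by chasing an arbitrary $f \in \s$ around the two composites and observing that both land on the same element of $\s \oplus \s \oplus \s$. All four arrows are given by explicit formulae, so no machinery beyond the definition of $\operatorname{dia}$ is needed. The only preparatory point is to settle the meaning of the unlabelled bottom arrow $\operatorname{dia}: \s \oplus \s \to \s \oplus \s \oplus \s$; in parallel with the labelling $1 \oplus \operatorname{dia}$ on the right-hand vertical arrow, it should be read as $\operatorname{dia} \oplus 1$, i.e.\ $(g,h) \mapsto (g,g,h)$.

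With that convention in place, the chase is immediate. Along the top-right route, $f \mapsto (f,f) \mapsto (f,\operatorname{dia}(f)) = (f,f,f)$; along the left-bottom route, $f \mapsto (f,f) \mapsto (\operatorname{dia}(f),f) = (f,f,f)$. Both composites therefore agree as graded (Real) $*$-homomorphisms $\s \to \s \oplus \s \oplus \s$, and the diagram commutes.

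The content of the lemma is essentially the coassociativity of the diagonal $\operatorname{dia}: \s \to \s \oplus \s$, together with the fact that direct sums of $C^*$-algebras admit the obvious associativity isomorphism $(\s \oplus \s) \oplus \s \cong \s \oplus (\s \oplus \s) \cong \s \oplus \s \oplus \s$ under which both triple diagonals coincide. This is why the statement admits no genuine obstacle and the author flags the proof as obvious; the two-line element chase above is all that is required.
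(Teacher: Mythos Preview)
Your proposal is correct and matches the paper's treatment: the paper does not give an explicit proof at all, stating only that ``the proof of which is obvious.'' Your element chase $f \mapsto (f,f,f)$ along both composites is precisely the obvious verification the author has in mind, and your reading of the bottom arrow as $\operatorname{dia}\oplus 1$ (in parallel with the right-hand $1\oplus\operatorname{dia}$, and with the coassociativity diagram for $\triangle$ that appears just afterwards) is the intended one.
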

\subsection{The codiagonal map}
We now come up with a codiagonal/comultiplication map $\s \rightarrow \s \hat\otimes \s$.  The construction of the comultiplication map is much more intricate than the diagonal map, so we develop it slowly.

First, we need to understand the $\Z/2\Z$-graded Real $C^*$-algebra $\s \hat\otimes \s$ better. We follow the treatment as in \cite{RZ}.
 \begin{lem}\label{lem: s x s}
     $\s \hat\otimes \s$ is isomorphic as a graded, Real $C^*$-algebra to the following:

     Consider the reflection automorphisms on $C_0(\R^2)$ given by $\alpha_x,\alpha_y$ as follows:
     \[
     % https://q.uiver.app/#q=WzAsNCxbMCwwLCJcXGFscGhhX3g6Q18wKFxcbWF0aGJie1J9XjIpIFxccmlnaHRhcnJvdyBDXzAoXFxtYXRoYmJ7Un1eMikgIl0sWzIsMCwiXFxhbHBoYV95OkNfMChcXG1hdGhiYntSfV4yKSBcXHJpZ2h0YXJyb3cgQ18wKFxcbWF0aGJie1J9XjIpICJdLFswLDEsImYgXFxtYXBzdG8gKCh4LHkpXFxtYXBzdG8gZigteCx5KSkiXSxbMiwxLCJmIFxcbWFwc3RvICgoeCx5KVxcbWFwc3RvIGYoeCwteSkpIl1d
\begin{tikzcd}
	{\alpha_x:C_0(\mathbb{R}^2) \rightarrow C_0(\mathbb{R}^2) } && {\alpha_y:C_0(\mathbb{R}^2) \rightarrow C_0(\mathbb{R}^2) } \\
	{f \mapsto ((x,y)\mapsto f(-x,y))} && {f \mapsto ((x,y)\mapsto f(x,-y))}
\end{tikzcd}
     \]
     Define 
     \[
     C_0^{(i,j)}(\R^2)=\{f \in C_0(\R^2) \mid \alpha_x(f)=(-1)^{i}f ~\text{and}~ \alpha_y(f)=(-1)^{j}f\}.
     \]
     The Banach space $C_0(\R^2)$ then becomes a $\Z/2\Z$-graded $C^*$-algebra, by taking the grading operator to be $\alpha_x \circ \alpha_y$, and product and $*$-operation  by setting
     \[
     f \widehat{\bullet}g:=(-1)^{jk}f.g,~~~f^*=(-1)^{ij}\overline{f} ~\text{for}~ f \in C_0^{(i,j)}(\R^2), g \in C_0^{(k,l)}(\R^2)
     \]
    and extended linearly. The Real structure is pointwise conjugation.
     
 \end{lem}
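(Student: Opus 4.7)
The plan is to write down an explicit identification and then verify, on homogeneous elementary tensors, that the four pieces of structure (grading, product, involution, Real structure) on $\s \hat\otimes \s$ correspond exactly to those described on $C_0(\R^2)$, extending by bilinearity and density.

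First, I would observe that since $\s = C_0(\R)$ is nuclear, the underlying Banach space of the maximal ungraded tensor product $\s \otimes \s$ is canonically isometrically isomorphic to $C_0(\R^2)$ via the standard map $f \otimes g \mapsto ((x,y) \mapsto f(x)g(y))$. By Lemma E.2.18 of \cite{RUF}, already invoked earlier in the paper, the underlying Banach space of $\s \hat\otimes \s$ coincides with that of $\s \otimes \s$. This yields a canonical Banach space isomorphism $\Psi: \s \hat\otimes \s \xrightarrow{\cong} C_0(\R^2)$.

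Next, I would decompose $\s = \s^{(0)} \oplus \s^{(1)}$ into its even and odd parts under the grading $f(x) \mapsto f(-x)$, and note that under $\Psi$ the subspace $\s^{(i)} \hat\otimes \s^{(j)}$ maps densely (in fact bijectively, by an approximation argument using products of even/odd bump functions) onto $C_0^{(i,j)}(\R^2)$. Thus $\Psi$ intertwines the bigraded decompositions on both sides. With this matching in hand, I would verify the four structural identities on homogeneous elementary tensors $f \in \s^{(i)}, g \in \s^{(j)}, h \in \s^{(k)}, l \in \s^{(l)}$ and extend by linearity and continuity. The grading on $\s \hat\otimes \s$ multiplies $f \hat\otimes g$ by $(-1)^{i+j}$, and $\Psi(f \hat\otimes g)(x,y) = f(x)g(y)$ transforms the same way under $\alpha_x \circ \alpha_y$. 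The graded product $(f \hat\otimes g)(h \hat\otimes l) = (-1)^{jk} fh \hat\otimes gl$ is pushed to $(-1)^{jk}$ times the pointwise product in $C_0(\R^2)$, matching $\widehat{\bullet}$. The graded involution $(f \hat\otimes g)^* = (-1)^{ij} \bar f \hat\otimes \bar g$ becomes $(-1)^{ij}\,\overline{\Psi(f \hat\otimes g)}$. Finally, the tensor Real structure $\overline{f \hat\otimes g} = \bar f \hat\otimes \bar g$ is precisely pointwise complex conjugation on $C_0(\R^2)$.

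The main obstacle is not any single identity---each is a short direct computation---but packaging them coherently. One must check that the twisted formulas for $\widehat{\bullet}$ and $*$, which are defined on the bigraded pieces via sign factors depending on $(i,j,k,l)$, extend to well-defined bilinear/antilinear continuous operations on all of $C_0(\R^2)$, so that $\Psi$ becomes an isomorphism of $\Z/2\Z$-graded Real $C^*$-algebras and not merely a Banach space isomorphism. This well-definedness is immediate once one transports the known structure from $\s \hat\otimes \s$ across $\Psi$, at which point the verifications on elementary tensors done above show that the transported structure agrees with the one described in the statement.
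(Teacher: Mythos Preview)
Your proposal is correct and follows essentially the same approach as the paper: both invoke Lemma~E.2.18 of \cite{RUF} to obtain the Banach space isomorphism $\s \hat\otimes \s \cong C_0(\R^2)$ via $f \hat\otimes g \mapsto ((x,y) \mapsto f(x)g(y))$, and then check on homogeneous elementary tensors that this map respects the grading, product, involution, and Real structure. The paper dismisses these verifications as ``routine'' in two places, whereas you spell them out explicitly, which is fine.
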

\begin{proof}
    That the prescribed operations indeed make $C_0(\R^2)$ into a $\Z/2\Z$-graded Real $C^*$-algebra is routine; for example, to see that $\alpha_x \circ \alpha_y$ is indeed a grading, one needs to note that $\alpha_x$ and $\alpha_y$ are commuting involutions on $C_0(\R^2)$.

    Note that, from \cite{RUF} Lemma E.2.18, we have that the map
    \begin{align*}
        \s \hat\otimes \s &\rightarrow C_0(\R^2) \\
        f \hat\otimes g &\mapsto ((x,y) \mapsto f(x)g(y))
    \end{align*}
    is a Banach space isomorphism. We claim that this map becomes a Real, graded $*$-homomorphism with the prescribed operations on $C_0(\R^2)$, which is again routine.
\end{proof}    
\begin{rem}
    We emphasize one aspect about the grading on ($C_0(\R^2)$,$\widehat{\bullet}$):  since $\alpha_x$ and $\alpha_y$ are commuting involutions, there is a direct sum decomposition
     \[
     C_0(\R^2)=C_0^{(0,0)}(\R^2) \oplus C_0^{(1,1)}(\R^2) \oplus C_0^{(1,0)}(\R^2) \oplus C_0^{(0,1)}(\R^2).
     \] 
     The grading $\alpha_x \circ \alpha_y$ on $C_0(\R^2)$ is given by considering the first two summands as the even part, and the last two summands as the odd part.
\end{rem}
\begin{prop}\label{prop:comultiplication}
    There is a unique graded $*$-homomorphism from $\s \rightarrow \s \hat\otimes \s$ such that $u \mapsto u \hat\otimes u$, $v \mapsto u \hat\otimes v + v \hat\otimes u$, where $u:=e^{-x^2}$, $v=xe^{-x^2}$.
\end{prop}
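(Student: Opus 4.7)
First I would observe that $u = e^{-x^2}$ and $v = xe^{-x^2}$ jointly separate the points of $\R$: if $u(x)=u(y)$ then $|x|=|y|$, and then $v(x)=v(y)$ forces $y=x$ (for if $y=-x$ with $x\ne 0$, then $v(x)=v(-x)=-v(x)\ne v(x)$). Since $u$ also never vanishes, the real Stone--Weierstrass theorem shows that the $*$-subalgebra of $\s$ generated by $u$ and $v$ is dense, so $u$ and $v$ generate $\s$ as a $C^*$-algebra. This immediately gives uniqueness, and it will also let us conclude existence once we exhibit a candidate that lands inside $\s \hat\otimes \s$ and has the prescribed values on $u, v$.

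\textbf{Construction via functional calculus.} For existence, the plan is to produce $\Delta$ as the functional calculus of an explicit odd self-adjoint unbounded operator. Represent $\s$ faithfully as multiplication operators on the graded Hilbert space $L^2(\R)$ (with grading $\xi(x)\mapsto \xi(-x)$), and let $X$ denote the unbounded odd self-adjoint operator ``multiplication by the coordinate''. On $L^2(\R)\hat\otimes L^2(\R)$ I would then consider the symmetric operator
\[
D := X \hat\otimes 1 + 1 \hat\otimes X,
\]
initially defined on $C_c^\infty(\R^2)$. The two summands are odd, and by the Koszul sign rule $(a \hat\otimes b)(c \hat\otimes d) = (-1)^{|b||c|}\, ac \hat\otimes bd$ they anticommute; hence
\[
D^2 = X^2 \hat\otimes 1 + 1 \hat\otimes X^2,
\]
which is the classical essentially self-adjoint operator of multiplication by $x^2 + y^2$ on $L^2(\R^2)$. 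This forces $D$ itself to be essentially self-adjoint, and the spectral theorem then yields a graded Real $*$-homomorphism $\Phi_D : \s \to \mathcal{B}(L^2(\R) \hat\otimes L^2(\R))$, $f\mapsto f(\overline{D})$.

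\textbf{Values on the generators.} Next I would compute: $\Phi_D(u) = e^{-D^2}$, and because $X^2 \hat\otimes 1$ and $1 \hat\otimes X^2$ are commuting positive operators, $e^{-D^2} = e^{-X^2}\hat\otimes e^{-X^2} = u \hat\otimes u$. For $v$, $\Phi_D(v) = D\, e^{-D^2} = (X \hat\otimes 1)(u \hat\otimes u) + (1 \hat\otimes X)(u \hat\otimes u)$; since $|u| = 0$, the Koszul sign rule makes both products sign-free, giving $\Phi_D(v) = v \hat\otimes u + u \hat\otimes v$.

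\textbf{Landing in $\s \hat\otimes \s$ and main obstacle.} Both $\Phi_D(u)$ and $\Phi_D(v)$ are elements of $\s \hat\otimes \s \subset \mathcal{B}(L^2(\R^2))$; together with the Stone--Weierstrass step, this shows that $C^*(\Phi_D(u), \Phi_D(v))$ contains $\Phi_D(\s)$ and is contained in $\s \hat\otimes \s$, so $\Phi_D$ corestricts to the desired map $\Delta : \s \to \s \hat\otimes \s$. The principal technical hurdle I anticipate is the careful handling of the unbounded operator $D$ in the graded setting---proving essential self-adjointness of $D$ via $D^2$, interpreting $f(\overline D)$ through the spectral theorem of its closure, and tracking the Koszul signs correctly throughout the identifications $e^{-D^2} = u\hat\otimes u$ and $D \cdot e^{-D^2} = v \hat\otimes u + u \hat\otimes v$---after which the density of the subalgebra generated by $u$ and $v$ takes care of everything else.
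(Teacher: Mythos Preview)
Your argument is correct and is in fact the ``classical'' construction of the comultiplication (this is essentially how Higson--Guentner present it). The paper takes a deliberately more elementary route: it first identifies $\s\hat\otimes\s$ with $(C_0(\R^2),\hat\bullet)$ as in \cref{lem: s x s}, and then writes down the map by hand, setting $\widetilde\triangle(f)=f\circ r$ on even functions and $\widetilde\triangle(f)=\xi_x\cdot(f\circ r)+\xi_y\cdot(f\circ r)$ on odd ones, with $r(x,y)=\sqrt{x^2+y^2}$ and $\xi_x=x/r$, $\xi_y=y/r$. The verification that this is a graded $*$-homomorphism is then a direct four-case computation using the twisted product $\hat\bullet$. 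Your approach trades that explicit bookkeeping for the machinery of unbounded functional calculus: once $D=X\hat\otimes 1+1\hat\otimes X$ is known to be essentially self-adjoint and odd, $f\mapsto f(\overline D)$ is automatically a graded Real $*$-homomorphism, and only the two values $\Phi_D(u),\Phi_D(v)$ need to be checked. The paper's approach has the virtue of producing the explicit formula for $\Delta$ (which makes the later coalgebra identities transparent) without invoking the spectral theorem; yours is quicker once the operator theory is in place.

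One small point: the sentence ``$D^2$ is essentially self-adjoint, this forces $D$ itself to be essentially self-adjoint'' is not a general implication. What actually works here is that $D$ preserves $C_c^\infty(\R^2)$ and $(D+i)(D-i)=D^2+1$ equals multiplication by $x^2+y^2+1$, which is a bijection of $C_c^\infty(\R^2)$ onto itself; hence $\operatorname{ran}(D\pm i)\supseteq C_c^\infty$ is dense, and essential self-adjointness follows. It would be worth making this explicit.
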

\begin{proof}
    The uniqueness of such a homomorphism, if it exists, is clear by the Stone-Weierstrass theorem, as the algebra generated by $u$ and $v$ is dense in $\s$. So, we just need to show existence.

    For the purpose of existence, we take the description of $\s \hat\otimes \s$ as in \cref{lem: s x s}.   Let $r : \mathbb{R}^{2} \to
\mathbb{R}$ be the Euclidean norm, that is, $r(x, y) = \sqrt{x^{2}
+y^{2}}$. Define the functions $\xi_x$ and $\xi_y$ given by $\xi_x(x,y)=\frac{x}{r(x,y)}$ and $\xi_y=\frac{y}{r(x,y)} ~\forall~ x,y \in \R^2 \setminus \{(0,0)\}$, and $\xi_x(0,0)=\xi_y(0,0)=\frac{1}{\sqrt{2}}$.
Note that $\xi_x$ and $\xi_y$ are globally bounded, and is continuous everywhere except at
$(0,0)$. Furthermore, the pointwise products $\xi_x \cdot f, \xi_y \cdot f  \in \mathcal{C}_0(\mathbb{R}^{2})$ for every $f \in
\mathcal{C}_0(\mathbb{R}^{2})$ vanishing at $(0,0)$. We define
linear maps,
\begin{align}
	\widetilde{\triangle}^{(0)} : \mathcal{S}^{(0)} 
	&\to \mathcal{C}_0^{(0,0)}(\mathbb{R}^{2}) \subset
	\mathcal{C}_0(\mathbb{R}^{2}),\notag\\
	f
	&\mapsto f \circ r,
\end{align}
\begin{align}
	\widetilde{\triangle}^{(1)} : \mathcal{S}^{(1)} 
	&\to \mathcal{C}_0^{(1,0)}(\mathbb{R}^{2}) \oplus
	\mathcal{C}_0^{(0,1)}(\mathbb{R}^{2})  \subset
	\mathcal{C}_0(\mathbb{R}^{2}),\notag\\
	f
	&\mapsto \xi_x \cdot (f \circ r) + \xi_y \cdot (f \circ r).
\end{align}
and let $\widetilde{\triangle} := \widetilde{\triangle}^{(0)} \oplus
\widetilde{\triangle}^{(1)} : \mathcal{S} \to
\mathcal{C}_0(\mathbb{R}^{2})$. 
\begin{clm}
    $\widetilde{\triangle}$ is a graded $*$-homomorphism $\s \rightarrow (\mathcal{C}_0(\mathbb{R}^{2}), \hat{\bullet})$, which satisfies
\begin{equation}
	\widetilde{\triangle}(u) = e^{-x^{2} - y^{2}}, \qquad
	\widetilde{\triangle}(v) = (x+y)e^{-x^{2} -y^{2}}.
	\label{eqn:lol}
\end{equation}
\end{clm}
\begin{proof}
    Additivity of $\widetilde{\triangle}$ is clear from definitition, and so is the fact that it is graded. Multiplicativity of $\widetilde{\triangle}$ can be checked only on even and odd functions in $\s$; here, we only show multiplicativity when $f,g \in \s^{(1)}$. Indeed, we have 
    \begin{align*}
        \widetilde{\triangle}(f)\hat{\bullet}\widetilde{\triangle}(g) &= \widetilde{\triangle}^{(1)}(f)\hat{\bullet}\widetilde{\triangle}^{(1)}(g)\\
        &=(\xi_x \cdot (f \circ r) + \xi_y \cdot (f \circ r)) \hat{\bullet} (\xi_x \cdot (g \circ r) + \xi_y \cdot (g \circ r)) \\
        &= \begin{cases}
            \xi_x^2.(fg \circ r) + \xi_x\xi_y.(fg \circ r) - \xi_y\xi_x.(fg \circ r) + \xi_y^2.(fg \circ r) & \text{on}~ \R^2 \setminus \{(0,0)\} \\
            0 &\text{at}~ (0,0). 
            \end{cases}\\
            &= \begin{cases}
            \xi_x^2.(fg \circ r)  + \xi_y^2.(fg \circ r) & \text{on}~ \R^2 \setminus \{(0,0)\} \\
            0 &\text{at}~ (0,0). \\
        \end{cases} \\
        &= 
            fg \circ r  \\
            &= \widetilde{\triangle}(fg)
    \end{align*}
    Similarly, it can be shown that $\widetilde{\triangle}$ preserves adjoints, and so its a graded $*$-homomorphism.
    \\
    The values of $\widetilde{\triangle}$ at $u$ and $v$ are clear from the definitions.
\end{proof}
Thus, we conclude that $\triangle:=\Phi^{-1} \circ \widetilde{\triangle}$ is a graded $*$-homomorphism $\s \rightarrow \s \hat\otimes \s$ such that $u \mapsto u \hat\otimes u$ , and $v \mapsto u \hat\otimes v + v \hat\otimes u$, as is clear by the definition of $\Phi$.
\end{proof}
We also have a graded $*$-homomorphism $\eta:\s \rightarrow \C$, given by $\eta(f)=f(0)$. 
\begin{cor}
    The $*$-homomorphisms $\eta$ and $\triangle$ equip $\s$ with a coalgebra structure; in the sense that the following diagrams commute
% https://q.uiver.app/#q=WzAsOCxbMCwwLCJcXG1hdGhjYWx7U30iXSxbMiwwLCJcXG1hdGhjYWx7U30gXFx3aWRlaGF0e1xcb3RpbWVzfSBcXG1hdGhjYWx7U30iXSxbMCwyLCJcXG1hdGhjYWx7U30gXFx3aWRlaGF0e1xcb3RpbWVzfSBcXG1hdGhjYWx7U30iXSxbMiwyLCJcXG1hdGhjYWx7U30gXFx3aWRlaGF0e1xcb3RpbWVzfSBcXG1hdGhjYWx7U30gXFx3aWRlaGF0e1xcb3RpbWVzfSBcXG1hdGhjYWx7U30iXSxbNiwwLCJcXG1hdGhjYWx7U30iXSxbNSwxLCJcXG1hdGhjYWx7U30iXSxbNywxLCJcXG1hdGhjYWx7U30iXSxbNiwyLCJcXG1hdGhjYWx7U30gXFx3aWRlaGF0e1xcb3RpbWVzfSBcXG1hdGhjYWx7U30iXSxbMCwxLCJcXERlbHRhIl0sWzAsMiwiXFxEZWx0YSIsMl0sWzIsMywiXFxEZWx0YSBcXHdpZGVoYXR7XFxvdGltZXN9IDEiLDJdLFsxLDMsIjEgXFx3aWRlaGF0e1xcb3RpbWVzfSBcXERlbHRhIl0sWzQsNywiXFxEZWx0YSJdLFs0LDUsIj0iLDJdLFs0LDYsIj0iXSxbNyw1LCJcXGV0YSBcXHdpZGVoYXR7XFxvdGltZXN9IDEiXSxbNyw2LCIxIFxcd2lkZWhhdHtcXG90aW1lc30gXFxldGEiLDJdXQ==
\[\begin{tikzcd}[ampersand replacement=\&]
	{\mathcal{S}} \&\& {\mathcal{S} \widehat{\otimes} \mathcal{S}} \&\&\&\& {\mathcal{S}} \\
	\&\&\&\&\& {\mathcal{S}} \&\& {\mathcal{S}} \\
	{\mathcal{S} \widehat{\otimes} \mathcal{S}} \&\& {\mathcal{S} \widehat{\otimes} \mathcal{S} \widehat{\otimes} \mathcal{S}} \&\&\&\& {\mathcal{S} \widehat{\otimes} \mathcal{S}}
	\arrow["\Delta", from=1-1, to=1-3]
	\arrow["\Delta"', from=1-1, to=3-1]
	\arrow["{\Delta \widehat{\otimes} 1}"', from=3-1, to=3-3]
	\arrow["{1 \widehat{\otimes} \Delta}", from=1-3, to=3-3]
	\arrow["\Delta", from=1-7, to=3-7]
	\arrow["{=}"', from=1-7, to=2-6]
	\arrow["{=}", from=1-7, to=2-8]
	\arrow["{\eta \widehat{\otimes} 1}", from=3-7, to=2-6]
	\arrow["{1 \widehat{\otimes} \eta}"', from=3-7, to=2-8]
\end{tikzcd}\]
\end{cor}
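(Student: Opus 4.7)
The plan is to reduce both diagrams to an equality of graded $*$-homomorphisms out of $\mathcal{S}$, and then invoke a uniqueness principle to check the equality only on the two generators $u = e^{-x^{2}}$ and $v = xe^{-x^{2}}$. Indeed, the uniqueness observation at the start of the proof of \cref{prop:comultiplication} applies to \emph{any} pair of graded $*$-homomorphisms out of $\mathcal{S}$: by the real Stone--Weierstrass theorem the unital $*$-algebra generated by $u$ and $v$ is dense in $\mathcal{S}^{+}$, so two graded $*$-homomorphisms from $\mathcal{S}$ to any graded (Real) $C^{*}$-algebra agreeing on $u$ and $v$ must coincide.

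With that reduction in place, the first (coassociativity) diagram requires checking that the compositions $(\triangle \hat\otimes 1) \circ \triangle$ and $(1 \hat\otimes \triangle) \circ \triangle$ agree on $u$ and $v$. On $u$ both sides give $u \hat\otimes u \hat\otimes u$, using $\triangle(u)=u\hat\otimes u$. On $v$, a direct expansion using $\triangle(u)=u\hat\otimes u$ and $\triangle(v)=u\hat\otimes v+v\hat\otimes u$ yields
\[
u \hat\otimes u \hat\otimes v \;+\; u \hat\otimes v \hat\otimes u \;+\; v \hat\otimes u \hat\otimes u
\]
on either side, so the two compositions agree on the generators and hence on all of $\mathcal{S}$.

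For the counit diagram, identify $\mathbb{C}\hat\otimes\mathcal{S}\cong\mathcal{S}\cong\mathcal{S}\hat\otimes\mathbb{C}$ in the canonical way. Then $(\eta\hat\otimes 1)\circ\triangle$ applied to $u$ yields $\eta(u)\cdot u = u(0)\, u = u$, and applied to $v$ yields $\eta(u)\cdot v + \eta(v)\cdot u = v(0)\, u + u(0)\, v = v$ since $u(0)=1$ and $v(0)=0$. The same computation with $(1\hat\otimes\eta)\circ\triangle$ returns $u$ and $v$ respectively. By the uniqueness principle above, both compositions equal the identity on $\mathcal{S}$.

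The only mildly delicate point is making the reduction to generators rigorous: one needs the density of the $*$-algebra generated by $u$ and $v$ in $\mathcal{S}$, which follows from the real Stone--Weierstrass theorem since $\{u,v\}$ separates points of $\mathbb{R}$ and $u$ is nowhere zero, together with the observation that the comparison is between \emph{continuous} $*$-homomorphisms on both sides. Given this, the whole proof is a mechanical substitution on two elements; there is no genuine obstacle.
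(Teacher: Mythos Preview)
Your proof is correct and takes essentially the same approach as the paper, which simply says ``This is immediate by considering the elements $u$ and $v$ in $\mathcal{S}$.'' You have spelled out the density/uniqueness argument and the explicit computations on the generators that the paper leaves implicit, but the strategy is identical.
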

\begin{proof}
    This is immediate by considering the elements $u$ and $v$ in $\s$.
\end{proof}

\section{Addition and H-space structures}
In this section, we develop the notion of ``sum" of two elements in a spectral $K$-theory space of a graded $C^*$-algebra underlying (potentially different) graded Hilbert spaces.

Throughout this section, $\h,\h_0,\h_1,...$ will denote graded Hilbert spaces, and $A,B,C,...$ will denote $\Z/2\Z$-graded $C^*$-algebras.

\subsection{Sums and naturality}

\begin{defi}\label{prop: sum}
    We have a continuous map
    \[s:\K(A,\mathcal{H}_0) \times \K(A,\mathcal{H}_1) \rightarrow \K(A,\mathcal{H}_0 \oplus \mathcal{H}_1),\]
    which is natural with respect to $*$-homomorphisms $A \rightarrow A'$ and even isometries $\h_0 \rightarrow \h_0'$ and $\h_1 \rightarrow \h_1
    '$. On an element $(\varphi,\psi) \in \K(A,\mathcal{H}_0) \times \K(A,\mathcal{H}_1) $, it yields the composite \begin{equation}\label{eqn: diag embedding}
    % https://q.uiver.app/#q=WzAsNSxbMSwwLCJcXG1hdGhjYWx7U30gXFxvcGx1cyBcXG1hdGhjYWx7U30iXSxbMiwwLCJBXFxoYXRcXG90aW1lc1xcbWF0aGNhbHtLfShcXG1hdGhjYWx7SH1fMCkgXFxvcGx1cyBBXFxoYXRcXG90aW1lc1xcbWF0aGNhbHtLfShcXG1hdGhjYWx7SH1fMSkiXSxbNCwwLCJBXFxoYXRcXG90aW1lc1xcbWF0aGNhbHtLfShcXG1hdGhjYWx7SH1fMFxcb3BsdXNcXG1hdGhjYWx7SH1fMSkiXSxbMCwwLCJcXG1hdGhjYWx7U30iXSxbMywwLCJBXFxoYXRcXG90aW1lcyhcXG1hdGhjYWx7S30oXFxtYXRoY2Fse0h9XzApIFxcb3BsdXNcXG1hdGhjYWx7S30oXFxtYXRoY2Fse0h9XzEpKSJdLFswLDEsIlxcdmFycGhpXFxvcGx1c1xccHNpIl0sWzMsMCwiXFxvcGVyYXRvcm5hbWV7ZGlhfSJdLFsxLDQsIlxcY29uZyJdLFs0LDIsIjEgXFxoYXRcXG90aW1lcyBcXG9wZXJhdG9ybmFtZXtpbmN9Il1d
\begin{tikzcd}
	{\mathcal{S}} & {\mathcal{S} \oplus \mathcal{S}} & {A\hat\otimes\mathcal{K}(\mathcal{H}_0) \oplus A\hat\otimes\mathcal{K}(\mathcal{H}_1)} & {A\hat\otimes(\mathcal{K}(\mathcal{H}_0) \oplus\mathcal{K}(\mathcal{H}_1))} & {A\hat\otimes\mathcal{K}(\mathcal{H}_0\oplus\mathcal{H}_1)}
	\arrow["\varphi\oplus\psi", from=1-2, to=1-3]
	\arrow["{\operatorname{dia}}", from=1-1, to=1-2]
	\arrow["\cong", from=1-3, to=1-4]
	\arrow["{1 \hat\otimes \operatorname{inc}}", from=1-4, to=1-5]
\end{tikzcd}
\end{equation}
where the isomorphism in the middle is inverse to the natural iso $a \hat\otimes (b \oplus c) \mapsto a \hat\otimes b \oplus a \hat\otimes c$. We call the element $s(\varphi,\psi)$ as the sum of $\varphi$ and $\psi$. 
\end{defi}
    Note that the continuity of $s$ is immediate from the definition of the point-norm topology and \cref{lem: 1st lemma}. The naturality is immediate from \cref{eqn: diag embedding}.
\begin{rem}
    For our convenience, we will also call the map $s$ as ``add"/``sum" at some junctures.
\end{rem}
\begin{rem} \label{cor: hom preserve sum}
    Note that the naturality assertion in \cref{prop: sum} really means that the following diagram commutes strictly, where $\varphi: A \rightarrow A'$ is a graded $*$-homomorphism, and $\iota_0:\h_0 \rightarrow \h_0'$, $\iota_1:\h_1 \rightarrow \h_1'$ are even isometries.
    \[
    % https://q.uiver.app/#q=WzAsNCxbMCwwLCJcXG1hdGhiYntLfShBLFxcbWF0aGNhbHtIfV8wKSBcXHRpbWVzIFxcbWF0aGJie0t9KEEsXFxtYXRoY2Fse0h9XzEpIl0sWzIsMCwiXFxtYXRoYmJ7S30oQScsXFxtYXRoY2Fse0h9XzAnKSBcXHRpbWVzIFxcbWF0aGJie0t9KEEsXFxtYXRoY2Fse0h9XzEnKSJdLFswLDEsIlxcbWF0aGJie0t9KEEsXFxtYXRoY2Fse0h9XzAgXFxvcGx1cyBcXG1hdGhjYWx7SH1fMSkiXSxbMiwxLCJcXG1hdGhiYntLfShBJyxcXG1hdGhjYWx7SH1fMCcgXFxvcGx1cyBcXG1hdGhjYWx7SH1fMScpIl0sWzAsMiwiXFxvcGVyYXRvcm5hbWV7YWRkfSJdLFsxLDMsIlxcb3BlcmF0b3JuYW1le2FkZH0iXSxbMCwxLCJcXG1hdGhiYntLfShcXHZhcnBoaSxcXGlvdGFfMCkgXFx0aW1lcyBcXG1hdGhiYntLfShcXHZhcnBoaSxcXGlvdGFfMSkiXSxbMiwzLCJcXG1hdGhiYntLfShcXHZhcnBoaSxcXGlvdGFfMCBcXG9wbHVzIFxcaW90YV8xKSJdXQ==
\begin{tikzcd}
	{\mathbb{K}(A,\mathcal{H}_0) \times \mathbb{K}(A,\mathcal{H}_1)} && {\mathbb{K}(A',\mathcal{H}_0') \times \mathbb{K}(A,\mathcal{H}_1')} \\
	{\mathbb{K}(A,\mathcal{H}_0 \oplus \mathcal{H}_1)} && {\mathbb{K}(A',\mathcal{H}_0' \oplus \mathcal{H}_1')}
	\arrow["{\operatorname{add}}", from=1-1, to=2-1]
	\arrow["{\operatorname{add}}", from=1-3, to=2-3]
	\arrow["{\mathbb{K}(\varphi,\iota_0) \times \mathbb{K}(\varphi,\iota_1)}", from=1-1, to=1-3]
	\arrow["{\mathbb{K}(\varphi,\iota_0 \oplus \iota_1)}", from=2-1, to=2-3]
\end{tikzcd}
    \]
    We draw this diagram here for future reference.
\end{rem}
\begin{prop} \label{prop: sum comm and ass}
    Sums are associative and commutative, in the sense that the following diagrams commute strictly:
    \[
       % https://q.uiver.app/#q=WzAsOCxbMCwwLCJcXG1hdGhiYntLfShBLFxcbWF0aGNhbHtIfV8wKSBcXHRpbWVzIFxcbWF0aGJie0t9KEEsXFxtYXRoY2Fse0h9XzEpIl0sWzIsMCwiXFxtYXRoYmJ7S30oQSxcXG1hdGhjYWx7SH1fMSkgXFx0aW1lcyBcXG1hdGhiYntLfShBLFxcbWF0aGNhbHtIfV8wKSJdLFswLDEsIlxcbWF0aGJie0t9KEEsXFxtYXRoY2Fse0h9XzAgXFxvcGx1cyBcXG1hdGhjYWx7SH1fMSkiXSxbMiwxLCJcXG1hdGhiYntLfShBLFxcbWF0aGNhbHtIfV8xIFxcb3BsdXMgXFxtYXRoY2Fse0h9XzApIl0sWzAsMiwiXFxtYXRoYmJ7S30oQSxcXG1hdGhjYWx7SH1fMCkgXFx0aW1lcyBcXG1hdGhiYntLfShBLFxcbWF0aGNhbHtIfV8xKSBcXHRpbWVzIFxcbWF0aGJie0t9KEEsXFxtYXRoY2Fse0h9XzIpIl0sWzIsMiwiXFxtYXRoYmJ7S30oQSxcXG1hdGhjYWx7SH1fMCBcXG9wbHVzIFxcbWF0aGNhbHtIfV8xKSBcXHRpbWVzIFxcbWF0aGJie0t9KEEsXFxtYXRoY2Fse0h9XzIpIl0sWzAsMywiXFxtYXRoYmJ7S30oQSxcXG1hdGhjYWx7SH1fMCkgXFx0aW1lcyBcXG1hdGhiYntLfShBLFxcbWF0aGNhbHtIfV8xICBcXG9wbHVzIFxcbWF0aGNhbHtIfV8yKSJdLFsyLDMsIlxcbWF0aGJie0t9KEEsXFxtYXRoY2Fse0h9XzAgXFxvcGx1cyBcXG1hdGhjYWx7SH1fMSBcXG9wbHVzIFxcbWF0aGNhbHtIfV8yKSJdLFswLDEsIlxcb3BlcmF0b3JuYW1le2ZsaXB9Il0sWzAsMiwiXFxvcGVyYXRvcm5hbWV7YWRkfSIsMl0sWzIsMywiMSBcXGhhdFxcb3RpbWVzIFxcb3BlcmF0b3JuYW1le0FkfSgoeCx5KSBcXG1hcHN0byAoeSx4KSkiXSxbMSwzLCJcXG9wZXJhdG9ybmFtZXthZGR9Il0sWzQsNSwiXFxvcGVyYXRvcm5hbWV7YWRkfSBcXHRpbWVzMSJdLFs0LDYsIjEgXFx0aW1lcyBcXG9wZXJhdG9ybmFtZXthZGR9IiwyXSxbNiw3LCJcXG9wZXJhdG9ybmFtZXthZGR9Il0sWzUsNywiXFxvcGVyYXRvcm5hbWV7YWRkfSJdXQ==
\begin{tikzcd}
	{\mathbb{K}(A,\mathcal{H}_0) \times \mathbb{K}(A,\mathcal{H}_1)} && {\mathbb{K}(A,\mathcal{H}_1) \times \mathbb{K}(A,\mathcal{H}_0)} \\
	{\mathbb{K}(A,\mathcal{H}_0 \oplus \mathcal{H}_1)} && {\mathbb{K}(A,\mathcal{H}_1 \oplus \mathcal{H}_0)} \\
	{\mathbb{K}(A,\mathcal{H}_0) \times \mathbb{K}(A,\mathcal{H}_1) \times \mathbb{K}(A,\mathcal{H}_2)} && {\mathbb{K}(A,\mathcal{H}_0 \oplus \mathcal{H}_1) \times \mathbb{K}(A,\mathcal{H}_2)} \\
	{\mathbb{K}(A,\mathcal{H}_0) \times \mathbb{K}(A,\mathcal{H}_1  \oplus \mathcal{H}_2)} && {\mathbb{K}(A,\mathcal{H}_0 \oplus \mathcal{H}_1 \oplus \mathcal{H}_2)}
	\arrow["{\operatorname{flip}}", from=1-1, to=1-3]
	\arrow["{\operatorname{add}}"', from=1-1, to=2-1]
	\arrow["{1 \hat\otimes \operatorname{Ad}((x,y) \mapsto (y,x))}", from=2-1, to=2-3]
	\arrow["{\operatorname{add}}", from=1-3, to=2-3]
	\arrow["{\operatorname{add} \times1}", from=3-1, to=3-3]
	\arrow["{1 \times \operatorname{add}}"', from=3-1, to=4-1]
	\arrow["{\operatorname{add}}", from=4-1, to=4-3]
	\arrow["{\operatorname{add}}", from=3-3, to=4-3]
\end{tikzcd}
    \]
\end{prop}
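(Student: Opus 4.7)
The plan is to reduce both diagrams to the corresponding (co)associativity and (co)commutativity of the diagonal map $\operatorname{dia}:\s \to \s\oplus\s$, combined with the naturality of the canonical isomorphism $A\hat\otimes(B\oplus C)\cong A\hat\otimes B \oplus A\hat\otimes C$ and of the inclusion $\com(\h)\oplus\com(\h')\hookrightarrow \com(\h\oplus\h')$. Since both diagrams live in spaces of $*$-homomorphisms out of $\s$, and the point-norm topology makes two homomorphisms equal iff they agree on elements, commutativity of the diagrams on the nose amounts to verifying that two explicit composites $\s \to A\hat\otimes \com(\h_0\oplus\h_1)$ (resp.\ $\s\to A\hat\otimes\com(\h_0\oplus\h_1\oplus\h_2)$) coincide.

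For commutativity, I would unwind the two composites. Starting from $(\varphi,\psi)$, the bottom-then-right path sends an $f\in\s$ to the element $\varphi(f)+\psi(f)\in A\hat\otimes\com(\h_0\oplus\h_1)$ (where the two summands sit in the block diagonal subalgebras $A\hat\otimes\com(\h_0)$ and $A\hat\otimes\com(\h_1)$), and then conjugates by the flip unitary $\h_0\oplus\h_1\to\h_1\oplus\h_0$. The top-then-right path sends $f$ first to $(\psi(f),\varphi(f))$ under $\operatorname{dia}$ and $\psi\oplus\varphi$, and then to $\psi(f)+\varphi(f)\in A\hat\otimes\com(\h_1\oplus\h_0)$. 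The key observation is that $\operatorname{flip}\circ\operatorname{dia}=\operatorname{dia}$, and that the conjugation by the swap unitary intertwines the two canonical inclusions $A\hat\otimes\com(\h_i)\hookrightarrow A\hat\otimes\com(\h_0\oplus\h_1)$ with $A\hat\otimes\com(\h_i)\hookrightarrow A\hat\otimes\com(\h_1\oplus\h_0)$; this identifies the two composites.

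For associativity the same template applies, but the driving input is \cref{lem: dia emb}, which is precisely coassociativity of $\operatorname{dia}$ at the level of $\s$. Concretely, I would unwind each path as a composite
\[
\s\longrightarrow \s^{\oplus 3}\longrightarrow A\hat\otimes\com(\h_0)\oplus A\hat\otimes\com(\h_1)\oplus A\hat\otimes\com(\h_2)\longrightarrow A\hat\otimes\com(\h_0\oplus\h_1\oplus\h_2),
\]
where the first arrow is one of the two iterated diagonals $\operatorname{dia}\oplus 1$ or $1\oplus\operatorname{dia}$ composed with $\operatorname{dia}$. By \cref{lem: dia emb} these two first arrows agree. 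The middle arrow $\varphi\oplus\psi\oplus\chi$ is independent of the bracketing. The last arrow in either path is the composite of a canonical commutation isomorphism $A\hat\otimes(X\oplus Y)\cong A\hat\otimes X\oplus A\hat\otimes Y$ with the block-diagonal inclusion, and the two partial bracketings agree because $\com(\h_0\oplus\h_1)\oplus\com(\h_2)$ and $\com(\h_0)\oplus\com(\h_1\oplus\h_2)$ include into $\com(\h_0\oplus\h_1\oplus\h_2)$ in the same way along block diagonals.

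I expect the main obstacle to be purely bookkeeping: carefully identifying, on both sides, the canonical isomorphism $A\hat\otimes(\bigoplus_i\com(\h_i))\cong\bigoplus_i A\hat\otimes\com(\h_i)$ and the block inclusions $\bigoplus_i\com(\h_i)\hookrightarrow \com(\bigoplus_i\h_i)$ so that the equality of the two composites is a genuine equality of homomorphisms and not merely one up to a canonical (but nontrivial) natural isomorphism. Once these identifications are made explicit, the verification reduces to the symmetry and associativity of $\operatorname{dia}$, which are immediate from its pointwise description.
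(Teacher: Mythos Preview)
Your proposal is correct and follows essentially the same approach as the paper: the paper likewise reduces the space-level diagrams to explicit algebra-level diagrams out of $\s$ and checks commutativity by tracing elements, with the commutativity case hinging on exactly your observation that $\operatorname{flip}\circ\operatorname{dia}=\operatorname{dia}$ and that the swap unitary intertwines the block-diagonal inclusions. For associativity the paper simply remarks that the argument is more straightforward, and your invocation of \cref{lem: dia emb} is precisely the intended ingredient.
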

\begin{proof}
    We only give the argument for commutativity; the argument for associativity is more straight-forward.

    Checking the strict commutativity of the space level diagram amounts to checking commutativity of the following diagram at the algebra level, where $\varphi \in \K(A,\h_0), \psi \in \K(A,\h_1)$:

    \[
    % https://q.uiver.app/#q=WzAsOSxbMCwwLCJcXG1hdGhjYWx7U30iXSxbMCwxLCJcXG1hdGhjYWx7U30gXFxvcGx1cyBcXG1hdGhjYWx7U30iXSxbMiwxLCJcXG1hdGhjYWx7U30gXFxvcGx1cyBcXG1hdGhjYWx7U30iXSxbMCwyLCJBIFxcaGF0XFxvdGltZXMgXFxtYXRoY2Fse0t9KFxcbWF0aGNhbHtIfV8wKSBcXG9wbHVzIEEgXFxoYXRcXG90aW1lc1xcbWF0aGNhbHtLfShcXG1hdGhjYWx7SH1fMSkiXSxbMCwzLCJBIFxcaGF0XFxvdGltZXMoXFxtYXRoY2Fse0t9KFxcbWF0aGNhbHtIfV8wKSBcXG9wbHVzIFxcbWF0aGNhbHtLfShcXG1hdGhjYWx7SH1fMSkpIl0sWzAsNCwiQSBcXGhhdFxcb3RpbWVzXFxtYXRoY2Fse0t9KFxcbWF0aGNhbHtIfV8wIFxcb3BsdXMgXFxtYXRoY2Fse0h9XzEpIl0sWzIsMiwiQSBcXGhhdFxcb3RpbWVzIFxcbWF0aGNhbHtLfShcXG1hdGhjYWx7SH1fMSkgXFxvcGx1cyBBIFxcaGF0XFxvdGltZXNcXG1hdGhjYWx7S30oXFxtYXRoY2Fse0h9XzApIl0sWzIsMywiQSBcXGhhdFxcb3RpbWVzKFxcbWF0aGNhbHtLfShcXG1hdGhjYWx7SH1fMSkgXFxvcGx1cyBcXG1hdGhjYWx7S30oXFxtYXRoY2Fse0h9XzApKSJdLFsyLDQsIkEgXFxoYXRcXG90aW1lc1xcbWF0aGNhbHtLfShcXG1hdGhjYWx7SH1fMSBcXG9wbHVzIFxcbWF0aGNhbHtIfV8wKSJdLFsxLDMsIlxcdmFycGhpIFxcb3BsdXMgXFxwc2kiXSxbMyw0LCJcXGNvbmciXSxbNCw1LCIxIFxcaGF0XFxvdGltZXMgXFxvcGVyYXRvcm5hbWV7aW5jfSJdLFsxLDIsIihmLGcpIFxccmlnaHRhcnJvdyhnLGYpIl0sWzIsNiwiXFxwc2kgXFxvcGx1cyBcXHZhcnBoaSIsMl0sWzYsNywiXFxjb25nIiwyXSxbNyw4LCIxIFxcaGF0XFxvdGltZXMgXFxvcGVyYXRvcm5hbWV7aW5jfSIsMl0sWzUsOCwiMSBcXGhhdFxcb3RpbWVzXFxvcGVyYXRvcm5hbWV7QWR9KFxcb3BlcmF0b3JuYW1le2ZsaXB9IFxcbWF0aGNhbHtIfV8wIH5cXCZ+IFxcbWF0aGNhbHtIfV8xKSJdLFswLDEsIlxcb3BlcmF0b3JuYW1le2RpYX0iXV0=
\begin{tikzcd}
	{\mathcal{S}} \\
	{\mathcal{S} \oplus \mathcal{S}} && {\mathcal{S} \oplus \mathcal{S}} \\
	{A \hat\otimes \mathcal{K}(\mathcal{H}_0) \oplus A \hat\otimes\mathcal{K}(\mathcal{H}_1)} && {A \hat\otimes \mathcal{K}(\mathcal{H}_1) \oplus A \hat\otimes\mathcal{K}(\mathcal{H}_0)} \\
	{A \hat\otimes(\mathcal{K}(\mathcal{H}_0) \oplus \mathcal{K}(\mathcal{H}_1))} && {A \hat\otimes(\mathcal{K}(\mathcal{H}_1) \oplus \mathcal{K}(\mathcal{H}_0))} \\
	{A \hat\otimes\mathcal{K}(\mathcal{H}_0 \oplus \mathcal{H}_1)} && {A \hat\otimes\mathcal{K}(\mathcal{H}_1 \oplus \mathcal{H}_0)}
	\arrow["{\varphi \oplus \psi}", from=2-1, to=3-1]
	\arrow["\cong", from=3-1, to=4-1]
	\arrow["{1 \hat\otimes \operatorname{inc}}", from=4-1, to=5-1]
	\arrow["{(f,g) \rightarrow(g,f)}", from=2-1, to=2-3]
	\arrow["{\psi \oplus \varphi}"', from=2-3, to=3-3]
	\arrow["\cong"', from=3-3, to=4-3]
	\arrow["{1 \hat\otimes \operatorname{inc}}"', from=4-3, to=5-3]
	\arrow["{1 \hat\otimes\operatorname{Ad}(\operatorname{flip} \mathcal{H}_0 ~\&~ \mathcal{H}_1)}", from=5-1, to=5-3]
	\arrow["{\operatorname{dia}}", from=1-1, to=2-1]
\end{tikzcd}
    \]
    It does commute, as can be checked by taking $(f,g) \in \s \oplus \s$ and checking its fate along the two itineraries.
\end{proof}
For our purposes later, we need to discuss functoriality of sums not just with respect to the $*$-homomorphisms as discussed in \cref{prop: sum}, but also for ``amplified" versions of them. First, we look at the following definition:
\begin{defi}\label{defi: amplified homo}
    \,
    \begin{enumerate}[(i)]
        \item Let $\kappa:\s \hat\otimes A \rightarrow B\hat\otimes \com(\h)$ be a graded $*$-homomorphism, and $\iota:\h_0 \rightarrow \h_0'$ an even isometry. Then, it induces a map $(\kappa,\iota)_*:\K(A,\h_0) \rightarrow \K(B,\h_0' \hat\otimes \h)$ by the following prescription:
        Given $\varphi_A \in \K(A,\h_0)$, the map $(\kappa,\iota)_*$ is given by the composite:
    \begin{equation}\label{eqn: prod 2}% 
% https://q.uiver.app/#q=WzAsNixbMCwwLCJcXG1hdGhjYWx7U30iXSxbMSwwLCJcXG1hdGhjYWx7U30gXFxoYXRcXG90aW1lcyBcXG1hdGhjYWx7U30iXSxbMiwwLCJcXG1hdGhjYWx7U30gXFxoYXRcXG90aW1lcyBBIFxcaGF0XFxvdGltZXMgXFxtYXRoY2Fse0t9KFxcbWF0aGNhbHtIfV8wKSJdLFszLDAsIkIgXFxoYXRcXG90aW1lcyBcXG1hdGhjYWx7S30oXFxtYXRoY2Fse0h9KVxcaGF0XFxvdGltZXMgXFxtYXRoY2Fse0t9KFxcbWF0aGNhbHtIfV8wJykiXSxbMSwxLCJCIFxcaGF0XFxvdGltZXMgXFxtYXRoY2Fse0t9KFxcbWF0aGNhbHtIfV8wJ1xcaGF0XFxvdGltZXMgXFxtYXRoY2Fse0h9KSJdLFszLDEsIkIgXFxoYXRcXG90aW1lcyBcXG1hdGhjYWx7S30oXFxtYXRoY2Fse0h9XzAnKSBcXGhhdFxcb3RpbWVzXFxtYXRoY2Fse0t9KFxcbWF0aGNhbHtIfSkiXSxbMCwxLCJcXHRyaWFuZ2xlIl0sWzEsMiwiMSBcXGhhdFxcb3RpbWVzIFxcdmFycGhpX0EiXSxbMiwzLCJcXGthcHBhIFxcaGF0XFxvdGltZXMgXFxvcGVyYXRvcm5hbWV7QWR9KFxcaW90YSkiXSxbMyw1LCJcXG9wZXJhdG9ybmFtZXtmbGlwfVxcbWF0aGNhbHtLfShcXG1hdGhjYWx7SH0pIH5cXCZ+IFxcbWF0aGNhbHtLfShcXG1hdGhjYWx7SH1fMCcpIl0sWzUsNCwiMSBcXGhhdFxcb3RpbWVzIFxcUHNpIl1d
\begin{tikzcd}
	{\mathcal{S}} & {\mathcal{S} \hat\otimes \mathcal{S}} & {\mathcal{S} \hat\otimes A \hat\otimes \mathcal{K}(\mathcal{H}_0)} & {B \hat\otimes \mathcal{K}(\mathcal{H})\hat\otimes \mathcal{K}(\mathcal{H}_0')} \\
	& {B \hat\otimes \mathcal{K}(\mathcal{H}_0'\hat\otimes \mathcal{H})} && {B \hat\otimes \mathcal{K}(\mathcal{H}_0') \hat\otimes\mathcal{K}(\mathcal{H})}
	\arrow["\triangle", from=1-1, to=1-2]
	\arrow["{1 \hat\otimes \varphi_A}", from=1-2, to=1-3]
	\arrow["{\kappa \hat\otimes \operatorname{Ad}(\iota)}", from=1-3, to=1-4]
	\arrow["{\operatorname{flip}\mathcal{K}(\mathcal{H}) ~\&~ \mathcal{K}(\mathcal{H}_0')}", from=1-4, to=2-4]
	\arrow["{1 \hat\otimes \Psi}", from=2-4, to=2-2]
\end{tikzcd}
    \end{equation}
    where $\Psi:\com(\h_0') \hat\otimes \com(\h) \rightarrow \com(\h_0' \hat\otimes \h)$ is the natural isomorphism.
    Note that $\varphi_A$ is clearly continuous from \cref{lem: 1st lemma} and \cref{lem: cont of tensor product}. Furthermore, if $\kappa \sim \kappa'$ and $\iota \sim \iota'$, then $(\kappa,\iota)_* \sim (\kappa',\iota')_*$ from the second part of \cref{lem: 1st lemma}. 
    \item Let $\kappa:A \hat\otimes \s \hat\otimes B \rightarrow C \hat\otimes \com(\h)$ be a graded $*$-homomorphism, and $\iota:\h_0 \rightarrow \h_0'$ an even isometry. Then, it induces a map $(\kappa,\iota)_*:\K(A \hat\otimes B,\h_0) \rightarrow \K(C, \h_0' \hat\otimes \h)$ by the following prescription: Given $\varphi \in \K(A \hat\otimes B,\h_0)$, the map $(\kappa,\iota)_*$ is given by the composite 
    \[
    % https://q.uiver.app/#q=WzAsNyxbMCwwLCJcXG1hdGhjYWx7U30iXSxbMSwwLCJcXG1hdGhjYWx7U30gXFxoYXRcXG90aW1lcyBcXG1hdGhjYWx7U30iXSxbMiwwLCJcXG1hdGhjYWx7U30gXFxoYXRcXG90aW1lcyBBIFxcaGF0XFxvdGltZXMgQiBcXGhhdFxcb3RpbWVzXFxtYXRoY2Fse0t9KFxcbWF0aGNhbHtIfV8wKSJdLFszLDAsIiBBIFxcaGF0XFxvdGltZXMgXFxtYXRoY2Fse1N9IFxcaGF0XFxvdGltZXMgQiBcXGhhdFxcb3RpbWVzXFxtYXRoY2Fse0t9KFxcbWF0aGNhbHtIfV8wKSJdLFszLDEsIkMgXFxoYXRcXG90aW1lcyBcXG1hdGhjYWx7S30oXFxtYXRoY2Fse0h9KVxcaGF0XFxvdGltZXNcXG1hdGhjYWx7S30oXFxtYXRoY2Fse0h9XzAnKSJdLFsxLDEsIkMgXFxoYXRcXG90aW1lcyBcXG1hdGhjYWx7S30oXFxtYXRoY2Fse0h9XzAnKVxcaGF0XFxvdGltZXNcXG1hdGhjYWx7S30oXFxtYXRoY2Fse0h9KSJdLFswLDEsIkMgXFxoYXRcXG90aW1lcyBcXG1hdGhjYWx7S30oXFxtYXRoY2Fse0h9XzAnXFxoYXRcXG90aW1lc1xcbWF0aGNhbHtIfSkiXSxbMCwxLCJcXHRyaWFuZ2xlIl0sWzEsMiwiMSBcXGhhdFxcb3RpbWVzIFxcdmFycGhpIl0sWzIsMywiXFxvcGVyYXRvcm5hbWV7ZmxpcH0gQSB+XFwmflxcbWF0aGNhbHtTfSJdLFszLDQsIlxca2FwcGFcXGhhdFxcb3RpbWVzIFxcb3BlcmF0b3JuYW1le0FkfShcXGlvdGEpIl0sWzQsNSwiXFxvcGVyYXRvcm5hbWV7ZmxpcH0gXFxtYXRoY2Fse0t9KFxcbWF0aGNhbHtIfV8wJykgflxcJn5cXG1hdGhjYWx7S30oXFxtYXRoY2Fse0h9KSJdLFs1LDYsIjEgXFxoYXRcXG90aW1lcyBcXFBzaSJdXQ==
\begin{tikzcd}
	{\mathcal{S}} & {\mathcal{S} \hat\otimes \mathcal{S}} & {\mathcal{S} \hat\otimes A \hat\otimes B \hat\otimes\mathcal{K}(\mathcal{H}_0)} & { A \hat\otimes \mathcal{S} \hat\otimes B \hat\otimes\mathcal{K}(\mathcal{H}_0)} \\
	{C \hat\otimes \mathcal{K}(\mathcal{H}_0'\hat\otimes\mathcal{H})} & {C \hat\otimes \mathcal{K}(\mathcal{H}_0')\hat\otimes\mathcal{K}(\mathcal{H})} && {C \hat\otimes \mathcal{K}(\mathcal{H})\hat\otimes\mathcal{K}(\mathcal{H}_0')}
	\arrow["\triangle", from=1-1, to=1-2]
	\arrow["{1 \hat\otimes \varphi}", from=1-2, to=1-3]
	\arrow["{\operatorname{flip} A ~\&~\mathcal{S}}", from=1-3, to=1-4]
	\arrow["{\kappa\hat\otimes \operatorname{Ad}(\iota)}", from=1-4, to=2-4]
	\arrow["{\operatorname{flip} \mathcal{K}(\mathcal{H}_0') ~\&~\mathcal{K}(\mathcal{H})}", from=2-4, to=2-2]
	\arrow["{1 \hat\otimes \Psi}", from=2-2, to=2-1]
\end{tikzcd}
    \]
    where $\Psi:\com(\h_0') \hat\otimes \com(\h) \rightarrow \com(\h_0' \hat\otimes \h)$ is the natural isomorphism.
    Note that $\varphi_A$ is clearly continuous from \cref{lem: 1st lemma} and \cref{lem: cont of tensor product}. Furthermore, if $\kappa \sim \kappa'$ and $\iota \sim \iota'$, then $(\kappa,\iota)_* \sim (\kappa',\iota')_*$ from the second part of \cref{lem: 1st lemma}. 
    \end{enumerate}
    \end{defi}
\begin{lem}\label{nat of amp morph}
    \,
    \begin{enumerate}[(i)]
        \item The maps discussed in \cref{defi: amplified homo} (i) are functorial in the following sense:
        if a strictly commutative diagram in $\mathfrak{C}^* \times \mathfrak{H}$ is given, as follows:
        \begin{equation*}\label{eqn: alg amp 1}
\begin{tikzcd}
	{(\mathcal{S} \hat\otimes A, \mathcal{H}_0)} && {(B \hat\otimes\mathcal{K}(\mathcal{H}),\mathcal{H}_1)} \\
	{(\mathcal{S} \hat\otimes A', \mathcal{H}_0')} && {(B' \hat\otimes\mathcal{K}(\mathcal{H}),\mathcal{H}_1')}
	\arrow["{(\kappa,\iota)}", from=1-1, to=1-3]
	\arrow["{A \rightarrow A'\\, \mathcal{H}_0 \rightarrow \mathcal{H}_0'}"', from=1-1, to=2-1]
	\arrow["{(\kappa',\iota')}", from=2-1, to=2-3]
	\arrow["{B \rightarrow B' \\, \mathcal{H}_1 \rightarrow \mathcal{H}_1'}", from=1-3, to=2-3]
\end{tikzcd}
        ,    
        \end{equation*}
        then the following diagram of spaces commute strictly:
        \begin{equation*}\label{eqn: space amp 1}
        % https://q.uiver.app/#q=WzAsNCxbMCwwLCJcXG1hdGhiYntLfShBLFxcbWF0aGNhbHtIfV8wKSJdLFswLDEsIlxcbWF0aGJie0t9KEEnLFxcbWF0aGNhbHtIfV8wJykiXSxbMiwwLCJcXG1hdGhiYntLfShCLFxcbWF0aGNhbHtIfV8xIFxcaGF0XFxvdGltZXMgXFxtYXRoY2Fse0h9KSkiXSxbMiwxLCJcXG1hdGhiYntLfShCJyxcXG1hdGhjYWx7SH1fMScgXFxoYXRcXG90aW1lcyBcXG1hdGhjYWx7SH0pKSJdLFswLDEsIlxce0EgXFxyaWdodGFycm93IEEnXFxcXCwgXFxtYXRoY2Fse0h9XzAgXFxyaWdodGFycm93IFxcbWF0aGNhbHtIfV8wJ1xcfV8qIiwyXSxbMCwyLCIoXFxrYXBwYSxcXGlvdGEpXyoiXSxbMSwzLCIoXFxrYXBwYScsXFxpb3RhJylfKiJdLFsyLDMsIlxce0IgXFxyaWdodGFycm93IEInIFxcXFwsIFxcbWF0aGNhbHtIfV8xIFxccmlnaHRhcnJvdyBcXG1hdGhjYWx7SH1fMSdcXH1fKiJdXQ==
\begin{tikzcd}
	{\mathbb{K}(A,\mathcal{H}_0)} && {\mathbb{K}(B,\mathcal{H}_1 \hat\otimes \mathcal{H}))} \\
	{\mathbb{K}(A',\mathcal{H}_0')} && {\mathbb{K}(B',\mathcal{H}_1' \hat\otimes \mathcal{H}))}
	\arrow["{\{A \rightarrow A'\\, \mathcal{H}_0 \rightarrow \mathcal{H}_0'\}_*}"', from=1-1, to=2-1]
	\arrow["{(\kappa,\iota)_*}", from=1-1, to=1-3]
	\arrow["{(\kappa',\iota')_*}", from=2-1, to=2-3]
	\arrow["{\{B \rightarrow B' \\, \mathcal{H}_1 \rightarrow \mathcal{H}_1'\}_*}", from=1-3, to=2-3]
\end{tikzcd}.
        \end{equation*}
        Moreover, if the first diagram commutes upto homotopy, then so does the second one.
        \item  The maps discussed in \cref{defi: amplified homo} (ii) are functorial in the following sense:
        if a commutative diagram in $\mathfrak{C}^* \times \mathfrak{H}$ is given, as follows:
        \begin{equation*}\label{eqn: alg amp 2}
\begin{tikzcd}
	{(A \hat\otimes \mathcal{S} \hat\otimes B, \mathcal{H}_0)} && {(C \hat\otimes\mathcal{K}(\mathcal{H}),\mathcal{H}_1)} \\
	{(A' \hat\otimes \mathcal{S} \hat\otimes B', \mathcal{H}_0')} && {(C' \hat\otimes\mathcal{K}(\mathcal{H}),\mathcal{H}_1')}
	\arrow["{(\kappa,\iota)}", from=1-1, to=1-3]
	\arrow["{A \rightarrow A'\\, B \rightarrow B',\mathcal{H}_0 \rightarrow \mathcal{H}_0'}"', from=1-1, to=2-1]
	\arrow["{(\kappa',\iota')}", from=2-1, to=2-3]
	\arrow["{C \rightarrow C' \\, \mathcal{H}_1 \rightarrow \mathcal{H}_1'}", from=1-3, to=2-3]
\end{tikzcd}
        \end{equation*}
then the following diagram of spaces commute strictly:
\begin{equation*}\label{eqn: space amp 2}
% https://q.uiver.app/#q=WzAsNCxbMCwwLCJcXG1hdGhiYntLfShBXFxoYXRcXG90aW1lcyBCLFxcbWF0aGNhbHtIfV8wKSJdLFswLDEsIlxcbWF0aGJie0t9KEEnIFxcaGF0XFxvdGltZXMgQicsXFxtYXRoY2Fse0h9XzAnKSJdLFsyLDAsIlxcbWF0aGJie0t9KEMsXFxtYXRoY2Fse0h9XzEgXFxoYXRcXG90aW1lcyBcXG1hdGhjYWx7SH0pKSJdLFsyLDEsIlxcbWF0aGJie0t9KEMnLFxcbWF0aGNhbHtIfV8xJyBcXGhhdFxcb3RpbWVzIFxcbWF0aGNhbHtIfSkpIl0sWzAsMSwiXFx7QSBcXHJpZ2h0YXJyb3cgQSdcXFxcLCBCIFxccmlnaHRhcnJvdyBCJyxcXG1hdGhjYWx7SH1fMCBcXHJpZ2h0YXJyb3cgXFxtYXRoY2Fse0h9XzAnXFx9XyoiLDJdLFswLDIsIihcXGthcHBhLFxcaW90YSlfKiJdLFsxLDMsIihcXGthcHBhJyxcXGlvdGEnKV8qIl0sWzIsMywiXFx7QyBcXHJpZ2h0YXJyb3cgQycgXFxcXCwgXFxtYXRoY2Fse0h9XzEgXFxyaWdodGFycm93IFxcbWF0aGNhbHtIfV8xJ1xcfV8qIl1d
\begin{tikzcd}
	{\mathbb{K}(A\hat\otimes B,\mathcal{H}_0)} && {\mathbb{K}(C,\mathcal{H}_1 \hat\otimes \mathcal{H}))} \\
	{\mathbb{K}(A' \hat\otimes B',\mathcal{H}_0')} && {\mathbb{K}(C',\mathcal{H}_1' \hat\otimes \mathcal{H}))}
	\arrow["{\{A \rightarrow A'\\, B \rightarrow B',\mathcal{H}_0 \rightarrow \mathcal{H}_0'\}_*}"', from=1-1, to=2-1]
	\arrow["{(\kappa,\iota)_*}", from=1-1, to=1-3]
	\arrow["{(\kappa',\iota')_*}", from=2-1, to=2-3]
	\arrow["{\{C \rightarrow C' \\, \mathcal{H}_1 \rightarrow \mathcal{H}_1'\}_*}", from=1-3, to=2-3]
\end{tikzcd}
\end{equation*}
 Moreover, if the first diagram commutes upto homotopy, then so does the second one.
    \end{enumerate}
\end{lem}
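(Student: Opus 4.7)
The plan is to unpack the composite defining $(\kappa,\iota)_*$ from \cref{defi: amplified homo} and observe that each constituent step is natural in all of its arguments; strict commutativity of the space-level square will then reduce to naturality of each piece of the composite taken separately, and the homotopy version will follow by combining continuity of the whole construction with the characterization of homotopies of $*$-homomorphisms from \cref{cor: homotopic homomorphism}.

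Concretely, for part (i), applying $(\kappa,\iota)_*$ to $\varphi_A \in \K(A,\h_0)$ is by definition the five-step composite of \cref{eqn: prod 2}: (a) the fixed comultiplication $\triangle$, (b) $1 \hat\otimes \varphi_A$, (c) $\kappa \hat\otimes \op{Ad}(\iota)$, (d) a tensor-flip, and (e) $1 \hat\otimes \Psi$ where $\Psi:\com(\h_1)\hat\otimes \com(\h) \to \com(\h_1\hat\otimes \h)$ is the canonical isomorphism. I would argue that steps (a), (d) and (e) are natural transformations between functors of the underlying data, hence automatically fit into commutative squares under any change of $A, \h_0, B, \h_1$; that step (b) is natural in $A \to A'$ and $\h_0 \to \h_0'$ by bifunctoriality of $\hat\otimes$ applied to the change of $\varphi_A$ under $\K(A,\h_0)\to \K(A',\h_0')$; and finally that step (c) is exactly where the hypothesis is used, since the assumed strict commutativity of the given square in $\mathfrak{C}^* \times \mathfrak{H}$ states precisely that $\kappa \hat\otimes \op{Ad}(\iota)$ and $\kappa' \hat\otimes \op{Ad}(\iota')$ fit into a commutative square with the induced vertical morphisms on $\s \hat\otimes A \hat\otimes \com(\h_0)$ and $B \hat\otimes \com(\h) \hat\otimes \com(\h_1)$ (here one also uses \cref{lem: cont isometries} to pass the Hilbert-space part of the hypothesis through $\op{Ad}$). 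Concatenating the five resulting commutative squares yields strict commutativity of the space-level diagram. Part (ii) is identical modulo one additional intermediate tensor-flip $A \hat\otimes \s \cong \s \hat\otimes A$, which is itself a natural isomorphism and hence contributes no obstruction.

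For the homotopy version, my plan is to realize a homotopy of the original algebra/Hilbert-space-level square as a single strictly commutative square in $\mathfrak{C}^* \times \mathfrak{H}$ whose target algebra is of the form $C([0,1], -)$: this is possible by \cref{cor: homotopic homomorphism} and (for the isometry part) the analogous reinterpretation afforded by \cref{prop:map into hom space}. Since $(\kappa,\iota)_*$ is continuous in its algebra and Hilbert-space arguments (by \cref{lem: 1st lemma}, \cref{lem: cont of tensor product}, \cref{lem: cont isometries} and \cref{prop: hom enriched over spaces}), applying the strict-commutativity case to this replacement immediately yields the desired homotopy at the space level. The main obstacle throughout will be purely bookkeeping: one must carefully keep track of which natural isomorphism (a tensor-flip, $\Psi$, or similar) appears in which slot, and how \cref{prop:map into hom space} reinterprets the isometry side. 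But every individual square that needs to be checked is essentially routine from the relevant naturality, so there is no genuine technical difficulty beyond this diagram chase.
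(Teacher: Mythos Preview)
Your approach for the strict-commutativity case is essentially the same as the paper's: both fix an element, unwind the composite of \cref{defi: amplified homo} into a ladder of algebra-level maps, and observe that the resulting diagram factors into rectangles which commute either by naturality of the tensor-flip/$\Psi$ isomorphisms or by the hypothesis itself. The paper writes this out for part~(ii) only and draws the big diagram explicitly, but the underlying argument is identical to yours.

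For the homotopy case you take a slightly more elaborate route than the paper. The paper simply reuses the same rectangular decomposition: the top and bottom rectangles (coming from naturality of flips) still commute strictly, and only the middle rectangle (the one encoding the hypothesis) now commutes up to homotopy; since pre- and post-composition with fixed maps sends homotopic $*$-homomorphisms to homotopic $*$-homomorphisms (\cref{lem: 1st lemma}), the outer square commutes up to homotopy. Your plan of promoting the homotopy to a strict square over $C([0,1],-)$ via \cref{cor: homotopic homomorphism} and then applying the strict case is correct, but it introduces extra bookkeeping (especially on the $\mathfrak{H}$ factor, where you would need the analogous reinterpretation for strong-$*$ paths of isometries) that the paper avoids entirely. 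Both work; the paper's is shorter.
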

\begin{proof}
    We only discuss the second assertion; the first one is similar (and easier).

    Given $\Phi \in \K(A \hat\otimes B, \h_0)$, the two routes in \cref{eqn: space amp 2} yield the following diagram at the algebra level:
    \[
\begin{tikzcd}
	{\mathcal{S}} \\
	{\mathcal{S} \hat\otimes \mathcal{S}} \\
	{\mathcal{S} \hat\otimes A \hat\otimes B \hat\otimes \mathcal{K}(\mathcal{H}_0)} &&& {\mathcal{S} \hat\otimes A' \hat\otimes B' \hat\otimes \mathcal{K}(\mathcal{H}_0')} \\
	{A \hat\otimes \mathcal{S} \hat\otimes B \hat\otimes \mathcal{K}(\mathcal{H}_0)} &&& {A' \hat\otimes \mathcal{S} \hat\otimes  B' \hat\otimes \mathcal{K}(\mathcal{H}_0')} \\
	{C \hat\otimes \mathcal{K}(\mathcal{H}) \hat\otimes \mathcal{K}(\mathcal{H}_1)} &&& {C' \hat\otimes \mathcal{K}(\mathcal{H}) \hat\otimes \mathcal{K}(\mathcal{H}_1')} \\
	{C \hat\otimes \mathcal{K}(\mathcal{H}_1) \hat\otimes \mathcal{K}(\mathcal{H})} \\
	{C' \hat\otimes \mathcal{K}(\mathcal{H}_1') \hat\otimes \mathcal{K}(\mathcal{H})} &&& {C \hat\otimes \mathcal{K}(\mathcal{H}_1') \hat\otimes \mathcal{K}(\mathcal{H})}
	\arrow[from=1-1, to=2-1]
	\arrow["{1 \hat\otimes \Phi}", from=2-1, to=3-1]
	\arrow["{A \rightarrow A', B \rightarrow B',\mathcal{H_0} \rightarrow \mathcal{H_0'}}"', shift right, from=3-1, to=3-4]
	\arrow["{\operatorname{flip} A ~\&~ \mathcal{S}}"{pos=0.7}, from=3-1, to=4-1]
	\arrow["{\operatorname{flip} A' ~\&~ \mathcal{S}}", from=3-4, to=4-4]
	\arrow["{\kappa' \hat\otimes \operatorname{Ad}(\iota')}", from=4-4, to=5-4]
	\arrow["{\operatorname{flip} \mathcal{K}(\mathcal{H}_1') ~\&~ \mathcal{K}(\mathcal{H}) }", from=5-4, to=7-4]
	\arrow["{\kappa \hat\otimes \operatorname{Ad}(\iota)}", from=4-1, to=5-1]
	\arrow["{\operatorname{flip} \mathcal{K}(\mathcal{H}_0') ~\&~ \mathcal{K}(\mathcal{H}) }", from=5-1, to=6-1]
	\arrow["{C \rightarrow C', \mathcal{H}_1 \rightarrow \mathcal{H}_1'}", from=6-1, to=7-1]
	\arrow["{=}"', from=7-1, to=7-4]
	\arrow["{\{A \rightarrow A', B \rightarrow B',\mathcal{H_0} \rightarrow \mathcal{H_0'}\}_*}", color={rgb,255:red,214;green,92;blue,92}, from=4-1, to=4-4]
	\arrow["{\{C \rightarrow C', \mathcal{H}_1 \rightarrow \mathcal{H}_1'\}_*}", color={rgb,255:red,214;green,92;blue,92}, from=5-1, to=5-4]
\end{tikzcd}
    \]
    In the case of strict commutativity, we have to show that the outer part of the above diagram commutes strictly. We can see this by using the filler morphisms given in orange, and noting that the top and bottom rectangles commute strictly because of the functoriality of the flip isomorphisms, and the middle one does by our hypothesis.

    Similarly, in the case of homotopy commutativity, the top and bottom rectangles still commute strictly, but the middle one does only upto homotopy by our hypothesis.
\end{proof}
\begin{prop}\label{sum nat amp morph}
    \,
    \begin{enumerate}[(i)]
        \item  Sums are natural with respect to maps mentioned in \cref{defi: amplified homo} (i), in the sense that the following diagram commutes strictly, where $\varphi: \s \hat\otimes A \rightarrow B \hat\otimes \com(\h)$ is a graded $*$-homomorphism, and $\iota_0:\h_0 \rightarrow \h_0'$ , $\iota_1:\h_1 \rightarrow \h_1'$ are even isometries:
        \[
       % https://q.uiver.app/#q=WzAsNSxbMCwwLCJcXG1hdGhiYntLfShBLFxcbWF0aGNhbHtIfV8wKSBcXHRpbWVzIFxcbWF0aGJie0t9KEEsXFxtYXRoY2Fse0h9XzEpIl0sWzIsMCwiXFxtYXRoYmJ7S30oQSxcXG1hdGhjYWx7SH1fMCBcXG9wbHVzIFxcbWF0aGNhbHtIfV8xKSJdLFswLDIsIlxcbWF0aGJie0t9KEIsXFxtYXRoY2Fse0h9XzAnIFxcaGF0XFxvdGltZXMgXFxtYXRoY2Fse0h9KSBcXHRpbWVzIFxcbWF0aGJie0t9KEIsXFxtYXRoY2Fse0h9XzEnIFxcaGF0XFxvdGltZXMgXFxtYXRoY2Fse0h9KSAiXSxbMiwxLCJcXG1hdGhiYntLfShCLChcXG1hdGhjYWx7SH1fMCdcXG9wbHVzIFxcbWF0aGNhbHtIfV8xJylcXGhhdFxcb3RpbWVzIFxcbWF0aGNhbHtIfSkiXSxbMiwyLCJcXG1hdGhiYntLfShCLFxcbWF0aGNhbHtIfV8wJyBcXGhhdFxcb3RpbWVzIFxcbWF0aGNhbHtIfVxcb3BsdXMgXFxtYXRoY2Fse0h9XzEnIFxcaGF0XFxvdGltZXNcXG1hdGhjYWx7SH0pIl0sWzAsMSwiXFxvcGVyYXRvcm5hbWV7YWRkfSJdLFswLDIsIihcXHZhcnBoaSxcXGlvdGFfMClfKlxcdGltZXMoXFx2YXJwaGksXFxpb3RhXzEpXyoiLDJdLFsxLDMsIihcXHZhcnBoaSxcXGlvdGFfMCBcXG9wbHVzIFxcaW90YV8xKV8qIl0sWzMsNCwiXFxjb25nIl0sWzIsNCwiXFxvcGVyYXRvcm5hbWV7YWRkfSIsMl1d
\begin{tikzcd}
	{\mathbb{K}(A,\mathcal{H}_0) \times \mathbb{K}(A,\mathcal{H}_1)} && {\mathbb{K}(A,\mathcal{H}_0 \oplus \mathcal{H}_1)} \\
	&& {\mathbb{K}(B,(\mathcal{H}_0'\oplus \mathcal{H}_1')\hat\otimes \mathcal{H})} \\
	{\mathbb{K}(B,\mathcal{H}_0' \hat\otimes \mathcal{H}) \times \mathbb{K}(B,\mathcal{H}_1' \hat\otimes \mathcal{H}) } && {\mathbb{K}(B,\mathcal{H}_0' \hat\otimes \mathcal{H}\oplus \mathcal{H}_1' \hat\otimes\mathcal{H})}
	\arrow["{\operatorname{add}}", from=1-1, to=1-3]
	\arrow["{(\varphi,\iota_0)_*\times(\varphi,\iota_1)_*}"', from=1-1, to=3-1]
	\arrow["{(\varphi,\iota_0 \oplus \iota_1)_*}", from=1-3, to=2-3]
	\arrow["\cong", from=2-3, to=3-3]
	\arrow["{\operatorname{add}}"', from=3-1, to=3-3]
\end{tikzcd}
        \]
    \item Sums are natural with respect to maps mentioned in \cref{defi: amplified homo} (ii), in the sense that the following diagram commutes strictly, where $\kappa:A \hat\otimes \s \hat\otimes B \rightarrow C \hat\otimes \com(\h)$ is a graded $*$-homomorphism, and $\iota_0:\h_0 \rightarrow \h_0'$ , $\iota_1:\h_1 \rightarrow \h_1'$ are even isometries:
    \[
    % https://q.uiver.app/#q=WzAsNSxbMCwwLCJcXG1hdGhiYntLfShBIFxcaGF0XFxvdGltZXMgQixcXG1hdGhjYWx7SH1fMCkgXFx0aW1lcyBcXG1hdGhiYntLfShBIFxcaGF0XFxvdGltZXMgQixcXG1hdGhjYWx7SH1fMSkiXSxbMiwwLCJcXG1hdGhiYntLfShBIFxcaGF0XFxvdGltZXMgQixcXG1hdGhjYWx7SH1fMCBcXG9wbHVzIFxcbWF0aGNhbHtIfV8xKSJdLFsyLDEsIlxcbWF0aGJie0t9KEMsKFxcbWF0aGNhbHtIfV8wJyBcXG9wbHVzIFxcbWF0aGNhbHtIfV8xJykgXFxoYXRcXG90aW1lcyBcXG1hdGhjYWx7SH0pIl0sWzAsMiwiXFxtYXRoYmJ7S30oQyxcXG1hdGhjYWx7SH1fMCcgXFxoYXRcXG90aW1lcyBcXG1hdGhjYWx7SH0pIFxcdGltZXMgXFxtYXRoYmJ7S30oQyxcXG1hdGhjYWx7SH1fMScgXFxoYXRcXG90aW1lcyBcXG1hdGhjYWx7SH0pIl0sWzIsMiwiXFxtYXRoYmJ7S30oQyxcXG1hdGhjYWx7SH1fMCcgXFxoYXRcXG90aW1lcyBcXG1hdGhjYWx7SH0gXFxvcGx1cyBcXG1hdGhjYWx7SH1fMScgXFxoYXRcXG90aW1lcyBcXG1hdGhjYWx7SH0pIl0sWzAsMSwiXFxvcGVyYXRvcm5hbWV7c3VtfSJdLFsxLDIsIihcXGthcHBhLFxcaW90YV8wIFxcb3BsdXMgXFxpb3RhXzEpXyoiLDJdLFswLDMsIihcXGthcHBhLFxcaW90YV8wKV8qIFxcdGltZXMgKFxca2FwcGEsXFxpb3RhXzEpXyoiXSxbMiw0LCJcXGNvbmciLDJdLFszLDQsIlxcb3BlcmF0b3JuYW1le3N1bX0iXV0=
\begin{tikzcd}
	{\mathbb{K}(A \hat\otimes B,\mathcal{H}_0) \times \mathbb{K}(A \hat\otimes B,\mathcal{H}_1)} && {\mathbb{K}(A \hat\otimes B,\mathcal{H}_0 \oplus \mathcal{H}_1)} \\
	&& {\mathbb{K}(C,(\mathcal{H}_0' \oplus \mathcal{H}_1') \hat\otimes \mathcal{H})} \\
	{\mathbb{K}(C,\mathcal{H}_0' \hat\otimes \mathcal{H}) \times \mathbb{K}(C,\mathcal{H}_1' \hat\otimes \mathcal{H})} && {\mathbb{K}(C,\mathcal{H}_0' \hat\otimes \mathcal{H} \oplus \mathcal{H}_1' \hat\otimes \mathcal{H})}
	\arrow["{\operatorname{sum}}", from=1-1, to=1-3]
	\arrow["{(\kappa,\iota_0 \oplus \iota_1)_*}"', from=1-3, to=2-3]
	\arrow["{(\kappa,\iota_0)_* \times (\kappa,\iota_1)_*}", from=1-1, to=3-1]
	\arrow["\cong"', from=2-3, to=3-3]
	\arrow["{\operatorname{sum}}", from=3-1, to=3-3]
\end{tikzcd}
    \]
    \end{enumerate}
\end{prop}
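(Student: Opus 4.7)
Both assertions of the proposition are statements of strict equality of two composites of graded $*$-homomorphisms, so my plan is to fix $(\varphi_A,\psi_A) \in \K(A,\h_0) \times \K(A,\h_1)$ and verify that the two routes of the diagram in (i) produce the same $*$-homomorphism $\s \to B \hat\otimes \com(\h_0' \hat\otimes \h \oplus \h_1' \hat\otimes \h)$. I would write out part (i) in detail and then remark that part (ii) goes through identically after carrying the extra $\operatorname{flip} A ~\&~ \s$ through the chase, since this flip commutes past every subsequent structural map.

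The first step is to unfold both routes. The route ``sum, then amplify'' begins with
\[
\s \xrightarrow{\triangle} \s \hat\otimes \s \xrightarrow{1 \hat\otimes \operatorname{dia}} \s \hat\otimes (\s \oplus \s),
\]
while the route ``amplify each summand, then sum'' begins with
\[
\s \xrightarrow{\operatorname{dia}} \s \oplus \s \xrightarrow{\triangle \oplus \triangle} (\s \hat\otimes \s) \oplus (\s \hat\otimes \s).
\]
After identifying $\s \hat\otimes (\s \oplus \s)$ with $(\s \hat\otimes \s) \oplus (\s \hat\otimes \s)$ via the canonical iso $a \hat\otimes (b \oplus c) \mapsto (a \hat\otimes b) \oplus (a \hat\otimes c)$, the substantive content of the proof is the coalgebra-style identity
\[
(1 \hat\otimes \operatorname{dia}) \circ \triangle = (\triangle \oplus \triangle) \circ \operatorname{dia}.
\]
This is an equality of graded $*$-homomorphisms $\s \to (\s \hat\otimes \s) \oplus (\s \hat\otimes \s)$, so by \cref{prop:comultiplication} and Stone--Weierstrass it suffices to test it on the generators $u,v \in \s$: both sides send $u \mapsto (u \hat\otimes u,\, u \hat\otimes u)$ and $v \mapsto (u \hat\otimes v + v \hat\otimes u,\, u \hat\otimes v + v \hat\otimes u)$.

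Once this key compatibility is in place, each remaining square of the large expanded diagram reduces to a naturality statement for structural isomorphisms. Concretely, I would need to check that $\kappa \hat\otimes \operatorname{Ad}(\iota_0 \oplus \iota_1)$ corresponds summand-wise to $(\kappa \hat\otimes \operatorname{Ad}(\iota_0)) \oplus (\kappa \hat\otimes \operatorname{Ad}(\iota_1))$ under the distributivity iso (which holds because $\operatorname{Ad}(\iota_0 \oplus \iota_1)$ restricts to $\operatorname{Ad}(\iota_i)$ on each block $\com(\h_i)$ and the inclusion $\com(\h_0) \oplus \com(\h_1) \hookrightarrow \com(\h_0 \oplus \h_1)$ is natural in isometries); that flipping $\com(\h)$ past $\com(\h_0' \oplus \h_1')$ is the direct sum of the flips past the individual $\com(\h_i')$; and that $\Psi$ is compatible with the decomposition $(\h_0' \oplus \h_1') \hat\otimes \h \cong \h_0' \hat\otimes \h \oplus \h_1' \hat\otimes \h$. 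Each of these is a routine naturality verification for the structural isomorphism in question.

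The main obstacle will be purely bookkeeping: the relevant commutative diagram is large (it is the analogue of the one drawn in the proof of \cref{nat of amp morph}, with extra columns coming from the direct-sum split), and the care required is in lining up the structural isos on both sides so that they match column by column. All non-formal content, however, is concentrated in the single identity $(1 \hat\otimes \operatorname{dia}) \circ \triangle = (\triangle \oplus \triangle) \circ \operatorname{dia}$, and once that is confirmed the rest of the chase commutes by naturality, yielding strict equality of the two routes.
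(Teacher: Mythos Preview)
Your proposal is correct and follows essentially the same strategy as the paper: reduce the space-level diagram to an algebra-level diagram by fixing elements and then verify commutativity by a direct chase. The paper writes out the large diagram for part (ii) and concludes with ``it does commute, as can be seen by taking homogeneous elementary tensors,'' whereas you do part (i) and isolate the key identity $(1 \hat\otimes \operatorname{dia}) \circ \triangle = (\triangle \oplus \triangle) \circ \operatorname{dia}$ explicitly, checking it on the generators $u,v$; this is the same verification, just packaged more transparently.
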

\begin{proof}
        We only talk about the second diagram; the first one is similar (and easier):
        Given $\varphi_0 \in \K(A \hat\otimes B,\h_0), \varphi_1 \in \K(A \hat\otimes B,\h_1)$, we have to check the commutativity of the following diagram at the algebra level:
        \[
\begin{tikzcd}
	& {\mathcal{S}} \\
	{\mathcal{S} \hat\otimes \mathcal{S} \oplus\mathcal{S}\hat\otimes\mathcal{S}} && {\mathcal{S}\hat\otimes(\mathcal{S} \oplus \mathcal{S})} \\
	{\mathcal{S}\hat\otimes A \hat\otimes B \hat\otimes \mathcal{K}(\mathcal{H}_0) \oplus\mathcal{S}\hat\otimes A \hat\otimes B \hat\otimes \mathcal{K}(\mathcal{H}_1)} && {\mathcal{S}\hat\otimes (A \hat\otimes B \hat\otimes \mathcal{K}(\mathcal{H}_0) \oplus A \hat\otimes B \hat\otimes \mathcal{K}(\mathcal{H}_1))} \\
	{A \hat\otimes \mathcal{S}\hat\otimes B \hat\otimes \mathcal{K}(\mathcal{H}_0) \oplus A \hat\otimes\mathcal{S} \hat\otimes B \hat\otimes \mathcal{K}(\mathcal{H}_1)} && {\mathcal{S}\hat\otimes (A \hat\otimes B \hat\otimes \mathcal{K}(\mathcal{H}_0 \oplus \mathcal{H}_1))} \\
	{C \hat\otimes \mathcal{K}(\mathcal{H})\hat\otimes \mathcal{K}(\mathcal{H}_0') \oplus C \hat\otimes \mathcal{K}(\mathcal{H})\hat\otimes \mathcal{K}(\mathcal{H}_1')} && {A \hat\otimes \mathcal{S}\hat\otimes  B \hat\otimes \mathcal{K}(\mathcal{H}_0 \oplus \mathcal{H}_1)} \\
	{C \hat\otimes \mathcal{K}(\mathcal{H}_0')\hat\otimes \mathcal{K}(\mathcal{H}) \oplus C \hat\otimes \mathcal{K}(\mathcal{H}_1')\hat\otimes \mathcal{K}(\mathcal{H})} && {C \hat\otimes\mathcal{K}(\mathcal{H}) \hat\otimes\mathcal{K}(\mathcal{H}_0' \oplus \mathcal{H}_1')} \\
	&& {C  \hat\otimes\mathcal{K}(\mathcal{H}_0' \oplus \mathcal{H}_1')\hat\otimes\mathcal{K}(\mathcal{H})} \\
	{C \hat\otimes \mathcal{K}(\mathcal{H}_0' \hat\otimes \mathcal{H} \oplus \mathcal{H}_1'\hat\otimes\mathcal{H})} && {C \hat\otimes\mathcal{K}((\mathcal{H}_0' \oplus \mathcal{H}_1')\hat\otimes\mathcal{H})}
	\arrow[from=1-2, to=2-1]
	\arrow[from=1-2, to=2-3]
	\arrow["{(1 \hat\otimes \varphi_0) \oplus (1 \hat\otimes \varphi_1)}", from=2-1, to=3-1]
	\arrow["{(\operatorname{flip}A ~\&~ \mathcal{S}) \oplus (\operatorname{flip}A ~\&~ \mathcal{S})}", from=3-1, to=4-1]
	\arrow["{\kappa\hat\otimes\operatorname{Ad}(\iota_0) \oplus \kappa\hat\otimes\operatorname{Ad}(\iota_1)}", from=4-1, to=5-1]
	\arrow["{(\operatorname{flip} \mathcal{K}(\mathcal{H}) ~\&~\mathcal{K}(\mathcal{H}_0')) \oplus (\operatorname{flip} \mathcal{K}(\mathcal{H}) ~\&~\mathcal{K}(\mathcal{H}_1'))}", from=5-1, to=6-1]
	\arrow["{\operatorname{add}}", from=6-1, to=8-1]
	\arrow["{1\hat\otimes(\varphi_0 \oplus \varphi_1)}", from=2-3, to=3-3]
	\arrow["{1 \hat\otimes\operatorname{add}}", from=3-3, to=4-3]
	\arrow["{\operatorname{flip}A ~\&~ \mathcal{S}}", from=4-3, to=5-3]
	\arrow["{\kappa \hat\otimes \operatorname{Ad}(\iota_0 \oplus \iota_1)}", from=5-3, to=6-3]
	\arrow["\cong"', from=8-3, to=8-1]
	\arrow["{\operatorname{flip} \mathcal{K}(\mathcal{H}_0' \oplus \mathcal{H}_1') ~\&~\mathcal{K}(\mathcal{H})}", from=6-3, to=7-3]
	\arrow["{1 \hat\otimes \Psi}", from=7-3, to=8-3]
\end{tikzcd}
        \]
    It does commute, as can be seen by taking homogeneous elementary tensors.
    \end{proof}
\subsection{H-space structures}
In the remainder of this section, we restrict our attention to graded Hilbert spaces whose \textit{even and odd parts are countably infinite dimensional}. Under this condition, the spectral $K$-theory spaces underlying $\h$ can be given a H-space structure.

For the convenience of the reader, we include the definition of $H$-space here:
\begin{defi}
    A pointed topological space $(X,e)$ is said to be an H-space if there is a continuous map $\mu:X \times X \rightarrow X$ such that the two maps $x \mapsto \mu(x,e)$ and $x \mapsto \mu(e,x)$ are based homotopic to the identity map $X \rightarrow X$. 
\end{defi}
More details about H-spaces and their properties can be found in \cite{HAT}, \cite{DAVIS}. 

For the remainder of this section, fix a graded $C^*$-algebra $A$, and a graded Hilbert space $\h$ whose even and odd parts countably infinite dimensional. 

\begin{defi}\label{defi: H space}
    Let $u:\h \oplus \h \rightarrow \h$ be an even unitary \footnote{it exists as $\h$ is taken to be infinite dimensional.}.
    Define a map
    \[
    \mu_u:\K(A,\h) \times \K(A,\h) \rightarrow \K(A,\h)
    \] on $\K(A,\h)$ as the composite 
    \[
    \K(A,\h) \times \K(A,\h) \xrightarrow{s} \K(A,\h \oplus \h) \xrightarrow{1 \hat\otimes \operatorname{Ad}(u)} \K(A,\h)
    \]
    when $s$ is as in \cref{prop: sum}.
\end{defi}
Before proceeding further, we recall the ``Dixmier-Douady swindle" (see \cite{DIX} for a proof):
\begin{lem}\label{lem: dix}
    Let $\com_1$,$\com_2$ be (ungraded) Hilbert spaces with $\com_2$ countably infinite dimensional. Then, $\operatorname{Iso}(\com_1,\com_2)$ and $\mathcal{U}(\com_1,\com_2)$ (when $\com_1$ and $\com_2$ are unitarily isomorphic) are contractible in the strong $*$-topology. 
\end{lem}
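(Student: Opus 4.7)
The plan is to adapt the classical Dixmier--Douady swindle, which exploits absorption in a countably infinite-dimensional Hilbert space to deform any isometry (or unitary) to a fixed basepoint through a sequence of elementary rotation homotopies.

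Setup: fix a unitary decomposition $\com_2 = \bigoplus_{n \geq 0} K_n$ with each $K_n$ unitarily isomorphic to $\com_2$, and let $s_n: \com_2 \to K_n \hookrightarrow \com_2$ be the associated orthogonal isometric embeddings (so $s_n^* s_m = \delta_{nm}\,I$ and $\sum_n s_n s_n^* = I$). Choose a basepoint $v_0 \in \operatorname{Iso}(\com_1, \com_2)$; for the unitary statement, take $v_0$ to be unitary (possible since $\com_1 \cong \com_2$). The aim is to build a strong-$*$ continuous contraction $H: \operatorname{Iso}(\com_1, \com_2) \times [0,1] \to \operatorname{Iso}(\com_1, \com_2)$ with $H(v, 0) = v$ and $H(v, 1) = v_0$; for the unitary case, the same $H$ will take values in $\mathcal{U}(\com_1, \com_2)$ since every rotation used below acts unitarily on $\com_2$.

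The building block is the elementary rotation homotopy in the $(K_k, K_{k+1})$-block:
\[
R_t^{(k)} = \cos(t\pi/2)\,(s_k s_k^* + s_{k+1} s_{k+1}^*) + \sin(t\pi/2)\,(s_k s_{k+1}^* - s_{k+1} s_k^*) + \bigl(I - s_k s_k^* - s_{k+1} s_{k+1}^*\bigr),
\]
a strongly continuous path of unitaries on $\com_2$ from $I$ at $t=0$ to a skew-swap of $K_k$ and $K_{k+1}$ at $t=1$. Composition with $R_t^{(k)}$ is strong-$*$ continuous in the input isometry by Lemma~\ref{lem: cont isometries} together with Proposition~\ref{prop: hom enriched over spaces}. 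The swindle then proceeds in two stages: (i) a preparatory rotation moving both $v$ and $v_0$ into compatible ``high'' summands so that their images become orthogonal in $\com_2$; (ii) an Eilenberg-style infinite concatenation of rotations $R_t^{(k)}$, parametrised on the intervals $[1 - 2^{-k}, 1 - 2^{-(k+1)}]$ for $k \geq 1$, that pushes $v$'s contribution off to infinity while simultaneously bringing $v_0$'s contribution back to $K_0$.

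The main obstacle is verifying strong-$*$ continuity of the infinite concatenation in stage (ii), jointly in $t$ and $v$, especially at the accumulation point $t = 1$. The resolution is a standard tail estimate: at time $t \geq 1 - 2^{-N}$, the intermediate isometry evaluated on any fixed $x \in \com_1$ agrees with $v_0(x)$ outside of a contribution supported in $\bigoplus_{n \geq N} K_n$, whose norm is bounded by $2\|x\|$ and which tends to zero because the tail projections $\sum_{n \geq N} s_n s_n^*$ converge strongly to $0$ as $N \to \infty$. Continuity of the adjoints uses the symmetric estimate with the roles of $\com_1$ and $\com_2$ exchanged; both arguments are uniform on the bounded set of isometries, so strong and strong-$*$ convergences behave compatibly. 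The explicit execution of the swindle is carried out in \cite{DIX}, which we cite for the full formulas.
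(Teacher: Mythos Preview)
The paper does not prove this lemma; it simply records it with the parenthetical ``see \cite{DIX} for a proof'' and moves on. Your proposal sketches the classical Dixmier--Douady swindle and likewise defers to \cite{DIX} for the explicit formulas, so you are taking the same route the paper points to; the outline you give (infinite orthogonal decomposition of $\com_2$, elementary block rotations, and an Eilenberg-style infinite concatenation with a tail estimate for strong-$*$ continuity at the endpoint) is the standard argument and is correct.
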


In what follows, fix an even unitary $u:\h \oplus \h \rightarrow \h$, whereby getting a map $\mu_u:\K(A,\h) \times \K(A,\h) \rightarrow \K(A,\h)$ as in \cref{defi: H space}.
\begin{lem} \label{lem: H space verification}
    The maps $\mu_u(0,-):\K(A,\h) \rightarrow \K(A,\h)$ and $\mu_u(0,-):\K(A,\h) \rightarrow \K(A,\h)$ are based homotopic to $\operatorname{id}:\K(A,\h) \rightarrow \K(A,\h)$.
\end{lem}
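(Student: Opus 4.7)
The plan is to compute $\mu_u(0,-)$ and $\mu_u(-,0)$ explicitly and identify each as a map of the form $\K(A,\alpha)$ for a single even isometry $\alpha:\h\to\h$, then conclude via part (2) of \cref{prop: baby functoriality} combined with path-connectedness of $\operatorname{Iso}^{\operatorname{ev}}(\h,\h)$.

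First I would trace through \cref{prop: sum} with first entry zero. The composite $\s \xrightarrow{\operatorname{dia}} \s\oplus\s \xrightarrow{0\oplus\varphi} A\hat\otimes\com(\h) \oplus A\hat\otimes\com(\h)$ sends $f$ to $(0,\varphi(f))$. Under the canonical iso $A\hat\otimes\com(\h) \oplus A\hat\otimes\com(\h) \cong A\hat\otimes(\com(\h)\oplus\com(\h))$ followed by $1\hat\otimes\operatorname{inc}$, where $\operatorname{inc}:\com(\h)\oplus\com(\h)\hookrightarrow\com(\h\oplus\h)$ is the block-diagonal embedding, a pair $(0,T)$ becomes $\iota_2 T\iota_2^* = \operatorname{Ad}(\iota_2)(T)$ with $\iota_2:\h\to\h\oplus\h$ the inclusion onto the second summand. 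Hence $s(0,\varphi) = (1\hat\otimes\operatorname{Ad}(\iota_2))\circ\varphi$, and composing with the final $1\hat\otimes\operatorname{Ad}(u)$ and using functoriality of $\operatorname{Ad}$ gives
\[
\mu_u(0,\varphi) = (1\hat\otimes\operatorname{Ad}(u\circ\iota_2))\circ\varphi = \K(A,\,u\circ\iota_2)(\varphi).
\]
Symmetrically, $\mu_u(\varphi,0) = \K(A,\,u\circ\iota_1)(\varphi)$ with $\iota_1:\h\to\h\oplus\h$ the first summand inclusion (reading the second ``$\mu_u(0,-)$'' in the statement as the obvious typo for $\mu_u(-,0)$).

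Both $u\circ\iota_1$ and $u\circ\iota_2$ are even isometries $\h\to\h$, and $\operatorname{id}_\h$ is also such an isometry. By part (2) of \cref{prop: baby functoriality}, it then suffices to show each of $u\circ\iota_1$ and $u\circ\iota_2$ is strongly-$*$ path-connected to $\operatorname{id}_\h$ in $\operatorname{Iso}^{\operatorname{ev}}(\h,\h)$. Decomposing with respect to the grading gives
\[
\operatorname{Iso}^{\operatorname{ev}}(\h,\h) \cong \operatorname{Iso}(\h^{(0)},\h^{(0)}) \times \operatorname{Iso}(\h^{(1)},\h^{(1)}),
\]
and each factor is contractible (hence path-connected) in the strong-$*$ topology by the Dixmier-Douady swindle (\cref{lem: dix}), using that $\h^{(0)}$ and $\h^{(1)}$ are countably infinite dimensional by hypothesis. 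The conclusion follows.

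No step is a genuine obstacle; the only portion demanding care is the algebraic bookkeeping which collapses $\mu_u(0,-)$ (originally a composite involving $\operatorname{dia}$, the canonical tensor-product iso, the block-diagonal inclusion, and $\operatorname{Ad}(u)$) into the single map $\K(A,\,u\circ\iota_2)$. Once this identification is made, the swindle finishes the job immediately and the homotopy is automatically based since each induced map $\K(A,\alpha)$ sends the zero homomorphism to itself.
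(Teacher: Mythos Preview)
Your proof is correct and follows essentially the same strategy as the paper: reduce $\mu_u(0,-)$ (resp.\ $\mu_u(-,0)$) to a map induced by a single even isometry, then invoke path-connectedness of the relevant isometry space via the Dixmier--Douady swindle. The only cosmetic difference is that you first collapse everything to $\K(A,u\circ\iota_j)$ and work in $\operatorname{Iso}^{\operatorname{ev}}(\h,\h)$, whereas the paper keeps $s(\varphi,0)$ fixed and varies a path in $\operatorname{Iso}^{\operatorname{ev}}(\h,\h\oplus\h)$ (applying adjoints to get back to $\K(A,\h)$); your route is slightly cleaner since it plugs directly into \cref{prop: baby functoriality}(2).
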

\begin{proof}
    Note that from the proof of \cref{thm: enriched cat}, the following map is continuous:
    \begin{align*}
        \operatorname{Iso}^{ev}(\mathcal{H},\mathcal{H} \oplus \mathcal{H}) \times \K(A,\mathcal{H}) &\rightarrow \K(A,\mathcal{H} \oplus \mathcal{H}) \\
        (\iota,\varphi) &\mapsto (1 \hat\otimes \operatorname{Ad}(\iota))(\varphi)
    \end{align*}
    Set $v:=u^* \in \operatorname{Iso}^{ev}(\mathcal{H},\mathcal{H} \oplus \mathcal{H})$. As $\operatorname{Iso}^{ev}(\mathcal{H},\mathcal{H} \oplus \mathcal{H})$ is path connected in the strong-$*$ topology (\cref{lem: dix}),  there is a path \[p:[0,1] \rightarrow \operatorname{Iso}^{ev}(\mathcal{H},\mathcal{H} \oplus \mathcal{H})\] such that $p(0)=u^*$ and $p(1)=\begin{pmatrix}
    1 \\ 0
    \end{pmatrix}$.  \\
   
   Thus, $H:\mathbb{K}(A,\h) \times [0,1] \rightarrow \mathbb{K}(A,\h)$, given by $H(\varphi,t)=(1 \hat\otimes \operatorname{Ad}p_t^*)(s(\varphi,0))$ is a based homotopy between $\mu_u(-,0)$ and $\operatorname{id}$. The case of $\mu_u(0,-)$ is similar.
    
\end{proof}
We have now shown that:
\begin{thm}
    If $A$ is a $\Z/2\Z$-graded $C^*$-algebra and $\h$ is a graded Hilbert space whose even and odd parts are countable infinite dimensional. Then, for any $u \in \mathcal{U}^{\operatorname{ev}}(\h \oplus \h,\h)$, $(\K(A,\h),\mu_u,0)$ is an H-space.
\end{thm}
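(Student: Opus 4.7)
The plan is to assemble three pieces that have already been established earlier in the subsection. First, I would verify that the multiplication $\mu_u$ is continuous: by \cref{defi: H space} it factors as the composite of the sum map $s$, which is continuous by \cref{prop: sum}, with $1 \hat\otimes \operatorname{Ad}(u): \K(A, \h \oplus \h) \to \K(A, \h)$; the latter is continuous by \cref{lem: cont isometries} together with the enriched-functoriality established in \cref{thm: enriched cat}.

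Second, I would check that $0 \in \K(A, \h)$ really is the basepoint and that $s(0, 0) = 0$. This is immediate from the construction of $s$ in \cref{prop: sum}: pre-composing $\operatorname{dia}: \s \to \s \oplus \s$ with the pair $(0, 0)$ yields the zero homomorphism, and the subsequent inclusion $1 \hat\otimes \operatorname{inc}$ preserves $0$. Consequently $\mu_u(0, 0) = (1 \hat\otimes \operatorname{Ad}(u)) \circ s(0, 0) = 0$, so the basepoint is preserved by $\mu_u$.

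Third, the two defining H-space identities $\mu_u(-, 0) \simeq \operatorname{id}$ and $\mu_u(0, -) \simeq \operatorname{id}$ (as based maps) are precisely the content of \cref{lem: H space verification}, whose key input is the path-connectedness of $\operatorname{Iso}^{\operatorname{ev}}(\h, \h \oplus \h)$ in the strong-$*$ topology (via the Dixmier--Douady swindle, \cref{lem: dix}) together with the continuity of the action of even isometries on spectral $K$-theory spaces established in \cref{thm: enriched cat}. Since all three pieces are already in place, there is no substantive obstacle: the theorem is a clean packaging of \cref{lem: H space verification} into the H-space formalism, which will be needed later when defining the $K$-theory groups via homotopy groups.
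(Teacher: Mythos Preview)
Your proposal is correct and matches the paper's approach exactly: the theorem is stated immediately after \cref{lem: H space verification} with the phrase ``We have now shown that,'' so the paper treats it purely as a packaging of that lemma together with the continuity of $\mu_u$ already built into \cref{defi: H space}. Your write-up is, if anything, more explicit than the paper's about the continuity and basepoint checks, but there is nothing substantively different.
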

\begin{lem}
    $(\K(A,\h),\mu_u,0)$ is a homotopy associative $H$-space, that is, the following diagram commutes upto homotopy:
    \[
    % https://q.uiver.app/#q=WzAsNCxbMCwwLCJcXG1hdGhiYntLfShBLFxcbWF0aGNhbHtIfSkgXFx0aW1lcyBcXG1hdGhiYntLfShBLFxcbWF0aGNhbHtIfSkgXFx0aW1lcyBcXG1hdGhiYntLfShBLFxcbWF0aGNhbHtIfSkiXSxbMiwwLCJcXG1hdGhiYntLfShBLFxcbWF0aGNhbHtIfSkgXFx0aW1lcyBcXG1hdGhiYntLfShBLFxcbWF0aGNhbHtIfSkiXSxbMCwxLCJcXG1hdGhiYntLfShBLFxcbWF0aGNhbHtIfSkgXFx0aW1lcyBcXG1hdGhiYntLfShBLFxcbWF0aGNhbHtIfSkiXSxbMiwxLCJcXG1hdGhiYntLfShBLFxcbWF0aGNhbHtIfSkiXSxbMCwxLCIxIFxcdGltZXMgXFxtdV91Il0sWzAsMiwiXFxtdV91IFxcdGltZXMgMSIsMl0sWzIsMywiXFxtdV91IiwyXSxbMSwzLCJcXG11X3UiXV0=
\begin{tikzcd}
	{\mathbb{K}(A,\mathcal{H}) \times \mathbb{K}(A,\mathcal{H}) \times \mathbb{K}(A,\mathcal{H})} && {\mathbb{K}(A,\mathcal{H}) \times \mathbb{K}(A,\mathcal{H})} \\
	{\mathbb{K}(A,\mathcal{H}) \times \mathbb{K}(A,\mathcal{H})} && {\mathbb{K}(A,\mathcal{H})}
	\arrow["{1 \times \mu_u}", from=1-1, to=1-3]
	\arrow["{\mu_u \times 1}"', from=1-1, to=2-1]
	\arrow["{\mu_u}"', from=2-1, to=2-3]
	\arrow["{\mu_u}", from=1-3, to=2-3]
\end{tikzcd}
    \]
\end{lem}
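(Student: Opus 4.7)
The plan is to reduce both composites in the square to the form $\K(A, w) \circ s^{(3)}$, where $s^{(3)} \colon \K(A,\h)^3 \to \K(A, \h \oplus \h \oplus \h)$ is the triple sum and $w \colon \h \oplus \h \oplus \h \to \h$ is an even unitary depending on the route. Once both composites are put in this form, the two routes will differ only by the choice of $w$, and homotopy commutativity will follow from strong-$*$ path connectedness of the space of even unitaries together with \cref{prop: baby functoriality}(2).

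First I would unwind $\mu_u \circ (\mu_u \times 1)$ using the definition of $\mu_u$ and the naturality of the binary sum $s$ with respect to even isometries (\cref{cor: hom preserve sum}). On a triple $(\varphi_0, \varphi_1, \varphi_2)$, applying $\mu_u$ to the first pair and then pairing the result with $\varphi_2$, the intermediate $1 \hat\otimes \operatorname{Ad}(u)$ factor commutes past $s$ via naturality and gets absorbed into a single even unitary $u \oplus 1_\h$, giving $\K(A, u) \circ \K(A, u \oplus 1_\h) \circ s \circ (s \times 1)$ on the nose. The symmetric analysis on the other route yields $\K(A, u) \circ \K(A, 1_\h \oplus u) \circ s \circ (1 \times s)$. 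By the strict associativity of $s$ from \cref{prop: sum comm and ass}, both iterated sums coincide with a common triple sum $s^{(3)}$ under the canonical identification $(\h \oplus \h) \oplus \h = \h \oplus (\h \oplus \h)$, so the two composites become $\K(A, u \circ (u \oplus 1_\h)) \circ s^{(3)}$ and $\K(A, u \circ (1_\h \oplus u)) \circ s^{(3)}$ respectively.

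Next, both $u \circ (u \oplus 1_\h)$ and $u \circ (1_\h \oplus u)$ are even unitaries $\h \oplus \h \oplus \h \to \h$, and since the even and odd parts of $\h \oplus \h \oplus \h$ are again countably infinite dimensional, the Dixmier-Douady swindle \cref{lem: dix} applied separately to the even and odd graded components yields path connectedness of $\mathcal{U}^{\operatorname{ev}}(\h \oplus \h \oplus \h, \h)$ in the strong-$*$ topology. A path between the two unitaries then induces, via \cref{prop: baby functoriality}(2), a based homotopy between $\K(A, u \circ (u \oplus 1_\h))$ and $\K(A, u \circ (1_\h \oplus u))$; post-composing with the continuous triple sum $s^{(3)}$ gives the required homotopy. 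The step I expect to be most delicate is the first one: carefully writing out the naturality squares for $s$ in order to push the intermediate $\operatorname{Ad}(u)$ factors outside, and making sure the iterated sums genuinely agree under the strict associator of \cref{prop: sum comm and ass}; once this bookkeeping is in place the topological argument is immediate.
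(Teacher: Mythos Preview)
Your proposal is correct and follows essentially the same approach as the paper's proof. The paper expands the associativity square into a larger diagram whose inner cells commute strictly by \cref{prop: sum comm and ass} and \cref{cor: hom preserve sum}, and whose remaining cell commutes up to homotopy because the two even unitaries $u \circ (u \oplus 1)$ and $u \circ (1 \oplus u)$ from $\h \oplus \h \oplus \h$ to $\h$ are strong-$*$ homotopic via \cref{lem: dix}; this is exactly the reduction you describe in prose form.
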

\begin{proof}
    We expand the above diagram into the following:
    \[
    % https://q.uiver.app/#q=WzAsOSxbMCwwLCJcXG1hdGhiYntLfShBLFxcbWF0aGNhbHtIfSkgXFx0aW1lcyBcXG1hdGhiYntLfShBLFxcbWF0aGNhbHtIfSkgXFx0aW1lcyBcXG1hdGhiYntLfShBLFxcbWF0aGNhbHtIfSkiXSxbMywwLCJcXG1hdGhiYntLfShBLFxcbWF0aGNhbHtIfSkgXFx0aW1lcyBcXG1hdGhiYntLfShBLFxcbWF0aGNhbHtIfSkiXSxbMCwyLCJcXG1hdGhiYntLfShBLFxcbWF0aGNhbHtIfSkgXFx0aW1lcyBcXG1hdGhiYntLfShBLFxcbWF0aGNhbHtIfSkiXSxbMywyLCJcXG1hdGhiYntLfShBLFxcbWF0aGNhbHtIfSkiXSxbMSwwLCJcXG1hdGhiYntLfShBLFxcbWF0aGNhbHtIfSkgXFx0aW1lcyBcXG1hdGhiYntLfShBLFxcbWF0aGNhbHtIfSBcXG9wbHVzIFxcbWF0aGNhbHtIfSkiXSxbMCwxLCJcXG1hdGhiYntLfShBLFxcbWF0aGNhbHtIfSBcXG9wbHVzIFxcbWF0aGNhbHtIfSkgXFx0aW1lcyBcXG1hdGhiYntLfShBLFxcbWF0aGNhbHtIfSkiXSxbMSwxLCJcXG1hdGhiYntLfShBLFxcbWF0aGNhbHtIfSBcXG9wbHVzIFxcbWF0aGNhbHtIfSBcXG9wbHVzIFxcbWF0aGNhbHtIfSkiXSxbMywxLCJcXG1hdGhiYntLfShBLFxcbWF0aGNhbHtIfSBcXG9wbHVzIFxcbWF0aGNhbHtIfSkiXSxbMSwyLCJcXG1hdGhiYntLfShBLFxcbWF0aGNhbHtIfSBcXG9wbHVzIFxcbWF0aGNhbHtIfSkiXSxbMCw0LCIxIFxcdGltZXMgcyJdLFs0LDEsIjEgXFx0aW1lcyBcXG9wZXJhdG9ybmFtZXtBZH0odSkiXSxbMCw1LCJzIFxcdGltZXMgMSIsMl0sWzUsMiwiMVxcaGF0XFxvdGltZXNcXG9wZXJhdG9ybmFtZXtBZH0odSkgXFx0aW1lcyAxIiwyXSxbNSw2LCJzIiwyXSxbNCw2LCJzIiwyXSxbMSw3LCJzIl0sWzcsMywiMSBcXGhhdFxcb3RpbWVzIFxcb3BlcmF0b3JuYW1le0FkfSh1KSJdLFsyLDgsInMiLDJdLFs2LDgsIjEgXFxoYXRcXG90aW1lcyBcXG9wZXJhdG9ybmFtZXtBZH0odSBcXG9wbHVzIDEpIiwyXSxbOCwzLCIxIFxcaGF0XFxvdGltZXNcXG9wZXJhdG9ybmFtZXtBZH0odSkiLDJdLFs2LDcsIjEgXFxoYXRcXG90aW1lcyBcXG9wZXJhdG9ybmFtZXtBZH0oMSBcXG9wbHVzIHUpIl1d
\begin{tikzcd}
	{\mathbb{K}(A,\mathcal{H}) \times \mathbb{K}(A,\mathcal{H}) \times \mathbb{K}(A,\mathcal{H})} & {\mathbb{K}(A,\mathcal{H}) \times \mathbb{K}(A,\mathcal{H} \oplus \mathcal{H})} && {\mathbb{K}(A,\mathcal{H}) \times \mathbb{K}(A,\mathcal{H})} \\
	{\mathbb{K}(A,\mathcal{H} \oplus \mathcal{H}) \times \mathbb{K}(A,\mathcal{H})} & {\mathbb{K}(A,\mathcal{H} \oplus \mathcal{H} \oplus \mathcal{H})} && {\mathbb{K}(A,\mathcal{H} \oplus \mathcal{H})} \\
	{\mathbb{K}(A,\mathcal{H}) \times \mathbb{K}(A,\mathcal{H})} & {\mathbb{K}(A,\mathcal{H} \oplus \mathcal{H})} && {\mathbb{K}(A,\mathcal{H})}
	\arrow["{1 \times s}", from=1-1, to=1-2]
	\arrow["{1 \times \operatorname{Ad}(u)}", from=1-2, to=1-4]
	\arrow["{s \times 1}"', from=1-1, to=2-1]
	\arrow["{1\hat\otimes\operatorname{Ad}(u) \times 1}"', from=2-1, to=3-1]
	\arrow["s"', from=2-1, to=2-2]
	\arrow["s"', from=1-2, to=2-2]
	\arrow["s", from=1-4, to=2-4]
	\arrow["{1 \hat\otimes \operatorname{Ad}(u)}", from=2-4, to=3-4]
	\arrow["s"', from=3-1, to=3-2]
	\arrow["{1 \hat\otimes \operatorname{Ad}(u \oplus 1)}"', from=2-2, to=3-2]
	\arrow["{1 \hat\otimes\operatorname{Ad}(u)}"', from=3-2, to=3-4]
	\arrow["{1 \hat\otimes \operatorname{Ad}(1 \oplus u)}", from=2-2, to=2-4]
\end{tikzcd}
    \]
    We have to show that the outer rectangle commutes upto homotopy. To that end, we first observe that all the reactangles except  the bottom right one commutes strictly, as follows from \cref{cor: hom preserve sum} and \cref{prop: sum comm and ass}. For the homotopy commutativity of the bottom right rectangle, it suffices to show that the unitaries $u \circ (1 \oplus u), u \circ (u \oplus 1)$ are homotopic in the strong topology. But this follows from \cref{lem: dix}.
\end{proof}
\begin{lem}
    $(\K(A,\h),\mu_u,0)$ is a homotopy commutative $H$-space, that is, the following diagram commutes upto homotopy:
    \[
    % https://q.uiver.app/#q=WzAsMyxbMCwwLCJcXG1hdGhiYntLfShBLFxcbWF0aGNhbHtIfSkgXFx0aW1lcyBcXG1hdGhiYntLfShBLFxcbWF0aGNhbHtIfSkiXSxbMiwwLCJcXG1hdGhiYntLfShBLFxcbWF0aGNhbHtIfSkgXFx0aW1lcyBcXG1hdGhiYntLfShBLFxcbWF0aGNhbHtIfSkiXSxbMSwxLCIgXFxtYXRoYmJ7S30oQSxcXG1hdGhjYWx7SH0pIl0sWzAsMiwiXFxtdV91Il0sWzEsMiwiXFxtdV91IiwyXSxbMCwxLCJcXG9wZXJhdG9ybmFtZXtmbGlwfSJdXQ==
\begin{tikzcd}
	{\mathbb{K}(A,\mathcal{H}) \times \mathbb{K}(A,\mathcal{H})} && {\mathbb{K}(A,\mathcal{H}) \times \mathbb{K}(A,\mathcal{H})} \\
	& { \mathbb{K}(A,\mathcal{H})}
	\arrow["{\mu_u}", from=1-1, to=2-2]
	\arrow["{\mu_u}"', from=1-3, to=2-2]
	\arrow["{\operatorname{flip}}", from=1-1, to=1-3]
\end{tikzcd}
    \]
\end{lem}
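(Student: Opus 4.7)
The plan is to reduce the homotopy commutativity to a statement about the homotopy type of the space of even unitaries, which is handled by the Dixmier--Douady swindle.

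First, I would expand the outer square into a more detailed diagram and rewrite $\mu_u \circ \operatorname{flip}$ using the already-established commutativity of the sum. Specifically, by \cref{prop: sum comm and ass}, for any $(\varphi,\psi) \in \K(A,\h) \times \K(A,\h)$, the strict identity
\[
s(\psi,\varphi) \;=\; \bigl(1 \hat\otimes \operatorname{Ad}(\tau)\bigr)\bigl(s(\varphi,\psi)\bigr)
\]
holds, where $\tau: \h \oplus \h \to \h \oplus \h$ is the coordinate flip. Consequently, the top-then-right route in the diagram sends $(\varphi,\psi)$ to $(1 \hat\otimes \operatorname{Ad}(u \circ \tau))(s(\varphi,\psi))$, while the direct route sends it to $(1 \hat\otimes \operatorname{Ad}(u))(s(\varphi,\psi))$. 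So it suffices to exhibit a based homotopy between the two induced maps $\K(A,\h \oplus \h) \to \K(A,\h)$ given by $\K(\operatorname{Ad} u)$ and $\K(\operatorname{Ad}(u \circ \tau))$.

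Next, I would invoke \cref{prop: baby functoriality}(2): strongly path-connected even isometries $\h \oplus \h \to \h$ induce based homotopic maps on spectral $K$-theory spaces. So the task reduces to showing that the even unitaries $u$ and $u \circ \tau$ can be joined by a strong-$*$ continuous path inside $\mathcal{U}^{\operatorname{ev}}(\h \oplus \h, \h)$.

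Finally, this follows directly from the Dixmier--Douady swindle (\cref{lem: dix}): since the even and odd parts of $\h$ are countably infinite dimensional, $\h$ and $\h \oplus \h$ are unitarily isomorphic as graded Hilbert spaces, and $\mathcal{U}^{\operatorname{ev}}(\h \oplus \h, \h)$ is strong-$*$ contractible, hence path-connected. Any path from $u$ to $u \circ \tau$ in this space composes with the continuous action of $\operatorname{Iso}^{\operatorname{ev}}(\h \oplus \h, \h)$ on $\K(A, \h \oplus \h)$ (as established in the proof of \cref{thm: enriched cat}) to yield the desired based homotopy. There is no serious obstacle here; the only point to keep track of is that the basepoint $0$ is preserved throughout, which is automatic since the zero homomorphism is sent to the zero homomorphism under both $\operatorname{Ad}$-conjugations and under the sum map applied to $(0,0)$.
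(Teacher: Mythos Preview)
Your proof is correct and follows essentially the same approach as the paper: expand $\mu_u$ via the sum map, use \cref{prop: sum comm and ass} to rewrite $\mu_u \circ \operatorname{flip}$ as $\K(\operatorname{Ad}(u\circ\tau))\circ s$, and then appeal to the strong-$*$ path-connectedness of $\mathcal{U}^{\operatorname{ev}}(\h\oplus\h,\h)$ (via \cref{lem: dix}) to conclude. The paper presents this via a two-rectangle diagram chase, but the content is identical.
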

\begin{proof}
   We expand the above diagram into the following:
   \[
   % https://q.uiver.app/#q=WzAsNixbMCwwLCJcXG1hdGhiYntLfShBLFxcbWF0aGNhbHtIfSkgXFx0aW1lcyBcXG1hdGhiYntLfShBLFxcbWF0aGNhbHtIfSkiXSxbMCwxLCJcXG1hdGhiYntLfShBLFxcbWF0aGNhbHtIfSBcXG9wbHVzIFxcbWF0aGNhbHtIfSkiXSxbMCwyLCJcXG1hdGhiYntLfShBLFxcbWF0aGNhbHtIfSkiXSxbMiwwLCJcXG1hdGhiYntLfShBLFxcbWF0aGNhbHtIfSkgXFx0aW1lcyBcXG1hdGhiYntLfShBLFxcbWF0aGNhbHtIfSkiXSxbMiwxLCJcXG1hdGhiYntLfShBLFxcbWF0aGNhbHtIfSBcXG9wbHVzIFxcbWF0aGNhbHtIfSkiXSxbMiwyLCJcXG1hdGhiYntLfShBLFxcbWF0aGNhbHtIfSkiXSxbMCwxLCJcXG9wZXJhdG9ybmFtZXthZGR9Il0sWzEsMiwiMSBcXGhhdFxcb3RpbWVzIFxcb3BlcmF0b3JuYW1le0FkfSh1KSJdLFswLDMsIlxcb3BlcmF0b3JuYW1le2ZsaXB9Il0sWzEsNCwiMSBcXGhhdFxcb3RpbWVzIFxcb3BlcmF0b3JuYW1le0FkfSgoeCx5KSBcXG1hcHN0byh5LHgpKSJdLFsyLDUsIj0iXSxbNCw1LCIxIFxcaGF0XFxvdGltZXMgXFxvcGVyYXRvcm5hbWV7QWR9KHUpIl0sWzMsNCwiXFxvcGVyYXRvcm5hbWV7YWRkfSJdXQ==
\begin{tikzcd}
	{\mathbb{K}(A,\mathcal{H}) \times \mathbb{K}(A,\mathcal{H})} && {\mathbb{K}(A,\mathcal{H}) \times \mathbb{K}(A,\mathcal{H})} \\
	{\mathbb{K}(A,\mathcal{H} \oplus \mathcal{H})} && {\mathbb{K}(A,\mathcal{H} \oplus \mathcal{H})} \\
	{\mathbb{K}(A,\mathcal{H})} && {\mathbb{K}(A,\mathcal{H})}
	\arrow["{\operatorname{add}}", from=1-1, to=2-1]
	\arrow["{1 \hat\otimes \operatorname{Ad}(u)}", from=2-1, to=3-1]
	\arrow["{\operatorname{flip}}", from=1-1, to=1-3]
	\arrow["{1 \hat\otimes \operatorname{Ad}((x,y) \mapsto(y,x))}", from=2-1, to=2-3]
	\arrow["{=}", from=3-1, to=3-3]
	\arrow["{1 \hat\otimes \operatorname{Ad}(u)}", from=2-3, to=3-3]
	\arrow["{\operatorname{add}}", from=1-3, to=2-3]
\end{tikzcd}
   \]
   Our goal is to show that the outer diagram commutes upto homotopy. To that end, we note that the top rectangle strictly commutes by \cref{prop: sum comm and ass}, and the bottom one commutes up to homotopy since the two unitaries $u\circ ((x,y) \mapsto (y,x))$ and $u$ are homotopic in the strong topology.
\end{proof}
Summarizing, we have:
\begin{cor}\label{H-monoid}
    $(\K(A,\h),\mu_u,0)$ is a commutative H-monoid.
\end{cor}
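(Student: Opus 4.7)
The statement is essentially a summary corollary, so my plan is simply to package the three preceding results into a single assertion. Specifically, a commutative H-monoid is by definition an H-space whose multiplication is both homotopy associative and homotopy commutative, so all three ingredients are already on the table.

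First, I would recall the definition of a commutative H-monoid: a pointed space $(X,e)$ together with a continuous multiplication $\mu:X\times X\rightarrow X$ such that (i) the maps $\mu(e,-)$ and $\mu(-,e)$ are based homotopic to $\mathrm{id}_X$, (ii) $\mu\circ(\mu\times\mathrm{id})$ is homotopic to $\mu\circ(\mathrm{id}\times\mu)$, and (iii) $\mu\circ\mathrm{flip}$ is homotopic to $\mu$. I would then note that conditions (i), (ii), (iii) for $(\K(A,\h),\mu_u,0)$ are exactly the contents of the preceding theorem (H-space structure), the homotopy-associativity lemma, and the homotopy-commutativity lemma respectively.

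Hence the proof reduces to a one-line assembly: $(\K(A,\h),\mu_u,0)$ is an H-space by the theorem immediately above, it is homotopy associative by the first of the two preceding lemmas, and it is homotopy commutative by the second, which together is the definition of a commutative H-monoid.

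There is no real obstacle here — the technical content has already been dealt with in the three prior results (where the nontrivial inputs are the strict commutativity diagrams from \cref{cor: hom preserve sum} and \cref{prop: sum comm and ass}, together with the Dixmier--Douady swindle \cref{lem: dix} used to homotope the various even unitaries on $\h\oplus\h$ and $\h\oplus\h\oplus\h$ into the right form). The corollary is purely a bookkeeping statement.
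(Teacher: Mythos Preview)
Your proposal is correct and matches the paper's approach exactly: the corollary is introduced there with the phrase ``Summarizing, we have'' and carries no separate proof, precisely because it just packages the H-space theorem and the two homotopy-associativity and homotopy-commutativity lemmas that precede it.
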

\begin{rem}
    It is in fact also true that $(\K(A,\h),\mu_u,0)$ is a commutative $H$-group. But we don't use this fact in this article.
\end{rem}
\subsection{Sums preserve H-space structures}
As the title of the subsection suggests, we now see how the sums described in the first part of this section go with the H-space structures. We summarize everything in the below proposition. Throughout there, we assume that H-space structures $\mu_{u},\mu_{u'},...$ as described in \cref{defi: H space} have been fixed on the spaces $\K(A,\h),\K(B,\h')...$.
\begin{prop}\label{sum preserve H-space}
    \,
    \begin{enumerate}[(i)]
        \item Let $\varphi:A \rightarrow B$ be a graded $*$-homomorphism, and $\iota:\h \rightarrow \h'$ an even isometry. Then the induced map $\K(\varphi,\iota): \K(A,\mathcal{H})\rightarrow \K(B,\mathcal{H}')$ preserves the $H$-space structure, in the sense that the following diagram commutes upto homotopy:
    \[
    % https://q.uiver.app/#q=WzAsNCxbMCwwLCJcXG1hdGhiYntLfShBLFxcbWF0aGNhbHtIfSkgXFx0aW1lcyBcXG1hdGhiYntLfShBLFxcbWF0aGNhbHtIfSkiXSxbMiwwLCJcXG1hdGhiYntLfShCLFxcbWF0aGNhbHtIfScpIFxcdGltZXMgXFxtYXRoYmJ7S30oQixcXG1hdGhjYWx7SH0nKSJdLFswLDIsIlxcbWF0aGJie0t9KEEsXFxtYXRoY2Fse0h9KSJdLFsyLDIsIlxcbWF0aGJie0t9KEIsXFxtYXRoY2Fse0h9JykiXSxbMCwxLCJcXG1hdGhiYntLfShcXHZhcnBoaSxcXGlvdGEpIFxcdGltZXMgXFxtYXRoYmJ7S30oXFx2YXJwaGksXFxpb3RhKSJdLFsyLDMsIlxcbWF0aGJie0t9KFxcdmFycGhpLFxcbWF0aGNhbHtIfSkiXSxbMCwyLCJcXG11X3UiXSxbMSwzLCJcXG11X3YiXV0=
\begin{tikzcd}
	{\mathbb{K}(A,\mathcal{H}) \times \mathbb{K}(A,\mathcal{H})} && {\mathbb{K}(B,\mathcal{H}') \times \mathbb{K}(B,\mathcal{H}')} \\
	\\
	{\mathbb{K}(A,\mathcal{H})} && {\mathbb{K}(B,\mathcal{H}')}
	\arrow["{\mathbb{K}(\varphi,\iota) \times \mathbb{K}(\varphi,\iota)}", from=1-1, to=1-3]
	\arrow["{\mathbb{K}(\varphi,\iota)}", from=3-1, to=3-3]
	\arrow["{\mu_u}", from=1-1, to=3-1]
	\arrow["{\mu_v}", from=1-3, to=3-3]
\end{tikzcd}.
    \]
    \item Let $\kappa: \s \hat\otimes A \rightarrow B \hat\otimes \com(\h)$ be a graded $*$-homomorphism, and $\iota:\h_0 \rightarrow \h_1$ an even isometry. Then, the induced map $(\kappa,\iota)_*:\K(A,\h_0) \rightarrow \K(B,\h_1 \hat\otimes \h)$ as in \cref{defi: amplified homo} (i) preserves the H-space structure, in the sense that the following diagram commutes upto homotopy:
    \[
    % https://q.uiver.app/#q=WzAsNCxbMCwwLCJcXG1hdGhiYntLfShBLFxcbWF0aGNhbHtIfV8wKSBcXHRpbWVzIFxcbWF0aGJie0t9KEEsXFxtYXRoY2Fse0h9XzApIl0sWzIsMCwiXFxtYXRoYmJ7S30oQixcXG1hdGhjYWx7SH1fMSBcXGhhdFxcb3RpbWVzIFxcbWF0aGNhbHtIfSkgXFx0aW1lcyBcXG1hdGhiYntLfShCLFxcbWF0aGNhbHtIfV8xIFxcaGF0XFxvdGltZXMgXFxtYXRoY2Fse0h9KSJdLFswLDEsIlxcbWF0aGJie0t9KEEsXFxtYXRoY2Fse0h9XzApIl0sWzIsMSwiXFxtYXRoYmJ7S30oQixcXG1hdGhjYWx7SH1fMSBcXGhhdFxcb3RpbWVzIFxcbWF0aGNhbHtIfSkiXSxbMCwxLCIoXFxrYXBwYSxcXGlvdGEpXypcXHRpbWVzKFxca2FwcGEsXFxpb3RhKV8qIl0sWzAsMiwiXFxtdV91IiwyXSxbMSwzLCJcXG11X3YiXSxbMiwzLCIoXFxrYXBwYSxcXGlvdGEpXyoiXV0=
\begin{tikzcd}
	{\mathbb{K}(A,\mathcal{H}_0) \times \mathbb{K}(A,\mathcal{H}_0)} && {\mathbb{K}(B,\mathcal{H}_1 \hat\otimes \mathcal{H}) \times \mathbb{K}(B,\mathcal{H}_1 \hat\otimes \mathcal{H})} \\
	{\mathbb{K}(A,\mathcal{H}_0)} && {\mathbb{K}(B,\mathcal{H}_1 \hat\otimes \mathcal{H})}
	\arrow["{(\kappa,\iota)_*\times(\kappa,\iota)_*}", from=1-1, to=1-3]
	\arrow["{\mu_u}"', from=1-1, to=2-1]
	\arrow["{\mu_v}", from=1-3, to=2-3]
	\arrow["{(\kappa,\iota)_*}", from=2-1, to=2-3]
\end{tikzcd}
    \]
    \item Let $\kappa: A \hat\otimes \s \hat\otimes B \rightarrow C\hat\otimes \com(\h)$ be a graded $*$-homomorphism, and $\iota:\h_0 \rightarrow \h_1$ an even isometry. Then, the induced map $(\kappa,\iota)_*:\K(A \hat\otimes B,\h_0) \rightarrow \K(C,\h_1 \hat\otimes \h)$ as in \cref{defi: amplified homo} (ii) preserves the H-space structure, in the sense that the following diagram commutes upto homotopy:
    \[
    % https://q.uiver.app/#q=WzAsNCxbMCwwLCJcXG1hdGhiYntLfShBIFxcaGF0XFxvdGltZXMgQixcXG1hdGhjYWx7SH1fMCkgXFx0aW1lcyBcXG1hdGhiYntLfShBIFxcaGF0XFxvdGltZXMgQixcXG1hdGhjYWx7SH1fMCkiXSxbMCwxLCJcXG1hdGhiYntLfShBIFxcaGF0XFxvdGltZXMgQixcXG1hdGhjYWx7SH1fMCkiXSxbMiwwLCJcXG1hdGhiYntLfShDLFxcbWF0aGNhbHtIfV8xIFxcaGF0XFxvdGltZXNcXG1hdGhjYWx7SH0pIFxcdGltZXMgXFxtYXRoYmJ7S30oQyxcXG1hdGhjYWx7SH1fMSBcXGhhdFxcb3RpbWVzIFxcbWF0aGNhbHtIfSkiXSxbMiwxLCJcXG1hdGhiYntLfShDLFxcbWF0aGNhbHtIfV8xIFxcaGF0XFxvdGltZXMgXFxtYXRoY2Fse0h9KSJdLFswLDEsIlxcbXVfdSJdLFswLDIsIihcXGthcHBhLFxcaW90YSlfKiBcXHRpbWVzIChcXGthcHBhLFxcaW90YSlfKiJdLFsxLDMsIihcXGthcHBhLFxcaW90YSlfKiJdLFsyLDMsIlxcbXVfdiJdXQ==
\begin{tikzcd}
	{\mathbb{K}(A \hat\otimes B,\mathcal{H}_0) \times \mathbb{K}(A \hat\otimes B,\mathcal{H}_0)} && {\mathbb{K}(C,\mathcal{H}_1 \hat\otimes\mathcal{H}) \times \mathbb{K}(C,\mathcal{H}_1 \hat\otimes \mathcal{H})} \\
	{\mathbb{K}(A \hat\otimes B,\mathcal{H}_0)} && {\mathbb{K}(C,\mathcal{H}_1 \hat\otimes \mathcal{H})}
	\arrow["{\mu_u}", from=1-1, to=2-1]
	\arrow["{(\kappa,\iota)_* \times (\kappa,\iota)_*}", from=1-1, to=1-3]
	\arrow["{(\kappa,\iota)_*}", from=2-1, to=2-3]
	\arrow["{\mu_v}", from=1-3, to=2-3]
\end{tikzcd}
    \]
    
    \end{enumerate}
    
\end{prop}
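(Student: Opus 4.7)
The plan is to reduce each of the three diagrams to a comparison of two even isometries mapping between the same pair of graded Hilbert spaces, and then apply the Dixmier-Douady swindle (\cref{lem: dix}) together with \cref{prop: baby functoriality}(2) to produce the required based homotopy.

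For part (i), I would begin by unfolding both $\mu_u$ and $\mu_v$ as the composites $(1 \hat\otimes \operatorname{Ad}(u)) \circ s$ and $(1 \hat\otimes \operatorname{Ad}(v)) \circ s$ respectively, and then expand the rectangle by inserting the intermediate spaces $\K(A, \h \oplus \h)$ and $\K(B, \h' \oplus \h')$. By \cref{cor: hom preserve sum} (naturality of sums with respect to ordinary $*$-homomorphisms), the upper rectangle involving only the sum maps and $\K(\varphi, \iota \oplus \iota)$ commutes strictly. It therefore suffices to show that the two composites
\[
\K(A, \h \oplus \h) \xrightarrow{1 \hat\otimes \operatorname{Ad}(u)} \K(A, \h) \xrightarrow{\K(\varphi, \iota)} \K(B, \h')
\]
and
\[
\K(A, \h \oplus \h) \xrightarrow{\K(\varphi, \iota \oplus \iota)} \K(B, \h' \oplus \h') \xrightarrow{1 \hat\otimes \operatorname{Ad}(v)} \K(B, \h')
\]
are based homotopic. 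Both are of the form $\K(\varphi, w)$ for the two even isometries $w_0 = \iota \circ u$ and $w_1 = v \circ (\iota \oplus \iota)$ in $\operatorname{Iso}^{\operatorname{ev}}(\h \oplus \h, \h')$. Because $\h'$ has infinite dimensional even and odd parts, \cref{lem: dix} tells us that this space is path connected in the strong-$*$ topology, and \cref{prop: baby functoriality}(2) then delivers the desired based homotopy.

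For parts (ii) and (iii), I would follow the same strategy, but invoke \cref{sum nat amp morph}(i) and (ii) respectively to handle the sub-rectangle in which the sum map meets the amplified morphism. In part (ii) this produces the intermediate space $\K(B, (\h_0 \oplus \h_0) \hat\otimes \h) \cong \K(B, \h_0 \hat\otimes \h \oplus \h_0 \hat\otimes \h)$, and what remains is to show that $1 \hat\otimes \operatorname{Ad}(v)$ applied after $(\kappa, \iota \oplus \iota)_*$ agrees, up to homotopy, with $(\kappa, \iota)_*$ applied after $1 \hat\otimes \operatorname{Ad}(u)$. A direct inspection at the algebra level (using the construction of $(\kappa, \iota)_*$ from \cref{defi: amplified homo}) shows that both induced maps differ only by conjugation by an even isometry $\h \oplus \h \to \h$ composed with appropriate amplifications, i.e.\ by two elements of $\operatorname{Iso}^{\operatorname{ev}}(\h_1 \oplus \h_1, \h_1)$ tensored with the identity on $\h$. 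Part (iii) is completely analogous.

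The main obstacle will be bookkeeping: making sure that, after unfolding the amplified maps and inserting all the flips and the natural isomorphisms $\Psi$, the two paths really do factor through an honest $\K(-, w_i)$ for even isometries $w_0, w_1$ between the same Hilbert spaces. Once this is laid out carefully (most cleanly by drawing large diagrams at the algebra level and invoking \cref{nat of amp morph} to move the isometries past the $*$-homomorphism parts), the homotopy itself is free: the relevant strong-$*$ path-connectedness of the isometry spaces is guaranteed by \cref{lem: dix}, and the continuous map $\operatorname{Iso}^{\operatorname{ev}}(-,-) \times \K(A,-) \to \K(A,-)$ from the proof of \cref{thm: enriched cat} converts any strong-$*$ path into the required based homotopy via \cref{prop: baby functoriality}(2).
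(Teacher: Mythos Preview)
Your proposal is correct and follows essentially the same route as the paper: split each square into a strictly commuting piece coming from the naturality of sums (\cref{cor: hom preserve sum} for (i), \cref{sum nat amp morph} for (ii)--(iii)), and a remaining piece that commutes only up to homotopy because the two relevant even isometries live in a strong-$*$ path-connected space (\cref{lem: dix}). The paper treats only (iii) in detail and organizes the ``remaining piece'' slightly differently, by inserting an auxiliary arrow $1\hat\otimes\operatorname{Ad}(w\hat\otimes 1)$ for some $w\in\operatorname{Iso}^{\operatorname{ev}}(\h_1\oplus\h_1,\h_1)$ and splitting into a trapezoid (handled by \cref{nat of amp morph}) and a triangle (handled by path-connectedness of $\operatorname{Iso}^{\operatorname{ev}}((\h_1\oplus\h_1)\hat\otimes\h,\h_1\hat\otimes\h)$); your plan to compare the two composites directly amounts to the same thing once you invoke \cref{nat of amp morph}, which you do mention. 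One small imprecision: in (ii) the two isometries you end up comparing are not literally both of the form $(\text{something in }\operatorname{Iso}^{\operatorname{ev}}(\h_1\oplus\h_1,\h_1))\hat\otimes 1$, since one side still carries $u:\h_0\oplus\h_0\to\h_0$ followed by $\iota$; it is \cref{nat of amp morph} that lets you trade this for an isometry on the target side, exactly as the paper does.
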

\begin{proof}
   We only discuss commutativity of $(iii)$; $(i)$ and $(ii)$ are similar (and easier).

   We have the following diagram:
   \[
   % https://q.uiver.app/#q=WzAsNyxbMCwwLCJcXG1hdGhiYntLfShBIFxcaGF0XFxvdGltZXMgQixcXG1hdGhjYWx7SH1fMCkgXFx0aW1lcyBcXG1hdGhiYntLfShBIFxcaGF0XFxvdGltZXMgQixcXG1hdGhjYWx7SH1fMCkiXSxbMCwyLCJcXG1hdGhiYntLfShBIFxcaGF0XFxvdGltZXMgQixcXG1hdGhjYWx7SH1fMCkiXSxbMiwwLCJcXG1hdGhiYntLfShDLFxcbWF0aGNhbHtIfV8xIFxcaGF0XFxvdGltZXNcXG1hdGhjYWx7SH0pIFxcdGltZXMgXFxtYXRoYmJ7S30oQyxcXG1hdGhjYWx7SH1fMSBcXGhhdFxcb3RpbWVzIFxcbWF0aGNhbHtIfSkiXSxbMiwyLCJcXG1hdGhiYntLfShDLFxcbWF0aGNhbHtIfV8xIFxcaGF0XFxvdGltZXMgXFxtYXRoY2Fse0h9KSJdLFswLDEsIlxcbWF0aGJie0t9KEEgXFxoYXRcXG90aW1lcyBCLFxcbWF0aGNhbHtIfV8wIFxcb3BsdXMgXFxtYXRoY2Fse0h9XzApIl0sWzEsMSwiXFxtYXRoYmJ7S30oQywoXFxtYXRoY2Fse0h9XzEgXFxvcGx1cyBcXG1hdGhjYWx7SH1fMSkgXFxoYXRcXG90aW1lcyBcXG1hdGhjYWx7SH0pKSJdLFsyLDEsIlxcbWF0aGJie0t9KEMsXFxtYXRoY2Fse0h9XzEgXFxoYXRcXG90aW1lcyBcXG1hdGhjYWx7SH0gXFxvcGx1cyBcXG1hdGhjYWx7SH1fMSBcXGhhdFxcb3RpbWVzIFxcbWF0aGNhbHtIfSkiXSxbMCwyLCIoXFxrYXBwYSxcXGlvdGEpXyogXFx0aW1lcyAoXFxrYXBwYSxcXGlvdGEpXyoiXSxbMSwzLCIoXFxrYXBwYSxcXGlvdGEpXyoiLDAseyJsYWJlbF9wb3NpdGlvbiI6MzB9XSxbNCw1LCIoXFxrYXBwYSxcXGlvdGFfMCBcXG9wbHVzIFxcaW90YV8wKV8qIl0sWzIsNiwiXFxvcGVyYXRvcm5hbWV7YWRkfSJdLFs1LDYsIlxcY29uZyJdLFs2LDMsIjEgXFxoYXRcXG90aW1lcyBcXG9wZXJhdG9ybmFtZXtBZH0odikiXSxbMCw0LCJcXG9wZXJhdG9ybmFtZXthZGR9Il0sWzQsMSwiMSBcXGhhdFxcb3RpbWVzIFxcb3BlcmF0b3JuYW1le0FkfSh1KSJdLFs1LDMsIjEgXFxoYXRcXG90aW1lcyBcXG9wZXJhdG9ybmFtZXtBZH0odyBcXGhhdFxcb3RpbWVzIDEpIiwyLHsibGFiZWxfcG9zaXRpb24iOjIwLCJzdHlsZSI6eyJib2R5Ijp7Im5hbWUiOiJkYXNoZWQifX19XV0=
\begin{tikzcd}
	{\mathbb{K}(A \hat\otimes B,\mathcal{H}_0) \times \mathbb{K}(A \hat\otimes B,\mathcal{H}_0)} && {\mathbb{K}(C,\mathcal{H}_1 \hat\otimes\mathcal{H}) \times \mathbb{K}(C,\mathcal{H}_1 \hat\otimes \mathcal{H})} \\
	{\mathbb{K}(A \hat\otimes B,\mathcal{H}_0 \oplus \mathcal{H}_0)} & {\mathbb{K}(C,(\mathcal{H}_1 \oplus \mathcal{H}_1) \hat\otimes \mathcal{H}))} & {\mathbb{K}(C,\mathcal{H}_1 \hat\otimes \mathcal{H} \oplus \mathcal{H}_1 \hat\otimes \mathcal{H})} \\
	{\mathbb{K}(A \hat\otimes B,\mathcal{H}_0)} && {\mathbb{K}(C,\mathcal{H}_1 \hat\otimes \mathcal{H})}
	\arrow["{(\kappa,\iota)_* \times (\kappa,\iota)_*}", from=1-1, to=1-3]
	\arrow["{(\kappa,\iota)_*}"{pos=0.3}, from=3-1, to=3-3]
	\arrow["{(\kappa,\iota_0 \oplus \iota_0)_*}", from=2-1, to=2-2]
	\arrow["{\operatorname{add}}", from=1-3, to=2-3]
	\arrow["\cong", from=2-2, to=2-3]
	\arrow["{1 \hat\otimes \operatorname{Ad}(v)}", from=2-3, to=3-3]
	\arrow["{\operatorname{add}}", from=1-1, to=2-1]
	\arrow["{1 \hat\otimes \operatorname{Ad}(u)}", from=2-1, to=3-1]
	\arrow["{1 \hat\otimes \operatorname{Ad}(w \hat\otimes 1)}"'{pos=0.2}, dashed, from=2-2, to=3-3]
\end{tikzcd}
   \]
   Our goal is to show that the outer diagram commutes up to homotopy. To that end, it suffices to show that for some (in fact any) isometry $w:\h_1 \oplus \h_1 \rightarrow \h_1$, each of the three smaller diagrams commute up to homotopy. The top rectangle commutes from \cref{sum nat amp morph} (ii). The trapezoid at the bottom left commutes upto homotopy by \cref{nat of amp morph} (ii) (here, we use the fact that $\operatorname{Iso}^{ev}(\h_0 \oplus \h_0,\h_1 \hat\otimes \h)$ is path-connected in the strong-$*$ topology). The triangle at the bottom right commutes up to homotopy too (here, we use the fact that $\operatorname{Iso}^{ev}((\h_1 \oplus \h_1)\hat\otimes \h,\h_1 \hat\otimes \h)$ is path-connected in the strong-$*$ topology).
\end{proof}
\section{Products in the spectral model}
One of the biggest advantages of working with the spectral model of $K$-theory is that it is well acquainted to handling sums and products. Having already studied sums, in this section we study products.

\subsection{Products and naturality}

We start off with the definition of products in the spectral picture:
\begin{defi} \label{prop: product}
    Given $\Z/2\Z$-graded $C^*$-algebras $A$ and $B$, we have a continuous map 
    \[
    p:\K(A,\h_0) \times \K(B,\h_1) \rightarrow \K(A \hat\otimes B,\h_0 \hat\otimes \h_1),
    \] which is natural with respect to $*$-homomorphisms $A \rightarrow A'$ and $B \rightarrow B'$,  and even isometries $\h_0 \rightarrow \h_0'$ and $\h_1 \rightarrow \h_1
    '$. On an element $(\varphi,\psi)\in  \K(A,\h_0) \times \K(B,\h_1)$, it yields the composite
    \begin{equation} \label{eqn:product}
            % https://q.uiver.app/#q=WzAsNSxbMCwwLCJcXG1hdGhjYWx7U30iXSxbMSwwLCJcXG1hdGhjYWx7U30gXFxoYXRcXG90aW1lc1xcbWF0aGNhbHtTfSJdLFsyLDAsIkFcXGhhdFxcb3RpbWVzIFxcbWF0aGNhbHtLfShcXG1hdGhjYWx7SH1fMClcXGhhdFxcb3RpbWVzIEIgXFxoYXRcXG90aW1lc1xcbWF0aGNhbHtLfShcXG1hdGhjYWx7SH1fMSkiXSxbMywwLCJBXFxoYXRcXG90aW1lcyBCIFxcaGF0XFxvdGltZXNcXG1hdGhjYWx7S30oXFxtYXRoY2Fse0h9XzApIFxcaGF0XFxvdGltZXNcXG1hdGhjYWx7S30oXFxtYXRoY2Fse0h9XzEpIl0sWzQsMCwiQVxcaGF0XFxvdGltZXMgQiBcXGhhdFxcb3RpbWVzXFxtYXRoY2Fse0t9KFxcbWF0aGNhbHtIfV8wXFxoYXRcXG90aW1lcyBcXG1hdGhjYWx7SH1fMSkiXSxbMCwxLCJcXHRyaWFuZ2xlIl0sWzEsMiwiXFx2YXJwaGkgXFxoYXRcXG90aW1lc1xccHNpIl0sWzIsMywiMVxcaGF0XFxvdGltZXNcXHRhdVxcaGF0XFxvdGltZXMxIl0sWzMsNCwiMVxcaGF0XFxvdGltZXMxXFxoYXRcXG90aW1lc1xcUHNpIl1d
\begin{tikzcd}
	{\mathcal{S}} & {\mathcal{S} \hat\otimes\mathcal{S}} & {A\hat\otimes \mathcal{K}(\mathcal{H}_0)\hat\otimes B \hat\otimes\mathcal{K}(\mathcal{H}_1)} & {A\hat\otimes B \hat\otimes\mathcal{K}(\mathcal{H}_0) \hat\otimes\mathcal{K}(\mathcal{H}_1)} & {A\hat\otimes B \hat\otimes\mathcal{K}(\mathcal{H}_0\hat\otimes \mathcal{H}_1)}
	\arrow["\triangle", from=1-1, to=1-2]
	\arrow["{\varphi \hat\otimes\psi}", from=1-2, to=1-3]
	\arrow["1\hat\otimes\tau\hat\otimes1", from=1-3, to=1-4]
	\arrow["1\hat\otimes1\hat\otimes\Psi", from=1-4, to=1-5]
\end{tikzcd}
    \end{equation}
    where $\tau:\com(\h_0) \hat\otimes B \rightarrow B \hat\otimes \com(\h_0)$ is the flip isomorphism, and $\Psi$ is the canonical isomorphism between $\mathcal{K}(\h_0) \hat\otimes \mathcal{K}(\h_1) $ and $ \mathcal{K}(\h_0\hat\otimes \h_1)$. We call the element $p(\varphi,\psi)$ as the product of $\varphi$ and $\psi$.
\end{defi}

    Note that the continuity of the map is immediate in light of \cref{lem: 1st lemma} and \cref{lem: cont of tensor product}. The naturality assertion is immediate from \cref{eqn:product}. 
\begin{rem}
   For our convenience, we will also call the map $p$ as ``prod" at some junctures.
\end{rem}
\begin{rem} \label{cor: hom preserve prod}
    Note that the naturality assertion in \cref{prop: product} really means that the following diagram commutes, where $\varphi_A: A \rightarrow A',\varphi_B: B \rightarrow B'$ are a graded $*$-homomorphism, and $\iota_0:\h_0 \rightarrow \h_0'$, $\iota_1:\h_1 \rightarrow \h_1'$ are even isometries.
    \[
   % https://q.uiver.app/#q=WzAsNCxbMCwwLCJcXG1hdGhiYntLfShBLFxcbWF0aGNhbHtIfV8wKSBcXHRpbWVzIFxcbWF0aGJie0t9KEIsXFxtYXRoY2Fse0h9XzEpIl0sWzIsMCwiXFxtYXRoYmJ7S30oQScsXFxtYXRoY2Fse0h9XzAnKSBcXHRpbWVzIFxcbWF0aGJie0t9KEInLFxcbWF0aGNhbHtIfV8xJykiXSxbMCwxLCJcXG1hdGhiYntLfShBIFxcaGF0XFxvdGltZXMgQixcXG1hdGhjYWx7SH1fMCBcXGhhdFxcb3RpbWVzIFxcbWF0aGNhbHtIfV8xKSJdLFsyLDEsIlxcbWF0aGJie0t9KEEnIFxcaGF0XFxvdGltZXMgQicsXFxtYXRoY2Fse0h9XzAnIFxcb3BsdXMgXFxtYXRoY2Fse0h9XzEnKSJdLFswLDIsIlxcb3BlcmF0b3JuYW1le3Byb2R9Il0sWzEsMywiXFxvcGVyYXRvcm5hbWV7cHJvZH0iXSxbMCwxLCJcXG1hdGhiYntLfShcXHZhcnBoaV9BLFxcaW90YV8wKSBcXHRpbWVzIFxcbWF0aGJie0t9KFxcdmFycGhpX0IsXFxpb3RhXzEpIl0sWzIsMywiXFxtYXRoYmJ7S30oXFx2YXJwaGlfQSBcXGhhdFxcb3RpbWVzIFxcdmFycGhpX0IsXFxpb3RhXzAgXFxoYXRcXG90aW1lcyBcXGlvdGFfMSkiXV0=
\begin{tikzcd}
	{\mathbb{K}(A,\mathcal{H}_0) \times \mathbb{K}(B,\mathcal{H}_1)} && {\mathbb{K}(A',\mathcal{H}_0') \times \mathbb{K}(B',\mathcal{H}_1')} \\
	{\mathbb{K}(A \hat\otimes B,\mathcal{H}_0 \hat\otimes \mathcal{H}_1)} && {\mathbb{K}(A' \hat\otimes B',\mathcal{H}_0' \oplus \mathcal{H}_1')}
	\arrow["{\operatorname{prod}}", from=1-1, to=2-1]
	\arrow["{\operatorname{prod}}", from=1-3, to=2-3]
	\arrow["{\mathbb{K}(\varphi_A,\iota_0) \times \mathbb{K}(\varphi_B,\iota_1)}", from=1-1, to=1-3]
	\arrow["{\mathbb{K}(\varphi_A \hat\otimes \varphi_B,\iota_0 \hat\otimes \iota_1)}", from=2-1, to=2-3]
\end{tikzcd}
    \]
    We draw this diagram here for future reference.
\end{rem}
\begin{prop}
    Products are commutative and associative, in the sense that the following diagrams commute strictly:
    \[
    % https://q.uiver.app/#q=WzAsOCxbMCwwLCJcXG1hdGhiYntLfShBLFxcbWF0aGNhbHtIfV8wKSBcXHRpbWVzIFxcbWF0aGJie0t9KEIsXFxtYXRoY2Fse0h9XzEpIl0sWzIsMCwiXFxtYXRoYmJ7S30oQixcXG1hdGhjYWx7SH1fMSkgXFx0aW1lcyBcXG1hdGhiYntLfShBLFxcbWF0aGNhbHtIfV8wKSJdLFswLDEsIlxcbWF0aGJie0t9KEEgXFxoYXRcXG90aW1lcyBCLCBcXG1hdGhjYWx7SH1fMCBcXGhhdFxcb3RpbWVzXFxtYXRoY2Fse0h9XzEpIl0sWzIsMSwiXFxtYXRoYmJ7S30oQiBcXGhhdFxcb3RpbWVzIEEsIFxcbWF0aGNhbHtIfV8xIFxcaGF0XFxvdGltZXMgXFxtYXRoY2Fse0h9XzApIl0sWzAsMiwiXFxtYXRoYmJ7S30oQSxcXG1hdGhjYWx7SH1fMCkgXFx0aW1lcyBcXG1hdGhiYntLfShCLFxcbWF0aGNhbHtIfV8xKSBcXHRpbWVzIFxcbWF0aGJie0t9KEMsXFxtYXRoY2Fse0h9XzIpIl0sWzAsMywiXFxtYXRoYmJ7S30oQSBcXGhhdFxcb3RpbWVzIEIsIFxcbWF0aGNhbHtIfV8wIFxcaGF0XFxvdGltZXNcXG1hdGhjYWx7SH1fMSkgXFx0aW1lcyBcXG1hdGhiYntLfShDLFxcbWF0aGNhbHtIfV8yKSJdLFsyLDIsIlxcbWF0aGJie0t9KEEsXFxtYXRoY2Fse0h9XzApIFxcdGltZXMgXFxtYXRoYmJ7S30oQiBcXGhhdFxcb3RpbWVzIEMsIFxcbWF0aGNhbHtIfV8xIFxcaGF0XFxvdGltZXNcXG1hdGhjYWx7SH1fMikgIl0sWzIsMywiXFxtYXRoYmJ7S30oQSBcXGhhdFxcb3RpbWVzIEIgXFxoYXRcXG90aW1lcyBDLCBcXG1hdGhjYWx7SH1fMCBcXGhhdFxcb3RpbWVzXFxtYXRoY2Fse0h9XzEgXFxoYXRcXG90aW1lcyBcXG1hdGhjYWx7SH1fMikiXSxbMCwxLCJcXG9wZXJhdG9ybmFtZXtmbGlwfSJdLFswLDIsIlxcb3BlcmF0b3JuYW1le3Byb2R9Il0sWzEsMywiXFxvcGVyYXRvcm5hbWV7cHJvZH0iXSxbMiwzLCIoXFx0YXVfe0EgXFxoYXRcXG90aW1lcyBCfSxcXHRhdV97XFxtYXRoY2Fse0h9XzAgXFxoYXRcXG90aW1lcyBcXG1hdGhjYWx7SH1fMX0pIiwyXSxbNCw1LCJcXG9wZXJhdG9ybmFtZXtwcm9kfSBcXHRpbWVzIDEiLDJdLFs0LDYsIjEgXFx0aW1lcyBcXG9wZXJhdG9ybmFtZXtwcm9kfSJdLFs1LDcsIlxcb3BlcmF0b3JuYW1le3Byb2R9IiwyXSxbNiw3LCJcXG9wZXJhdG9ybmFtZXtwcm9kfSJdXQ==
\begin{tikzcd}
	{\mathbb{K}(A,\mathcal{H}_0) \times \mathbb{K}(B,\mathcal{H}_1)} && {\mathbb{K}(B,\mathcal{H}_1) \times \mathbb{K}(A,\mathcal{H}_0)} \\
	{\mathbb{K}(A \hat\otimes B, \mathcal{H}_0 \hat\otimes\mathcal{H}_1)} && {\mathbb{K}(B \hat\otimes A, \mathcal{H}_1 \hat\otimes \mathcal{H}_0)} \\
	{\mathbb{K}(A,\mathcal{H}_0) \times \mathbb{K}(B,\mathcal{H}_1) \times \mathbb{K}(C,\mathcal{H}_2)} && {\mathbb{K}(A,\mathcal{H}_0) \times \mathbb{K}(B \hat\otimes C, \mathcal{H}_1 \hat\otimes\mathcal{H}_2) } \\
	{\mathbb{K}(A \hat\otimes B, \mathcal{H}_0 \hat\otimes\mathcal{H}_1) \times \mathbb{K}(C,\mathcal{H}_2)} && {\mathbb{K}(A \hat\otimes B \hat\otimes C, \mathcal{H}_0 \hat\otimes\mathcal{H}_1 \hat\otimes \mathcal{H}_2)}
	\arrow["{\operatorname{flip}}", from=1-1, to=1-3]
	\arrow["{\operatorname{prod}}", from=1-1, to=2-1]
	\arrow["{\operatorname{prod}}", from=1-3, to=2-3]
	\arrow["{(\tau_{A \hat\otimes B},\tau_{\mathcal{H}_0 \hat\otimes \mathcal{H}_1})}"', from=2-1, to=2-3]
	\arrow["{\operatorname{prod} \times 1}"', from=3-1, to=4-1]
	\arrow["{1 \times \operatorname{prod}}", from=3-1, to=3-3]
	\arrow["{\operatorname{prod}}"', from=4-1, to=4-3]
	\arrow["{\operatorname{prod}}", from=3-3, to=4-3]
\end{tikzcd}
    \]
\end{prop}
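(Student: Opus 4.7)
The plan is to reduce both diagrams to statements at the $C^*$-algebra level, where they will follow from the cocommutativity and coassociativity of $\triangle$ (established in the corollary after \cref{prop:comultiplication}) together with the naturality of the flip isomorphisms for the graded maximal tensor product and the canonical isomorphism $\Psi:\com(\h)\hat\otimes \com(\h')\to \com(\h\hat\otimes \h')$. Throughout, I would fix $(\varphi,\psi)\in \K(A,\h_0)\times \K(B,\h_1)$ and chase this pair through the two possible routes, reducing strict commutativity of the space-level square to strict commutativity of the resulting algebra-level diagram (a routine principle also used in the proofs of \cref{prop: sum comm and ass} and \cref{sum nat amp morph}).

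For commutativity, tracing the two routes out of $\mathcal{S}$ produces the diagram
\[
\begin{tikzcd}
\mathcal{S} \ar[r,"\triangle"] \ar[d,"\triangle"'] & \mathcal{S}\hat\otimes \mathcal{S} \ar[r,"\varphi\hat\otimes \psi"] \ar[d,"\mathrm{flip}_{\mathcal{S},\mathcal{S}}"] & A\hat\otimes \com(\h_0)\hat\otimes B\hat\otimes \com(\h_1) \ar[d] \\
\mathcal{S}\hat\otimes \mathcal{S} \ar[r,"\psi\hat\otimes \varphi"'] & B\hat\otimes\com(\h_1)\hat\otimes A\hat\otimes \com(\h_0) \ar[r] & B\hat\otimes A\hat\otimes\com(\h_1\hat\otimes \h_0)
\end{tikzcd}
\]
together with the comparison map $(\tau_{A\hat\otimes B},\tau_{\h_0\hat\otimes \h_1})$ going from the top-right composite $A\hat\otimes B\hat\otimes \com(\h_0\hat\otimes\h_1)$ to the bottom-right corner. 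The leftmost square commutes because $\triangle$ is cocommutative: this is immediate from the formulas $\triangle(u)=u\hat\otimes u$ and $\triangle(v)=u\hat\otimes v+v\hat\otimes u$, both of which are symmetric under the flip of the two $\mathcal{S}$-factors, and the Stone--Weierstrass argument used in \cref{prop:comultiplication} extends this to all of $\mathcal{S}$. The remaining portion of the diagram is bookkeeping: it is the assertion that applying $\varphi\hat\otimes\psi$ then fully permuting all four factors (via the graded flips on $A\hat\otimes B$ and $\com(\h_0)\hat\otimes \com(\h_1)$, followed by $\Psi$) agrees with first permuting and then applying $\psi\hat\otimes\varphi$. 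This follows from the naturality of the flip isomorphism of the graded maximal tensor product and the naturality of $\Psi$ with respect to even isometries.

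For associativity, the same method applies with $\triangle$ replaced by its coassociativity relation $(\triangle\hat\otimes 1)\circ \triangle = (1\hat\otimes \triangle)\circ \triangle$, also established in the corollary after \cref{prop:comultiplication}. Here, on an element $(\varphi,\psi,\chi)$, both routes in the associativity square produce a map $\mathcal{S}\to A\hat\otimes B\hat\otimes C\hat\otimes \com(\h_0\hat\otimes \h_1\hat\otimes \h_2)$ whose first step is a two-fold application of $\triangle$, followed by $\varphi\hat\otimes \psi\hat\otimes \chi$, followed by the unique permutation of tensor factors that collects the algebra part and the compacts part. Coassociativity of $\triangle$ takes care of the left edge, and coherence of the graded flip and of $\Psi$ takes care of the right edge.

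The only real obstacle is the careful handling of signs in the graded flip and of the associativity/coherence of $\Psi$: one needs to verify that the two factorizations of the permutation $(A\hat\otimes \com(\h_0))\hat\otimes(B\hat\otimes \com(\h_1))\to A\hat\otimes B\hat\otimes \com(\h_0\hat\otimes \h_1)$ used in the definition of $p$ really agree on both sides of each diagram. I would handle this by checking on elementary homogeneous tensors, where all signs are explicit and cancel pairwise because the two routes perform the same underlying permutation of four (respectively six) graded tensor factors; this is the same strategy that makes the outer rectangles in \cref{sum nat amp morph} and \cref{sum preserve H-space} strictly commute. No homotopies are needed at any point, yielding strict commutativity as asserted.
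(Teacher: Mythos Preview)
Your approach is essentially the same as the paper's: reduce to an algebra-level diagram and verify commutativity on homogeneous elementary tensors, with cocommutativity (resp.\ coassociativity) of $\triangle$ handling the $\mathcal{S}$-side and naturality of the graded flips and $\Psi$ handling the rest. One minor correction: the corollary after \cref{prop:comultiplication} only records coassociativity and counitality, not cocommutativity, so your appeal to it for the commutativity square is misplaced---but since you independently verify cocommutativity from the formulas for $\triangle(u)$ and $\triangle(v)$ (which is correct, as $u$ is even so no signs appear), this does not affect the argument.
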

\begin{proof}
    We only discuss the commutativity case, the associativity case is more straight-forward.

    Checking the strict commutativity of the space level diagram amounts to checking commutativity of the following diagram at the algebra level, where $\varphi \in \mathbb{K}(A,\h_0), \psi \in \mathbb{K}(B,\h_1)$:
    \[
\begin{tikzcd}
	{\mathcal{S}} \\
	{\mathcal{S} \hat\otimes \mathcal{S}} && {\mathcal{S} \hat\otimes \mathcal{S}} \\
	{A \hat\otimes \mathcal{K}(\mathcal{H}_0) \hat\otimes B \hat\otimes \mathcal{K}(\mathcal{H}_1)} && {B \hat\otimes \mathcal{K}(\mathcal{H}_1) \hat\otimes A \hat\otimes \mathcal{K}(\mathcal{H}_0)} \\
	{A \hat\otimes B\hat\otimes \mathcal{K}(\mathcal{H}_0)  \hat\otimes \mathcal{K}(\mathcal{H}_1)} && {B \hat\otimes A\hat\otimes \mathcal{K}(\mathcal{H}_1)  \hat\otimes \mathcal{K}(\mathcal{H}_0)} \\
	{A \hat\otimes B \hat\otimes \mathcal{K}(\mathcal{H}_0 \hat\otimes \mathcal{H}_1)} && {B \hat\otimes A \hat\otimes \mathcal{K}(\mathcal{H}_1 \hat\otimes \mathcal{H}_0)}
	\arrow["{f \hat\otimes g \mapsto (-1)^{\partial f \partial g}g \hat\otimes f}", from=2-1, to=2-3]
	\arrow["\triangle", from=1-1, to=2-1]
	\arrow["{\varphi \hat\otimes \psi}", from=2-1, to=3-1]
	\arrow["{\operatorname{flip} B ~\&~ \mathcal{K}(\mathcal{H}_0)}", from=3-1, to=4-1]
	\arrow["\cong", from=4-1, to=5-1]
	\arrow["{\psi \hat\otimes \varphi}", from=2-3, to=3-3]
	\arrow["{\operatorname{flip} A ~\&~ \mathcal{K}(\mathcal{H}_1)}", from=3-3, to=4-3]
	\arrow["\cong", from=4-3, to=5-3]
	\arrow["{\tau_{A \hat\otimes B} \hat\otimes \operatorname{Ad}(\tau_{\mathcal{H}_0 \hat\otimes \mathcal{H}_1})}", from=5-1, to=5-3]
\end{tikzcd}
    \]
    It does commute, as can be checked by tracing the fate of the homogeneous element $f \hat\otimes g \in \s \hat\otimes \s$.
\end{proof}
We now show that products ``distribute" over sums. In fact, it suffices to show only one sided distributivity:
\begin{lem}\label{prop: sum distri over prod} 
    Products distribute over sums from the left, in the sense that the following diagram commutes strictly:
     \[
\begin{tikzcd}
	{\mathbb{K}(A,\mathcal{H}_0) \times \mathbb{K}(B,\mathcal{H}_1) \times \mathbb{K}(B,\mathcal{H}_2)} && {\mathbb{K}(A,\mathcal{H}_0) \times \mathbb{K}(A,\mathcal{H}_0) \times\mathbb{K}(B,\mathcal{H}_1) \times \mathbb{K}(B,\mathcal{H}_2)} \\
	{\mathbb{K}(A,\mathcal{H}_0) \times \mathbb{K}(B,\mathcal{H}_1 \oplus \mathcal{H}_2)} && {\mathbb{K}(A,\mathcal{H}_0) \times \mathbb{K}(B,\mathcal{H}_1) \times\mathbb{K}(A,\mathcal{H}_0) \times \mathbb{K}(B,\mathcal{H}_2)} \\
	&& {\mathbb{K}(A \hat\otimes B,\mathcal{H}_0 \hat\otimes \mathcal{H}_1) \times \mathbb{K}(A \hat\otimes B,\mathcal{H}_0 \hat\otimes \mathcal{H}_2)} \\
	{\mathbb{K}(A \hat\otimes B,\mathcal{H}_0 \hat\otimes (\mathcal{H}_1 \oplus  \mathcal{H}_2))} && {\mathbb{K}(A \hat\otimes B,\mathcal{H}_0 \hat\otimes \mathcal{H}_1 \oplus \mathcal{H}_0 \hat\otimes \mathcal{H}_2))}
	\arrow["{1\times \operatorname{add}}"', from=1-1, to=2-1]
	\arrow["{\operatorname{flip} \mathbb{K}(A,\mathcal{H}_0)~\&~\mathbb{K}(B,\mathcal{H}_1)}", color={rgb,255:red,214;green,92;blue,214}, from=1-3, to=2-3]
	\arrow["{\operatorname{dia}\times 1 \times 1}", from=1-1, to=1-3]
	\arrow["{\operatorname{prod}}"', from=2-1, to=4-1]
	\arrow["{ \operatorname{prod} \times \operatorname{prod}}", color={rgb,255:red,214;green,92;blue,214}, from=2-3, to=3-3]
	\arrow["{\operatorname{add}}", color={rgb,255:red,214;green,92;blue,214}, from=3-3, to=4-3]
	\arrow["\cong"', from=4-1, to=4-3]
\end{tikzcd}
    \]
     where the bottom isomorphism is the one induced by the canonical unitary isomorphism $\h_0 \hat\otimes (\h_1 \oplus \h_2) \rightarrow \h_0 \hat\otimes \h_1 \oplus \h_0 \hat\otimes \h_2$.
\end{lem}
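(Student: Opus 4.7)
The plan is to unravel both routes as explicit composites of graded $*$-homomorphisms from $\s$, and to verify their equality by isolating a single ``coalgebra-type'' compatibility between $\operatorname{dia}$ and $\triangle$. Concretely, fix $(\varphi,\psi_1,\psi_2) \in \K(A,\h_0) \times \K(B,\h_1) \times \K(B,\h_2)$. Both itineraries in the diagram produce a graded $*$-homomorphism $\s \to A \hat\otimes B \hat\otimes \com(\h_0\hat\otimes \h_1 \oplus \h_0\hat\otimes \h_2)$, and strict commutativity reduces to showing these two homomorphisms agree. I would expand $\operatorname{add}$ using \cref{prop: sum} and $\operatorname{prod}$ using \cref{prop: product}: the top route begins $\s \xrightarrow{\triangle} \s\hat\otimes\s \xrightarrow{1\hat\otimes(1\hat\otimes \operatorname{inc})\circ(\psi_1\oplus\psi_2)\circ \operatorname{dia}} \cdots$, while the bottom route begins $\s \xrightarrow{\operatorname{dia}} \s\oplus\s \xrightarrow{\triangle\oplus\triangle} (\s\hat\otimes\s)\oplus(\s\hat\otimes\s) \to \cdots$.

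The key identity I would isolate is the ``distributivity'' of the comultiplication over the diagonal:
\[
(1 \hat\otimes \operatorname{dia}) \circ \triangle \;=\; (\triangle \oplus \triangle) \circ \operatorname{dia}
\]
as maps $\s \to (\s \hat\otimes \s) \oplus (\s \hat\otimes \s)$, modulo the canonical identification $\s \hat\otimes (\s \oplus \s) \cong (\s\hat\otimes\s) \oplus (\s\hat\otimes\s)$. This is an equality of $*$-homomorphisms, so by Stone-Weierstrass and the fact that $u,v$ generate $\s$ (\cref{prop:comultiplication}), it suffices to check it on $u$ and $v$; alternatively, one just observes that both composites send a general $f \in \s$ to the pair $(\triangle(f), \triangle(f))$.

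Once this central identity is in place, the rest of the comparison is bookkeeping with flip isomorphisms and the natural isomorphisms $\Psi$ between $\com(-)\hat\otimes\com(-)$ and $\com(-\hat\otimes-)$. Specifically, I would use that the flip $1 \hat\otimes \tau \hat\otimes 1$ is compatible with the splitting of $\s \hat\otimes B \hat\otimes \com(\h_1 \oplus \h_2)$ as a direct sum of $\s \hat\otimes B \hat\otimes \com(\h_i)$ (by bilinearity of the maximal tensor product), and that $\Psi$ is natural in both Hilbert-space arguments, so it intertwines the canonical isomorphism $\com(\h_0) \hat\otimes \big(\com(\h_1) \oplus \com(\h_2)\big) \cong \bigoplus_i \com(\h_0)\hat\otimes\com(\h_i)$ with the unitary-conjugation isomorphism $\com(\h_0 \hat\otimes (\h_1 \oplus \h_2)) \cong \com(\h_0\hat\otimes\h_1 \oplus \h_0\hat\otimes\h_2)$ induced by the canonical unitary on Hilbert spaces.

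I do not expect a deep obstacle: the hard part is purely organizational, namely keeping track of which tensor factors are being permuted, included, or split, and verifying that every one of these canonical identifications is mutually compatible. A clean way to execute the verification is to pick a homogeneous elementary tensor $f \hat\otimes g \in \s$ (equivalently, evaluate on $u$ and $v$) and trace it through both itineraries; the only nontrivial identification is the coalgebra identity above, and everything else is a naturality statement for the functors $- \hat\otimes -$ and $\com(-)$.
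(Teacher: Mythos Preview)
Your proposal is correct and follows essentially the same approach as the paper: both unravel the space-level diagram to a diagram of $*$-homomorphisms out of $\s$, isolate the key coalgebra compatibility $(1 \hat\otimes \operatorname{dia}) \circ \triangle = (\triangle \oplus \triangle) \circ \operatorname{dia}$ (modulo the canonical identification $\s \hat\otimes (\s \oplus \s) \cong (\s\hat\otimes\s) \oplus (\s\hat\otimes\s)$), and then verify the remaining bookkeeping with flips, inclusions, and $\Psi$ by tracing homogeneous elementary tensors.
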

\begin{rem}
    Purple arrows have been used on the right-hand side of this diagram to use that portion in a later diagram; it is not really needed for the proof below.
\end{rem}
\begin{proof}
    (We only discuss right distributivity of sum over products; left distributivity will be completely analogous.)
   Unravelling the above space level diagram down to the $C^*$-algebra level, we have:
   \[
\begin{tikzcd}
	& {\mathcal{S}} \\
	{\mathcal{S} \hat\otimes(\mathcal{S} \oplus \mathcal{S})} && {\mathcal{S} \hat\otimes \mathcal{S} \oplus \mathcal{S} \hat\otimes \mathcal{S} } \\
	{A \hat\otimes\mathcal{K}(\mathcal{H}_0) \hat\otimes(B \hat\otimes\mathcal{K}(\mathcal{H}_1) \oplus B \hat\otimes\mathcal{K}(\mathcal{H}_2))} && {A \hat\otimes\mathcal{K}(\mathcal{H}_0) \hat\otimes B \hat\otimes\mathcal{K}(\mathcal{H}_1) \oplus A \hat\otimes\mathcal{K}(\mathcal{H}_0) \hat\otimes B \hat\otimes\mathcal{K}(\mathcal{H}_2))} \\
	{A \hat\otimes\mathcal{K}(\mathcal{H}_0) \hat\otimes(B \hat\otimes\mathcal{K}(\mathcal{H}_1 \oplus \mathcal{H}_2))} && {A \hat\otimes B \hat\otimes\mathcal{K}(\mathcal{H}_0) \hat\otimes\mathcal{K}(\mathcal{H}_1) \oplus A \hat\otimes B \hat\otimes\mathcal{K}(\mathcal{H}_0) \hat\otimes\mathcal{K}(\mathcal{H}_2))} \\
	{A \hat\otimes B \hat\otimes\mathcal{K}(\mathcal{H}_0)\hat\otimes\mathcal{K}(\mathcal{H}_1 \oplus \mathcal{H}_2)} && {A \hat\otimes B \hat\otimes\mathcal{K}(\mathcal{H}_0 \hat\otimes\mathcal{H}_1) \oplus A \hat\otimes B \hat\otimes\mathcal{K}(\mathcal{H}_0 \hat\otimes\mathcal{H}_2))} \\
	{A \hat\otimes B \hat\otimes\mathcal{K}(\mathcal{H}_0\hat\otimes(\mathcal{H}_1 \oplus \mathcal{H}_2))} && {A \hat\otimes B \hat\otimes\mathcal{K}(\mathcal{H}_0 \hat\otimes\mathcal{H}_1 \oplus \mathcal{H}_0 \hat\otimes\mathcal{H}_2)}
	\arrow["{\alpha\hat\otimes \beta_1 \oplus \alpha \hat\otimes\beta_2}", from=2-3, to=3-3]
	\arrow["{\alpha \hat\otimes(\beta_1 \oplus \beta_2)}"', from=2-1, to=3-1]
	\arrow["{1 \hat\otimes \operatorname{add}}"', from=3-1, to=4-1]
	\arrow["{(\operatorname{flip} B ~\&~\mathcal{K}(\mathcal{H}_0)) \oplus (\operatorname{flip} B ~\&~\mathcal{K}(\mathcal{H}_1))}", from=3-3, to=4-3]
	\arrow["{\operatorname{flip} B ~\&~\mathcal{K}(\mathcal{H}_0)}"', from=4-1, to=5-1]
	\arrow["{1 \hat\otimes 1 \hat\otimes \Psi}"', from=5-1, to=6-1]
	\arrow["{1 \hat\otimes1 \hat\otimes \Psi \oplus1\hat\otimes1\hat\otimes \Psi}", from=4-3, to=5-3]
	\arrow["{\operatorname{add}}", from=5-3, to=6-3]
	\arrow["\cong", from=6-1, to=6-3]
	\arrow[from=1-2, to=2-1]
	\arrow["{(\triangle,\triangle)}", from=1-2, to=2-3]
\end{tikzcd}
   \]
   This diagram does commute, as can be seen by considering homogeneous elementary tensors. Note that the map $\s \rightarrow \s \hat\otimes (\s \oplus \s)$ is the one making the the diagram
   \[
   % https://q.uiver.app/#q=WzAsMyxbMSwwLCJcXG1hdGhjYWx7U30iXSxbMCwxLCJcXG1hdGhjYWx7U30gXFxoYXRcXG90aW1lcyhcXG1hdGhjYWx7U30gXFxvcGx1cyBcXG1hdGhjYWx7U30pIl0sWzIsMSwiXFxtYXRoY2Fse1N9IFxcaGF0XFxvdGltZXMgXFxtYXRoY2Fse1N9IFxcb3BsdXMgXFxtYXRoY2Fse1N9IFxcaGF0XFxvdGltZXMgXFxtYXRoY2Fse1N9ICJdLFswLDFdLFswLDIsIihcXHRyaWFuZ2xlLFxcdHJpYW5nbGUpIl0sWzIsMSwiXFxjb25nIiwyXV0=
\begin{tikzcd}
	& {\mathcal{S}} \\
	{\mathcal{S} \hat\otimes(\mathcal{S} \oplus \mathcal{S})} && {\mathcal{S} \hat\otimes \mathcal{S} \oplus \mathcal{S} \hat\otimes \mathcal{S} }
	\arrow[from=1-2, to=2-1]
	\arrow["{(\triangle,\triangle)}", from=1-2, to=2-3]
	\arrow["\cong"', from=2-3, to=2-1]
\end{tikzcd}
   \]
   commutative, where the bottom isomorphism is inverse to the natural map $f \hat\otimes (g \oplus h) \mapsto f \hat\otimes g \oplus f \hat\otimes h$.
\end{proof}
\begin{prop}
    Products distribute over sums.
\end{prop}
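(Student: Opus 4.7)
The plan is to leverage the commutativity of products (proved just above in this subsection) to bootstrap from the one-sided distributivity of \cref{prop: sum distri over prod} to full two-sided distributivity. What remains to show is the analogue of \cref{prop: sum distri over prod} in which the sum is performed in the first slot, namely that for $\varphi,\varphi' \in \K(A,\h_0)$ and $\psi \in \K(B,\h_1)$ the two composites $\operatorname{prod}(\operatorname{add}(\varphi,\varphi'),\psi)$ and $\operatorname{add}(\operatorname{prod}(\varphi,\psi),\operatorname{prod}(\varphi',\psi))$ agree, up to the canonical identification $\K(A\hat\otimes B,(\h_0\oplus \h_0')\hat\otimes\h_1)\cong \K(A\hat\otimes B,\h_0\hat\otimes\h_1\oplus \h_0'\hat\otimes\h_1)$, as a strictly commutative diagram of spaces.

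First I would draw the ``right distributivity'' diagram in the same shape as \cref{prop: sum distri over prod}, merely swapping the roles of $(A,\h_0)$ and $(B,\h_1)$. Then I would sandwich every occurrence of $\operatorname{prod}\colon \K(A,-)\times\K(B,-)\to \K(A\hat\otimes B,-\hat\otimes -)$ in this diagram with the commutativity-of-products square, which allows me to replace $\operatorname{prod}(-,\psi)$ by $\operatorname{prod}(\psi,-)$ followed by the flip $(\tau_{A\hat\otimes B},\tau_{\h_0\hat\otimes \h_1})$. After this substitution the inner portion of the diagram is precisely the statement of \cref{prop: sum distri over prod}, now with $(B,\h_1)$ in the role of the fixed left factor, and hence commutes by hypothesis. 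The outer flip maps can then be pulled past the terminal $\operatorname{add}$ by the naturality of sums (\cref{cor: hom preserve sum}), applied to the morphism $\tau_{A\hat\otimes B}$ on the algebra side and to the block-diagonal flip $\tau_{\h_0\hat\otimes\h_1}\oplus \tau_{\h_0'\hat\otimes \h_1}$ on the Hilbert space side.

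The main obstacle, and essentially the only thing requiring real care, is bookkeeping: one must keep track of the Koszul signs implicit in $\tau_{A\hat\otimes B}$ and verify that the various graded flip isomorphisms on both the $C^*$-algebra and Hilbert space sides assemble coherently enough that pre- and post-composing the right distributivity diagram with the commutativity squares yields exactly an instance of \cref{prop: sum distri over prod}. Once this coherence is recorded, no additional computation is needed: the interior of the assembled diagram commutes by the lemma, and the outer flip maps commute with $\operatorname{add}$ by the naturality statement already established.
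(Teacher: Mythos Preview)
Your proposal is correct and takes exactly the same approach as the paper: having established left distributivity in \cref{prop: sum distri over prod}, the paper simply remarks that right distributivity follows from the commutativity of products, which is precisely the sandwiching argument you describe. Your write-up is considerably more detailed than the paper's one-line proof, but the strategy is identical.
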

\begin{proof}
    Having already shown left distributivity, right distributivity now follows from the commutativity of products. (It can directly be shown just like right distributivity too, as well).
\end{proof}
\subsection{More naturality}
We now see how the maps in \cref{defi: amplified homo} go with products. To that end, we have the following:
\begin{prop}\label{amp morph nat prod}
     The $K$-theory product is natural with respect the maps described in \cref{defi: amplified homo} in the following sense: if $\alpha: \s \hat\otimes B \rightarrow B' \hat\otimes \com(\h)$ is a $*$-homomorphism, and $\varphi: A \rightarrow A'$ is a $*$-homomorphism, then the following diagram commutes strictly:
     \[
    % https://q.uiver.app/#q=WzAsNCxbMCwwLCJcXG1hdGhiYntLfShBLFxcbWF0aGNhbHtIfV8wKVxcdGltZXMgXFxtYXRoYmJ7S30oQixcXG1hdGhjYWx7SH1fMSkiXSxbMiwwLCJcXG1hdGhiYntLfShBIFxcaGF0XFxvdGltZXMgQixcXG1hdGhjYWx7SH1fMCBcXGhhdFxcb3RpbWVzXFxtYXRoY2Fse0h9XzEpIl0sWzAsMSwiXFxtYXRoYmJ7S30oQScsXFxtYXRoY2Fse0h9XzApXFx0aW1lcyBcXG1hdGhiYntLfShCJyxcXG1hdGhjYWx7SH1fMVxcaGF0XFxvdGltZXNcXG1hdGhjYWx7SH0pIl0sWzIsMSwiXFxtYXRoYmJ7S30oQScgXFxoYXRcXG90aW1lcyBCJyxcXG1hdGhjYWx7SH1fMCBcXGhhdFxcb3RpbWVzXFxtYXRoY2Fse0h9XzEgXFxoYXRcXG90aW1lc1xcbWF0aGNhbHtIfSkiXSxbMCwxLCJLLVxcdGV4dHt0aGVvcnkgcHJvZHVjdH0iXSxbMiwzLCJLLVxcdGV4dHt0aGVvcnkgcHJvZHVjdH0iXSxbMCwyLCJcXHZhcnBoaV8qIFxcdGltZXMgXFxhbHBoYV8qIiwyXSxbMSwzLCIoXFx2YXJwaGkgXFxoYXRcXG90aW1lcyBcXGFscGhhKV8qIl1d
\begin{tikzcd}
	{\mathbb{K}(A,\mathcal{H}_0)\times \mathbb{K}(B,\mathcal{H}_1)} && {\mathbb{K}(A \hat\otimes B,\mathcal{H}_0 \hat\otimes\mathcal{H}_1)} \\
	{\mathbb{K}(A',\mathcal{H}_0)\times \mathbb{K}(B',\mathcal{H}_1\hat\otimes\mathcal{H})} && {\mathbb{K}(A' \hat\otimes B',\mathcal{H}_0 \hat\otimes\mathcal{H}_1 \hat\otimes\mathcal{H})}
	\arrow["{K-\text{theory product}}", from=1-1, to=1-3]
	\arrow["{K-\text{theory product}}", from=2-1, to=2-3]
	\arrow["{\varphi_* \times \alpha_*}"', from=1-1, to=2-1]
	\arrow["{(\varphi \hat\otimes \alpha)_*}", from=1-3, to=2-3]
\end{tikzcd}
     \]
\end{prop}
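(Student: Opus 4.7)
The plan is to verify strict commutativity by tracing an arbitrary pair $(\varphi_A,\psi_B)\in\K(A,\h_0)\times\K(B,\h_1)$ along both itineraries and unfolding them into two graded $*$-homomorphisms $\s\to A'\hat\otimes B'\hat\otimes\com(\h_0\hat\otimes\h_1\hat\otimes\h)$; by the universal characterization of $\s$ it will suffice to check they agree on the generators $u,v$. As in \cref{prop: sum comm and ass} and \cref{sum nat amp morph}, the rest is bookkeeping.

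First I would unpack the \emph{right-then-down} route. The $K$-theory product gives $p(\varphi_A,\psi_B)$ as the composite in \cref{eqn:product}, landing in $A\hat\otimes B\hat\otimes\com(\h_0\hat\otimes\h_1)$. Applying $(\varphi\hat\otimes\alpha)_{*}$, viewed as an amplified morphism of type (ii) from \cref{defi: amplified homo} with $\kappa=\varphi\hat\otimes\alpha\colon A\hat\otimes\s\hat\otimes B\to A'\hat\otimes B'\hat\otimes\com(\h)$ and $\iota=\mathrm{id}$, prepends one more copy of $\triangle$ and a flip $A\,\&\,\s$, then applies $\kappa$, a flip $\com(\h)\,\&\,\com(\h_0\hat\otimes\h_1)$, and the canonical $\Psi$. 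Next I would unpack the \emph{down-then-right} route: $\varphi_{*}(\varphi_A)=(\varphi\hat\otimes 1)\circ\varphi_A$, while $\alpha_{*}(\psi_B)$ is a composite starting with its own $\triangle\colon\s\to\s\hat\otimes\s$. Taking the product then prepends yet another $\triangle$.

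The main step is to reconcile the two occurrences of $\triangle$ in the second route with the single ``product-of-product'' $\triangle$ in the first. This is precisely what coassociativity of the coalgebra structure $(\s,\triangle,\eta)$ proven in the corollary to \cref{prop:comultiplication} delivers: both routes use $(1\hat\otimes\triangle)\circ\triangle=(\triangle\hat\otimes 1)\circ\triangle\colon\s\to\s\hat\otimes\s\hat\otimes\s$, so after this identification the remaining discrepancy consists only of flips and tensor-rearrangements inside $A'\hat\otimes B'\hat\otimes\com(\h_0)\hat\otimes\com(\h_1)\hat\otimes\com(\h)$. I would organize the rest as a single large diagram with cells of three kinds: (a) naturality of the graded flip isomorphism $\tau$ applied to the three homomorphisms $\varphi_A,\psi_B,\alpha$ (noting that $\varphi$ acts on the $A$-factor and so commutes past $\s$ and $B$ for free), (b) functoriality/naturality of $\Psi\colon\com(-)\hat\otimes\com(-)\to\com(-\hat\otimes-)$, and (c) the obvious identity $(\varphi\hat\otimes 1)\hat\otimes(\alpha_{*}(\psi_B))=(\varphi\hat\otimes\alpha)_{*}$-structured tensor after using coassociativity. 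Each cell either commutes tautologically or is an instance already used in \cref{prop: sum comm and ass} or \cref{sum nat amp morph}(ii).

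The main obstacle will be purely bookkeeping: there are several graded flips and three compact-operator isomorphisms $\Psi$ to track, and in the $\Z/2\Z$-graded setting the flip $\tau$ introduces a Koszul sign $(-1)^{\partial f\partial g}$ on homogeneous elementary tensors. To keep this under control I would (as in the paper's earlier proofs) check commutativity on a homogeneous elementary tensor $f\hat\otimes g\hat\otimes h\in\s\hat\otimes\s\hat\otimes\s$ obtained after coassociativity; the signs produced on the two sides match because each route performs exactly the same two transpositions of $\s$-factors across $A$ and $B$. Once the algebra-level diagram is seen to commute on generators (equivalently on these homogeneous simple tensors), the strict commutativity of the original space-level square follows.
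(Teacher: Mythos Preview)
Your proposal is correct and follows essentially the same approach as the paper: unfold both itineraries into graded $*$-homomorphisms out of $\s$ (equivalently, out of $\s^{\hat\otimes 3}$ after the iterated comultiplication), assemble them into one large algebra-level diagram, and verify commutativity on homogeneous elementary tensors. The paper's proof is terser---it simply draws the diagram and states ``It does commute, as can be directly checked with elementary homogeneous tensors''---whereas you are more explicit about the role of coassociativity of $\triangle$ and the naturality of the flips and of $\Psi$; but the underlying argument is the same diagram chase.
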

\begin{rem}
Note that $\alpha$ is as in \cref{defi: amplified homo} (i), whereas $\varphi \hat\otimes \alpha$ is as in \cref{defi: amplified homo} (ii).
\end{rem}
\begin{proof}
    A direct diagram chase, let $\Phi_A \in \K(A,\h_0)$, $\Phi_B \in \K(B,\h_1)$. Then, at the algebra level, we have to show that the following diagram commutes:
    \[
\begin{tikzcd}
	& {\mathcal{S}} \\
	{A\hat\otimes \mathcal{K}(\mathcal{H}_0) \hat\otimes \mathcal{S}\hat\otimes B \hat\otimes \mathcal{K}(\mathcal{H}_1)} && {\mathcal{S} \hat\otimes A\hat\otimes \mathcal{K}(\mathcal{H}_0) \hat\otimes B \hat\otimes \mathcal{K}(\mathcal{H}_1)} \\
	&& {\mathcal{S} \hat\otimes A \hat\otimes B\hat\otimes \mathcal{K}(\mathcal{H}_0 \hat\otimes \mathcal{H}_1)} \\
	{A'  \hat\otimes \mathcal{K}(\mathcal{H}_0) \hat\otimes B' \hat\otimes \mathcal{K}(\mathcal{H})  \hat\otimes \mathcal{K}(\mathcal{H}_1)} && {A \hat\otimes \mathcal{S} \hat\otimes B\hat\otimes \mathcal{K}(\mathcal{H}_0  \hat\otimes\mathcal{H}_1)} \\
	{A'  \hat\otimes \mathcal{K}(\mathcal{H}_0) \hat\otimes B' \hat\otimes \mathcal{K}(\mathcal{H}_1)  \hat\otimes \mathcal{K}(\mathcal{H})} \\
	{A' \hat\otimes B' \hat\otimes \mathcal{K}(\mathcal{H}_0\hat\otimes \mathcal{H}_1)  \hat\otimes \mathcal{K}(\mathcal{H})} && {A' \hat\otimes B' \hat\otimes \mathcal{K}(\mathcal{H})\hat\otimes \mathcal{K}(\mathcal{H}_0  \hat\otimes \mathcal{H}_1)} \\
	&& {}
	\arrow["{\Phi_A \hat\otimes1\hat\otimes \Phi_B}"', from=1-2, to=2-1]
	\arrow["{1 \hat\otimes\Phi_A\hat\otimes\Phi_B}", from=1-2, to=2-3]
	\arrow["{\text{flip}~B~\&~\mathcal{K}(\mathcal{H}_0) ~\text{(step of product)}}", from=2-3, to=3-3]
	\arrow["{\text{flip}~A~\&~\mathcal{S}}", from=3-3, to=4-3]
	\arrow["{\varphi \hat\otimes 1 \hat\otimes\alpha \hat\otimes1}", from=2-1, to=4-1]
	\arrow["{\operatorname{flip} \mathcal{K}(\mathcal{H}) ~\&~\mathcal{K}(\mathcal{H}_1)}", from=4-1, to=5-1]
	\arrow["{\operatorname{flip} B' ~\&~\mathcal{K}(\mathcal{H}_0) (~\text{step of product})}", from=5-1, to=6-1]
	\arrow["{\varphi \hat\otimes\alpha \hat\otimes 1}", from=4-3, to=6-3]
	\arrow["{\operatorname{flip} \mathcal{K}(\mathcal{H}) ~\&~ \mathcal{K}(\mathcal{H}_0 \hat\otimes \mathcal{H}_1) }", from=6-3, to=6-1]
\end{tikzcd}
    \]
     It does commute, as can be directly check with elementary homogeneous tensors.
\end{proof}

\subsection{H-space structures and products}
We now see how products interact with the H-space structures on the spectral K-theory spaces.

In order to systematically do the treatment, we make the following definition:

\begin{defi}\label{defi: bi H-map}
    Let $(X,\mu_x),(Y,\mu_Y),(Z,\mu_Z)$ be $H$-spaces, and $\varphi: X \times Y \rightarrow Z$ a pointed continuous map. We say that $\varphi$ is a \textit{bi H-map} if the following diagram commutes upto pointed homotopy:
    \[
    % https://q.uiver.app/#q=WzAsMTIsWzAsMCwiWCBcXHRpbWVzIFkgXFx0aW1lcyBZIl0sWzIsMCwiWCBcXHRpbWVzIFggXFx0aW1lcyBZIFxcdGltZXMgWSJdLFswLDEsIlggXFx0aW1lcyBZIl0sWzIsMSwiWCBcXHRpbWVzIFkgXFx0aW1lcyBYIFxcdGltZXMgWSJdLFsyLDIsIlogXFx0aW1lcyBaIl0sWzAsMiwiWiJdLFs0LDAsIlggXFx0aW1lcyBYIFxcdGltZXMgWSJdLFs2LDAsIlggXFx0aW1lcyBYIFxcdGltZXMgWSBcXHRpbWVzIFkiXSxbNiwxLCJYIFxcdGltZXMgWSBcXHRpbWVzIFggXFx0aW1lcyBZIl0sWzYsMiwiWiBcXHRpbWVzIFoiXSxbNCwxLCJYIFxcdGltZXMgWSJdLFs0LDIsIloiXSxbMCwyLCIxXFx0aW1lcyBcXG11X1kiLDJdLFsxLDMsIlxcb3BlcmF0b3JuYW1le2ZsaXB9IFh+XFwmflkiXSxbMCwxLCJcXG9wZXJhdG9ybmFtZXtkaWF9XFx0aW1lcyAxIFxcdGltZXMgMSJdLFszLDQsIlxcdmFycGhpIFxcdGltZXMgXFx2YXJwaGkgIl0sWzQsNSwiXFxtdV9aIl0sWzIsNSwiXFx2YXJwaGkiLDJdLFs2LDcsIjEgXFx0aW1lcyAxIFxcdGltZXMgXFxvcGVyYXRvcm5hbWV7ZGlhfSJdLFs3LDgsIlxcb3BlcmF0b3JuYW1le2ZsaXB9IFh+XFwmflkiXSxbOCw5LCJcXHZhcnBoaSBcXHRpbWVzIFxcdmFycGhpICJdLFs2LDEwLCJcXG11X1ggXFx0aW1lcyAxIiwyXSxbMTAsMTEsIlxcdmFycGhpIiwyXSxbOSwxMSwiXFxtdV9aIl1d
\begin{tikzcd}
	{X \times Y \times Y} && {X \times X \times Y \times Y} && {X \times X \times Y} && {X \times X \times Y \times Y} \\
	{X \times Y} && {X \times Y \times X \times Y} && {X \times Y} && {X \times Y \times X \times Y} \\
	Z && {Z \times Z} && Z && {Z \times Z}
	\arrow["{1\times \mu_Y}"', from=1-1, to=2-1]
	\arrow["{\operatorname{flip} X~\&~Y}", from=1-3, to=2-3]
	\arrow["{\operatorname{dia}\times 1 \times 1}", from=1-1, to=1-3]
	\arrow["{\varphi \times \varphi }", from=2-3, to=3-3]
	\arrow["{\mu_Z}", from=3-3, to=3-1]
	\arrow["\varphi"', from=2-1, to=3-1]
	\arrow["{1 \times 1 \times \operatorname{dia}}", from=1-5, to=1-7]
	\arrow["{\operatorname{flip} X~\&~Y}", from=1-7, to=2-7]
	\arrow["{\varphi \times \varphi }", from=2-7, to=3-7]
	\arrow["{\mu_X \times 1}"', from=1-5, to=2-5]
	\arrow["\varphi"', from=2-5, to=3-5]
	\arrow["{\mu_Z}", from=3-7, to=3-5]
\end{tikzcd}
    \]
\end{defi}
\begin{lem}\label{bi H map}
     Let $(X,\mu_x),(Y,\mu_Y),(Z,\mu_Z)$ be commutative $H$-monoids, and $\varphi: X \times Y \rightarrow Z$ be a bi-H map. Then, $\varphi$ induces a bilinear monoid map 
     \[
     \pi_0(X) \times \pi_0(Y) \rightarrow \pi_0(Z)
     \]
\end{lem}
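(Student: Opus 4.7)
The strategy is to pass through the functor $\pi_0 : \mathfrak{Top}_* \to \mathfrak{Set}_*$, exploiting two standard facts: $\pi_0$ preserves finite products (so $\pi_0(X \times Y) \cong \pi_0(X) \times \pi_0(Y)$ canonically), and $\pi_0$ sends pointed-homotopic maps to equal maps of pointed sets. I would proceed in four steps.

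First, I would verify that for any commutative $H$-monoid $(W,\mu_W,e_W)$, the set $\pi_0(W)$ with the operation $\pi_0(\mu_W) : \pi_0(W) \times \pi_0(W) \cong \pi_0(W \times W) \to \pi_0(W)$ is a commutative monoid with identity $[e_W]$: the homotopy-associativity, homotopy-commutativity, and $H$-space unit axioms for $\mu_W$ become strict associativity, commutativity, and unit laws once $\pi_0$ is applied. Apply this to each of $X$, $Y$, $Z$.

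Next, define the candidate map $\bar\varphi := \pi_0(\varphi) : \pi_0(X) \times \pi_0(Y) \to \pi_0(Z)$, which is a morphism of pointed sets because $\varphi$ is pointed; in particular $\bar\varphi([e_X],[e_Y]) = [e_Z]$. Then apply $\pi_0$ to each of the two pentagonal diagrams in \cref{defi: bi H-map}. Since each diagram commutes up to pointed homotopy, the induced diagrams of pointed sets commute on the nose. Chasing a representative $(x,y_1,y_2)$ around the left-hand diagram gives precisely
\[
\bar\varphi([x],\, [y_1]\cdot[y_2]) \;=\; \bar\varphi([x],[y_1]) \cdot \bar\varphi([x],[y_2]),
\]
i.e.\ additivity in the second slot, and the right-hand diagram yields the symmetric additivity in the first slot.

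Finally, combining these additivity relations with $\bar\varphi([e_X],[e_Y])=[e_Z]$ and the unit law on $\pi_0$ gives a bilinear monoid map: setting $y_1=e_Y$ in the second additivity and invoking the unit axiom on $\pi_0(Y)$ shows $\bar\varphi([x],-)$ is a monoid homomorphism, and symmetrically for $\bar\varphi(-,[y])$. (At the level where this lemma is used, $\pi_0$ will even be a group, so the idempotency relation $\bar\varphi([e_X],[y])\cdot\bar\varphi([e_X],[y])=\bar\varphi([e_X],[y])$ obtained from the first-variable additivity forces $\bar\varphi([e_X],[y])=[e_Z]$ automatically.)

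The argument is essentially routine once the definitions are unpacked. The only conceptual step is the translation of the \emph{up-to-pointed-homotopy} commutativity of the bi-$H$ diagrams into \emph{strict} commutativity on $\pi_0$; after that, each bilinearity identity is read off from the corresponding diagram. Thus the main obstacle, insofar as there is one, is bookkeeping the large bi-$H$ diagrams carefully enough to identify the composite on each side with the expected product on $\pi_0$.
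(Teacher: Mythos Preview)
Your proposal is correct and follows essentially the same approach as the paper: both define the map via functoriality of $\pi_0$, then apply $\pi_0$ to the two homotopy-commutative bi-$H$ diagrams of \cref{defi: bi H-map} to obtain strict commutativity on path components, from which additivity in each slot is read off directly. Your write-up is somewhat more thorough in that it also spells out the monoid structure on $\pi_0$ and the unit behavior, but the argument is the same.
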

\begin{proof}
    By the functoriality of $\pi_0$, it is clear that there is a set map $\pi_0(X) \times \pi_0(Y) \rightarrow \pi_0(Z)$; the point is to check its bilinearity.
    
    We first fix $[x] \in \pi_0(X)$, and show that the map 
    \begin{align*}
        \varphi_{x_*}:\pi_0(Y) \rightarrow \pi_0(Z) \\
        [y] \mapsto [\varphi(x,y)]
    \end{align*}
    is linear. Indeed, on the one hand, we have \begin{align*}
       &[y_0] + [y_1] = [\mu_Y(y_0,y_1)] \\
       \implies& \varphi_{x_*}([y_0]+[y_1])=[\varphi(x,\mu_Y(y_0,y_1))] .
    \end{align*}
    On the other hand,
    \begin{align*}
        \varphi_{x_*}([y_0])+\varphi_{x_*}([y_1])=[\varphi(x,y_0)]+[\varphi(x,y_1)]=[\mu_Z(\varphi(x,y_0),\varphi(x,y_1))]
    \end{align*}
    The rest follows from the homotopy commutativity of the left diagram in \cref{defi: bi H-map}.

    Similarly, linearity upon fixing an element of $\pi_0(Y)$ can be shown by considering the right diagram in \cref{defi: bi H-map}.
\end{proof}
\begin{rem}
    We caution the reader that the term ``bi H-map" is not standard terminology. The author was unable to find a better word to describe the required properties.
\end{rem}
\begin{prop}\label{prod bilinear}
    Let $\h_0$,$\h_1$ be graded Hilbert spaces whose and even and odd parts are countably infinite dimensional. Fix H-space structures $\mu_{u_0},\mu_{u_1},\mu_u$ as prescribed in \cref{defi: H space} on $\K(A,\h_0),\K(B,\h_1)$ and $\K(A \hat\otimes B,\h_0 \hat\otimes \h_1)$. \footnote{equivalently, fix even unitaries $u_0:\h_0 \oplus \h_0 \rightarrow \h_0$,$u_1:\h_1 \oplus \h_1 \rightarrow \h_1$,$u:\h_0\hat\otimes\h_1 \oplus \h_0\hat\otimes\h_1 \rightarrow \h_0\hat\otimes\h_1$.}
    Then, the map described in \cref{prop: product} is a bi H-map.
\end{prop}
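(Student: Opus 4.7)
The plan is to check both diagrams in \cref{defi: bi H-map} directly by unravelling the $H$-space structures $\mu_{u_0}, \mu_{u_1}, \mu_u$ via their definitions as composites of the sum map with an $\operatorname{Ad}$-twist, applying the distributivity of products over sums already established in \cref{prop: sum distri over prod}, and then closing the gap using the Dixmier-Douady swindle (\cref{lem: dix}) combined with \cref{prop: baby functoriality}.

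For the first diagram (right-linearity), I would start from $\operatorname{prod}(x, \mu_{u_1}(y_0,y_1))$ and rewrite it as $\operatorname{prod}(x, (1 \hat\otimes \operatorname{Ad}(u_1))_*\operatorname{add}(y_0,y_1))$. Naturality of the product in the Hilbert-space variable (\cref{cor: hom preserve prod}) allows me to move $(1 \hat\otimes \operatorname{Ad}(u_1))$ outside the product, yielding $(1 \hat\otimes \operatorname{Ad}(1_{\mathcal{H}_0} \hat\otimes u_1))_*\operatorname{prod}(x, \operatorname{add}(y_0,y_1))$. Then \cref{prop: sum distri over prod} identifies $\operatorname{prod}(x, \operatorname{add}(y_0,y_1))$ with $\operatorname{add}(\operatorname{prod}(x,y_0), \operatorname{prod}(x,y_1))$ after precomposing with the canonical unitary isomorphism $w : \mathcal{H}_0 \hat\otimes \mathcal{H}_1 \oplus \mathcal{H}_0 \hat\otimes \mathcal{H}_1 \to \mathcal{H}_0 \hat\otimes (\mathcal{H}_1 \oplus \mathcal{H}_1)$. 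On the other hand, $\mu_u(\operatorname{prod}(x,y_0), \operatorname{prod}(x,y_1)) = (1 \hat\otimes \operatorname{Ad}(u))_*\operatorname{add}(\operatorname{prod}(x,y_0), \operatorname{prod}(x,y_1))$. So after all the dust settles, the two sides differ only by conjugation by the two even unitaries $(1_{\mathcal{H}_0} \hat\otimes u_1) \circ w$ and $u$, both living in $\mathcal{U}^{\operatorname{ev}}(\mathcal{H}_0 \hat\otimes \mathcal{H}_1 \oplus \mathcal{H}_0 \hat\otimes \mathcal{H}_1, \mathcal{H}_0 \hat\otimes \mathcal{H}_1)$. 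By \cref{lem: dix} this space is strong-$*$ path-connected, so the two unitaries are connected by a strong-$*$ path, and \cref{prop: baby functoriality} then supplies a based homotopy between the induced self-maps of $\mathbb{K}(A \hat\otimes B, \mathcal{H}_0 \hat\otimes \mathcal{H}_1)$.

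For the second diagram (left-linearity) the strategy is identical, up to swapping roles: one needs the analogous right-distributivity, which follows from \cref{prop: sum distri over prod} combined with the commutativity of products already proven in this subsection; then the same Dixmier-Douady argument closes out. In both cases the bulk of the verification is really just diagram chasing through canonical flip and regrouping isomorphisms of the form $\mathcal{K}(\mathcal{H}_i) \hat\otimes \mathcal{K}(\mathcal{H}_j) \cong \mathcal{K}(\mathcal{H}_i \hat\otimes \mathcal{H}_j)$ and $\mathcal{H}_i \hat\otimes (\mathcal{H}_j \oplus \mathcal{H}_k) \cong \mathcal{H}_i \hat\otimes \mathcal{H}_j \oplus \mathcal{H}_i \hat\otimes \mathcal{H}_k$.

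The main obstacle I anticipate is not conceptual but notational: tracking which flip isomorphism sits where and verifying that every square in the expanded diagrams really does commute strictly before one invokes the single strong-$*$ homotopy of unitaries at the end. The substantive content is concentrated in the distributivity lemma and the path-connectedness of the unitary group; everything else is bookkeeping that can be handled by testing on elementary homogeneous tensors, exactly as in the proof of \cref{prop: sum distri over prod}.
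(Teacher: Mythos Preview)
Your proposal is correct and follows essentially the same route as the paper's proof: unpack the $H$-space structures via \cref{defi: H space}, invoke the distributivity lemma (\cref{prop: sum distri over prod}) together with naturality of the product in the Hilbert-space slot (\cref{cor: hom preserve prod}), and then close the remaining triangle using strong-$*$ path-connectedness of the relevant even unitary group. The paper organises this into a single diagram with three sub-regions (distributivity, naturality, and the unitary-homotopy triangle), but the logical content is exactly what you describe.
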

\begin{proof}
    Consider the following diagram, where the purple arrow is the composite of the purple arrows in \cref{prop: sum distri over prod} :
        \[
        % https://q.uiver.app/#q=WzAsNyxbMCwwLCJcXG1hdGhiYntLfShBLFxcbWF0aGNhbHtIfV8wKSBcXHRpbWVzIFxcbWF0aGJie0t9KEIsXFxtYXRoY2Fse0h9XzEpIFxcdGltZXMgXFxtYXRoYmJ7S30oQixcXG1hdGhjYWx7SH1fMSkiXSxbMiwwLCJcXG1hdGhiYntLfShBLFxcbWF0aGNhbHtIfV8wKV4yIFxcdGltZXNcXG1hdGhiYntLfShCLFxcbWF0aGNhbHtIfV8xKV4yIl0sWzAsMiwiXFxtYXRoYmJ7S30oQSxcXG1hdGhjYWx7SH1fMCkgXFx0aW1lcyBcXG1hdGhiYntLfShCLFxcbWF0aGNhbHtIfV8xXFxvcGx1cyBcXG1hdGhjYWx7SH1fMSkiXSxbMSwyLCJcXG1hdGhiYntLfShBIFxcaGF0XFxvdGltZXMgQixcXG1hdGhjYWx7SH1fMCBcXGhhdFxcb3RpbWVzIChcXG1hdGhjYWx7SH1fMVxcb3BsdXMgIFxcbWF0aGNhbHtIfV8xKSkiXSxbMiwzLCJcXG1hdGhiYntLfShBIFxcaGF0XFxvdGltZXMgQixcXG1hdGhjYWx7SH1fMCBcXGhhdFxcb3RpbWVzIFxcbWF0aGNhbHtIfV8xIFxcb3BsdXMgXFxtYXRoY2Fse0h9XzAgXFxoYXRcXG90aW1lcyBcXG1hdGhjYWx7SH1fMSkpIl0sWzAsMywiXFxtYXRoYmJ7S30oQSxcXG1hdGhjYWx7SH1fMCkgXFx0aW1lc1xcbWF0aGJie0t9KEIsXFxtYXRoY2Fse0h9XzEpIl0sWzEsMywiXFxtYXRoYmJ7S30oQSBcXGhhdFxcb3RpbWVzIEIsXFxtYXRoY2Fse0h9XzAgXFxoYXRcXG90aW1lcyBcXG1hdGhjYWx7SH1fMSkiXSxbMCwyLCIxXFx0aW1lcyBcXG9wZXJhdG9ybmFtZXthZGR9IiwyXSxbMiwzLCJcXG9wZXJhdG9ybmFtZXtwcm9kfSJdLFszLDQsIlxcY29uZyIsMl0sWzIsNSwiMSBcXHRpbWVzKFxcb3BlcmF0b3JuYW1le0FkfXVfMSlfKiIsMix7ImNvbG91ciI6WzAsNjAsNjBdfSxbMCw2MCw2MCwxXV0sWzMsNiwiXFxvcGVyYXRvcm5hbWV7QWR9KDFcXGhhdFxcb3RpbWVzIHVfMSlfKiIsMix7ImNvbG91ciI6WzAsNjAsNjBdfSxbMCw2MCw2MCwxXV0sWzUsNiwiXFxvcGVyYXRvcm5hbWV7cHJvZH0iLDIseyJjb2xvdXIiOlswLDYwLDYwXX0sWzAsNjAsNjAsMV1dLFs0LDYsIjFcXGhhdFxcb3RpbWVzKFxcb3BlcmF0b3JuYW1le0FkfXUpXyoiLDAseyJjb2xvdXIiOlsyNDAsNjAsNjBdfSxbMjQwLDYwLDYwLDFdXSxbMCwxLCJcXG9wZXJhdG9ybmFtZXtkaWF9XFx0aW1lcyAxIFxcdGltZXMgMSJdLFsxLDQsIiIsMCx7ImNvbG91ciI6WzMwMCw2MCw2MF19XV0=
\begin{tikzcd}
	{\mathbb{K}(A,\mathcal{H}_0) \times \mathbb{K}(B,\mathcal{H}_1) \times \mathbb{K}(B,\mathcal{H}_1)} && {\mathbb{K}(A,\mathcal{H}_0)^2 \times\mathbb{K}(B,\mathcal{H}_1)^2} \\
	\\
	{\mathbb{K}(A,\mathcal{H}_0) \times \mathbb{K}(B,\mathcal{H}_1\oplus \mathcal{H}_1)} & {\mathbb{K}(A \hat\otimes B,\mathcal{H}_0 \hat\otimes (\mathcal{H}_1\oplus  \mathcal{H}_1))} \\
	{\mathbb{K}(A,\mathcal{H}_0) \times\mathbb{K}(B,\mathcal{H}_1)} & {\mathbb{K}(A \hat\otimes B,\mathcal{H}_0 \hat\otimes \mathcal{H}_1)} & {\mathbb{K}(A \hat\otimes B,\mathcal{H}_0 \hat\otimes \mathcal{H}_1 \oplus \mathcal{H}_0 \hat\otimes \mathcal{H}_1))}
	\arrow["{1\times \operatorname{add}}"', from=1-1, to=3-1]
	\arrow["{\operatorname{prod}}", from=3-1, to=3-2]
	\arrow["\cong"', from=3-2, to=4-3]
	\arrow["{1 \times(\operatorname{Ad}u_1)_*}"', color={rgb,255:red,214;green,92;blue,92}, from=3-1, to=4-1]
	\arrow["{\operatorname{Ad}(1\hat\otimes u_1)_*}"', color={rgb,255:red,214;green,92;blue,92}, from=3-2, to=4-2]
	\arrow["{\operatorname{prod}}"', color={rgb,255:red,214;green,92;blue,92}, from=4-1, to=4-2]
	\arrow["{1\hat\otimes(\operatorname{Ad}u)_*}", color={rgb,255:red,92;green,92;blue,214}, from=4-3, to=4-2]
	\arrow["{\operatorname{dia}\times 1 \times 1}", from=1-1, to=1-3]
	\arrow[color={rgb,255:red,214;green,92;blue,214}, from=1-3, to=4-3]
\end{tikzcd}
        \]

    Our aim is to show that the outer diagram commutes upto homotopy.

    Note that the diagram spanned by the black arrows and the purple arrow commutes strictly by \cref{prop: sum distri over prod}. The lower left rectangle commutes strictly because of the naturality of products as mentioned in \cref{cor: hom preserve prod}. The blue-black-orangle triangle commutes upto homotopy as $\mathcal{U}^{\operatorname{ev}}(\h_0 \hat\otimes (\h_1 \oplus \h_1),\h_0 \hat\otimes \h_1 \oplus \h_0 \hat\otimes \h_1)$ is path connected in the strong-$*$ topology.
    So overall, the outer diagram commutes upto homotopy.
\end{proof}
\section{Spectral K-theory groups}
We now give the definition of $K$-groups in the spectral model. Having developed most of the machinery at the space level, most of the properties of these $K$-groups will be mere corollaries.

\begin{defi} \label{defi: spectral K-theory space}
    Given a $\Z/2\Z$-graded (Real) $C^*$-algebra $A$, we define \textit{the spectral K-theory space} (without mention of any given underlying Hilbert space) of $A$ to be $\K(A):=\K(A,\hat{l^2})$. 
\end{defi}
\begin{defi}\label{defi: spectral k-theory}
    Given a $\Z/2\Z$-graded (Real) $C^*$-algebra $A$, we define its spectral K-theory groups to be the homotopy groups of $\K(A)$ at the basepoint \underline{$0$}. That is, we define ${K_i}^{sp}(A):= \pi_i(\K(A),\underline{0})$.
\end{defi}

Before proceeding further, we note that for any graded Hilbert space $\h$ whose even and odd parts are countably infinite dimensional, there exists a preferred homotopy class of maps $\K(A,\h) \rightarrow \K(A)$ for each $\Z/2\Z$-graded $C^*$-algebra $A$, given by conjugation with any even unitary from $\h$ to $\hat{l^2}$. Any unlabelled map $\K(A,\h) \rightarrow \K(A)$ would be assumed to be (the homotopy class of) this. 

As an immediate corollary to our section on H-spaces, we have the following definition:
\begin{defi}
    By \textit{the} H-space structure on $\K(A)$, we mean the homotopy class of the following composite:
    \[
    \K(A) \times \K(A) \xrightarrow{s} \K(A,\hat{l^2} \oplus \hat{l^2}) \rightarrow \K(A)
    \]
\end{defi}
The following is an immediate corollary to \cref{H-monoid}.
\begin{cor}
     $K_0^{\operatorname{sp}}(A)$ is an abelian monoid. $K_1^{\operatorname{sp}}(A)$ is an abelian group, and so are the higher spectral $K$-groups by homotopy theory.
\end{cor}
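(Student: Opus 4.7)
The plan is to transfer the commutative H-monoid structure on $\K(A)$ provided by \cref{H-monoid} to its homotopy groups, using essentially formal arguments from elementary algebraic topology of H-spaces. After fixing an even unitary $u:\hat{l^2}\oplus\hat{l^2}\to\hat{l^2}$, which exists since both sides are graded Hilbert spaces with countably infinite dimensional even and odd parts, \cref{H-monoid} furnishes a continuous multiplication $\mu_u:\K(A)\times\K(A)\to\K(A)$ which is homotopy associative, homotopy commutative, and has $\underline{0}$ as a two-sided homotopy unit.

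For $K_0^{\operatorname{sp}}(A)=\pi_0(\K(A))$, I would apply $\pi_0$ to $\mu_u$: since $\pi_0$ is a product-preserving functor to sets, we obtain a binary operation on $\pi_0(\K(A))$. The homotopy commutativity, associativity, and unit diagrams of \cref{H-monoid} become commutative diagrams of sets after applying $\pi_0$ (because $\pi_0$ identifies homotopic maps), hence strict commutativity, associativity, and the unit axiom all hold on the level of path components. This exhibits $K_0^{\operatorname{sp}}(A)$ as an abelian monoid.

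For the higher groups $K_n^{\operatorname{sp}}(A)=\pi_n(\K(A),\underline{0})$ with $n\geq 1$, the group structure already exists purely by standard homotopy theory (concatenation of based loops or $n$-spheres). For $n\geq 2$, abelianness is automatic from the usual Eckmann--Hilton interchange between the two ways of composing $n$-cubes. For $n=1$, I would invoke the Eckmann--Hilton argument applied to the H-space structure: the loop concatenation and the pointwise multiplication induced by $\mu_u$ on $\pi_1(\K(A),\underline{0})$ share the common unit $[\text{const}_{\underline{0}}]$ and satisfy the interchange law up to homotopy, which is enough to conclude that the loop product is commutative (and in fact agrees with the H-space product). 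This is the classical fact that any H-space has abelian fundamental group.

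The argument is essentially formal and there is no real obstacle; the only minor point requiring care is the tacit claim that the operation on $\pi_*(\K(A))$ is independent of the choice of the auxiliary unitary $u$. This follows because two choices $u,u'\in\mathcal{U}^{\operatorname{ev}}(\hat{l^2}\oplus\hat{l^2},\hat{l^2})$ can be joined by a strong-$*$ path by \cref{lem: dix}, and the argument in the proof of \cref{lem: H space verification} then shows that $\mu_u$ and $\mu_{u'}$ are based homotopic, so they induce the same operation on all $\pi_n$.
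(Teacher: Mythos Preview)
Your argument is correct and is exactly what the paper has in mind: it states this corollary as an immediate consequence of \cref{H-monoid} without further proof, and your unpacking via $\pi_0$ of the H-monoid structure together with the standard Eckmann--Hilton argument for $\pi_1$ of an H-space is precisely the intended reasoning.
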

\begin{rem}
    Note that as of now, we don't know yet that the spectral $K_0$-group is an abelian group. However, we phrase the subsequent results of this section treating them as abelian groups; it doesn't create any problem as abelian group homomorphisms are homomorphisms of the underlying abelian monoid, and constructions like tensor products are done on the underlying abelian monoid. We prove that the spectral $K_0$ groups are abelian groups in \cref{thm: sp=fr}.
\end{rem}
Basic homotopy theory immediately yields the following corollary to \cref{prop: baby functoriality}.
\begin{cor} \label{cor: baby functoriality}
    For each $i \geq 0$, $K_i^{\operatorname{sp}}(-)$ is a functor from the category of $\Z/2\Z$-graded $C^*$-algebras to the category of abelian groups which satisfies the following properties:
    \begin{enumerate}
        \item Homotopic $*$-homomorphisms $\varphi,\psi: A \rightarrow B$ induce the same  map $K_i^{\operatorname{sp}}(\varphi) = K_i^{\operatorname{sp}}(\psi): K_i^{\operatorname{sp}}(A) \rightarrow K_i^{\operatorname{sp}}(B)$.
        \item Any isometry $\iota\in \hat{l^2}$  induces an isomorphism $K_i^{\operatorname{sp}}(\iota):K_i^{\operatorname{sp}}(A) \rightarrow K_i^{\operatorname{sp}}(A)$.
        \item $0:A \rightarrow B$ induces the zero homomorphism $K_i^{\operatorname{sp}}(A) \rightarrow K_i^{\operatorname{sp}}(B)$.
    \end{enumerate} 
\end{cor}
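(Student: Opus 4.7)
The plan is to reduce everything to \cref{prop: baby functoriality} together with the fact that $\K(\varphi,\iota)$ preserves the H-space structure up to homotopy. Recall that by \cref{defi: spectral k-theory}, $K_i^{\operatorname{sp}}(A) = \pi_i(\K(A),\underline{0})$, and that $\K(A) = \K(A,\hat{l^2})$ carries the H-space structure $\mu_u$ fixed via a choice of an even unitary $u: \hat{l^2} \oplus \hat{l^2} \to \hat{l^2}$. Since \cref{H-monoid} says $(\K(A),\mu_u,\underline{0})$ is a commutative H-monoid, the standard homotopy-theoretic fact (see \cite{HAT}) that $\pi_i$ of a homotopy-associative H-space is naturally a monoid, abelian for $i=0$ and an abelian group for $i \geq 1$, gives us the abelian-group (resp.\ abelian-monoid) structure on $K_i^{\operatorname{sp}}(A)$.

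For functoriality, given a graded $*$-homomorphism $\varphi: A \to B$, we set $K_i^{\operatorname{sp}}(\varphi) := \pi_i(\K(\varphi,\operatorname{id}))$. The first thing to check is that this is a monoid (hence group) homomorphism. This is precisely \cref{sum preserve H-space}(i) applied with $\iota = \operatorname{id}_{\hat{l^2}}$: it states that $\K(\varphi)$ commutes with the H-space structures up to based homotopy, and passing to $\pi_i$ turns this into strict compatibility with the group operations. Functoriality of the assignment $\varphi \mapsto K_i^{\operatorname{sp}}(\varphi)$ is then immediate from functoriality of $\K(-,\hat{l^2})$ together with functoriality of $\pi_i$.

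For property (1), \cref{prop: baby functoriality}(1) says that homotopic $*$-homomorphisms $\varphi,\psi: A \to B$ induce based homotopic maps $\K(\varphi), \K(\psi): \K(A) \to \K(B)$. Since $\pi_i$ is a based-homotopy invariant, we get $K_i^{\operatorname{sp}}(\varphi) = K_i^{\operatorname{sp}}(\psi)$. For property (3), \cref{prop: baby functoriality}(3) tells us that the zero $*$-homomorphism induces the constant map to the basepoint $\underline{0}$; passage to $\pi_i$ sends constant maps to the zero homomorphism.

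Property (2) is slightly subtler: interpret an even isometry $\iota \in \operatorname{Iso}^{\operatorname{ev}}(\hat{l^2}, \hat{l^2})$ as acting by $\K(A,\iota): \K(A) \to \K(A)$. By the Dixmier--Douady swindle (\cref{lem: dix}), $\operatorname{Iso}^{\operatorname{ev}}(\hat{l^2},\hat{l^2})$ is (strongly) path-connected, so $\iota$ is path-connected to $\operatorname{id}_{\hat{l^2}}$ in the strong-$*$ topology. Applying \cref{prop: baby functoriality}(2) gives a based homotopy between $\K(A,\iota)$ and $\K(A,\operatorname{id}) = \operatorname{id}_{\K(A)}$, so $K_i^{\operatorname{sp}}(\iota) = \operatorname{id}$, which is in particular an isomorphism. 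The whole argument is a clean unpacking; the only ``content'' to be careful about is invoking \cref{sum preserve H-space}(i) to upgrade the set-level functoriality to a group-level functoriality, and this is the one step that requires the H-space machinery of the previous section rather than just \cref{prop: baby functoriality} alone.
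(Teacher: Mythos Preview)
Your proposal is correct and follows the same route the paper intends: the paper's own proof is a single sentence (``Basic homotopy theory immediately yields the following corollary to \cref{prop: baby functoriality}''), and you have simply unpacked what that sentence means. Your explicit invocation of \cref{sum preserve H-space}(i) for the monoid-homomorphism property at $i=0$, and of \cref{lem: dix} to connect an arbitrary isometry to the identity for property~(2), are exactly the details the paper's one-liner leaves implicit.
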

We have the following two corollaries to the results established in section 3. The proofs follow from the homotopy invariance of homotopy groups, and from the long exact sequence of homotopy groups associated to a Serre fibration (see \cite{HAT} Theorem 4.41).
\newpage
\begin{cor} \label{stab and les group}
    \,
    \begin{enumerate}
        \item Let $A$ be a graded $C^*$-algebra, and $\mathcal{H}$ a graded Hilbert space. Then, the map $\Phi:A \rightarrow A \hat\otimes \mathcal{K}(\mathcal{H})$, given by $\Phi(a)=a\hat\otimes e_{11}$, where $e_{11}$ is projection onto an even one-dimensional subspace of $\mathcal{H}$, induces an isomorphism $K_i^{\operatorname{sp}}(\Phi):K_i^{\operatorname{sp}}(A) \rightarrow K_i^{\operatorname{sp}}(A \hat\otimes \mathcal{K}(\mathcal{H}))~\forall~ i \geq 0$.
        \item Let $A,B$ be graded $C^*$-algebras, and $\varphi:A \rightarrow B$ be a surjective graded $*$-homomorphism. Let $J:=\ker \varphi$. Then, we have a long exact sequence as follows:

\[
% https://q.uiver.app/#q=WzAsNixbMiwwLCJLX3tuKzF9XntzcH0oQikiXSxbMywwLCJLX25ee3NwfShKKSJdLFs0LDAsIktfbl57c3B9KEIpIl0sWzEsMCwiS197bisxfV57c3B9KEEpIl0sWzAsMCwiXFxjZG90cyJdLFs1LDAsIlxcY2RvdHMiXSxbMywwLCJcXHZhcnBoaV8qIl0sWzAsMV0sWzEsMiwiXFxvcGVyYXRvcm5hbWV7aW5jfV8qIl0sWzQsM10sWzIsNV1d
\begin{tikzcd}
	\cdots & {K_{n+1}^{sp}(A)} & {K_{n+1}^{sp}(B)} & {K_n^{sp}(J)} & {K_n^{sp}(B)} & \cdots
	\arrow["{\varphi_*}", from=1-2, to=1-3]
	\arrow[from=1-3, to=1-4]
	\arrow["{\operatorname{inc}_*}", from=1-4, to=1-5]
	\arrow[from=1-1, to=1-2]
	\arrow[from=1-5, to=1-6]
\end{tikzcd}\]
    The sequence terminates at $K_0^{\operatorname{sp}}(B)$.
    \end{enumerate}
\end{cor}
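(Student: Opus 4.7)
The plan is to deduce both statements as direct consequences of the space-level theorems \cref{thm: stability} and \cref{thm: les} by specialising to $H = \hat{l^2}$ and invoking standard facts about homotopy groups.

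For part (1), I would simply apply \cref{thm: stability} with the Hilbert space $H$ in that theorem equal to $\hat{l^2}$. By \cref{defi: spectral K-theory space}, we have $\K(A,\hat{l^2}) = \K(A)$ and $\K(A \hat\otimes \com(\h), \hat{l^2}) = \K(A \hat\otimes \com(\h))$. The theorem then says $\K(\Phi) : \K(A) \to \K(A \hat\otimes \com(\h))$ is a (based) homotopy equivalence. Since homotopy equivalences induce isomorphisms on all homotopy groups at any basepoint, $K_i^{\operatorname{sp}}(\Phi) = \pi_i(\K(\Phi))$ is an isomorphism for every $i \geq 0$.

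For part (2), I would apply \cref{thm: les} with $H = \hat{l^2}$. This produces a Serre fibration $\K(A) \to \K(B)$ with fiber $\K(J)$ over the basepoint $\underline{0} \in \K(B)$. The long exact sequence of homotopy groups associated to a Serre fibration (\cite{HAT}, Theorem 4.41) then produces the desired long exact sequence once we rewrite $\pi_i(\K(A)) = K_i^{\operatorname{sp}}(A)$, and similarly for $B$ and $J$. The sequence terminates at $K_0^{\operatorname{sp}}(B)$ because there are no homotopy groups in negative degree.

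The only mildly subtle point is identifying the maps in the long exact sequence with those induced by the algebra-level morphisms $J \hookrightarrow A$ and $\varphi : A \to B$; this is essentially built into the proof of \cref{thm: les}, since the Serre fibration $\K(A) \to \K(B)$ \emph{is} the map $\K(\varphi)$, and the identification of its fiber with $\K(J)$ is via the inclusion $J \hookrightarrow A$ (which yields $J \hat\otimes \com(\hat{l^2}) \hookrightarrow A \hat\otimes \com(\hat{l^2})$). No step here should present a real obstacle: once the space-level theorems of Section 3 are in hand, the group-level corollaries are purely formal translations through $\pi_i$.
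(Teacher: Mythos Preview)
Your proposal is correct and matches the paper's own argument essentially verbatim: the paper simply remarks that the corollary follows from the homotopy invariance of homotopy groups (for part 1, via \cref{thm: stability}) and from the long exact sequence of homotopy groups associated to a Serre fibration (for part 2, via \cref{thm: les} and \cite{HAT}, Theorem 4.41). Your additional remark about identifying the maps in the long exact sequence with those induced by $J \hookrightarrow A$ and $\varphi$ is a helpful elaboration, but not something the paper spells out either.
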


Now, we cast \cref{defi: amplified homo} at the level of K-groups:

\begin{defi}\label{amplified morphisms group}
    \,
    \begin{enumerate}
        \item Let $\kappa:\s \hat\otimes A \rightarrow B\hat\otimes \com(\h)$ be a graded $*$-homomorphism. Then, it induces a well-defined group homomorphism $\kappa_*:K_0^{\operatorname{sp}}(A) \rightarrow K_0^{\operatorname{sp}}(B)$, given by applying the $\pi_0$ functor to the  following composite:
        \[
        \K(A) \xrightarrow{(\kappa,1)_*} \K(B,\hat{l^2} \hat\otimes \h) \rightarrow \K(B).
        \]
        Note that $\kappa_*$ is indeed a monoid homomorphism in light of \cref{sum preserve H-space} (ii) and (i). Moreover, homotopic $*$-homomorphisms $\kappa,\kappa':\s \hat\otimes A \rightarrow B \hat\otimes \com(\h)$ yield the same map $\kappa_*=\kappa_*':K_0^{\operatorname{sp}}(A) \rightarrow K_0^{\operatorname{sp}}(B)$.
        \item Let $\kappa:A \hat\otimes \s \hat\otimes B \rightarrow C \hat\otimes \com(\h)$ be a graded $*$-homomorphism. Then, it induces a map $\kappa_*:K_0^{\operatorname{sp}}(A \hat\otimes B) \rightarrow K_0^{\operatorname{sp}}(C)$, given by applying the $\pi_0$ functor to the  following composite:
        \[
        \K(A \hat\otimes B) \xrightarrow{(\kappa,1)_*} \K(C,\hat{l^2} \hat\otimes \h) \rightarrow \K(C).
        \] 
        Note that $\kappa_*$ is indeed a monoid homomorphism in light of \cref{sum preserve H-space} (iii) and (i). Moreover, homotopic $*$-homomorphisms $\kappa,\kappa':A \hat\otimes \s \hat\otimes B \rightarrow C \hat\otimes \com(\h)$ yield the same map $\kappa_*=\kappa_*':K_0^{\operatorname{sp}}(A) \rightarrow K_0^{\operatorname{sp}}(B)$.
    \end{enumerate}
\end{defi}
\subsection{Products at the group level}
We can now form products at the ``group level". We will be brisk from now on, as all the hard work has been developed at the space level before.
\begin{defi}\label{Prod group level}
    Let $A$ and $B$ be $\Z/2\Z$-graded Real $C^*$-algebras. We have a well-defined homotopy class given by the composition 
    \[
    \K(A) \times \K(B) \xrightarrow{p} \K(A \hat\otimes B, \hat{l^2} \hat\otimes \hat{l^2}) \rightarrow \K(A \hat\otimes B),
    \]
    which is natural with respect to $*$-homomorphisms $A \rightarrow A'$ and $B \rightarrow B'$. It descends to a well-defined bilinear pairing (the bilinearity follows from \cref{prod bilinear},\cref{bi H map} and \cref{sum preserve H-space} (i)): 
    \[
    K_0^{\operatorname{sp}}(A) \times K_0^{\operatorname{sp}}(B) \rightarrow K_0^{\operatorname{sp}}(A \hat\otimes B)
    ,\]
    which we call as the K-theory product (at the group level).
\end{defi}
\begin{rem} \label{rem: prod}
        Note that the naturality assertion in \cref{Prod group level} is saying that the following diagram commutes, where $\varphi:A \rightarrow A'$ and $\psi: B \rightarrow B'$ are $*$-homomorphisms:
        \[
       % https://q.uiver.app/#q=WzAsNCxbMCwwLCJLXzBee1xcb3BlcmF0b3JuYW1le3NwfX0oQSlcXG90aW1lcyBLXzBee1xcb3BlcmF0b3JuYW1le3NwfX0oQikiXSxbMCwxLCJLXzBee1xcb3BlcmF0b3JuYW1le3NwfX0oQScpXFxvdGltZXMgS18wXntcXG9wZXJhdG9ybmFtZXtzcH19KEInKSJdLFsyLDAsIktfMF57XFxvcGVyYXRvcm5hbWV7c3B9fShBIFxcaGF0XFxvdGltZXMgQikiXSxbMiwxLCJLXzBee1xcb3BlcmF0b3JuYW1le3NwfX0oQScgXFxoYXRcXG90aW1lcyBCJykiXSxbMCwxLCJcXHZhcnBoaV8qXFxoYXRcXG90aW1lc1xccHNpXyoiLDJdLFswLDIsIkstXFx0ZXh0e3RoZW9yeSBwcm9kdWN0fSIsMl0sWzEsMywiSy1cXHRleHR7dGhlb3J5IHByb2R1Y3R9Il0sWzIsMywiKFxcdmFycGhpIFxcaGF0XFxvdGltZXMgXFxwc2kpXyoiXV0=
\begin{tikzcd}
	{K_0^{\operatorname{sp}}(A)\otimes K_0^{\operatorname{sp}}(B)} && {K_0^{\operatorname{sp}}(A \hat\otimes B)} \\
	{K_0^{\operatorname{sp}}(A')\otimes K_0^{\operatorname{sp}}(B')} && {K_0^{\operatorname{sp}}(A' \hat\otimes B')}
	\arrow["{\varphi_*\hat\otimes\psi_*}"', from=1-1, to=2-1]
	\arrow["{K-\text{theory product}}"', from=1-1, to=1-3]
	\arrow["{K-\text{theory product}}", from=2-1, to=2-3]
	\arrow["{(\varphi \hat\otimes \psi)_*}", from=1-3, to=2-3]
\end{tikzcd}
        \]
    That the diagram commutes is a consequence of \cref{cor: hom preserve prod} and \cref{Prod group level}. Moreover, all the maps in the diagram are group homomorphisms: K-theory product has already been seen to be one, and the justfication for the maps $\varphi_*,\psi_*$ and $(\varphi \hat\otimes \psi)_*$ comes from \cref{sum preserve H-space} (i).
    \end{rem}
    We also have the following naturality statement:
    \begin{cor} \label{cor: boy and products}
     The $K$-theory product is natural with respect the maps described in \cref{amplified morphisms group} in the following sense: if $\alpha: \s \hat\otimes B \rightarrow B' \hat\otimes \com(\h)$ is a $*$-homomorphism, and $\varphi: A \rightarrow A'$ is a $*$-homomorphism, then the following diagram commutes:
     \[
    % https://q.uiver.app/#q=WzAsNCxbMCwwLCJLXzBee1xcb3BlcmF0b3JuYW1le3NwfX0oQSlcXG90aW1lcyBLXzBee1xcb3BlcmF0b3JuYW1le3NwfX0oQikiXSxbMiwwLCJLX3swfV57XFxvcGVyYXRvcm5hbWV7c3B9fShBIFxcaGF0XFxvdGltZXMgQikiXSxbMCwxLCJLXzBee1xcb3BlcmF0b3JuYW1le3NwfX0oQScpXFxvdGltZXMgS18wXntcXG9wZXJhdG9ybmFtZXtzcH19KEInKSJdLFsyLDEsIktfezB9XntcXG9wZXJhdG9ybmFtZXtzcH19KEEnIFxcaGF0XFxvdGltZXMgQicpIl0sWzAsMSwiSy1cXHRleHR7dGhlb3J5IHByb2R1Y3R9Il0sWzIsMywiSy1cXHRleHR7dGhlb3J5IHByb2R1Y3R9Il0sWzAsMiwiXFx2YXJwaGlfKiBcXGhhdFxcb3RpbWVzIFxcYWxwaGFfKiIsMl0sWzEsMywiKFxcdmFycGhpIFxcaGF0XFxvdGltZXMgXFxhbHBoYSlfKiJdXQ==
\begin{tikzcd}
	{K_0^{\operatorname{sp}}(A)\otimes K_0^{\operatorname{sp}}(B)} && {K_{0}^{\operatorname{sp}}(A \hat\otimes B)} \\
	{K_0^{\operatorname{sp}}(A')\otimes K_0^{\operatorname{sp}}(B')} && {K_{0}^{\operatorname{sp}}(A' \hat\otimes B')}
	\arrow["{K-\text{theory product}}", from=1-1, to=1-3]
	\arrow["{K-\text{theory product}}", from=2-1, to=2-3]
	\arrow["{\varphi_* \hat\otimes \alpha_*}"', from=1-1, to=2-1]
	\arrow["{(\varphi \hat\otimes \alpha)_*}", from=1-3, to=2-3]
\end{tikzcd}
     \]
\end{cor}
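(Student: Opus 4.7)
The plan is to reduce this to the already-established naturality at the space level, namely Proposition \ref{amp morph nat prod}, and then apply the $\pi_0$ functor together with the observation that the canonical ``descent'' maps $\K(-,\mathcal{H}) \to \K(-)$ behave naturally up to homotopy.

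First I would unpack the K-theory product at the group level as defined in \cref{Prod group level}: it is the image under $\pi_0$ of the composite $\K(A)\times\K(B) \xrightarrow{p} \K(A\hat\otimes B,\hat{l^2}\hat\otimes\hat{l^2}) \to \K(A\hat\otimes B)$, where the second map is conjugation by any chosen even unitary $\hat{l^2}\hat\otimes\hat{l^2} \to \hat{l^2}$. Similarly, the amplified morphism $\alpha_*$ at the K-group level is (by \cref{amplified morphisms group}) the image under $\pi_0$ of $\K(B) \xrightarrow{(\alpha,1)_*} \K(B',\hat{l^2}\hat\otimes\mathcal{H}) \to \K(B')$, and $\varphi_*$ is induced by the literal map $\K(\varphi,\operatorname{id})$. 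Writing the group-level diagram as the application of $\pi_0$ to a suitable space-level diagram therefore becomes a bookkeeping problem.

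The heart of the argument will be to paste Proposition \ref{amp morph nat prod} (which gives the strict commutativity of the product with $\varphi$ and an amplified $\alpha$ at the space level, for specific Hilbert spaces) with the naturality of the product in Hilbert-space isometries provided by \cref{cor: hom preserve prod}. Concretely, I would form the following rectangle at the space level:
\[

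\begin{tikzcd}
\K(A,\hat{l^2})\times \K(B,\hat{l^2}) \arrow[r,"p"] \arrow[d,"\K(\varphi)\times (\alpha,1)_*"'] & \K(A\hat\otimes B,\hat{l^2}\hat\otimes \hat{l^2}) \arrow[d,"(\varphi\hat\otimes\alpha,1)_*"] \\
\K(A',\hat{l^2})\times \K(B',\hat{l^2}\hat\otimes\mathcal{H}) \arrow[r,"p"] & \K(A'\hat\otimes B',\hat{l^2}\hat\otimes \hat{l^2}\hat\otimes\mathcal{H})
\end{tikzcd}
\]
which commutes strictly by \cref{amp morph nat prod}. Post-composing the right vertical map with the canonical $\K(A'\hat\otimes B',\hat{l^2}\hat\otimes\hat{l^2}\hat\otimes\mathcal{H}) \to \K(A'\hat\otimes B')$ and comparing with $\K(B',\hat{l^2}\hat\otimes\mathcal{H})\to\K(B')$ applied first and then the product, I would invoke the path-connectedness of $\operatorname{Iso}^{\operatorname{ev}}(\hat{l^2}\hat\otimes\mathcal{H},\hat{l^2})$ and of $\operatorname{Iso}^{\operatorname{ev}}(\hat{l^2}\hat\otimes\hat{l^2}\hat\otimes\mathcal{H},\hat{l^2})$ (Dixmier--Douady, \cref{lem: dix}) combined with the second clause of \cref{prop: baby functoriality} to conclude that the two resulting maps to $\K(A'\hat\otimes B')$ are based homotopic. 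Applying $\pi_0$ and using that all maps in sight are H-maps (so that $\pi_0$ lands in groups and the resulting arrows are homomorphisms, by \cref{sum preserve H-space}) yields the required group-level diagram.

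The main obstacle I anticipate is not conceptual but organizational: one must carefully match the various Hilbert-space factors appearing on each side (in particular, the auxiliary $\mathcal{H}$ from $\alpha$ slots into the second tensor factor before being absorbed into $\hat{l^2}$), and verify that all the ``absorption'' isometries used are homotopic in the strong-$*$ topology so that \cref{prop: baby functoriality}(ii) applies. Once these bookkeeping steps are in place, the corollary is a formal consequence of what has already been proved.
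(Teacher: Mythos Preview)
Your proposal is correct and follows essentially the same route as the paper: the paper's proof also cites \cref{amp morph nat prod} together with \cref{Prod group level}, invokes the well-defined homotopy class of identifications between the auxiliary Hilbert spaces and $\hat{l^2}$, and appeals to \cref{sum preserve H-space} for the H-map property. Your writeup is simply a more explicit unpacking of those same ingredients.
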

\begin{proof}
    The diagram commutes as a consequence of \cref{amp morph nat prod} and \cref{Prod group level}, and since their a well defined homotopy class of isomorphisms between the Hilbert spaces appearing in \cref{amp morph nat prod} and $\hat{l^2}$. Moreover, all the maps in the diagram are group homomorphisms: K-theory product has already seen to be one, and the justfication for the maps $\varphi_*,\alpha_*$ and $(\varphi \hat\otimes \alpha)_*$ comes from \cref{sum preserve H-space} (i), (ii) and (iii).  
\end{proof}
We end this subsection by compiling some properties of K-theory product, which are a reflection of the properties of products already discussed at the space level:
\begin{prop} \label{prop: prod prop}
The $K$-theory product has the following properties:
\begin{enumerate}[(a)]
	\item It is associative, in the sense that if $x \in K_0^{\operatorname{sp}}(A)$, $y
		\in K_0^{\operatorname{sp}}(B)$, $z
		\in K_0^{\operatorname{sp}}(C)$, then $x \times (y \times z) = (x \times y) \times z$.
	\item It is commutative, in the sense that if $x \in K_0^{\operatorname{sp}}(A)$ and $y
		\in K_0^{\operatorname{sp}}(B)$, and if $\tau : A \hat{\otimes} B \to B
		\hat{\otimes} A$ is the transposition isomorphism, then
		$\tau_{*}(x \times y) = y \times x$.
	\item It is functorial, in the sense that if $\varphi : A \to  A'$ 
		and $\psi : B \to B'$ are graded $*$-homomorphisms then
		$(\varphi \hat{\otimes}  \psi)_{*}(x \times y) =
		\varphi_{*}(x) \times \psi_{*}(y)$.
\end{enumerate}
\end{prop}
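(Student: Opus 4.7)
My plan is to deduce all three properties at the group level from the corresponding properties already established at the space level, by applying $\pi_0$ and tracking how the canonical change-of-Hilbert-space maps interact with the constructions.

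For part (c), functoriality, the statement is essentially the content of \cref{rem: prod}. Starting from the naturality of the space-level product (\cref{cor: hom preserve prod}), one composes with the canonical maps $\K(A\hat\otimes B,\hat{l^2}\hat\otimes\hat{l^2})\to \K(A\hat\otimes B)$ and $\K(A'\hat\otimes B',\hat{l^2}\hat\otimes\hat{l^2})\to \K(A'\hat\otimes B')$; these are compatible because any two even unitaries $\hat{l^2}\hat\otimes\hat{l^2}\to\hat{l^2}$ are strong-$*$ path-connected by \cref{lem: dix}, so that \cref{prop: baby functoriality} (2) guarantees a single homotopy class of maps. Applying $\pi_0$ to the resulting commutative square of spaces produces exactly the claimed diagram, and that its entries are homomorphisms has already been recorded in \cref{rem: prod}.

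For part (a), associativity, I would expand the triple product $\K(A)\times \K(B)\times \K(C)$ through both bracketings into $\K(A\hat\otimes B\hat\otimes C,\hat{l^2}\hat\otimes\hat{l^2}\hat\otimes\hat{l^2})$ and then down to $\K(A\hat\otimes B\hat\otimes C)$. The upper portion of this diagram is the strict associativity established already at the space level in the earlier proposition. The lower triangle that compares the two composites $\K(-,\hat{l^2}\hat\otimes\hat{l^2}\hat\otimes\hat{l^2})\to \K(-)$ (one going via $\K(-,\hat{l^2}\hat\otimes\hat{l^2})$ associated to the left bracketing and the other to the right) commutes up to homotopy because $\operatorname{Iso}^{\operatorname{ev}}(\hat{l^2}\hat\otimes\hat{l^2}\hat\otimes\hat{l^2},\hat{l^2})$ is path-connected in the strong-$*$ topology by \cref{lem: dix}, together with \cref{prop: baby functoriality} (2). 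Applying $\pi_0$ then gives the group-level identity $x\times(y\times z)=(x\times y)\times z$.

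For part (b), commutativity, I would take the space-level commutativity diagram (where the bottom arrow is $(\tau_{A\hat\otimes B},\tau_{\mathcal{H}_0\hat\otimes\mathcal{H}_1})$), specialize to $\mathcal{H}_0=\mathcal{H}_1=\hat{l^2}$, and then map down to $\K(B\hat\otimes A)$ on both sides. After applying $\pi_0$, the conjugation by $\tau_{\hat{l^2}\hat\otimes\hat{l^2}}$ becomes invisible because this even unitary is strong-$*$ path-connected to the identity (again by \cref{lem: dix} and \cref{prop: baby functoriality} (2)), so the composite $\K(A\hat\otimes B,\hat{l^2}\hat\otimes\hat{l^2})\to\K(B\hat\otimes A,\hat{l^2}\hat\otimes\hat{l^2})\to \K(B\hat\otimes A)$ agrees on $\pi_0$ with the canonical map, yielding $\tau_*(x\times y)=y\times x$.

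The only genuine subtlety throughout is the bookkeeping at the very last step: in each case one must verify that the canonical homotopy classes $\K(-,\hat{l^2}^{\hat\otimes n})\to\K(-)$ used to land back in the basepoint model of the spectral $K$-theory space fit together coherently. This is harmless because path-connectedness of the relevant spaces of even isometries (\cref{lem: dix}) makes any two reasonable choices homotopic, but it needs to be noted explicitly so that the square diagrams one passes through $\pi_0$ are literally commutative.
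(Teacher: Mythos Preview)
Your proposal is correct and follows exactly the approach the paper intends: the paper does not give an explicit proof of this proposition, introducing it only as ``compiling some properties of K-theory product, which are a reflection of the properties of products already discussed at the space level.'' You have simply written out the details the paper leaves implicit—passing the space-level associativity, commutativity, and naturality diagrams through $\pi_0$ and handling the Hilbert-space bookkeeping via \cref{lem: dix} and \cref{prop: baby functoriality}—which is precisely what is required.
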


\subsection{Asymptotic morphisms and more products at the group level}
In this section, we discuss the notion of an asymptotic morphism, and show that they too, just like $*$-homomorphisms, induce maps on spectral $K$-theory groups. However, unlike $*$-homomorphisms, there is no natural space level constructions inducing them.
\begin{defi}
	Let $A$ and $ B$ be graded $C^{*}$-algebras. An asymptotic morphism
from $A$ to $B$ is a family of functions $\varphi_t : A \to B$, $t \in
[1, \infty)$ satisfying the continuity condition that for all $a \in
A$
\[
t \longmapsto \varphi_t(a) : [1, \infty) \to B \text{ is bounded and
continuous}
\]
and the asymptotic conditions that for all $a, a_1, a_2 \in A$ and
$\lambda \in \mathbb{C}$ 
\[
\begin{rcases}
	\varphi_t(a_1a_2) -  \varphi_t(a_1)\varphi_t(a_2)\\
	\varphi_t(a_1+a_2)- \varphi_t(a_1) - \varphi_t(a_2)\\
	\varphi_t(\lambda a ) - \lambda \varphi_t(a)\\
	\varphi_t(a^{*}) - \varphi_t(a)^{*}
\end{rcases}
\to 0, \qquad \text{as }t \to \infty
\]
Since $A $ and $B$ are graded, we shall require that in addition
\[
\alpha(\varphi_t(a)) - \varphi_t(\alpha(a)) \to 0 \qquad \text{as }t
\to \infty,
\]
where $\alpha$ denotes the grading automorphism. We shall denote an
asymptotic morphism with  a dashed arrow, thus $\varphi : A
\dashrightarrow B$.
\end{defi}
Although the definition of an asymptotic morphism didn't necessarily ``involve" any $*$-homomorphisms, associated to every  $C^*$-algebra $B$, there is a $C^*$-algebra $\mathfrak{A}B$ such that the following result holds:
\begin{prop} \label{prop: asym}
    For $C^*$-algebras $A$ and $B$, there is a natural bijection between the set of equivalence classes of asymtotic morphisms $A \dashrightarrow B$ and $\operatorname{Hom}(A,\mathfrak{A}B)$. Furthermore, homotopy classes of asymtototic morphisms from $A$ to $B$ correspond to homotopy classes of $*$-homomorphisms from $A$ to $\mathfrak{A}B$. 
\end{prop}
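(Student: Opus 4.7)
The plan is to construct $\mathfrak{A}B$ explicitly as the asymptotic algebra and exhibit the bijection via the quotient map. Concretely, I would set
\[
\mathfrak{A}B := C_b([1,\infty), B)/C_0([1,\infty), B),
\]
where $C_b([1,\infty), B)$ is the graded $C^*$-algebra of bounded continuous $B$-valued functions (with pointwise grading), and $C_0([1,\infty), B)$ is the graded ideal of functions vanishing at infinity. A Real structure on $B$ propagates to $\mathfrak{A}B$ pointwise, so $\mathfrak{A}B$ is a graded (Real) $C^*$-algebra.

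For the forward direction, given an asymptotic morphism $\varphi = \{\varphi_t\}_{t \in [1,\infty)} : A \dashrightarrow B$, the continuity condition says that $a \mapsto (t \mapsto \varphi_t(a))$ defines a (not necessarily linear or multiplicative) set map $A \to C_b([1,\infty), B)$. Composing with the quotient to $\mathfrak{A}B$, the four asymptotic identities translate precisely into the statement that the composite is linear, multiplicative, $*$-preserving, and grading-preserving; hence an element of $\operatorname{Hom}(A, \mathfrak{A}B)$. Two asymptotic morphisms yield the same map to $\mathfrak{A}B$ iff $\varphi_t(a) - \psi_t(a) \to 0$ for each $a$, which is precisely the equivalence relation on asymptotic morphisms. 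For the reverse direction, I would invoke the Bartle-Graves selection theorem (already used in the proof of \cref{prop: Serre fib}) applied to the surjection $C_b([1,\infty), B) \twoheadrightarrow \mathfrak{A}B$ to obtain a continuous set-theoretic section $s$; given $\Phi \in \operatorname{Hom}(A, \mathfrak{A}B)$, the family $\varphi_t(a) := s(\Phi(a))(t)$ is a bounded continuous lift whose defects from being a $*$-homomorphism are elements of $C_0([1,\infty), B)$, i.e.\ vanish as $t \to \infty$. One must check the grading and Real structure are respected; this requires picking a section compatible with the $\mathbb{Z}/2^3\mathbb{Z}$-action generated by grading, Real structure and $*$-operation, exactly as in the footnote to \cref{prop: Serre fib}.

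For the homotopy part, I would define a homotopy of asymptotic morphisms $A \dashrightarrow B$ to be an asymptotic morphism $A \dashrightarrow C([0,1], B)$ whose evaluations at $0$ and $1$ yield the given morphisms. There is a canonical $*$-homomorphism $\mathfrak{A}(C([0,1], B)) \to C([0,1], \mathfrak{A}B)$ induced by the quotient, and this is compatible with evaluation at endpoints. Unpacking the first bijection applied to $C([0,1],B)$, a homotopy of asymptotic morphisms then produces a $*$-homomorphism $A \to \mathfrak{A}(C([0,1],B))$, which post-composes to a path in $\operatorname{Hom}(A, \mathfrak{A}B)$. Conversely, a homotopy of $*$-homomorphisms to $\mathfrak{A}B$ is a $*$-homomorphism $A \to C([0,1], \mathfrak{A}B)$, and the Bartle-Graves lift (applied uniformly in $t \in [0,1]$) gives back a homotopy of asymptotic morphisms.

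The main obstacle is the homotopy correspondence: the natural map $\mathfrak{A}(C([0,1],B)) \to C([0,1], \mathfrak{A}B)$ is surjective but not an isomorphism in general (functions in the codomain need only be pointwise bounded, not uniformly so in $t$), and one must check that the Bartle-Graves section can be chosen to produce a genuinely jointly continuous lift $A \times [0,1] \times [1,\infty) \to B$ so that evaluation at the endpoints recovers the prescribed asymptotic morphisms. The equivariance of the section under the grading and Real involutions is a secondary subtlety that is handled by averaging (or equivalently, by taking isotypical components as in the footnote to \cref{prop: Serre fib}).
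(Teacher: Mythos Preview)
Your sketch is essentially the standard argument and matches the paper's treatment: the paper does not prove this proposition at all, but cites Theorem~1.2.5 of \cite{SLB} and only records the forward map $\varphi \mapsto (A \xrightarrow{\Phi} C_b([1,\infty),B) \xrightarrow{q} \mathfrak{A}B)$, which is exactly your forward direction. Your use of Bartle--Graves for the reverse direction and your identification of the subtlety with $\mathfrak{A}(C([0,1],B)) \to C([0,1],\mathfrak{A}B)$ in the homotopy statement are the right ingredients and go beyond what the paper itself supplies.
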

\begin{defi}
Let $A$ be a $C^{*}$-algebra and let $T = [1, \infty).$ Then define
the asymptotic algebra of $A$, $\mathfrak{A}A$, by:
\[
\mathfrak{A}A := C_b(T, A) / C_0(T, A),
\]
where $C_b(T, A)$ denotes bounded continuous functions from $T$ to $A$
and $C_0(T, A)$ denotes those functions of $C_b(T, A)$ which vanish in
norm at infinity. Alternatively,
\[
	\mathfrak{A}A = \set{[f] \mid f : [1, \infty) \to A \mid f \text{
	is bounded and continuous}},
\]
and $[f]$ denotes the equivalence class of functions, where $f \sim g$
if 
\[
\lim_{t \to \infty} \norm{f(t) - g(t)} = 0.
\]
The algebra structure here is defined pointwise as expected.
\end{defi}
We don't prove \cref{prop: asym} here; a detailed proof can be found in Theorem 1.2.5 of \cite{SLB}. We just note what the natural bijection is going to be: given $\varphi: A \dashrightarrow B$, we map it to the composite
\[
A \xrightarrow{\Phi} C_b[1,\infty) \xrightarrow{\operatorname{q}} \mathfrak{A}B
,\] where $\Phi(a)(t):=\varphi_t(a)$, and $q$ is the quotient map.

We now show that asymptotic morphisms induce maps on the spectral $K$-theory groups. 
\begin{prop} \label{prop: man functoriality groups}
An asymptotic morphism $\varphi_t : A \to B$ determines a $K$-theory
map $\varphi_{*}: K_0^{\operatorname{sp}}(A) \to K_0^{\operatorname{sp}}(B)$, with the following properties:
\begin{enumerate}[(i)]
	\item The correspondence $\varphi \mapsto  \varphi_{*}$ is
		functorial with respect to composition with $*$-homomorphism
		$A_1 \to A$ and $B \to B_1$.
	\item The map $\varphi_{*}$ depends only on the homotopy class of
		$\varphi$.
	\item If each $\varphi_t$ is actually a $*$-homomorphism, then
		$\varphi_{*}:K_0^{\operatorname{sp}}(A) \to K_0^{\operatorname{sp}}(B)$ is the map induced by $\varphi_1$.
\end{enumerate}
\end{prop}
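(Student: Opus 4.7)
The plan is to route the construction through \cref{prop: asym}: the asymptotic morphism $\varphi: A \dashrightarrow B$ corresponds to a genuine graded $*$-homomorphism $\alpha_\varphi: A \to \mathfrak{A}B$, which by \cref{cor: baby functoriality} induces $(\alpha_\varphi)_*: K_0^{\operatorname{sp}}(A) \to K_0^{\operatorname{sp}}(\mathfrak{A}B)$. The remaining task is to construct a natural \emph{descent} map $\rho_B: K_0^{\operatorname{sp}}(\mathfrak{A}B) \to K_0^{\operatorname{sp}}(B)$, and then declare $\varphi_* := \rho_B \circ (\alpha_\varphi)_*$.

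The construction of $\rho_B$ is the main technical step, which I would build using the Cayley transform model of \cref{lem: cay transform}. A class in $K_0^{\operatorname{sp}}(\mathfrak{A}B)$ is represented by a unitary $U \in \operatorname{Cay}(\mathfrak{A}B \hat\otimes \com(\hat{l^2}))$. Composing with the canonical graded $*$-homomorphism $\mathfrak{A}B \hat\otimes \com(\hat{l^2}) \to \mathfrak{A}(B \hat\otimes \com(\hat{l^2}))$, one lifts $U$ (using Bartle--Graves) to a bounded continuous family $\widetilde{U}: [1,\infty) \to (B \hat\otimes \com(\hat{l^2}))^+$ whose class modulo $C_0$ is $U$. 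For $t$ sufficiently large, $\widetilde{U}(t)$ is approximately unitary with the required grading/Real symmetries, and applying the strong deformation retract onto the unitaries used in the proof of \cref{prop: Serre fib} produces a genuine $u_t \in \operatorname{Cay}(B \hat\otimes \com(\hat{l^2}))$. The asymptotic relations, combined with the ``$1+x$'' lemma, imply that for $t_1, t_2$ both large, $u_{t_1}$ and $u_{t_2}$ are norm-close and hence lie in the same path component of $\operatorname{Cay}(B \hat\otimes \com(\hat{l^2}))$; the homotopy class thus stabilizes and defines $\rho_B([U])$. Independence of the chosen lift follows because any two lifts differ by a $C_0$-valued family, which is asymptotically negligible and therefore produces the same limiting class after unitarization.

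With $\varphi_* = \rho_B \circ (\alpha_\varphi)_*$ in hand, the three stated properties become formal consequences. For (i), the assignments $\varphi \mapsto \alpha_\varphi$ and $B \mapsto \rho_B$ are both natural in $*$-homomorphisms at source and target (the former from \cref{prop: asym}, the latter by inspection of the construction), yielding functoriality. For (ii), the second statement of \cref{prop: asym} says homotopic asymptotic morphisms yield homotopic $\alpha_\varphi$'s, and \cref{cor: baby functoriality}(1) then concludes. For (iii), if $\varphi_t \equiv \varphi_1$ is constant in $t$, then $\alpha_\varphi$ factors as $A \xrightarrow{\varphi_1} B \xhookrightarrow{\iota_B} \mathfrak{A}B$, where $\iota_B$ is the constants inclusion; from the definition of $\rho_B$ one sees directly that constant lifts require no unitarization, so $\rho_B \circ (\iota_B)_* = \mathrm{id}_{K_0^{\operatorname{sp}}(B)}$, giving $\varphi_* = (\varphi_1)_*$.

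The main obstacle is the construction of $\rho_B$, namely verifying that $[u_t]$ stabilizes for large $t$ and is independent of the lift. This requires combining the Bartle--Graves selection for the lift $\widetilde{U}$ with the deformation retract argument of \cref{prop: Serre fib} to perform the unitarization, together with a careful tracking of how the defining asymptotic conditions of $\varphi$ translate into norm-closeness of the resulting Cayley unitaries. All subsequent properties then follow cleanly from the naturality of this construction and the homotopy invariance already established for $*$-homomorphism-induced maps.
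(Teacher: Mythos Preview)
Your construction of $\rho_B$ is doing by hand what the paper obtains for free from the long exact sequence already developed in Section~3. The paper simply observes that the short exact sequence
\[
0 \to C_0(T,B) \to C_b(T,B) \xrightarrow{q} \mathfrak{A}B \to 0
\]
with $T=[1,\infty)$ has contractible kernel, so \cref{stab and les group}(2) gives an isomorphism $q_*: K_0^{\operatorname{sp}}(C_b(T,B)) \xrightarrow{\cong} K_0^{\operatorname{sp}}(\mathfrak{A}B)$; one then sets $\varphi_* := (\operatorname{ev}_1)_* \circ q_*^{-1} \circ (\alpha_\varphi)_*$. Your $\rho_B$ is precisely $(\operatorname{ev}_1)_* \circ q_*^{-1}$, but built explicitly via Cayley lifts and unitarization. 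The catch is that to show $\rho_B$ is well-defined on $\pi_0$ you must lift \emph{paths} of Cayley unitaries from $\mathfrak{A}B$ to $C_b(T,B)$, which is exactly the path-lifting property of \cref{prop: Serre fib}---so you end up invoking (or re-proving) the Serre fibration result anyway. The paper's route is cleaner because it packages all of this into the already-established long exact sequence.

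One further point on (iii): the hypothesis is that each $\varphi_t$ is a $*$-homomorphism, not that the family is constant. Your argument treats only the constant case. This is easily repaired---when all $\varphi_t$ are $*$-homomorphisms, $\alpha_\varphi$ factors through $C_b(T,B)$ before the quotient, and the diagram collapses to $(\operatorname{ev}_1)_*$ applied to that factorization, giving $(\varphi_1)_*$ directly (alternatively, reduce to the constant family via (ii), since $t \mapsto \varphi_t$ is then a homotopy of $*$-homomorphisms). The paper takes the first route.
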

\begin{proof}
    \begin{enumerate}[(i)]
        \item We have a natural short exact sequence of graded $C^*$-algebras
    \begin{equation}\label{nat ses}
        0 \rightarrow C_0(T,B) \rightarrow C_b(T,B) \xrightarrow{q} \mathfrak{A}B \rightarrow 0,
    \end{equation}
    
    and $C_0(T,B)$ is contractible. By \cref{stab and les group} (ii), we have natural isomorphisms \[K_0^{\operatorname{sp}}(C_b(T,B)) \xrightarrow[\cong]{q_*} K_0^{\operatorname{sp}}(\mathfrak{A}B)\] By \cref{prop: asym}, we have a map $K_0^{\operatorname{sp}}(A) \xrightarrow{\tilde{\varphi}_*} K_0^{\operatorname{sp}}(\mathfrak{A}B)$. Our sought after natural map $K_0^{\operatorname{sp}}(A) \rightarrow K_0^{\operatorname{sp}}(B)$ is the one which makes the following diagram commute:
    \[
    % https://q.uiver.app/#q=WzAsNCxbMCwwLCJLXzBee1xcb3BlcmF0b3JuYW1le3NwfX0oQSkiXSxbMSwwLCJLXzBee1xcb3BlcmF0b3JuYW1le3NwfX0oQikiXSxbMCwxLCJLXzBee1xcb3BlcmF0b3JuYW1le3NwfX0oXFxtYXRoZnJha3tBfUIpIl0sWzEsMSwiS18wXntcXG9wZXJhdG9ybmFtZXtzcH19KENfYihULEIpKSJdLFswLDIsIlxcdGlsZGV7XFx2YXJwaGl9XyoiXSxbMywyLCJxXyoiLDJdLFszLDEsIntcXG9wZXJhdG9ybmFtZXtldn1fMX1fKiIsMl0sWzAsMSwiXFx2YXJwaGlfKiIsMCx7InN0eWxlIjp7ImJvZHkiOnsibmFtZSI6ImRhc2hlZCJ9fX1dXQ==
\begin{tikzcd}
	{K_0^{\operatorname{sp}}(A)} & {K_0^{\operatorname{sp}}(B)} \\
	{K_0^{\operatorname{sp}}(\mathfrak{A}B)} & {K_0^{\operatorname{sp}}(C_b(T,B))}
	\arrow["{\tilde{\varphi}_*}", from=1-1, to=2-1]
	\arrow["{q_*}^{-1}"', from=2-1, to=2-2]
	\arrow["{{\operatorname{ev}_1}_*}"', from=2-2, to=1-2]
	\arrow["{\varphi_*}", dashed, from=1-1, to=1-2]
\end{tikzcd}
    \]
    \item This follows from the second part of \cref{prop: asym} and the homotopy invariance mentioned in \cref{amplified morphisms group} (i).
    \item As all $\varphi_t$ are $*$-homomorphisms, the map $\tilde{\varphi_*}$ factors through \[
    K_0^{\operatorname{sp}}(A) \rightarrow K_0^{\operatorname{sp}}(C_b(T,B))\xrightarrow{q_*}K_0^{\operatorname{sp}}(\mathfrak{A}B),
    \] and the rest follows from the definition of the map.
    \end{enumerate}
\end{proof}
\begin{cor}
    The $K$-theory product is natural with respect the maps described in \cref{prop: man functoriality groups} in the following sense: if $\psi_t: B \dashrightarrow B'$ is an asymptotic morphism, and $\varphi: A \rightarrow A'$ is a $*$-homomorphism, then the following diagram commutes:
    \[
    % https://q.uiver.app/#q=WzAsNCxbMCwwLCJLXzBee1xcb3BlcmF0b3JuYW1le3NwfX0oQSlcXG90aW1lcyBLXzBee1xcb3BlcmF0b3JuYW1le3NwfX0oQikiXSxbMCwxLCJLXzBee1xcb3BlcmF0b3JuYW1le3NwfX0oQScpXFxvdGltZXMgS18wXntcXG9wZXJhdG9ybmFtZXtzcH19KEInKSJdLFsyLDAsIktfezB9XntcXG9wZXJhdG9ybmFtZXtzcH19KEEgXFxoYXRcXG90aW1lcyBCKSJdLFsyLDEsIktfezB9XntcXG9wZXJhdG9ybmFtZXtzcH19KEEnIFxcaGF0XFxvdGltZXMgQicpIl0sWzAsMSwiXFx2YXJwaGlfKlxcaGF0XFxvdGltZXNcXHBzaV8qIiwyXSxbMCwyLCJLLVxcdGV4dHt0aGVvcnkgcHJvZHVjdH0iLDJdLFsxLDMsIkstXFx0ZXh0e3RoZW9yeSBwcm9kdWN0fSJdLFsyLDMsIihcXHZhcnBoaSBcXGhhdFxcb3RpbWVzIFxccHNpKV8qIl1d
\begin{tikzcd}
	{K_0^{\operatorname{sp}}(A)\otimes K_0^{\operatorname{sp}}(B)} && {K_{0}^{\operatorname{sp}}(A \hat\otimes B)} \\
	{K_0^{\operatorname{sp}}(A')\otimes K_0^{\operatorname{sp}}(B')} && {K_{0}^{\operatorname{sp}}(A' \hat\otimes B')}
	\arrow["{\varphi_*\hat\otimes\psi_*}"', from=1-1, to=2-1]
	\arrow["{K-\text{theory product}}"', from=1-1, to=1-3]
	\arrow["{K-\text{theory product}}", from=2-1, to=2-3]
	\arrow["{(\varphi \hat\otimes \psi)_*}", from=1-3, to=2-3]
\end{tikzcd}
    \]
\end{cor}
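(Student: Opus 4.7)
The plan is to reduce to the special case $A = A'$ with $\varphi = \mathrm{id}$, and then to unpack the definition of $\psi_{*}$ through the asymptotic algebra. First, the asymptotic morphism $\varphi \hat{\otimes} \psi \colon A \hat{\otimes} B \dashrightarrow A' \hat{\otimes} B'$ factors as the composite of the $*$-homomorphism $\varphi \hat{\otimes} \mathrm{id}_{B} \colon A \hat{\otimes} B \to A' \hat{\otimes} B$ with the asymptotic morphism $\mathrm{id}_{A'} \hat{\otimes} \psi \colon A' \hat{\otimes} B \dashrightarrow A' \hat{\otimes} B'$. Using the functoriality of asymptotic morphisms with respect to $*$-homomorphism composition (\cref{prop: man functoriality groups}(i)) together with the already established $*$-homomorphism naturality of the K-theory product (\cref{rem: prod}), it suffices to prove the case $A = A'$ and $\varphi = \mathrm{id}$.

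For this case, let $\tilde{\psi} \colon B \to \mathfrak{A}B'$ be the $*$-homomorphism corresponding to $\psi$ via \cref{prop: asym}. I would then construct a natural $*$-homomorphism $\nu \colon A \hat{\otimes} \mathfrak{A}B' \to \mathfrak{A}(A \hat{\otimes} B')$, induced by the canonical map of bounded families $A \hat{\otimes} C_{b}(T,B') \to C_{b}(T, A \hat{\otimes} B')$, and verify that it intertwines evaluation at $t=1$ with $\mathrm{id}_{A} \hat{\otimes} \mathrm{ev}_{1}$, restricts to a map $A \hat{\otimes} C_{0}(T,B') \to C_{0}(T, A \hat{\otimes} B')$ on the ideal, and satisfies the identification $\widetilde{\mathrm{id}_{A} \hat{\otimes} \psi} = \nu \circ (\mathrm{id}_{A} \hat{\otimes} \tilde{\psi})$ under the bijection of \cref{prop: asym}.

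With $\nu$ in place, I would expand the target rectangle vertically into three stacked layers: the top layer applies $\tilde{\psi}_{*}$ to the second tensor factor before taking the K-theory product, landing in $K_{0}^{\operatorname{sp}}(A \hat{\otimes} \mathfrak{A}B')$; the middle layer post-composes with $\nu_{*}$ to land in $K_{0}^{\operatorname{sp}}(\mathfrak{A}(A \hat{\otimes} B'))$; and the bottom layer applies $q_{*}^{-1}$ followed by $(\mathrm{ev}_{1})_{*}$ to descend to $K_{0}^{\operatorname{sp}}(A \hat{\otimes} B')$. The top rectangle commutes by \cref{rem: prod} applied to $\tilde{\psi}$; the middle rectangle commutes by \cref{rem: prod} applied to $\nu$ together with the identification of $\mathrm{id}_{A} \hat{\otimes} \psi$ with $\nu \circ (\mathrm{id}_{A} \hat{\otimes} \tilde{\psi})$; and the bottom rectangle commutes because $\nu$ intertwines evaluation at $t=1$, so its commutativity reduces to the functoriality of $K_{0}^{\operatorname{sp}}$ on a square of ordinary $*$-homomorphisms, combined with the definition of $\psi_{*}$ from the proof of \cref{prop: man functoriality groups}(i).

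The principal obstacle is the careful construction of $\nu$ and the verification of its compatibilities: showing that the tensor-pairing map $A \hat{\otimes} C_{b}(T,B') \to C_{b}(T, A \hat{\otimes} B')$ descends cleanly to the quotient (i.e.\ that $A \hat{\otimes} C_{0}(T,B')$ lands in $C_{0}(T, A \hat{\otimes} B')$ under the natural map), and that the composite $\nu \circ (\mathrm{id}_{A} \hat{\otimes} \tilde{\psi})$ indeed represents $\mathrm{id}_{A} \hat{\otimes} \psi$ under \cref{prop: asym}. Both are routine verifications at the level of elementary tensors and bounded continuous functions once the graded maximal tensor product conventions from Section~2 are kept in force; after this is set up, the remainder of the argument is formal diagram chasing using only the naturality statements already established in the paper.
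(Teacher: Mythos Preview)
Your argument is correct and is essentially the paper's approach: factor the vertical maps through $K_0^{\operatorname{sp}}(\mathfrak{A}B')$ and $K_0^{\operatorname{sp}}(C_b(T,B'))$ (and their $A'\hat\otimes(-)$ counterparts on the right) and apply \cref{rem: prod} to each layer. Two minor differences: the paper does not first reduce to $\varphi=\mathrm{id}_A$ but carries $\varphi$ through the top layer directly, and its three-layer diagram stays on the $A'\hat\otimes(-)$ side throughout (using $(1\hat\otimes q)_*^{-1}$ and $(1\hat\otimes\operatorname{ev}_1)_*$), so it never explicitly introduces your comparison map $\nu$; your construction of $\nu$ in effect supplies the justification, left tacit in the paper, that its right-column composite really computes $(\varphi\hat\otimes\psi)_*$ as defined in \cref{prop: man functoriality groups}.
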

\begin{proof}
    The above diagram can be factored as:
    \[
   % https://q.uiver.app/#q=WzAsOCxbMCwwLCJLXzBee1xcb3BlcmF0b3JuYW1le3NwfX0oQSlcXG90aW1lcyBLXzBee1xcb3BlcmF0b3JuYW1le3NwfX0oQikiXSxbMCwxLCJLXzBee1xcb3BlcmF0b3JuYW1le3NwfX0oQScpXFxvdGltZXMgS18wXntcXG9wZXJhdG9ybmFtZXtzcH19KFxcbWF0aGZyYWt7QX1CJykiXSxbMiwwLCJLX3swfV57XFxvcGVyYXRvcm5hbWV7c3B9fShBIFxcaGF0XFxvdGltZXMgQikiXSxbMiwxLCJLX3swfV57XFxvcGVyYXRvcm5hbWV7c3B9fShBJyBcXGhhdFxcb3RpbWVzIFxcbWF0aGZyYWt7QX1CJykiXSxbMCwzLCJLXzBee1xcb3BlcmF0b3JuYW1le3NwfX0oQScpIFxcb3RpbWVzIEtfMF57XFxvcGVyYXRvcm5hbWV7c3B9fShCJykiXSxbMiwzLCJLXzBee1xcb3BlcmF0b3JuYW1le3NwfX0oQSdcXGhhdFxcb3RpbWVzIEInKSJdLFswLDIsIktfMF57XFxvcGVyYXRvcm5hbWV7c3B9fShBJykgXFxvdGltZXMgS18wXntcXG9wZXJhdG9ybmFtZXtzcH19KENfYihULEInKSkiXSxbMiwyLCJLXzBee1xcb3BlcmF0b3JuYW1le3NwfX0oQScgXFxoYXRcXG90aW1lcyBDX2IoVCxCJykpIl0sWzAsMSwiXFx2YXJwaGlfKlxcaGF0XFxvdGltZXNcXHRpbGRle1xccHNpfV8qIiwyXSxbMCwyLCJLLVxcdGV4dHt0aGVvcnkgcHJvZHVjdH0iXSxbMSwzLCJLLVxcdGV4dHt0aGVvcnkgcHJvZHVjdH0iXSxbMiwzLCIoXFx2YXJwaGkgXFxoYXRcXG90aW1lcyBcXHRpbGRle1xccHNpfSlfKiJdLFs0LDUsIkstXFx0ZXh0e3RoZW9yeSBwcm9kdWN0fSJdLFsxLDYsIjEgXFxvdGltZXMgKHFfKileey0xfSIsMl0sWzYsNywiSy1cXHRleHR7dGhlb3J5IHByb2R1Y3R9Il0sWzMsNywiKDEgXFxoYXRcXG90aW1lcyBxXyopXnstMX0iXSxbNiw0LCIxIFxcb3RpbWVzIFxcb3BlcmF0b3JuYW1le2V2fV97MV8qfSJdLFs3LDUsIigxIFxcaGF0XFxvdGltZXNcXG9wZXJhdG9ybmFtZXtldl97MX0pfV8qIl1d
\begin{tikzcd}
	{K_0^{\operatorname{sp}}(A)\otimes K_0^{\operatorname{sp}}(B)} && {K_{0}^{\operatorname{sp}}(A \hat\otimes B)} \\
	{K_0^{\operatorname{sp}}(A')\otimes K_0^{\operatorname{sp}}(\mathfrak{A}B')} && {K_{0}^{\operatorname{sp}}(A' \hat\otimes \mathfrak{A}B')} \\
	{K_0^{\operatorname{sp}}(A') \otimes K_0^{\operatorname{sp}}(C_b(T,B'))} && {K_0^{\operatorname{sp}}(A' \hat\otimes C_b(T,B'))} \\
	{K_0^{\operatorname{sp}}(A') \otimes K_0^{\operatorname{sp}}(B')} && {K_0^{\operatorname{sp}}(A'\hat\otimes B')}
	\arrow["{\varphi_*\hat\otimes\tilde{\psi}_*}"', from=1-1, to=2-1]
	\arrow["{K-\text{theory product}}", from=1-1, to=1-3]
	\arrow["{K-\text{theory product}}", from=2-1, to=2-3]
	\arrow["{(\varphi \hat\otimes \tilde{\psi})_*}", from=1-3, to=2-3]
	\arrow["{K-\text{theory product}}", from=4-1, to=4-3]
	\arrow["{1 \otimes (q_*)^{-1}}"', from=2-1, to=3-1]
	\arrow["{K-\text{theory product}}", from=3-1, to=3-3]
	\arrow["{(1 \hat\otimes q)_*^{-1}}", from=2-3, to=3-3]
	\arrow["{1 \otimes \operatorname{ev}_{1_*}}", from=3-1, to=4-1]
	\arrow["{(1 \hat\otimes\operatorname{ev_{1})}_*}", from=3-3, to=4-3]
\end{tikzcd}
    \]
    and each of the rectangles commute because of \cref{rem: prod}; for the middle rectangle we also need the naturality of \cref{nat ses}.
\end{proof}
\begin{prop}\label{prop: legend functoriality}
A graded asymptotic morphism $\varphi_t:\mathcal{S} \hat\otimes A \dashrightarrow B
\hat\otimes \mathcal{K}(\h)$ determines a $K$-theory map $\varphi_{*}:K_0^{\operatorname{sp}}(A) \to
K_0^{\operatorname{sp}}(B)$, with the following properties:
\begin{enumerate}[(i)]
	\item The correspondence $\varphi \mapsto \varphi_{*}$ is
		functorial with respect to composition with $*$-homomorphisms
		$A_1 \to A$ and $B \to B_1$.
	\item The map $\varphi_{*}$ depends only on the homotopy class of
		$\varphi$.
	\item If each $\varphi_t$ is actually a $*$-homomorphism, then
		$\varphi_{*}: K_0^{\operatorname{sp}}(A) \to K_0^{\operatorname{sp}}(B)$ is the map induced by
		$\varphi_1$.
\end{enumerate}
\end{prop}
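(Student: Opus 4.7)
The plan is to mimic the proof of \cref{prop: man functoriality groups}, but using the amplified-morphism machinery of \cref{amplified morphisms group}(i) in place of ordinary $*$-functoriality. First, by \cref{prop: asym} applied to the graded $C^*$-algebras $\s \hat\otimes A$ and $B \hat\otimes \com(\h)$, the graded asymptotic morphism $\varphi_t$ corresponds to a genuine graded $*$-homomorphism
\[
\tilde\varphi : \s \hat\otimes A \longrightarrow \mathfrak{A}(B \hat\otimes \com(\h)),
\]
which is well-defined up to homotopy.

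I would then apply \cref{amplified morphisms group}(i) to $\tilde\varphi$, viewing the target as $\mathfrak{A}(B \hat\otimes \com(\h)) \hat\otimes \com(\C)$ (so that the auxiliary Hilbert space there is simply $\C$); this yields a group homomorphism $\tilde\varphi_* : K_0^{\operatorname{sp}}(A) \to K_0^{\operatorname{sp}}(\mathfrak{A}(B \hat\otimes \com(\h)))$. As in the proof of \cref{prop: man functoriality groups} one has the natural short exact sequence of graded $C^*$-algebras
\[
0 \longrightarrow C_0(T, B \hat\otimes \com(\h)) \longrightarrow C_b(T, B \hat\otimes \com(\h)) \xrightarrow{\;q\;} \mathfrak{A}(B \hat\otimes \com(\h)) \longrightarrow 0,
\]
and the ideal $C_0(T, B \hat\otimes \com(\h))$ is contractible (via a reparametrisation of $T$). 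By \cref{stab and les group}(2) the map $q_*$ is therefore an isomorphism on $K_0^{\operatorname{sp}}$. Combining this with evaluation at $1$ and the stability isomorphism of \cref{stab and les group}(1), I would define
\[
\varphi_* \;:=\; (\operatorname{stab}_*)^{-1} \circ \operatorname{ev}_{1*} \circ q_*^{-1} \circ \tilde\varphi_* \;:\; K_0^{\operatorname{sp}}(A) \longrightarrow K_0^{\operatorname{sp}}(B).
\]

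To verify properties (i) and (ii) I would check naturality of each ingredient separately: the assignment $\varphi \mapsto \tilde\varphi$ is natural in both variables and descends to homotopy classes by \cref{prop: asym}; the amplified-morphism construction is natural and sends homotopic $*$-homomorphisms to the same map by \cref{amplified morphisms group}(i) together with \cref{nat of amp morph}(i); and the short exact sequence, $\operatorname{ev}_1$, and the stability isomorphism are manifestly functorial in $B$. For (iii), if each $\varphi_t$ is itself a $*$-homomorphism then the family assembles into a single graded $*$-homomorphism $\Phi : \s \hat\otimes A \to C_b(T, B \hat\otimes \com(\h))$ with $\tilde\varphi = q \circ \Phi$. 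Functoriality of the amplified-morphism construction then gives $\tilde\varphi_* = q_* \circ \Phi_*$, so $q_*^{-1} \circ \tilde\varphi_* = \Phi_*$ and the whole composite collapses to $(\operatorname{stab}_*)^{-1} \circ \operatorname{ev}_{1*} \circ \Phi_* = (\operatorname{stab}_*)^{-1} \circ (\operatorname{ev}_1 \circ \Phi)_* = (\operatorname{stab}_*)^{-1} \circ \varphi_{1*}$, which is exactly the map on K-theory induced by $\varphi_1 : \s \hat\otimes A \to B \hat\otimes \com(\h)$ in the sense of \cref{amplified morphisms group}(i).

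The step I expect to require the most care is checking that the amplified-morphism construction respects post-composition with ordinary $*$-homomorphisms on the target algebra, which is needed both for naturality in $B$ and for the simplification in (iii). This is essentially what \cref{nat of amp morph}(i) asserts, but unpacking it for the concrete composition $\tilde\varphi = q \circ \Phi$ (and likewise for a postcomposition $B \to B_1$) still requires a short diagram chase through \cref{defi: amplified homo}(i) to confirm that the flip isomorphisms and the natural isomorphism $\Psi$ stay in place. Once that compatibility is verified, everything else is a mechanical translation of the argument in \cref{prop: man functoriality groups}.
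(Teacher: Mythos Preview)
Your approach is correct and follows the same overall strategy as the paper: pass to the asymptotic algebra via \cref{prop: asym}, feed the resulting genuine $*$-homomorphism into the amplified-morphism machinery of \cref{amplified morphisms group}(i), and then descend along the short exact sequence defining $\mathfrak{A}$ together with $\operatorname{ev}_1$.

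The one genuine difference is where the factor $\com(\h)$ is absorbed. The paper invokes an identification $\mathfrak{A}(B\hat\otimes\com(\h))\cong \mathfrak{A}B\hat\otimes\com(\h)$ at the outset, so that $\tilde\varphi$ already has the shape $\s\hat\otimes A\to(\mathfrak{A}B)\hat\otimes\com(\h)$ demanded by \cref{amplified morphisms group}(i) with nontrivial $\h$; this lands directly in $K_0^{\operatorname{sp}}(\mathfrak{A}B)$, and the remaining steps are literally those of \cref{prop: man functoriality groups}. You instead apply \cref{amplified morphisms group}(i) with the trivial Hilbert space $\C$, land in $K_0^{\operatorname{sp}}\bigl(\mathfrak{A}(B\hat\otimes\com(\h))\bigr)$, and peel off $\com(\h)$ at the end via \cref{stab and les group}(1). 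Your route costs one extra stability isomorphism but avoids relying on the identification $\mathfrak{A}(B\hat\otimes\com(\h))\cong\mathfrak{A}B\hat\otimes\com(\h)$, which is not entirely innocent for infinite-dimensional $\h$ (the inclusion $C_b(T,B)\hat\otimes\com(\h)\hookrightarrow C_b(T,B\hat\otimes\com(\h))$ is generally strict). The verification of (iii) you sketch is exactly right; the compatibility you flag as ``requiring care'' is precisely \cref{nat of amp morph}(i), and once invoked the collapse to $(\varphi_1)_*$ in the sense of \cref{amplified morphisms group}(i) is immediate up to the harmless flip $\h\hat\otimes\hat{l^2}\cong\hat{l^2}\hat\otimes\h$.
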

\begin{proof}
    \begin{enumerate}[(i)]
        \item Note that $\varphi: \s \hat\otimes A \dashrightarrow B \hat\otimes \com(\h)$ induces a $*$-homomorphism $\s \hat\otimes A \rightarrow \mathfrak{A}(B \hat\otimes \com(\h)) \cong \mathfrak{A}B \hat\otimes \com(\h)$. By \cref{amplified morphisms group}, this induces a map $K_0^{\operatorname{sp}}(A) \rightarrow K_0^{\operatorname{sp}}(\mathfrak{A}B)$, which can be pushed forward by $q_*^{-1}$ to obtain a map $K_0^{\operatorname{sp}}(A) \rightarrow K_0^{\operatorname{sp}}(B)$; the map which we denote by $\varphi_*$.
        \item This follows from the second part of \cref{prop: asym} and the homotopy invariance mentioned in \cref{amplified morphisms group} (ii).
        \item Similar to the proof of $(iii)$ of \cref{prop: man functoriality groups}.
    \end{enumerate}
\end{proof}

\begin{cor} \label{cor: legend and prod}
    The $K$-theory product is natural with respect the maps described in \cref{prop: legend functoriality} in the following sense: if $\alpha: \s \hat\otimes B \dashrightarrow B' \hat\otimes \com(\h)$ is an asymptotic morphism, and $\varphi: A \rightarrow A'$ is a $*$-homomorphism, then the following diagram commutes:
    \[
    % https://q.uiver.app/#q=WzAsNCxbMCwwLCJLXzBee1xcb3BlcmF0b3JuYW1le3NwfX0oQSlcXG90aW1lcyBLXzBee1xcb3BlcmF0b3JuYW1le3NwfX0oQikiXSxbMiwwLCJLX3swfV57XFxvcGVyYXRvcm5hbWV7c3B9fShBIFxcaGF0XFxvdGltZXMgQikiXSxbMCwxLCJLXzBee1xcb3BlcmF0b3JuYW1le3NwfX0oQScpXFxvdGltZXMgS18wXntcXG9wZXJhdG9ybmFtZXtzcH19KEInKSJdLFsyLDEsIktfezB9XntcXG9wZXJhdG9ybmFtZXtzcH19KEEnIFxcaGF0XFxvdGltZXMgQicpIl0sWzAsMSwiSy1cXHRleHR7dGhlb3J5IHByb2R1Y3R9Il0sWzIsMywiSy1cXHRleHR7dGhlb3J5IHByb2R1Y3R9Il0sWzAsMiwiXFx2YXJwaGlfKiBcXGhhdFxcb3RpbWVzIFxcYWxwaGFfKiIsMl0sWzEsMywiKFxcdmFycGhpIFxcaGF0XFxvdGltZXMgXFxhbHBoYSlfKiJdXQ==
\begin{tikzcd}
	{K_0^{\operatorname{sp}}(A)\otimes K_0^{\operatorname{sp}}(B)} && {K_{0}^{\operatorname{sp}}(A \hat\otimes B)} \\
	{K_0^{\operatorname{sp}}(A')\otimes K_0^{\operatorname{sp}}(B')} && {K_{0}^{\operatorname{sp}}(A' \hat\otimes B')}
	\arrow["{K-\text{theory product}}", from=1-1, to=1-3]
	\arrow["{K-\text{theory product}}", from=2-1, to=2-3]
	\arrow["{\varphi_* \hat\otimes \alpha_*}"', from=1-1, to=2-1]
	\arrow["{(\varphi \hat\otimes \alpha)_*}", from=1-3, to=2-3]
\end{tikzcd}
    \]
\end{cor}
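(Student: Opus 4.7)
The plan is to mimic the proof of the immediately preceding corollary, factoring the asymptotic-morphism square into a vertical stack of three squares that arise from honest $*$-homomorphisms. The construction of $\alpha_*$ in \cref{prop: legend functoriality} tells me exactly how to do this: $\alpha$ yields a $*$-homomorphism $\tilde{\alpha}: \s \hat\otimes B \to \mathfrak{A}B' \hat\otimes \com(\h)$ (using the canonical identification $\mathfrak{A}(B' \hat\otimes \com(\h)) \cong \mathfrak{A}B' \hat\otimes \com(\h)$ employed there), and $\alpha_*$ is the composite
\[
K_0^{\operatorname{sp}}(B) \xrightarrow{\tilde{\alpha}_*} K_0^{\operatorname{sp}}(\mathfrak{A}B') \xrightarrow{q_*^{-1}} K_0^{\operatorname{sp}}(C_b(T,B')) \xrightarrow{(\operatorname{ev}_1)_*} K_0^{\operatorname{sp}}(B'),
\]
with $\tilde{\alpha}_*$ the amplified morphism of \cref{amplified morphisms group}(i) and $q$ the quotient map from the natural short exact sequence $0 \to C_0(T,B') \to C_b(T,B') \xrightarrow{q} \mathfrak{A}B' \to 0$. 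Similarly, viewing $\varphi \hat\otimes \alpha$ as an asymptotic morphism of the form in \cref{defi: amplified homo}(ii), its induced $K$-theory map $(\varphi \hat\otimes \alpha)_*$ factorizes through $K_0^{\operatorname{sp}}(A' \hat\otimes \mathfrak{A}B') \to K_0^{\operatorname{sp}}(A' \hat\otimes C_b(T,B')) \to K_0^{\operatorname{sp}}(A' \hat\otimes B')$ via the $*$-homomorphism $\varphi \hat\otimes \tilde{\alpha}$.

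With these factorizations in place, I factor the square of the corollary as a vertical stack of three $*$-homomorphism squares. The top square replaces $B'$ by $\mathfrak{A}B'$ and has left and right vertical arrows $\varphi_* \otimes \tilde{\alpha}_*$ and $(\varphi \hat\otimes \tilde{\alpha})_*$; it commutes by \cref{cor: boy and products}, the analogous naturality already proved for amplified $*$-homomorphisms. The middle square has vertical arrows $1 \otimes q_*^{-1}$ and $(1 \hat\otimes q)_*^{-1}$ and commutes by \cref{rem: prod} applied to the pair $(1_{A'},q)$. The bottom square has vertical arrows $1 \otimes (\operatorname{ev}_1)_*$ and $(1 \hat\otimes \operatorname{ev}_1)_*$ and commutes by \cref{rem: prod} applied to $(1_{A'},\operatorname{ev}_1)$. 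Composing the three commuting squares vertically yields the desired commutativity.

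The main obstacle I anticipate is not any of the three individual squares --- the top one is a direct invocation of \cref{cor: boy and products} and the other two are instances of \cref{rem: prod} --- but rather verifying that $(\varphi \hat\otimes \alpha)_*$ really does admit the claimed factorization through the tensor product of the short exact sequence for $B'$ with $A'$. Concretely, one must check that the $*$-homomorphism $\varphi \hat\otimes \tilde{\alpha}: A \hat\otimes \s \hat\otimes B \to A' \hat\otimes \mathfrak{A}B' \hat\otimes \com(\h)$, post-composed with the natural map $A' \hat\otimes \mathfrak{A}B' \hat\otimes \com(\h) \to \mathfrak{A}(A' \hat\otimes B' \hat\otimes \com(\h))$, coincides with the $*$-homomorphism extracted from the asymptotic morphism $\varphi \hat\otimes \alpha$. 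This is precisely the naturality of $0 \to C_0(T,-) \to C_b(T,-) \to \mathfrak{A}(-) \to 0$ with respect to tensoring with a fixed $C^*$-algebra, the same device already used in the proof of the preceding corollary, so the argument transfers with only notational modifications.
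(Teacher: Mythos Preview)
Your proposal is correct and follows essentially the same route as the paper: factor the square into a three-square stack with top vertical maps $\varphi_*\otimes\tilde{\alpha}_*$ and $(\varphi\hat\otimes\tilde{\alpha})_*$, invoke \cref{cor: boy and products} for the top rectangle and \cref{rem: prod} for the other two, noting the naturality of the short exact sequence for the middle rectangle. Your discussion of the ``main obstacle'' is exactly the naturality-of-\eqref{nat ses} point the paper flags in one clause.
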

\begin{proof}
    The diagram factors as follows:
    \[
    % https://q.uiver.app/#q=WzAsOCxbMCwwLCJLXzBee1xcb3BlcmF0b3JuYW1le3NwfX0oQSlcXG90aW1lcyBLXzBee1xcb3BlcmF0b3JuYW1le3NwfX0oQikiXSxbMiwwLCJLX3swfV57XFxvcGVyYXRvcm5hbWV7c3B9fShBIFxcaGF0XFxvdGltZXMgQikiXSxbMCwzLCJLXzBee1xcb3BlcmF0b3JuYW1le3NwfX0oQScpXFxvdGltZXMgS18wXntcXG9wZXJhdG9ybmFtZXtzcH19KEInKSJdLFsyLDMsIktfezB9XntcXG9wZXJhdG9ybmFtZXtzcH19KEEnIFxcaGF0XFxvdGltZXMgQicpIl0sWzAsMSwiS18wXntcXG9wZXJhdG9ybmFtZXtzcH19KEEnKVxcb3RpbWVzIEtfMF57XFxvcGVyYXRvcm5hbWV7c3B9fShcXG1hdGhmcmFre0F9QicpIl0sWzIsMSwiS197MH1ee1xcb3BlcmF0b3JuYW1le3NwfX0oQScgXFxoYXRcXG90aW1lcyBcXG1hdGhmcmFre0F9QicpIl0sWzAsMiwiS18wXntcXG9wZXJhdG9ybmFtZXtzcH19KEEnKSBcXG90aW1lcyBLXzBee1xcb3BlcmF0b3JuYW1le3NwfX0oQ19iKFQsQicpKSJdLFsyLDIsIktfMF57XFxvcGVyYXRvcm5hbWV7c3B9fShBJyBcXGhhdFxcb3RpbWVzIENfYihULEInKSkiXSxbMCwxLCJLLVxcdGV4dHt0aGVvcnkgcHJvZHVjdH0iXSxbMiwzLCJLLVxcdGV4dHt0aGVvcnkgcHJvZHVjdH0iXSxbMCw0LCJcXHZhcnBoaV8qIFxcb3RpbWVzIFxcdGlsZGV7XFxhbHBoYX1fKiIsMl0sWzEsNSwiKFxcdmFycGhpIFxcaGF0XFxvdGltZXMgXFx0aWxkZXtcXGFscGhhfSlfKiJdLFs0LDUsIkstXFx0ZXh0e3RoZW9yeSBwcm9kdWN0fSJdLFs2LDcsIkstXFx0ZXh0e3RoZW9yeSBwcm9kdWN0fSJdLFs0LDYsIjEgXFxvdGltZXMgKHFfKileey0xfSIsMl0sWzUsNywiKDEgXFxoYXRcXG90aW1lcyBxKV8qXnstMX0iXSxbNiwyLCIxIFxcb3RpbWVzIFxcb3BlcmF0b3JuYW1le2V2fV97MV8qfSIsMl0sWzcsMywiKDEgXFxoYXRcXG90aW1lc1xcb3BlcmF0b3JuYW1le2V2X3sxfX0pXyoiXV0=
\begin{tikzcd}
	{K_0^{\operatorname{sp}}(A)\otimes K_0^{\operatorname{sp}}(B)} && {K_{0}^{\operatorname{sp}}(A \hat\otimes B)} \\
	{K_0^{\operatorname{sp}}(A')\otimes K_0^{\operatorname{sp}}(\mathfrak{A}B')} && {K_{0}^{\operatorname{sp}}(A' \hat\otimes \mathfrak{A}B')} \\
	{K_0^{\operatorname{sp}}(A') \otimes K_0^{\operatorname{sp}}(C_b(T,B'))} && {K_0^{\operatorname{sp}}(A' \hat\otimes C_b(T,B'))} \\
	{K_0^{\operatorname{sp}}(A')\otimes K_0^{\operatorname{sp}}(B')} && {K_{0}^{\operatorname{sp}}(A' \hat\otimes B')}
	\arrow["{K-\text{theory product}}", from=1-1, to=1-3]
	\arrow["{K-\text{theory product}}", from=4-1, to=4-3]
	\arrow["{\varphi_* \otimes \tilde{\alpha}_*}"', from=1-1, to=2-1]
	\arrow["{(\varphi \hat\otimes \tilde{\alpha})_*}", from=1-3, to=2-3]
	\arrow["{K-\text{theory product}}", from=2-1, to=2-3]
	\arrow["{K-\text{theory product}}", from=3-1, to=3-3]
	\arrow["{1 \otimes (q_*)^{-1}}"', from=2-1, to=3-1]
	\arrow["{(1 \hat\otimes q)_*^{-1}}", from=2-3, to=3-3]
	\arrow["{1 \otimes \operatorname{ev}_{1_*}}"', from=3-1, to=4-1]
	\arrow["{(1 \hat\otimes\operatorname{ev_{1}})_*}", from=3-3, to=4-3]
\end{tikzcd}
    \]
    where the top rectangle commutes because of \cref{cor: boy and products}, and the other rectangles commute because of \cref{rem: prod}; for the middle rectangle, we also need the naturality of \cref{nat ses}.
\end{proof}
\subsection{A note on higher K-groups}
As the reader would have noticed, we developed products only at the $K_0$ level in the preceding subsection. For all our applications, this suffices. We still give a brief sketch of generalizing products to a pairing between the higher K-groups, as a formal consequence of results in homotopy theory. 

First, note that for any $\Z/2\Z$-graded $C^*$-algebra $A$, its higher spectral $K$-groups are naturally isomorphic to the $K_0^{\operatorname{sp}}$-group of a function algebra on it. More precisely:
\begin{prop}\label{higher K-group}
    For any $n\geq 1$, we have a natural isomorphism of abelian monoids:
    \[
         K_n^{\operatorname{sp}}(A) \rightarrow K_0^{\operatorname{sp}}(C_0(\R^n,A))
    \]
\end{prop}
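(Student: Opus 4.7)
The plan is to derive this from the adjunction established in \cref{prop:map into hom space}, applied to $X = S^n$. First I would observe the natural isomorphism of graded (Real) $C^*$-algebras
\[
C_0(\R^n, A) \hat\otimes \com(\hat{l^2}) \cong C_0(\R^n, A \hat\otimes \com(\hat{l^2})),
\]
which is a standard fact, direct to verify on elementary tensors. Choosing a basepoint $x_0 \in S^n$ and noting that $C_0(S^n \setminus \{x_0\}, -) = C_0(\R^n, -)$, the pointed case of \cref{prop:map into hom space} then supplies a natural homeomorphism of pointed spaces
\[
\operatorname{Map}_*(S^n, \K(A)) \cong \operatorname{Hom}(\s, C_0(\R^n, A \hat\otimes \com(\hat{l^2}))) \cong \K(C_0(\R^n, A)).
\]
Applying $\pi_0$ yields the natural bijection of pointed sets
\[
K_n^{\operatorname{sp}}(A) = \pi_n(\K(A)) = \pi_0(\operatorname{Map}_*(S^n, \K(A))) \cong K_0^{\operatorname{sp}}(C_0(\R^n, A)).
\]

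The next step is to verify that this bijection is a monoid homomorphism. On the left, the monoid structure on $\pi_n(\K(A))$ can be induced either by pointwise application of the H-space structure on $\K(A)$ or by loop concatenation in a coordinate of $S^n$; these two agree for any H-space when $n \geq 1$. On the right, the structure is the H-space operation on $\K(C_0(\R^n, A))$ from \cref{defi: H space}. I would verify compatibility by chasing the sum map of \cref{prop: sum} through the adjunction: since all the structure maps appearing in the definition of the sum (the diagonal $\operatorname{dia}:\s \to \s \oplus \s$ and the embedding $\com(\mathcal{H}_0) \oplus \com(\mathcal{H}_1) \hookrightarrow \com(\mathcal{H}_0 \oplus \mathcal{H}_1)$) are natural in the $C^*$-algebra argument, the pointwise sum on $\operatorname{Map}_*(S^n, \K(A))$ corresponds exactly to the sum on $\K(C_0(\R^n, A))$ under the identification. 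The unitary twist $1 \hat\otimes \operatorname{Ad}(u)$ used in defining $\mu_u$ acts through the $\com(\hat{l^2})$-factor, which is inert under the adjunction; and any two choices of unitary give homotopic H-space operations (by \cref{lem: dix} applied as in \cref{lem: H space verification}), hence the same induced monoid on $\pi_0$.

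Naturality in $A$ is then immediate from the naturality of \cref{prop:map into hom space}, of the tensor-product isomorphism displayed above, and of the H-space constructions. I expect the main obstacle to be the bookkeeping for the monoid compatibility: conceptually routine, but requiring one to carefully track the various flips and tensor-factor rearrangements of \cref{prop: sum} through the adjunction. Everything else is a formal consequence of the pointed adjunction and the homotopy-invariance of $\pi_0$.
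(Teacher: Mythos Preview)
Your proposal is correct and follows essentially the same route as the paper: both use the pointed adjunction of \cref{prop:map into hom space} (together with the identification $C_0(\R^n,A)\hat\otimes\com(\hat{l^2})\cong C_0(\R^n,A\hat\otimes\com(\hat{l^2}))$) to obtain a natural homeomorphism $\operatorname{Map}_*(S^n,\K(A))\cong\K(C_0(\R^n,A))$, and then apply $\pi_0$. Your discussion of the monoid compatibility is more explicit than the paper's, which simply notes that the H-space structure on $\K(A)$ induces the one on $\operatorname{Map}_*(S^n,\K(A))$ and calls the check routine.
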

\begin{proof}
    Our sought after isomorphism comes from the following series of isomorphisms:
    \begin{align*}
        K_n^{\operatorname{sp}}(A) &\cong \pi_0(\Omega^n\K(A)) \\
        &= \pi_0(\operatorname{Map}_*(\mathbb{S}^n,\K(A))) \\
        &\cong \pi_0(\K(C_0(\R^n,A))) \\
        &= K_0^{\operatorname{sp}}(C_0(\R^n,A)),
    \end{align*}
    where the middle isomorphism stems from the isomorphism in \cref{prop:map into hom space}. The fact that the isomorphism therein preserves the H-space structure (whence we have an isomorphism of monoids) is routine upon observing that $\K(A)$ is a H-space, whence the group structure on $\operatorname{Map}_*(\mathbb{S}^n,\K(A))$ is induced from the addition on $\K(A)$.
\end{proof}
Thus, every higher spectral $K$-group is a spectral $K_0$ group in disguise.

To discuss the pairings between higher $K$-groups, we first note the following:
\begin{lem}
    The product map in \cref{Prod group level} descends to a map
    \[
    p: \K(A) \wedge \K(B) \rightarrow \K(A \hat\otimes B).
    \]
\end{lem}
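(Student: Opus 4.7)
The plan is to show that the product map $p:\K(A)\times\K(B)\to\K(A\hat\otimes B)$ sends the wedge subspace $\K(A)\vee\K(B)$ to the basepoint $\underline{0}\in\K(A\hat\otimes B)$; since $p$ is continuous and bipointed, the universal property of the smash product (in the category of compactly generated pointed spaces, in which $\K$ takes values after k-ification by \cref{thm: enriched cat}) then yields the desired factorization through $\K(A)\wedge\K(B)$.

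First I would unpack the definition. By \cref{Prod group level}, $p$ is the composite
\[
\K(A)\times\K(B)\xrightarrow{\;p_0\;}\K(A\hat\otimes B,\hat{l^2}\hat\otimes\hat{l^2})\longrightarrow\K(A\hat\otimes B),
\]
where $p_0$ is the product of \cref{prop: product} at Hilbert space level, and the second map is conjugation by some fixed even unitary $\hat{l^2}\hat\otimes\hat{l^2}\to\hat{l^2}$. Since the latter is induced by a graded $*$-isomorphism $A\hat\otimes B\hat\otimes\com(\hat{l^2}\hat\otimes\hat{l^2})\to A\hat\otimes B\hat\otimes\com(\hat{l^2})$ via post-composition, it preserves the zero $*$-homomorphism. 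Thus it suffices to check that $p_0(\underline{0},\psi)=\underline{0}$ and $p_0(\varphi,\underline{0})=\underline{0}$ for every $\varphi\in\K(A)$, $\psi\in\K(B)$.

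Next I would argue this vanishing directly from \cref{eqn:product}. If $\varphi=\underline{0}$, then on elementary tensors $(\varphi\hat\otimes\psi)(f\hat\otimes g)=\varphi(f)\hat\otimes\psi(g)=0\hat\otimes\psi(g)=0$, and so $\varphi\hat\otimes\psi$ vanishes identically on the algebraic tensor product, hence on all of $\s\hat\otimes\s$ by density together with the continuity of the extension. Postcomposing with the flip $1\hat\otimes\tau\hat\otimes 1$ and the isomorphism $1\hat\otimes 1\hat\otimes\Psi$ keeps the composite zero, so the whole $*$-homomorphism $\s\to A\hat\otimes B\hat\otimes\com(\h_0\hat\otimes\h_1)$ is the zero map, i.e.\ the basepoint of $\K(A\hat\otimes B,\h_0\hat\otimes\h_1)$. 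The same argument handles the case $\psi=\underline{0}$.

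The only categorical subtlety is ensuring that the smash product quotient behaves as expected; this is why the values of $\K$ were k-ified in \cref{thm: enriched cat}. Once we are in $\mathfrak{Top}^{CG}_*$, the standard fact that a bipointed continuous map $X\times Y\to Z$ descends uniquely to a continuous pointed map $X\wedge Y\to Z$ applies without difficulty. I do not expect any genuine obstacle here: the entire content is that tensoring with the zero $*$-homomorphism yields zero, together with the formal factorization through the smash product.
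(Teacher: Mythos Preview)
Your proof is correct and follows exactly the same approach as the paper, which simply observes that if one of the product factors is the zero homomorphism then so is the resultant. You have merely supplied the details (density on elementary tensors, preservation of zero under the subsequent isomorphisms, and the k-ification point for the smash quotient) that the paper leaves implicit.
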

\begin{proof}
    This is obvious from observing that if one of product factors in \cref{prop: product} is the zero homomorphism, then so is the resultant.
\end{proof}
\begin{cor}
    Given any two graded $C^*$-algebras, there is a bilinear pairing $K_i^{\operatorname{sp}}(A) \otimes K_j^{\operatorname{sp}}(B) \rightarrow K_{i+j}^{\operatorname{sp}}(A\hat\otimes B)~\forall~ i,j \geq 0$ which is natural for $*$-homomorphisms $A \rightarrow A'$ and $B \rightarrow B'$.
\end{cor}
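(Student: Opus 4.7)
The plan is to derive the higher pairing from the space-level smash-product map
\[
p : \K(A) \wedge \K(B) \longrightarrow \K(A \hat\otimes B)
\]
of the preceding Lemma, together with the classical smash pairing on homotopy groups of pointed spaces. Concretely, given homotopy classes $[\alpha] \in \pi_i(\K(A)) = K_i^{\operatorname{sp}}(A)$ and $[\beta] \in \pi_j(\K(B)) = K_j^{\operatorname{sp}}(B)$ represented by based maps $\alpha : \mathbb{S}^i \to \K(A)$, $\beta : \mathbb{S}^j \to \K(B)$, I would define $[\alpha] \cdot [\beta]$ to be the class of
\[
\mathbb{S}^{i+j} \;\cong\; \mathbb{S}^i \wedge \mathbb{S}^j \xrightarrow{\;\alpha \wedge \beta\;} \K(A) \wedge \K(B) \xrightarrow{\;p\;} \K(A \hat\otimes B).
\]
That this is well-defined on homotopy classes is standard since the smash product of two based null-homotopies is null-homotopic, and $p$ is a fixed (homotopy class of) pointed map.

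For bilinearity I would argue as follows. When $i + j \geq 2$, or more generally whenever at least one of $i, j$ is $\geq 1$, bilinearity of the smash pairing $\pi_i(X) \times \pi_j(Y) \to \pi_{i+j}(X \wedge Y)$ is classical: it comes from the cogroup/co-H structure on spheres, combined with the fact that the target is an abelian group (using the H-space structure on $\K(A \hat\otimes B)$ when $i+j=1$). The remaining case $i = j = 0$ is exactly the pairing already constructed in Definition \ref{Prod group level}; under the identification $\pi_0(\K(A)) \times \pi_0(\K(B)) \to \pi_0(\K(A) \wedge \K(B))$ followed by $p_*$, the resulting map agrees with the composite $K_0^{\operatorname{sp}}(A) \times K_0^{\operatorname{sp}}(B) \to K_0^{\operatorname{sp}}(A \hat\otimes B)$ of Definition \ref{Prod group level}, which is already known to be bilinear by Proposition \ref{prod bilinear} via Lemma \ref{bi H map}.

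Naturality for a pair of $*$-homomorphisms $\varphi : A \to A'$, $\psi : B \to B'$ would be deduced from naturality of $p$: the diagram
\[
\begin{tikzcd}
\K(A) \wedge \K(B) \ar[r, "p"] \ar[d, "\K(\varphi)\wedge\K(\psi)"'] & \K(A \hat\otimes B) \ar[d, "\K(\varphi \hat\otimes \psi)"] \\
\K(A') \wedge \K(B') \ar[r, "p"'] & \K(A' \hat\otimes B')
\end{tikzcd}
\]
commutes up to based homotopy (this is the content of Remark \ref{cor: hom preserve prod} / Remark \ref{rem: prod} descended to the smash quotient), and smashing with the identity of $\mathbb{S}^{i+j}$ and taking $\pi_0$ yields the asserted compatibility at the level of $K$-groups.

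The main obstacle I expect is the verification that, for mixed $(i,0)$ or $(0,j)$, the smash pairing is truly bilinear in both variables. In those cases one of the factors has no a priori group structure coming from the sphere, so bilinearity on that side must be imported from the H-space structure on $\K$; this reduces to the fact that $p$ is a bi H-map (Proposition \ref{prod bilinear}) after looping the H-space variable once. Once this step is in place, the rest of the argument is formal homotopy theory.
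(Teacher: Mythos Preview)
Your proposal is correct and follows the same overall strategy as the paper: use the smash map $p:\K(A)\wedge\K(B)\to\K(A\hat\otimes B)$ together with the classical smash pairing on homotopy groups to obtain $K_i^{\operatorname{sp}}(A)\otimes K_j^{\operatorname{sp}}(B)\to K_{i+j}^{\operatorname{sp}}(A\hat\otimes B)$, with naturality coming from the space-level naturality of $p$.

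The only substantive difference is in how the edge cases $i=0$ or $j=0$ are handled. The paper does not argue bilinearity in these mixed cases directly from the bi-H-map property; instead it invokes \cref{higher K-group} to rewrite $K_n^{\operatorname{sp}}(A)\cong K_0^{\operatorname{sp}}(C_0(\R^n,A))$ and thereby reduces everything to the $K_0$-level product of \cref{Prod group level}, whose bilinearity is already established. Your route---importing linearity in the $\pi_0$ variable from the bi-H-map structure of $p$---also works, but the phrase ``after looping the H-space variable once'' is not quite what you mean; what you actually need is that, for each fixed basepoint component $[b]\in\pi_0(\K(B))$, the map $p(-,b)$ is an H-map up to homotopy (which is exactly the content of \cref{prod bilinear}), and symmetrically in the other slot. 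The paper's reduction via \cref{higher K-group} is a bit cleaner because it sidesteps this verification entirely and lands in a case already fully proved.
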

\begin{proof}[Proof Sketch]
 It is a result in homotopy theory that if $X,Y,Z$ are based spaces, then a continuous map $f: X \wedge Y \rightarrow Z$ induces a natural bilinear pairing $\pi_i(X) \otimes \pi_j(Y) \rightarrow \pi_{i+j}(Z) ~\forall~ i,j \geq 1$. Note that when $n=1$, by ``bilinear", we really mean being a group homomorphism separately. 

 This result immediately yields the desired pairing when $i,j \geq 1$. When $i=0$ or $j=0$, we use \cref{higher K-group} to reduce to a pairing between spectral $K_0$-groups, for which we have already established products.
\end{proof}
\section{Relating Spectral K-theory with usual K-theory}

In this section, we come up with a natural isomorphism between $K_0^{\operatorname{sp}}(A)$ and $K_0(A)$ (the strandard projective module picture) for any trivially graded, unital (Real) $C^*$-algebra $A$. To that end, we will compare both the the pictures with a third picture, which we call the Fredholm picture of $K$-theory. We begin by recalling the notions of bounded and compact operators on Hilbert modules.

\subsection{Preliminaries on Hilbert modules and technical results}

We assume the reader is familiar with the notion of a Hilbert module over a $C^*$-algebra (in the complex and ungraded case, \cite{WEG} is a good reference), and about operators on them. We have analogous notions for $\Z/2\Z$-graded (Real) Hilbert modules over a $\Z/2\Z$-graded (Real) $C^*$-algebra, see for example \cite{JT} and \cite{BLA}. 

Throughout this section, $A$ denotes a $\Z/2\Z$-graded (Real) $C^*$-algebra (unless otherwise mentioned), and $\E$ and $\F$ graded (Real) Hilbert modules over it.

For notational purposes, we recall the definitions of bounded, finite-rank and compact operators.

\begin{defi}\label{def:adjointable}
    A map $T: \E \rightarrow \F$ (apriori neither linear nor bounded) is said to be adjointable if there exists a map $T^*:\F \rightarrow \E$ satisfying $\langle x,Ty \rangle = \langle T^*x,y \rangle ~\forall~ x,y \in \E$. Such a map $T^*$ is then called the adjoint for $T$. By $\mathcal{B}(\E,\F)$, we denote the set of all adjointable maps between $\E$ and $\F$, and when $\E=\F$, we simply denote it by $\mathcal{B}(\E)$.
\end{defi}
It is well known (see \cite{WEG}) that the elements of $\mathcal{B}(\E,\F)$ are automatically $A$-linear and bounded, and $\mathcal{B}(\E)$ forms a $C^*$-algebra with respect to the operator norm. Furthermore, the grading on $\E$ equips $\mathcal{B}(\E)$ with the structure of a $\Z/2\Z$-graded $C^*$-algebra.
\begin{defi} \label{def:fin_rank}
    If $e\in \E, f \in \F$, define $\ket{f}\bra{e}:\E \rightarrow \F$, which maps $x$ to $f\inr{e}{x}$. They are called rank one operators between $\E$ and $\F$. They satisfy the following easy to verify facts:
    \begin{itemize}
        \item The operators $\ket{f}\bra{e}$ are adjointable, with $\ket{f}\bra{e}^*=\ket{e}\bra{f}$.
        \item If $T \in \operatorname{Hom}_A(\F,\G)$ (and in particular, if $T \in \mathcal{B}(\F,\G)$) , then $T\ket{f} \bra{e}=\ket{Tf} \bra{e}$. Similarly, if $T \in \mathbb{B}(\E,\F)$, then $\ket{g} \bra{e} T= \ket{g} \bra{T^*f}$.
        \item The subspace spanned by the rank one elements of $\mathcal{B}(\E)$ form an ideal, called the ideal of $A$-finite rank operators on $\E$, and denoted by $\mathcal{F}(\E)$.
        \item The closure of $\mathcal{F}(\E)$ in the norm topology is called the ideal of $A$-compact operators, and denoted by $\mathcal{K}(\E)$.
    \end{itemize}
\end{defi}
\begin{rem}
    In the Real case (when we are considering Real operators between Real Hilbert modules), there are potentially two definitions of the finite rank operators that can be taken: 
    \begin{itemize}
        \item A Real finite rank operator is one which is finite rank as prescribed above, and which is also a Real operator
        \item A Real finite rank operator is one which can be written in the form $\sum_{i=1}^{n}\ket{f_i}\bra{e_i}$, where $f_i,e_i$ are Real elements of the $\F$ and $\E$ respectively.
    \end{itemize}
    The two definitions are equivalent, as can be checked easily: that the latter definition implies the former is clear, and in the other direction, first note that $\ov{\ket{f}\bra{e}}=\ket{\ov{f}}\bra{\ov{e}}$, and that \[\frac{\ket{f}\bra{e}+\ket{\ov{f}}\bra{\ov{e}}}{2}=\ket{\frac{f+\ov{f}}{2}}\bra{\frac{e+\ov{e}}{2}}+\ket{\frac{f-\ov{f}}{2i}}\bra{\frac{e-\ov{e}}{2i}}.\]
    However, it is not true that every rank-one Real operator is of the form $\ket{f}\bra{e}$ for some Real $f \in \F, e \in \E$.
\end{rem}
Next, we recall some facts about the (internal and external) tensor products of Hilbert modules; see \cite{LANCE} for a thorough treatment.

First, recall that when $A$ is a $\Z/2\Z$-graded unital $C^*$-algebra, and $H$ a graded Hilbert module on it, then for every compact Hausdorff space $X$, the graded Hilbert $C(X) \hat\otimes A$-module $C(X) \hat\otimes H$ can be naturally identified with the graded Hilbert $C(X,A)$-module $C(X,H)$. We present a lemma characterizing bounded and compact operators on it:

\begin{prop}\label{lem: characterizing B and K}
    There are natural isomorphisms of graded $C^*$-algebras:
    \[
    \mathcal{B}_{C(X,A)}(C(X,H)) \cong \{F:X \rightarrow \mathcal{B}_A(H) \mid F ~\text{is strictly continuous} \},
    \]
    and 
    \[
    \mathcal{K}_{C(X,A)}(C(X,H)) \cong \{F:X \rightarrow \mathcal{K}_A(H) \mid F ~\text{is norm continuous} \},
    \]
    where both the function algebras are equipped with the supremum norm.
\end{prop}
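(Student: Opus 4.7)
Both isomorphisms are induced by the same pointwise procedure, so I would construct them in parallel. Given $T \in \mathcal{B}_{C(X,A)}(C(X,H))$, define $T_x : H \to H$ by $T_x(h) := T(\hat{h})(x)$, where $\hat{h} \in C(X,H)$ denotes the constant section $y \mapsto h$. Using the $C(X,A)$-linearity of $T$ applied to elements of the form $a \cdot \hat{h} = \widehat{ha}$ for $a \in A$, one sees that $T_x$ is $A$-linear, and the adjointability of $T$ transfers pointwise, giving $(T^*)_x = (T_x)^*$, so $T_x \in \mathcal{B}_A(H)$. Strict continuity of $x \mapsto T_x$ is immediate, since for each $h \in H$ the sections $T\hat{h}$ and $T^*\hat{h}$ are norm-continuous functions $X \to H$. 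Conversely, given a strictly continuous $F : X \to \mathcal{B}_A(H)$, the formula $(T_F s)(x) := F(x) s(x)$ defines an element of $\mathcal{B}_{C(X,A)}(C(X,H))$; continuity of $T_F s$ is first checked on elementary tensors $\phi \cdot h$ with $\phi \in C(X), h \in H$, then extended by density, and boundedness of $\sup_x \lVert F(x) \rVert$ follows from the uniform boundedness principle applied to $\{F(x)\}$ and $\{F(x)^*\}$ on the compact base $X$.

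Checking that these two assignments are mutually inverse, isometric, multiplicative, $*$-preserving, grading-preserving, and compatible with the Real structure (where applicable) is then routine and reduces, in each case, to pointwise evaluation on elementary tensors or on constant sections. The isometry property uses that evaluation maps $C(X,H) \to H$ are contractive and, because $X$ is compact and the sections $\hat{h}$ are available, a section realizing the supremum norm of $T$ can be approximated by such constants.

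For the compact operator isomorphism, I would first observe that under $\Phi : T \mapsto (x \mapsto T_x)$, a rank-one operator $\ket{s_1}\bra{s_2}$ on $C(X,H)$ is sent to the family $x \mapsto \ket{s_1(x)}\bra{s_2(x)}$, which is manifestly norm-continuous with values in $\mathcal{F}_A(H) \subseteq \mathcal{K}_A(H)$. By linearity, finite-rank operators are sent to norm-continuous functions with values in $\mathcal{K}_A(H)$, and taking closures (using that $\Phi$ is an isometry with $C(X,\mathcal{K}_A(H))$ equipped with the supremum norm) yields the inclusion $\Phi(\mathcal{K}_{C(X,A)}(C(X,H))) \subseteq C(X,\mathcal{K}_A(H))$. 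For the reverse inclusion, given a norm-continuous $F : X \to \mathcal{K}_A(H)$, pick a finite open cover $\{U_i\}$ of $X$, points $x_i \in U_i$, and a partition of unity $\{\phi_i\}$ subordinate to it such that $\lVert F(y) - F(x_i) \rVert < \varepsilon$ on $U_i$; then $\lVert F - \sum_i \phi_i F(x_i) \rVert_\infty < \varepsilon$. Approximating each compact $F(x_i)$ by a finite-rank operator $\sum_k \ket{h_{i,k}}\bra{g_{i,k}}$, the finite-rank operator $\sum_{i,k} \ket{\phi_i \cdot h_{i,k}}\bra{\hat{g}_{i,k}}$ on $C(X,H)$ maps under $\Phi$ arbitrarily close to $F$ in supremum norm, which establishes surjectivity.

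The main obstacle, beyond bookkeeping, is the uniform boundedness argument in the first part, which needs both $\{F(x)\}$ and $\{F(x)^*\}$ to be pointwise bounded so that Banach--Steinhaus applies on the compact base; once this is in hand, everything else flows naturally from the module structure and the partition-of-unity approximation, with the graded and Real compatibilities verified on elementary tensors.
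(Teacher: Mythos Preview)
Your proposal is correct and follows essentially the same architecture as the paper's proof: the same pointwise maps $T \mapsto (x \mapsto T_x)$ and $(F \mapsto (s \mapsto (x \mapsto F(x)s(x))))$, the same check that rank-one operators go to norm-continuous families $x \mapsto \ket{s_1(x)}\bra{s_2(x)}$, and an approximation argument for surjectivity onto $C(X,\mathcal{K}_A(H))$. The only noteworthy difference is in that last step: the paper observes that functions of the form $x \mapsto f(x)\ket{h}\bra{k}$ with $f \in C(X,\mathbb{C})$, $h,k \in H$ are dense in $C(X,\mathcal{K}_A(H))$ and manifestly hit by $\Phi_A(\ket{f_h}\bra{1_k})$, whereas you use a partition-of-unity approximation. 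Both are standard and equally efficient; your Banach--Steinhaus step for $\sup_x\|F(x)\|<\infty$ is exactly the content the paper compresses into ``norm bounded courtesy the compactness of $X$.''
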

\begin{proof}
    That the function algebras described above are $C^*$-algebras is routine. We first deal with bounded operators: there are natural maps in both directions:
    \begin{align*}
         \Phi_A: \mathcal{B}_{C(X,A)}(C(X,H)) &\longrightarrow \{F:X \rightarrow \mathcal{B}_A(H) \mid F ~\text{is strictly continuous} \} \\
         T &\longmapsto x \mapsto T_x, ~\text{where}~ T_x(h)=(T1_h)x.
    \end{align*}\footnote{$1_h$ stands for the constant function taking the value $h$}
    and 
    \begin{align*}
        \Psi_A: \{F:X \rightarrow \mathcal{B}_A(H) \mid F ~\text{is strictly continuous} \} &\longrightarrow \mathcal{B}_{C(X,A)}(C(X,H)) \\
        x \mapsto T_x &\longmapsto f \mapsto \{ x \mapsto T_x(f(x)) \}
    \end{align*}
    We only check now that both $\Phi_A$ and $\Psi_A$ are well-defined set maps; that they are mutually inverse $*$-homomorphisms is again routine.

    For the well-definedness of $\Phi_A$, we first note that the operators $T_x$ are indeed adjointable operators on $H$. Indeed, $(T_x)^*=(T^*)_x$, as can be easily seen by observing that $\langle T1_{h_1},1_{h_2} \rangle = \langle 1_{h_1},T^*(1_{h_2}) \rangle ~\forall~ h_1,h_2 \in H$. Next, we need to show that $\Phi_A(T)$ is strictly continuous for all $T \in \mathcal{B}_{C(X,A)}(C(X,H))$. Since $X$ is compact, in light of Lemma 2.3.6 of \cite{WEG}, it suffices to show that $\Phi_A(T)$ is continuous in the point-norm-$*$ topology. But this is clear: for all $h \in H$, we have \[x_i \rightarrow x \implies (T1_h)(x_i) \xrightarrow{\lVert . \rVert} (T1_h)(x) \implies T_{x_i}(h) \xrightarrow {\lVert . \rVert}T_x(h).\]

    For the well-definedness of $\Psi_A$, we first check that given $f \in C(X,H)$, the map $x \mapsto T_x(f(x))$ is also an element of $C(X,H)$. This can be seen easily as follows: given $x_i \rightarrow x$, we have 
    \begin{align*}
        &\lVert T_x(f(x))-T_{x_i}(f(x_i)) \rVert \\
        \leq & \lVert T_x(f(x))-T_{x_i}(f(x)) \rVert + \lVert T_{x_i}(f(x))-T_{x_i}(f(x_i)) \rVert 
    \end{align*}
   and both the summands can be made small enough for large enough $i$; the first one as $x \mapsto T_x$ is given to be strictly continuous, and the second one because $x \mapsto T_x$ is norm bounded courtesy the compactness of $X$. We can now conclude that $\Psi_A(x \mapsto T_x)$ is an adjointable operator on $C(X,H)$, since $\Psi_A(x \mapsto (T_x)^*)$ can be readily seen to be its adjoint.

   It remains now to show the statements involving compact operators. We first show that \[
   \Phi_A(\mathcal{K}_{C(X,A)}(C(X,H))) \subseteq \{F:X \rightarrow \mathcal{K}_A(H) \mid F ~\text{is norm continuous} \}.\] Indeed, the rank-one operator $\ket{f}\bra{g}$ maps to the norm continuous map \footnote{this requires a check, but is easy!} $x \mapsto \ket{f(x)}\bra{g(x)}$ under $\Phi_A$, and as $\mathcal{K}_{C(X,A)}(C(X,H))$ is the closed linear span of rank-one operators are the compact operators, the containment follows. To see that the containment is infact an equality, we first note that given $f \in C(X,\C)$ and $h,k \in H$, $\Phi_A(\ket{f_h}\bra{1_k})=x \mapsto f(x)\ket{h}\bra{k}$, where $f_h(x)=f(x)h$. As the collection $\{x \mapsto f(x)\ket{h}\bra{k} \mid f \in C(X,\C),h,k \in H\}$ is dense in $\{F:X \rightarrow \mathcal{K}_A(H) \mid F ~\text{is norm continuous} \}$, and $\com_A(\h)$ is a Banach space with respect to the norm topology, we are done by Lemma 6.4.16 of \cite{MUR}.
\end{proof}

 In the remainder of this subsection,  we include a series of technical results that will come very handy later. For the remainder of this subsection, $H$ denotes a graded Hilbert $A$-module, and $E \subseteq H$ is a graded Hilbert $A$-submodule of $H$. Let $\mathfrak{K}:={\{T \in \mathcal{K}(H) \mid \operatorname{Im}T \subseteq E, \operatorname{Im}T^* \subseteq E\}}$.

\begin{lem} \label{lem: restricting compacts}
    Let $T \in \mathfrak{K}$. Then, $T\mid_E: E \rightarrow E$ is an element of $\mathcal{K}(E)$.
\end{lem}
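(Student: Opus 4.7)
My plan is to realize $T\mid_E : E \to E$ as a norm limit of genuinely $\mathcal{K}(E)$-compact operators via an approximate polar decomposition, the crucial input being compactness of $(T^*T)\mid_E \in \mathcal{B}(E)$. First note that $T\mid_E$ is well-defined (since $\operatorname{Im}(T) \subseteq E$) and adjointable with adjoint $T^*\mid_E$ (since $\operatorname{Im}(T^*) \subseteq E$), so everything in sight makes sense as an operator on $E$.

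The key technical step is to show that $T\mid_E$, viewed as an operator $E \to H$, already lies in $\mathcal{K}(E, H)$. Pick an approximate unit $v_\mu = \sum_k \ket{g_k^\mu}\bra{g_k^\mu}$ of $\mathcal{K}(H)$ with $g_k^\mu \in H$; since $T \in \mathcal{K}(H)$, we have $v_\mu T \to T$ in norm. Direct computation gives
\[
    v_\mu T = \sum_k \ket{g_k^\mu}\bra{T^* g_k^\mu},
\]
and the vectors $T^* g_k^\mu$ lie in $E$ by hypothesis. Restricting the source to $E$ exhibits each summand as a rank-one operator of the form $\ket{h}\bra{e}$ with $h \in H$, $e \in E$, so $v_\mu T\mid_E \in \mathcal{K}(E, H)$. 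Closedness of $\mathcal{K}(E, H)$ then yields $T\mid_E \in \mathcal{K}(E, H)$, and the symmetric argument (approximating $T^*$ on the right) gives $(T\mid_E)^* \in \mathcal{K}(H, E)$.

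The composition $(T^*T)\mid_E = (T\mid_E)^* \cdot T\mid_E$ therefore lies in $\mathcal{K}(H, E) \cdot \mathcal{K}(E, H) \subseteq \mathcal{K}(E)$, using the standard Hilbert-module fact that such cross-compact products land in $\mathcal{K}(E)$ (verified on rank-ones). Set $P := (T^*T)\mid_E \in \mathcal{K}(E)_+$, so $P^{1/2} \in \mathcal{K}(E)$ by continuous functional calculus. For $\epsilon > 0$, define
\[
    T_\epsilon := T\mid_E \cdot (P + \epsilon)^{-1/2} \cdot P^{1/2}.
\]
Since $(P+\epsilon)^{-1/2} \in \mathcal{B}(E)$ is bounded, $P^{1/2} \in \mathcal{K}(E)$, and $\mathcal{K}(E)$ is an ideal in $\mathcal{B}(E)$, we have $T_\epsilon \in \mathcal{K}(E)$. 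A routine functional-calculus estimate applied to the function $x\bigl(1 - (x+\epsilon)^{-1/2}x^{1/2}\bigr)^2$ on $\sigma(P)$ yields $\|T\mid_E - T_\epsilon\|^2 \leq \epsilon/4$, so $T\mid_E$ is a norm limit of elements of $\mathcal{K}(E)$ and hence lies in $\mathcal{K}(E)$.

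The main obstacle is the identification $T\mid_E \in \mathcal{K}(E, H)$: it is generally false that restricting a compact operator on $H$ to a closed submodule yields a compact operator into $H$, the failure being tied to the non-complementedness of submodules of Hilbert modules. The hypothesis $\operatorname{Im}(T^*) \subseteq E$ is precisely what makes the approximate-unit computation above produce finite-rank approximations of the correct Hilbert-module form, bootstrapping $\mathcal{K}(H)$-compactness into $\mathcal{K}(E)$-compactness.
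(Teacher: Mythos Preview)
Your proof is correct, but it takes a genuinely different route from the paper's.

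The paper's argument is shorter and more structural: it observes that $\mathfrak{K}$ is itself a $C^{*}$-subalgebra of $\mathcal{K}(H)$ and takes an approximate unit $\{S_i\}$ of $\mathfrak{K}$. Given any finite-rank approximation $F$ of $T$ in $\mathcal{K}(H)$, the sandwiched operator $S_i F S_i$ is again close to $T$ (via $S_i T S_i \to T$) and, since $S_i,S_i^{*}$ map $H$ into $E$, is automatically a finite sum $\sum_k \ket{r_k}\bra{s_k}$ with $r_k,s_k \in E$. Restricting to $E$ then gives finite-rank approximants in $\mathcal{K}(E)$ in one stroke.

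Your approach instead first isolates the intermediate fact $T\mid_E \in \mathcal{K}(E,H)$ (using a finite-rank approximate unit of $\mathcal{K}(H)$ on the \emph{left} to force the bra-vectors into $\operatorname{Im}(T^{*})\subseteq E$), then bootstraps to $(T^{*}T)\mid_E \in \mathcal{K}(E)$, and finally runs the standard polar-type estimate $T\mid_E \approx T\mid_E\,(P+\epsilon)^{-1/2}P^{1/2}$ to land in $\mathcal{K}(E)$. This is longer but has the virtue of breaking the problem into two very portable lemmas (rank-one approximation in $\mathcal{K}(E,H)$, and the ``$T$ is compact iff $T \approx T\cdot g(T^{*}T)$'' trick), and it yields the auxiliary statement $T\mid_E \in \mathcal{K}(E,H)$ along the way. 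One small cosmetic point: you write $v_\mu = \sum_k \ket{g_k^\mu}\bra{g_k^\mu}$, which for general Hilbert modules is not obviously available; it suffices (and your computation goes through unchanged) to take any finite-rank $v_\mu = \sum_k \ket{h_k^\mu}\bra{(h_k')^\mu}$, since $\ket{h}\bra{h'}\,T = \ket{h}\bra{T^{*}h'}$ regardless.
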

\begin{proof}
    Note that $\mathfrak{K}$ is a $C^*$-algebra. Let $\{S_i\}_{i \in I}$ be an approximate unit of it. Let $\varepsilon>0$ be given.
    Choose a finite rank operator $F$ in $\mathcal{K}(H)$ such that $\lVert T-F \rVert < \frac{\varepsilon}{2}$. Also, there exists some $i \in I$ such that $\lVert S_iFS_i - F \rVert < \frac{\varepsilon}{2}$. Thus, $\lVert T-S_iFS_i \rVert < \varepsilon$. But, it is direct from the elementary properties of finite rank operators as mentioned in \cref{def:fin_rank} that  $S_iFS_i$ is of the form $\sum_{k=1}^{n} \rvert r_k \rangle \langle s_k \rvert$, with $r_k,s_k \in E$. Thus, as $\lVert T\mid_E-S_iFS_i\mid_E \rVert \leq \lVert T-S_iFS_i \rVert < \varepsilon$, and $\varepsilon > 0$ is arbitrary, we conclude that $T\mid_E : E \rightarrow E$ is an element of $ \mathcal{K}(E)$. 
\end{proof}
\begin{rem}\label{rem: fin rank}
    From the proof of \cref{lem: restricting compacts}, it is clear that the $C^*$-algebra $\mathfrak{K}$ is the norm closure of the span of the set $\{\ket{x}\bra{y} \mid x,y \in E\}$ inside $\com(H)$.
\end{rem}
\begin{lem} \label{lem: extending co_pacts along isometries}
    Associated to the inclusion $j: E \hookrightarrow H$, there is a graded $*$-homomorphism $j_*:\mathcal{K}(E) \rightarrow \mathcal{K}(H)$. Moreover, $j_*=\operatorname{Ad}(j)$ when $E$ is complemented. 
\end{lem}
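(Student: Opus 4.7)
The plan is to identify $\mathcal{K}(E)$ with the ideal $\mathfrak{K} \subseteq \mathcal{K}(H)$ via a canonical $*$-isomorphism, and then take $j_*$ to be the composition with the ideal inclusion $\mathfrak{K} \hookrightarrow \mathcal{K}(H)$.

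First I would verify that the restriction map $r: \mathfrak{K} \to \mathcal{K}(E)$, $T \mapsto T|_E$ (which is a well-defined graded $*$-homomorphism by \cref{lem: restricting compacts}) is in fact a $*$-isomorphism. For surjectivity, note that $r$ sends the rank-one operator $\ket{x}\bra{y} \in \mathfrak{K}$ with $x, y \in E$ to the corresponding rank-one operator in $\mathcal{K}(E)$; so by \cref{rem: fin rank} the image of $r$ contains a norm-dense subspace of $\mathcal{K}(E)$, and since $*$-homomorphisms of $C^*$-algebras have closed range, $r$ is surjective. For injectivity, suppose $T \in \mathfrak{K}$ satisfies $T|_E = 0$. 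For any $z \in H$, we have $T^*z \in E$ by the defining property of $\mathfrak{K}$, and moreover $\langle T^*z, e\rangle = \langle z, Te\rangle = 0$ for all $e \in E$; specializing $e := T^*z$ forces $\langle T^*z, T^*z\rangle = 0$, hence $T^*z = 0$. Thus $T^* = 0$ and so $T = 0$. Having established this isomorphism, I would define $j_* := \iota_H \circ r^{-1}: \mathcal{K}(E) \to \mathcal{K}(H)$, where $\iota_H:\mathfrak{K} \hookrightarrow \mathcal{K}(H)$ is the inclusion; this is manifestly a graded $*$-homomorphism.

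For the second assertion, when $E$ is complemented in $H$, the inclusion $j$ becomes adjointable, with adjoint the orthogonal projection $p: H \to E$. For a rank-one operator $\ket{x}\bra{y} \in \mathcal{K}(E)$ with $x, y \in E$, a direct computation gives $\operatorname{Ad}(j)(\ket{x}\bra{y}) = j\ket{x}\bra{y}j^* = \ket{jx}\bra{jy}$ as an element of $\mathcal{K}(H)$, which coincides with $j_*(\ket{x}\bra{y})$ by construction. Both $\operatorname{Ad}(j)$ and $j_*$ are norm-continuous $*$-homomorphisms, so their agreement on the dense subspace of finite rank operators extends to all of $\mathcal{K}(E)$.

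The main obstacle I anticipate is the injectivity of $r$: the condition $\operatorname{Im} T^* \subseteq E$ built into the definition of $\mathfrak{K}$ is essential, since for a general Hilbert module $E$ need not be complemented in $H$, so the naive ``restrict to $E^{\perp}$'' argument from Hilbert space theory is unavailable. The trick is to combine both submodule conditions simultaneously to deduce $T^*z \in E$ is orthogonal to itself, closing the argument without ever referring to an orthogonal complement.
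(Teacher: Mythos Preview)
Your proof is correct, but it takes a genuinely different route from the paper's. The paper writes down an explicit formula $j_*(T) := j(jT^*)^*$, first showing via a small claim that $jT \in \mathcal{K}(F,H)$ whenever $T \in \mathcal{K}(F,E)$ (finite rank case plus norm limits), and then verifying by hand that this formula is linear, $*$-preserving, multiplicative, and graded. Only afterwards, in \cref{thm: technical}, does the paper establish that this $j_*$ is precisely the inverse of the restriction map $-|_E : \mathfrak{K} \to \mathcal{K}(E)$.

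You effectively reverse the order: you prove directly that restriction $r : \mathfrak{K} \to \mathcal{K}(E)$ is a $*$-isomorphism (your injectivity argument via $\langle T^*z, T^*z\rangle = 0$ is clean and avoids any complementedness assumption), and then \emph{define} $j_*$ as $r^{-1}$ followed by the ideal inclusion. This buys you the $*$-homomorphism properties for free, and in fact collapses the paper's \cref{lem: extending co_pacts along isometries} and \cref{thm: technical} into a single step. The cost is that the explicit formula $j_*(T) = j(jT^*)^*$ is not immediately visible; the paper uses that formula directly in the norm estimate of \cref{lem: easy} to prove continuity of $t \mapsto j_{t_*}$. Of course your $j_*$ agrees with the paper's on rank-one operators, hence everywhere, so the formula can be recovered --- but if you adopt this approach you should make that explicit so that downstream arguments like \cref{lem: easy} still go through verbatim. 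One minor point: your appeal to \cref{rem: fin rank} for surjectivity is slightly misplaced --- that remark describes $\mathfrak{K}$, whereas what you actually need is just that finite-rank operators are dense in $\mathcal{K}(E)$, which is the definition.
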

\begin{proof}
    We claim that the map $j_*:\mathcal{K}(E) \rightarrow \mathcal{K}(H)$, given by $j_*(T)=j(jT^*)^*$ is the one we are after. We first show:
    
    \textbf{Claim:}
        For any graded Hilbert $A$-module $F$, if $T \in \mathcal{K}(F,E)$, then $jT \in \mathcal{K}(F,H)$.
        
    \begin{proof}
        First, we deal the case when $T$ is of finite rank. Write $T=\sum_{k=1}^{n} \rvert r_k \rangle \langle s_k \lvert$, with $r_k \in E, s_k \in F$. Then, it is direct from the definition that $jT=\sum_{k=1}^{n} \rvert j(r_k) \rangle \langle s_k \lvert$. In the general case when $T$ is compact, write $T$ as a norm limit of finite rank operators, and the fact that $j$, being an isometry, preserves norms.
    \end{proof}
    Using the above claim twice, together with the fact that the adjoint of a compact operator is compact, yields that $j(jT^*)^* \in \mathcal{K}(H)$. It remains to show that it is a $*$-homomorphism.
    \begin{itemize}
        \item $\varphi$ is linear: $j(j(S+T)^*)^*=j(jS^*+jT^*)^*=j((jS^*)^*+(jT^*)^*)=j(jS^*)^*+j(jT^*)^*$; $j(j(\lambda T)^*)^*=\lambda j(jT^*)^*$.
        \item $\varphi$ preserves $*$:  We have to show that $(j(jS^*)^*)^*=j(jS)^*$, or, $j(jS^*)^*=(j(jS)^*)^*$. But this is direct to verify when $S$ is finite rank, and the general case follows by writing $S$ as a norm limit of finite ranks, and using that $j$ is an isometry.
        \item $\varphi$ preserves multiplication: We have to show that $j(j(ST)^*)^*=j(jS^*)^*j(jT^*)^*$. The left hand side directly simplifies to $jS(jT^*)^*$. It thus suffices to show that $S=(jS^*)^*j$. But this is again clear when $S$ is finite rank, and the general case follows by writing $S$ as a norm limit of finite rank operators, and using that $j$ is an isometry.
        \item $\varphi$ commutes with the grading: This is clear by considering the definition of the grading on the compact operators induced by the grading on the underlying Hilbert module.
    \end{itemize}
\end{proof}
\begin{thm}\label{thm: technical}
    There exists a unique $*$-isomorphism \[
    \Phi: \com(E) \rightarrow \mathfrak{K}
    \] such that $\Phi(\ket{x}\bra{y})=\ket{jx}\bra{jy}$. The inverse to $\Phi$ is given by \[-\mid_E:\mathfrak{K} \rightarrow \com(E).\]
\end{thm}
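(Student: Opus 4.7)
The plan is to construct $\Phi$ as the restriction of the $*$-homomorphism $j_*:\com(E) \to \com(H)$ from \cref{lem: extending co_pacts along isometries}, and use the restriction map from \cref{lem: restricting compacts} as the candidate inverse. The unique prescription $\ket{x}\bra{y} \mapsto \ket{jx}\bra{jy}$ already pins down what $\Phi$ must do on the rank-one generators of $\com(E)$, so uniqueness is immediate once existence is shown.

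First I would verify that $j_*$ takes values inside $\mathfrak{K}$. On rank-one elements, the computation in the proof of \cref{lem: extending co_pacts along isometries} gives $j_*(\ket{x}\bra{y}) = \ket{jx}\bra{jy}$, whose image and adjoint image are visibly contained in $E$; this passes to finite-rank operators by linearity and then to all of $\com(E)$ by continuity, since $\mathfrak{K}$ is norm-closed in $\com(H)$. So $\Phi := j_*$ is a well-defined graded $*$-homomorphism $\com(E) \to \mathfrak{K}$, and it satisfies the prescribed formula on rank-one operators. Uniqueness follows because the span of the rank-one operators is dense in $\com(E)$, and any $*$-homomorphism out of $\com(E)$ is automatically norm-continuous.

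Next I would check the two composites. For $\ket{x}\bra{y} \in \com(E)$ with $x,y \in E$ and any $z \in E$, the fact that the inner product on $E$ is inherited from $H$ gives
\[
\Phi(\ket{x}\bra{y})\big|_E(z) = (jx)\langle jy, jz\rangle_H = x\langle y, z\rangle_E = \ket{x}\bra{y}(z),
\]
so $(-\mid_E) \circ \Phi$ agrees with the identity on rank-one operators, hence on all of $\com(E)$ by linearity and continuity. Conversely, by \cref{rem: fin rank}, the span of $\{\ket{x}\bra{y} \mid x,y \in E\}$ (viewed inside $\com(H)$) is dense in $\mathfrak{K}$; on such a generator we have $\Phi(\ket{x}\bra{y}\mid_E) = \ket{jx}\bra{jy} = \ket{x}\bra{y}$ as elements of $\com(H)$, so $\Phi \circ (-\mid_E)$ is the identity on a dense subspace of $\mathfrak{K}$, hence on all of $\mathfrak{K}$ (using that $-\mid_E$ is contractive and $\Phi$ is a $*$-homomorphism, hence both are norm-continuous).

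There is no substantive obstacle here: once the two preceding lemmas are in hand, the whole argument is a density-and-continuity routine, and the only point to be slightly careful about is that the inner product on $E$ really is the restriction of the one on $H$, which is what makes the rank-one calculation $\Phi(\ket{x}\bra{y})\mid_E = \ket{x}\bra{y}$ go through verbatim.
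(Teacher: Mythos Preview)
Your proof is correct and follows essentially the same route as the paper: define $\Phi$ as $j_*$ from \cref{lem: extending co_pacts along isometries}, observe it lands in $\mathfrak{K}$ because it does so on rank-ones, and verify both composites are the identity on the rank-one generators $\ket{x}\bra{y}$ with $x,y\in E$, then conclude by density (invoking \cref{rem: fin rank} for the $\mathfrak{K}$ side). The paper's version is terser but the argument is the same.
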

\begin{proof}
    The uniqueness of the isomorphism $\Phi$, if it exists, is clear since $\com(E)$ is the norm closure of the span of rank-one operators. We therefore focus on existence.
    
    We claim that the map $j_*$ as in \cref{lem: extending co_pacts along isometries} is the required isomorphism. Clearly, $j_*(\ket{x}\bra{y})=\ket{jx}\bra{jy}$. From this, it follows that the image of $j_*$ does indeed lie in $\mathfrak{K}$. 
    Both the composites $j_* \circ -\mid_E$ and $-\mid_E \circ j_*$ maps $\ket{x}\bra{y}$ to $\ket{x}\bra{y}$ (where $x,y \in E$), whence both are identity on a dense subset, and we are done. 
\end{proof}
We end this subsection with a couple of more lemmas using the notation developed in the above theorem.
\begin{lem} \label{lem: easy}
 Let $j_0,j_1:E \rightarrow H$ be two (not necessarily adjointable) even isometries, which are homotopic (in the norm topology). Then, ${j_0}_* \sim {j_1}_* : \mathcal{K}(E) \rightarrow \mathcal{K}(\h)$.
\end{lem}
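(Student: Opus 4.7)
The plan is to reduce to \cref{cor: homotopic homomorphism}: it suffices to produce a graded $*$-homomorphism $J \colon \mathcal{K}(E) \to C([0,1], \mathcal{K}(H))$ satisfying $\operatorname{ev}_0 \circ J = (j_0)_*$ and $\operatorname{ev}_1 \circ J = (j_1)_*$. Let $\{j_t\}_{t \in [0,1]}$ be a norm-continuous path of even isometries from $j_0$ to $j_1$. The obvious candidate is
\[
J(T)(t) \;:=\; (j_t)_*(T) \;=\; j_t\bigl(j_t T^{*}\bigr)^{*},
\]
which lies in $\mathcal{K}(H)$ for every $t$ by \cref{lem: extending co_pacts along isometries}. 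Since the algebraic structure on $C([0,1], \mathcal{K}(H))$ is pointwise, and each $(j_t)_*$ is a graded $*$-homomorphism, the only content left is continuity of $t \mapsto J(T)(t)$ in the norm topology for each fixed $T \in \mathcal{K}(E)$.

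First I would verify this on a rank-one operator $\ket{x}\bra{y}$, where $J(\ket{x}\bra{y})(t) = \ket{j_t x}\bra{j_t y}$. A standard triangle inequality, together with the elementary estimate $\lVert \ket{a}\bra{b}\rVert \leq \lVert a \rVert \cdot \lVert b \rVert$, yields
\[
\bigl\lVert \ket{j_t x}\bra{j_t y} - \ket{j_s x}\bra{j_s y} \bigr\rVert \;\leq\; \lVert j_t - j_s \rVert \cdot \lVert x \rVert \cdot \lVert y \rVert \;+\; \lVert j_t - j_s \rVert \cdot \lVert x \rVert \cdot \lVert y \rVert,
\]
which goes to $0$ as $s \to t$ by norm continuity of the path. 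By linearity, continuity then holds on the dense subspace of finite-rank operators.

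To pass to a general $T \in \mathcal{K}(E)$, I would invoke the contractivity of each $(j_t)_*$ (automatic, being a $*$-homomorphism) to run the usual $\epsilon/3$ argument: given $\epsilon > 0$ and a finite-rank $F$ with $\lVert T - F \rVert < \epsilon/3$,
\[
\lVert J(T)(t) - J(T)(s) \rVert \;\leq\; \lVert J(T-F)(t) \rVert + \lVert J(F)(t) - J(F)(s) \rVert + \lVert J(F-T)(s) \rVert \;<\; \tfrac{2\epsilon}{3} + \lVert J(F)(t) - J(F)(s) \rVert,
\]
with the remaining middle term made smaller than $\epsilon/3$ by the finite-rank case. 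This produces the desired $J$, and \cref{cor: homotopic homomorphism} delivers the homotopy between $(j_0)_*$ and $(j_1)_*$.

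There is no substantial obstacle here; the argument is entirely formal once one observes that compact operators are norm-approximated by finite-rank operators, on which the path $t \mapsto (j_t)_*$ is transparently continuous. (In the Real case, note that if the path $\{j_t\}$ can be taken to consist of Real isometries, then each $(j_t)_*$ automatically preserves the Real structure, and the argument goes through unchanged.)
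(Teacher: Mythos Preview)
Your proof is correct and follows the same overall strategy as the paper: both construct the homotopy $t \mapsto (j_t)_*$ and verify that $t \mapsto (j_t)_*(T)$ is norm-continuous for each $T$. The only difference is in how that continuity is checked: you first handle rank-one operators and then pass to general $T$ via density and an $\epsilon/3$ argument, whereas the paper gives a direct estimate on the formula $(j_t)_*(T)=j_t(j_tT^*)^*$ that works uniformly for all $T\in\mathcal{K}(E)$ at once, namely
\[
\lVert j_t(j_tT^*)^* - j_s(j_sT^*)^*\rVert \;\le\; \lVert j_t-j_s\rVert\,\lVert T^*\rVert \;+\; \lVert j_t-j_s\rVert\,\lVert (j_sT^*)^*\rVert,
\]
using only that each $j_t$ is an isometry. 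The paper's route is slightly more economical, but your argument is equally valid.
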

\begin{proof}
    We claim that $t \mapsto j_{t_*}:\com(E) \rightarrow \com(H)$ is a homotopy of $*$-homomorphisms, where $t \mapsto j_t$ is the norm continuous path of isometries. We have to show that $t \mapsto j_{t_*}(T)$ is a continuous map $[0,1] \rightarrow \com(H)$ for each $T \in \com(E)$. We calculate:
    \begin{align*}
       \lVert j_{t}(j_{t}T^*)^* -  j_{s}j_{s}T^*)^* \rVert &=  \lVert j_{t}(j_{t}T^*)^* -  j_{t}(j_{s}T^*)^* +  j_{t}(j_{s}T^*)^* - j_{s}(j_{s}T^*)^* \rVert \\
       &\leq \lVert j_{t}(j_{t}T^*)^* -  j_{t}(j_{s}T^*)^* \rVert +  \lVert j_{t}(j_{s}T^*)^* - j_{s}(j_{s}T^*)^* \rVert \\
       &= \lVert (j_{t}T^*)^* - (j_{s}T^*)^* \rVert +  \lVert (j_{t}-j_{s})(j_{s}T^*)^*  \rVert \\
       &\leq \lVert j_t - j_s \rVert\lVert T^* \rVert + \lVert j_t - j_s \rVert \lVert (j_sT^*)^* \rVert ,
    \end{align*}
    and the continuity follows.
\end{proof}
Recall that $\hat{\HA}$ denotes the standard graded Hilbert $A$-module. It is going to be extremely crucial for us in the forthcoming subsections. Note that there's a natural identification between $A \hat\otimes \hat{l^2}$ and $\hat{\HA}$. Details about it can be found in \cite{BLA}, \cite{JT}.
\begin{lem} \label{lem: final}
    Let $E$ be a graded Hilbert submodule of $\hat{\HA}$. Let $j:E \rightarrow \hat{\HA}$ be the inclusion. Then the $*$-homomorphisms $j_*:\mathcal{K}(E) \rightarrow \mathcal{K}(\hat\HA)$ and $Ad(i):\mathcal{K}(E) \rightarrow \com(\hat\HA)$ obtained from an adjointable even isometry are homotopic.
\end{lem}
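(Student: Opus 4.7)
The plan is to reduce $j_* \sim \operatorname{Ad}(i)$ to a single norm-continuous rotation inside $\hat{\HA} \oplus \hat{\HA}$, transport it back to $\hat{\HA}$ via an absorbing unitary, and then absorb the resulting auxiliary adjointable isometries using a Kuiper-type contractibility statement for $\hat{\HA}$. The hard part will be invoking this last ingredient; everything else is a formal manipulation using material already developed above.

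First I will fix an adjointable even unitary $u : \hat{\HA} \oplus \hat{\HA} \to \hat{\HA}$ (existence from the absorption $\hat{\HA} \oplus \hat{\HA} \cong \hat{\HA}$) and set $s_k := u \circ \operatorname{incl}_k$ for $k = 1, 2$. Then the path of (in general non-adjointable) even isometries
\[
J_t(e) := u\bigl(\cos(\tfrac{\pi t}{2})\, j(e),\, \sin(\tfrac{\pi t}{2})\, i(e)\bigr), \qquad t \in [0, 1],
\]
is norm-continuous (a direct estimate using that $u$, $j$, $i$ are isometric) and interpolates between $J_0 = s_1 \circ j$ and $J_1 = s_2 \circ i$. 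Applying \cref{lem: easy} yields $(s_1 \circ j)_* \sim (s_2 \circ i)_*$ in $\operatorname{Hom}(\mathcal{K}(E), \mathcal{K}(\hat{\HA}))$. A short functoriality check on rank-one operators, using \cref{thm: technical} together with the identity $k\ket{a}\bra{b}k^* = \ket{ka}\bra{kb}$ for adjointable $k$, shows $(k \circ \ell)_* = \operatorname{Ad}(k) \circ \ell_*$ whenever $k$ is adjointable, so the previous display becomes
\[
\operatorname{Ad}(s_1) \circ j_* \;\sim\; \operatorname{Ad}(s_2) \circ \operatorname{Ad}(i).
\]

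To conclude I will invoke the Hilbert-module analog of \cref{lem: dix}: the space of adjointable even isometries of $\hat{\HA}$ is contractible in the strict topology (a Kuiper/Mingo-style theorem), together with the evident analog of \cref{lem: cont isometries} that strict convergence of adjointable isometries induces point-norm convergence of $\operatorname{Ad}$ on $\mathcal{K}(\hat{\HA})$. Since both $s_1$ and $s_2$ are adjointable even isometries $\hat{\HA} \to \hat{\HA}$, each is strict-homotopic to $\operatorname{id}_{\hat{\HA}}$, giving $\operatorname{Ad}(s_k) \sim \operatorname{id}$ as $*$-endomorphisms of $\mathcal{K}(\hat{\HA})$. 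The same contractibility applied to the space of adjointable even isometries $E \to \hat{\HA}$ shows that the homotopy class of $\operatorname{Ad}(i)$ does not depend on the choice of $i$, so we may work with any convenient one. Chaining,
\[
j_* \;\sim\; \operatorname{Ad}(s_1) \circ j_* \;\sim\; \operatorname{Ad}(s_2) \circ \operatorname{Ad}(i) \;\sim\; \operatorname{Ad}(i),
\]
as required.
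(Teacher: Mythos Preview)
Your proof is correct and is essentially the paper's own argument, just conjugated by the absorbing unitary $u$ up front: the paper works directly with the two coordinate inclusions $\iota_0,\iota_1:\hat{\HA}\to\hat{\HA}\oplus\hat{\HA}$ in place of your $s_k=u\circ\iota_k$, uses the identical norm-continuous rotation $t\mapsto(\cos t\,i,\sin t\,j)$ together with \cref{lem: easy} and the same functoriality $(k\circ\ell)_*=\operatorname{Ad}(k)\circ\ell_*$ for adjointable $k$, and then cancels by noting that $\operatorname{Ad}(\iota_0)\sim\operatorname{Ad}(\iota_1)$ are homotopic homotopy equivalences via precisely the strict path-connectedness of adjointable even isometries that you name as a Kuiper/Mingo-type input.
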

\begin{proof}
    Let $\iota_0,\iota_1:\hat\HA \rightarrow \hat\HA \oplus \hat\HA$ be the inclusions. Then, $\operatorname{Ad}(\iota_0) \sim \operatorname{Ad}(\iota_1): \mathcal{K}(\hat\HA) \rightarrow \mathcal{K}(\hat\HA \oplus \hat\HA)$ are homotopic homotopy equivalences, since both are strong-$*$ connected to an unitary isomorphism $\hat\HA \rightarrow \hat\HA \oplus \hat\HA$. So, it suffices to show that $\operatorname{Ad}(\iota_1) \circ j_* \sim \operatorname{Ad}(\iota_0) \circ \operatorname{Ad}(i)$. But, $\operatorname{Ad}(\iota_0) \circ \operatorname{Ad}(i) \sim (\iota_0 \circ i)_*$, and $\operatorname{Ad}(\iota_1) \circ j_*=(\iota_1 \circ j)_*$. Finally, note that $\iota_1 \circ j$,$ \iota_0 \circ i$ are homotopic (in the norm topology!) through the path $t \mapsto (\cos(t)i,\sin(t)j)$. The rest follows from \cref{lem: easy}.
\end{proof}
\subsection{Fredholm operators and their index}
We now discuss the most important class of operators for K-theory; namely the Fredholm operators. Keeping Atkinson's theorem from classical Hilbert space theory in mind, we have the following definition: (see also \cite{WEG}, \cite{GVF}):
\begin{defi}\label{defi: Fredholm}
	Let $F \in \mathcal{B}(\mathcal{E},\mathcal{F})$. We say that $F$ is an $A$-Fredholm operator if $1-FG \in \mathcal{K}(\mathcal{F})$ and $1-GF \in \mathcal{K}(\mathcal{E})$ for some $G \in \mathcal{B}(\mathcal{F},\mathcal{E})$. An equivalent definition when $\mathcal{E}=\mathcal{F}$ is to ask for invertibility in $\mathcal{B}(\mathcal{E})/\mathcal{K}(\mathcal{E})$. We write $F \in ~\text{Fred}_A(\mathcal{E},\mathcal{F})$, or simply $F \in ~\text{Fred}_A(\mathcal{E})$ when $\mathcal{E}=\mathcal{F}$. $G$ as in this definition is called a parametrix for $F$. If $1-FG$ and $1-GF$ are $A$-finite rank, we call $G$ a finite rank parametrix for $F$.
\end{defi}
\begin{rem}
    In the Real case, a Real Fredholm operator always has a Real parametrix. Indeed, if $G$ is any parametrix to $F$, $\frac{G+\bar{G}}{2}$ is a Real parametrix to $F$. Similarly, an odd, self-adjoint Fredholm operator has an odd, self-adjoint parametrix.
\end{rem}

\begin{rem}
	If $F \in ~\text{Fred}_A(\mathcal{E},\mathcal{F})$, then $F^\ast$ is also Fredholm. Indeed, take $G$ to be a parametrix for $F$. Then, as adjoint of an $A$-compact operator is $A$-compact, we get $1-G^\ast F^\ast \in \mathcal{K}(\mathcal{F})$ and $1-F^\ast G^\ast\in \mathcal{K}(\mathcal{E})$, as  desired. Similarly, it can be shown that the composition of two $A$-Fredholm operators is $A$-Fredholm.
\end{rem}
Note that we, in some sense, take Atkinson's theorem from classical Fredholm theory as a definition here. The next lemma shows that some equivalent formulations of Atkinson's theorem in the classical world also hold good here:
\begin{lem} \label{lem: fred <=> inv mod fin rank}
	The following are equivalent for $F \in \mathcal{B}(\mathcal{E},\mathcal{F})$:
	\begin{enumerate}[label=(\roman*)]
		\item F is $A$-Fredholm.
		\item $\exists~ G_1,G_2 \in \mathcal{B}(\mathcal{F},\mathcal{E})$ such that $1-G_1F \in \mathcal{K}(\mathcal{E})$, $1-FG_2 \in \mathcal{K}(\mathcal{F})$.
		\item $\exists~ H \in \mathcal{B}(\mathcal{F},\mathcal{E})$ such that $1-HF$ and $1-FH$ are $A$-finite rank operators.
		\item $\exists~ H_1,H_2 \in \mathcal{B}(\mathcal{F},\mathcal{E})$ such that $1-H_1F$, and $1-FH_2$ are $A$-finite rank operators.
	\end{enumerate}
\end{lem}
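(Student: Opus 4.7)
The plan is to establish the implications in the order
$(\mathrm{iii}) \Rightarrow (\mathrm{iv}) \Rightarrow (\mathrm{ii}) \Rightarrow (\mathrm{i})$, all of which are essentially formal, and then tackle $(\mathrm{i}) \Rightarrow (\mathrm{iii})$, which is the one substantial step. The implication $(\mathrm{iii}) \Rightarrow (\mathrm{iv})$ is trivial by taking $H_1=H_2=H$, and $(\mathrm{iv}) \Rightarrow (\mathrm{ii})$ follows since every $A$-finite rank operator lies in $\mathcal{K}$.

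For $(\mathrm{ii}) \Rightarrow (\mathrm{i})$, given $G_1, G_2$ satisfying the one-sided conditions, I would multiply: the compositions $G_1 F G_2$ agree modulo $\mathcal{K}$ with both $G_1$ and $G_2$, so $G_1 - G_2 \in \mathcal{K}(\mathcal{F}, \mathcal{E})$. Setting $G := G_1$, one has $1 - G F = 1 - G_1 F \in \mathcal{K}(\mathcal{E})$ by hypothesis, and $1 - F G_1 = (1 - F G_2) + F(G_2 - G_1) \in \mathcal{K}(\mathcal{F})$ since $\mathcal{K}$ is an ideal. So $G$ is a two-sided parametrix.

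The heart of the argument is $(\mathrm{i}) \Rightarrow (\mathrm{iii})$. Write $1 - FG = K_1$, $1 - GF = K_2$ with $K_i$ compact. Since the $A$-finite rank operators are norm-dense in $\mathcal{K}$ by \Cref{def:fin_rank}, I can pick finite-rank operators $F_1, F_2$ with $\|K_1 - F_1\| < 1$ and $\|K_2 - F_2\| < 1$. Then $FG + F_1 = 1 - (K_1 - F_1)$ and $GF + F_2 = 1 - (K_2 - F_2)$ are invertible in $\mathcal{B}(\mathcal{F})$ and $\mathcal{B}(\mathcal{E})$ respectively by the Neumann series. Let $U := (FG+F_1)^{-1}$ and $V := (GF+F_2)^{-1}$, and define the two candidate parametrices
\[
H := GU, \qquad H' := VG.
\]
Multiplying out gives $FH = (FG+F_1)U - F_1 U = 1 - F_1 U$ and $H'F = V(GF+F_2) - V F_2 = 1 - V F_2$, and both $F_1 U$ and $V F_2$ are $A$-finite rank because the finite rank ideal is closed under composition with adjointables on either side.

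It remains to glue $H$ and $H'$ into a single operator working on both sides. Computing $H' F H$ two ways yields
\[
H' - V F_2 H \;=\; H' F H \;=\; H - H' F_1 U,
\]
so $H' - H = V F_2 H - H' F_1 U$ is $A$-finite rank. Therefore $1 - F H' = (1 - FH) - F(H'-H) = F_1 U - F(H'-H)$ is $A$-finite rank, while $1 - H' F = V F_2$ is already finite rank. Hence $H'$ is the desired common finite-rank parametrix, completing $(\mathrm{i}) \Rightarrow (\mathrm{iii})$. The one point to double-check carefully is that $\mathcal{F}(\mathcal{E})$ is a two-sided ideal in $\mathcal{B}(\mathcal{E})$ (so that products like $F_1 U$ stay finite rank), which is the content of the third bullet of \Cref{def:fin_rank}; I expect the argument to go through verbatim in the Real and graded setting since all the constructions respect those structures.
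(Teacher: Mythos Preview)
Your proof is correct and shares its core with the paper's, though organized differently. You run the cycle $(\mathrm{iii}) \Rightarrow (\mathrm{iv}) \Rightarrow (\mathrm{ii}) \Rightarrow (\mathrm{i}) \Rightarrow (\mathrm{iii})$ and supply the key step $(\mathrm{i}) \Rightarrow (\mathrm{iii})$ explicitly via a Neumann-series construction. The paper instead proves $(\mathrm{i}) \Leftrightarrow (\mathrm{ii})$ and $(\mathrm{iii}) \Leftrightarrow (\mathrm{iv})$ by the same ``one-sided parametrices differ by an ideal element'' trick, notes $(\mathrm{iii}) \Rightarrow (\mathrm{i})$ is trivial, and for $(\mathrm{i}) \Rightarrow (\mathrm{iv})$ simply invokes the general Banach-algebra lemma that an element is invertible modulo an ideal $J$ iff it is invertible modulo $\overline{J}$. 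Your Neumann-series step is exactly the proof of that lemma unpacked in situ, and your gluing of $H$ and $H'$ is the paper's $(\mathrm{iv}) \Rightarrow (\mathrm{iii})$ argument. So the mathematics is the same; your version is self-contained while the paper's appeals to a cited fact.

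One small slip: in your displayed equation for $H'FH$ the symbols $H$ and $H'$ are interchanged on both sides. The correct computation reads $H - V F_2 H = H'FH = H' - H' F_1 U$, giving $H' - H = H' F_1 U - V F_2 H$. The conclusion that $H'-H$ is $A$-finite rank is unaffected.
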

\begin{proof}\;

\begin{itemize}
    \item\;[$(i) \implies (ii)$] Obvious!

	\item\;[$(ii) \implies (i)$] Let $1-G_1F=K_1 \in \mathbb{K}(\mathcal{E})$, $1-FG_2=K_2 \in \mathbb{K}(\mathcal{F})$. Note that $G_2-K_1G_2=(1-K_1)G_2=(G_1F)G_2=G_1(FG_2)=G_1(1-K_2)=G_1-G_1K_2$. Thus, $G_2-G_1 = K_1G_2-G_1K_2 =: K_3$ is compact, which implies that $G_2F=G_1F+K_3F=1-K_1+K_3F$, whence $1-G_2F$ is compact. So we can take $G=G_2$ (or similarly,
	$G=G_1$).

	\item\;[$(iii) \iff (iv)$] Similar to the above equivalence; note that all we needed above was that compact operators on a Hilbert $C^\ast$-module forms an ideal. But so does the set of finite rank operators.

	\item\;[$(iii) \implies (i)$] Obvious, as finite rank operators are compact.

	\item\;[$(i) \implies (iv)$] We have $1-GF \in \mathcal{K}(\mathcal{E})$ implies that $GF$ is invertible in $\mathcal{B}(\mathcal{E})/\mathcal{K}(\mathcal{E})$. The rest follows from the easy result about Banach algebras (see \cite{GVF} for a proof): If $A$ is a unital Banach algebra, and $J$ is an ideal in it, then $a \in A$ is invertible modulo $\closure{J}$ if and only if it is invertible modulo $J$.
\end{itemize}
\end{proof}
We also note what \cref{lem: characterizing B and K} says about Fredholm operators on function algebras:
\begin{cor} \label{cor: Fred on function algebra}
    Fredholm operators on $C(X,H)$ correspond to precisely those strictly continuous functions $F:X \rightarrow \mathcal{B}_A(H)$ which takes values in Fredholm operators on $H$, and for which a strictly continuous family of pointwise-parametrices $G:X \rightarrow \mathcal{B}_A(H)$ exist such that $1-FG,1-GF: X \rightarrow \com_A(\h)$ is norm continuous. Moreover, if $F:X \rightarrow \mathcal{B}_A(H)$ is Fredholm, and if a parametrix $g$ to $F(x_0)$ for a fixed $x_0 \in X$ is given, then a parametrix $G:X \rightarrow \mathcal{B}_A(\h)$ to $F$ exsits such that $G(x_0)=g$.  
 \end{cor}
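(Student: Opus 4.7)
The plan is to invoke Proposition~\ref{lem: characterizing B and K} and unpack the definition of a Fredholm operator. Under the natural identifications
\[
\mathcal{B}_{C(X,A)}(C(X,H)) \;\cong\; \{F : X \to \mathcal{B}_A(H) \mid F\text{ strictly continuous}\}
\]
and
\[
\mathcal{K}_{C(X,A)}(C(X,H)) \;\cong\; \{F : X \to \mathcal{K}_A(H) \mid F\text{ norm continuous}\},
\]
the statement that a strictly continuous $F$ is Fredholm on $C(X,H)$---i.e.\ admits a parametrix $G$ with $1-FG$ and $1-GF$ in $\mathcal{K}_{C(X,A)}(C(X,H))$---translates verbatim into the first claim. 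Evaluating at each point $x\in X$ shows that such an $F$ is automatically pointwise Fredholm with $G(x)$ a parametrix for $F(x)$.

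For the extension statement, suppose $F$ is Fredholm with some global parametrix $G$ already in hand, and let $g$ be a parametrix to $F(x_0)$. The first observation is that any two parametrices of a given Fredholm operator differ by a compact: since $F(x_0)$ is Fredholm, its image in the $C^*$-quotient $\mathcal{B}_A(H)/\mathcal{K}_A(H)$ is invertible, and both $G(x_0)$ and $g$ map to the same two-sided inverse there, so $k := g - G(x_0) \in \mathcal{K}_A(H)$. I would then simply define $G'(x) := G(x) + k$, a constant shift of $G$ by a fixed compact. Clearly $G'(x_0) = g$, and $G'$ is still strictly continuous, since it differs from $G$ by a norm-continuous (indeed constant) map into $\mathcal{K}_A(H) \subset \mathcal{B}_A(H)$.

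To see that $G'$ remains a parametrix for $F$, compute
\[
1 - FG' = (1 - FG) - F \cdot k, \qquad 1 - G'F = (1 - GF) - k \cdot F.
\]
The first summand in each identity lies in $C(X,\mathcal{K}_A(H))$ by hypothesis. For the second, I would invoke the standard fact that strict convergence $F(x_n) \to F(x)$ in $\mathcal{B}_A(H) \cong \mathcal{M}(\mathcal{K}_A(H))$ implies norm convergence $F(x_n)k \to F(x)k$ for each fixed compact $k$; applied to our $k = g - G(x_0)$ this makes both $x\mapsto F(x)k$ and $x\mapsto kF(x)$ norm-continuous into $\mathcal{K}_A(H)$. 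Hence both $1 - FG'$ and $1 - G'F$ lie in $C(X,\mathcal{K}_A(H))$, confirming that $G'$ is a genuine parametrix.

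The only mildly non-trivial step is the compatibility between strict and norm continuity when multiplying by a fixed compact, combined with the Calkin-algebra observation that any two parametrices of a Fredholm operator differ by a compact; everything else is a mechanical translation through Proposition~\ref{lem: characterizing B and K}.
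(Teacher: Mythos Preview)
Your proof is correct and follows essentially the same approach as the paper: both invoke Proposition~\ref{lem: characterizing B and K} for the first part, observe that any two parametrices of $F(x_0)$ differ by a compact, and then shift a given global parametrix by that constant compact. Your version is in fact more complete than the paper's, which simply asserts that $G'-k$ is the desired parametrix without checking that $1-FG'$ and $1-G'F$ remain norm-continuous into $\mathcal{K}_A(H)$; your justification via the strict-to-norm continuity of $x\mapsto F(x)k$ and $x\mapsto kF(x)$ for fixed $k\in\mathcal{K}_A(H)$ fills that gap cleanly.
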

 \begin{proof}
     The first part of the lemma is immediate form \cref{lem: characterizing B and K}. We therefore only focus on the second part: from the proof of (ii) $\implies$ (i) in \cref{lem: fred <=> inv mod fin rank}, we have that any two parametrices to $F(x_0)$ differ by a compact operator. So, if $G'$ is any parametrix to $F$, then $G'(x_0)-g=:k \in \mathcal{K}_A(H)$. The operator $G:=G'-k:X \rightarrow \mathcal{B}_A(\h)$ is the desired parametrix to $F$.
 \end{proof}
 \begin{cor}\label{cor: path of Fredholm}
     Let $F:X \rightarrow \mathcal{B}_A(H)$ be a norm continuous family of (odd, self-adjoint, Real) Fredholm operators on $H$. Then,  $\Psi_A(F)$ is a (odd, self-adjoint, Real) Fredholm operator on $\mathcal{B}_{C(X,A)}(C(X,H))$. 
 \end{cor}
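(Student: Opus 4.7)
The plan is to verify the characterization of Fredholm operators on $C(X,H)$ provided by \cref{cor: Fred on function algebra}: since $\Psi_A$ is the $*$-isomorphism of \cref{lem: characterizing B and K}, I need only produce a strictly continuous parametrix $G : X \to \mathcal{B}_A(H)$ for $F$ whose pointwise defects $1 - F(x)G(x)$ and $1 - G(x)F(x)$ depend norm-continuously on $x$ as elements of $\mathcal{K}_A(H)$. The odd, self-adjoint, and Real properties listed in parentheses are all fiberwise conditions preserved under $\Psi_A$, so the only substantive content is the Fredholm property. The standard strategy applies: build parametrices locally via a perturbation argument, then patch via a partition of unity, using norm continuity of $F$ and compactness of $X$.

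For the local step, I would fix $x_0 \in X$, pick any parametrix $G_{x_0}$ for $F(x_0)$, and set $K_{x_0} := 1 - F(x_0) G_{x_0} \in \mathcal{K}_A(H)$. Norm continuity of $F$ supplies an open neighborhood $U_{x_0}$ of $x_0$ on which $\|(F(x) - F(x_0)) G_{x_0}\| < 1/2$, so that $1 + (F(x) - F(x_0))G_{x_0}$ is invertible with norm-continuous inverse $J(x)$. Setting $G^R_{x_0}(x) := G_{x_0} J(x)$ yields $F(x) G^R_{x_0}(x) = 1 - K_{x_0} J(x)$, and $x \mapsto K_{x_0} J(x)$ is norm-continuous into $\mathcal{K}_A(H)$ since $K_{x_0}$ is a fixed compact and $J$ is norm-continuous. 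An entirely parallel argument produces a norm-continuous left-parametrix $G^L_{x_0}$; by the equivalence in \cref{lem: fred <=> inv mod fin rank}, $G^R_{x_0}$ and $G^L_{x_0}$ agree modulo a norm-continuous compact-valued family, so either may serve as a two-sided local parametrix.

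To patch, extract a finite subcover $\{U_i\}$ by compactness of $X$ and fix a subordinate continuous partition of unity $\{\phi_i\}$. Set $G(x) := \sum_i \phi_i(x) G_i(x)$, where each summand is understood to be zero off its supporting set, which is well-defined since $\phi_i$ is supported inside $U_i$. Then
\[
F(x) G(x) - 1 = \sum_i \phi_i(x)\bigl(F(x) G_i(x) - 1\bigr)
\]
is a finite sum of norm-continuous $\mathcal{K}_A(H)$-valued maps, hence is itself norm-continuous into $\mathcal{K}_A(H)$; the analogous conclusion holds on the other side. Invoking the compact-operator half of \cref{lem: characterizing B and K}, these defects lie in $\mathcal{K}_{C(X,A)}(C(X,H))$, so $\Psi_A(G)$ is a parametrix for $\Psi_A(F)$, establishing that the latter is Fredholm.

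The main obstacle, modest though it is, sits at the local step: one has to verify that a perturbatively built pointwise parametrix has both the correct fiberwise compactness of its defect and the correct joint continuity in $x$. Everything else is standard partition-of-unity bookkeeping. If one further wishes the parametrix to share any extra symmetries of $F$ (Real, odd, self-adjoint), one may symmetrize $G$ at the end, for example via $G \mapsto \tfrac{1}{2}(G + \overline{G})$, $G \mapsto -\tfrac{1}{2}(\varepsilon G \varepsilon + G)$, or $G \mapsto \tfrac{1}{2}(G + G^{*})$; each preserves the parametrix relation modulo a norm-continuous compact-valued correction because the corresponding involution preserves $\mathcal{K}_A(H)$.
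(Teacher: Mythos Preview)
Your proof is correct, but it takes a different route from the paper. The paper's argument is a one-line global construction: invoking the Bartle--Graves selection theorem to obtain a continuous section $s:\mathcal{Q}_A(H)\to\mathcal{B}_A(H)$ of the quotient map $q$, it sets $G := s\circ(\,\cdot\,)^{-1}\circ q\circ F$, which is automatically norm-continuous with $q(F(x)G(x))=1$ for all $x$, so $1-FG$ and $1-GF$ are norm-continuous families of compacts. Your approach instead builds parametrices locally via a Neumann-series perturbation around a fixed $G_{x_0}$, then patches with a partition of unity over a finite cover. Your argument is more elementary in that it avoids Bartle--Graves entirely and makes the local structure explicit; the paper's argument is shorter and more conceptual but leans on a nontrivial selection theorem. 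Both yield a norm-continuous global parametrix, and both handle the parenthetical symmetries the same way, by averaging the resulting $G$ under the relevant involution.
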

 \begin{proof}
     In light of \cref{cor: Fred on function algebra}, it suffices to show that there exists a norm continuous family $G : X \rightarrow \mathcal{B}_A(H)$ of pointwise-parametrices to $F$. Let $\mathcal{Q}_A(H):=\mathcal{B}_A(H)/\com_A(H)$. Choose a continuous cross-section $p:\mathcal{Q}_A(H) \rightarrow \mathcal{B}_A(H)$ of the quotient map $q$, which exists by the Bartle-Graves theorem. Then, our sought after $G$ can be seen as the following composite:
     \[
     % https://q.uiver.app/#q=WzAsNSxbMCwwLCJYIl0sWzEsMCwiXFxvcGVyYXRvcm5hbWV7RnJlZH1fQShIKSJdLFsxLDEsIlxcb3BlcmF0b3JuYW1le0dMfShcXG1hdGhjYWx7UX1fQShIKSkiXSxbMiwxLCJcXG9wZXJhdG9ybmFtZXtHTH0oXFxtYXRoY2Fse1F9X0EoSCkpIl0sWzIsMCwiXFxtYXRoY2Fse0J9X0EoSCkiXSxbMSwyLCJxIl0sWzIsMywiYSBcXG1hcHN0byBhXnstMX0iXSxbMyw0LCJyIl0sWzAsMSwiRiJdXQ==
\begin{tikzcd}
	X & {\operatorname{Fred}_A(H)} & {\mathcal{B}_A(H)} \\
	& {\operatorname{GL}(\mathcal{Q}_A(H))} & {\operatorname{GL}(\mathcal{Q}_A(H))}
	\arrow["q", from=1-2, to=2-2]
	\arrow["{a \mapsto a^{-1}}", from=2-2, to=2-3]
	\arrow["s", from=2-3, to=1-3]
	\arrow["F", from=1-1, to=1-2]
\end{tikzcd},
     \]
    Indeed, for any $x \in X$,
    \begin{align*}
        q(F(x)G(x))&=q(F(x))q(G(x)) \\
        &=q(F(x))q(F(x))^{-1} ~\text{since $s \circ q= \operatorname{id}_{\mathcal{Q}_A(H)}$}\\
        &=1.
    \end{align*}
    implying $G(x)$ is a parametrix to $F(x)$. The odd, self-adjoint, Real case follows as in \cref{foot}.
 \end{proof}
We now restrict our attention only to \textit{unital, trivially graded} $C^*$-algebras $A$. Some of the ideas below are adapted from \cite{EBERT}.

The classical theory of Fredholm operators on a Hilbert space worked very smoothly because the image of such an operator was always closed, and its kernel and cokernel were finite dimensional. However, it is not the case in general over Hilbert $C^*$-modules; see \cite{GVF}. However, if the image of an $A$- Fredholm operator is closed, then some well known results from Fredholm theory over Hilbert spaces do continue to hold. In what follows $\E,\F$ are Hilbert modules over $A$.

\begin{lem}\label{lem: taming}
    Let $F:\E \rightarrow \F$ be an $A$-Fredholm operator with closed range. Then, $\ker F$ is algebraically finitely generated and projective.
\end{lem}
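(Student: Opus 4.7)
The plan is to exhibit $\ker F$ as the image of an $A$-compact projection on $\mathcal{E}$, and then use a perturbation argument to show that images of compact projections on Hilbert $C^*$-modules are finitely generated projective.

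First, I would use the closed range hypothesis to get that $\ker F$ is orthogonally complemented in $\mathcal{E}$ (this is the standard fact, as in Lance's book, that an adjointable operator on Hilbert modules with closed range has complemented kernel and image; equivalently, it admits a generalized inverse). Let $P \in \mathcal{B}(\mathcal{E})$ denote the adjointable orthogonal projection onto $\ker F$.

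Next, I would show $P$ is $A$-compact. Pick a parametrix $G$ of $F$ and set $K := 1 - GF \in \mathcal{K}(\mathcal{E})$. For any $x \in \ker F$, $Kx = x - GFx = x$, so $K$ restricted to $\ker F$ is the identity. A direct computation then yields $PKP = P$, and since $K$ is compact while $P$ is adjointable, this exhibits $P \in \mathcal{K}(\mathcal{E})$.

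Now I would run the standard perturbation argument. Approximate $P$ in norm by a finite-rank operator $F_0 = \sum_{i=1}^{n}\ket{x_i}\bra{y_i}$ with $\|P - F_0\| < 1/2$. The restriction $T := (PF_0 P)|_{\ker F}$ lies within distance $1/2$ of $\operatorname{id}_{\ker F}$, hence is invertible in $\mathcal{B}(\ker F)$. The image of $T$ is contained in the $A$-submodule $\operatorname{span}_A\{Px_1,\dots,Px_n\}$, and invertibility forces $\ker F = \sum_{i=1}^{n}(Px_i)\cdot A$, so $\ker F$ is algebraically finitely generated. For projectivity, I would use $T^{-1}$ to produce an explicit splitting: define $\sigma : \ker F \to A^n$ by $\sigma(z) := (\langle y_i, T^{-1}z\rangle)_i$ and $\rho : A^n \to \ker F$ by $\rho(a_1,\dots,a_n):=\sum_i (Px_i)a_i$. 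A short calculation using $z = Pz$ for $z \in \ker F$ gives $\rho\circ\sigma = \operatorname{id}_{\ker F}$, exhibiting $\ker F$ as a direct summand of the free module $A^n$, hence finitely generated projective.

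The main obstacle is the first step: the complementability of $\ker F$ genuinely requires the closed range hypothesis, since in general the kernel of an adjointable operator on a Hilbert $C^*$-module need not be orthogonally complemented. Once that is in hand, the identification $PKP = P$ and the perturbation argument are essentially soft; the remaining care is in choosing the approximation $F_0$ close enough to $P$ so that the restriction to $\ker F$ is invertible.
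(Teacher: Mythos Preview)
Your proof is correct and follows essentially the same approach as the paper: both use the closed-range hypothesis to complement $\ker F$, then use the parametrix relation $1-GF\in\mathcal{K}(\mathcal{E})$ to see that the projection onto $\ker F$ (equivalently, the identity on $\ker F$) is $A$-compact. The only difference is that the paper then cites Theorem~15.4.2 of Wegge--Olsen as a black box to conclude finitely generated projective, whereas you unfold that result by hand via the finite-rank perturbation argument; your version is thus a bit more self-contained, but the underlying idea is the same.
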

\begin{proof}
    In light of Theorem 15.4.2 of \cite{WEG}, it suffices to show that the identity operator on $\ker F$ is compact. To that end, choose a $G \in \mathcal{B}(\F,\E)$ such that $1-GF=:R$ is an $A$-compact operator on $\E$. Then, note that 
    \[
    x \in \ker F \implies Rx=x.
    \]
    Also, as $\operatorname{im} F$ is given to be closed, Theorem 3.2 of \cite{LANCE} shows that $\ker F$ is complemented in $\E$, whence there exists a projection operator $P: \E \rightarrow \ker F$. Therefore, the identity on $\E$ factors as 
    \[
    % https://q.uiver.app/#q=WzAsNCxbMCwwLCJcXGtlciBGIl0sWzEsMCwiXFxrZXIgRiJdLFswLDEsIlxcbWF0aGNhbHtFfSJdLFsxLDEsIlxcbWF0aGNhbHtFfSJdLFswLDIsIlxcb3BlcmF0b3JuYW1le2luY30iXSxbMiwzLCJSIl0sWzMsMSwiUCIsMl0sWzAsMSwiXFxvcGVyYXRvcm5hbWV7aWR9Il1d
\begin{tikzcd}
	{\ker F} & {\ker F} \\
	{\mathcal{E}} & {\mathcal{E}}
	\arrow["{\operatorname{inc}}", from=1-1, to=2-1]
	\arrow["R", from=2-1, to=2-2]
	\arrow["P"', from=2-2, to=1-2]
	\arrow["{\operatorname{id}}", from=1-1, to=1-2]
\end{tikzcd},
    \]
    and the conclusion follows since $R$ is compact.
\end{proof}
\begin{lem}\label{surj implies ker iso}
    Given any surjective $A$-Fredholm operator $F$, there exists a $\delta>0$ such that if $G$ is a surjective Fredholm operator with $\lVert F-G \rVert < \delta$, then $\ker F \cong \ker G$. 
\end{lem}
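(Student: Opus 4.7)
The plan is to build an explicit bijection $\psi : \ker F \to \ker G$ exploiting the fact that surjectivity of $F$ (together with its closed range) gives a complementation of $\ker F$ in $\E$. First I would invoke Theorem 3.2 of \cite{LANCE} to write $\E = \ker F \oplus \E'$ with $\E' := (\ker F)^\perp$; the restriction $F_0 := F|_{\E'} : \E' \to \F$ is then a bijective adjointable operator, hence invertible in $\mathcal{B}(\E', \F)$ with adjointable bounded inverse $K$. By a Neumann series argument in the Banach space $\mathcal{B}(\E',\F)$, there is some $\delta > 0$ such that whenever $\|F-G\| < \delta$ one has $\|F_0 - G_0\| \cdot \|K\| < 1$, so that $G_0 := G|_{\E'}$ is still an adjointable bijection $\E' \to \F$; write $H := G_0^{-1}$.

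Next I would define
\[
\psi : \ker F \longrightarrow \E, \qquad \psi(x) := x - H(G(x)),
\]
and verify in one line that $\psi$ lands in $\ker G$: indeed $G(\psi(x)) = G(x) - G_0 H(G(x)) = 0$. For bijectivity onto $\ker G$, the key observation is that $\psi(x)$ has component $x$ in $\ker F$ and $-HG(x)$ in $\E'$ under the direct sum decomposition, giving injectivity for free. For surjectivity, any $y \in \ker G$ decomposes as $y = y_1 + y_2$ with $y_1 \in \ker F$ and $y_2 \in \E'$; the relation $G(y_1) + G_0(y_2) = 0$ forces $y_2 = -HG(y_1)$, whence $y = \psi(y_1)$.

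The main bookkeeping point will be checking that $\psi$ is not merely an $A$-linear bounded bijection but an honest isomorphism of Hilbert $A$-modules. Both $\psi$ and its inverse $\psi^{-1} = P|_{\ker G}$ (with $P$ the orthogonal projection onto $\ker F$) are composites of manifestly adjointable operators, and $\ker G$ is complemented in $\E$ by the same argument applied to $G$. Combined with \cref{lem: taming}, this yields the desired isomorphism $\ker F \cong \ker G$ of finitely generated projective Hilbert $A$-modules. The only conceivable obstacle is adjointability going wrong in Hilbert-module land, but since every operator appearing in the construction is manifestly adjointable, no such subtlety actually arises.
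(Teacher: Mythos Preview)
Your argument is correct but takes a genuinely different route from the paper. The paper normalises $F$ and $G$ to coisometries $F_1 = (FF^*)^{-1/2}F$ and $G_1 = (GG^*)^{-1/2}G$ (using that $FF^*$ and $GG^*$ are invertible when $F$, $G$ are surjective), so that $F_1^*F_1$ and $G_1^*G_1$ are the orthogonal projections onto $(\ker F)^\perp$ and $(\ker G)^\perp$; choosing $\delta$ small forces these projections to lie within norm distance $1$ of each other, whereupon the standard fact that nearby projections in a unital $C^*$-algebra are unitarily equivalent (\cite{ROR}, Section~2.2) finishes the proof. That argument is shorter and packages everything into a single $C^*$-algebraic black box, at the cost of invoking continuous functional calculus and the projection-equivalence result. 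Your approach is more elementary---only a Neumann series and the complemented decomposition $\E = \ker F \oplus (\ker F)^\perp$---and produces an explicit adjointable bijection $\psi$ together with its explicit inverse $P|_{\ker G}$. Both approaches ultimately rest on Theorem~3.2 of \cite{LANCE} for the complementation of $\ker F$; yours simply exploits it more directly rather than passing through projections in $\mathcal{B}(\E)$. The closing reference to \cref{lem: taming} is not needed for the isomorphism itself, only for the side remark that the kernels are finitely generated projective.
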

\begin{proof}
    As $F$ is surjective, $FF^*$ is invertible (see the proof of Theorem 3.2 in \cite{LANCE}), and we can write $\ker F= \ker F_1$, where $F_1=(FF^*)^{-\frac{1}{2}}F$. Similarly, $\ker G= \ker G_1$, where $G_1=(GG^*)^{-\frac{1}{2}}G$. Then $F_1{F_1}^*={G_1}{G_1}^*=1$. So, ${G_1}^*{G_1}$ and ${F_1}^*{F_1}$ are the projections onto the kernels of $G$ and $F$ respectively. We can also choose a $\delta>0$ such that $\lVert F_1{F_1}^* - G_1{G_1}^* \rVert < 1$. It now follows from basic facts about projections in a unital $C^*$-algebra (see \cite{ROR} Section 2.2) that $F_1{F_1}^*$ and $G_1{G_1}^*$ are unitarily equivalent, whence have isomorphic kernels.
\end{proof}
In light of the above discussion, we define index of a $A$-Fredholm operator in the following way:

\begin{defi}\label{defi: index defi}
Let $F:\E \rightarrow \F$ be an $A$-Fredholm operator. Choose $n \in \mathbb{N}$, and an $A$-linear map $g:A^n \rightarrow \F$ such that $F+g:\E \oplus A^n \rightarrow \F$ is surjective. We call the pair $(n,g)$ a taming of $F$, and define the index of $F$, denoted $\operatorname{ind} F$, as $\operatorname{ind} F=[\ker(F+g)]-[A^n]$.   
\end{defi}
\begin{lem}\label{lem: ind well-defined}
    The above definition is well-defined, that is, a taming always exist, and the index is independent of the chosen taming.
\end{lem}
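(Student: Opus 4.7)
The plan is to establish existence and independence of the taming separately.

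For \emph{existence}, I would start from a parametrix $G \in \mathcal{B}(\F, \E)$ for $F$ and approximate $1 - FG \in \mathcal{K}(\F)$ in norm (to within some $\varepsilon < 1$) by a finite-rank operator of the form $\sum_{i=1}^n \ket{y_i}\bra{x_i}$. Defining $g: A^n \to \F$ by $g(a_1,\dots,a_n) = \sum y_i \cdot a_i$ and $h: \F \to A^n$ by $h(z) = (\inr{x_i}{z})_i$, the composition $(F + g)\circ(G \oplus h)$ equals $FG + \sum \ket{y_i}\bra{x_i}$, which sits within $\varepsilon$ of $1_\F$ and is therefore invertible. Hence $F+g$ admits a right inverse and is surjective. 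To apply \cref{lem: taming} to $F+g$, I need it to be Fredholm with closed range: closed range is automatic from surjectivity, and Fredholmness follows by writing $F+g$ as the Fredholm operator $(x,a) \mapsto Fx$ (with parametrix $y \mapsto (Gy, 0)$, where the only new compact contribution is the identity on the $A^n$ summand, which is the $A$-finite-rank operator $\sum_{i=1}^n \ket{e_i}\bra{e_i}$ on the standard generators) plus the finite-rank perturbation $(x,a) \mapsto g(a)$. So $\ker(F+g)$ is algebraically finitely generated and projective.

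For \emph{independence}, given two tamings $(n_1, g_1)$ and $(n_2, g_2)$ with kernels $K_1, K_2$, I would form the combined taming $(n_1 + n_2, g_1 \oplus g_2)$ with kernel $\tilde K \subseteq \E \oplus A^{n_1} \oplus A^{n_2}$. Projection onto the last summand gives an $A$-linear map $\tilde K \to A^{n_2}$ sending $(x, a_1, a_2) \mapsto a_2$. It is surjective, because for any $a_2 \in A^{n_2}$ the surjectivity of $F+g_1$ lets one solve $Fx + g_1(a_1) = -g_2(a_2)$, producing $(x,a_1,a_2) \in \tilde K$, and its kernel is exactly $K_1$ by definition. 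Projectivity of $A^{n_2}$ splits the resulting sequence
\[
    0 \to K_1 \to \tilde K \to A^{n_2} \to 0,
\]
so $[\tilde K] = [K_1] + [A^{n_2}]$ in $K_0(A)$. Symmetrically $[\tilde K] = [K_2] + [A^{n_1}]$, and subtracting gives $[K_1] - [A^{n_1}] = [K_2] - [A^{n_2}]$, which is independence.

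The only point requiring genuine attention beyond routine manipulation is the Fredholm verification for $F+g$; the rest is standard perturbation theory followed by the splitting lemma for projective modules. In the Real case everything can be arranged Real-equivariantly by choosing $x_i, y_i$ Real, which is possible because the Real compact operators are the norm closure of $\mathbb{R}$-linear combinations of $\ket{y}\bra{x}$ with $x, y$ Real (as noted in the remark following \cref{def:fin_rank}).
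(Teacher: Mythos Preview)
Your proposal is correct. The existence argument is essentially the paper's: both produce $g$ from a finite-rank piece of $1-FG$, the only cosmetic difference being that you approximate within $\varepsilon<1$ while the paper invokes \cref{lem: fred <=> inv mod fin rank}(iii) to obtain an \emph{exact} finite-rank remainder $R=\sum \ket{r_i}\bra{s_i}$ and sets $g=\sum\ket{r_i}\bra{e_i}$.

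For independence, however, your route is genuinely different. The paper argues by homotopy: given tamings $(n,g)$ and $(m,h)$ it forms the two-parameter family $F(s,t)\colon \E\oplus A^n\oplus A^m\to\F$, $(e,x,y)\mapsto Fe+sg(x)+th(y)$, observes that $F(s,t)$ is surjective for $(s,t)\neq(0,0)$, and then walks along the segment $t\mapsto F(t,1-t)$, invoking \cref{surj implies ker iso} (local constancy of the kernel for nearby surjective Fredholm operators) together with compactness of $[0,1]$ to conclude $\ker F(1,0)\cong\ker F(0,1)$, i.e.\ $\ker(F+g)\oplus A^m\cong\ker(F+h)\oplus A^n$. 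Your argument bypasses \cref{surj implies ker iso} entirely: you take the single combined taming, produce the split short exact sequence $0\to K_1\to\tilde K\to A^{n_2}\to 0$ (and its symmetric counterpart), and read off the $K_0$-identity algebraically. Your approach is more elementary and self-contained; the paper's is more in keeping with the homotopy-theoretic spirit of the article and reuses a lemma (\cref{surj implies ker iso}) that is of independent interest there. Either way one lands on the same stable isomorphism of kernels.
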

\begin{proof}
    First, we note that $F+g:\E \oplus A^n \rightarrow \F$ is Fredholm for any $A$-linear map $g:A^n \rightarrow \F$, since $g$ is $A$-finite rank. Thus, in light of \cref{lem: taming}, $\ker(F+g)$ is finitely-generated and projective (and hence a valid class in $K_0(A)$) as soon as $F+g$ is surjective. 
    
    As in \cref{lem: fred <=> inv mod fin rank}, choose an $A$-Fredholm operator $G: \F \rightarrow \E$ such that $R:=1-FG \in \mathbb{F}(\F)$. Write $R=\sum_{i=1}^{n}\ket{r_i}\bra{s_i}$. Define $L_r=\sum_{i=1}^{n} \ket{r_i}\bra{e_i}:A^n \rightarrow \F$. Then, by construction, $F+g$ is surjective, and we conclude that $(n,g)$ is a taming of $F$.

    It remains to show that the index is independent of the taming chosen. To that end, suppose $(n,g)$ and $(m,h)$ are two tamings. Then, consider the family of $A$-Fredholm operator parametrized over $[0,1]^2$ given by \begin{align*}
        F(s,t): \E \oplus A^n \oplus A^m \rightarrow \F \\
        (e,x,y) \mapsto e+sg(x)+th(y)
    \end{align*}
    By the definition of taming, $F(s,t)$ is surjective whenever $(s,t) \neq (0,0)$. Now, the path $t \mapsto F(t,1-t)$ is clearly a norm continuous path of surjective Fredholm operators. \cref{surj implies ker iso} now shows  (by covering the path with small enough balls and using compactness of $[0,1]$) that $\ker F(0,1) \cong \ker F(1,0)$. But, $\ker F(0,1) \cong \ker (F+h) \oplus A^n$, and $\ker F(1,0) \cong \ker(F+g) \oplus A^m$, and the rest follows by the group structure on $K_0(A)$.
\end{proof}

\subsection{The Fredholm picture of K-theory}
In this subsection, we restrict our attention only to unital $\Z/2\Z$-graded $C^*$-algebras $A$. Although this isn't a complete necessity, and perhaps something weaker like $\sigma$-unitality could suffice, the applications we have in mind later need unitality assumptions anyway.
\begin{defi}
    A Fredholm family/cycle over $A$ is a triple $(\E,\iota,F)$, where $(\E,\iota)$ is a countably generated $\Z/2\Z$-graded Hilbert $A$ module \footnote{often, the grading $\iota$ on $\E$ will be suppresed in the notation}, and $F$ is an odd, self-adjoint Fredholm operator on it. 
\end{defi}
\begin{defi}
    The following are some definitions circling Fredholm families:
    \begin{itemize}
        \item A Fredholm family $F$ is called acyclic if $F$ is invertible.
        \item If $(\E_0,\iota_0,F_0)$ and $(\E_1,\iota_1,F_1)$ are Fredholm families, then $(\E_0,\iota_0,F_0) \oplus (\E_1,\iota_1,F_1):=(\E_0 \oplus E_1, \iota_0 \oplus \iota_1,F_0 \oplus F_1
        )$.
        \item Two Fredholm cycles $(\E_0,\iota_0,F_0)$ and $(\E_1,\iota_1,F_1)$ are said to be isomorphic if there exists an even unitary $U: \E_0 \rightarrow \E_1$ such that $UF_0U^*=F_1$. We write $(\E_0,\iota_0,F_0) \cong_{U} (\E_1,\iota_1,F_1)$ in this case. Although the unitary $U$ is a crucial part of the data, we will often suppress it in the notation.
        \item Two Fredholm cycles $(\E_0,\iota_0,F_0)$ and $(\E_1,\iota_1,F_1)$ are said to be concordant (denoted $(\E_0,\iota_0,F_0) \sim (\E_1,\iota_1,F_1)$) if there exists $(\E,\iota,F)$, a Fredholm family over $A \hat\otimes C[0,1]$ such that $(\E \hat\otimes_{\operatorname{ev}_i}A, \operatorname{ev}_i(\iota) \operatorname{ev}_i(F)) \cong (\E_i,\iota_i,F_i)$ for $i=0,1$, where $\operatorname{ev}_t:A[0,1] \rightarrow A$ is the graded homomorphism given by $\operatorname{ev}_t(f)=f(t)$.
    \end{itemize}
\end{defi}
The notion of concordance is meant to be a substitute for ``paths of Fredholm operators", but is much more flexible. Indeed, we have the following result:
\begin{lem} \label{lem: homotopy implies concor}
    Let $\E$ be a countably generated Hilbert module, and $F,G:\E \rightarrow \E$ odd, self-adjoint Fredholm operators on it. If $F$ and $G$ are connected by a norm continuous path of odd, self-adjoint Fredholm operators, then $(\E,F) \sim (\E,G)$.
\end{lem}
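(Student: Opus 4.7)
The plan is to build an explicit Fredholm family over $A \hat\otimes C[0,1]$ out of the given norm-continuous path $t \mapsto F_t$, and then to verify that its fibres at the endpoints are the desired Fredholm cycles. The natural candidate for the underlying module is $\tilde{\mathcal{E}} := C([0,1], \mathcal{E})$, which carries a canonical structure of a graded Hilbert module over $C([0,1], A) \cong A \hat\otimes C[0,1]$ (with grading induced pointwise from $\iota$). This module is countably generated over $C([0,1], A)$: any countable generating set of $\mathcal{E}$ over $A$, viewed as constant functions, generates $C([0,1], \mathcal{E})$.

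Next, I would use \cref{cor: path of Fredholm}: since $t \mapsto F_t$ is a norm-continuous family of odd, self-adjoint Fredholm operators on $\mathcal{E}$, applying $\Psi_A$ yields an odd, self-adjoint Fredholm operator $\tilde{F} := \Psi_A(t \mapsto F_t)$ on $\mathcal{B}_{C([0,1], A)}(C([0,1], \mathcal{E}))$. The oddness and self-adjointness are preserved pointwise, and \cref{cor: path of Fredholm} delivers both the global parametrix and the membership in the right operator algebra. Hence $(\tilde{\mathcal{E}}, \tilde{\iota}, \tilde{F})$ is a Fredholm family over $A \hat\otimes C[0,1]$.

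It then remains to verify that evaluation at the endpoints recovers the original cycles, that is, $(\tilde{\mathcal{E}} \hat\otimes_{\operatorname{ev}_i} A,\, \operatorname{ev}_i(\tilde{\iota}),\, \operatorname{ev}_i(\tilde{F})) \cong (\mathcal{E}, \iota, F_i)$ for $i = 0, 1$. Here one uses the standard fact that the internal tensor product $C([0,1], \mathcal{E}) \hat\otimes_{\operatorname{ev}_i} A$ is naturally isomorphic as a graded Hilbert $A$-module to $\mathcal{E}$ via the map $f \hat\otimes a \mapsto f(i) \cdot a$, and that under this identification the operator induced by $\tilde{F}$ is precisely $F_i$ (this is exactly what $\Psi_A$ encodes: the fibre of $\tilde{F}$ at $i$ is $F_i$). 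The grading identification is equally transparent.

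The main (and really the only) subtlety I anticipate is the bookkeeping around the tensor product identification and checking that the induced unitary is even and intertwines $\operatorname{ev}_i(\tilde{F})$ with $F_i$; everything else is either immediate from the setup or already packaged in \cref{cor: path of Fredholm}. No existence-of-parametrices argument has to be redone, since that is the content of \cref{cor: path of Fredholm}.
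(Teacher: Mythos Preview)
Your proposal is correct and is exactly the approach taken in the paper: the path $t\mapsto F_t$ itself serves as the concordance on $C([0,1],\mathcal{E})$, with \cref{cor: path of Fredholm} (together with the identifications of \cref{lem: characterizing B and K}) supplying the Fredholmness over $A\hat\otimes C[0,1]$. The paper states this in one line, and your write-up simply unpacks that line with the endpoint checks made explicit.
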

\begin{proof}
    This follows immediately from \cref{lem: characterizing B and K} and \cref{cor: path of Fredholm}; the path between $F$ and $G$ works as a concordance.
\end{proof}

The next lemma shows the flexibility of the concordance relation.
\begin{lem} \label{cor: inv conc}
    Let $(\E,F)$ be a Fredholm cycle over $A$. If $F$ is invertible, then $(\E,F) \sim (0,0)$. 
\end{lem}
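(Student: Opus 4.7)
The plan is to build an explicit concordance witnessing $(\E,F) \sim (0,0)$ by taking the "cone" on $\E$ and extending $F$ fiberwise; the invertibility of $F$ is precisely what ensures this extension remains Fredholm over $A \hat\otimes C[0,1]$.

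Concretely, I would set $\tilde{\E} := C_0((0,1],\E)$, the space of continuous sections $s:[0,1]\to\E$ with $s(0)=0$, equipped with the pointwise $A$-valued inner product $\langle s,s'\rangle(t)=\langle s(t),s'(t)\rangle$ and the pointwise action of $C[0,1]\hat\otimes A \cong C([0,1],A)$. Standard arguments (along the lines of \cref{lem: characterizing B and K}) show $\tilde{\E}$ is a countably generated graded Hilbert $(A \hat\otimes C[0,1])$-module, with grading induced pointwise from $\iota$. I would then define $\tilde{F}$ on $\tilde{\E}$ by $(\tilde{F}s)(t) := F(s(t))$; since $F(0)=0$, this preserves vanishing at $0$, and since $F$ is odd and self-adjoint, so is $\tilde{F}$.

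The key step is verifying that $\tilde{F}$ is Fredholm. Because $F$ is invertible in $\mathcal{B}(\E)$, the pointwise operator $\tilde{G}$ defined by $(\tilde{G}s)(t)=F^{-1}(s(t))$ is a well-defined adjointable operator on $\tilde{\E}$, and $\tilde{F}\tilde{G}=\tilde{G}\tilde{F}=\operatorname{id}_{\tilde{\E}}$, so $\tilde{F}$ is actually invertible (hence trivially Fredholm). This is where the invertibility hypothesis gets used in an essential way — if $F$ were only Fredholm, the parametrix $F^{-1}$ would not extend continuously through $t=0$ without further care.

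Finally, I would check the endpoint fibers. The map $\operatorname{ev}_1:A\hat\otimes C[0,1]\to A$ induces $\tilde{\E}\hat\otimes_{\operatorname{ev}_1}A \cong \E$ under the identification $s\otimes a \mapsto s(1)a$, and under this isomorphism $\tilde{F}$ restricts to $F$; so the fiber at $t=1$ is $(\E,\iota,F)$. At $t=0$, every section of $\tilde{\E}$ vanishes, giving $\tilde{\E}\hat\otimes_{\operatorname{ev}_0}A = 0$ and the trivial operator, which is $(0,0)$. Thus $(\tilde{\E},\tilde{F})$ is the required concordance. The main obstacle I foresee is purely bookkeeping: making the identifications $\tilde{\E}\hat\otimes_{\operatorname{ev}_t}A \cong \E_t$ and the corresponding operator restrictions fully rigorous, but no genuine analytic difficulty arises beyond the invertibility of $F$.
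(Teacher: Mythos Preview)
Your proposal is correct and is essentially the same argument as the paper's: both build the concordance as the cone on $\E$ (the paper writes it as $\E \hat\otimes C_0[0,1)$, you as $C_0((0,1],\E)$ --- the same module up to swapping endpoints) with $F$ acting fiberwise, and both use the invertibility of $F$ to conclude that the fiberwise operator is invertible, hence Fredholm. The only cosmetic difference is which endpoint is taken to vanish.
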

\begin{proof}
     Consider the following concordance $(\E \hat\otimes C_0[0,1),F \hat\otimes 1)$, where $C_0[0,1)$ is the ideal of functions on $C[0,1]$ which vanishes at $1$. This is a concordance between $(\E,F)$ and $(0,0)$, since $F \hat\otimes 1$ is Fredholm courtesy being invertible.
\end{proof}
We need another lemma in a similar spirit, whose proof is a little more technical. However, both the lemma and its proof will be used at critical junctures later. Before proceeding, we remind the reader of the famous Kasparov stabilization theorem, which asserts that every countably generated graded (Real) Hilbert $A$-module is isomorphic to a complemented submodule of $\hat{\HA}$. Moreover, the isomorphism can be constructed so that the complement of the image is again isomorphic is $\hat{\HA}$. This extremely important result is going to be our backbone for many of the results. See the seminal paper by G.G. Kasparov \cite{KAS} for proofs.
\begin{lem} \label{lem: add nondeg}
     Let $(\E,F)$ be a Fredholm cycle over $A$. Then, $(\E \oplus\hat{\HA},F \oplus U) \sim (\E,F)$, where $U:\hat{\HA} \rightarrow \hat{\HA}$ is an invertible operator.
\end{lem}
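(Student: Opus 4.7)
The plan is to exhibit an explicit concordance over $A \hat\otimes C[0,1]$ that realizes the asserted equivalence. Morally, the operator $U$ is invertible, hence acyclic, and the previous lemma \cref{cor: inv conc} already shows that such a cycle is concordant to $(0,0)$; simultaneously carrying $(\E, F)$ unchanged along the interval should then give the identification with $(\E, F)$.

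Concretely, I propose the following. Define a graded Hilbert module over $A \hat\otimes C[0,1]$ by
\[
\mathcal{E} \;:=\; \bigl(\E \hat\otimes C[0,1]\bigr) \;\oplus\; \bigl(\hat\HA \hat\otimes C_0[0,1)\bigr),
\]
with grading inherited componentwise from $\E$ and $\hat\HA$; here $C_0[0,1)$ denotes the ideal of functions in $C[0,1]$ vanishing at $1$, so $\hat\HA \hat\otimes C_0[0,1)$ is a countably generated graded Hilbert $A\hat\otimes C[0,1]$-module. Set
\[
G \;:=\; (F \hat\otimes 1) \,\oplus\, (U \hat\otimes 1),
\]
which is odd and self-adjoint by construction. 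The concordance claim will then amount to verifying three things: (a) $G$ is Fredholm on $\mathcal{E}$ as an $A \hat\otimes C[0,1]$-module; (b) the evaluation at $t=0$ recovers $(\E \oplus \hat\HA, F \oplus U)$; (c) the evaluation at $t=1$ recovers $(\E, F)$, because the second summand of $\mathcal{E}$ collapses to $0$ under $\operatorname{ev}_1$.

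For (a), $F \hat\otimes 1$ is Fredholm because any parametrix $Q$ for $F$ yields the parametrix $Q \hat\otimes 1$ for $F \hat\otimes 1$, with remainders obtained by tensoring compacts on $\E$ with the identity (which remain $A\hat\otimes C[0,1]$-compact, since rank-one operators $\ket{x}\bra{y}$ tensor to rank-one operators $\ket{x \hat\otimes 1}\bra{y \hat\otimes 1}$ on the corresponding module, and this extends by norm density to all compacts). The operator $U \hat\otimes 1$ is honestly invertible on $\hat\HA \hat\otimes C_0[0,1)$ with inverse $U^{-1}\hat\otimes 1$, hence Fredholm. Parts (b) and (c) are straightforward computations via the internal tensor product: at $t=0$, $\operatorname{ev}_0$ sends $f\in C[0,1]$ to $f(0)$ and one recovers $\E \oplus \hat\HA$ with operator $F\oplus U$; at $t=1$, $\operatorname{ev}_1$ annihilates $C_0[0,1)$, leaving $\E \oplus 0$ with operator $F$.

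The main (very mild) obstacle is simply to confirm the bookkeeping of the internal tensor products and the compactness claim used in (a); neither presents genuine difficulty, and no further ingredient beyond \cref{cor: inv conc}-style reasoning and standard properties of adjointable and compact operators on Hilbert modules is required.
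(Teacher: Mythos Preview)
Your argument is correct. You are essentially observing that concordance is compatible with direct sums, and then taking the direct sum of the reflexivity concordance $(\E \hat\otimes C[0,1], F \hat\otimes 1)$ with the concordance $(\hat{\HA} \hat\otimes C_0[0,1), U \hat\otimes 1)$ supplied by \cref{cor: inv conc}. All the verifications you list go through: the external tensor $K \hat\otimes 1$ of a compact $K$ with the identity of $C[0,1]$ is compact (your rank-one argument is fine, or use $\mathcal{K}_A(\E)\hat\otimes C[0,1]\cong \mathcal{K}_{A[0,1]}(\E\hat\otimes C[0,1])$), and the endpoint evaluations behave exactly as you claim.

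The paper takes a different and more laborious route. It first applies Kasparov stabilization to get an even unitary $u:\E\oplus\hat{\HA}\to\hat{\HA}$, transports $F\oplus U$ to an operator $G$ on $\hat{\HA}$, and then builds the concordance on the submodule
\[
\h=\{e:[0,1]\to\hat{\HA}\text{ continuous}\mid e(0)\in u(\E)\}\subseteq C([0,1],\hat{\HA}),
\]
using \cref{lem: restricting compacts} and the invertibility of $U$ to check that $1-G'G$ and $1-GG'$ restrict to compacts on $\h$. Your construction avoids all of this.

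The reason the paper works harder is flagged just before the lemma: ``both the lemma and its proof will be used at critical junctures later.'' In the transitivity proof for concordance, the paper needs to concatenate concordances, and it does so by first stabilizing everything to $\hat{\h}_{A[0,1]}$ and then gluing; the specific form of the concordance produced here---living inside $C([0,1],\hat{\HA})$ with a boundary condition at one endpoint---is what makes that concatenation go through. Your direct-sum concordance lives on a different module and does not immediately slot into that argument. So your proof is cleaner for the lemma in isolation, but the paper's version earns its keep downstream.
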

\begin{proof}
    By Kasparov stabilization, there exists an unitary isomorphism $u:\E \oplus \hat{\HA} \rightarrow \hat{\HA}$. Let $G:\hat{\HA} \rightarrow \hat{\HA}$ be such that the following diagram commutes:
 \[
 % https://q.uiver.app/#q=WzAsNCxbMCwwLCJFXzBcXG9wbHVzIFxcaGF0e1xcbWF0aGNhbHtIfV9BfSJdLFsyLDAsIkVfMFxcb3BsdXMgXFxoYXR7XFxtYXRoY2Fse0h9X0F9Il0sWzAsMSwiXFxoYXR7XFxtYXRoY2Fse0h9X0F9Il0sWzIsMSwiXFxoYXR7XFxtYXRoY2Fse0h9X0F9Il0sWzAsMiwidSJdLFswLDEsIkZfMCBcXG9wbHVzIDEiXSxbMSwzLCJ1Il0sWzIsMywiRyJdXQ==
\begin{tikzcd}
	{\mathcal{E}\oplus \hat{\mathcal{H}_A}} && {\mathcal{E}\oplus \hat{\mathcal{H}_A}} \\
	{\hat{\mathcal{H}_A}} && {\hat{\mathcal{H}_A}}
	\arrow["u", from=1-1, to=2-1]
	\arrow["{F \oplus \mathcal{I}}", from=1-1, to=1-3]
	\arrow["u", from=1-3, to=2-3]
	\arrow["G", from=2-1, to=2-3]
\end{tikzcd}
 \]
 On $C([0,1],\hat{\HA})$, let $\Tilde{G}$ denote the operator on it given by $\Tilde{G}(f)(t):=G(f(t)) ~\forall~ t \in [0,1]$. Note that under isomorphisms described in \cref{lem: characterizing B and K}, $\Tilde{G}$ corresponds to the constant function $G$ on $X$.
 
 Now, note that the following restricted diagram commutes too:
 \[
 % https://q.uiver.app/#q=WzAsNCxbMCwwLCJFXzAiXSxbMiwwLCJFXzAiXSxbMCwxLCJ1KEVfMCkiXSxbMiwxLCJ1KEVfMCkiXSxbMCwyLCJ1Il0sWzAsMSwiRl8wICJdLFsyLDMsIkdcXG1pZF97dShFXzApfSJdLFsxLDMsInUiXV0=
\begin{tikzcd}
	{\mathcal{E}} && {\mathcal{E}} \\
	{u(\mathcal{E})} && {u(\mathcal{E})}
	\arrow["u", from=1-1, to=2-1]
	\arrow["{F}", from=1-1, to=1-3]
	\arrow["{G\mid_{u(\mathcal{E})}}", from=2-1, to=2-3]
	\arrow["u", from=1-3, to=2-3]
\end{tikzcd}
\]
Setting $\E':=u(\E)$, it suffices to exhibit a concordance between $(\hat{\HA},G)$ and $(\E',G \mid_{\E'})$. This can be built as follows: Let 
\[
    \h=\{e:[0,1] \rightarrow \HA ~\text{continuous}~ \mid e(0) \in \E' \},
\]
and define consider $\Tilde{G}\mid_{\h}:\h \rightarrow \h$.

    Then, as $\E'$ is complementable in $\hat{\HA}$, we conclude that $\Tilde{G}\mid_{\h}$ is adjointable with $\Tilde{G}\mid_{\h}^*=\tilde{G}^*\mid_{\h}$. Furthermore, considering a parametrix $F_0'$ to $F_0$, we get $G':=(uF_0'u^*) \oplus U^{-1}$ is a parametrix to $G$, whence the operator $\Tilde{G'}(f)(t):=G'(f(t))$ is a parametrix to $\Tilde{G}$. It can also be seen that $\Tilde{G}'\mid_{\h}:\h \rightarrow \h$ is an adjointable operator. We finally observe that $1\mid_{\h}-\Tilde{G}'\mid_{\h}\Tilde{G}\mid_{\h}$ and $1\mid_{\h}-\Tilde{G}\mid_{\h}\Tilde{G}'\mid_{\h}$ are elements of $\com_{A[0,1]}(\h)$. But note that $\operatorname{im}(1-GG') \subseteq \E$ (here's where we crucially use the invertibility of $U$), and the rest follows from \cref{lem: restricting compacts}.

    Thus, we conclude that $(\h,\Tilde{G}\mid_{\h})$ is a Fredholm $A[0,1]$ cycle. It is also clear that $(\h\hat\otimes_{\operatorname{ev}_0}A,\operatorname{ev}_0(\Tilde{G}\mid_{\h})) \cong (\E',G\mid_{\E'})$, and $(\h\hat\otimes_{\operatorname{ev}_1}A,\operatorname{ev}_1(\Tilde{G}\mid_{\h})) \cong (\hat{\HA},G)$.
\end{proof}
\begin{lem}
    Concordance is an equivalence relation.
\end{lem}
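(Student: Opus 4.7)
For reflexivity, the constant concordance $(\E \hat\otimes C[0,1], F \hat\otimes 1)$ over $A \hat\otimes C[0,1]$ works immediately. For symmetry, I would pull the given concordance back along the graded $*$-automorphism $A \hat\otimes C[0,1] \to A \hat\otimes C[0,1]$ induced by the homeomorphism $t \mapsto 1-t$; this exchanges the roles of $\operatorname{ev}_0$ and $\operatorname{ev}_1$ while preserving Fredholmness, and the new endpoint unitaries are just the old ones swapped.

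The main content is transitivity. Given concordances $(\E, F)$ from $(\E_0, F_0)$ to $(\E_1, F_1)$ and $(\E', F')$ from $(\E_1, F_1)$ to $(\E_2, F_2)$, together with the specified unitary isomorphisms $u_1$ and $u_0'$ identifying the matching endpoint fibers with $\E_1$, my plan is to glue them into a single concordance over $A \hat\otimes C[0,1]$ via a pullback. The key observation is that $A \hat\otimes C[0,1]$ is canonically isomorphic to the pullback $(A \hat\otimes C[0,1]) \times_A (A \hat\otimes C[0,1])$ along $\operatorname{ev}_1$ and $\operatorname{ev}_0$ (after rescaling so the matching occurs at $t = 1/2$). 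I then define
\[
\E'' := \bigl\{(e, e') \in \E \oplus \E' : u_1(e \hat\otimes_{\operatorname{ev}_1} 1) = u_0'(e' \hat\otimes_{\operatorname{ev}_0} 1)\bigr\},
\]
with componentwise grading, inner product, and module action; since the inner product and action take values in the pullback algebra, $\E''$ becomes a countably generated graded Hilbert $A \hat\otimes C[0,1]$-module. The operator $F'' := (F \oplus F')|_{\E''}$ is well-defined (both $F$ and $F'$ reduce to $F_1$ at the matching point), odd, and self-adjoint.

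The principal obstacle will be verifying that $F''$ is Fredholm. The plan is to produce matching parametrices: first fix any parametrix $g_1$ of $F_1$, then produce parametrices $G$ of $F$ and $G'$ of $F'$ whose evaluations at the matching endpoint both transport to $g_1$. This is the heart of the argument. An arbitrary parametrix of $F$ evaluates at $t = 1$ to a parametrix of $F_1$ that differs from $g_1$ by a compact; lifting this compact via the surjection $\mathcal{K}(\E) \twoheadrightarrow \mathcal{K}(\E_1)$ (which holds because finite-rank operators lift along the quotient $\E \twoheadrightarrow \E \hat\otimes_{\operatorname{ev}_1} A$, followed by a Bartle--Graves section in the spirit of the proof of \cref{cor: Fred on function algebra}) produces the corrected $G$; similarly for $G'$. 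Then $G \oplus G'$ preserves $\E''$, and its restriction is a parametrix for $F''$: the remainders are restrictions of compacts on $\E \oplus \E'$ that preserve $\E''$, hence compact on $\E''$ by \cref{lem: restricting compacts}. Finally, the endpoint isomorphisms for the glued concordance come from the two coordinate projections, transporting $F''$ to $F_0$ at $t = 0$ and to $F_2$ at $t = 1$, which gives the desired concordance between $(\E_0, F_0)$ and $(\E_2, F_2)$.
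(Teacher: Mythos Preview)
Your treatment of reflexivity and symmetry matches the paper's. For transitivity you take a genuinely different route: you attempt a direct pullback/gluing of the two concordances, whereas the paper first stabilises. Concretely, the paper adds $(\hat{\mathcal H}_A,1)$ to each endpoint cycle using \cref{lem: add nondeg}, invokes Kasparov stabilisation to replace each concordance by one carried on the standard module $\hat{\mathcal H}_{A[0,1]}$, and then concatenates using the explicit description of $\mathcal B$ and $\mathcal K$ on $C(X,\hat{\mathcal H}_A)$ from \cref{lem: characterizing B and K} together with \cref{cor: Fred on function algebra} to match parametrices at the gluing point. Finally it sandwiches this with the concordances from \cref{lem: add nondeg} to undo the stabilisation. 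The virtue of this detour is that on the standard module, compactness is literally norm-continuity of a $\mathcal K_A(\hat{\mathcal H}_A)$-valued function, so concatenation of compacts is transparent.

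Your direct gluing is more elegant in spirit, but there is a genuine gap. Your appeal to \cref{lem: restricting compacts} to conclude that $(1-F''G'')$ is compact on $\E''$ is not valid as stated: that lemma requires the compact operator on the ambient module to have \emph{all} of its image (and that of its adjoint) contained in the submodule, not merely to preserve it. The operator $(1-FG)\oplus(1-F'G')$ certainly preserves $\E''$, but it does not map all of $\E\oplus\E'$ into $\E''$, so the hypothesis fails. What you actually need is something like $\mathcal K(\E'')\cong \mathcal K(\E)\times_{\mathcal K(\E_1)}\mathcal K(\E')$ for the pullback module, which is plausible but is neither in the paper nor argued in your proposal. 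A secondary issue you do not address is why $\E''$ is countably generated; this is not automatic for pullbacks of Hilbert modules and again would require either a direct argument or, in effect, the stabilisation that the paper performs. Both gaps are exactly what the paper's stabilise-first strategy is designed to avoid.
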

\begin{proof}
    Throughout this proof, we use the notation of \cref{lem: characterizing B and K}.
    \begin{itemize}
        \item Reflexivity: To see that $(\E,F) \sim (\E,F)$, just consider the concordance $(\E \hat\otimes C[0,1],F \hat\otimes 1)$.
        \item Symmetry: Suppose $(\E_0,F_0) \sim (\E_1,F_1)$, with $(\E,F)$ being a concordance. Then, $(\E\hat\otimes_{\varphi}A[0,1],F\hat\otimes_{\varphi}\operatorname{id})$ is a concordance between $(\E_1,F_1)$ and $({\E_0},F_0)$, where $\varphi: A[0,1] \rightarrow A[0,1]$ is the order reversing isomorphism.         \item Transitivity: Suppose $(\E_0,F_0) \sim (\E_1,F_1)$, and $(\E_1,F_1) \sim (\E_2,F_2)$. We first consider the case where the concordances are of the form $(\hat\h_{A[0,1]},F')$ and $(\hat\h_{A[1,2]},F'')$ respectively. This will contain the bulk of the proof; the general case will use some of the ideas used here, and won't be spelled out in detail.
        
        From \cref{lem: characterizing B and K}, we can view $F'$ and $F''$ as strictly continuous maps $p:[0,1] \rightarrow \mathcal{B}_A(\hat\HA)$ and $q:[1,2] \rightarrow \mathcal{B}_A(\hat\HA) $ respectively. Now, from the definition of concordance, there exists even unitaries $u,v$ making the following diagrams commute:
        \[
      % https://q.uiver.app/#q=WzAsOCxbMCwwLCJcXG1hdGhjYWx7RX1fMSJdLFsxLDAsIlxcbWF0aGNhbHtFfV8xIl0sWzAsMSwiXFxoYXR7XFxtYXRoY2Fse0h9X0F9Il0sWzEsMSwiXFxoYXR7XFxtYXRoY2Fse0h9X0F9Il0sWzMsMCwiXFxtYXRoY2Fse0V9XzEiXSxbNCwwLCJcXG1hdGhjYWx7RX1fMSJdLFszLDEsIlxcaGF0e1xcbWF0aGNhbHtIfV9BfSJdLFs0LDEsIlxcaGF0e1xcbWF0aGNhbHtIfV9BfSJdLFswLDEsIkZfMSJdLFsyLDMsInAoMSkiXSxbMCwyLCJ1IiwyXSxbMSwzLCJ1Il0sWzQsNSwiRl8xIl0sWzQsNiwidiIsMl0sWzUsNywidiJdLFs2LDcsInEoMSkiXV0=
\begin{tikzcd}
	{\mathcal{E}_1} & {\mathcal{E}_1} && {\mathcal{E}_1} & {\mathcal{E}_1} \\
	{\hat{\mathcal{H}_A}} & {\hat{\mathcal{H}_A}} && {\hat{\mathcal{H}_A}} & {\hat{\mathcal{H}_A}}
	\arrow["{F_1}", from=1-1, to=1-2]
	\arrow["{p(1)}", from=2-1, to=2-2]
	\arrow["u"', from=1-1, to=2-1]
	\arrow["u", from=1-2, to=2-2]
	\arrow["{F_1}", from=1-4, to=1-5]
	\arrow["v"', from=1-4, to=2-4]
	\arrow["v", from=1-5, to=2-5]
	\arrow["{q(1)}", from=2-4, to=2-5]
\end{tikzcd}
        \]
    Combining the two diagrams, we obtain \[
   % https://q.uiver.app/#q=WzAsNCxbMCwwLCJcXGhhdHtcXG1hdGhjYWx7SH1fQX0iXSxbMSwwLCJcXGhhdHtcXG1hdGhjYWx7SH1fQX0iXSxbMCwxLCJcXGhhdHtcXG1hdGhjYWx7SH1fQX0iXSxbMSwxLCJcXGhhdHtcXG1hdGhjYWx7SH1fQX0iXSxbMCwxLCJwKDEpIl0sWzAsMiwidnVeKiIsMl0sWzIsMywicSgxKSIsMl0sWzEsMywidnVeKiJdXQ==
\begin{tikzcd}
	{\hat{\mathcal{H}_A}} & {\hat{\mathcal{H}_A}} \\
	{\hat{\mathcal{H}_A}} & {\hat{\mathcal{H}_A}}
	\arrow["{p(1)}", from=1-1, to=1-2]
	\arrow["{vu^*}"', from=1-1, to=2-1]
	\arrow["{q(1)}"', from=2-1, to=2-2]
	\arrow["{vu^*}", from=1-2, to=2-2]
\end{tikzcd}
    \]
     Now, we consider the map \begin{align*}
         \alpha:[0,2] &\rightarrow \mathcal{B}_A(\hat{\HA}) \\
         t &\mapsto \begin{cases}
             vu^*p(t)uv^* ~\text{if}~ 0 \leq t \leq 1 \\
             q(t) ~\text{if}~ 1 \leq t \leq 2
         \end{cases}
     \end{align*}  
     The well definedness of $\alpha$ as a function is clear.
    We claim that $t$ corresponds to a Fredholm operator on $C([0,2],\hat{\HA})$ under the isomorphisms of \cref{lem: characterizing B and K}. The strict continuity of $\alpha$ follows from the strict continuity of $p$ and $q$. As $p$ and $q$ corresponded to Fredholm operators, there exists parametrices $p':[0,1]\rightarrow \mathcal{B}_A(\hat{\HA})$ and $q':[1,2] \rightarrow \mathcal{B}_A(\hat{\HA})$ respectively. Moreover, we can demand that $q'(1)=vu^*p'(1)uv^*$ from the second part of \cref{cor: Fred on function algebra}. It follows that the following map is a parametrix to $\alpha$
    \begin{align*}
         \beta:[0,2] &\rightarrow \mathcal{B}_A(\hat{\HA}) \\
         t &\mapsto \begin{cases}
             vu^*p'(t)uv^* ~\text{if}~ 0 \leq t \leq 1 \\
             q'(t) ~\text{if}~ 1 \leq t \leq 2
         \end{cases}
     \end{align*}   
     Thus, $(\hat{\h_{A[0,2]}},\Psi_A(\alpha))$ is a concordance between $(\E_0,F_0)$ and $(\E_2,F_2)$.
           
        Now, we come back to the general case. Let $(\E',F')$ and $(\E'',F'')$ be concordances between $(\E_0,F_0)$ and $(\E_1,F_1)$, and between $(\E_1,F_1)$ and $(\E_2,F_2)$ respectively.
        Then, $(\E' \oplus \hat{\h_{A[0,1]}},F' \oplus 1)$ is a concordance between $(\E_0 \oplus \hat{\HA},F_0 \oplus 1)$ and $(\E_1 \oplus \hat{\HA},F_1 \oplus 1)$, and $(\E'' \oplus \hat{\h_{A[2,3]}},F'' \oplus 1)$ is a concordance between $(\E_1 \oplus \hat{\HA},F_1 \oplus 1)$ and $(\E_2 \oplus \hat{\HA},F_2 \oplus 1)$.

 We now invoke Kasparov stabilization to conclude that there exist Fredholm $A[0,1]$- cycles $(\hat{\h_{A[0,1]}},G_1)$ and $(\hat{\h_{A[1,2]}}, G_2)$ such that the former is a concordance between $(\E_0 \oplus \hat{\HA},F_0 \oplus 1)$ and $(\E_1 \oplus \hat{\HA},F_1 \oplus 1)$, and the latter is a concordance between $(\E_1 \oplus \hat{\HA},F_1 \oplus 1)$ and $(\E_2 \oplus \hat{\HA},F_2 \oplus 1)$. The first part of the proof now shows that $(\E_0 \oplus \hat{\HA},F_0 \oplus 1)$ and $(\E_2 \oplus \hat{\HA},F_2 \oplus 1)$ are concordant. 

 We still have to come up with a concordance between $(\E_0,F_0)$ and $(\E_1,F_1)$. We first note the concordance between $(\E_0,F_0)$ and $(\E_0 \oplus \hat{\HA},F_0 \oplus 1)$ given by \cref{lem: add nondeg}. Similarly, we can also cook up a concordance between  $(\E_1 \oplus \hat{\HA},F_1 \oplus 1)$ and $(\E_1,F_1)$. Just like the first part of the proof, we can concatenate the paths corresponding to the concordances so as to build a concordance between $(\E_0,F_0)$ and $(\E_2,F_2)$, with the module being of the form $\{e: [0,3] \rightarrow \hat{\HA} ~\text{continuous}~ \mid e(0) \in \E_0', e(3) \in \E_2' \}$, where $\E_0'$ and $\E_2'$ are complementable copies of $\E_0$ and $\E_2$ respectively inside $\hat{\HA}$, as was considered in the proof of \cref{lem: add nondeg}.
    \end{itemize}
\end{proof}

\iffalse

\fi

\begin{defi}
    Denote by $K_0^{\operatorname{Fr}}(A)$ the abelian monoid of concordance classes of Fredholm families over $A$. 
\end{defi}
\begin{lem}
    $K_0^{\operatorname{Fr}}(A)$ is a group.
\end{lem}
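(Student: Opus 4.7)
The plan is to exhibit an explicit additive inverse for every element of $K_0^{\operatorname{Fr}}(A)$, since the monoid structure is already established. Given a Fredholm cycle $(\E, \iota, F)$, I claim that $(\E, -\iota, -F)$---the same module equipped with the opposite grading and the negated operator---will serve as an inverse. A direct check confirms that this is a valid Fredholm cycle: $-F$ is self-adjoint, Fredholm, and (in the Real case) Real, and since $\iota F = -F\iota$ we have $(-\iota)(-F) = \iota F = -F\iota = -(-F)(-\iota)$, so $-F$ is odd with respect to $-\iota$.

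By definition, the sum $(\E, \iota, F) \oplus (\E, -\iota, -F)$ is the cycle $(\E \oplus \E,\, \iota \oplus (-\iota),\, F \oplus (-F))$. To concord this to $(0,0,0)$, I will use a rotation homotopy. Let $T \in \mathcal{B}(\E \oplus \E)$ denote the flip $T(x,y) = (y,x)$. A matrix-entry calculation shows that $T$ is self-adjoint, an invertible involution, odd with respect to $\iota \oplus (-\iota)$, and, crucially, \emph{anti-commutes} with $F \oplus (-F)$: the cross-term $(F \oplus (-F))T + T(F \oplus (-F))$ vanishes, because the sign change in the second summand is cancelled when the flip $T$ moves the $F$-blocks off the diagonal. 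I then consider the norm-continuous path
\[
F_t := \cos(t)\,(F \oplus (-F)) + \sin(t)\,T, \qquad t \in [0, \pi/2].
\]
Each $F_t$ is manifestly self-adjoint and odd. Anticommutativity yields $F_t^2 = \cos^2(t)\,(F^2 \oplus F^2) + \sin^2(t)\,I$, which is bounded below by $\sin^2(t)\,I$ for $t > 0$ (hence invertible) and equals $F^2 \oplus F^2$ at $t = 0$ (hence Fredholm). Thus $F_t$ is Fredholm for every $t \in [0, \pi/2]$.

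With this path in hand, \cref{lem: homotopy implies concor} will provide a concordance between $(\E \oplus \E,\, \iota \oplus (-\iota),\, F \oplus (-F))$ (at $t=0$) and $(\E \oplus \E,\, \iota \oplus (-\iota),\, T)$ (at $t = \pi/2$). Since $T$ is invertible, this endpoint is acyclic, and \cref{cor: inv conc} then concords it to $(0,0,0)$. Transitivity of concordance closes the argument: $(\E, \iota, F) \oplus (\E, -\iota, -F) \sim (0,0,0)$, so $(\E, -\iota, -F)$ represents the inverse of $[(\E, \iota, F)]$ in $K_0^{\operatorname{Fr}}(A)$. The main subtlety I anticipate is this sign choice: the naive candidate $(\E, -\iota, F)$ would not work, because $F \oplus F$ \emph{commutes} with $T$ rather than anti-commuting, leaving a non-vanishing cross-term in $F_t^2$ that destroys the bounded-below estimate and obstructs Fredholmness on the interior of the path. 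Beyond getting this sign right, the remaining work is routine matrix algebra and appeals to the concordance lemmas already in place.
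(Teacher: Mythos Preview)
Your proof is correct and is essentially identical to the paper's own argument: the paper uses the same inverse candidate $(\E,-\iota,-F)$ and the same rotation path $t\mapsto\begin{pmatrix}\cos(\tfrac{\pi}{2}t)F & \sin(\tfrac{\pi}{2}t)\\ \sin(\tfrac{\pi}{2}t) & -\cos(\tfrac{\pi}{2}t)F\end{pmatrix}$, with the same positivity estimate $p(t)^*p(t)\ge \sin^2(\tfrac{\pi}{2}t)\cdot 1$ to get invertibility for $t>0$, followed by \cref{lem: homotopy implies concor} and \cref{cor: inv conc}. Your additional remarks on the anticommutation of $T$ with $F\oplus(-F)$ and on why the sign in $-F$ matters are helpful elaborations of exactly the mechanism the paper relies on.
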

\begin{proof}
    The only thing we need to show is the existence of inverses. We claim that $(\E,-\iota,-F)$ is the inverse to $(\E,\iota,F)$. To that end, we have to show that $(\E \oplus \E, \iota \oplus -\iota, F \oplus -F) \sim (0,0)$. We define a norm continuous function $p:[0,1]\rightarrow \mathcal{B}(\E \oplus \E)$ as follows:
    \begin{align*}
        t \mapsto \begin{pmatrix}
            \cos(\frac{\pi}{2}t)F & \sin(\frac{\pi}{2}t) \\
            \sin(\frac{\pi}{2}t) & -\cos(\frac{\pi}{2}t)F
        \end{pmatrix}
    \end{align*}
    It is direct to see that $p(t)$ is an odd, self-adjoint operator on $\E \oplus \E$. We claim that $p(t)$ is invertible for $t>0$. Indeed, we have
    \[
    p(t)^*p(t)=\begin{pmatrix}
        \cos^2(\frac{\pi}{2}t)F^2+\sin^2(\frac{\pi}{2})t & 0 \\
        0 & \cos^2(\frac{\pi}{2}t)F^2+\sin^2(\frac{\pi}{2})t
    \end{pmatrix} 
    \geq \sin^2(\frac{\pi}{2}t).\begin{pmatrix}
        1 & 0 \\
        0 & 1
    \end{pmatrix},
    \]
    since $\begin{pmatrix}
         \cos^2(\frac{\pi}{2}t)F^2 & 0 \\
         0 &  \cos^2(\frac{\pi}{2}t)F^2
    \end{pmatrix}$ is positive. The result now follows since a positive element $a$ in a unital $C^*$-algebra is invertible if and only if $a \geq \varepsilon.1$ for some $\varepsilon>0$ (here, $a=p(t)p(t)^*$, and $\varepsilon=\sin^2(\frac{\pi}{2}t)>0$). Thus, $p(t)$ is invertible too.

    Thus, $p$ is a norm continuous path of Fredholm operators on $\E \oplus \E$, with $p(1)=\begin{pmatrix}
        0 & 1 \\
        1 & 0
    \end{pmatrix}$ being invertible. The conclusion now follows from \cref{cor: inv conc} and \cref{lem: homotopy implies concor}.
\end{proof}
\begin{lem}
    $K_0^{\operatorname{Fr}}$ is a functor from the category of unital, graded $C^*$-algebras to abelian groups.
\end{lem}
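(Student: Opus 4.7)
The plan is to define the functorial structure on morphisms via the internal tensor product of Hilbert modules. Given a unital graded $*$-homomorphism $\varphi: A \to B$, for each Fredholm cycle $(\mathcal{E}, \iota, F)$ over $A$ I would form the graded Hilbert $B$-module $\mathcal{E} \hat\otimes_\varphi B$ (equipped with the induced grading $\iota \hat\otimes 1$) and define
\[
\varphi_*[\mathcal{E}, \iota, F] := [\mathcal{E} \hat\otimes_\varphi B,\ \iota \hat\otimes 1,\ F \hat\otimes 1].
\]
Countable generation is preserved by internal tensor product, and being odd and self-adjoint is immediate from the construction.

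The first substantive step is checking that $F \hat\otimes 1$ is Fredholm on $\mathcal{E} \hat\otimes_\varphi B$. If $G$ is an odd, self-adjoint parametrix for $F$, I would show $G \hat\otimes 1$ is a parametrix for $F \hat\otimes 1$: the identity $1 - (F \hat\otimes 1)(G \hat\otimes 1) = (1 - FG) \hat\otimes 1$ reduces the issue to the assertion that the assignment $K \mapsto K \hat\otimes 1$ maps $\mathcal{K}_A(\mathcal{E})$ into $\mathcal{K}_B(\mathcal{E} \hat\otimes_\varphi B)$. This holds since rank-one operators satisfy $\ket{e}\bra{e'} \hat\otimes 1 = \ket{e \hat\otimes 1}\bra{e' \hat\otimes 1}$, and compactness is preserved under the norm-continuous extension. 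This is where the only real technical care is needed; everything else is formal.

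Next, I would verify that the construction descends to concordance classes. If $(\mathcal{E}, \iota, F)$ is a concordance over $A \hat\otimes C[0,1]$ between $(\mathcal{E}_0, \iota_0, F_0)$ and $(\mathcal{E}_1, \iota_1, F_1)$, I would apply the above construction with the induced homomorphism $\varphi \hat\otimes \operatorname{id}: A \hat\otimes C[0,1] \to B \hat\otimes C[0,1]$ to produce a Fredholm cycle $(\mathcal{E} \hat\otimes_{\varphi \hat\otimes 1} B[0,1],\ \iota \hat\otimes 1,\ F \hat\otimes 1)$ over $B[0,1]$. The key point is the associativity of internal tensor products together with the identity $\operatorname{ev}_i \circ (\varphi \hat\otimes 1) = \varphi \circ \operatorname{ev}_i$, which yields the natural isomorphism
\[
(\mathcal{E} \hat\otimes_{\varphi \hat\otimes 1} B[0,1]) \hat\otimes_{\operatorname{ev}_i} B \cong (\mathcal{E} \hat\otimes_{\operatorname{ev}_i} A) \hat\otimes_\varphi B \cong \mathcal{E}_i \hat\otimes_\varphi B,
\]
intertwining the respective Fredholm operators. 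Hence $\varphi_*$ is a well-defined set map on concordance classes.

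Finally, I would check that $\varphi_*$ is a monoid homomorphism — using the obvious natural isomorphism $(\mathcal{E}_0 \oplus \mathcal{E}_1) \hat\otimes_\varphi B \cong (\mathcal{E}_0 \hat\otimes_\varphi B) \oplus (\mathcal{E}_1 \hat\otimes_\varphi B)$, which intertwines $F_0 \oplus F_1$ with $(F_0 \hat\otimes 1) \oplus (F_1 \hat\otimes 1)$ — whence automatically a group homomorphism since $K_0^{\operatorname{Fr}}$ is a group. Functoriality reduces to two verifications: $(\operatorname{id}_A)_* = \operatorname{id}$, which is clear from the canonical isomorphism $\mathcal{E} \hat\otimes_{\operatorname{id}_A} A \cong \mathcal{E}$; and $(\psi \circ \varphi)_* = \psi_* \circ \varphi_*$, which follows from associativity of internal tensor product, namely the natural isomorphism $(\mathcal{E} \hat\otimes_\varphi B) \hat\otimes_\psi C \cong \mathcal{E} \hat\otimes_{\psi \circ \varphi} C$ intertwining $(F \hat\otimes 1) \hat\otimes 1$ with $F \hat\otimes 1$. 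No new ideas beyond the standard facts about internal tensor products of Hilbert modules (as in \cite{LANCE}) are needed — the whole proof is essentially organized bookkeeping once the compact-operator step above is in place.
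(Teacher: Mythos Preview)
Your proposal is correct and follows essentially the same approach as the paper: define $\varphi_*$ via the internal tensor product $\mathcal{E} \hat\otimes_\varphi B$, use that $K \mapsto K \hat\otimes 1$ sends compacts to compacts to preserve Fredholmness, invoke the commuting square $\operatorname{ev}_i \circ (\varphi \hat\otimes 1) = \varphi \circ \operatorname{ev}_i$ together with associativity of internal tensor products to push concordances forward, and use compatibility with direct sums for additivity. The paper cites Lemma 1.2.8 of \cite{JT} for the compacts-to-compacts step rather than verifying it on rank-one operators as you do, and it omits the explicit checks of $(\operatorname{id}_A)_* = \operatorname{id}$ and $(\psi\circ\varphi)_* = \psi_*\circ\varphi_*$, but otherwise the arguments are the same.
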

\begin{proof}
    This is a direct consequence of the functoriality of the internal tensor product construction: if $\varphi: A \rightarrow B$ is a graded $*$-homomorphism and $H$ a graded Hilbert $A$-module, then the induced graded unital $*$-homomorphism
    $
    \mathcal{B}_A(H) \xrightarrow{- \otimes_{\varphi} \operatorname{id}} \mathcal{B}_B(H \otimes_{\varphi} B)$ restricts to a graded  $*$-homomorphism
    $ \mathcal{K}_A(H) \xrightarrow{- \otimes_{\varphi} \operatorname{id}} \mathcal{K}_B(H)$ (Lemma 1.2.8 of \cite{JT}). This shows Fredholm cycles get mapped to Fredholm cycles. Concordant Fredholm cycles in $K_0^{\operatorname{Fr}}(A)$ get mapped to concordant Fredholm cycles in $K_0^{\operatorname{Fr}}(B)$ since the following diagram commutes
    \[
    % https://q.uiver.app/#q=WzAsNCxbMCwwLCJBWzAsMV0iXSxbMSwwLCJBIl0sWzAsMSwiQlswLDFdIl0sWzEsMSwiQiJdLFswLDEsIlxcb3BlcmF0b3JuYW1le2V2fV90Il0sWzIsMywiXFxvcGVyYXRvcm5hbWV7ZXZ9X3QiXSxbMSwzLCJcXHZhcnBoaSJdLFswLDIsIlxcdmFycGhpIFxcY2lyYy0iLDJdXQ==
\begin{tikzcd}
	{A[0,1]} & A \\
	{B[0,1]} & B
	\arrow["{\operatorname{ev}_t}", from=1-1, to=1-2]
	\arrow["{\operatorname{ev}_t}", from=2-1, to=2-2]
	\arrow["\varphi", from=1-2, to=2-2]
	\arrow["{\varphi \circ-}"', from=1-1, to=2-1]
\end{tikzcd}.
    \] The group structure is also preserved since the internal tensor product construction respects direct sums.
\end{proof}

\subsection{Revisiting classical K-theory}
In this subsection, we show that when $A$ is a unital, ungraded $C^*$-algebra, then $K_0^{\operatorname{Fr}}(A)$ is naturally isomorphic to $K_0(A)$, the well-studied projective module picture of $K$-theory (see \cite{WEG}, \cite{ROR} for the classical treatment). Although not completely necessary, in this subsection, we consider an ungraded picture of Fredholm cycles for the sake of convenience. We cast the definition in this setup.

In this subsection, $A$ denotes a unital and trivally graded $C^*$-algebra.
\begin{defi} \label{def: ungraded fred 1}
    A Fredholm family/cycle over $A$ is a triple $(\E,\F,F)$, where $\E,\F$ are countably generated Hilbert $A$-modules, and $F$ is an Fredholm operator from $\E$ to $\F$. 
\end{defi}
\begin{defi} \label{def: ungraded fred 2}
    The following are some relevant definitions circling Fredholm families over $A$:
    \begin{itemize}
        \item A Fredholm family $F$ is called acyclic if $F$ is invertible.
        \item If $(\E_0,\F_0,F_0)$ and $(\E_1,\F_1,F_1)$ are Fredholm families, then $(\E_0,\F_0,F_0) \oplus (\E_1,\F_1,F_1):=(\E_0 \oplus \E_1, \F_0 \oplus \F_1,F_0 \oplus F_1
        )$.
        \item Two Fredholm cycles $(\E_0,\F_0,F_0)$ and $(\E_1,\F_1,F_1)$ are said to be isomorphic if there exists unitaries $U: \E_1 \rightarrow \E_0, V:\F_0 \rightarrow F_1$ such that $VF_0U=F_1$. We write $(\E_0,F_0) \cong_{U,V} (\E_1,F_1)$ in this case. Although the unitaries $U,V$ are a crucial bit of the data, we will oftentimes be suppressing them and simply write $(\E_0,F_0) \cong (\E_1,F_1)$.
        \item Two Fredholm cycles $(\E_0,\F_0,F_0)$ and $(\E_1,\F_1,F_1)$ are said to be concordant (denoted by $(\E_0,\F_0,F_0) \sim (\E_1,\F_1,F_1)$) if there exists $(\E,\F,F)$, a Fredholm family over $A \otimes C[0,1]$, and there exists unitaries $u_0,v_0,u_1,v_1$ such that $(\E \otimes_{\operatorname{ev}_i}A, \F \otimes_{\operatorname{ev}_i}A, \operatorname{ev}_i(F)) \cong_{u_i,v_i} (\E_i,\F_i,F_i)$ for $i=0,1$, where $\operatorname{ev}_t:A[0,1] \rightarrow A$ is the homomorphism given by $\operatorname{ev}_t(f)=f(t)$. Note that in the notation we  suppress the data about the unitary isomorphisms, although they are not to be ignored.
    \end{itemize}
\end{defi}
\begin{lem}
    When $A$ is trivially graded, there is an obvious identification between the ungraded and graded versions of $K_0^{\operatorname{Fr}}(A)$.
\end{lem}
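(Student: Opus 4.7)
The plan is to construct mutually inverse monoid homomorphisms between the graded $K_0^{\operatorname{Fr}}(A)$ (as built in the preceding subsection) and the ungraded $K_0^{\operatorname{Fr}}(A)$ defined through Definitions \ref{def: ungraded fred 1} and \ref{def: ungraded fred 2}, and then to observe that both factor through a common equivalence relation. The \emph{graded-to-ungraded} direction sends a graded Fredholm cycle $(\E,\iota,F)$ (with $\E=\E^{(0)}\oplus \E^{(1)}$ and $F$ odd and self-adjoint) to the ungraded triple $(\E^{(0)},\E^{(1)},F_{+})$, where $F=\begin{pmatrix} 0 & F_{+}^{*}\\ F_{+} & 0\end{pmatrix}$ decomposes with respect to the grading. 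Since $F$ is odd and self-adjoint Fredholm, $F_{+}:\E^{(0)}\to \E^{(1)}$ is automatically a Fredholm operator of ungraded Hilbert $A$-modules, and direct sums of cycles are clearly preserved. The reverse map sends $(\E,\F,F)$ to the graded cycle $(\E\oplus\F,\, 1\oplus (-1),\, \begin{pmatrix} 0 & F^{*}\\ F & 0\end{pmatrix})$, which is plainly odd, self-adjoint, and Fredholm.

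The key verifications are that both assignments respect the respective equivalence relations. Isomorphism is immediate: a unitary pair $(U,V)$ in the ungraded setting corresponds precisely to the single even unitary $U\oplus V$ on the graded side, and conversely any even unitary on $\E^{(0)}\oplus \E^{(1)}$ restricts to a pair of unitaries between the degree-zero and degree-one parts. For concordance, the point is that the grading is compatible with the $A[0,1]$-module structure via $A[0,1]$ being trivially graded: a graded concordance $(\E,\iota,F)$ over $A[0,1]$ decomposes as $\E=\E^{(0)}\oplus\E^{(1)}$ of Hilbert $A[0,1]$-modules with $F$ off-diagonal, and the evaluation functors $-\hat\otimes_{\operatorname{ev}_{i}}A$ commute with this decomposition since $\operatorname{ev}_{i}$ is (trivially) graded. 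This yields an ungraded concordance between the images of the endpoints. Conversely, a concordance $(\E,\F,F)$ in the ungraded sense yields the graded concordance $(\E\oplus\F,\,1\oplus(-1),\,\begin{pmatrix} 0 & F^{*}\\ F & 0\end{pmatrix})$ over $A[0,1]$, and the endpoint identifications are compatible in the same way.

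Finally, the two constructions are literally mutually inverse on representatives: starting from $(\E,\iota,F)$, decomposing into degree parts and reassembling gives back $(\E,\iota,F)$ up to the natural even unitary $\E\cong \E^{(0)}\oplus\E^{(1)}$; and starting from $(\E,\F,F)$ we recover it up to the trivial identification of degree-zero and degree-one subspaces of $\E\oplus\F$. Since these identifications are implemented by even unitaries (resp.\ by pairs of unitaries), they give the identity at the level of concordance classes. Thus both maps descend to group homomorphisms, and they are mutually inverse. The only subtle point — which is where I would expect the bookkeeping to be heaviest — is verifying that the evaluation and decomposition operations commute when passing back and forth through concordances, especially making sure the unitaries $(u_{i},v_{i})$ implementing the endpoint identifications in Definition \ref{def: ungraded fred 2} are exactly the degree pieces of a single even unitary on the graded side. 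Once this bookkeeping is in place, the identification is tautological, as the title of the lemma suggests.
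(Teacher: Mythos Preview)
Your proposal is correct and follows exactly the same approach as the paper: the paper's proof is a single sentence giving the ungraded-to-graded assignment $(\E_0,\F_0,F_0)\mapsto(\E_0\oplus\F_0,\begin{pmatrix}0&F^*\\F&0\end{pmatrix})$ and declaring it the required identification, while you have written out both directions and the compatibility with concordance in full. Your version is simply a more detailed verification of what the paper leaves implicit.
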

\begin{proof}
    Associating the ungraded Fredholm cycle $(\E_0,\F_0,F_0)$ to the graded one given by $(\E_0 \oplus \F_0, \begin{pmatrix}
        0 & F^*\\
        F & 0
    \end{pmatrix})$ is the required identification.
\end{proof}
The main theorem about this model is the following:

\begin{thm} \label{thm: K_0 iso}
   The natural map $\Phi: K_0(A) \rightarrow K_0^{\operatorname{Fr}}(A)$ is an isomorphism. In fact, the two functors $K_0^{\operatorname{Fr}}(-)$ and $K_0(-)$ are naturally isomorphic when restricted to unital ungraded $C^*$ algebras.
\end{thm}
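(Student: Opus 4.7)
The plan is to construct an explicit inverse $\Psi: K_0^{\operatorname{Fr}}(A) \to K_0(A)$ via the index map of \cref{defi: index defi}. The natural map $\Phi$ is the assignment $[P] - [Q] \mapsto [(P, Q, 0)]$, which is legitimate because on a finitely generated projective module the identity is $A$-compact, so the zero operator $P \to Q$ is Fredholm with trivial parametrix; additivity and naturality in $A$ are immediate from the construction.

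I would set $\Psi[(\mathcal{E}, \mathcal{F}, F)] := \operatorname{ind} F$. The main obstacle, and the step I expect to require the most care, is showing that $\Psi$ descends to concordance classes. My approach: given a concordance, realised as a Fredholm cycle $(\mathcal{E}, \mathcal{F}, F)$ over the unital $C^*$-algebra $A[0,1]$, apply \cref{lem: ind well-defined} to produce a taming $(n, g)$ of $F$; then $\ker(F + g) \subseteq \mathcal{E} \oplus A[0,1]^n$ is a finitely generated projective $A[0,1]$-module by \cref{lem: taming}. Base-change along the evaluation maps commutes with kernels of surjective adjointable operators with complemented kernel (the kernel projection is compact, hence commutes with tensoring), so $\ker(F + g) \otimes_{\operatorname{ev}_i} A = \ker(\operatorname{ev}_i F + \operatorname{ev}_i g)$ for $i = 0, 1$, and these compute the two indices we wish to compare. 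They represent the same class in $K_0(A)$ by the standard homotopy invariance of $K_0$ for unital $C^*$-algebras, applied to the homotopic $*$-homomorphisms $\operatorname{ev}_0, \operatorname{ev}_1 : A[0,1] \to A$.

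Granting this, $\Psi \circ \Phi = \operatorname{id}$ is immediate from the empty taming of $(P, 0, 0)$. For $\Phi \circ \Psi = \operatorname{id}$, given $(\mathcal{E}, \mathcal{F}, F)$ with a taming $(n, g)$, I would note that the straight-line family $t \mapsto (F \oplus 0) + tg$ on $\mathcal{E} \oplus A^n$ is uniformly Fredholm (since $g$ is $A$-finite rank, hence a compact perturbation of $F \oplus 0$ at each $t$), producing a concordance $(\mathcal{E} \oplus A^n, \mathcal{F}, F \oplus 0) \sim (\mathcal{E} \oplus A^n, \mathcal{F}, F + g)$. Surjectivity of $F + g$ forces its kernel to be complemented (Theorem~3.2 of \cite{LANCE}), and the resulting orthogonal splitting $\mathcal{E} \oplus A^n = \ker(F + g) \oplus K^\perp$ decomposes $F + g$ as $0 \oplus (\text{isomorphism})$. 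The acyclic summand $(K^\perp, \mathcal{F}, \text{iso})$ is concordant to $(0, 0, 0)$ by the ungraded analogue of \cref{cor: inv conc} (tensoring with $C_0[0,1)$ provides the needed $A[0,1]$-cycle). Reading the resulting identity in $K_0^{\operatorname{Fr}}(A)$ and subtracting $\Phi([A^n]) = [(A^n, 0, 0)]$ yields $[(\mathcal{E}, \mathcal{F}, F)] = \Phi([\ker(F + g)] - [A^n]) = \Phi(\operatorname{ind} F)$.

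Naturality in $A$ is then automatic: for $\varphi: A \to B$ the base change $- \otimes_\varphi B$ commutes with the formation of the zero Fredholm cycle on both sides, so the square matching $\Phi_B \circ K_0(\varphi)$ with $K_0^{\operatorname{Fr}}(\varphi) \circ \Phi_A$ commutes on generators $[P]$ and extends by linearity to all of $K_0(A)$.
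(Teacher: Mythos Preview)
Your proposal is correct and follows essentially the same route as the paper: construct the inverse via the index of \cref{defi: index defi}, verify well-definedness on concordance classes by taming the $A[0,1]$-cycle and pushing forward along $\operatorname{ev}_0,\operatorname{ev}_1$, and then check the two composites. The only organizational difference is in the second composite: the paper proves that $\operatorname{ind}$ is injective (showing that $\operatorname{ind} F = 0$ forces $F \oplus 1_{A^{n+m}}$ to be a compact perturbation of an invertible), whereas you argue directly that $[(\mathcal{E},\mathcal{F},F)] = \Phi(\operatorname{ind} F)$ by splitting off the acyclic summand $(K^\perp,\mathcal{F},(F+g)|_{K^\perp})$; your version is slightly more direct, while the paper's packaging makes the ``compact perturbation of an invertible'' characterization explicit. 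One small remark: your justification ``the kernel projection is compact, hence commutes with tensoring'' is true but slightly beside the point---what you actually use is that the splitting $\mathcal{E}\oplus A[0,1]^n = \ker(F+g)\oplus K^\perp$ is by \emph{adjointable} projections, and adjointable operators (invertible or not) pass cleanly through the internal tensor product; the paper makes this step explicit via the split exact sequence argument surrounding \eqref{eqn: ses}.
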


Recall that the natural map $\Phi:K_0(A) \rightarrow K_0^{\operatorname{Fr}}(A)$ is given by $\Phi([\E]-[\F])=(\E,\F,0)$. The fact that this is a well-defined group homomorphism follows from the universal property of Grothendieck completion of an abelian monoid.

Before proving the theorem, we come up with an inverse map to $\Phi$. Infact, this inverse map will be a generalization of the notion of index of a Fredholm operator as defined in the last section:

\begin{defi}
    Let $(\E,\F,F) \in K_0^{\operatorname{Fr}}(A)$. We define the index of this Fredholm family as the index of F. 
\end{defi}

\textit{Proof that this is well defined:} We have to show that if $(\E_0,\F_0,F_0) \sim (\E_1,\F_1,F_1)$, then $\operatorname{ind} F_0=\operatorname{ind} F_1$. To this end, if the Fredholm $A[0,1]$-family $(\E,\F,F)$ realizes the concordance, then we claim that $\operatorname{ind} F_0=\operatorname{ind} F_1=(\operatorname{ind}F) \otimes_{\operatorname{ev}_0} A = (\operatorname{ind}F) \otimes_{\operatorname{ev}1} A$. To see the the equality between the last two quantities, note that $\operatorname{ev}_0=\operatorname{ev}_1 \circ \alpha$, where $\alpha:C([0,1],A) \rightarrow C([0,1],A)$ is the order reversing isomorphism, and use that the pushout construction is functorial. We next show that $\operatorname{ind} F_0 = (\operatorname{ind} F) \otimes_{\operatorname{ev}_0} A$.

Choose a taming $(n,g)$ of ${F}$, and set $\Tilde{F}:=F+g$. First of all, if $(\operatorname{ev}_0 (\E),\operatorname{ev}_0 (\F),\operatorname{ev}_0(F)) \cong_{u,v} (\E_0,\F_0,F)$, then the commutativity of the following diagram shows that $(n,v \circ \operatorname{ev}_0(g))$ is a taming of $F_0$:
\[
% https://q.uiver.app/#q=WzAsNCxbMCwwLCJcXG9wZXJhdG9ybmFtZXtldn1fMChcXG1hdGhjYWx7RX0pIFxcb3BsdXMgQV5uIl0sWzIsMCwiXFxvcGVyYXRvcm5hbWV7ZXZ9XzAoXFxtYXRoY2Fse0Z9KSJdLFswLDEsIlxcbWF0aGNhbHtFfV8wIFxcb3BsdXMgQV5uIl0sWzIsMSwiXFxtYXRoY2Fse0Z9XzAiXSxbMCwyLCJ1IFxcb3BsdXMgMSJdLFsxLDMsInYiXSxbMCwxLCJcXG9wZXJhdG9ybmFtZXtldn1fMChGKSBcXG9wbHVzIFxcb3BlcmF0b3JuYW1le2V2fV8wKGcpIiwwLHsic3R5bGUiOnsiaGVhZCI6eyJuYW1lIjoiZXBpIn19fV0sWzIsMywiRl8wIFxcb3BsdXMgdiBcXGNpcmNcXG9wZXJhdG9ybmFtZXtldn1fMChnKSIsMCx7InN0eWxlIjp7ImhlYWQiOnsibmFtZSI6ImVwaSJ9fX1dXQ==
\begin{tikzcd}
	{\operatorname{ev}_0(\mathcal{E}) \oplus A^n} && {\operatorname{ev}_0(\mathcal{F})} \\
	{\mathcal{E}_0 \oplus A^n} && {\mathcal{F}_0}
	\arrow["{u \oplus 1}", from=1-1, to=2-1]
	\arrow["v", from=1-3, to=2-3]
	\arrow["{\operatorname{ev}_0(F) \oplus \operatorname{ev}_0(g)}", two heads, from=1-1, to=1-3]
	\arrow["{F_0 \oplus v \circ\operatorname{ev}_0(g)}", two heads, from=2-1, to=2-3]
\end{tikzcd}.
\] Denote by $g_0$, the operator $v \circ \operatorname{ev}_0(g)$. Thus, to show that $\operatorname{ind} F_0 = (\operatorname{ind} F) \otimes_{\operatorname{ev}_0} A$, it suffices to show that $\ker(F_0+g_0) \cong \ker(\Tilde{F}) \otimes A$.

Now, we have the short exact sequence of $A[0,1]$-modules \[% https://q.uiver.app/#q=WzAsNSxbMCwwLCIwIl0sWzEsMCwiXFxrZXIoXFx0aWxkZXtGfSkiXSxbMiwwLCJcXG1hdGhjYWx7RX1cXG9wbHVzIEFebiJdLFs0LDAsIjAiXSxbMywwLCJcXG1hdGhjYWx7Rn0iXSxbMCwxXSxbMSwyXSxbMiw0LCJcXHRpbGRle0Z9Il0sWzQsM11d
\begin{tikzcd}
	0 & {\ker(\tilde{F})} & {\mathcal{E}\oplus A[0,1]^n} & {\mathcal{F}} & 0
	\arrow[from=1-1, to=1-2]
	\arrow[from=1-2, to=1-3]
	\arrow["{\tilde{F}}", from=1-3, to=1-4]
	\arrow[from=1-4, to=1-5]
\end{tikzcd}.\] Basic Hilbert module theory yields that, after taking adjoints, we have the short exact sequence \begin{equation}\label{eqn: ses}
    % https://q.uiver.app/#q=WzAsNSxbMCwwLCIwIl0sWzEsMCwiXFxtYXRoY2Fse0Z9Il0sWzIsMCwiXFxtYXRoY2Fse0V9XFxvcGx1cyBBXm4iXSxbMywwLCJ7XFxvcGVyYXRvcm5hbWV7aW19XFx0aWxkZXtGfV4qfV5cXHBlcnAiXSxbNCwwLCIwIl0sWzAsMV0sWzEsMiwiXFx0aWxkZXtGfV4qIl0sWzIsM10sWzMsNF1d
\begin{tikzcd}
	0 & {\mathcal{F}} & {\mathcal{E}\oplus A[0,1]^n} & {{\operatorname{im}\tilde{F}^*}^\perp} & 0
	\arrow[from=1-1, to=1-2]
	\arrow["{\tilde{F}^*}", from=1-2, to=1-3]
	\arrow[from=1-3, to=1-4]
	\arrow[from=1-4, to=1-5]
\end{tikzcd}.
\end{equation} Of course, here we crucially needed that $\tilde{F}$ is surjective, whence its adjoint is injective. Moreover, surjectivity of $\Tilde{F}$ implies that $\operatorname{im}\Tilde{F}^*$ is a closed submodule of $\E$ and $\ker \tilde{F} = \operatorname{im} {\Tilde{F}{}^{*}}^{\perp}$ (see Theorem 3.2 of \cite{LANCE}). Furthermore, as $\ker \Tilde{F}$ is given to be projective, \cref{eqn: ses} is split exact. Whence, upon applying the functor $- \otimes_{\operatorname{ev}_0} A$, we still have the split exact sequence
\begin{equation*}\label{eqn: sess}
    % https://q.uiver.app/#q=WzAsNSxbMCwwLCIwIl0sWzEsMCwiXFxtYXRoY2Fse0Z9IFxcb3RpbWVzX3tcXG9wZXJhdG9ybmFtZXtldn1fMH0gQSJdLFsyLDAsIihcXG1hdGhjYWx7RX1cXG9wbHVzIEFebikgXFxvdGltZXNfe1xcb3BlcmF0b3JuYW1le2V2fV8wfSBBIl0sWzMsMCwiXFxrZXIgXFx0aWxkZXtGfSBcXG90aW1lc197XFxvcGVyYXRvcm5hbWV7ZXZ9XzB9ICBBIl0sWzQsMCwiMCJdLFswLDFdLFsxLDIsIlxcdGlsZGV7Rn1eKiBcXG90aW1lc197XFxvcGVyYXRvcm5hbWV7ZXZ9XzB9IEEiXSxbMiwzXSxbMyw0XV0=
\noindent\begin{tikzcd}
	0 & {\mathcal{F} \otimes_{\operatorname{ev}_0} A} & {(\mathcal{E}\oplus A[0,1]^n) \otimes_{\operatorname{ev}_0} A} & {\ker \tilde{F} \otimes_{\operatorname{ev}_0}  A} & 0
	\arrow[from=1-1, to=1-2]
	\arrow["{\tilde{F}^* \otimes_{\operatorname{ev}_0} A}", from=1-2, to=1-3]
	\arrow[from=1-3, to=1-4]
	\arrow[from=1-4, to=1-5]
\end{tikzcd}.
\end{equation*}
The above sequence is equivalent to \[% https://q.uiver.app/#q=WzAsNSxbMCwwLCIwIl0sWzEsMCwiXFxtYXRoY2Fse0Z9XzAiXSxbMiwwLCIoXFxtYXRoY2Fse0V9XzBcXG9wbHVzIEFebl8wKSJdLFszLDAsIlxca2VyIFxcdGlsZGV7Rn0gXFxvdGltZXNfe1xcb3BlcmF0b3JuYW1le2V2fV8wfSAgQSJdLFs0LDAsIjAiXSxbMCwxXSxbMSwyLCJGXzBeKiJdLFsyLDNdLFszLDRdXQ==
\begin{tikzcd}
	0 & {\mathcal{F}_0} & {\mathcal{E}_0\oplus A^n} & {\ker \tilde{F} \otimes_{\operatorname{ev}_0}  A} & 0.
	\arrow[from=1-1, to=1-2]
	\arrow["{F_0^* + g_0^*}", from=1-2, to=1-3]
	\arrow[from=1-3, to=1-4]
	\arrow[from=1-4, to=1-5]
\end{tikzcd}\] We also have the short exact sequence \[% https://q.uiver.app/#q=WzAsNSxbMCwwLCIwIl0sWzEsMCwiXFxtYXRoY2Fse0Z9XzAiXSxbMiwwLCJcXG1hdGhjYWx7RX1fMFxcb3BsdXMgQV5uIl0sWzMsMCwiXFxvcGVyYXRvcm5hbWV7aW19eyhGXzAgKyBnXzApXip9XlxccGVycCJdLFs0LDAsIjAiXSxbMCwxXSxbMSwyLCIoRl8wK2dfMCleKiJdLFsyLDNdLFszLDRdXQ==
\begin{tikzcd}
	0 & {\mathcal{F}_0} & {\mathcal{E}_0\oplus A^n} & {\operatorname{im}{(F_0 + g_0)^*}^\perp} & 0
	\arrow[from=1-1, to=1-2]
	\arrow["{(F_0+g_0)^*}", from=1-2, to=1-3]
	\arrow[from=1-3, to=1-4]
	\arrow[from=1-4, to=1-5]
\end{tikzcd}\] since $(n,g_0)$ is a taming of $F_0$.
As $(F_0^*+g_0^*)=(F_0+g_0)^*$, and $\operatorname{im}{(F_0 + g_0)^*}^\perp = \ker (F_0+g_0)$, we conclude that $\ker (F_0+g_0) \cong \ker \tilde{F} \otimes_{\operatorname{ev}_0} A$, as required.
\begin{lem}
    $\operatorname{ind}: K_0^{\operatorname{fr}}(A) \rightarrow K_0(A)$ is a group homomorphism.
\end{lem}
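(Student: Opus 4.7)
The plan is as follows. First, I would observe that well-definedness of $\operatorname{ind}$ at the level of concordance classes has already been established in the preceding paragraphs, so the only remaining content is additivity with respect to $\oplus$. Since $K_0^{\operatorname{fr}}(A)$ and $K_0(A)$ are both groups, showing additivity of $\operatorname{ind}$ on the monoid operation $\oplus$ suffices (it automatically respects inverses and the identity).

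Next, I would unpack the additivity claim. Given Fredholm cycles $(\E_0,\F_0,F_0)$ and $(\E_1,\F_1,F_1)$, choose tamings $(n_0,g_0)$ of $F_0$ and $(n_1,g_1)$ of $F_1$ as in \cref{defi: index defi}. The key observation is that $(n_0+n_1,\, g_0 \oplus g_1)$ is a taming of the direct-sum Fredholm operator $F_0 \oplus F_1 : \E_0 \oplus \E_1 \oplus A^{n_0+n_1} \to \F_0 \oplus \F_1$: the operator $(F_0+g_0) \oplus (F_1+g_1)$ is surjective because each summand is, and under the evident identification $A^{n_0+n_1} \cong A^{n_0} \oplus A^{n_1}$ this agrees with $(F_0 \oplus F_1) + (g_0 \oplus g_1)$. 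I would also quickly confirm that this candidate is actually a \emph{valid} taming, i.e., the index defined using it equals the one defined using any other taming; this is immediate from \cref{lem: ind well-defined}.

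Then I would compute directly:
\begin{align*}
\operatorname{ind}(F_0 \oplus F_1)
&= [\ker((F_0+g_0) \oplus (F_1+g_1))] - [A^{n_0+n_1}] \\
&= [\ker(F_0+g_0) \oplus \ker(F_1+g_1)] - [A^{n_0}] - [A^{n_1}] \\
&= [\ker(F_0+g_0)] - [A^{n_0}] + [\ker(F_1+g_1)] - [A^{n_1}] \\
&= \operatorname{ind}(F_0) + \operatorname{ind}(F_1),
\end{align*}
where the splitting of the kernel as a direct sum uses only that the operator acts block-diagonally. Finally, $\operatorname{ind}(0,0,0) = [0]-[A^0] = 0$ using the trivial taming, so the unit is preserved. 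I do not expect any serious obstacle here: the heavy lifting (independence of taming, existence of tamings, compatibility with concordance) was already carried out in \cref{lem: ind well-defined} and the discussion immediately preceding this lemma, and the direct-sum decomposition of kernels of block-diagonal operators is purely formal.
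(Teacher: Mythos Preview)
Your proposal is correct and follows essentially the same approach as the paper: choose tamings $(n_0,g_0)$ and $(n_1,g_1)$ of the two cycles, observe that their direct sum $(n_0+n_1,\,g_0\oplus g_1)$ is a taming of the direct-sum cycle, and then the additivity of the index follows immediately from the block-diagonal splitting of the kernel. The paper's proof is more terse (it invokes ``basic properties of projective modules'' and \cref{lem: ind well-defined} without writing out the kernel computation), but the underlying idea is identical.
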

\begin{proof}
    Let $(\E_0,\F_0,\F_0)$ and $(\E_1,\F_1,F_1)$ be two elements of $K_0^{\operatorname{Fr}}(A)$. Let $(n_0,g_0)$ and $(n_1,g_1)$ be tamings of them respectively. Then, it immediately follows from basic properties of projective modules that $(n_0+n_1,g_0+g_1)$ is a taming of $(\E_0 \oplus \E_1, \F_0 \oplus \F_1, F_0 \oplus F_1)$. The rest follows from \cref{lem: ind well-defined}.
\end{proof}
\begin{proof}[Proof (of \cref{thm: K_0 iso})]
    We show that $\operatorname{ind} \circ \Phi=\operatorname{id}$, and $\operatorname{ind}$ is injective.
    
    For showing that $\operatorname{ind} \circ \Phi = \operatorname{id}$, it suffices to observe that $\operatorname{ind} \circ \Phi (\E,0,0)=(\E,0,0)$ for any finitely generated projective $A$-module $\E$. This is clear from the well-definedness of the index map.
    
    To show injectivity of $\operatorname{ind}:K_0^{\operatorname{Fr}}(A) \rightarrow K_0(A)$, let $(\E,\F,F)$ be a Fredholm cycle with $\operatorname{ind} F=0$. Choose a taming $(n,g)$ of $F$. By basic Hilbert module theory, we have the decomposition $\E \oplus A^n= \ker (F+g) \oplus \ker (F+g)^\perp$, and the index of $F$ being $0$ yields that $\ker(F+g)$ and $A^n$ are stably isomorphic. Furthermore, $F+g:\ker(F+g)^\perp \rightarrow \F$ is an isomorphism. Thus, there exists an $m \in \N$, and an isomorphism of the form $(F+g) \oplus U:\ker(F+g)^\perp \oplus (\ker (F+g) \oplus A^m) \rightarrow \F \oplus (A^n \oplus A^m)$. This shows that $F+g:\E \oplus A^n \oplus A^m \rightarrow \F \oplus A^n \oplus A^m$ is a compact perturbation of an invertible, whence, so is $F \oplus 1_{n+m}:\E \oplus A^{n+m} \rightarrow \F \oplus A^{n+m}$. This shows that, $(\E \oplus A^{n+m},\F \oplus A^{n+m},F\oplus 1_{n+m}) \sim (0,0,0)$. But since $(A^{n+m},A^{n+m},1_{n+m})=(0,0,0)$, we conclude that $(\E,\F,F) \sim (0,0,0)$, as desired. 
\end{proof}
\subsection{The main theorem}
We want to come up with a natural isomorphism between $K_0^{\operatorname{sp}}(A)$ and $K_0^{\operatorname{Fr}}(A)$ for unital, $\Z/2\Z$-graded (Real) $C^*$-algebras $A$. For technical reasons, we come up with the following substitutes of $\K(A)$ and $K_0^{\operatorname{Fr}}(A)$:
\begin{itemize}
    \item We replace $\K(A)$ by the space of all graded (Real) $*$-homomorphisms from $C_0(-1,1) \rightarrow \com(\hat\HA)$, where the domain is still equipped with the even-odd grading, and the Real structure is given by pointwise conjugation. We denote this new space by $\tilde{\K}(A)$, and its homotopy groups as ${\Tilde{K}_{n}}^{\operatorname{sp}}(A)$ for $n \geq 0$. $\tilde{\K}(A)$ is homeomorphic to $\K(A)$ in light of the odd, increasing homeomorphism from $\R$ to $(-1,1)$ given by $x \mapsto \frac{x}{\sqrt{x^2+1}}$, and inverse given by $y \mapsto \frac{y}{\sqrt{1-y^2}}$.
    \item We replace $K_0^{\operatorname{Fr}}(A)$ with its ``essentially unitary counterpart", wherein instead of considering odd, self-adjoint Fredholm operators, we only consider odd, self-adjoint operators $F$ such that $\lVert F \rVert \leq 1$, and $F^2-1$ is compact. We can thereby see that their concordance classes form a group, which we denote by $K_0^{\operatorname{eu}}(A)$.
\end{itemize}
That $K_0^{\operatorname{eu}}(A)$ is naturally isomorphic to $K_0^{\operatorname{Fr}}(A)$ requires some work, which we do now.
\begin{prop}
    The natural map $K_0^{\operatorname{eu}}(A) \rightarrow K_0^{\operatorname{Fr}}(A)$ is a group isomorphism.
\end{prop}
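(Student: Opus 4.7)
The plan is to show that the obvious inclusion $\iota:K_0^{\operatorname{eu}}(A)\to K_0^{\operatorname{Fr}}(A)$, sending the class of $(\E,F)$ to itself, is an isomorphism. The map is well-defined as a group homomorphism: a self-adjoint $F$ with $F^2-1\in\com_A(\E)$ has $F$ itself as a parametrix, so every essentially unitary cycle is a Fredholm cycle, direct sums are preserved, and an essentially unitary concordance over $A[0,1]$ is \emph{a fortiori} a Fredholm concordance. The remaining content is surjectivity and injectivity; both proceed by applying functional calculus through a cut-off function
\[
\chi_\epsilon(x)=\begin{cases} x/\epsilon &\text{if }|x|\le\epsilon,\\ \operatorname{sign}(x) &\text{if }|x|>\epsilon,\end{cases}
\]
for a suitable $\epsilon\in(0,1)$, and invoking \cref{lem: homotopy implies concor} (or its obvious essentially-unitary analogue, which follows verbatim from \cref{lem: characterizing B and K}).

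For surjectivity, let $(\E,F)$ be a Fredholm cycle. The image of $F$ in $\mathcal{B}_A(\E)/\com_A(\E)$ is self-adjoint and invertible, so its spectrum is disjoint from some $[-\epsilon,\epsilon]$. The operator $\chi_\epsilon(F)$ is odd, self-adjoint, of norm at most $1$, and since $\chi_\epsilon^2-1$ is supported in $[-\epsilon,\epsilon]$ and hence vanishes on the essential spectrum of $F$, we get $\chi_\epsilon(F)^2-1\in\com_A(\E)$. The straight-line path $F_s=(1-s)F+s\chi_\epsilon(F)$ stays Fredholm throughout because $f_s(x)=(1-s)x+s\operatorname{sign}(x)$ has the same sign as $x$ whenever $|x|\ge\epsilon$; \cref{lem: homotopy implies concor} then yields a Fredholm concordance $(\E,F)\sim(\E,\chi_\epsilon(F))$, placing $[(\E,F)]_{\operatorname{Fr}}$ in the image of $\iota$.

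For injectivity, suppose $(\E_0,F_0)$ and $(\E_1,F_1)$ are essentially unitary cycles that are Fredholm-concordant via $(\E,F)$ over $A[0,1]$. Choose $\epsilon\in(0,1)$ so that $[-\epsilon,\epsilon]$ is disjoint from the spectrum of the image of $F$ in $\mathcal{B}_{A[0,1]}(\E)/\com_{A[0,1]}(\E)$; this is possible since $F$ is Fredholm over $A[0,1]$. Then $(\E,\chi_\epsilon(F))$ is an essentially unitary $A[0,1]$-cycle, and because functional calculus commutes with the graded $*$-homomorphisms $\operatorname{ev}_i$, its fibres at $i=0,1$ are $(\E_i,\chi_\epsilon(F_i))$. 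To close the loop, link each $(\E_i,F_i)$ to $(\E_i,\chi_\epsilon(F_i))$ inside $K_0^{\operatorname{eu}}$ by the convex path $F_i^s=(1-s)F_i+s\chi_\epsilon(F_i)$: the image of $F_i$ in the Calkin algebra is a self-adjoint unitary with spectrum in $\{-1,1\}$, where $\chi_\epsilon$ acts as the identity, so the images of $F_i^s$ and $F_i$ in the Calkin coincide, giving $(F_i^s)^2-1\in\com_A(\E_i)$, while convexity gives $\lVert F_i^s\rVert\le 1$. Concatenating the three essentially unitary concordances produces the desired one between $(\E_0,F_0)$ and $(\E_1,F_1)$.

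The main obstacle I anticipate is the spectral-gap step in the injectivity argument: one must check that Fredholmness over the $C^*$-algebra $A[0,1]$ genuinely gives a single $\epsilon$-gap in the Calkin spectrum, rather than only fibrewise gaps that might shrink to zero along $[0,1]$. This rests on the identification of $A[0,1]$-compacts with norm-continuous families of $A$-compacts from \cref{lem: characterizing B and K}, together with the observation that functional calculus intertwines the evaluation maps; the rest of the argument is then a routine manipulation with convex combinations and \cref{lem: homotopy implies concor}.
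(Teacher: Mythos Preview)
Your proof is correct and follows essentially the same strategy as the paper: both arguments normalise a Fredholm cycle to an essentially unitary one via functional calculus with a clipping function (your $\chi_\epsilon$ is exactly the paper's $h_c(\cdot)/c$), and both connect the original operator to its normalisation by a path invoking \cref{lem: homotopy implies concor}. The only cosmetic differences are in the specific homotopies chosen: for surjectivity the paper interpolates the cut-off parameter while you take a straight-line convex combination, and for injectivity the paper runs the clipping parameter from $c$ up to $1$ whereas you use a convex path and appeal directly to the Calkin image being a self-adjoint unitary fixed by $\chi_\epsilon$; your worry about the uniform $\epsilon$-gap is unfounded, since invertibility of the self-adjoint image of $F$ in the single Calkin algebra $\mathcal{B}_{A[0,1]}(\E)/\com_{A[0,1]}(\E)$ immediately gives such a gap.
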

\begin{proof}
    Since every essentially unitary operator is Fredholm, the well definedness of the map is clear. We only need to check that it's bijective.

    We first deal with surjectivity. Let $(\E,F)$ be a Fredholm cycle. Then, we first claim that $F$ is homotopic through a path of odd, self-adjoint Fredholm operators to an operator $G$ such that $\lVert G \rVert \leq 1$, and $G^2-1$ is compact. Indeed, as $F$ is self-adjoint and Fredholm, choose some $c>0$ such that $(-c,c) \cap \sigma_{\operatorname{ess}}(F) = \emptyset$. Consider the operator $h_c(F)$ in $\mathcal{B}(\E)$, where
    \begin{align*}
        h_c: \R &\rightarrow \R \\
        x &\mapsto \begin{cases}
            c ~\text{if}~ x \geq c \\
            x ~\text{if}~ -c \leq x \leq c \\
            -c ~\text{if}~ x \leq -c \\
        \end{cases}
    \end{align*}
    We have $h_c(F)^2-c^2$ is a compact operator, and $\lVert h_c(F) \rVert \leq c$. Thus, taking $G=\frac{h_c(F)}{c}$ works. To see that $G$ is homotopic to $F$, note that $F=h_{\lVert F \rVert}(F)$, and the functions $h_{\lVert F \rVert}$ and $h_{c}$ are homotopic in $C_b(\R)$, whence so are $F$ and $h_c(F)$, and the latter is trivially homotopic to $\frac{h_c(F)}{c}$. The surjectivity assertion now follows from (a graded analogue of) \cref{lem: homotopy implies concor}. 
    
    Now, we move towards injectivity. Suppose $(\E_0,F_0) \sim^{\operatorname{Fr}} (\E_1,F_1) $. Let $(\E,F)$ be a concordance between them. Analogous to the argument for the sujectivity part, we choose some $c>0$ such that $(-c,c) \cap \sigma_{\operatorname{ess}}(F) = \emptyset$. Then, we have $\frac{h_c(F)}{c}$ and $F$ are homotopic, with $(\E,\frac{h_c(F)}{c})$ being an ``essentially unitary" cycle over $A[0,1]$. We claim that $(\E \hat\otimes_{\operatorname{ev}_j} A, \operatorname{ev}_j(\frac{h_c(F)}{c})) \sim^{\operatorname{eu}} (\E \hat\otimes_{\operatorname{ev}_j} A,\operatorname{ev}_j(F))$ for $j=0,1$. We in fact show that $\operatorname{ev}_j(F)$ and $\operatorname{ev}_j(\frac{h_c(F)}{c})$ are homotopic through a path of odd, self-adjoint essentially unitary operators having norm less than or equal to one. First, note that $c \leq 1$, since $\operatorname{ev}_j(F)^2-1$ is compact. Note that, by properties of the internal tensor product, $\operatorname{ev}_j(\frac{h_c(F)}{c})=\frac{h_c(\operatorname{ev}_j(F))}{c}$. As $\lVert \operatorname{ev}_j(F) \rVert \leq 1$, we have $\operatorname{ev}_j(F)=h_1(\operatorname{ev}_j(F))$. Then the path $t \mapsto \frac{h_{(c-1)t+1}(\operatorname{ev}_j(F))}{(c-1)t+1}$ is the one we are after. Injectivity now follows from the transitivity of the ``essentially unitary" concordance relation.

\end{proof}

 Now, it suffices to come up with a natural map $\Tilde{K_0}^{\operatorname{sp}}(A) \rightarrow K_0^{\operatorname{eu}}(A)$ for every $\Z/2\Z$ graded (Real) $C^*$-algebra.

Let $\varphi \in \K(A)$. Recall that have the identification $\com(\hat\HA) \cong A \hat\otimes \com(\hat{l^2})$ (see \cite{BLA}). Set $\h_\varphi:=\overline{\varphi(\s)(\com(\hat\HA))}$.
\begin{lem} \label{lem: define phi nd}
    $\varphi$ restricts to a non-degenerate graded $*$-homomorphism $\varphi^{\operatorname{nd}}:C_0(-1,1) \rightarrow \com(\h_\varphi)$.
\end{lem}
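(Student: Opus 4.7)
The plan is to verify three successive claims: (a) $\h_\varphi$ is a closed graded (Real) Hilbert $A$-submodule of $\hat\HA$; (b) each operator $\varphi(f)$, $f \in C_0(-1,1)$, preserves $\h_\varphi$ and its restriction is compact on $\h_\varphi$; (c) the resulting restricted $*$-homomorphism is non-degenerate, i.e.\ $\overline{\varphi^{\operatorname{nd}}(C_0(-1,1))\cdot\h_\varphi} = \h_\varphi$.

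For (a), $\h_\varphi$ is closed by construction. It is an $A$-submodule since each $\varphi(f)$ is adjointable, hence $A$-linear, so $(\varphi(f)h)\cdot a = \varphi(f)(h\cdot a)$ remains in $\varphi(C_0(-1,1))\hat\HA$. Because $\varphi$ preserves the grading and the Real structure, and these structures on $\com(\hat\HA)$ are implemented by conjugation with the corresponding structures on $\hat\HA$, for $f$ homogeneous we have $\iota_{\hat\HA}(\varphi(f)h) = \varphi(\varepsilon(f))\iota_{\hat\HA}(h) \in \varphi(C_0(-1,1))\hat\HA$, and analogously for the Real involution. Hence $\h_\varphi$ inherits the structure of a graded (Real) Hilbert $A$-module.

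For (b), for $f,g \in C_0(-1,1)$ and $h \in \hat\HA$ one has $\varphi(f)(\varphi(g)h) = \varphi(fg)h \in \varphi(C_0(-1,1))\hat\HA \subseteq \h_\varphi$, and so by continuity $\varphi(f)(\h_\varphi)\subseteq \h_\varphi$. Since $\varphi(f)^\ast = \varphi(\bar f)$ is of the same form, we also have $\varphi(f)^\ast(\h_\varphi)\subseteq \h_\varphi$. Thus $\varphi(f)$ belongs to the algebra $\mathfrak{K}$ of \cref{thm: technical} (with $E = \h_\varphi$, $H = \hat\HA$), and by \cref{lem: restricting compacts} its restriction $\varphi(f)|_{\h_\varphi}$ lies in $\com(\h_\varphi)$. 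The assignment $\varphi^{\operatorname{nd}}: f \mapsto \varphi(f)|_{\h_\varphi}$ is then clearly a graded (Real) $*$-homomorphism $C_0(-1,1) \to \com(\h_\varphi)$.

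For (c), we use that $C_0(-1,1)$ admits an approximate unit, so Cohen's factorization theorem gives $C_0(-1,1)\cdot C_0(-1,1) = C_0(-1,1)$; in particular every $g \in C_0(-1,1)$ factors as $g = g_1 g_2$ with $g_i \in C_0(-1,1)$. Hence $\varphi(g)h = \varphi(g_1)\bigl(\varphi(g_2)h\bigr)$, and since $\varphi(g_2)h \in \h_\varphi$, this exhibits the spanning set $\varphi(C_0(-1,1))\hat\HA$ of $\h_\varphi$ as contained in $\varphi^{\operatorname{nd}}(C_0(-1,1))\cdot\h_\varphi$; taking closures yields the non-degeneracy.

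All three verifications are routine once the technical apparatus of the previous subsection is available; the only subtle point is the compactness in (b), for which \cref{lem: restricting compacts} is precisely tailored, and the use of Cohen factorization in (c) to pass from $\varphi(C_0(-1,1))\hat\HA$ to $\varphi^{\operatorname{nd}}(C_0(-1,1))\h_\varphi$.
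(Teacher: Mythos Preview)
Your proof is correct and follows essentially the same approach as the paper: both use \cref{lem: restricting compacts} for compactness of the restricted operators and a factorization argument for non-degeneracy. The only minor difference is that you invoke Cohen's factorization theorem, whereas the paper simply notes (elementarily, for $C_0(-1,1)$) that every element can be written as a product of two elements; your version is slightly more general but also slightly heavier machinery for what is needed here.
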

\begin{proof}
    The definition of $\varphi^{\operatorname{nd}}$ is simply by restriction; the point is to observe that it is still an $A$-compact operator. But this is clear from a graded analogue of \cref{lem: restricting compacts}.
    
    The non-degeneracy of $\varphi^{\operatorname{nd}}$ is clear: it suffices to show that every element of the form $\varphi(f)x, f \in \sr, x \in \hat\HA$ can be expressed as $\varphi^{\operatorname{nd}}(f')(x'), f' \in \sr, x' \in \h_\varphi$. Now, it is easy to see that every element of $\sr$ can be written as product of two elements of $\sr$. Writing $f=g'g''$, we can set $f'=g'$, and $x'=\varphi(g'')(x)$
\end{proof}
The reason we care about non-degeneracy at this juncture is due to the following:
\begin{prop} \label{prop: extend homomorphism}
    Let $A,B,C$ be $\Z/2\Z$-graded $C^*$-algebras, such that $A$ is an ideal in $B$. Let $E$ be a $\Z/2\Z$-graded Hilbert $C$-module. Suppose that $\varphi:A \rightarrow \mathcal{B}(E)$ is a nondegenerate graded $*$-homomorphism. Then, $\varphi$ extends uniquely to a graded $*$-homomorphism $\tilde{\varphi}:B \rightarrow \mathcal{B}(E)$. Moreover, when $B$ is unital, the extension $\Tilde{\varphi}$ is unital.
\end{prop}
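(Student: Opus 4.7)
The plan is the standard multiplier-algebra extension: an ideal inclusion $A \trianglelefteq B$ gives a canonical unital embedding $B \hookrightarrow \mathcal{M}(A)$, and any nondegenerate representation of $A$ on $E$ extends uniquely and strictly-continuously to $\mathcal{M}(A)$. I will implement this by hand with approximate units, being careful about the grading (and Real structure).

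First, I would pick an approximate unit $\{u_\lambda\}$ of $A$ whose elements are even, self-adjoint, and of norm $\leq 1$; this is available because the even self-adjoint part is cut out by the commuting involutions $a \mapsto a^*$ and $\varepsilon$, and averaging an ordinary approximate unit yields one of this type which still acts as an approximate unit on all of $A$ (in the Real case one further averages with $\tau$). For $b \in B$ and $x \in E$, I define
\[
\tilde\varphi(b)x := \lim_\lambda \varphi(bu_\lambda)x.
\]
The net $\{\varphi(bu_\lambda)\}$ is uniformly bounded by $\|b\|$, and on the dense subspace $\varphi(A)E$ (dense by nondegeneracy) it converges pointwise, because $\varphi(bu_\lambda)\varphi(a)y = \varphi(bu_\lambda a)y \to \varphi(ba)y$ as $u_\lambda a \to a$ in $A$. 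A standard $\varepsilon/3$-argument using the uniform bound then yields the limit for every $x \in E$ with $\|\tilde\varphi(b)\| \leq \|b\|$.

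The main obstacle (more tedious than deep) is checking that $\tilde\varphi(b)$ is adjointable and that $\tilde\varphi$ is multiplicative; both reduce to computations on $\varphi(A)E$ plus continuity. For adjointability I claim the adjoint is $\tilde\varphi(b^*)$: for $x = \varphi(c)z$ and $y \in E$,
\[
\langle \tilde\varphi(b)\varphi(c)z,\, y\rangle = \langle \varphi(bc)z,\, y\rangle = \langle z,\, \varphi(c^*b^*)y\rangle,
\]
while on the other side $\langle \varphi(c)z,\, \tilde\varphi(b^*)y\rangle = \lim_\lambda \langle z,\, \varphi(c^*b^*u_\lambda)y\rangle = \langle z,\, \varphi(c^*b^*)y\rangle$, since $c^*b^* \in A$ and hence $c^*b^*u_\lambda \to c^*b^*$ in norm. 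Multiplicativity rests on the identity $\tilde\varphi(b)\varphi(a)y = \varphi(ba)y$ (valid since $ba \in A$): then $\tilde\varphi(b_1b_2)\varphi(a)y = \varphi(b_1b_2a)y = \tilde\varphi(b_1)\varphi(b_2a)y = \tilde\varphi(b_1)\tilde\varphi(b_2)\varphi(a)y$, and density finishes the check. Linearity and continuity in $b$ are routine.

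For compatibility with the grading (and Real structure), the evenness of $u_\lambda$ gives $\varepsilon(bu_\lambda) = \varepsilon(b)u_\lambda$ for homogeneous $b$; since the grading operator $S_E$ on $E$ is bounded and hence commutes with strong limits, passing to the limit in $\varphi(\varepsilon(b)u_\lambda)x = S_E\varphi(bu_\lambda)S_E x$ gives $\tilde\varphi(\varepsilon(b)) = \varepsilon_{\mathcal{B}(E)}\tilde\varphi(b)$, and the Real case is identical with $\tau$. That $\tilde\varphi$ extends $\varphi$ is clear because for $b \in A$ one has norm convergence $bu_\lambda \to b$. Uniqueness is cheap: if $\psi$ is any extension, then for $b \in B$, $a \in A$, $y \in E$,
\[
\psi(b)\varphi(a)y = \psi(b)\psi(a)y = \psi(ba)y = \varphi(ba)y,
\]
using $ba \in A$; thus $\psi$ agrees with $\tilde\varphi$ on $\varphi(A)E$, hence everywhere by density. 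Finally, if $B$ is unital, $\tilde\varphi(1_B)x = \lim_\lambda \varphi(u_\lambda)x = x$ by nondegeneracy, so $\tilde\varphi(1_B) = 1_{\mathcal{B}(E)}$.
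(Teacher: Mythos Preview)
Your proof is correct and is essentially the standard argument that the paper invokes by citing Lance, Proposition~2.1; the paper does not spell out details beyond that reference. One small stylistic difference: for the grading (and Real) compatibility the paper appeals to uniqueness rather than to an even approximate unit, i.e.\ once the ungraded extension $\tilde\varphi$ exists and is unique, the map $\varepsilon_{\mathcal{B}(E)}^{-1}\circ\tilde\varphi\circ\varepsilon_B$ is another extension of $\varphi$ and hence equals $\tilde\varphi$; your direct computation with even $u_\lambda$ is equally valid.
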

\begin{proof}
    Essentially \cite{LANCE} Proposition 2.1. That $\Tilde{\varphi}$ is graded, and the unitality assertion can be seen from its proof and the uniqueness of $\Tilde{\varphi}$.
\end{proof}
Thus, we have a commutative diagram as follows:
% https://q.uiver.app/#q=WzAsMyxbMCwwLCJcXG1hdGhjYWx7U30iXSxbMiwwLCJcXG1hdGhiYntCfShcXGhhdHtcXG1hdGhjYWx7SH1fQX0pIl0sWzAsMSwiXFxtYXRoY2Fse019KFxcbWF0aGNhbHtTfSkiXSxbMCwxLCJcXHZhcnBoaV57XFxvcGVyYXRvcm5hbWV7bmR9fSJdLFswLDIsIlxcaW90YSIsMl0sWzIsMSwiXFx0aWxkZXtcXHZhcnBoaV57XFxvcGVyYXRvcm5hbWV7bmR9fX0iLDIseyJzdHlsZSI6eyJib2R5Ijp7Im5hbWUiOiJkYXNoZWQifX19XV0=
\[\begin{tikzcd}
	{C_0(-1,1)} && {\mathbb{B}(\mathcal{H}_\varphi}) \\
	{C[-1,+1]}
	\arrow["{\varphi^{\operatorname{nd}}}", from=1-1, to=1-3]
	\arrow["\operatorname{inc}"', from=1-1, to=2-1]
	\arrow["{\tilde{\varphi}}"', dashed, from=2-1, to=1-3]
\end{tikzcd}\]
Moreover, $\tilde{\varphi}$ is unital from \cref{prop: extend homomorphism}. 
\begin{cor} \label{cor: cor to technical thm}
    Let $\iota_*:\mathcal{K}(\mathcal{H}_\varphi) \rightarrow \mathcal{K}(\hat\HA)$ be the map as described in \cref{lem: extending co_pacts along isometries} associated to the (not necessarily adjointable) isometry $\iota: \mathcal{H}_\varphi \hookrightarrow \hat\HA$. Then, $\iota_* \circ \varphi^{\operatorname{nd}}=\varphi:\s \rightarrow \mathcal{K}(\hat\HA)$. 
\end{cor}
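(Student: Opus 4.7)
The plan is to deduce this corollary directly from the Technical Theorem (the \cref{thm: technical}) together with the definition of $\varphi^{\operatorname{nd}}$. Recall that Theorem states that the map $\iota_{*} : \com(\h_\varphi) \to \mathfrak{K}$ is a $*$-isomorphism with inverse given by restriction $T \mapsto T|_{\h_\varphi}$, where $\mathfrak{K} := \{T \in \com(\hat{\HA}) \mid \operatorname{im} T \subseteq \h_\varphi,\ \operatorname{im} T^{*} \subseteq \h_\varphi\}$. Consequently, the identity $\iota_{*}(S) = T$ holds for $S \in \com(\h_\varphi)$ and $T \in \com(\hat{\HA})$ if and only if $T \in \mathfrak{K}$ and $T|_{\h_\varphi} = S$. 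My plan is to verify these two conditions for $T = \varphi(f)$ and $S = \varphi^{\operatorname{nd}}(f)$.

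First I would show that $\varphi(f) \in \mathfrak{K}$ for every $f \in \sr$. Since $\h_\varphi$ is by construction the closed submodule generated by $\varphi(\s)\hat{\HA}$, the image of $\varphi(f)$ is automatically contained in $\h_\varphi$. For the adjoint, observe that $\varphi(f)^{*} = \varphi(\bar f^\varepsilon)$ (with $\bar{(-)}^\varepsilon$ denoting the Real/adjoint combination given by the grading and Real structure on $\s$), which is again in $\varphi(\s)$, so its image also lies inside $\h_\varphi$. Hence $\varphi(f) \in \mathfrak{K}$.

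Second, the equality $\varphi(f)|_{\h_\varphi} = \varphi^{\operatorname{nd}}(f)$ is nothing but the defining property of $\varphi^{\operatorname{nd}}$ from \cref{lem: define phi nd}, where $\varphi^{\operatorname{nd}}$ was defined as the restriction of $\varphi$ to $\h_\varphi$ (with codomain corestricted to $\com(\h_\varphi)$, which is where the content of that lemma lay). Applying $\iota_{*}$ to both sides and invoking that $\iota_{*}$ inverts restriction on $\mathfrak{K}$, we conclude $\iota_{*}(\varphi^{\operatorname{nd}}(f)) = \iota_{*}(\varphi(f)|_{\h_\varphi}) = \varphi(f)$, as desired.

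There is essentially no obstacle here: the corollary is a clean repackaging of \cref{thm: technical} once one notes that $\varphi(f)$ automatically lies in the ideal $\mathfrak{K}$. The only point requiring a moment of care is the image condition for $\varphi(f)^{*}$, but this is immediate from $\s$ being closed under the adjoint (and the Real structure), so no genuine difficulty arises.
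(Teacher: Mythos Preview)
Your proof is correct and is precisely the unpacking of the paper's one-line justification ``direct consequence of \cref{thm: technical}''. The only cosmetic remark is that you do not need the $\bar f^{\varepsilon}$ notation: since $\varphi$ is a $*$-homomorphism, $\varphi(f)^{*}=\varphi(f^{*})$, which already lies in $\varphi(\sr)$ and hence has image in $\h_\varphi$.
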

\begin{proof}
    It is a direct consequence of \cref{thm: technical}.
\end{proof}
\begin{lem} \label{lem: phi tilde x}
    $\Tilde{\varphi}(x)$ is an odd, self-adjoint operator such that $\lVert \Tilde{\varphi}(x) \rVert \leq 1$, and $\Tilde{\varphi}(x)^2-1$ is compact. Moreover, $\varphi^{\operatorname{nd}}$ is the functional calculus of $\Tilde{\varphi}(x)$.
\end{lem}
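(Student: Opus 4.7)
The plan is to verify each property in turn, using only that $\tilde\varphi:C[-1,1]\to\mathbb B(\mathcal H_\varphi)$ is a unital graded $*$-homomorphism extending $\varphi^{\operatorname{nd}}$, together with elementary properties of the function $x\in C[-1,1]$.

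First I would check the algebraic properties of $\tilde\varphi(x)$. The identity function $x$ is self-adjoint ($x=\bar x$) and odd with respect to the grading $f(t)\mapsto f(-t)$ on $C[-1,1]$; since $\tilde\varphi$ preserves adjoints and the grading, $\tilde\varphi(x)$ is an odd, self-adjoint element of $\mathbb B(\mathcal H_\varphi)$. Since $*$-homomorphisms of $C^*$-algebras are norm contractive, $\lVert\tilde\varphi(x)\rVert\leq\lVert x\rVert_\infty=1$.

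Next I would show compactness of $\tilde\varphi(x)^2-1$. The function $x^2-1\in C[-1,1]$ vanishes at the endpoints $\pm1$, so $x^2-1\in C_0(-1,1)$. Since $\tilde\varphi$ extends $\varphi^{\operatorname{nd}}$ and is unital,
\[
\tilde\varphi(x)^2-1=\tilde\varphi(x^2-1)=\varphi^{\operatorname{nd}}(x^2-1),
\]
which lies in $\mathcal K(\mathcal H_\varphi)$ by construction of $\varphi^{\operatorname{nd}}$ in \cref{lem: define phi nd}.

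Finally, for the functional calculus claim, note that $\tilde\varphi(x)$ is a self-adjoint element whose spectrum is a closed subset of $[-1,1]$ (since it lies in the image of a unital $*$-homomorphism from $C[-1,1]$, and spectrum cannot grow under such maps). The continuous functional calculus of $\tilde\varphi(x)$ is the unique unital $*$-homomorphism $C(\sigma(\tilde\varphi(x)))\to\mathbb B(\mathcal H_\varphi)$ sending the identity function to $\tilde\varphi(x)$. On the other hand, the composition $C(\sigma(\tilde\varphi(x)))\hookrightarrow C[-1,1]\xrightarrow{\tilde\varphi}\mathbb B(\mathcal H_\varphi)$ (where the first map is any continuous extension, well-defined up to elements vanishing on the spectrum which are killed by $\tilde\varphi$) is a unital $*$-homomorphism sending the identity to $\tilde\varphi(x)$, hence by uniqueness coincides with the functional calculus. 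Restricting to $C_0(-1,1)\subseteq C[-1,1]$ recovers $\varphi^{\operatorname{nd}}$ by the definition of $\tilde\varphi$.

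There is no real obstacle here — the statement is essentially a formal consequence of the definition of $\tilde\varphi$ and basic facts about continuous functional calculus; the slight subtlety is only the observation that the spectrum of $\tilde\varphi(x)$ may be a proper subset of $[-1,1]$, but this is handled by the uniqueness characterization of the functional calculus.
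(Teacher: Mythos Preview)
Your proof is correct and follows essentially the same route as the paper: the first part is read off from the properties of the function $x\in C[-1,1]$ and the fact that $\tilde\varphi$ is a unital graded $*$-homomorphism, and the functional calculus identification is the uniqueness argument for $*$-homomorphisms out of $C[-1,1]$ (the paper phrases this via density of polynomials vanishing at $\pm1$). One cosmetic point: your map $C(\sigma(\tilde\varphi(x)))\hookrightarrow C[-1,1]$ by ``any continuous extension'' is awkward, since justifying that $\tilde\varphi$ kills functions vanishing on $\sigma(\tilde\varphi(x))$ is essentially the statement you want; it is cleaner to go the other way, observing that $\tilde\varphi$ and $\Phi_{\tilde\varphi(x)}\circ\mathrm{res}$ are two unital $*$-homomorphisms $C[-1,1]\to\mathbb B(\mathcal H_\varphi)$ agreeing on the generator $x$, hence equal.
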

\begin{proof}
    The first part of the lemma follows immediately since the function $x \mapsto x$ on $C[-1,1]$ has all those properties.

    To see that $\varphi^{\operatorname{nd}}=\Phi_{\Tilde{\varphi}(x)}:\sr \rightarrow \com(\h_\varphi)$, it suffices to check it on all polynomials in $C_0(-1,1)$ which vanish at $+1$ and $-1$ (as its dense in $C_0(-1,1)$ by Stone-Weierstrass). Now, for any polynomial $p$, $\Phi_{\Tilde{\varphi}(x)}(p)=p(\Tilde{\varphi}(x))=\Tilde{\varphi}(p)$. When $p \in C_0(-1,1)$, $\Tilde{\varphi}(p)=\varphi^{\operatorname{nd}}(p)$, and the claim follows.
\end{proof}
\begin{cor}
    [Definition of our natural map]
    The assignment 
    $\varphi \mapsto ({\h_\varphi},\Tilde{\varphi}(x))$ is a natural set map from $\Tilde{\K}(A) \rightarrow K_0^{\operatorname{eu}}(A)$, which descends to a well-defined natural monoid homomorphism 
    \begin{align*}
         \F_A: &\Tilde{K}_0^{\operatorname{sp}}(A) \rightarrow K_0^{\operatorname{eu}}(A) \\
        &[\varphi] \mapsto ({\h_\varphi},\Tilde{\varphi}(x))
    \end{align*}
\end{cor}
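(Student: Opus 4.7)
The plan is to handle four tasks in sequence: verifying the set-level map lands in $K_0^{\operatorname{eu}}(A)$; establishing naturality; proving homotopy invariance so the map descends to $\tilde{K}_0^{\operatorname{sp}}(A)$; and showing it respects the monoid structure.

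First, I would confirm that $(\h_\varphi, \tilde{\varphi}(x))$ is a genuine essentially unitary cycle: $\h_\varphi$ is a closed submodule of the countably generated $\hat{\HA}$ and hence itself countably generated, while \cref{lem: phi tilde x} gives that $\tilde{\varphi}(x)$ is odd, self-adjoint, contractive, with $\tilde{\varphi}(x)^2 - 1 = -\tilde{\varphi}(1-x^2) = -\varphi^{\operatorname{nd}}(1-x^2)$ compact (since $1-x^2 \in C_0(-1,1)$). Naturality in $A$ then follows from functoriality of the internal tensor product: given $\rho : A \to B$, the uniqueness clause of \cref{prop: extend homomorphism} forces $\h_{\rho_* \varphi} \cong \h_\varphi \hat\otimes_\rho B$ and $\widetilde{\rho_* \varphi}(x) = \tilde{\varphi}(x) \otimes_\rho 1$, so the induced map on $K_0^{\operatorname{eu}}$ is the expected one.

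Next, for homotopy invariance---which I expect to be the main obstacle---given a path from $\varphi_0$ to $\varphi_1$ in $\tilde{\K}(A)$, \cref{cor: homotopic homomorphism} produces a graded $*$-homomorphism $\Phi : C_0(-1,1) \to C([0,1], \com(\hat{\HA}))$. The isomorphism $C([0,1], \com(\hat{\HA})) \cong \com_{A[0,1]}(\hat{\mathcal{H}}_{A[0,1]})$ from \cref{lem: characterizing B and K} lets me view $\Phi$ as an element of $\tilde{\K}(A[0,1])$. Applying the set-level construction yields a cycle $(\h_\Phi, \tilde{\Phi}(x))$ over $A[0,1]$, and I claim this is the desired concordance. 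The technical core is to identify $\h_\Phi \hat\otimes_{\operatorname{ev}_i} A$ with $\h_{\varphi_i}$ as Hilbert $A$-modules, intertwining $\tilde{\Phi}(x) \otimes_{\operatorname{ev}_i} 1$ with $\tilde{\varphi}_i(x)$. Both modules present themselves as closures in $\hat{\HA}$ of $\varphi_i(\sr) \cdot \hat{\HA}$, but confirming that the internal tensor product really delivers this submodule---and not something strictly smaller---requires careful use of the nondegeneracy established in \cref{lem: define phi nd}, together with the behaviour of evaluation on elements of the form $\Phi(f)\xi$.

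Finally, for the monoid structure, $\F_A$ sends the zero class to the zero cycle since $\h_0 = 0$. For additivity, I would unpack $\mu_u(\varphi_0, \varphi_1) = (1 \hat\otimes \operatorname{Ad}(u)) \circ s(\varphi_0, \varphi_1)$ from \cref{defi: H space} and \cref{prop: sum}: the sum $s(\varphi_0, \varphi_1)$ sends $f$ to the block-diagonal element $\varphi_0(f) \oplus \varphi_1(f) \in \com(\hat{\HA} \oplus \hat{\HA})$, so $\h_{s(\varphi_0, \varphi_1)} \cong \h_{\varphi_0} \oplus \h_{\varphi_1}$ with operator $\tilde{\varphi}_0(x) \oplus \tilde{\varphi}_1(x)$; conjugation by the even unitary induced by $u$ yields a unitarily isomorphic cycle, which is precisely $(\h_{\varphi_0}, \tilde{\varphi}_0(x)) \oplus (\h_{\varphi_1}, \tilde{\varphi}_1(x))$ in $K_0^{\operatorname{eu}}(A)$.
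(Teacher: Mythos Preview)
Your proposal is correct and follows essentially the same route as the paper. The paper's proof relegates the first two of your four tasks to the ``preceding discussions'' (specifically \cref{lem: phi tilde x} for landing in $K_0^{\operatorname{eu}}(A)$, and it leaves naturality largely implicit), then handles homotopy invariance exactly as you outline: it views the homotopy $\Phi$ as an element of $\tilde{\K}(A[0,1])$ via \cref{lem: characterizing B and K}, forms the cycle $(\h_\Phi,\tilde\Phi(x))$ over $A[0,1]$, and checks $\operatorname{ev}_j(\h_\Phi)=\h_{\varphi_j}$ by a generator argument (every $\Phi(f)\xi$ evaluates to $\varphi_j(f)(\operatorname{ev}_j\xi)$, and conversely every $\varphi_j(f)y$ arises this way), after which the uniqueness clause of \cref{prop: extend homomorphism} matches the operators. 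For the monoid structure the paper is terser than you---it simply says ``follows directly from the definitions of the addition on both sides, and as unitarily equivalent Fredholm cycles are concordant''---but your unpacking of $\mu_u$ via the block-diagonal description of $s(\varphi_0,\varphi_1)$ is precisely what underlies that sentence.
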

\begin{proof}
    In light of the preceding discussions, all we now only need to show the following:
    \begin{enumerate}
        \item  Homotopic elements of $\Tilde{K}_0^{\operatorname{sp}}(A)$ are concordant. This can be seen as follows: if $\varphi \sim \psi$, there exists a graded $*$-homomorphism $\Phi:\sr \rightarrow \operatorname{Map}([0,1],\com(\hat\HA)) \cong \com(\hat\h_{A[0,1]})$ such that $\Phi_0=\varphi$, $\Phi_1=\psi$. We first claim that the following diagram commutes ($j=0,1$):
        \[
            % https://q.uiver.app/#q=WzAsNSxbMCwwLCJDXzAoLTEsMSkiXSxbMSwwLCJcXG1hdGhjYWx7S30oXFxoYXR7XFxtYXRoY2Fse0h9X3tBWzAsMV19fSkiXSxbMSwxLCJcXG1hdGhjYWx7S30oXFxoYXR7XFxtYXRoY2Fse0h9X0F9KSJdLFsyLDAsIlxcbWF0aGNhbHtLfShcXG1hdGhjYWx7SH1fXFxQaGkpIl0sWzIsMSwiXFxtYXRoY2Fse0t9KFxcbWF0aGNhbHtIfV97XFx2YXJwaGlfan0pIl0sWzAsMSwiXFxQaGkiXSxbMSwyLCJcXG9wZXJhdG9ybmFtZXtldn1faigtKSJdLFswLDIsIlxcdmFycGhpX2oiLDJdLFsxLDMsIi1cXG1pZF97XFxtYXRoY2Fse0h9X1xcUGhpfSJdLFsyLDQsIi1cXG1pZF97XFxtYXRoY2Fse0h9X3tcXHZhcnBoaV9qfX0iXSxbMyw0LCJcXG9wZXJhdG9ybmFtZXtldn1faigtKSJdXQ==
\begin{tikzcd}
	{C_0(-1,1)} & {\mathcal{K}(\hat{\mathcal{H}_{A[0,1]}})} & {\mathcal{K}(\mathcal{H}_\Phi)} \\
	& {\mathcal{K}(\hat{\mathcal{H}_A})} & {\mathcal{K}(\mathcal{H}_{\varphi_j})}
	\arrow["\Phi", from=1-1, to=1-2]
	\arrow["{\operatorname{ev}_j(-)}", from=1-2, to=2-2]
	\arrow["{\varphi_j}"', from=1-1, to=2-2]
	\arrow["{-\mid_{\mathcal{H}_\Phi}}", from=1-2, to=1-3]
	\arrow["{-\mid_{\mathcal{H}_{\varphi_j}}}", from=2-2, to=2-3]
	\arrow["{\operatorname{ev}_j(-)}", from=1-3, to=2-3]
\end{tikzcd}
        \]
    The commutativity of the left triangle is by definition. For the square, in light of the functoriality of $\operatorname{ev}_j(-)$, it suffices to note that $\operatorname{ev}_j(\h_\Phi)=\h_{\varphi_j}$. But this follows since every algebraic generator $\Phi(f)x$ ($f \in \sr, x \in \hat\h_{A[0,1]}$) of $\h_\Phi$ is mapped to $\varphi_j(f)(\operatorname{ev}_j(x))$, and every algebraic generator $\varphi_j(f)y$ ($f \in \sr, y \in \hat\h_{A}$) of $\mathcal{H}_{\varphi_j}$ is of the form $\varphi_j(f)(\operatorname{ev}_j(y'))$ for some $y' \in \hat\h_{A[0,1]}$.
    This shows that $\operatorname{ev}_j(\h_\Phi)$ is a dense subset of $\h_{\varphi_j}$. The conclusion follows from the definition of the pushout construction.

    Thus, we have $\operatorname{ev}_j(-) \circ \Phi^{\operatorname{nd}}=\varphi^{\operatorname{nd}}$. The rest follows from the universality of the extension in \cref{prop: extend homomorphism}.
    \item $\F_A$ is a monoid homomorphism. This follows directly from the definitions of the addition on both sides, and as unitarily equivalent Fredholm cycles are concordant.
    \end{enumerate}

\end{proof}
We next proceed to show that $\mathcal{F}_A$ is surjective and injective. First, we deal with surjectivity. 

We need a preparatory lemma:

\begin{lem} \label{lem: for surjectivity}
    Let $(E,F) \in K_0^{\operatorname{eu}}(A)$ be an essentially unitary cycle. Then, it can be represented as $(E',G)$, where $E' \subseteq E$, and $\overline{\Phi_G(C_0(-1,1))E'}=E'$, where $\Phi_G$ denotes the functional calculus of $G$.
\end{lem}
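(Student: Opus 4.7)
The plan is to take $E'$ to be the ``essential non-degenerate part'' of $E$ with respect to the functional calculus of $F$, namely
\[
E' := \overline{(1-F^2)E} \subseteq E.
\]
First I would check that $E' = \overline{\Phi_F(C_0(-1,1))E}$. The inclusion $\supseteq$ is immediate since $h(x):=1-x^2 \in C_0(-1,1)$. For $\subseteq$, note that $h$ is strictly positive on $(-1,1)$, so the ideal generated by $h$ is dense in $C_0(-1,1)$; hence every $\Phi_F(f)$ with $f \in C_0(-1,1)$ is a norm limit of $\Phi_F(h)\Phi_F(g_n) = (1-F^2)\Phi_F(g_n)$, and so $\Phi_F(f)E \subseteq \overline{(1-F^2)E}$. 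As a by-product, the functional calculus $\Phi_F$ already lands in $\com(E)$ on $C_0(-1,1)$ since $1-F^2 \in \com(E)$.

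Next I would verify that $E'$ is a well-behaved object: it is an $A$-submodule and inherits the grading (and Real structure) from $E$ since $1-F^2$ is even (and Real); it is $F$-invariant because $F \cdot \Phi_F(f) = \Phi_F(xf)$ with $xf \in C_0(-1,1)$ whenever $f \in C_0(-1,1)$; and it is countably generated, obtained as the closure of $(1-F^2)$ applied to a countable generating set of $E$. Setting $G := F|_{E'}$, $G$ is odd and self-adjoint with $\|G\| \leq 1$, and $G^2 - 1_{E'} = (F^2-1)|_{E'}$ is compact by \cref{lem: restricting compacts}, since $F^2-1$ is self-adjoint, compact on $E$, and takes values in $E'$ (as $(F^2-1)E = -(1-F^2)E \subseteq E'$). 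Thus $(E',G) \in K_0^{\operatorname{eu}}(A)$. To check the non-degeneracy claim $\overline{\Phi_G(C_0(-1,1))E'} = E'$, note that $\Phi_G(f) = \Phi_F(f)|_{E'}$ by polynomial approximation plus $F$-invariance of $E'$, and then use that the algebraic square $C_0(-1,1)\cdot C_0(-1,1)$ is dense in $C_0(-1,1)$ (approximate unit) to conclude $\overline{\Phi_F(C_0(-1,1))\Phi_F(C_0(-1,1))E} = \overline{\Phi_F(C_0(-1,1))E} = E'$.

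The remaining (and technically most delicate) step is to exhibit an explicit concordance $(E,F) \sim (E',G)$. Mimicking the construction in the proof of \cref{lem: add nondeg}, I would take
\[
\mathcal{E} := \{\, f \in C([0,1],E) \mid f(1) \in E' \,\},
\]
a countably generated graded Hilbert $A[0,1]$-module (generators: $(1-t)e_n$ together with $t\,e'_m$ for generating sequences of $E$ and $E'$ respectively), with $\mathcal{F}$ defined by pointwise application of $F$. Well-definedness of $\mathcal{F}$ on $\mathcal{E}$ uses $F$-invariance of $E'$. The maps $f \otimes a \mapsto f(j)a$ yield unitary isomorphisms $\mathcal{E}\hat{\otimes}_{\operatorname{ev}_0}A \cong E$ and $\mathcal{E}\hat{\otimes}_{\operatorname{ev}_1}A \cong E'$ intertwining $\operatorname{ev}_0(\mathcal{F})$ with $F$ and $\operatorname{ev}_1(\mathcal{F})$ with $G$. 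The hard part is showing $\mathcal{F}^2 - 1 \in \com_{A[0,1]}(\mathcal{E})$. I would view it as the restriction to $\mathcal{E}$ of the element of $\com_{A[0,1]}(C([0,1],E))$ corresponding via \cref{lem: characterizing B and K} to the constant norm-continuous family $F^2-1$; since $(F^2-1)$ maps $E$ into $E'$ (as observed above), this operator sends $\mathcal{E}$ into $\mathcal{E}$ and likewise for its adjoint, so \cref{lem: restricting compacts} yields compactness on $\mathcal{E}$.

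The main obstacle will be the bookkeeping in this last step --- one needs \cref{lem: characterizing B and K} to translate ``norm-continuous families of compacts'' into honest compact operators over $A[0,1]$, and then \cref{lem: restricting compacts} to pass to the submodule $\mathcal{E}$. Once these identifications are in place, essential unitarity of the concordance and the endpoint identifications follow formally.
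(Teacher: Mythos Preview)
Your proof is correct and follows essentially the same route as the paper: define $E'$ as the non-degenerate part $\overline{\Phi_F(C_0(-1,1))E}$ (which you reach via the equivalent description $\overline{(1-F^2)E}$), set $G=F|_{E'}$, and build the concordance as a ``mapping-cylinder'' submodule of $C([0,1],E)$ with pointwise $F$, invoking \cref{lem: characterizing B and K} and \cref{lem: restricting compacts} exactly as the paper does. The only cosmetic differences are that the paper places the $E'$-constraint at $t=0$ rather than $t=1$, and that your labels ``$\subseteq$'' and ``$\supseteq$'' in the first paragraph are swapped relative to the arguments you give (the reasoning itself is fine).
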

\begin{proof}
     First, note that ${\Phi_{F}(C_0(-1,1))} \subseteq \com_A(E)$. Set $E':=\overline{\Phi_{F}(C_0(-1,1))E}$. Then, $F(E') \subseteq E'$, as $f \in C_0(-1,1) \implies xf \in C_0(-1,1)$. Thus, $G:=F\mid_{E'}$ is an element of $\mathcal{B}(E')$. Moreover, as $F^2-1 \in \com(E)$ and $\operatorname{Im}(F^2-1) \subseteq E'$, \cref{thm: technical} yields that $(F^2-1)\mid_{E'} \in \com(E')$, whence $G$ is an odd, self-adjoint essentially unitary operator on $E'$. 
    
     We now show that $\overline{\Phi_G(C_0(-1,1))E'}=E'$, i.e., the functional calculus of $G$ is non-degenerate. To that end, it suffices to observe that $\Phi_G=\Phi_{F}^{\operatorname{nd}}:C_0(-1,1) \rightarrow \mathcal{K}(E')$. To see this, note that, $\Phi_{F}(p)=p(F)$ for any polynomial $p$, and in particular, for polynomials which vanish at $+1$ and $-1$. For any such polynomial $p$, on the one hand, we have $\Phi_{F}^{\operatorname{nd}}(p)=p(F)\mid_{E'}=p(F\mid_{E'})$ by \cref{lem: define phi nd}. On the other hand, for all such polynomials $p$,$~\Phi_G(p)=p(G)=p(F\mid_{E'})$. Thus, $\Phi_G=\Phi_{F}$ on the dense subspace of $C_0(-1,1)$ comprising of polynomials on $[-1,1]$ which vanish at the end-points, and we are done.
    
    We finally claim that $(E,F) \sim (E',G)$. We build a concordance as follows: Set $\h=\{e:[0,1] \rightarrow E ~\text{continuous}~ \mid e(0) \in E' \}$, and view it as a graded Hilbert submodule of the graded Hilbert $A[0,1]$-module $\operatorname{Map}([0,1],E)$. Define 
    \begin{align*}
        \Tilde{F}&: \h \rightarrow \h \\
        (\Tilde{F}e)(t)&:=F(e(t))
    \end{align*}
    Then $\Tilde{F}$ is self-adjoint, hence an element of $\mathcal{B}(\h)$. Moreover, $\Tilde{F}^2-1$ is compact by (a graded analogue of) \cref{lem: characterizing B and K} and \cref{thm: technical}, and so $[\h,\Tilde{F}]$ is a concordance. It is now clear that $(\h \hat\otimes _{\operatorname{ev}_0}A,\operatorname{ev}_0(\Tilde{F})) \cong (E,F)$, $(\h \hat\otimes _{\operatorname{ev}_1}A,\operatorname{ev}_1(\Tilde{F})) \cong (E',G)$.
\end{proof}
\begin{rem} \label{rem: imp}
    Note that, from the above lemma, since $\Phi_G=\Phi_{F}^{\operatorname{nd}}$, we have $F\mid_{E'}=G=\Phi_G(x)=\Tilde{\Phi_{F}}(x)$
\end{rem}
\begin{cor}
    $\mathcal{F}_A$ is surjective.
\end{cor}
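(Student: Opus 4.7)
The plan is: given a class in $K_0^{\operatorname{eu}}(A)$, use \cref{lem: for surjectivity} to put a representative into a ``non-degenerate'' normal form $(E',G)$, transport it into $\hat{\mathcal{H}_A}$ via Kasparov stabilization, and read off an explicit preimage in $\tilde{K}_0^{\operatorname{sp}}(A)$ built from the functional calculus of $G$.

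More precisely, let $(E,F)\in K_0^{\operatorname{eu}}(A)$. First, by \cref{lem: for surjectivity} together with \cref{rem: imp}, we may replace $(E,F)$ by a concordant cycle $(E',G)$ with $\Phi_G:\sr\to\com(E')$ non-degenerate and $\tilde{\Phi_G}(x)=G$. By Kasparov stabilization, $E'$ embeds as a graded Hilbert $A$-submodule of $\hat{\mathcal{H}_A}$; fix such an inclusion $\iota:E'\hookrightarrow\hat{\mathcal{H}_A}$. Applying \cref{lem: extending co_pacts along isometries}, set
\[
\varphi:=\iota_*\circ\Phi_G:\sr\longrightarrow\com(\hat{\mathcal{H}_A}),
\]
which is a graded (Real) $*$-homomorphism and hence defines a class $[\varphi]\in\tilde{K}_0^{\operatorname{sp}}(A)$.

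The main step, which I expect to be the only real work, is to identify $\mathcal{H}_\varphi$ with $\iota(E')$. One inclusion is direct from the formula $\iota_*(\ket{a}\bra{b})=\ket{\iota a}\bra{\iota b}$ of \cref{thm: technical} and its proof: every $\varphi(f)z$ with $z\in\hat{\mathcal{H}_A}$ lands in $\iota(E')$, which is closed, so $\mathcal{H}_\varphi\subseteq\iota(E')$. For the reverse inclusion, the same rank-one formula, extended by norm limits to general compacts, gives $\varphi(f)\iota(c)=\iota(\Phi_G(f)c)$ for every $f\in\sr$ and $c\in E'$; now the non-degeneracy $\overline{\Phi_G(\sr)E'}=E'$ supplied by \cref{lem: for surjectivity} upgrades this to $\iota(E')\subseteq\mathcal{H}_\varphi$. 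Hence $\mathcal{H}_\varphi=\iota(E')$, and $\iota$ restricts to an even unitary isomorphism $E'\xrightarrow{\sim}\mathcal{H}_\varphi$.

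It remains to compare the operators $G$ on $E'$ and $\tilde{\varphi}(x)$ on $\mathcal{H}_\varphi$ under this unitary. By construction, the compression $\varphi^{\operatorname{nd}}:\sr\to\com(\mathcal{H}_\varphi)$ from \cref{lem: define phi nd} is, via the identification $\mathcal{H}_\varphi\cong\iota(E')\cong E'$, exactly $\Phi_G$; this follows from the same rank-one computation above, together with \cref{cor: cor to technical thm} used to check $\iota_*\circ\varphi^{\operatorname{nd}}=\varphi$ is consistent with our definition of $\varphi$. Since \cref{lem: phi tilde x} says $\varphi^{\operatorname{nd}}$ is the functional calculus of $\tilde{\varphi}(x)$, and since the functional calculus determines a self-adjoint operator uniquely (see \cite{USAO}), $\tilde{\varphi}(x)$ corresponds to $\tilde{\Phi_G}(x)=G$ under the unitary. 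Thus the Fredholm cycles $(\mathcal{H}_\varphi,\tilde{\varphi}(x))$ and $(E',G)$ are unitarily isomorphic, hence concordant, and we conclude
\[
\mathcal{F}_A([\varphi])=[(\mathcal{H}_\varphi,\tilde{\varphi}(x))]=[(E',G)]=[(E,F)],
\]
proving surjectivity.
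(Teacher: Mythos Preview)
Your proof is correct, but it takes a more roundabout route than the paper. The paper applies Kasparov stabilization \emph{first}: by \cref{lem: add nondeg} one may assume without loss of generality that the cycle is $(\hat{\mathcal{H}_A},F)$. Then the functional calculus $\Phi_F:C_0(-1,1)\to\mathcal{K}(\hat{\mathcal{H}_A})$ is already an element of $\tilde{\mathbb{K}}(A)$, and \cref{lem: for surjectivity} applied to $(\hat{\mathcal{H}_A},F)$ produces exactly the cycle $(\mathcal{H}_{\Phi_F},\tilde{\Phi_F}(x))=\mathcal{F}_A(\Phi_F)$ (using \cref{rem: imp} to identify the operator). So the concordance is read off directly from the lemma with no further identification needed.

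You reverse the order: you first invoke \cref{lem: for surjectivity} to pass to a non-degenerate $(E',G)$, and only then embed via Kasparov and push the functional calculus forward along $\iota_*$. This forces you to do the extra work of identifying $\mathcal{H}_\varphi$ with $\iota(E')$ and matching $\tilde{\varphi}(x)$ with $G$ under the unitary. Your argument for this identification is sound (the rank-one formula from \cref{thm: technical} plus non-degeneracy handles both inclusions, and the uniqueness of functional calculus pins down the operator), but it is exactly the kind of bookkeeping that the paper's ordering avoids. In effect, you are re-deriving on $\iota(E')$ what \cref{lem: for surjectivity} already packages when applied on $\hat{\mathcal{H}_A}$.
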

\begin{proof}
    Let $(E,F) \in K_0^{\operatorname{eu}}(A)$ be an essentially unitary cycle. By Kasparov stabilization, we can take $E=\hat\HA$ without loss of generality. Then, the functional calculus of $F$ is a graded $*$-homomorphism $\Phi_F:C_0(-1,1) \rightarrow \com(\hat\HA)$. Then, $\F_A(\Phi_F)=(\h_{\Phi_F},\Tilde{\Phi_F}(x))$, and \cref{lem: for surjectivity} and \cref{rem: imp} show that $(\hat\HA,F) \sim (\h_{\Phi_F},\Tilde{\Phi_F}(x))$. This shows surjectivity of $\F_A$.
\end{proof}
We now focus on the injectivity of $\mathcal{F}_A$. We now proceed to show injectivity. 

\begin{prop}
    $\F_A$ is injective.
\end{prop}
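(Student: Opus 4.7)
The plan is to take any concordance over $A[0,1]$ realizing $\F_A([\varphi]) = \F_A([\psi])$ in $K_0^{\operatorname{eu}}(A)$ and extract from it a genuine homotopy between $\varphi$ and $\psi$ in $\tilde{\K}(A)$. Concretely, suppose $(\E, F)$ is an essentially-unitary cycle over $A[0,1]$ together with even unitaries realizing $(\E \hat\otimes_{\operatorname{ev}_0} A, \operatorname{ev}_0(F)) \cong_{U_0} (\h_\varphi, \tilde{\varphi}(x))$ and $(\E \hat\otimes_{\operatorname{ev}_1} A, \operatorname{ev}_1(F)) \cong_{U_1} (\h_\psi, \tilde{\psi}(x))$. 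Functional calculus then yields a graded (Real) $*$-homomorphism $\Phi_F : \sr \to \com_{A[0,1]}(\E)$.

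Next, I would invoke Kasparov's stabilization theorem to produce a (possibly non-adjointable) even isometric inclusion $j : \E \hookrightarrow \hat\h_{A[0,1]}$, and apply \cref{lem: extending co_pacts along isometries} to obtain the induced $j_* : \com(\E) \to \com(\hat\h_{A[0,1]})$. The composite $j_* \circ \Phi_F$ lies in $\operatorname{Hom}(\sr, \com(\hat\h_{A[0,1]}))$, and under the natural isomorphism $\com(\hat\h_{A[0,1]}) \cong C([0,1], \com(\hat\HA))$ from \cref{lem: characterizing B and K}, combined with the exponential law of \cref{prop:map into hom space}, it corresponds to a continuous path $t \mapsto \Psi_t$ in $\tilde{\K}(A)$, i.e.\ a homotopy. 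Functoriality of functional calculus and of $(-)_*$ under the pushout along $\operatorname{ev}_i$ identifies $\Psi_i$ with $(j_i)_* \circ \Phi_{F_i}$, where $F_i := \operatorname{ev}_i(F)$ and $j_i : \E_i \to \hat\HA$ is the induced isometry. Conjugation by $U_i$ together with \cref{lem: phi tilde x} rewrite $\Phi_{F_0} = \operatorname{Ad}(U_0^*) \circ \varphi^{\operatorname{nd}}$, giving $\Psi_0 = (w_\varphi)_* \circ \varphi^{\operatorname{nd}}$ for the even isometry $w_\varphi := j_0 U_0^* : \h_\varphi \to \hat\HA$, and analogously $\Psi_1 = (w_\psi)_* \circ \psi^{\operatorname{nd}}$.

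It remains to check $\Psi_0 \sim \varphi$ in $\tilde{\K}(A)$ (the case $\Psi_1 \sim \psi$ is identical). Since \cref{cor: cor to technical thm} gives $\varphi = (\iota_\varphi)_* \circ \varphi^{\operatorname{nd}}$ for the canonical inclusion $\iota_\varphi : \h_\varphi \hookrightarrow \hat\HA$, it suffices to show $(w_\varphi)_* \sim (\iota_\varphi)_*$ as $*$-homomorphisms $\com(\h_\varphi) \to \com(\hat\HA)$ and then pre-compose with $\varphi^{\operatorname{nd}}$ via \cref{lem: 1st lemma}. Factoring $w_\varphi$ through its image as $\iota_E \circ w'$ with $w'$ an even unitary isomorphism onto $E := w_\varphi(\h_\varphi) \subseteq \hat\HA$, and applying \cref{lem: final} to both $\iota_E$ and $\iota_\varphi$, we reduce to showing that any two \emph{adjointable} even (Real) isometries $\h_\varphi \to \hat\HA$ are homotopic through adjointable even isometries in the strong-$*$ topology. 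This is the main obstacle; it is the Hilbert-module analogue of \cref{lem: dix}. I would prove it by a rotation argument: using Kasparov stabilization in the form of \cref{lem: add nondeg} to split off an extra copy of $\hat\HA$ inside $\hat\HA$, embed the two given isometries into complementary summands and interpolate by the norm-continuous unitary rotation $\begin{pmatrix} \cos(\pi t/2) & -\sin(\pi t/2) \\ \sin(\pi t/2) & \cos(\pi t/2) \end{pmatrix}$, checking at each step that the grading and Real structures are respected. Concatenating the three homotopies $\varphi \sim \Psi_0 \sim_\Psi \Psi_1 \sim \psi$ finally yields $[\varphi] = [\psi]$ in $\tilde{K}_0^{\operatorname{sp}}(A)$.
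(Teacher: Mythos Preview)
Your proposal is correct and follows essentially the same route as the paper: take the functional calculus of the concordance operator, push it into $\com(\hat{\mathcal H}_{A[0,1]})$ via Kasparov stabilization, read off a homotopy in $\tilde{\K}(A)$, and then compare the endpoints to $\varphi$ and $\psi$ using \cref{cor: cor to technical thm} and \cref{lem: final}. Two minor remarks: Kasparov stabilization already provides an \emph{adjointable} isometry $j:\E\hookrightarrow\hat{\mathcal H}_{A[0,1]}$, so the ``possibly non-adjointable'' caveat is unnecessary; and what you call the ``main obstacle'' (that any two adjointable even isometries $\h_\varphi\to\hat{\mathcal H}_A$ induce homotopic maps on compacts) is in fact an immediate consequence of \cref{lem: final} itself, since for any adjointable isometry $i$ that lemma gives $\operatorname{Ad}(i)\sim(\iota_\varphi)_*$, whence any two such $\operatorname{Ad}(i_1),\operatorname{Ad}(i_2)$ are homotopic by transitivity. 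Your rotation argument is therefore redundant, though perfectly valid.
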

\begin{proof}
    Let $\varphi_0$ and $\varphi_1$ be elements of $\Tilde{\K}(A)$ such that $\F_A(\varphi_0)=\F_A(\varphi_1)$. This means $[\mathcal{H}_{\varphi_0},\tilde{\varphi_0}(x)] \sim [\mathcal{H}_{\varphi_1},\tilde{\varphi_1}(x)]$. Suppose $[\mathcal{E},F]$ is the concordance. The functional calculus of $F$ is map $\Phi:\sr \rightarrow \mathcal{K}_{A[0,1]}(\E)$. By Kasparov stabilization, we choose an adjointable isometry $\iota:E \rightarrow \hat\HA$.
    
    We now make a series of claims:
    \begin{clm}
        Applying the functor $- \otimes_{ev_j}A$ ($j=0,1$) to the sequence % https://q.uiver.app/#q=WzAsMyxbMCwwLCJcXG1hdGhjYWx7U30iXSxbMSwwLCJcXG1hdGhjYWx7S31fe0FbMCwxXX0oXFxtYXRoY2Fse0V9KSJdLFsyLDAsIlxcbWF0aGNhbHtLfV97QVswLDFdfShcXGhhdHtcXG1hdGhjYWx7SH1fQX0pIl0sWzAsMSwiXFxQaGkiXSxbMSwyLCJBZChcXGlvdGEpIl1d
\[\begin{tikzcd}
	{C_0(-1,1)} & {\mathcal{K}_{A[0,1]}(\mathcal{E})} & {\mathcal{K}_{A[0,1]}(\hat{\mathcal{H}_{A[0,1]}})}
	\arrow["\Phi", from=1-1, to=1-2]
	\arrow["{Ad(\iota)}", from=1-2, to=1-3]
\end{tikzcd}\] we get % https://q.uiver.app/#q=WzAsMyxbMCwwLCJcXG1hdGhjYWx7U30iXSxbMSwwLCJcXG1hdGhjYWx7S31fe0F9KFxcb3BlcmF0b3JuYW1le2V2fV9qXFxtYXRoY2Fse0V9KSJdLFsyLDAsIlxcbWF0aGNhbHtLfV97QX0oXFxoYXR7XFxtYXRoY2Fse0h9X0F9KSJdLFswLDEsIlxcb3BlcmF0b3JuYW1le2V2fV9qKFxcUGhpKSJdLFsxLDIsIkFkKFxcb3BlcmF0b3JuYW1le2V2fV9qKFxcaW90YSkpIl1d
\begin{tikzcd}
	{C_0(-1,1)} & {\mathcal{K}_{A}(\operatorname{ev}_j\mathcal{E})} & {\mathcal{K}_{A}(\hat{\mathcal{H}_A})}
	\arrow["{\operatorname{ev}_j(\Phi)}", from=1-1, to=1-2]
	\arrow["{Ad(\operatorname{ev}_j(\iota))}", from=1-2, to=1-3]
\end{tikzcd}
    \end{clm}
\begin{proof}
    Lemma 1.2.8 of \cite{JT} shows that the map % https://q.uiver.app/#q=WzAsMixbMCwwLCJcXG1hdGhjYWx7U30iXSxbMSwwLCJcXG1hdGhjYWx7S31fQShcXG9wZXJhdG9ybmFtZXtldn1falxcbWF0aGNhbHtFfSkiXSxbMCwxLCJcXG9wZXJhdG9ybmFtZXtldn1faihcXFBoaSkiXV0=
\begin{tikzcd}
	{C_0(-1,1)} & {\mathcal{K}_A(\operatorname{ev}_j\mathcal{E})}
	\arrow["{\operatorname{ev}_j(\Phi)}", from=1-1, to=1-2]
\end{tikzcd} is well-defined (that is, we indeed land in the compact operators). The rest follows from the functoriality of the pushout construction.
\end{proof}
\begin{clm}
    The $*$-homomorphisms $\operatorname{Ad(ev}_0(\iota)) \circ \operatorname{ev}_0(\Phi)$ and $\operatorname{Ad(ev}_1(\iota)) \circ \operatorname{ev}_1(\Phi)$ are homotopic.
\end{clm}
\begin{proof}
    We have that $\operatorname{Ad}(\iota) \circ \Phi: \s \rightarrow \mathcal{K}_{A[0,1]}(\hat{\mathcal{H}_{A[0,1]}}) \cong \operatorname{Map}([0,1],\mathcal{K}_A(\hat\HA))$ is a homotopy between $(\operatorname{Ad}(\iota) \circ \Phi)\mid_0$ and $(\operatorname{Ad}(\iota) \circ \Phi)\mid_1$. The rest follows since under the canonical isomorphism $\mathcal{K}(\hat{\mathcal{H}_{A[0,1]}}) \cong \operatorname{Map}([0,1],\mathcal{K}_A(\hat\HA))$, the functor $-\otimes_{\operatorname{ev}_j}A$ corresponds to the functor evaluation at time $j$.
\end{proof}
\begin{clm}
    In the setup of the last claim, $\operatorname{ev}_j(\Phi)$ is the functional calculus of $\operatorname{ev}_j(F)$.
\end{clm}
\begin{proof}
    We have the following commutative diagram, where c.f.c. stands for the continuous functional calculus:
\[% https://q.uiver.app/#q=WzAsNSxbMSwwLCJDKFxcc2lnbWEoRikpIl0sWzMsMCwiXFxtYXRoY2Fse0J9KFxcbWF0aGNhbHtFfSkiXSxbMSwxLCJDKFxcc2lnbWEoXFxvcGVyYXRvcm5hbWV7ZXZ9X2ooRikpKSJdLFszLDEsIlxcbWF0aGNhbHtCfShcXG9wZXJhdG9ybmFtZXtldn1faihcXG1hdGhjYWx7RX0pKSJdLFswLDAsIkNfMCgtMSwxKSJdLFswLDIsIlxcb3BlcmF0b3JuYW1le3Jlc30iLDJdLFswLDEsIlxcb3BlcmF0b3JuYW1le2MuZi5jfm9mfSBGIl0sWzIsMywiXFxvcGVyYXRvcm5hbWV7Yy5mLmN+b2Z+ZXZ9X2ooRikiXSxbMSwzLCJcXG9wZXJhdG9ybmFtZXtldn1fe2oqfSJdLFs0LDAsIi1cXG1pZF97XFxzaWdtYShGKX0iLDJdXQ==
\begin{tikzcd}
	{C_0(-1,1)} & {C(\sigma(F))} && {\mathcal{B}(\mathcal{E})} \\
	& {C(\sigma(\operatorname{ev}_j(F)))} && {\mathcal{B}(\operatorname{ev}_j(\mathcal{E}))}
	\arrow["{\operatorname{res}}"', from=1-2, to=2-2]
	\arrow["{\operatorname{c.f.c~of} F}", from=1-2, to=1-4]
	\arrow["{\operatorname{c.f.c~of~ev}_j(F)}", from=2-2, to=2-4]
	\arrow["{\operatorname{ev}_{j*}}", from=1-4, to=2-4]
	\arrow["{-\mid_{\sigma(F)}}"', from=1-1, to=1-2]
\end{tikzcd}\]
Indeed, the top row composite is $\Phi$, and the square commutes because the continuous functional calculus is uniquely defined upon knowing it for the constant function $1$, and the identity function.
\end{proof}
\begin{clm}
    The following diagram commutes:
    \[
    % https://q.uiver.app/#q=WzAsMyxbMCwwLCJDXzAoLTEsMSkiXSxbMSwwLCJcXG1hdGhjYWx7S31fe0F9KFxcb3BlcmF0b3JuYW1le2V2fV9qXFxtYXRoY2Fse0V9KSJdLFsxLDEsIlxcbWF0aGNhbHtLfV9BKFxcbWF0aGNhbHtIfV97XFx2YXJwaGlfan0pIl0sWzAsMSwiXFxvcGVyYXRvcm5hbWV7ZXZ9X2ooXFxQaGkpIl0sWzAsMiwiXFx2YXJwaGlfal57XFxvcGVyYXRvcm5hbWV7bmR9fSIsMl0sWzIsMSwiXFxvcGVyYXRvcm5hbWV7QWR9KHVfaikiLDJdXQ==
\begin{tikzcd}
	{C_0(-1,1)} & {\mathcal{K}_{A}(\operatorname{ev}_j\mathcal{E})} \\
	& {\mathcal{K}_A(\mathcal{H}_{\varphi_j})}
	\arrow["{\operatorname{ev}_j(\Phi)}", from=1-1, to=1-2]
	\arrow["{\varphi_j^{\operatorname{nd}}}"', from=1-1, to=2-2]
	\arrow["{\operatorname{Ad}(u_j)}"', from=2-2, to=1-2]
\end{tikzcd}
    \]

\end{clm}
\begin{proof}
    Obvious, in light of the above claim and since $\varphi_j^{\operatorname{nd}}$ is the functional calculus of $\Tilde{\varphi_j}(x)$ by \cref{lem: phi tilde x}.
\end{proof}
Summarizing, we now know the following:
\[
    \text{there exists adjointable even isometries $\kappa_j:\h_{\varphi_j} \rightarrow \hat\HA$ such that $\operatorname{Ad}(\kappa_0) \circ \varphi_0^{\operatorname{nd}} \sim \operatorname{Ad}(\kappa_1) \circ \varphi_1^{\operatorname{nd}} $}
\]
Now, \cref{cor: cor to technical thm} shows that ${\operatorname{inc}_j}_* \circ \varphi_j^{\operatorname{nd}} \sim \varphi_j$, and we are now done in light of \cref{lem: final}.
\end{proof}
Summarizing the work done in this section, we now have:
\begin{thm}\label{thm: sp=fr}
    Let $A$ be a $\Z/2\Z$-graded (Real) $C^*$-algebra. Then, there is a natural isomorphism between $K_0^{\operatorname{sp}}(A)$ and $K_0^{\operatorname{Fr}}(A)$. In particular, $K_0^{\operatorname{sp}}(A)$ is an abelian group.
\end{thm}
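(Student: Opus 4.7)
The plan is to string together the work already done in the subsections above into a single isomorphism. The replacement of $\K(A)$ by $\tilde{\K}(A)$ is literally a homeomorphism (via the odd, increasing homeomorphism $\R \cong (-1,1)$), so it induces a natural isomorphism $K_0^{\operatorname{sp}}(A) \cong \tilde{K}_0^{\operatorname{sp}}(A)$ of abelian monoids. The natural map $\mathcal{F}_A: \tilde{K}_0^{\operatorname{sp}}(A) \to K_0^{\operatorname{eu}}(A)$, built via $\varphi \mapsto (\mathcal{H}_\varphi, \tilde{\varphi}(x))$, has just been shown to be well-defined, a monoid homomorphism, surjective, and injective. Finally, the ``truncation via functional calculus'' comparison already established a natural group isomorphism $K_0^{\operatorname{eu}}(A) \cong K_0^{\operatorname{Fr}}(A)$. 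Composing these three natural maps produces the desired natural isomorphism $K_0^{\operatorname{sp}}(A) \cong K_0^{\operatorname{Fr}}(A)$.

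The group structure on $K_0^{\operatorname{sp}}(A)$ is obtained by transport: since $K_0^{\operatorname{Fr}}(A)$ was shown to be a genuine group (with $(\E, -\iota, -F)$ providing the inverse of $(\E, \iota, F)$), and since $\mathcal{F}_A$ is a monoid isomorphism, the monoid structure on $K_0^{\operatorname{sp}}(A)$ is in fact a group structure. That is, the previously-noted caveat that we did not yet know $K_0^{\operatorname{sp}}(A)$ has inverses gets resolved automatically.

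The one thing that warrants a short justification is naturality: given a graded (Real) unital $*$-homomorphism $\psi: A \to B$, one checks the commuting square relating $\mathcal{F}_A$ and $\mathcal{F}_B$. The point is that the induced map $\psi_*: K_0^{\operatorname{sp}}(A) \to K_0^{\operatorname{sp}}(B)$ is built from tensoring with $\com(\hat{l^2})$ and composing, while on the Fredholm side $\psi_*$ is implemented via the internal tensor product $- \hat\otimes_\psi B$ of Hilbert modules. Under the identifications $\com(\hat\HA) \cong A \hat\otimes \com(\hat{l^2})$ and $\hat{\h_A} \hat\otimes_\psi B \cong \hat{\h_B}$, these two operations agree up to the canonical comparison maps used in defining $\mathcal{F}_A$; the closure $\mathcal{H}_{\psi \circ \varphi}$ maps naturally onto $\mathcal{H}_\varphi \hat\otimes_\psi B$, and the extension $\widetilde{(\psi \circ \varphi)}(x)$ corresponds to $\tilde{\varphi}(x) \hat\otimes_\psi 1$ by uniqueness of the extension in \cref{prop: extend homomorphism}.

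The main (and only genuine) obstacle is the naturality square; all other ingredients are direct assemblies of results already proved. Even the naturality verification is routine once one pins down that (i) the formation of $\mathcal{H}_\varphi$ is preserved by $-\hat\otimes_\psi B$ because $\psi$-tensoring is continuous and the image of a dense subset is dense, and (ii) the unique unital extension to $C[-1,1]$ commutes with the induced $*$-homomorphism on multiplier-like targets. With these in hand, the final statement ``$K_0^{\operatorname{sp}}(A)$ is an abelian group'' is an immediate consequence of the naturality square being filled by the already-established group isomorphism $K_0^{\operatorname{eu}}(-) \cong K_0^{\operatorname{Fr}}(-)$.
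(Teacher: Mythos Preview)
Your proposal is correct and matches the paper's approach exactly: the paper states this theorem as a summary (``Summarizing the work done in this section, we now have:'') of the chain of isomorphisms $K_0^{\operatorname{sp}}(A) \cong \tilde{K}_0^{\operatorname{sp}}(A) \xrightarrow{\mathcal{F}_A} K_0^{\operatorname{eu}}(A) \cong K_0^{\operatorname{Fr}}(A)$ established in the preceding subsections, with the group structure transported from the Fredholm side. Your added discussion of the naturality square is a welcome elaboration of something the paper asserts but does not spell out in detail.
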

\begin{thm} \label{thm: nat iso ungraded}
     Let $A$ be a unital, trivially graded (Real) $C^*$-algebra. Then, there is a natural isomorphism between $K_0^{\operatorname{sp}}(A)$ and $K_0(A)$.
\end{thm}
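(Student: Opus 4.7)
The plan is to obtain the desired natural isomorphism by chaining two isomorphisms already established in the excerpt, namely the spectral-to-Fredholm comparison in \cref{thm: sp=fr} and the Fredholm-to-projective comparison in \cref{thm: K_0 iso}. Concretely, I would simply form the composite
\[
K_0^{\operatorname{sp}}(A) \xrightarrow{\;\mathcal{F}_A\;} K_0^{\operatorname{Fr}}(A) \xrightarrow{\;\operatorname{ind}\;} K_0(A),
\]
where the first map is the natural isomorphism of \cref{thm: sp=fr} (built from $\varphi \mapsto (\mathcal{H}_\varphi, \tilde{\varphi}(x))$ composed with the comparison $K_0^{\operatorname{eu}}(A) \cong K_0^{\operatorname{Fr}}(A)$) and the second is the index isomorphism of \cref{thm: K_0 iso}, whose inverse is induced by viewing a finitely generated projective module $\mathcal{E}$ as the Fredholm cycle $(\mathcal{E},0,0)$. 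Since both factors are group isomorphisms, so is the composite.

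The only real content left is to check that the composite is natural in $A$, which reduces to checking naturality of each factor separately. For $\mathcal{F}_A$, the proof of \cref{thm: sp=fr} already records naturality: given a graded $*$-homomorphism $\alpha:A \to B$, the induced map on the spectral side is $\varphi \mapsto (\alpha \hat\otimes \operatorname{id}_{\mathcal{K}(\hat{l^2})}) \circ \varphi$, which on the Fredholm side corresponds to applying $-\hat\otimes_\alpha B$ to the Hilbert module and the Fredholm operator; naturality of the nondegenerate extension in \cref{prop: extend homomorphism} and functoriality of the internal tensor product then give the commuting square. For $\operatorname{ind}$, naturality follows from the fact that a taming $(n,g)$ of $F$ pushes forward to a taming $(n, \alpha_*(g))$ of $\alpha_*(F)$, and that kernels of surjective Fredholm operators with closed range commute with $-\hat\otimes_\alpha B$ (this is exactly the computation in the proof of well-definedness of the index in \cref{defi: index defi}).

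I expect no serious obstacle, since all the hard work is already in place. The only minor subtlety is that $\operatorname{ind}$ is only defined and proven natural for \emph{trivially graded, unital} $C^*$-algebras, which is why the hypothesis of \cref{thm: nat iso ungraded} is needed; and for such $A$ the identification of graded and ungraded Fredholm cycles (recorded in the lemma preceding \cref{thm: K_0 iso}) ensures that the two flavors of $K_0^{\operatorname{Fr}}(A)$ agree, so the composition is well-defined. The Real case adds nothing new: in \cref{thm: sp=fr} the construction of $\mathcal{F}_A$ is compatible with Real structures (all the ingredients — functional calculus, nondegenerate extension, restriction to $\mathcal{H}_\varphi$ — preserve Real structure), and for Real unital $A$ the index takes values in $K_0$ of the underlying real $C^*$-algebra, which is precisely $K_0(A)$ in the Real sense. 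Hence the composite furnishes the asserted natural isomorphism.
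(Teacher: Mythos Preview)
Your proposal is correct and matches the paper's approach exactly: the theorem is stated as a summary of the preceding work, obtained by composing the natural isomorphism $K_0^{\operatorname{sp}}(A)\cong K_0^{\operatorname{Fr}}(A)$ of \cref{thm: sp=fr} with the index isomorphism $K_0^{\operatorname{Fr}}(A)\cong K_0(A)$ of \cref{thm: K_0 iso}. The paper gives no separate proof beyond this chaining, so your write-up is if anything more detailed than what appears there.
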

\begin{cor}\label{cor: K-theory ungraded}
    When $A$ is unital and ungraded, under the isomorphism of \cref{thm: nat iso ungraded}, the graded $*$-homomorphism 
    \begin{align*}
        \theta: \s &\rightarrow \com(\hat{l^2}) \\
        f &\mapsto \begin{pmatrix}
            f(0)P_0 & 0 \\
            0 & f(0)P_1
        \end{pmatrix},
    \end{align*}
    where $P_0,P_1$ are compact projections, gets mapped to the class $[P_0]-[P_1] \in K_0(A)$.
\end{cor}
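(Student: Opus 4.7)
The plan is to trace $[\theta]$ through the natural isomorphism of \cref{thm: nat iso ungraded}, which factors as
\[
K_0^{\operatorname{sp}}(A) \xrightarrow{\F_A} K_0^{\operatorname{eu}}(A) \xrightarrow{\sim} K_0^{\operatorname{Fr}}(A) \xrightarrow{\operatorname{ind}} K_0(A).
\]
The homeomorphism $\K(A) \cong \tilde\K(A)$ is induced by the odd increasing homeomorphism $\mathbb{R} \cong (-1,1)$, which in particular fixes $0$; so in the $\tilde\K(A)$ picture the class of $\theta$ is represented by the map $C_0(-1,1) \to \com(\hat\HA)$ with the same formula $f \mapsto f(0)(P_0 \oplus P_1)$.

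First I would compute $\F_A([\theta])$ by identifying $\h_\theta$ and the operator $\tilde\theta(x)$. Because every scalar arises as $f(0)$ for some $f \in C_0(-1,1)$, the image $\theta(C_0(-1,1))$ is the line $\mathbb{C}\cdot(P_0\oplus P_1)$, so $\h_\theta = \overline{\theta(C_0(-1,1))\,\hat\HA}$ is precisely the range of the even compact projection $P_0 \oplus P_1$. As a graded Hilbert $A$-module this decomposes as $P_0\HA \oplus P_1\HA$ with $P_0\HA$ even and $P_1\HA$ odd, and both summands are finitely generated and projective since compact projections have finitely generated projective range. The non-degenerate restriction $\theta^{\operatorname{nd}} : C_0(-1,1) \to \com(\h_\theta)$ is then $f \mapsto f(0)\,\mathrm{id}_{\h_\theta}$, so by the uniqueness in \cref{prop: extend homomorphism} its unital extension $\tilde\theta : C[-1,1] \to \mathcal{B}(\h_\theta)$ is $g \mapsto g(0)\,\mathrm{id}_{\h_\theta}$; in particular $\tilde\theta(x) = 0$. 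Hence $\F_A([\theta])$ is represented by the essentially unitary Fredholm cycle $(P_0\HA \oplus P_1\HA,\, 0)$, which is a valid cycle because $\h_\theta$ is finitely generated, so the identity on it is compact.

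Viewing this graded Fredholm cycle as an ungraded triple in the sense of Definitions~\ref{def: ungraded fred 1}--\ref{def: ungraded fred 2} (the zero operator decomposes off-diagonally into zero), we obtain $(P_0\HA,\, P_1\HA,\, 0)$. I would then apply the index map of \cref{defi: index defi}: pick any surjection $g : A^n \twoheadrightarrow P_1\HA$, which exists as $P_1\HA$ is finitely generated, so that $(n,g)$ tames the zero operator. The kernel of $0 \oplus g : P_0\HA \oplus A^n \twoheadrightarrow P_1\HA$ is $P_0\HA \oplus \ker g$, and since the short exact sequence $0 \to \ker g \to A^n \to P_1\HA \to 0$ splits ($P_1\HA$ being projective), $[\ker g] = [A^n] - [P_1\HA]$ in $K_0(A)$. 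Therefore
\[
\operatorname{ind}(0) = [P_0\HA \oplus \ker g] - [A^n] = [P_0\HA] + [A^n] - [P_1\HA] - [A^n] = [P_0] - [P_1],
\]
which matches the claim, and by \cref{thm: K_0 iso} this is the image of $[\theta]$ in $K_0(A)$.

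The main obstacle is really just the bookkeeping of the first step: correctly identifying $\h_\theta$ as $(P_0 \oplus P_1)\hat\HA$ together with its $\mathbb{Z}/2\mathbb{Z}$-grading, and appealing to the uniqueness in \cref{prop: extend homomorphism} to conclude that the unital extension kills the coordinate function $x$. Once these identifications are in hand, the conversion to an ungraded Fredholm triple and the subsequent index computation are completely formal.
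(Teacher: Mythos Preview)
Your proof is correct and follows essentially the same route as the paper: identify $\h_\theta$ as the range of $P_0\oplus P_1$, invoke the uniqueness in \cref{prop: extend homomorphism} to conclude $\tilde\theta(x)=0$, and then read off the index of the resulting Fredholm cycle $(\h_\theta,0)$. The only difference is that the paper stops at ``the rest follows from the definition of the index of a Fredholm cycle,'' whereas you spell out the taming and the resulting kernel computation explicitly.
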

\begin{proof}
    This is just a matter of checking the fate of $\theta$ under the isomorphisms \[K_0^{\operatorname{sp}}(A) \rightarrow \Tilde{K_0}^{\operatorname{sp}}(A) \rightarrow K_0^{\operatorname{eu}}(A) \rightarrow K_0^{\operatorname{Fr}}(A) \rightarrow K_0(A)\]
    While viewing $\theta$ as a $*$-homomorphism $C_0(-1,1) \rightarrow \com(\hat{l^2})$, we note that  $\mathcal{H}_\theta=(P_0 \oplus P_1)\hat{l^2}$. The uniqueness assertion in \cref{prop: extend homomorphism}) immediately shows that 
    \begin{align*}
        \Tilde{\theta}: C[-1,1] &\rightarrow \com(\h_\theta) \\
        f &\mapsto \begin{pmatrix}
            f(0)P_0 & 0 \\
            0 & f(0)P_1
        \end{pmatrix},
    \end{align*}
    Thus, $\Tilde{\theta}(x)=0$, and so the class in $K_0^{\operatorname{Fr}}(A)$ corresponding to $\theta$ is $(\h_\theta,0)$. The rest follows from the definition of the index of a Fredholm cycle.
\end{proof}
Thus, any projection onto an even one-dimensional subspace yields an element of $K_0^{\operatorname{sp}}(\mathbf{C})$, which we denote by $1$ in light of \cref{thm: nat iso ungraded}. We need a technical lemma concerning this for the next section:
\begin{lem} \label{prop: 1 x = x}
With $1 \in K_0^{\operatorname{sp}}(\mathbf{C})$ as above, if
$A$ is any graded $C^{*}$-algebra and if $x \in K_0^{\operatorname{sp}}(A)$, then under the
isomorphism $\mathbf{C} \hat{\otimes} A \cong A$ the class $1 \times x$
corresponds to $x$.
\end{lem}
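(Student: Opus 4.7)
The plan is to unravel the product $p(\theta,\varphi)$ at the space level using the coalgebra structure on $\s$, and then finish via the Dixmier--Douady swindle. I represent $1 \in K_0^{\operatorname{sp}}(\mathbf{C})$ by the homomorphism
\[
\theta: \s \to \mathbf{C} \hat\otimes \com(\hat{l^2}), \quad f \mapsto 1_\mathbf{C} \hat\otimes f(0)P_0,
\]
where $P_0 = \ket{e}\bra{e}$ for a unit even vector $e \in \hat{l^2}$; by \cref{cor: K-theory ungraded} (with the second projection taken to be zero) this class is indeed $1$. The key observation is that $\theta$ factors as $\theta = P \circ \eta$, where $\eta: \s \to \mathbf{C}$, $f \mapsto f(0)$, is the counit from the corollary following \cref{prop:comultiplication}, and $P:\mathbf{C} \to \mathbf{C} \hat\otimes \com(\hat{l^2})$ sends $1 \mapsto 1_\mathbf{C} \hat\otimes P_0$.

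Let $\varphi: \s \to A \hat\otimes \com(\hat{l^2})$ represent $x$. Unpacking \cref{prop: product}, the first goal is to establish
\[
(\theta \hat\otimes \varphi)\Delta(f) \;=\; 1_\mathbf{C} \hat\otimes P_0 \hat\otimes \varphi(f) \qquad \text{in } \mathbf{C} \hat\otimes \com(\hat{l^2}) \hat\otimes A \hat\otimes \com(\hat{l^2}),
\]
by invoking the counit identity $(\eta \hat\otimes 1_\s)\Delta = 1_\s$ together with the factorization $\theta = P \circ \eta$; since both sides are graded $*$-homomorphisms, it suffices to verify the identity on the generators $u,v$, using $\theta(u) = 1_\mathbf{C}\hat\otimes P_0$, $\theta(v)=0$, and the formulas $\Delta(u) = u \hat\otimes u$, $\Delta(v) = u \hat\otimes v + v \hat\otimes u$. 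Next, the flip $1 \hat\otimes \tau \hat\otimes 1$ commutes $P_0$ past $A$ without any Koszul sign because $P_0$ is even. Finally, a direct computation shows $\Psi(P_0 \hat\otimes k) = \operatorname{Ad}(\iota)(k)$, where $\iota: \hat{l^2} \to \hat{l^2} \hat\otimes \hat{l^2}$, $h \mapsto e \hat\otimes h$, is an even isometry (its adjoint being $h_1 \hat\otimes h_2 \mapsto \langle e, h_1 \rangle h_2$). Combining these steps, under the identification $\mathbf{C} \hat\otimes A \cong A$ one obtains $p(\theta,\varphi) = (1_A \hat\otimes \operatorname{Ad}(\iota)) \circ \varphi = \K(1_A, \iota)(\varphi) \in \K(A, \hat{l^2} \hat\otimes \hat{l^2})$.

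To conclude, fix an even unitary $u: \hat{l^2} \hat\otimes \hat{l^2} \to \hat{l^2}$ realizing the canonical homotopy class of maps $\K(A, \hat{l^2} \hat\otimes \hat{l^2}) \to \K(A)$. Then $1 \times x \in K_0^{\operatorname{sp}}(A)$ is represented by $\K(1_A, u\iota)(\varphi)$, where $u\iota$ is an even isometry of $\hat{l^2}$. By \cref{lem: dix}, $\operatorname{Iso}^{\operatorname{ev}}(\hat{l^2}, \hat{l^2})$ is strong-$*$ contractible, so $u\iota$ is strong-$*$ path-connected to $1_{\hat{l^2}}$; by \cref{prop: baby functoriality}(2), $\K(1_A, u\iota)$ is then based homotopic to the identity on $\K(A)$, yielding $[1 \times x] = [\varphi] = x$. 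The most delicate step is the middle paragraph: carefully tracking the tensor factors through the flip and the isomorphism $\Psi$ while verifying that the counit identity collapses the formula cleanly. Once that bookkeeping is in place, the Dixmier--Douady swindle finishes the proof.
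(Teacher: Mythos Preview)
Your proof is correct and follows essentially the same approach as the paper's: both represent $1$ by the rank-one projection $P_0$ onto an even unit vector $e$, observe that the product $p(\theta,\varphi)$ collapses to $(1_A \hat\otimes \operatorname{Ad}(\iota))\circ\varphi$ for the even isometry $\iota: h \mapsto e \hat\otimes h$, and then finish by the Dixmier--Douady contractibility of even isometries. Your use of the counit identity $(\eta \hat\otimes 1)\Delta = 1$ to justify the collapse is slightly more conceptual than the paper's direct check on the generators $u,v$, but the argument is the same.
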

\begin{proof}
    Let $\theta: w \mapsto \langle w,x \rangle x$ be our rank-one projection, where $x$ is a homogeneous element in $\hat{l^2}$. Note that we have an isometry 
    \begin{align*}
        \iota:\hat{l^2} &\rightarrow \hat{l^2} \hat\otimes \hat{l^2}\\
        e_i &\mapsto x \hat\otimes e_i
    \end{align*}
    Then, the induced map $\iota_*:\com(\hat{l^2}) \rightarrow \com(\hat{l^2}) \hat\otimes \com(\hat{l^2})$ is given by $\iota_*(T)=\theta \hat\otimes T$ for homogeneous operators $T$. Now, $\iota$ is homotopic to an even unitary $u:\hat{l^2} \rightarrow \hat{l^2} \hat\otimes \hat{l^2}$, so that $u^*$ is an identification between $\hat{l^2} \hat\otimes \hat{l^2}$ and $\hat{l^2}$. From \cref{Prod group level}, we are now done by checking that for the generators $u$ and $v$ in $\s$, the composite \[% https://q.uiver.app/#q=WzAsNSxbMCwwLCJcXG1hdGhjYWx7U30iXSxbMSwwLCJcXG1hdGhjYWx7U30gXFxoYXRcXG90aW1lc1xcbWF0aGNhbHtTfSJdLFsyLDAsIlxcbWF0aGNhbHtLfShcXGhhdHtsXjJ9KSBcXGhhdFxcb3RpbWVzIEEgXFxoYXRcXG90aW1lcyBcXG1hdGhjYWx7S30oXFxoYXR7bF4yfSkgIl0sWzQsMCwiIEEgXFxoYXRcXG90aW1lcyBcXG1hdGhjYWx7S30oXFxoYXR7bF4yfSkgXFxoYXRcXG90aW1lc1xcbWF0aGNhbHtLfShcXGhhdHtsXjJ9KSAiXSxbNSwwLCJBIFxcaGF0XFxvdGltZXMgXFxtYXRoY2Fse0t9KFxcaGF0e2xeMn0pIl0sWzAsMSwiXFx0cmlhbmdsZSJdLFsxLDIsIjEgXFxoYXRcXG90aW1lcyB4Il0sWzMsNCwiMSBcXGhhdFxcb3RpbWVzIFxcaW90YV8qXnstMX0iXSxbMiwzLCJcXG9wZXJhdG9ybmFtZXtmbGlwfSBBIH5cXCZ+IFxcbWF0aGNhbHtLfShcXGhhdHtsXjJ9KSJdXQ==
\begin{tikzcd}
	{\mathcal{S}} & {\mathcal{S} \hat\otimes\mathcal{S}} & {\mathcal{K}(\hat{l^2}) \hat\otimes A \hat\otimes \mathcal{K}(\hat{l^2}) } && { A \hat\otimes \mathcal{K}(\hat{l^2}) \hat\otimes\mathcal{K}(\hat{l^2}) } & {A \hat\otimes \mathcal{K}(\hat{l^2})}
	\arrow["\triangle", from=1-1, to=1-2]
	\arrow["{1 \hat\otimes x}", from=1-2, to=1-3]
	\arrow["{1 \hat\otimes \iota^{*}_{*}}", from=1-5, to=1-6]
	\arrow["{\operatorname{flip} A ~\&~ \mathcal{K}(\hat{l^2})}", from=1-3, to=1-5]
\end{tikzcd}
    \] identifies the class of $1 \times x$ with $x$.
\end{proof}
\section{Bott Periodicity}
In this section, we state and prove the Bott periodicty theorem in the spectral model. As we have already deduced natural isomorphisms between the spectral and projection pictures of K-theory in the unital, ungraded case, we will make no distinctions between these models henceforth. Most results here are inspired from \cite{NIG} and \cite{SLB}.

We will be making extensive use of Clifford algebras to that end. This is because, in both the complex and Real cases, the well known periodicity stems out of the periodicity of the Clifford algebras.
\subsection{Clifford algebras}
In this section, we discuss Clifford algebras. As usual, we develop everything in the Real world. We will consider Clifford algebras in a variety of generalities, as follows:

\begin{defi} \label{defi: baby clifford}
Let $E$ be an $n$-dimensional $\R$-vector space and $Q$ a quadratic
form on $E$. The Clifford algebra $\operatorname{Cliff}(E, Q)$ is the
solution of the universal problem that asks for an $\R$-algebra $C$
and a linear map $j : E \to C$ with the following properties: If $A$
is an associative $\R$-algebra with unit $1_A$ and if $f : E \to A$ is
linear with $f(v)^{2}  = Q(v)1_A$ for all $v \in E$, then there is a
unique algebra homomorphism $\tilde{f} : C \to A$ with $\tilde{f}
\circ j = f$.
\end{defi}
\begin{rem}
The map $j : E \to \operatorname{Cliff}(E, Q)$ will be denoted by
$i_E$. The special case $Q = 0$ yields the exterior algebra
$\Lambda^{*}E$. For general $Q$ we obtain $\operatorname{Cliff}(E, Q)$
as the quotient of the tensor algebra
\[
\mathcal{T}(E) = \R \oplus \bigoplus_{k \ge 0} \underbrace{(E \otimes
\ldots \otimes E)}_{k \text{ times}}
\]
by the ideal which is generated by elements of the form $v \otimes v -
Q(v)1_{T(E)}$, $v \in E$.
\end{rem}
\begin{defi} \label{defi: cl on euclidean vs}
    Let $V$ be a finite dimensional Euclidean vector space, that is, a finite dimensional real vector space together with a positive-definite inner product. We define \[
    \operatorname{Cliff}(V):=\operatorname{Cliff(V,{\lVert . \rVert^2}_V})
    \]
\end{defi}
Now, we consider the situation where $V$ also carries a non-trivial involution.
\begin{defi}
    A finite dimensional pseudo-Euclidean vector space $(V,\kappa)$ is a finite dimensional Euclidean vector space equipped with an self-adjoint involution $\kappa$.
\end{defi}
Some elementary algebra shows that the following definition makes sense:
\begin{defi}
    Let $(V,\kappa)$ be a finite dimensional pseudo-Euclidean vector space. Then, we can equip $\operatorname{Cliff}(V)$ with an involution; by extending $\kappa$ multiplicatively on $\operatorname{Cliff}(V)$. We denote $\operatorname{Cliff}(V)$ together with this extended involution by $\operatorname{Cliff}(V,\kappa)$. 
\end{defi}
\begin{prop}\label{prop: cliff c* alg}
    Let $(V,\kappa)$ be a finite dimensional pseudo-Euclidean vector space. Then, $\operatorname{Cliff}(V,\kappa)$ can be canonically given the structure of a $\Z/2\Z$-graded real $C^*$-algebra.
\end{prop}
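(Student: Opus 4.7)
The plan is to build the structure in three stages: the $\Z/2\Z$-grading coming from $\kappa$, the antimultiplicative $*$-involution, and finally the $C^*$-norm via a faithful self-adjoint representation on the exterior algebra.

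First, I would extend $\kappa : V \to V$ to an algebra automorphism $\hat\kappa$ of $\operatorname{Cliff}(V)$. Since $\kappa$ is orthogonal, $\kappa(v)^2 = \|v\|^2$ in $\operatorname{Cliff}(V)$, so the universal property in \cref{defi: baby clifford} produces a unique $\R$-algebra endomorphism extending $\kappa$; as $\kappa^2 = \operatorname{id}$, so does $\hat\kappa$, yielding the $\Z/2\Z$-grading. Second, I would define $*$ on the tensor algebra $\mathcal{T}(V)$ as the $\R$-linear antimultiplicative involution fixing $V$ pointwise. Since the defining Clifford relation $v \otimes v - \|v\|^2$ is $*$-invariant, the involution descends to $\operatorname{Cliff}(V)$. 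Compatibility $\hat\kappa(a^*) = \hat\kappa(a)^*$ is immediate on $V$ and propagates by simultaneous (anti)multiplicativity of the two operations, so $\hat\kappa$ is indeed a $*$-automorphism.

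Third, to obtain the norm I would use the standard Clifford representation on the exterior algebra. Equip $\Lambda^{*} V$ with the inner product for which wedges of an orthonormal basis of $V$ form an orthonormal basis, and set $c(v) := \varepsilon(v) + \iota(v) \in \operatorname{End}_{\R}(\Lambda^{*} V)$, where $\varepsilon(v)$ is exterior multiplication by $v$ and $\iota(v)$ is its formal adjoint (contraction). A routine computation gives $c(v)^2 = \|v\|^2 \operatorname{id}$ and $c(v)^* = c(v)$, so by \cref{defi: baby clifford} the map $c$ extends to a $*$-homomorphism $\rho : \operatorname{Cliff}(V) \to \operatorname{End}_{\R}(\Lambda^{*} V)$. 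Faithfulness follows from the symbol map $a \mapsto \rho(a)(1)$, which sends a Clifford monomial $e_{i_1} \cdots e_{i_k}$ (in an orthonormal basis with distinct indices) to $e_{i_1} \wedge \cdots \wedge e_{i_k}$ and is therefore an $\R$-linear isomorphism $\operatorname{Cliff}(V) \to \Lambda^{*} V$. Pulling back the operator norm along $\rho$ equips $\operatorname{Cliff}(V,\kappa)$ with a norm satisfying the $C^*$-identity, with completeness automatic from finite-dimensionality.

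The main obstacle I anticipate is verifying the full definition of a real $C^*$-algebra rather than just the $C^*$-identity: one additionally needs $1 + x^*x$ to be invertible for every $x$. This is clean in our setting, since $\rho(1 + x^*x) = 1 + \rho(x)^*\rho(x)$ is self-adjoint with spectrum in $[1,\infty)$ inside $\operatorname{End}_{\R}(\Lambda^{*} V)$, hence invertible; by faithfulness of $\rho$ and finite-dimensionality, this pulls back to invertibility in $\operatorname{Cliff}(V,\kappa)$. Finally, $\hat\kappa$ being a $*$-automorphism of a finite-dimensional real $C^*$-algebra is automatically isometric, so the grading preserves the norm; and canonicality of the $C^*$-norm is guaranteed because any two $C^*$-norms on a semisimple real $*$-algebra coincide.
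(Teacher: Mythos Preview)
Your construction of the $*$-operation and the $C^*$-norm via the faithful representation on $\Lambda^{*} V$ is essentially what the paper does (the paper routes through $\operatorname{Cliff}(V \oplus V, \lVert\cdot\rVert^2 \oplus -\lVert\cdot\rVert^2) \to \operatorname{End}(\Lambda^{*} V)$ and then restricts, but your direct $c(v) = \varepsilon(v) + \iota(v)$ is the same map). Your explicit check of the real-$C^*$ axiom $1 + x^*x \in \operatorname{GL}$ is a nice touch absent from the paper's sketch.

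There is, however, a genuine misidentification: the $\Z/2\Z$-grading is \emph{not} supplied by $\kappa$. In the paper's setup, $\operatorname{Cliff}(V,\kappa)$ denotes $\operatorname{Cliff}(V)$ together with the extra multiplicative involution $\hat\kappa$, but the grading is the standard Clifford grading $\alpha$ obtained by extending $v \mapsto -v$. The involution $\hat\kappa$ is retained as a separate datum which, after complexification in \cref{cor: complex cliff alg}, becomes the Real structure $\kappa \hat\otimes \overline{(\cdot)}$ on $\operatorname{Cliff}^{\C}(V,\kappa)$. This is not cosmetic: with your $\hat\kappa$-grading, the case $\kappa = \operatorname{id}_V$ (i.e.\ $V = \R^{0,n}$) would yield a trivially graded algebra, and more seriously the graded tensor decomposition $\operatorname{Cliff}(V_1 \oplus V_2) \cong \operatorname{Cliff}(V_1) \hat\otimes \operatorname{Cliff}(V_2)$ of \cref{lem: tensor product Cliff} --- on which the entire Bott periodicity argument rests --- fails, since the cross terms $(v_1 \hat\otimes 1)(1 \hat\otimes v_2) + (1 \hat\otimes v_2)(v_1 \hat\otimes 1)$ cancel only when every $v \in V$ is odd.
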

\begin{proof}
    The result is well known, we don't give all the details here; the details can be found in \cite{SCH} and \cite{SPIN}. We just mention the grading on $\operatorname{Cliff}(V,\kappa)$, and mention a graded injective algebra homomomorphism $c:\operatorname{Cliff}(V,\kappa) \rightarrow \operatorname{End}(\Lambda^*V)$, the real $C^*$-algebra of bounded operators on the graded real Hilbert space $\Lambda^*V$. The norm and adjoint operation on $\operatorname{Cliff}(V,\kappa)$ is then defined to be such that this algebra homomorphism is an isomorphism of $C^*$-algebras.
    
    We first discuss the grading on $\operatorname{Cliff}(V)$. Note that, by the universal property of Clifford algebras, the map $v \mapsto -v$ on $V$ extends to an involution preserving automorphism $\alpha$ of $(\operatorname{Cliff}(V,\kappa))$ such that $\alpha^2=1$. We declare $\alpha$ to be the grading operator on $\operatorname{Cliff}(V,\kappa)$.

    We next give the definition of 
    \[
    c: \operatorname{Cliff}(V,\kappa) \rightarrow \operatorname{End}(\Lambda^*V)
    \]
    For each $v \in V$,
	there is a homomorphism
	\[
	\operatorname{ext}_v : \Lambda^{*}V \to \Lambda^{*+1}(V), \qquad
	\omega \mapsto v \wedge \omega
	\]
	with $\operatorname{ext}_v^{2} =0$.

	If $L \in V^{*}$, there also is a unique homomorphism
	$\operatorname{ins}_L : \Lambda^{*}V \to \Lambda^{*-1}(V)$ such
	that
	\[
	\operatorname{ins}_L = L : V = \Lambda^{1}(V) \to \Lambda^{0}(V) =
	\mathbb{K}
	\]
	and
	\[
	\operatorname{ins}_L(\omega \wedge \eta) =
	\operatorname{ins}_L(\omega) \wedge \eta + (-1)^{k} \omega \wedge
	\operatorname{ins}_L(\eta) \qquad \forall \omega \in
	\Lambda^{k}(V), \eta \in \Lambda^{l}(V)
	\]
	We have $\operatorname{ins}_L^{2}  = 0$ and
	\[
	\operatorname{ext}_v \operatorname{ins}_L + \operatorname{ins}_L
	\operatorname{ext}_v = L(v).
	\]

	Now, as $V$ is an euclidean vector space, then  $\Lambda^{*}V$ 
	has an inner product and $\operatorname{ext}^{*}_v =
	\operatorname{ins}_{\langle v,- \rangle}$.

	Now define a map
	\[
	\gamma : V \oplus V \to \operatorname{End}(\Lambda^{*}E), \qquad
	(v, w) \mapsto \operatorname{ext}_{v+w} +
	\operatorname{ins}_{\langle v-w, -\rangle}.
	\]
	Then a direct calculation shows that 
	\[
	\gamma(v, w)^{2} = \lVert v \rVert^2 - \lVert w \rVert^2.
	\]
	By the universal property of the Clifford algebras, we can continue to an algebra homomorphism
	\[
	\gamma : \operatorname{Cliff}(V \oplus V, \lVert . \rVert^2 \oplus (-\lVert . \rVert^2)) \to
	\operatorname{End}(\Lambda^{*}V)
	\]
	to a graded algebra homomorphism (in fact, an isomorphism). Since $i : (V, \lVert . \rVert) \rightarrow (V \oplus V, \lVert . \rVert
	\oplus (-\lVert . \rVert)), v \mapsto (v, 0)$ is an isometry, there also is an
	algebra homomorphism $c := \gamma \circ \operatorname{Cliff}(i) :
	\operatorname{Cliff}(V, \lVert . \rVert) \to \operatorname{End}(\Lambda^{*}V)$, which we mention is the one we are after. 
    
\end{proof}
We remark that the grading on Clifford algebras actually makes its study easier; as the following lemma shows (see Theorem 1.2.3 of \cite{SCH}):
\begin{lem}\label{lem: tensor product Cliff}
    Let $(V_1,\kappa_1)$ and $(V_2,\kappa_2)$ be finite dimensional pseudo-Euclidean vector spaces. Then the map
    \begin{align*}
      f: V_1 \oplus V_2 &\rightarrow \operatorname{Cliff}(V_1,\kappa_1) \hat\otimes \operatorname{Cliff}(V_2,\kappa_2)  \\
      v_1 \oplus v_2 &\mapsto j_{V_1}(v_1) \hat\otimes 1 + 1 \hat\otimes j_{V_2}(v_2)
    \end{align*}
    induces an isomorphism
    \[
    \operatorname{Cliff}(V_1 \oplus V_2, \kappa_1 \oplus\kappa_2) \rightarrow \operatorname{Cliff}(V_1,\kappa_1)\hat\otimes \operatorname{Cliff}(V_2,\kappa_2)
    \]
    
\end{lem}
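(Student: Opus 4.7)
The plan is to build the isomorphism via the universal property of the Clifford algebra, and the whole point is that the \emph{graded} tensor product is exactly what makes the needed Clifford relation hold. First I would check that $f:V_1 \oplus V_2 \to \operatorname{Cliff}(V_1,\kappa_1) \hat\otimes \operatorname{Cliff}(V_2,\kappa_2)$ satisfies $f(v_1\oplus v_2)^2 = (\lVert v_1\rVert^2 + \lVert v_2\rVert^2)\cdot 1$. Setting $a = j_{V_1}(v_1)\hat\otimes 1$ and $b = 1\hat\otimes j_{V_2}(v_2)$, both homogeneous of odd degree, we have $a^2 = j_{V_1}(v_1)^2 \hat\otimes 1 = \lVert v_1\rVert^2 \cdot 1$, $b^2 = 1\hat\otimes \lVert v_2\rVert^2 = \lVert v_2\rVert^2 \cdot 1$, and by the Koszul sign rule
\[
ab = j_{V_1}(v_1) \hat\otimes j_{V_2}(v_2), \qquad ba = (-1)^{1\cdot 1}\, j_{V_1}(v_1)\hat\otimes j_{V_2}(v_2) = -ab,
\]
so $ab + ba = 0$, giving $f(v_1\oplus v_2)^2 = \lVert v_1\oplus v_2\rVert^2 \cdot 1$ as required. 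By \cref{defi: baby clifford} applied to the quadratic form $\lVert\cdot\rVert^2$ on $V_1\oplus V_2$, the map $f$ extends uniquely to an algebra homomorphism
\[
F : \operatorname{Cliff}(V_1\oplus V_2) \longrightarrow \operatorname{Cliff}(V_1,\kappa_1)\hat\otimes \operatorname{Cliff}(V_2,\kappa_2).
\]

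Next I would verify that $F$ is a morphism in the category of $\Z/2\Z$-graded Real $C^{*}$-algebras. The grading on the left is the extension of $v\mapsto -v$, while the grading on the right is $\alpha_1 \hat\otimes \alpha_2$ where $\alpha_i$ negates each $j_{V_i}(v_i)$; since $F$ sends $j_{V_1\oplus V_2}(v_1\oplus v_2)$ to a sum of odd elements, it is graded. Similarly $F$ intertwines the involutions since on the generating set $V_1\oplus V_2$ it satisfies $F(\kappa_1(v_1)\oplus \kappa_2(v_2)) = j_{V_1}(\kappa_1(v_1))\hat\otimes 1 + 1 \hat\otimes j_{V_2}(\kappa_2(v_2)) = (\kappa_1\hat\otimes\kappa_2)(F(v_1\oplus v_2))$, and both sides extend multiplicatively.

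It remains to show $F$ is bijective. For surjectivity, the image of $F$ is a subalgebra containing every $j_{V_1}(v_1)\hat\otimes 1$ and every $1\hat\otimes j_{V_2}(v_2)$ (just restrict $f$ to the two summands). Taking products, the image contains $j_{V_1}(v_1)\hat\otimes j_{V_2}(v_2)$ and, more generally, every elementary tensor of words in the generators of the two Clifford algebras. Since such words span $\operatorname{Cliff}(V_1)\hat\otimes\operatorname{Cliff}(V_2)$ linearly, $F$ is surjective. For injectivity I would use a dimension count: a standard basis argument for Clifford algebras shows $\dim_{\R}\operatorname{Cliff}(V) = 2^{\dim V}$, so both $\operatorname{Cliff}(V_1\oplus V_2)$ and $\operatorname{Cliff}(V_1)\hat\otimes\operatorname{Cliff}(V_2)$ have real dimension $2^{\dim V_1 + \dim V_2}$. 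A surjection between finite-dimensional vector spaces of the same dimension is an isomorphism.

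The only mildly subtle point, which I would flag as the main obstacle, is the sign computation that makes the cross terms $ab + ba$ vanish: this is precisely where the \emph{graded} tensor product $\hat\otimes$ (as opposed to the ordinary tensor product) is essential; with the ordinary tensor product the map $f$ would not even extend to $\operatorname{Cliff}(V_1\oplus V_2)$. Everything else — graded/Real compatibility, surjectivity, the dimension count — is routine once this key calculation is in place.
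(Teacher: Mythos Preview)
Your proof is correct and follows exactly the standard route: verify the Clifford relation via the Koszul sign in the graded tensor product, invoke the universal property, then conclude by surjectivity on generators plus a dimension count. The paper does not prove this lemma at all but simply cites Theorem 1.2.3 of \cite{SCH}, so you have in fact supplied the argument the paper outsources; the approach is the same as the one in that reference.

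One small terminological quibble: at this point in the paper $\operatorname{Cliff}(V,\kappa)$ is a \emph{real} graded $C^{*}$-algebra, not a Real one --- the Real structure appears only after complexification in \cref{cor: complex cliff alg}. So in your compatibility check you should speak of the extended involution $\kappa$ rather than a ``Real structure''; the actual verification you wrote down is the correct one regardless.
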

We now turn our attention to the complexified Clifford algebras. 
\begin{cor}\label{cor: complex cliff alg}
Let $(V,\kappa)$ be a finite dimensional pseudo-Euclidean vector space. Then, $\operatorname{Cliff}^{\C}(V,\kappa):=\operatorname{Cliff}(V,\kappa) \hat\otimes_{\R} \mathbf{C}$ is a $\Z/2\Z$-graded Real $C^*$-algebra with Real structure given by $\kappa \hat\otimes \bar{(.)}$.
\end{cor}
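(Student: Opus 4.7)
The plan is to unpack the definition of a $\Z/2\Z$-graded Real $C^*$-algebra (Definition \ref{defi: Z/2Z graded real}) and verify the three requirements on $\operatorname{Cliff}^{\C}(V,\kappa) := \operatorname{Cliff}(V,\kappa) \hat\otimes_{\R} \mathbf{C}$: that it is a $\Z/2\Z$-graded complex $C^*$-algebra, that $\tau := \kappa \hat\otimes \overline{(.)}$ is a $\C$-antilinear $*$-automorphism squaring to the identity, and that $\tau$ commutes with the grading. By Proposition \ref{prop: cliff c* alg}, $\operatorname{Cliff}(V,\kappa)$ is already a $\Z/2\Z$-graded real $C^*$-algebra with grading $\alpha$ (extending $v \mapsto -v$) and $*$-operation inherited from $\operatorname{End}(\Lambda^{*}V)$; its standard complexification by tensoring over $\R$ with $\mathbf{C}$ (where $\mathbf{C}$ carries the trivial grading) is then a $\Z/2\Z$-graded complex $C^*$-algebra in the manner described by the Goodearl correspondence recalled earlier in the paper.

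Next I would define $\tau$ on elementary tensors by $\tau(a \otimes z) := \kappa(a) \otimes \bar{z}$ and extend $\R$-linearly to the complexification; well-definedness follows because the prescription is $\R$-bilinear. The verifications are then mechanical: for $\lambda \in \C$, $\tau(\lambda(a \otimes z)) = \tau(a \otimes \lambda z) = \kappa(a) \otimes \overline{\lambda z} = \bar\lambda \, \tau(a \otimes z)$, giving $\C$-antilinearity; $\tau^{2}(a \otimes z) = \kappa^{2}(a) \otimes z = a \otimes z$ using $\kappa^{2} = 1$; and multiplicativity together with preservation of $*$ follow from the fact that $\kappa$ is an $\R$-algebra automorphism extending an isometric involution on $V$, hence a $*$-automorphism of the real $C^*$-algebra $\operatorname{Cliff}(V,\kappa)$. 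Commutation with the grading reduces to checking $\kappa \circ \alpha = \alpha \circ \kappa$ on the generating subspace $V \subset \operatorname{Cliff}(V,\kappa)$, which is immediate because $\alpha|_{V} = -\operatorname{id}_{V}$ is central with respect to any $\R$-linear map on $V$.

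The only step that is not purely formal is confirming that the $\R$-linear extension $\kappa : \operatorname{Cliff}(V,\kappa) \to \operatorname{Cliff}(V,\kappa)$ is in fact a $*$-automorphism of the real $C^*$-algebra structure, since the latter was defined by transport through the injection $c : \operatorname{Cliff}(V,\kappa) \hookrightarrow \operatorname{End}(\Lambda^{*}V)$. I would argue this by observing that $\kappa$ is an orthogonal involution on $(V, \|\cdot\|)$, so it induces an orthogonal involution $\Lambda^{*}\kappa$ on the Euclidean space $\Lambda^{*}V$; a direct check on the operators $\operatorname{ext}_{v}$ and $\operatorname{ins}_{\langle v, -\rangle}$ used to build $c$ in the proof of Proposition \ref{prop: cliff c* alg} then shows $c \circ \kappa = \operatorname{Ad}(\Lambda^{*}\kappa) \circ c$, and conjugation by an orthogonal operator is a $*$-automorphism of $\operatorname{End}(\Lambda^{*}V)$. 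This single compatibility is the main (and really only) obstacle; everything else assembles from the universal property of Clifford algebras and the universal property of complexification.
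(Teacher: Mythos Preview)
Your proposal is correct. The paper states this corollary without proof, treating it as immediate from Proposition~\ref{prop: cliff c* alg} together with the general complexification construction (the Goodearl correspondence recalled in Section~2). Your write-up simply makes explicit what the paper leaves tacit: that the multiplicative extension of $\kappa$ to $\operatorname{Cliff}(V)$ is a $*$-automorphism for the $C^*$-structure transported from $\operatorname{End}(\Lambda^{*}V)$, which you verify via the intertwining $c \circ \kappa = \operatorname{Ad}(\Lambda^{*}\kappa) \circ c$. This is exactly the kind of check one would expect to underlie the corollary, so your approach and the paper's are the same in spirit; you have just supplied the details.
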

For the remainder of this section, we will make the notation a bit more concise by not mentioning the involution $\kappa$.
\begin{defi}
    Let $V$ be a finite dimensional pseudo-Euclidean vector space. Denote by $\cc(V)$ the $\Z/2\Z$-graded, Real $C^*$-algebra $C_0(V,\operatorname{Cliff}^{\C}(V))$, with pointwise grading, and Real structure by combining the Real structures on $V$ and $\operatorname{Cliff}^{\C}(V)$ as in \cref{ex: Real}.
\end{defi}
\begin{lem}
    Let $V$, $W$ be involutive finite dimensional Euclidean vector spaces. The map $f_1 \hat\otimes f_2 \mapsto f$, where $f(v+w)=f_1(v)f_2(w)$ determines an isomorphism 
    \[
        \operatorname{Cliff}^{\C}(V) \hat\otimes \operatorname{Cliff}^{\C}(W) \rightarrow \operatorname{Cliff}^{\C}(V \oplus W)
    \]
    of graded, Real $C^*$-algebras. 
\end{lem}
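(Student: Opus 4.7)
The strategy is to reduce the statement to two already-established facts: the tensor product decomposition of Clifford algebras (the preceding \cref{lem: tensor product Cliff}), and a standard identification of function algebras with values in a tensor product. Concretely, the plan is to exhibit the following chain of graded Real $C^*$-algebra isomorphisms:
\[
\cc(V) \hat\otimes \cc(W)
\;=\; C_0(V,\operatorname{Cliff}^{\mathbb{C}}(V)) \hat\otimes C_0(W,\operatorname{Cliff}^{\mathbb{C}}(W))
\;\cong\; C_0(V \oplus W,\ \operatorname{Cliff}^{\mathbb{C}}(V) \hat\otimes \operatorname{Cliff}^{\mathbb{C}}(W))
\;\cong\; C_0(V \oplus W,\ \operatorname{Cliff}^{\mathbb{C}}(V \oplus W))
\;=\; \cc(V \oplus W),
\]
and then check that the composite sends an elementary tensor $f_1 \hat\otimes f_2$ to the function $(v+w) \mapsto f_1(v)\cdot f_2(w)$, where the product is taken inside $\operatorname{Cliff}^{\mathbb{C}}(V \oplus W)$.

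The first isomorphism is a standard fact that the author has essentially already used (it is the direct analogue of \cite{RUF} Lemma E.2.18, invoked earlier in \cref{lem: s x s} to identify $\mathcal{S} \hat\otimes \mathcal{S}$ with $C_0(\mathbb{R}^2)$ as a Banach space): the underlying Banach-space isomorphism sends $f_1 \hat\otimes f_2$ to $(v,w) \mapsto f_1(v) \hat\otimes f_2(w)$, and one checks directly on elementary tensors that the product, involution, grading, and Real structure on the graded tensor product match the pointwise product, involution, grading and Real structure on $C_0(V \oplus W,\ \operatorname{Cliff}^{\mathbb{C}}(V) \hat\otimes \operatorname{Cliff}^{\mathbb{C}}(W))$ coming from the codomain's graded algebra structure. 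The second isomorphism is just post-composition with the graded Real $C^*$-algebra isomorphism $\operatorname{Cliff}^{\mathbb{C}}(V) \hat\otimes \operatorname{Cliff}^{\mathbb{C}}(W) \cong \operatorname{Cliff}^{\mathbb{C}}(V \oplus W)$ provided by \cref{lem: tensor product Cliff} (after tensoring with $\mathbf{C}$), which extends the functor $C_0(V \oplus W, -)$ to an isomorphism of the corresponding function algebras.

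Finally, the explicit formula follows by tracking an elementary tensor through the chain: $f_1 \hat\otimes f_2$ is sent first to the function $(v,w) \mapsto f_1(v) \hat\otimes f_2(w)$, then to $(v+w) \mapsto f_1(v) \cdot f_2(w)$ after applying \cref{lem: tensor product Cliff} pointwise, which uses that under the Clifford isomorphism $j_V(v)$ and $j_W(w)$ correspond to $j_{V \oplus W}(v)$ and $j_{V \oplus W}(w)$ respectively, so $f_1(v) \hat\otimes f_2(w)$ maps to the product $f_1(v) f_2(w)$ inside $\operatorname{Cliff}^{\mathbb{C}}(V \oplus W)$. No step is truly difficult; the most delicate point is verifying that the first identification respects the graded tensor product structure (as opposed to merely the Banach space or ungraded algebra structure), which is handled by checking the multiplication and the sign rule on homogeneous elementary tensors, together with the observation that the grading and Real structure on $\cc(V)$ are defined pointwise and so commute with the evaluation maps used in the identification.
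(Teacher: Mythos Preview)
Your proposal is correct and follows essentially the same approach as the paper's proof, which simply invokes the complexified version of \cref{lem: tensor product Cliff} together with the isomorphism $C_0(V) \hat\otimes C_0(W) \cong C_0(V \oplus W)$. You have just unpacked these two ingredients with more care, in particular spelling out the intermediate identification $C_0(V,\operatorname{Cliff}^{\mathbb{C}}(V)) \hat\otimes C_0(W,\operatorname{Cliff}^{\mathbb{C}}(W)) \cong C_0(V \oplus W,\operatorname{Cliff}^{\mathbb{C}}(V) \hat\otimes \operatorname{Cliff}^{\mathbb{C}}(W))$ and tracking the elementary tensor through the chain.
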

\begin{proof}
    This follows easily from (the complexified version of) \cref{lem: tensor product Cliff} and the isomorphism $C_0(V) \hat\otimes C_0(W) \cong C_0(V \oplus W)$.
\end{proof}
\begin{defi}
    Let $V$ be a pseudo-Euclidean vector space. Denote by $P:V \rightarrow \operatorname{Cliff}^{\C}(V)$ the composite $V \rightarrow \operatorname{Cliff}(V) \rightarrow \operatorname{Cliff}^{\C}(V)$, where the first map is given by $v \mapsto v$, and the second map is $- \hat\otimes 1$.
\end{defi}
\begin{defi}\label{defi: Bott map}
    The Bott element $b \in K_0^{\operatorname{sp}}(\cc(V))$ is the $K$-theory class of the $*$-homomorphism $\beta: \s \rightarrow \cc(V)$ defined by $\beta: f \rightarrow f(P)$. $\beta$ is called the Bott map.
\end{defi}
The following lemma is needed to show that the Bott map indeed is a class in spectral K-theory.
\begin{lem}
    The Bott map commutes with the grading and the Real structure. 
\end{lem}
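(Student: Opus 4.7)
The plan is to unwind both compatibility statements by observing that $P(v)$ is an odd, self-adjoint element of $\operatorname{Cliff}^{\C}(V)$ with finite spectrum contained in $\{\pm\|v\|\}$ (since $P(v)^{2} = \|v\|^{2}\cdot 1$ by the Clifford relation, and $v \in V$ is self-adjoint in $\operatorname{Cliff}(V)$ under the concrete representation $c:\operatorname{Cliff}(V)\to\operatorname{End}(\Lambda^{*}V)$ from the proof of \cref{prop: cliff c* alg}). Thus for each $v$ the continuous functional calculus $f \mapsto f(P(v))$ genuinely defines an element of $\operatorname{Cliff}^{\C}(V)$, and $\beta$ makes sense as a set map $\s \to \cc(V)$. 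I will then chase each structure through this functional calculus.

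For the grading, recall that the grading automorphism $\alpha$ on $\operatorname{Cliff}^{\C}(V)$ is the multiplicative extension of $v \mapsto -v$, hence $\alpha(P(v)) = -P(v)$. Since $\alpha$ is a $\C$-linear $*$-automorphism, it commutes with continuous functional calculus of self-adjoint elements: $\alpha(f(P(v))) = f(\alpha(P(v))) = f(-P(v))$. But the grading on $\s = C_0(\R)$ is precisely $\varepsilon_\s(f)(x) = f(-x)$, and evaluating $\varepsilon_\s(f)$ via functional calculus on $P(v)$ gives the same thing $f(-P(v))$. Since the grading on $\cc(V) = C_0(V,\operatorname{Cliff}^{\C}(V))$ is pointwise, this yields $\alpha_{\cc(V)}\bigl(\beta(f)\bigr) = \beta(\varepsilon_\s f)$.

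For the Real structure, the key computation is that $\tau_{\operatorname{Cliff}^{\C}(V)} = \kappa \hat\otimes \bar{(\cdot)}$ sends $P(\kappa(v)) = \kappa(v)\hat\otimes 1$ to $\kappa(\kappa(v))\hat\otimes 1 = P(v)$. Now, a general fact about conjugate-linear $*$-automorphisms $\sigma$ of a $C^*$-algebra is that for every self-adjoint $T$ and every $f \in C_0(\operatorname{sp}(T))$, one has $\sigma(f(T)) = \overline{f}(\sigma(T))$, where $\overline{f}(x) := \overline{f(x)}$; this follows by verifying it on polynomials in $T$ and passing to limits. Applying this with $T = P(\kappa(v))$ and $\sigma = \tau_{\operatorname{Cliff}^{\C}(V)}$, and combining with the definition $\tau_{\cc(V)}(g)(v) = \tau_{\operatorname{Cliff}^{\C}(V)}(g(\kappa(v)))$ from \cref{ex: Real}, we obtain $\tau_{\cc(V)}(\beta(f))(v) = \overline{f}(P(v)) = \beta(\tau_\s(f))(v)$, as desired.

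The only genuine subtlety is the ``change-of-variables'' statement $\sigma(f(T)) = \overline{f}(\sigma(T))$ for conjugate-linear $*$-automorphisms; this is not quite the standard naturality of the functional calculus (which applies to $\C$-linear maps), but is easy to check. Everything else is a direct unwinding of definitions, so I do not foresee a substantive obstacle; the bulk of the work is simply bookkeeping the interaction of the grading/Real structure on $\s$, on $\operatorname{Cliff}^{\C}(V)$, and on the function algebra $C_0(V,-)$.
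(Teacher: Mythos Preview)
Your argument is correct. The paper takes a slightly different route: for both the grading and the Real structure it invokes a Stone--Weierstrass reduction, checking the compatibility only on a generating set of $\s$ (the pair $e^{-x^{2}}$, $xe^{-x^{2}}$ for the grading; the resolvent $\frac{1}{x+i}$ for the Real structure) and verifying those special cases by hand via power series or a direct computation with inverses. Your approach instead proves the general naturality statements $\alpha(f(T)) = f(\alpha(T))$ for $\C$-linear $*$-automorphisms and $\sigma(f(T)) = \overline{f}(\sigma(T))$ for conjugate-linear ones, and then specializes to $T = P(v)$ or $T = P(\kappa v)$. This is more conceptual and handles all $f$ uniformly; the paper's version trades that generality for not having to articulate the conjugate-linear functional-calculus lemma. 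Both are short, and neither has a real advantage in difficulty, but your version makes clearer \emph{why} the compatibility holds rather than just \emph{that} it holds on generators.
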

\begin{proof}
    We first note that the set of all elements $f \in \s$ for which $\beta(\varepsilon_{\s}(f))=\varepsilon_{\cc(V)}(\beta(f))$ is a $C^*$-subalgebra of $\s$. Thus, by the Stone-Weierstrass theorem, it suffices to check the condition on the functions $e^{-x^2}$ and $xe^{-x^2}$. But for these, it is obvious in light of their power series expansions. Thus, $\beta$ commutes with the grading.

    To see that $\beta$ commutes with the Real structure, by another Stone-Weierstrass argument, we boil down to investigating it only on the function $f:=\frac{1}{x+i}$. We have to show that $\overline{f}v=\kappa(f(\kappa{v})) ~\forall~ v \in V$ (here, we think about extending $\kappa$ $\R$-linearly). The left hand side evaluates to $\frac{1}{v \otimes 1-1 \otimes i}$. The right-hand side evaluates to $\kappa(\frac{1}{\kappa v \otimes 1 + 1 \otimes i})=\frac{1}{v \otimes 1-1 \otimes i}$, as desired.
    
\end{proof} 
\subsection{Bott Periodicity - the statement and the formal part}
Having given the relevant background on Clifford algebras, we are now in a position to state the Bott periodicity theorem.

\begin{thm}\label{thm: BP}
    For every graded $C^*$-algebra $A$, and every finite-dimensional pseudo Euclidean vector space $V$, the Bott map
    \[
        \beta: K_0(A) \rightarrow K_0(A \hat\otimes \cc(V)),
    \]
    defined by $\beta(x)=x \times b$, is an isomorphism of abelian groups.
\end{thm}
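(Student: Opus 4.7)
The plan is to prove \cref{thm: BP} by a Dirac--dual Dirac argument, constructing a K-theoretic inverse to $\beta$ using the functional calculus of a Dirac operator on $V$. By the naturality of the K-theory product (\cref{cor: boy and products}, \cref{cor: legend and prod}) and the identity $1 \times x = x$ from \cref{prop: 1 x = x}, the whole argument reduces to constructing a natural map $\alpha_\ast : K_0^{\operatorname{sp}}(A \hat\otimes \cc(V)) \to K_0^{\operatorname{sp}}(A)$ and showing that $\alpha_\ast \circ \beta_\ast = \operatorname{id}$ and $\beta_\ast \circ \alpha_\ast = \operatorname{id}$; both identities ultimately reduce, by naturality of products, to statements for $A = \mathbf{C}$.

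\textbf{Step 1 (the Dirac asymptotic morphism).} Let $\mathcal{H}_V := L^2(V) \hat\otimes \Lambda^\ast_{\C} V$, graded by the $\Z/2\Z$-grading on $\Lambda^\ast V$ and equipped with the Real structure combining complex conjugation with $\kappa$. The Dirac operator $D_V$ is an odd, self-adjoint, Real unbounded operator on $\mathcal{H}_V$. The algebra $\cc(V)$ acts on $\mathcal{H}_V$ by pointwise left Clifford multiplication $M_g$. Define
\[
\alpha_t : \s \hat\otimes \cc(V) \dashrightarrow \mathcal{K}(\mathcal{H}_V), \qquad f \hat\otimes g \longmapsto f(t^{-1} D_V)\, M_g.
\]
The asymptotic relations are verified using Rellich compactness (so that $f(t^{-1}D_V)M_g$ is indeed compact) and the commutator estimate $\|[f(t^{-1}D_V),M_g]\| \to 0$ as $t\to\infty$ (via the Fourier transform and $\|[D_V,M_g]\| = \|M_{\nabla g}\| < \infty$). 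By \cref{prop: legend functoriality}, $\alpha$ induces $\alpha_\ast : K_0^{\operatorname{sp}}(\cc(V)) \to K_0^{\operatorname{sp}}(\mathbf{C})$, and tensoring against $A$ gives the required natural map out of $K_0^{\operatorname{sp}}(A \hat\otimes \cc(V))$.

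\textbf{Step 2 (the easy composition: $\alpha_\ast \circ \beta_\ast = \operatorname{id}$).} By naturality it suffices to show $\alpha_\ast(b) = 1 \in K_0^{\operatorname{sp}}(\mathbf{C})$. Unwinding the definitions of $\alpha_\ast$ and $\beta_\ast$ through the coproduct $\triangle$, this class is represented by the asymptotic morphism $\s \dashrightarrow \mathcal{K}(\mathcal{H}_V)$ sending $f$ to (a rescaling of) $f(D_V + c(P))$, where $c(P)$ denotes Clifford multiplication by the position vector field. The operator $D_V + c(P)$ is the graded harmonic oscillator: it is odd, self-adjoint, Real, has compact resolvent, and has a one-dimensional even kernel spanned by the Gaussian $e^{-|x|^2/2}$. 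By \cref{lem: unbounded}, this class is therefore homotopic to $f \mapsto f(0) P_0$ for the rank-one projection $P_0$ onto the Gaussian ground state, which by \cref{cor: K-theory ungraded} represents $1 \in K_0^{\operatorname{sp}}(\mathbf{C})$.

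\textbf{Step 3 (the rotation trick: $\beta_\ast \circ \alpha_\ast = \operatorname{id}$).} This is the main obstacle. The strategy is to realise the composite asymptotic morphism $(\beta \hat\otimes 1) \circ \alpha : \s \hat\otimes \cc(V) \dashrightarrow \cc(V) \hat\otimes \mathcal{K}(\mathcal{H}_V)$ as a family over $V \oplus V$ and exploit the SO$(2)$-rotation of the two copies of $V$. Identifying $\cc(V) \hat\otimes \cc(V) \cong \cc(V \oplus V)$, one sees that the composite is built out of the Dirac operator of the first copy of $V$ together with Clifford multiplication from the second copy, which together assemble into the Dirac operator of $V \oplus V$. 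The rotation $R_\theta$ of $V \oplus V$ induces a norm-continuous path of graded Real $\ast$-automorphisms of $\cc(V \oplus V)$, and conjugating the composite along $\theta \in [0, \pi/2]$ interchanges the roles of ``Dirac'' and ``Bott'' between the two factors. At $\theta = \pi/2$, one factor has become a trivial (Step 2-type) Dirac contribution giving the unit, while the other remains as the identity embedding, so that after invoking stability (\cref{thm: stability}) to absorb $\mathcal{K}(\mathcal{H}_V)$, the composite becomes homotopic to the identity on $K_0^{\operatorname{sp}}(\cc(V))$. The delicate part is tracking the Koszul signs from the $\Z/2\Z$-grading and verifying that the rotation really does intertwine the ``mixed'' operator $D_V \hat\otimes 1 + 1 \hat\otimes c(P)$ with the genuine Dirac operator of $V \oplus V$; this is where the Clifford anti-commutation relations, and the fact that $\hat\otimes$ graded-commutes odd elements, enter essentially.
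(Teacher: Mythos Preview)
Your Steps 1 and 2 coincide with the paper: the Dirac asymptotic morphism $\alpha_t(f\hat\otimes h)=f(t^{-1}D)M_{h_t}$ and the harmonic-oscillator computation $\alpha_*(b)=1$ are exactly \cref{thm: hg sl Main thm} and \cref{thm: HG Thm 1.17}.

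The divergence is in Step 3. The paper does \emph{not} attempt to verify $\beta_*\alpha_*=\operatorname{id}$ by analysing a composite of asymptotic morphisms. Instead it isolates Atiyah's rotation trick as a purely formal statement (\cref{thm: rotation trick}): if $B$ has the rotation property (the flip on $B\hat\otimes B$ is homotopic to some $1\hat\otimes\iota$) and there is $\alpha:\s\hat\otimes B\dashrightarrow\mathcal{K}(\mathcal{H})$ with $\alpha_*(b)=1$, then $\alpha_*$ and $\beta_*$ are automatically two-sided inverses. The argument uses only multiplicativity of $\alpha_*$ (\cref{cor: legend and prod}) and two lines of algebra: from $\sigma_*(y)\times b=\tau_*(b\times y)=\iota_*(b)\times y$ one applies $\alpha_*$ to get $\sigma_*(y)=\iota_*(b)\times\alpha_*(y)$, so $\alpha_*$ has a left inverse in addition to its right inverse $\beta_*$. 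The rotation property for $\cc(V)$ (\cref{lem: C(V) sat rot trick}) is just the observation that the flip on $V\oplus V$ is homotopic through $\kappa$-equivariant isometries to $(v_1,v_2)\mapsto(v_1,-v_2)$.

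Your Step 3, by contrast, tries to realise $\beta_*\alpha_*$ as a concrete composite and rotate it to the identity. This runs into a difficulty you do not address: asymptotic morphisms do not compose on the nose, so forming ``$(\beta\hat\otimes 1)\circ\alpha$'' as an asymptotic morphism $\s\hat\otimes\cc(V)\dashrightarrow\cc(V)\hat\otimes\mathcal{K}(\mathcal{H}_V)$ already requires E-theory machinery or an ad hoc construction. Even granting that, your endpoint identification (``one factor becomes trivial, the other remains the identity embedding'') is too vague; you would still need a second harmonic-oscillator-type computation and a careful use of stability. The paper's formal version sidesteps all of this: once $\alpha_*(b)=1$ is established, no further analysis is needed, only the product axioms and one elementary homotopy of automorphisms of $\cc(V\oplus V)$.
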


In the remainder of this subsection, we discuss a trick, originally due to Michael Atiyah, which substantially reduces the workload needed to prove \cref{thm: BP}. 

\begin{defi}\label{defi:rotation}
Let us say that a graded $C^{*}$-algebra $B$ has the rotation property
if the automorphism $b_1 \widehat{\otimes} b_2 \mapsto
(-1)^{\partial b_1\partial b_2}b_2 \widehat{\otimes} b_1$ which
interchanges the two factors in the tensor product $B
\widehat{\otimes} B$ is homotopic to a tensor product $*$-homomorphism
$1 \otimes \iota : B \widehat{\otimes} B \to B \widehat{\otimes} B$.
\end{defi}
\begin{lem} \label{lem: C(V) sat rot trick}
    Let $(V,\kappa)$ be a finite dimensional pseudo-Euclidean vector space. The $C^*$-algebra $\cc(V)$ has the rotation property.
\end{lem}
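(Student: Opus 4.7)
The plan is to use the classical rotation trick of Atiyah. First I would combine the natural isomorphism $\operatorname{Cliff}^{\C}(V) \hat\otimes \operatorname{Cliff}^{\C}(V) \cong \operatorname{Cliff}^{\C}(V \oplus V)$ (the complexification of \cref{lem: tensor product Cliff}) with the standard decomposition $C_0(V) \hat\otimes C_0(V) \cong C_0(V \oplus V)$ to obtain a natural isomorphism of $\Z/2\Z$-graded Real $C^*$-algebras
\[
\Phi : \cc(V) \hat\otimes \cc(V) \xrightarrow{\cong} \cc(V \oplus V).
\]
For each $\kappa$-equivariant orthogonal transformation $R$ of $V \oplus V$ I would define the graded Real $*$-automorphism $R_* \in \operatorname{Aut}(\cc(V \oplus V))$ by $R_*(f)(x) := \operatorname{Cliff}^{\C}(R)\bigl(f(R^{-1}x)\bigr)$; since $R \mapsto \operatorname{Cliff}^{\C}(R)$ is continuous (it is determined by a linear map on generators of a finite-dimensional algebra) and composition with $R^{-1}$ is continuous in the supremum norm, the assignment $R \mapsto R_*$ is continuous in the point-norm topology.

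Next I would identify the graded flip on $\cc(V) \hat\otimes \cc(V)$, under $\Phi$, with $F_*$, where $F : V \oplus V \to V \oplus V$ is the swap $F(v,w) = (w,v)$. This is a direct check on elementary tensors: both $F_*$ and the graded tensor flip send a homogeneous $f_1 \hat\otimes f_2$ to the function $(v,w) \mapsto (-1)^{\partial f_1 \partial f_2} f_2(v) \cdot f_1(w)$ in $\operatorname{Cliff}^{\C}(V \oplus V)$, the sign appearing because passing $f_1(w)$ past $f_2(v)$ in the graded tensor is precisely how $\operatorname{Cliff}^{\C}(F)$ acts on the generators $V \oplus 0$ and $0 \oplus V$ of the Clifford algebra. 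Similarly I would verify that under $\Phi$, the automorphism $(1 \oplus -1)_*$ corresponds to $1 \hat\otimes \iota$, where $\iota : \cc(V) \to \cc(V)$ is the Real graded $*$-automorphism $\iota(f)(v) := \alpha_V(f(-v))$ (here $\alpha_V$ is the grading on $\operatorname{Cliff}^{\C}(V)$, which is exactly the multiplicative extension of $-1_V$).

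Then comes the rotation trick. Define the path of $\kappa$-equivariant orthogonal transformations of $V \oplus V$
\[
R_t \;:=\; F \circ \rho_t, \qquad \rho_t(v,w) := \bigl(\cos(\tfrac{\pi t}{2}) v - \sin(\tfrac{\pi t}{2}) w,\; \sin(\tfrac{\pi t}{2}) v + \cos(\tfrac{\pi t}{2}) w\bigr).
\]
Then $R_0 = F$ and $R_1 = 1 \oplus (-1)$. Each $\rho_t$ is $\kappa$-equivariant because it is the real-linear combination $\cos(\tfrac{\pi t}{2})\cdot \mathrm{id} + \sin(\tfrac{\pi t}{2}) \cdot J$ with $J(v,w)=(-w,v)$, and both $\mathrm{id}$ and $J$ commute with $\kappa \oplus \kappa$. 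Consequently $t \mapsto (R_t)_*$ is a continuous path of graded Real $*$-automorphisms of $\cc(V \oplus V)$ from $F_*$ to $(1 \oplus -1)_*$. Transporting this path across $\Phi$ yields the desired homotopy between the graded flip of $\cc(V) \hat\otimes \cc(V)$ and $1 \hat\otimes \iota$.

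The only place that genuinely requires care is the sign bookkeeping in step two (matching $F_*$ with the graded tensor flip and identifying $(1 \oplus -1)_*$ with $1 \hat\otimes \iota$); everything else is either continuity of standard functors or elementary Euclidean geometry. This will be handled by checking equalities on a generating set of $\cc(V) \hat\otimes \cc(V)$ and extending by continuity, which is routine since both sides are $*$-homomorphisms.
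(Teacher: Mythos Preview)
Your proposal is correct and follows essentially the same approach as the paper: both transport the problem through the isomorphism $\cc(V)\hat\otimes\cc(V)\cong\cc(V\oplus V)$, identify the graded flip with the automorphism induced by the swap $F$ on $V\oplus V$, and then use the standard rotation path connecting $F$ to $1\oplus(-1)$ inside the $\kappa\oplus\kappa$-equivariant orthogonal group. Your composite $R_t=F\circ\rho_t$ is literally the same matrix $\begin{pmatrix}\sin(\tfrac{\pi t}{2})&\cos(\tfrac{\pi t}{2})\\ \cos(\tfrac{\pi t}{2})&-\sin(\tfrac{\pi t}{2})\end{pmatrix}$ that the paper writes down, and your $\kappa$-equivariance check via $J$ commuting with $\kappa\oplus\kappa$ is exactly the paper's observation; you simply spell out more of the bookkeeping (the explicit form of $\iota$ and the continuity of $R\mapsto R_*$) that the paper leaves implicit.
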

\begin{proof}
    Under the isomorphism 
    \[
    \cc(V) \hat\otimes \cc(V) \cong \cc(V \oplus V),
    \]
    the flip isomorphism corresponds to the $*$-automorphism of $\cc(V \oplus V)$ associated to the map $\tau_{**}$ which exchanges the two copies of $V$ in the direct sum $V \oplus V$. We claim that $\tau$ is homotopic to the map $\xi:V \oplus V \rightarrow V \oplus V, \xi(v_1,v_2) = (v_1,-v_2)$, through a path of isometric isomorphisms of $V \oplus V$ which preserve the involution $\kappa$. This follows by observing that the standard homotopy $\begin{pmatrix}
        \sin(\frac{\pi}{2}t) & \cos(\frac{\pi}{2}t) \\
        \cos(\frac{\pi}{2}t) & -\sin(\frac{\pi}{2}t)
    \end{pmatrix}$ clearly commutes with the involution $\begin{pmatrix}
        \kappa & 0 \\
        0 & \kappa
    \end{pmatrix}$.
\end{proof}
The reason we care about the rotation property is because of the following theorem:
\begin{thm} \label{thm: rotation trick}
Let $B$ be a graded $C^{*}$-algebra with the rotation property, and $\mathcal{H}$ some graded Real Hilbert space whose even and odd parts are both infinite dimensional.
Suppose there exists a class $b \in K_0(B)$ and an asymptotic morphism
\[
\alpha : \mathcal{S} \widehat{\otimes}B \to \mathcal{K}(\mathcal{H})
\]
with the property that the induced $K$-theory homomorphism $\alpha_{*}
: K_0(B) \to K_0(\mathbf{C})$ maps $b$ to $1$. Then for every
$C^{*}$-algebra $A$ the maps,
\[
\alpha_{*} : K_0(A \widehat{\otimes}B) \to K_0(A) \qquad \text{and}\qquad
\beta_{*} : K_0(A) \to K_0(A \widehat{\otimes} B)
\]
induced by $\alpha$ and by multiplication by the $K$-theory class $b$
are inverse to one another. 
\end{thm}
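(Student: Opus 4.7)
My plan is to show that $\alpha_*$ and $\beta_*$ are mutually inverse in two steps, with the rotation property doing its main work in the second.

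The composite $\alpha_* \circ \beta_*$ is the easy direction. For $x \in K_0(A)$,
\[
\alpha_*(\beta_*(x)) \;=\; \alpha_*(x \times b) \;=\; x \times \alpha_*(b) \;=\; x \times 1 \;=\; x,
\]
using naturality of the $K$-theory product under the asymptotic morphism $\alpha$ (Corollary~\ref{cor: legend and prod}) for the second equality, the hypothesis $\alpha_*(b) = 1$ for the third, and Lemma~\ref{prop: 1 x = x} (combined with commutativity of the product, Proposition~\ref{prop: prod prop}(b)) for the last. In particular $\alpha_*$ has a right inverse and so is surjective.

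For $\beta_* \circ \alpha_* = \operatorname{id}$, fix $y \in K_0(A \hat\otimes B)$ and consider $y \times b \in K_0(A \hat\otimes B \hat\otimes B)$. Two natural ``contractions'' $K_0(A \hat\otimes B \hat\otimes B) \to K_0(A \hat\otimes B)$ are available, each obtained by applying $\alpha$ to one of the two $B$-factors: the \emph{rightmost} contraction $R := (1_{A \hat\otimes B} \hat\otimes \alpha)_*$, and the \emph{middle} contraction $M$, which applies $\alpha$ to the $B$-factor coming from $A \hat\otimes B$. By naturality of the product (Corollary~\ref{cor: legend and prod} and its version on the other slot obtained via Proposition~\ref{prop: prod prop}(b)), one has $R(y \times b) = y \times \alpha_*(b) = y$ and $M(y \times b) = \alpha_*(y) \times b = \beta_*(\alpha_*(y))$. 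Since $M = R \circ (1_A \hat\otimes \sigma)_*$, where $\sigma : B \hat\otimes B \to B \hat\otimes B$ is the graded flip (applying $\sigma$ simply interchanges the two $B$-factors, so $R$ then contracts what was the middle one), the rotation property (Definition~\ref{defi:rotation}) gives $(1_A \hat\otimes \sigma)_* = (1_{A \hat\otimes B} \hat\otimes \iota)_*$ on $K_0$, and chasing yields
\[
\beta_*(\alpha_*(y)) \;=\; R\bigl((1_{A \hat\otimes B} \hat\otimes \iota)_*(y \times b)\bigr) \;=\; R(y \times \iota_*(b)) \;=\; y \times \alpha_*(\iota_*(b)) \;=\; c \cdot y,
\]
where $c := \alpha_*(\iota_*(b)) \in K_0(\mathbf{C}) \cong \mathbb{Z}$ is a fixed integer. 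Pre-composing $\beta_* \circ \alpha_* = c \cdot \operatorname{id}$ with $\alpha_*$ and using the first step gives $\alpha_* = c \cdot \alpha_*$; evaluating at $b \in K_0(B)$ (the $A = \mathbf{C}$ case) forces $1 = c \cdot 1$, i.e.\ $c = 1$, whence $\beta_* \circ \alpha_* = \operatorname{id}_{K_0(A \hat\otimes B)}$.

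The main obstacle is the bookkeeping in the second step: verifying, with proper attention to graded signs and tensor reorderings, that $M$ really does equal $R \circ (1_A \hat\otimes \sigma)_*$ at the $K$-theory level, and that the rotation homotopy at the $*$-algebra level translates to the claimed equality of $K$-theory maps on the triple tensor product $A \hat\otimes B \hat\otimes B$. Once this is in hand, the rotation property converts the flip $\sigma$ into the ``parallel'' tensor $1 \hat\otimes \iota$, which is exactly what allows the identification $\beta_*(\alpha_*(y)) = y \times \alpha_*(\iota_*(b))$ and thereby closes the argument.
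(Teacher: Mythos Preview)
Your argument is correct and is essentially the same rotation-trick strategy as the paper's, differing only in how the second direction is finished. You show $\beta_*\alpha_* = c\cdot\mathrm{id}$ for the universal constant $c=\alpha_*(\iota_*(b))\in K_0(\mathbf{C})$ and then pin down $c=1$ by composing with $\alpha_*$ and evaluating at $b$. The paper instead works in $K_0(B\hat\otimes A\hat\otimes B)$ and applies multiplicativity of $\alpha_*$ directly to the identity $\sigma_*(y)\times b = \iota_*(b)\times y$ (contracting the rightmost $B$), obtaining $y=\alpha_*(y)\times\iota_*(b)$; this exhibits multiplication by $\iota_*(b)$ as a \emph{left} inverse to $\alpha_*$, and since $\beta_*$ is already a right inverse, $\alpha_*$ is bijective with no constant to compute. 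Your extra step is harmless, and the bookkeeping you flag---that $R\circ(1_A\hat\otimes\sigma)_*$ sends $y\times b$ to $\alpha_*(y)\times b$---is precisely the commutativity-plus-functoriality chase the paper also performs, just transported from $B\hat\otimes A\hat\otimes B$ to $A\hat\otimes B\hat\otimes B$ by a flip.
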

\begin{rem}
    Note that in light of \cref{prop: 1 x = x}, and since $K_0(\mathbf{C}) \cong \Z$ from classical K-theory, the assumption on $\alpha$ is readily seen to be equivalent to saying that the $\alpha\beta:K_0(\mathbf{C}) \rightarrow K_0(\mathbf{C})$ is the identity.
\end{rem}
\begin{proof}
    From \cref{cor: legend and prod}, the following diagram commutes:
% https://q.uiver.app/#q=WzAsNCxbMCwwLCJLKEMpIFxcb3RpbWVzIEsoQSBcXGhhdHtcXG90aW1lc30gQikiXSxbMiwwLCJLKEMgXFxoYXR7XFxvdGltZXN9ICBBIFxcaGF0e1xcb3RpbWVzfSBCKSJdLFswLDEsIksoQykgXFxvdGltZXMgSyhBKSJdLFsyLDEsIksoQyBcXGhhdHtcXG90aW1lc30gQSkiXSxbMCwxLCJLIFxcdGV4dHstdGhlb3J5IHByb2R1Y3R9Il0sWzAsMiwiMSBcXG90aW1lcyBcXGFscGhhX3sqfSIsMl0sWzIsMywiSyBcXHRleHR7LXRoZW9yeSBwcm9kdWN0fSIsMl0sWzEsMywiXFxhbHBoYV97Kn0iXV0=
\[\begin{tikzcd}[ampersand replacement=\&]
	{K(C) \otimes K(A \hat{\otimes} B)} \&\&\& {K(C \hat{\otimes}  A \hat{\otimes} B)} \\
	{K(C) \otimes K(A)} \&\&\& {K(C \hat{\otimes} A)}
	\arrow["{K \text{-theory product}}", from=1-1, to=1-4]
	\arrow["{1 \otimes \alpha_{*}}"', from=1-1, to=2-1]
	\arrow["{K \text{-theory product}}"', from=2-1, to=2-4]
	\arrow["{\alpha_{*}}", from=1-4, to=2-4]
\end{tikzcd}\]
    We express this property by saying that $\alpha$ is multiplicative, i.e. $\alpha_*(x \times y)= x \times \alpha_*(y)$. It follows directly from this multiplicativity property of $\alpha$ that $\alpha$ is a left inverse to $\beta$:
    \[
    \alpha_*(\beta_*(x))=\alpha_*(x \times b)=x \times \alpha_*(b)= x \times 1 = x.
    \]
    It remains to show that $\alpha$ is also a right-inverse to $\beta$. To that end, it suffices to show that $\alpha$ has a left inverse. To that end, we consider the isomorphisms $\sigma: A \hat\otimes B \rightarrow B \hat\otimes A$ given by flipping, and $\tau: B \hat\otimes A \hat\otimes B \rightarrow B \hat\otimes A \hat\otimes B $ given by flipping the first and last factors in the tensor products. Note that the following diagram commutes:
    \[
    % https://q.uiver.app/#q=WzAsNixbMCwxLCJLXzAoQSBcXGhhdFxcb3RpbWVzIEIpIFxcb3RpbWVzIEtfMChCKSJdLFswLDIsIktfMChCIFxcaGF0XFxvdGltZXMgQSkgXFxvdGltZXMgS18wKEIpIl0sWzIsMiwiS18wKEIgXFxoYXRcXG90aW1lcyBBIFxcaGF0XFxvdGltZXMgQikiXSxbMiwxLCJLXzAoQSBcXGhhdFxcb3RpbWVzIEIgXFxoYXRcXG90aW1lcyBCKSJdLFswLDAsIktfMChCKSBcXG90aW1lcyBLXzAoQSBcXGhhdFxcb3RpbWVzIEIpIl0sWzIsMCwiS18wKEIgXFxoYXRcXG90aW1lcyBBIFxcaGF0XFxvdGltZXMgQikiXSxbMCwxLCJcXHNpZ21hXyogXFxvdGltZXMxIl0sWzEsMiwiSy1cXHRleHR7dGhlb3J5IHByb2R1Y3R9IiwyXSxbMCwzLCJLLVxcdGV4dHt0aGVvcnkgcHJvZHVjdH0iLDJdLFszLDIsIlxcdGF1X3sxMl8qfSIsMl0sWzQsNSwiSy1cXHRleHR7dGhlb3J5IHByb2R1Y3R9Il0sWzUsMywiKFxcdGV4dHtmbGlwfSB+QSBcXGhhdFxcb3RpbWVzIEIgflxcdGV4dHthbmR9fkIpXyoiLDJdLFs0LDAsInggXFxvdGltZXMgeSBcXG1hcHN0byB5IFxcb3RpbWVzIHgiLDJdXQ==
\begin{tikzcd}
	{K_0(B) \otimes K_0(A \hat\otimes B)} && {K_0(B \hat\otimes A \hat\otimes B)} \\
	{K_0(A \hat\otimes B) \otimes K_0(B)} && {K_0(A \hat\otimes B \hat\otimes B)} \\
	{K_0(B \hat\otimes A) \otimes K_0(B)} && {K_0(B \hat\otimes A \hat\otimes B)}
	\arrow["{\sigma_* \otimes1}", from=2-1, to=3-1]
	\arrow["{K-\text{theory product}}"', from=3-1, to=3-3]
	\arrow["{K-\text{theory product}}"', from=2-1, to=2-3]
	\arrow["{\tau_{12_*}}"', from=2-3, to=3-3]
	\arrow["{K-\text{theory product}}", from=1-1, to=1-3]
	\arrow["{(\text{flip} ~A \hat\otimes B ~\text{and}~B)_*}"', from=1-3, to=2-3]
	\arrow["{x \otimes y \mapsto y \otimes x}"', from=1-1, to=2-1]
\end{tikzcd}
    \]
    Indeed, the top rectangle commutes because of \cref{prop: prod prop} $(ii)$, and the bottom one commutes because of \cref{rem: prod}. This yields that,
    \[
    \sigma_*(y) \times z = \tau_*(z \times y), ~\forall~ y \in K_0(A \hat\otimes B), z \in K_0(B).
    \]

    As $B$ has the rotation property, $\tau$ is homotopic to $\iota \hat\otimes 1 \hat\otimes 1$, where $\iota$ is as in \cref{defi:rotation}.Thus, setting $z=b$ above, we obtain:
    \[
    \sigma_*(y) \times b = \tau_*(b \times y)=(\iota \hat\otimes 1 \hat\otimes 1)_*(b \times y)= \iota_*(b) \times y.
    \]
    Applying $\alpha_*$, we deduce that:
\[
\sigma_{*}(y) = \alpha_{*}(\sigma_{*}(y) \times b) =
\alpha_{*}(\iota_{*}(b) \times y) = \iota_{*}(b) \times \alpha_{*}(y)
\]
    where for the first and last equalities, we used the multiplicativity of $\alpha$. Applying the flip isomorphism $\sigma$ to the above equation, we obtain
    \[
        y=\alpha_*(y) \times \iota_*(b).
    \]
    Thus, multiplication by $\iota_*(b)$ is a left-inverse to $\alpha$, as desired.
\end{proof}
\subsection{Inverse to the Bott map}

To prove \cref{thm: BP}, it suffices to prove the following theorem in light of \cref{thm: rotation trick} and \cref{lem: C(V) sat rot trick}:

\begin{thm} \label{thm: hg sl Main thm}
    Given any pseudo Euclidean vector space $V$, there exists a Real graded Hilbert space $\mathcal{H}(V)$, and asymtotic morphism $\alpha: \s \hat\otimes \cc(V) \dashrightarrow \mathcal{K}(\mathcal{H}(V))$ for which the induced homomorphism $\alpha:K_0(\cc(V)) \rightarrow K_0(\mathbf{C})$ maps the Bott element $b \in K_0(\cc(V))$ to $1 \in K_0(\mathbf{C})$.
 \end{thm}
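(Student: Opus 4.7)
The plan is to run the Dirac--dual-Dirac method of Higson--Kasparov--Trout: build a Bott--Dirac operator whose spectral data supplies the asymptotic morphism $\alpha$, then apply \cref{lem: unbounded} and \cref{cor: K-theory ungraded} to identify its class with $1$.

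Take $\mathcal{H}(V) := L^{2}(V) \hat{\otimes} \Lambda^{*}_{\mathbb{C}}V$, $\mathbb{Z}/2\mathbb{Z}$-graded by form-parity and made Real by combining pointwise complex conjugation on $L^{2}(V)$ with the $\kappa$-induced structure on $\Lambda^{*}_{\mathbb{C}}V$; both of its grading components are infinite-dimensional. The algebra $\cc(V)$ acts through a graded Real $*$-representation $\pi$ that multiplies the $L^{2}$-factor pointwise while representing $\operatorname{Cliff}^{\mathbb{C}}(V)$ on $\Lambda^{*}_{\mathbb{C}}V$ via the map $c$ from \cref{prop: cliff c* alg}. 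Fix an auxiliary Clifford action $\hat{c}$ on $\Lambda^{*}_{\mathbb{C}}V$ graded-anticommuting with $c$, and, on Schwartz sections, put
\[D := \sum_{i} \hat{c}(e_{i})\,\partial_{i}, \qquad X := \sum_{i} x_{i}\,c(e_{i}), \qquad B := D + X,\]
all odd, self-adjoint and Real. A direct computation yields $B^{2} = -\Delta + \|x\|^{2} + R$, with $R := \sum_{i} \hat{c}(e_{i})c(e_{i})$ a bounded grading-dependent remainder. Thus $B$ is a harmonic oscillator up to bounded perturbation, hence has compact resolvents, and $\ker B$ is the even one-dimensional Real line spanned by $e^{-\|x\|^{2}/2} \otimes \Omega$ for a distinguished even Real $\Omega \in \Lambda^{*}_{\mathbb{C}}V$ lying at the minimum of $R$; write $P_{0}$ for the corresponding rank-one projection.

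For the asymptotic morphism, rescale the Dirac factor: for $t \in [1,\infty)$ set $B_{t} := t^{-1}D + X$ (still compact-resolvent by the same argument), and define
\[\alpha_{t}(f \hat{\otimes} g) := f(B_{t})\,\pi(g), \qquad f \in \mathcal{S},\ g \in \cc(V),\]
extended bilinearly. Continuity, boundedness, linearity, $*$-preservation and compatibility with the grading and Real structures are immediate from the functional calculus. Asymptotic multiplicativity reduces to the commutator estimate
\[\bigl\| [f(B_{t}), \pi(g)] \bigr\| = O(t^{-1}) \qquad \text{as } t \to \infty,\]
which for resolvents $f(x) = (x \pm i)^{-1}$ and smooth compactly supported $g$ follows from the resolvent identity $[(B_t \pm i)^{-1}, \pi(g)] = -(B_t \pm i)^{-1}[B_t,\pi(g)](B_t \pm i)^{-1}$ together with $[B_{t}, \pi(g)] = t^{-1}[D, \pi(g)]$ being the $\pi$-image of the $\operatorname{Cliff}^{\mathbb{C}}(V)$-valued derivative of $g$; density extends the bound to all of $\mathcal{S} \hat{\otimes} \cc(V)$.

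Finally, to compute $\alpha_{*}(b)$, \cref{prop: legend functoriality} represents it by the composite asymptotic morphism
\[\mathcal{S} \xrightarrow{\triangle} \mathcal{S} \hat{\otimes} \mathcal{S} \xrightarrow{1 \hat{\otimes} \beta} \mathcal{S} \hat{\otimes} \cc(V) \xrightarrow{\alpha_{t}} \mathcal{K}(\mathcal{H}(V)).\]
Since $P: V \hookrightarrow \operatorname{Cliff}^{\mathbb{C}}(V)$ is the inclusion, $\pi(\beta(f)) = f(X)$. The coproduct $\triangle$ of \cref{prop:comultiplication} is designed precisely so that, applied to two graded-anticommuting odd self-adjoint operators, it reproduces the functional calculus of their sum; checked on the generators $u = e^{-x^{2}}$ and $v = xe^{-x^{2}}$ this identifies the above composite with $f \mapsto f(B_{t})$ modulo terms of order $t^{-1}$ coming from the residual anticommutator $\{t^{-1}D, X\} = t^{-1}R$. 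Passing to the $t \to \infty$ limit (or taking $t = 1$) and invoking \cref{lem: unbounded}, the class $\alpha_{*}(b) \in K_{0}^{\operatorname{sp}}(\mathbf{C})$ is represented by $f \mapsto f(B)$, which is based-homotopic to $f \mapsto f(0)P_{0}$. By \cref{cor: K-theory ungraded} this equals $[P_{0}] = 1 \in K_{0}(\mathbf{C})$, completing the proof.

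The main obstacle is the analysis underpinning the last two paragraphs: the uniform commutator bound, and the exact-up-to-$O(t^{-1})$ identification of the $\triangle$-composite with $f \mapsto f(B_{t})$, both of which hinge on controlling the residual bounded operator $R$ in the $t \to \infty$ limit while respecting the graded and Real structures throughout.
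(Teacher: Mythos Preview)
Your overall architecture is the right one---build a Bott--Dirac operator $B=D+X$ on $L^{2}(V)\hat\otimes\Lambda^{*}_{\mathbb C}V$, show it is a harmonic oscillator with one-dimensional even kernel, and invoke \cref{lem: unbounded} and \cref{cor: K-theory ungraded} at the end. The problem is with your asymptotic morphism $\alpha_t(f\hat\otimes g)=f(B_t)\,\pi(g)$, $B_t=t^{-1}D+X$: it is not asymptotically multiplicative, and the identification of the composite with $f\mapsto f(B_t)$ is wrong.

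The key error is the claim $[B_t,\pi(g)]=t^{-1}[D,\pi(g)]$. You have dropped the term $[X,\pi(g)]$, and it does \emph{not} vanish: both $X=\sum_i x_i\,c(e_i)$ and $\pi(g)$ act through the \emph{same} Clifford representation $c$. For $g=g_0\otimes e_j$ one computes the graded commutator directly as $[X,\pi(g)]=\sum_i M_{x_ig_0}\otimes c(e_ie_j+e_je_i)=2\,M_{x_jg_0}\otimes 1$, a nonzero bounded operator independent of $t$. (By contrast $[D,\pi(g)]$ is harmless precisely because $D$ is built from $\hat c$, which graded-commutes with $c$.) Hence $(B_t\pm i)^{-1}[B_t,\pi(g)](B_t\pm i)^{-1}$ does not tend to zero, and $\alpha_t$ is not an asymptotic morphism. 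The second problem is visible already on $u=e^{-x^2}$: your composite $\alpha_t\circ(1\hat\otimes\beta)\circ\triangle$ sends $u$ to $e^{-B_t^2}e^{-X^2}$, which as $t\to\infty$ converges (strongly) to $e^{-2X^2}$, not to $e^{-X^2}$; so it is not asymptotic to $u(B_t)=e^{-B_t^2}$.

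The paper's construction sidesteps both issues by taking $\alpha_t(f\hat\otimes h)=f(t^{-1}D)\,M_{h_t}$ with $h_t(v)=h(t^{-1}v)$. The operator $X$ never enters $\alpha$; the commutator $[f(t^{-1}D),M_{h_t}]$ only pairs $\hat c$ against $c$ and is genuinely $O(t^{-1})$. The rescaling $h\mapsto h_t$ is essential: after composing with $\beta$ one gets $e^{-t^{-2}D^2}e^{-t^{-2}C^2}$ (here $C$ is your $X$), and it is Mehler's formula for the harmonic oscillator that identifies this asymptotically with $e^{-t^{-2}B^2}$, after which your endgame via \cref{lem: unbounded} goes through.
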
  
 We first come up with the Hilbert space $\mathcal{H}(V)$.
  \begin{defi}
Let $V$ be a finite-dimensional Euclidean vector space. Let us provide the
finite-dimensional linear space underlying the algebra
$\operatorname{Cliff}^{\C}(V)$ with the Hilbert space structure for which the
monomials $e_{i_1} \cdots e_{i_p}$ (associated to an orthonormal basis of
$V$) are orthonormal. The Hilbert space structure so obtained is
independent of the choice of $e_1, \ldots , e_n$\footnote{strictly speaking, we look at the elements $e_i \otimes 1$, but we will refer to $x \otimes 1$ in $\operatorname{Cliff}^{\C}(V)$ as simply $x$ from now on.}. Denote by
$\mathcal{H}(V)$ the infinite-dimensional complex Hilbert space of
square-integrable $\operatorname{Cliff}^{\C}(V)$-valued functions on $V$, Thus:
\[
\mathcal{H}(V) = L^{2} (V, \operatorname{Cliff}^{\C}(V)).
\]
The Hilbert space $\mathcal{H}(V)$ is a graded Hilbert space, with grading
inherited from $\operatorname{Cliff}^{\C}(V)$, and a Real structure combining the ones on $V$ and $\operatorname{Cliff}^{\C}(V)$.
 \end{defi}
 To prove this theorem, we need to develop some more machinery:

 \begin{defi}
Let $V$ be a finite-dimensional Euclidean vector space and let $e, f \in
V$. Define linear operators on the finite-dimensional graded Hilbert space
underlying $\operatorname{Cliff}^{\C}(V)$ by the formulas
\begin{align*}
	e(x)  &= e \cdot x\\
	\hat{f}(x) &= (-1)^{\partial x}x \cdot f.
\end{align*}
\end{defi}
\begin{lem}
    The operator $e:\operatorname{Cliff}^{\C}(V) \rightarrow \operatorname{Cliff}^{\C}(V)$ is self-adjoint, while the operator $\hat f:\operatorname{Cliff}^{\C}(V) \rightarrow \operatorname{Cliff}^{\C}(V)$ is skew adjoint. Furthermore, $e$ and $\hat f$ are Real operators.
\end{lem}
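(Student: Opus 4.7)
The plan is to reformulate the given inner product on $\operatorname{Cliff}^{\C}(V)$ in terms of the algebra structure, and then read off both statements from a single trace-style calculation, rather than computing matrix elements on monomial bases.

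The first step is to introduce the linear functional $\tau : \operatorname{Cliff}^{\C}(V) \to \C$ that extracts the scalar (degree-zero) component. A direct check on an orthonormal basis $\{e_I\}_I$ of Clifford monomials shows that $\tau(e_I^* e_J) = \delta_{IJ}$, since $e_i^* = e_i$ in $\operatorname{Cliff}^{\C}(V)$ (as $Q$ is positive definite) and $e_I^* e_J$ has nonzero scalar component iff $I = J$. Consequently the given Hilbert space structure agrees with $\langle x, y \rangle := \tau(y^* x)$. The same basis computation proves that $\tau$ is a genuine trace ($\tau(ab) = \tau(ba)$): the product $e_I e_J$ contributes to the scalar part only when $I = J$, in which case $e_I e_J = e_J e_I = \pm 1$.

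With $\tau$ in hand, self-adjointness of $L_e(x) = e \cdot x$ is immediate:
\[
\langle e x, y \rangle = \tau(y^* e x) = \tau((e^* y)^* x) = \langle x, e y \rangle,
\]
using $e^* = e$. For $\hat f(x) = (-1)^{\partial x} x f$, observe first that $\hat f$ is an odd operator, so $\langle \hat f x, y \rangle$ vanishes unless $x$ and $y$ are homogeneous of opposite parity; restricting to such $x, y$ with $\partial y \equiv \partial x + 1$,
\[
\langle \hat f x, y \rangle = (-1)^{\partial x} \tau(y^* x f) = (-1)^{\partial x} \tau(f y^* x)
\]
by the trace property, while $\langle x, \hat f y \rangle = (-1)^{\partial y} \tau(f^* y^* x) = (-1)^{\partial y} \tau(f y^* x)$ since $f^* = f$. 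These differ by $(-1)^{\partial x - \partial y} = -1$, establishing $\hat f^* = -\hat f$.

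For Real-ness, the point is that $V \subset \operatorname{Cliff}(V) \hookrightarrow \operatorname{Cliff}^{\C}(V)$ lands in the real part for the Real structure $\kappa \hat\otimes \overline{(\cdot)}$, so $e$ and $f$ are fixed under the conjugation; combined with the fact that the sign $(-1)^{\partial x}$ depends only on parity, which is preserved by the Real structure, both $L_e$ and $\hat f$ commute with it. The only place where care is genuinely needed is verifying the trace property of $\tau$ on a $\Z/2\Z$-graded algebra (one might naively expect only a supertrace relation); the saving grace is that nonzero scalar components only occur when $I = J$, so the signs align automatically and no grading-induced corrections appear.
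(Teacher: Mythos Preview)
Your argument is correct and runs along the same lines as the paper's: both identify the inner product with a trace functional on $\operatorname{Cliff}^{\C}(V)$ (your scalar-part $\tau$ coincides with the paper's $T(x) = 2^{-n}\operatorname{Tr}(c(x))$) and then compute directly, the only cosmetic difference being that the paper invokes $T\circ\alpha = T$ for the skew-adjointness of $\hat f$ where you instead use even/odd orthogonality to reduce to homogeneous $x,y$ of opposite parity. One caution on the Reality claim: your assertion that $V$ lies in the fixed set of $\kappa \hat\otimes \overline{(\cdot)}$ is only valid when $\kappa|_V = \mathrm{id}$, i.e.\ in the Euclidean (not general pseudo-Euclidean) setting --- which is indeed the hypothesis in the definition immediately preceding this lemma, so you are fine in context, but the notation $\kappa$ suggests a generality the argument does not actually support.
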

\begin{proof}
    For the proof, it helps to have the point of view on Clifford algebras as used in the proof of \cref{prop: cliff c* alg}.

    Indeed, we have the graded $*$-algebra homomorphism:
    \begin{align*}
        c:\operatorname{Cliff}^{\C}(V) &\rightarrow \operatorname{End}(\Lambda^*V) \\
        v &\mapsto \operatorname{ext}_v + \operatorname{ext}_{v^*}.
    \end{align*}
    Then, $\gamma:\operatorname{Cliff}^{\C}(V) \rightarrow \Lambda^*V$, $\gamma(v)=c(v).1$ is a linear isomorphism. We define the positive normed functional
    \[
    T: \operatorname{Cliff}^{\C}(V) \rightarrow \C, T(x)=\frac{1}{2^n}(\operatorname{Tr}(c(x))).
    \]
        Under these identifications, it is easy to see that the scalar product on $\operatorname{Cliff}^{\C}(V)$ is given by $\langle x,y \rangle = T(x^*y)$.
    As $\operatorname{End}(\Lambda^*V)$ is inner graded, the properties of trace show that $T(x)=T(\alpha(x))$.
    Now, we compute
    \[
    \langle ex,y \rangle = T((ex)^*y) = T(x^*e^*y) \xlongequal{e=e^*}T(x^*ey)=\langle x,ey \rangle.
    \]
    and 
    \begin{align*}
        \langle \hat fx,y \rangle &= T((\alpha(x).f)^*y)\\&=T(f^*\alpha(x)^*y) \\
    &=T(\alpha(x)^*yf) ~(\text{since}~ T(abc)=T(bca), f=f^*)\\
    &= T(\alpha(\alpha(x)^*)\alpha(y)\alpha(f)) ~(\text{since}~ T(x)=T(\alpha(x)))\\
    &=T(x^*\alpha(y)\alpha(f)) \\
    &=-T(x^*\alpha(y)f) ~(\text{since}~ \alpha(f)=-f) \\
    &= -\langle x,\hat fy \rangle
    \end{align*}
    The Reality of $e$ and $\hat f$ follows immediately from the definition.
\end{proof}
 \begin{defi}
Let $V$ be a finite-dimensional Euclidean vector space. Denote by
$\mathfrak{s}(V)$ the dense subspace of $\mathcal{H}(V)$ comprised of
Schwartz-class $\operatorname{Cliff}^{\C}(V)$-valued functions:
\[
\mathfrak{s}(V,\operatorname{Cliff}^{\C}(V)) = \text{Schwartz-class
}\operatorname{Cliff}^{\C}(V)\text{-valued functions.}
\]
The Dirac operator of $V$ is the unbounded operator $D$ on
$\mathcal{H}(V)$, with domain $\mathfrak{s}(V,\operatorname{Cliff}^{\C}(V))$, defined by
\[
	(Df)(v) = \sum_{1}^{n} \hat{e}_i\left( \frac{\partial f}{\partial x_i}
	(v)\right) , 
\]
where $e_1,\ldots , e_n$ is an orthonormal basis of $V$ and $x_1, \ldots ,
x_n$ are the corresponding coordinates on $V$. 

\end{defi}
\begin{defi}
    Let $V$ be a finite dimensional pseudo Euclidean vector space. The Clifford operator is the Real unbounded operator on $\h(V)$, with domain $\mathfrak{s}(V,\operatorname{Cliff}^{\C}(V))$, which is given by the formula
    \[
    (Cf)(v)=vf(v)
    \]
\end{defi}
\begin{lem}
    The Dirac operator is Real.
\end{lem}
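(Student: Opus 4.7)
The plan is to verify the identity $D\bar f = \overline{Df}$ on the Schwartz domain $\mathfrak{s}(V,\operatorname{Cliff}^{\C}(V))$ by direct computation, where the Real structure on $\mathcal{H}(V) = L^2(V,\operatorname{Cliff}^{\C}(V))$ is the one from \cref{ex: Real} item (3) combined with that on $\operatorname{Cliff}^{\C}(V)$ from \cref{cor: complex cliff alg}, namely $\bar f(v) = \overline{f(\kappa v)}$, with the outer bar denoting the Real structure $\kappa \hat\otimes \overline{(\cdot)}$ on $\operatorname{Cliff}^{\C}(V)$.

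The first step is to observe that the defining formula for $D$ is independent of the orthonormal basis used (a short orthogonality check on the coefficients $\hat{e}_i$ and $\partial/\partial x_i$ under an orthogonal change of basis), so I may assume $e_1,\dots,e_n$ is an orthonormal basis adapted to the self-adjoint involution $\kappa$ on $V$, i.e.\ $\kappa e_i = \epsilon_i e_i$ with $\epsilon_i \in \{\pm 1\}$. This trick is what makes all computations explicit.

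Next, I compute the Real structure's action on the generators $\hat{e}_i$ of $\operatorname{Cliff}^{\C}(V)$ acting on itself: using $\overline{e_i} = \epsilon_i e_i$ and the fact that the Real structure preserves the grading (so $\partial \bar x = \partial x$), a one-line calculation gives
\[
\overline{\hat e_i(x)} \;=\; \overline{(-1)^{\partial x} x \cdot e_i} \;=\; \epsilon_i\,(-1)^{\partial \bar x}\,\bar x \cdot e_i \;=\; \epsilon_i\,\hat e_i(\bar x).
\]
In parallel, using the chain rule on $\bar f(v) = \overline{f(\kappa v)}$ with $(\kappa v)_j = \epsilon_j x_j$, I get
\[
\frac{\partial \bar f}{\partial x_i}(v) \;=\; \epsilon_i\,\overline{\frac{\partial f}{\partial x_i}(\kappa v)}.
\]

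Finally, I combine these two identities:
\[
(D\bar f)(v) \;=\; \sum_i \hat e_i\!\left(\frac{\partial \bar f}{\partial x_i}(v)\right) \;=\; \sum_i \epsilon_i\,\hat e_i\!\left(\overline{\tfrac{\partial f}{\partial x_i}(\kappa v)}\right),
\]
while
\[
\overline{Df}(v) \;=\; \overline{(Df)(\kappa v)} \;=\; \sum_i \overline{\hat e_i\!\left(\tfrac{\partial f}{\partial x_i}(\kappa v)\right)} \;=\; \sum_i \epsilon_i\,\hat e_i\!\left(\overline{\tfrac{\partial f}{\partial x_i}(\kappa v)}\right),
\]
and the two expressions match term by term. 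The only nontrivial point is bookkeeping the two independent appearances of the signs $\epsilon_i$: one from differentiating through the involution $\kappa$ on the base $V$, and one from the Real structure on $\operatorname{Cliff}^{\C}(V)$ acting on the generator $e_i$. These two sign contributions are what the adapted basis is designed to expose transparently, and their agreement is exactly what makes $D$ Real. I anticipate no further obstacle, since the Real structure preserves Schwartz class and all other structures (grading, differentiation, $\C$-antilinearity) have been accounted for.
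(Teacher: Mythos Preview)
Your proof is correct and follows essentially the same approach as the paper: both choose an orthonormal basis of $\kappa$-eigenvectors and track the two sign contributions $\epsilon_i$ coming from the chain rule through $\kappa$ on the base and from the Real structure on the Clifford generator $e_i$. The paper organizes the computation as a factorization $D = C \circ \nabla$ and checks each factor is Real via a diagram chase, whereas you compute $D\bar f$ and $\overline{Df}$ directly; the underlying bookkeeping is identical.
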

\begin{proof}
    We can factor the Dirac operator as 
    \[
    \mathfrak{s}(V,\operatorname{Cliff}^{\C}(V)) \xrightarrow{\nabla}  \mathfrak{s}(V,V \hat\otimes \operatorname{Cliff}^{\C}(V)) \xrightarrow{C} 
     \mathfrak{s}(V,\operatorname{Cliff}^{\C}(V)) \]
     where $\nabla(f)=\sum_{j=1}^n \hat{e_j} \hat\otimes \partial_j f$. It now suffices to show that both $C$ and $\nabla$ are Real operators.

     Reality of $C$: It suffices to observe that for an element $x \mapsto v \hat\otimes f(x) \in \mathfrak{s}(V,\operatorname{Cliff}^{\C}(V))$, where $v \in V$ is fixed, the following diagram commutes:
     \[
     % https://q.uiver.app/#q=WzAsNCxbMCwwLCJ4IFxcbWFwc3RvIHYgXFxoYXRcXG90aW1lcyBmKHgpIl0sWzIsMCwieCBcXG1hcHN0byB2Zih4KSJdLFsyLDEsInggXFxtYXBzdG8gXFxvdmVybGluZXtcXGthcHBhKHYpXFxrYXBwYSBmKFxca2FwcGEoeCkpfSJdLFswLDEsInggXFxtYXBzdG8gXFxvdmVybGluZXtcXGthcHBhKHYpIFxcaGF0XFxvdGltZXNcXGthcHBhIGYoXFxrYXBwYSB4KX0iXSxbMCwxLCJDIiwwLHsic3R5bGUiOnsidGFpbCI6eyJuYW1lIjoibWFwcyB0byJ9fX1dLFsxLDIsIlxcdGV4dHtSZWFsIGZvcm19Il0sWzAsMywiXFx0ZXh0e1JlYWwgZm9ybX0iLDJdLFszLDIsIkMiXV0=
\begin{tikzcd}
	{x \mapsto v \hat\otimes f(x)} && {x \mapsto vf(x)} \\
	{x \mapsto \overline{\kappa(v) \hat\otimes\kappa f(\kappa x)}} && {x \mapsto \overline{\kappa(v)\kappa f(\kappa(x))}}
	\arrow["C", maps to, from=1-1, to=1-3]
	\arrow["{\text{Real form}}", from=1-3, to=2-3]
	\arrow["{\text{Real form}}"', from=1-1, to=2-1]
	\arrow["C", from=2-1, to=2-3]
\end{tikzcd}
     \]
     Reality of $\nabla$: It's another diagram chase, but a bit more subtle:
     \[
     % https://q.uiver.app/#q=WzAsNixbMCwwLCJmIl0sWzIsMCwiXFxzdW1fe2o9MX1ee259IGVfaiBcXGhhdFxcb3RpbWVzIFxccGFydGlhbF9qZiJdLFsyLDEsInggXFxtYXBzdG8gXFxzdW1fe2o9MX1ee259IFxcb3ZlcmxpbmV7XFxrYXBwYSBlX2ogXFxoYXRcXG90aW1lcyBcXGthcHBhKERmKFxca2FwcGEgeCkuZV9qKX0iXSxbMCwxLCJ4IFxcbWFwc3RvIFxcb3ZlcmxpbmV7XFxrYXBwYSBmKFxca2FwcGEgeCl9Il0sWzIsMywieCBcXG1hcHN0b1xcc3VtX3tqPTF9XntufVxcb3ZlcmxpbmV7ZV9qIFxcaGF0XFxvdGltZXNcXGZyYWN7ZH17ZHR9XFxtaWRfe3Q9MH0oXFxrYXBwYSBmIChcXGthcHBhICh4ICsgdGVfaikpKX0iXSxbMiwyLCJ4IFxcbWFwc3RvIFxcc3VtX3tqPTF9XntufWVfaiBcXGhhdFxcb3RpbWVzIFxca2FwcGEgKEQoZikoXFxrYXBwYSB4KS4oXFxrYXBwYSBlX2opKSJdLFsxLDIsIlxcdGV4dHtSZWFsIHN0cnVjdHVyZX0iLDAseyJzdHlsZSI6eyJ0YWlsIjp7Im5hbWUiOiJtYXBzIHRvIn19fV0sWzAsMSwiXFxuYWJsYSIsMCx7InN0eWxlIjp7InRhaWwiOnsibmFtZSI6Im1hcHMgdG8ifX19XSxbMCwzLCJcXHRleHR7UmVhbCBzdHJ1Y3R1cmV9IiwwLHsic3R5bGUiOnsidGFpbCI6eyJuYW1lIjoibWFwcyB0byJ9fX1dLFszLDQsIlxcbmFibGEiLDJdLFs0LDUsIj0gKFxcdGV4dHtzaW5jZX1+XFxrYXBwYX5cXHRleHR7aXMgbGluZWFyOyBhbmQgY2hhaW4gcnVsZX0pIiwyXSxbNSwyLCJcXHRleHR7YXN9fiBcXGthcHBhKGVfaik9XFxwbSAxIiwyXV0=
\begin{tikzcd}
	f && {\sum_{j=1}^{n} \hat{e_j} \hat\otimes \partial_jf} \\
	{x \mapsto \overline{\kappa f(\kappa x)}} && {x \mapsto \sum_{j=1}^{n} \overline{\kappa \hat{e_j} \hat\otimes \kappa(Df(\kappa x).e_j)}} \\
	&& {x \mapsto \sum_{j=1}^{n}\overline{\hat{e_j} \hat\otimes \kappa (D(f)(\kappa x).(\kappa e_j))}} \\
	&& {x \mapsto\sum_{j=1}^{n}\overline{\hat{e_j} \hat\otimes\frac{d}{dt}\mid_{t=0}(\kappa f (\kappa (x + te_j)))}}
	\arrow["{\text{Real structure}}", maps to, from=1-3, to=2-3]
	\arrow["\nabla", maps to, from=1-1, to=1-3]
	\arrow["{\text{Real structure}}", maps to, from=1-1, to=2-1]
	\arrow["\nabla"', from=2-1, to=4-3]
	\arrow["{= (\text{since}~\kappa~\text{is linear; and chain rule})}"', from=4-3, to=3-3]
	\arrow["{=(\text{as}~ \kappa(e_j)=\pm 1)}"', from=3-3, to=2-3]
\end{tikzcd}
     \]
\end{proof}

Since the individual $\hat{e}_i$ are skew-adjoint and since they commute
with the partial derivatives we see that $D$ is formally self-adjoint on
$\mathfrak{s}(V)$. The following lemma is well known in the theory of differential operators:
 \begin{lem} \label{lem: HG Lem 1.8}
 Let $V$ be a finite-dimensional pseudo-Euclidean vector space. The Dirac operator on $V$ is closed and essentially self-adjoint. If $f \in \s$, if $h \in \cc(V)$ and if $M_h$ is the operator of pointwise multiplication by $h$ on the Hilbert space $\mathcal{H}(V)$, then the product $f(D)M_h$ is a compact operator on $\mathcal{H}(V)$.
 \end{lem}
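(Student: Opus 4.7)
For essential self-adjointness, my plan is to diagonalize $D$ via the Fourier transform. Let $\mathcal{F}:\mathcal{H}(V)\to L^2(V^*,\operatorname{Cliff}^{\C}(V))$ denote the vector-valued Fourier transform (acting coefficient-wise in any fixed orthonormal basis of $\operatorname{Cliff}^{\C}(V)$). Using the identity $\mathcal{F}\partial_j\mathcal{F}^{-1}=iM_{\xi_j}$, conjugation by $\mathcal{F}$ takes $D$, viewed as a symmetric operator with core $\mathfrak{s}(V,\operatorname{Cliff}^{\C}(V))$, to the operator of pointwise multiplication by the symbol $\sigma(\xi)=i\sum_j\xi_j\hat{e}_j\in\operatorname{End}(\operatorname{Cliff}^{\C}(V))$. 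Since each $\hat{e}_j$ is skew-adjoint on the finite-dimensional Hilbert space $\operatorname{Cliff}^{\C}(V)$, every $i\hat{e}_j$ is self-adjoint, so $\sigma(\xi)$ is self-adjoint for each $\xi\in V^*$, with operator norm bounded by a polynomial in $|\xi|$. Multiplication by such a pointwise self-adjoint, polynomially bounded operator-valued function is essentially self-adjoint on Schwartz-class vectors by the standard cut-off/mollification argument (Schwartz functions are a core for the maximal multiplication operator, which is self-adjoint). Transporting back, $D$ is essentially self-adjoint on $\mathfrak{s}(V,\operatorname{Cliff}^{\C}(V))$, and moreover one obtains the explicit functional calculus formula $f(D)=\mathcal{F}^{-1}M_{f\circ\sigma}\mathcal{F}$ for every bounded Borel $f$ on $\R$.

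For the compactness of $f(D)M_h$ I would argue in two reductions plus an integral-kernel computation. First, the map $(f,h)\mapsto f(D)M_h$ is bilinear and bounded from $\mathcal{S}\times\mathcal{C}(V)$ into $\mathcal{B}(\mathcal{H}(V))$: one has $\|f(D)\|\le\|f\|_\infty$ by the functional calculus, and $\|M_h\|\le\|h\|_\infty$. Since the compact operators are closed, it suffices to prove compactness on a dense set of pairs. I would choose $f$ in the dense subalgebra of $\mathcal{S}$ generated by $e^{-x^2}$ and $xe^{-x^2}$ (as used in \cref{prop:comultiplication}) and $h$ continuous with compact support.

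For such $f$, the function $f\circ\sigma:V^*\to\operatorname{End}(\operatorname{Cliff}^{\C}(V))$ is Schwartz, because $\sigma(\xi)^2=|\xi|^2$ (using the anticommutation relations $\hat{e}_j\hat{e}_k+\hat{e}_k\hat{e}_j=-2\delta_{jk}$) so the eigenvalues of $\sigma(\xi)$ are $\pm|\xi|$ and any composition of a Schwartz function with $\sigma$ is again Schwartz in $\xi$. By the functional calculus formula above, $f(D)M_h$ is then an integral operator on $\mathcal{H}(V)=L^2(V,\operatorname{Cliff}^{\C}(V))$ whose kernel is
\[
K(x,y)=G_f(x-y)\,h(y),\qquad G_f(z):=(2\pi)^{-n}\int_{V^*}e^{i\xi\cdot z}f(\sigma(\xi))\,d\xi,
\]
with $G_f$ a Schwartz-class $\operatorname{End}(\operatorname{Cliff}^{\C}(V))$-valued function of $z\in V$. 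Since $h$ has compact support, Fubini gives $\iint\|K(x,y)\|_{\mathrm{HS}}^2\,dx\,dy=\|G_f\|_{L^2}^2\cdot\|h\|_{L^2}^2<\infty$, so $f(D)M_h$ is Hilbert--Schmidt and in particular compact. Density in $\mathcal{S}\times\mathcal{C}(V)$ then upgrades the conclusion to arbitrary $f\in\mathcal{S}$ and $h\in\mathcal{C}(V)$.

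The main obstacle will be the density/reduction step: one has to verify that the subalgebra of $f$'s for which $f\circ\sigma$ is Schwartz, and the subalgebra of compactly supported $h$'s, are simultaneously dense and that the bilinear estimate $\|f(D)M_h\|\le\|f\|_\infty\|h\|_\infty$ (together with closedness of the compact operators) genuinely gives approximation in operator norm. Everything else reduces to standard vector-valued Fourier analysis and a Hilbert--Schmidt estimate, which are routine once the Clifford-valued formalism is set up carefully.
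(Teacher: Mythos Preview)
Your argument is correct and takes a genuinely different route from the paper. The paper handles essential self-adjointness by citation and proves compactness via elliptic-operator machinery: for the resolvents $f(x)=(x\pm i)^{-1}$ and smooth compactly supported $h$, it invokes the Rellich lemma and G{\aa}rding's inequality to show $M_h(D\pm i)^{-1}$ is compact, takes adjoints, then extends to all $h$ by density and to all $f$ by Stone--Weierstrass (the set of $f$ with $f(D)M_h$ compact for all $h$ is a closed ideal containing the resolvents). Your approach instead diagonalizes $D$ explicitly via the Fourier transform, reduces to $f$ of Gaussian type and $h$ compactly supported, and computes the integral kernel directly to obtain a Hilbert--Schmidt (not merely compact) conclusion. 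Your method is more elementary---it avoids Sobolev spaces entirely---and yields a sharper trace-class/Hilbert--Schmidt statement essentially for free; the paper's approach has the advantage of generalizing verbatim to Dirac-type operators on manifolds, where no global Fourier transform is available. One small correction: your displayed equality $\iint\|K(x,y)\|_{\mathrm{HS}}^2\,dx\,dy=\|G_f\|_{L^2}^2\cdot\|h\|_{L^2}^2$ should be an inequality (with a dimensional constant), since $h(y)$ acts by Clifford multiplication rather than scalar multiplication; this does not affect the conclusion.
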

 \begin{proof}
      $D$ is an essentially self-adjoint operator follows from well known theory of the Dirac operator (\cite{RUF} Proposition 8.1.8). Thus, it makes sense to talk about the operator $f(D)$, since there is a functional calculus for self-adjoint operators on a Hilbert space. (\cite{RUF} Theorem D.1.7).

      We next show that $f(D)M_h$ is a compact operator. First, take $f(x)=(x\pm i)^{-1}$, and $h \in C_c^{\infty}(V,\operatorname{Cliff(V)})$. Some well known characterization of essential self-adjointness yields that $(D \pm i):\mathfrak{s}(V) \rightarrow \mathcal{H}(V)$ has dense range. Thus, Rellich lemma and Garding's inequality (see \cite{RUF}) shows that $M_h(D\pm i)^{-1}$ is a compact operator on $\mathcal{H}(V)$. Thus, by taking adjoints, we get $(D\pm i)^{-1}M_h$ is compact. We can now see that $(D\pm i)^{-1}M_h$ is compact for all $h \in \cc(V)$, since $C_c^{\infty}(V,\operatorname{Cliff(V)})$ is dense in $\cc(V)$. We are now done by a direct application of the Stone-Weierstrass theorem, as the set of $f \in \s$ for which $f(D)M_h$ is compact, for all $h$, is an ideal in $\s$.
 \end{proof}
 \begin{rem} \label{rem: for HG Prop 1.5}
     From the proof of \cref{lem: HG Lem 1.8}, it also follows that $f(t^{-1}D)M_{h_t}$ is compact where $t \in [1,\infty)$, and $h_t(v):=h(t^{-1}v).$
 \end{rem}
 \begin{lem} \label{lem: HG Lem 1.9}
     In the notation of \cref{lem: HG Lem 1.8}, for every $f \in \s$ and $h \in \cc(V)$, we have 
     \[\lim_{t \rightarrow \infty} \lVert [f(t^{-1}D),M_{h_t}] \rVert=0,\]
     where $[.,.]$ denotes the graded commutator.
 \end{lem}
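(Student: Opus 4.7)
The plan is to run the standard asymptotic commutator estimate via functional calculus, first reducing in the variable $h$ and then in $f$. For the reduction in $h$: for any $h'\in\cc(V)$ approximating $h$ in norm one has
\[
\bigl\|[f(t^{-1}D),M_{h_t}]-[f(t^{-1}D),M_{h'_t}]\bigr\|\le 2\|f\|_\infty\,\|h-h'\|_\infty,
\]
since rescaling preserves the sup norm and $\|f(t^{-1}D)\|\le\|f\|_\infty$. Hence density of $C_c^\infty(V,\operatorname{Cliff}^\C(V))$ in $\cc(V)$ allows us to assume from now on that $h\in C_c^\infty(V,\operatorname{Cliff}^\C(V))$.

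Next I would apply Stone--Weierstrass in the variable $f$. Consider
\[
\mathcal{A}:=\{f\in\s:\lim_{t\to\infty}\|[f(t^{-1}D),M_{h_t}]\|=0\}.
\]
This will be shown to be a closed $*$-subalgebra of $\s$: closure follows from $\|f(t^{-1}D)-g(t^{-1}D)\|\le\|f-g\|_\infty$, and multiplicativity from the graded Leibniz rule, after decomposing $f$ and $h$ into homogeneous components (the functional calculus of the odd self-adjoint operator $t^{-1}D$ sends even/odd functions to even/odd operators). Since the resolvents $r_\pm(x):=(x\pm i)^{-1}$ separate points of $\R$ and never vanish, they generate $\s=C_0(\R)$ as a $C^*$-algebra, so it suffices to prove $r_\pm\in\mathcal{A}$.

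For this, on the Schwartz domain $\mathfrak{s}(V)$ (which $M_{h_t}$ preserves) the standard resolvent identity gives
\[
[r_\pm(t^{-1}D),M_{h_t}]=-(t^{-1}D\pm i)^{-1}\,[t^{-1}D,M_{h_t}]\,(t^{-1}D\pm i)^{-1},
\]
where the graded commutator on the right reduces to $\pm$ the ungraded commutator on each homogeneous component of $h$. Since $t^{-1}D$ is self-adjoint, both outer resolvents are norm-bounded by $1$. A direct graded Leibniz computation with $D=\sum_j\hat{e}_j\partial_j$ shows that $[D,M_{h_t}]$ is the pointwise operator acting at $v$ by a signed sum of terms $\hat{e}_j\cdot\partial_j h_t(v)$; since $\partial_j h_t(v)=t^{-1}(\partial_j h)(t^{-1}v)$ for smooth $h$, one obtains $\|[D,M_{h_t}]\|\le C\,t^{-1}\|\nabla h\|_\infty$, and hence
\[
\|[r_\pm(t^{-1}D),M_{h_t}]\|\le C\,t^{-2}\|\nabla h\|_\infty\xrightarrow[t\to\infty]{}0.
\]

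The only genuinely delicate point is keeping the grading signs straight when verifying that $\mathcal{A}$ is a $*$-subalgebra and when writing down the resolvent identity for a graded commutator; once this bookkeeping is set up correctly the estimates are entirely routine.
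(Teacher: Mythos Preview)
Your proposal is correct and follows essentially the same route as the paper: reduce $h$ to $C_c^\infty$, then run Stone--Weierstrass in $f$, verifying the generating case via the resolvent identity and the bound $\|[D,M_{h_t}]\|\le C t^{-1}$. The only difference is cosmetic: the paper chooses the \emph{homogeneous} generators $\tfrac{1}{x^2+1}$ (even) and $\tfrac{x}{x^2+1}$ (odd) instead of your $r_\pm=(x\pm i)^{-1}$, which lightens exactly the graded-commutator bookkeeping you flagged, since each generator already has a definite parity; conversely, your first-order resolvents make the commutator identity itself one step shorter.
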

 \begin{proof}
     As in the proof of \cref{lem: HG Lem 1.8}, a Stone-Weierstrass argument reduces to proving the lemma for the functions $x \mapsto \frac{1}{x^2+1}$ and $x \mapsto \frac{x}{x^2+1}$, and $h$ is smooth and compactly supported. We show the computation for $f=\frac{1}{x^2+1}$.
\begin{align*}
	\left[ f(t^{-1}D), M_{h_t} \right] 
	&= \left[ (t^{-2}D^2+I)^{-1}, M_{h_{t}} \right]\\
	&= (t^{-2}D^2+I)^{-1}M_{h_{t}} \pm M_{h_{t}}(t^{-2}D^2+I)^{-1}\\
	&=
	(t^{-2}D^2+I)^{-1}M_{h_{t}}(t^{-2}D^2+I)(t^{-2}D^2+I)^{-1}\\
	& \qquad \qquad \pm (t^{-2}D^2+I)^{-1}(t^{-2}D^2+I)M_{h_{t}}(t^{-2}D^2
	+I)^{-1}\\
	&= (t^{-2}D^2+I)^{-1}\left[ M_{h_{t}}, (t^{-2}D^2+I) \right]
	(t^{-2}D+I)^{-1}\\
 &= (t^{-2}D^2+I)^{-1}\left[ M_{h_{t}}, t^{-2}D^2 \right]
	(t^{-2}D+I)^{-1}\\
	&= t^{-2}(t^{-2}D+I)^{-1} \left[ M_{h_{t}}, D^2 \right]
	(t^{-2}D^2+I)^{-1}.
\end{align*}

     which has norm bounded by $t^{-2}\lVert [M_{h_t},D^2]\rVert$, and the conclusion follows from here.
 \end{proof}
 \begin{prop}
     Upto equivalence, there is a unique asymptotic morphism \[\alpha_t: \s \hat\otimes \cc(V) \rightarrow \mathcal{K}(\mathcal{H}(V))\] for which, on elementary tensors,
     \[\alpha_t(f \hat\otimes h)=f(t^{-1}D)M_{h_t}.\]
 \end{prop}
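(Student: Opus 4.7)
The plan is to use \cref{prop: asym} to reduce constructing the asymptotic morphism to constructing a single graded, Real $*$-homomorphism $\beta: \s \hat\otimes \cc(V) \to \mathfrak{A}(\mathcal{K}(\h(V)))$ which encodes the family $t \mapsto f(t^{-1}D)M_{h_t}$ on simple tensors. First I would build $\beta$ on the algebraic tensor product $\s \hat\odot \cc(V)$, and then extend it to the maximal tensor product by the universal property of the latter.

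On simple tensors, set $\beta(f \hat\otimes h) := q\bigl(t \mapsto f(t^{-1}D)M_{h_t}\bigr)$, where $q: C_b([1,\infty),\mathcal{K}(\h(V))) \to \mathfrak{A}(\mathcal{K}(\h(V)))$ is the quotient map. By \cref{lem: HG Lem 1.8} and \cref{rem: for HG Prop 1.5}, the map $t \mapsto f(t^{-1}D)M_{h_t}$ takes values in $\mathcal{K}(\h(V))$; by the spectral theorem $\lVert f(t^{-1}D) \rVert \leq \lVert f \rVert_\infty$ and $\lVert M_{h_t} \rVert \leq \lVert h \rVert_\infty$, so the assignment is bounded, and continuity in $t$ is standard (spectral mapping for $f(t^{-1}D)$ and uniform continuity of $h$ for $M_{h_t}$). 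Extending $\C$-linearly yields a well-defined map $\beta: \s \hat\odot \cc(V) \to \mathfrak{A}(\mathcal{K}(\h(V)))$.

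The key verification is that $\beta$ is a graded $*$-homomorphism. The crucial identity on simple tensors is
\[
f_1(t^{-1}D)M_{(h_1)_t}f_2(t^{-1}D)M_{(h_2)_t} - (-1)^{\partial h_1 \partial f_2}(f_1 f_2)(t^{-1}D)M_{(h_1 h_2)_t} \to 0
\]
in norm as $t \to \infty$. This reduces to an application of \cref{lem: HG Lem 1.9} to the graded commutator $[M_{(h_1)_t}, f_2(t^{-1}D)]$, which vanishes asymptotically. Adjoint-preservation follows from self-adjointness of $D$ (so $f(t^{-1}D)^* = \bar f(t^{-1}D)$) together with $M_{h_t}^* = M_{h_t^*}$, with the Koszul sign again absorbed via \cref{lem: HG Lem 1.9}. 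Grading compatibility follows from the oddness of $D$ (so $f(t^{-1}D)$ inherits the grading of $f$), while Real-structure compatibility follows from the Reality of $D$ and the pointwise Real structure on $\cc(V)$.

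To extend $\beta$ from $\s \hat\odot \cc(V)$ to $\s \hat\otimes \cc(V)$, note that any $*$-homomorphism from a graded $*$-algebra into a $C^*$-algebra is automatically contractive for the maximal $C^*$-norm and therefore extends uniquely to the completion. Invoking \cref{prop: asym}, the resulting $*$-homomorphism $\beta: \s \hat\otimes \cc(V) \to \mathfrak{A}(\mathcal{K}(\h(V)))$ corresponds to an asymptotic morphism $\alpha_t$ whose values on simple tensors are asymptotic to $f(t^{-1}D)M_{h_t}$, as required. For uniqueness, any two such asymptotic morphisms $\alpha_t, \alpha_t'$ produce, via \cref{prop: asym}, two $*$-homomorphisms $\s \hat\otimes \cc(V) \to \mathfrak{A}(\mathcal{K}(\h(V)))$ agreeing on the dense subalgebra $\s \hat\odot \cc(V)$, hence everywhere by continuity, so $\alpha_t \sim \alpha_t'$. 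The main obstacle I anticipate is the careful sign-bookkeeping in the multiplicativity and $*$-preservation computations for inhomogeneous sums of simple tensors; once these are pinned down, the extension to the maximal tensor product and uniqueness are formal.
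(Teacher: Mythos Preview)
Your proposal is correct and follows essentially the same route as the paper: define the family on the algebraic tensor product, use \cref{lem: HG Lem 1.9} to see that it becomes a graded $*$-homomorphism into the asymptotic algebra, extend by the universal property of the maximal tensor product, and invoke \cref{lem: HG Lem 1.8}/\cref{rem: for HG Prop 1.5} to land in $\mathfrak{A}(\mathcal{K}(\mathcal{H}(V)))$. The paper's argument is slightly more terse (it first lands in $\mathfrak{A}(\mathcal{B}(\mathcal{H}(V)))$ and only afterwards observes compactness), but the structure and the key inputs are identical.
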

 \begin{proof}
     For $t \in [1,\infty)$, we can define a linear map $\alpha_t:\s \hat\odot \cc(V) \rightarrow \mathcal{B}(\mathcal{H}(V))$ by the formula \[\alpha_t(f \hat\otimes h)=f(t^{-1}D)M_{h_t}.\] \cref{lem: HG Lem 1.9} shows that $\alpha_t$ defines a homomorphism from $\s \hat\odot \cc(V)$ into $\mathfrak{A}(\mathcal{B}(\mathcal{H}(V)))$. By the universal property of the tensor product, this extends to a $*$-homomorphism defined on $\s \hat\otimes \cc(V)$. \cref{rem: for HG Prop 1.5} shows that $f(t^{-1}D)M_{h_t}$ is compact. So, our $*$-homomorphism actually maps $\s \hat\otimes \cc(V)$ into the subalgebra $\mathfrak{A}(\mathcal{K}(\mathcal{H}(V)))$, as required.
 \end{proof}
 \subsection{The Harmonic Oscillator and the calculations}
 The $\alpha$ constructed in the previous subsection is the asymptotic morphism sought after in \cref{thm: hg sl Main thm}. The only part remaining is thus to show that $\alpha_*(b)=1$. We proceed to show this now.

 \begin{lem}
     Let $V$ be a finite dimensional pseudo Euclidean vector space. If $\cc(V)$ is represented on the Hilbert space $\mathcal{H}(V)$ by pointwise multiplication operators then the composition 
     \[% https://q.uiver.app/#q=WzAsMyxbMCwwLCJcXG1hdGhjYWx7U30iXSxbMSwwLCJcXG1hdGhjYWx7Q30oVikiXSxbMiwwLCJcXG1hdGhjYWx7Qn0oXFwsXFxtYXRoY2Fse0h9KFYpKSJdLFswLDEsIlxcYmV0YSJdLFsxLDIsIk0iXV0=
\begin{tikzcd}
	{\mathcal{S}} & {\mathcal{C}(V)} & {\mathcal{B}(\,\mathcal{H}(V))}
	\arrow["\beta", from=1-1, to=1-2]
	\arrow["M", from=1-2, to=1-3]
\end{tikzcd}\] maps $f \in \s$ to $f(C) \in \mathcal{B}(\mathcal{H}(V))$.
 \end{lem}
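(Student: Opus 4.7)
The plan is to view $f \mapsto f(C)$ and $f \mapsto M \circ \beta(f)$ as two graded $*$-homomorphisms from $\mathcal{S}$ to $\mathcal{B}(\mathcal{H}(V))$, and verify their equality on a generating set. The right-hand side is manifestly a $*$-homomorphism: $\beta$ is one by definition, and $M$ is the representation of $\mathcal{C}(V)$ on $\mathcal{H}(V)$ by pointwise multiplication. The left-hand side is a $*$-homomorphism because the Clifford operator $C$, defined on the Schwartz core $\mathfrak{s}(V, \operatorname{Cliff}^{\mathbb{C}}(V))$ by $(C\xi)(v) = v\,\xi(v)$, is essentially self-adjoint (its symmetry is immediate, and $(C \pm i)$ has dense range since on each fiber $v \in V$ the element $v \pm i \in \operatorname{Cliff}^{\mathbb{C}}(V)$ is invertible with $\|(v \pm i)^{-1}\| \le 1$), hence admits a functional calculus $f \mapsto f(C)$.

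The conceptual heart of the argument is fiberwise: write $\mathcal{H}(V) \cong \int_V^{\oplus} \operatorname{Cliff}^{\mathbb{C}}(V) \, dv$ as a constant-fiber direct integral. The operator $C$ is decomposable with respect to this decomposition, with fiber $C_v$ equal to left multiplication by $v \in \operatorname{Cliff}^{\mathbb{C}}(V)$. Since $v$ is a self-adjoint element of the $C^*$-algebra $\operatorname{Cliff}^{\mathbb{C}}(V)$ (indeed $v^2 = \|v\|^2 \cdot 1$), and since left multiplication is a faithful $*$-representation, $C_v$ is a self-adjoint operator on the finite-dimensional Hilbert space $\operatorname{Cliff}^{\mathbb{C}}(V)$ whose functional calculus satisfies $f(C_v) = L_{f(v)}$, where $f(v) \in \operatorname{Cliff}^{\mathbb{C}}(V)$ is computed in the continuous functional calculus of $v$. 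By the very definition of the Bott map this is $\beta(f)(v)$, and compatibility of functional calculus with direct integrals then gives $f(C) = M_{\beta(f)}$.

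To make the direct integral manipulation rigorous without having to justify decomposability of an unbounded operator, I would instead reduce to the two resolvents $r_\pm(x) = (x \pm i)^{-1}$, which generate $\mathcal{S}$ as a $C^*$-algebra (by Stone-Weierstrass applied to $C^*(\{r_+, r_-, 1\}) \subseteq C(\mathbb{R}^+)$, whose spectrum is all of $\mathbb{R}^+$). Since both maps are bounded $*$-homomorphisms, equality on $r_\pm$ implies equality on $\mathcal{S}$. For $r = r_\pm$, one checks directly that for $\xi \in \mathfrak{s}(V, \operatorname{Cliff}^{\mathbb{C}}(V))$, $M_{\beta(r)}\xi$ lies in the Schwartz core and $(C \pm i) M_{\beta(r)}\xi = \xi$ pointwise, using the fiberwise identity $(v \pm i)(v \pm i)^{-1} = 1$ in $\operatorname{Cliff}^{\mathbb{C}}(V)$. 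This identifies $M_{\beta(r)}$ with the resolvent $(C \pm i)^{-1} = r(C)$.

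The main obstacle I anticipate is precisely this justification that $M_{\beta(r_\pm)}$ acts as the honest operator-theoretic resolvent of $C$, given the subtlety that $C$ is unbounded and one must take care about domains. The key point one must verify is that $M_{\beta(r_\pm)}$ is bounded (obvious from $\|(v \pm i)^{-1}\| \le 1$), maps all of $\mathcal{H}(V)$ into the domain of the closure of $C$, and left inverts $(C \pm i)$ on that domain; once this is pinned down, the rest of the argument is algebraic and routine.
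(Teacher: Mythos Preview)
Your proposal is correct and follows essentially the same fiberwise approach as the paper: both arguments boil down to the pointwise identity $(M_{\beta(f)}g)(v) = f(v)\cdot g(v) = (f(C)g)(v)$, where $f(v)$ is the functional calculus of the self-adjoint element $v$ in $\operatorname{Cliff}^{\mathbb{C}}(V)$. The paper simply asserts this identity in one line, whereas you are considerably more careful about the unbounded operator theory---verifying essential self-adjointness of $C$, reducing to resolvents, and checking domains---which the paper omits entirely.
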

 \begin{proof}
     We do a direct calculation: for $g\in \h(V), v \in V$, we have 
     \begin{align*}
         &(M\beta(f))(g)(v) \\
    =&M_{\beta(f)}(g)(v) \\
         =& \beta(f)(v).g(v) \\
         =& f(P)(v).g(v) \\
         =& f(v).g(v)
     \end{align*}
     which can be checked to be equal to $f(C)(g)(v)$ as well.
 \end{proof}
 We shall compute the composition $\alpha_*(b)$ by analyzing the following operator:
 \begin{defi}
     Let $V$ be a finite-dimensional Euclidean vector space. Define an unbounded operator $B$ on $\mathcal{H}(V)$, with domain $\mathfrak{s}(V)$, by the formula 
\[
	(Bf)(v) = \sum_{i=1}^{n}x_ie_i(f(v)) +
	\sum_{i=1}^{n}\widehat{e}_i\left( \frac{\partial f}{\partial x_i}
	(v) \right) .  
\]
     Thus, $B=C+D$, where $C$ is the Clifford operator and $D$ is the Dirac operator.
 \end{defi}
\begin{rem}
     Note that $B=C+D$, where $C$ is the Clifford operator and $D$ is the Dirac operator. Furthermore, its straightforward to see that $B$ maps the Schwartz space $\mathfrak{s}(V)$ to itself, whence the operator $H=B^2$ is defined on $\mathfrak{s}(V)$.
\end{rem}
\begin{example}
Suppose $V = \mathbb{R}$. Then
\[
B = 
\begin{pmatrix}
	0 & x - d / dx\\
	x + d / dx & 0
\end{pmatrix},
\]
upon identifying $\mathcal{H}(V)$ with $L^{2}(\mathbb{R}) \oplus L^{2}
(\mathbb{R})$.

Observe that the operator $B$ maps the Schwartz space
$\mathfrak{s}(V,\operatorname{Cliff}^{\C}(V))$ into itself. So the operator $H = B^{2} $ is defined
on $\mathfrak{s}(V)$.
\end{example}
\begin{defi}
    Let $V$ be a finite-dimensional Euclidean vector space. Define the number operator on $\operatorname{Cliff}(V)$ by \[N=\sum_{i=1}^{n} \hat{e_i}e_i,\] where $e_i$'s form an orthonormal basis of $V$.
\end{defi}
\begin{lem}\label{lem: N=2p-n}
    $N$ maps the monomial $e_{i_1}...e_{i_p}$ (where $i_1 < i_2 <...<i_p$) to $(2p-n)e_{i_1}...e_{i_p}$.
\end{lem}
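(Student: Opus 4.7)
The plan is a direct computation, splitting into two cases according to whether the summation index $i$ in $N = \sum_{i=1}^{n}\hat{e}_i e_i$ appears in the monomial $w = e_{i_1}\cdots e_{i_p}$ or not. By the definitions, for any homogeneous $x \in \operatorname{Cliff}^{\mathbb{C}}(V)$ one has
\[
\hat{e}_i e_i (x) \;=\; \hat{e}_i(e_i\cdot x) \;=\; (-1)^{\partial(e_i\cdot x)}(e_i\cdot x)\cdot e_i \;=\; (-1)^{\partial x + 1}\, e_i\, x\, e_i,
\]
so the whole calculation reduces to simplifying $e_i w e_i$ using the Clifford relations $e_j e_k = -e_k e_j$ for $j\neq k$ and $e_i^2 = 1$ (for the orthonormal basis).

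First I would treat Case 1: $i \notin \{i_1,\dots,i_p\}$. Anticommuting $e_i$ past each of the $p$ factors of $w$ gives $e_i w = (-1)^{p} w e_i$, and then $e_i w e_i = (-1)^{p} w e_i^2 = (-1)^p w$. Since $\partial w = p \bmod 2$, the displayed formula yields $\hat{e}_i e_i(w) = (-1)^{p+1}(-1)^p w = -w$. Next, Case 2: $i = i_k$ for some $k$. Moving $e_{i_k}$ on the left into position $k$ costs a sign $(-1)^{k-1}$ and then $e_{i_k}^2 = 1$ deletes it, yielding $e_i w = (-1)^{k-1} w'$ where $w' = e_{i_1}\cdots\widehat{e_{i_k}}\cdots e_{i_p}$; then multiplying on the right by $e_i = e_{i_k}$ and carrying it back through the $p-k$ factors $e_{i_{k+1}},\dots,e_{i_p}$ (and using $e_{i_k}^2 = 1$ again) gives $e_i w e_i = (-1)^{p-1} w$. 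Since $\partial w = p \bmod 2$, this yields $\hat{e}_i e_i(w) = (-1)^{p+1}(-1)^{p-1} w = w$.

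Summing over the $p$ values of $i$ in Case 2 and the $n-p$ values of $i$ in Case 1 gives
\[
N(w) \;=\; p\cdot w + (n-p)\cdot(-w) \;=\; (2p-n)\, w,
\]
as required. There is no real obstacle here; the only delicate point is the careful sign bookkeeping when shuttling $e_i$ through $w$, which is what the two cases isolate.
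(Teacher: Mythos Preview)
Your proof is correct and follows exactly the same approach as the paper's: compute $\hat{e}_i e_i(w) = (-1)^{p+1} e_i w e_i$ and then use the Clifford relations to reduce $e_i w e_i$ in the two cases $i\in\{i_1,\dots,i_p\}$ and $i\notin\{i_1,\dots,i_p\}$. The paper simply records the line $\sum_i (-1)^{p+1}(e_i w) e_i = (2p-n)w$ and leaves the case split to the reader, whereas you have written it out; one minor slip is the parenthetical ``using $e_{i_k}^2 = 1$ again'' in Case~2, which is not actually needed there (only anticommutations occur in computing $w' e_{i_k}$), but your sign bookkeeping and final count are correct.
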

\begin{proof}
    We calculate
    \begin{align*}
        N(e_{i_1}...e_{i_p})&=\sum_{i=1}^{n}\hat{e_i}e_i(e_{i_1}...e_{i_p}) \\
        &=\sum_{i=1}^{n}\hat{e_i}(e_ie_{i_1}...e_{i_p}) \\
        &=\sum_{i=1}^{n}(-1)^{p+1}(e_ie_{i_1}...e_{i_p})e_i \\
        &=(2p-n)e_{i_1}...e_{i_p},
    \end{align*}
    as can be readily seen from the Clifford relations.
\end{proof}
The following lemma will be useful in calculations:
\begin{lem}  \label{lem: B2=C2+D2+N}
    $B^2=C^2+D^2+N$, and $N$ commutes with $C^2$ and $D^2$. 
\end{lem}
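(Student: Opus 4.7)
\bigskip

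\noindent\emph{Proof proposal.} The plan is to expand the square $B^{2}=(C+D)^{2}=C^{2}+CD+DC+D^{2}$ on the common invariant domain $\mathfrak{s}(V,\operatorname{Cliff}^{\C}(V))$, and to identify the cross term $CD+DC$ with the number operator $N$. The commutation with $C^{2}$ and $D^{2}$ will then drop out once we show that both of these auxiliary operators factor through pointwise scalar operations on $\mathcal{H}(V)$, so that they act trivially on the ``Clifford fiber'' direction along which $N$ is diagonal.

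For the identity $CD+DC=N$, I would fix an orthonormal basis $e_{1},\dots,e_{n}$ of $V$, and write $C=\sum_{i}x_{i}e_{i}$ and $D=\sum_{j}\hat{e}_{j}\partial_{j}$, where $x_{i}$, $\partial_{j}$ are spatial operators that commute with every $e_{k}$ and $\hat{e}_{k}$. The key algebraic input is the relation
\[
e_{i}\hat{e}_{j}+\hat{e}_{j}e_{i}=0 \qquad\text{for all }i,j,
\]
which I would verify by a direct one-line computation from the definitions $e_{i}(x)=e_{i}\cdot x$ and $\hat{e}_{j}(x)=(-1)^{\partial x}x\cdot e_{j}$, keeping careful track of the sign $(-1)^{\partial x}$ when commuting $\hat{e}_{j}$ past $e_{i}$. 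Then
\[
CD+DC=\sum_{i,j}x_{i}\,(e_{i}\hat{e}_{j}+\hat{e}_{j}e_{i})\,\partial_{j}+\sum_{i,j}\hat{e}_{j}[\partial_{j},x_{i}]e_{i},
\]
and since $[\partial_{j},x_{i}]=\delta_{ij}$, the first sum vanishes and the second collapses to $\sum_{i}\hat{e}_{i}e_{i}=N$, as required.

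For the commutation assertions, I would first observe that $C^{2}$ acts as pointwise multiplication by the scalar function $v\mapsto\|v\|^{2}$, because the Clifford relation in $\operatorname{Cliff}^{\C}(V)$ gives $v\cdot v=\|v\|^{2}$, so left multiplication by $v$ applied twice reduces to a scalar on each fiber. Similarly, a short computation using $\hat{e}_{i}^{2}=-1$ and $\hat{e}_{i}\hat{e}_{j}+\hat{e}_{j}\hat{e}_{i}=0$ for $i\neq j$ yields
\[
D^{2}=\sum_{i,j}\hat{e}_{i}\hat{e}_{j}\partial_{i}\partial_{j}=-\sum_{i}\partial_{i}^{2}=-\Delta_{V},
\]
acting componentwise on Clifford-valued functions. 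Since $N=\sum_{i}\hat{e}_{i}e_{i}$ is a fiberwise operator on the Clifford algebra (indeed, by \cref{lem: N=2p-n} it is diagonal in the monomial basis with eigenvalues $2p-n$), it commutes with every operator that is either scalar multiplication by a function on $V$ or a scalar differential operator on $V$; hence $[N,C^{2}]=[N,D^{2}]=0$.

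The main ``obstacle'' is purely bookkeeping: getting all the Koszul signs right in verifying $e_{i}\hat{e}_{j}=-\hat{e}_{j}e_{i}$ and $\hat{e}_{i}^{2}=-1$. Once these two identities are in place, the rest is a direct computation on $\mathfrak{s}(V,\operatorname{Cliff}^{\C}(V))$, and no analytic subtleties enter since $B$ preserves this Schwartz-type dense subspace.
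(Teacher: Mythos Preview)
Your proposal is correct and is precisely the ``purely algebraic calculation with partial derivatives'' that the paper defers to \cite{SLB}; you have simply written out the details that the paper omits. The key identities $e_i\hat e_j+\hat e_j e_i=0$ and $\hat e_i^{\,2}=-1$ are verified correctly, and your observation that $C^2$ and $D^2$ act as scalar operators on the Clifford fibres (hence commute with the fibrewise operator $N$) is exactly the content of the paper's remark that this commutation ``is obvious from the definitions.''
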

\begin{proof}
    The proof is a purely algebraic calculation with partial derivatives, and is done in detail in Proposition 3.2.13 of \cite{SLB}, so we skip the proof here. That $N$ commutes with $C^2$ and $D^2$ is also obvious from the definitions.
\end{proof}
\begin{prop} \label{HG Prop 1.16}
    Let $V$ be a finite dimensional Euclidean vector space of dimension $n$. Then,
    \begin{enumerate}
        \item the eigenvalues of $B^2$ are $2m ~(m=0,1,...)$, and each eigenvalue occurs with finite multiplicity.
        \item $B^2$ has an orthonormal basis of eigenfunctions within $\mathfrak{s}(V)$.
        \item the eigenvalue $0$ occurs precisely once, and the corresponding eigenfunction is $e^{-\frac{1}{2}\lVert v \rVert^2}.1$
    \end{enumerate}
\end{prop}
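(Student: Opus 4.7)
My plan is to diagonalize $B^{2}$ by exploiting the algebraic decomposition $B^{2}=C^{2}+D^{2}+N$ from \cref{lem: B2=C2+D2+N}, reducing everything to the classical quantum harmonic oscillator together with the combinatorics of the number operator on $\operatorname{Cliff}^{\C}(V)$.

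First I would fix an orthonormal basis $e_{1},\ldots,e_{n}$ of $V$ (passing to the complexification and exploiting that $\operatorname{Cliff}^{\C}(V)$ acts in a well-behaved way; the Real structure plays no role in the spectral calculation). The Clifford monomials $\{e_{I}:=e_{i_{1}}\cdots e_{i_{p}}\mid I=\{i_{1}<\cdots<i_{p}\}\subseteq\{1,\ldots,n\}\}$ form an orthonormal basis of $\operatorname{Cliff}^{\C}(V)$, and by \cref{lem: N=2p-n} each $e_{I}$ is an eigenvector of $N$ with eigenvalue $2|I|-n$. This yields an orthogonal decomposition
\[
    \mathcal{H}(V)=L^{2}(V,\operatorname{Cliff}^{\C}(V))\cong\bigoplus_{I\subseteq\{1,\ldots,n\}}L^{2}(V)\otimes\C e_{I},
\]
and this decomposition preserves $\mathfrak{s}(V,\operatorname{Cliff}^{\C}(V))$ summand-wise.

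Next I would observe that on each summand $L^{2}(V)\otimes\C e_{I}$, the operators $C^{2}$ and $D^{2}$ act only on the $L^{2}(V)$ factor: indeed $C^{2}=M_{\lVert v\rVert^{2}}$ is just pointwise multiplication by $\|v\|^2$ (the Clifford relations give $v\cdot v=\|v\|^{2}$), and a direct computation on $\mathfrak{s}(V)$ shows that $D^{2}=-\Delta$ is the positive Laplacian, where the $\hat{e}_{i}\hat{e}_{j}$ coefficients collapse via $\hat{e}_{i}^{2}=-1$ and anti-commutation. Hence on each summand $B^{2}$ is the operator $H_{\text{osc}}+(2|I|-n)\cdot\operatorname{id}$, where $H_{\text{osc}}=-\Delta+\|v\|^{2}$ is the standard $n$-dimensional harmonic oscillator. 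I would then invoke the classical fact (easily proved via ladder operators, or by tensor-separating into one-dimensional oscillators) that $H_{\text{osc}}$ has an orthonormal basis of Hermite eigenfunctions $\{h_{\alpha}\}_{\alpha\in\N^{n}}\subset\mathfrak{s}(V)$ with $H_{\text{osc}}h_{\alpha}=(n+2|\alpha|)h_{\alpha}$, where $h_{0}(v)=\pi^{-n/4}e^{-\|v\|^{2}/2}$.

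Assembling these pieces, the family $\{h_{\alpha}\otimes e_{I}\}_{\alpha\in\N^{n},\,I\subseteq\{1,\ldots,n\}}$ is an orthonormal basis of $\mathcal{H}(V)$ lying inside $\mathfrak{s}(V,\operatorname{Cliff}^{\C}(V))$, consisting of $B^{2}$-eigenvectors with eigenvalues
\[
    (n+2|\alpha|)+(2|I|-n)=2(|\alpha|+|I|).
\]
This immediately gives (1), (2), and (3): the spectrum is $\{2m:m\in\N_{0}\}$; the multiplicity of $2m$ is $\#\{(\alpha,I):|\alpha|+|I|=m\}$, which is finite because $|I|\leq n$ and each fixed $|\alpha|=k$ admits only finitely many $\alpha\in\N^{n}$; and the eigenvalue $0$ forces $|\alpha|=0$ and $|I|=0$, giving the unique (up to scalar) eigenvector $h_{0}\otimes 1$, i.e.\ a multiple of $e^{-\|v\|^{2}/2}\cdot 1$. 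The only step requiring real care is verifying cleanly that $D^{2}=-\Delta$ on $\mathfrak{s}(V,\operatorname{Cliff}^{\C}(V))$ (the $\hat{e}_{i}\hat{e}_{j}+\hat{e}_{j}\hat{e}_{i}=-2\delta_{ij}$ cancellation), and that the Hermite/Clifford basis actually exhausts $\mathcal{H}(V)$, for which density of $\mathfrak{s}(V,\operatorname{Cliff}^{\C}(V))$ together with completeness of Hermite functions in $L^{2}(V)$ suffices.
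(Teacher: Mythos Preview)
Your argument is correct. It is essentially the same spectral calculation as the paper's, but organized differently: the paper first treats $V=\R$ by computing $B^{2}=\begin{pmatrix}x^{2}-d^{2}/dx^{2}-1 & 0\\ 0 & x^{2}-d^{2}/dx^{2}+1\end{pmatrix}$ and analyzing the one-dimensional oscillator $H=x^{2}-d^{2}/dx^{2}$ via ladder operators, then passes to general $V$ by asserting that $B^{2}$ is a tensor product of the one-dimensional pieces. You instead invoke $B^{2}=C^{2}+D^{2}+N$ from \cref{lem: B2=C2+D2+N} together with \cref{lem: N=2p-n} to reduce, on each Clifford-isotypic summand $L^{2}(V)\otimes\C e_{I}$, directly to the $n$-dimensional oscillator $-\Delta+\lVert v\rVert^{2}$ shifted by $2|I|-n$. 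Your route makes the multiplicity count and the kernel computation slightly more transparent (the condition $|\alpha|+|I|=0$ is immediate), and it actually \emph{uses} the lemmas \ref{lem: N=2p-n} and \ref{lem: B2=C2+D2+N} that the paper proves but then does not invoke in its active proof; the paper's route has the virtue of being fully self-contained for the oscillator part. Either way the substance is the same harmonic-oscillator spectral theory.
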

\begin{proof}
    We first consider the case when $V=\mathbb{R}$. In this case, a direct computation shows that 
    \[
    B^2=\begin{pmatrix}
        x^2-\frac{d^2}{dx^2}-1 & 0 \\
        0 & x^2-\frac{d^2}{dx^2}+1
    \end{pmatrix}.
    \]
    Thus, it suffices to show the following:
    
    \begin{clm}
        There is an orthonormal basis of eigenvectors for the operator 
    \[
    H=x^2-\frac{d^2}{dx^2}.
    \] defined on the Schwarz subspace of $L^2(\R)$, whose eigenvalues are all odd positive integers, and each eigenvalue has multiplicity one.
    \end{clm} 
    \begin{proof}
        Set $L=x+\frac{d}{dx}$, $R=x-\frac{d}{dx}$. Also, set $f_1(x)=e^{-\frac{1}{2}x^2}$. A direct calculation shows that 
        \[
        (RL)(f)(x)=x^2f(x)-f''(x)-f(x), (LR)(f)(x)=x^2f(x)-f''(x)+f(x)
        ,\]
        implying that $H=RL+I=LR-I$. Also, its direct to check that $Lf_1=0$, whence $H(f_1)=f_1$, implying $f_1$ is in the $1$-eigenspace of $B^2$. A direct calculation shows that $HR=RH+2R$, and so by induction, $HR^n=R^nH+2nR^n$. Thus, by setting $f_{n+1}:=R^nf_1$, we get $H(f_{n+1})=(2n+1)f_{n+1}$. Also, note that $f_{n+1}$ is a non-zero polynomial of degree $n$ times $f_1$. Thus, $\operatorname{span}_{n \in \N}\{f_n\}=\operatorname{span}_{n \in \N}\{x^ne^{\frac{-x^2}{2}}\}$, and the latter is known to be dense in $L^2(\R)$ (\cite{ROE}). Also, the functions $f_n$ are orthogonal, being eigenfunctions corresponding to distinct eigenvalues of a symmetric operator. So, we conclude that $\{\frac{f_n}{{\lVert f_n \rVert}_2}\}_{n \in \N}$ is an orthonormal basis of eigenfunctions of $B^2$, the eigenvalues of $H$ are exactly the odd positive integers, each having multiplicity one. 
    \end{proof}
    The general case follows by noting that $B^2$ is the tensor product of the restrictions of $B^2$ onto the span of each of the orthonormal basis vectors of $V$.

\iffalse

    The general case from \cref{lem: N=2p-n} and \cref{lem: B2=C2+D2+N}, since 
    \[
        B^2=C^2+D^2+N=\sum_{i=1}^{n} x_i^2 + \sum_{i=1}^{n} -\frac{\partial^2}{\partial x_i^2}+(2p-n) ~\text{on}~ \h_p(V)
    \]
    where $\mathcal{H}_p(V)$ denotes the subspace of $\mathcal{H}(V)$ comprised of functions $V \rightarrow \operatorname{Cliff}(V)$ whose values are combinations of the degree $p$ monomials $e_{i_1},...,e_{i_p}$. \textcolor{red}{An eigenbasis for $B^2$ can be found now by separation of variables.}
    \fi
\end{proof}
\begin{cor}
    Let $V$ be a finite dimensional Euclidean vector space. Then,
    \begin{enumerate}
        \item B is essentially self-adjoint
        \item The eigenvalues of $B$ are $\pm\sqrt{2m}~(m=0,1,...)$, and the eigenspace associated to each is finite dimensional.
        \item $B$ has an orthonormal basis of eigenfunctions within $\mathfrak{s}(V)$, and has compact resolvent.
        \item The kernel of $B$ is one-dimensional and is generated by the function $e^{-\frac{1}{2}\lVert v \rVert^2}$
    \end{enumerate}
\end{cor}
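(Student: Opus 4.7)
The plan is to bootstrap everything from the spectral analysis of $B^2$ provided by \cref{HG Prop 1.16}. First, observe that $B$ is symmetric on $\mathfrak{s}(V)$: this is a direct computation analogous to the formal self-adjointness of $D$, using that each $e_i$ is self-adjoint and each $\widehat{e}_i$ is skew-adjoint, and that multiplication by $x_i$ pairs with $\partial/\partial x_i$ via integration by parts (with a single sign that cancels against the Clifford sign). Let $V_{2m} \subseteq \mathfrak{s}(V)$ denote the $2m$-eigenspace of $B^2$; by \cref{HG Prop 1.16} each $V_{2m}$ is finite-dimensional and $\bigoplus_{m\ge 0} V_{2m}$ is dense in $\mathcal{H}(V)$.

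For parts (ii) and (iii), note that $B$ preserves each $V_{2m}$ because $B$ commutes with $B^2$. On the finite-dimensional Hilbert space $V_{2m}$, $B$ restricts to a symmetric operator whose square equals $2m\cdot I$; hence its spectrum is contained in $\{\pm\sqrt{2m}\}$, and we may diagonalize it by splitting $V_{2m}=V_{2m}^{+}\oplus V_{2m}^{-}$ into $\pm\sqrt{2m}$-eigenspaces of $B$. Concatenating orthonormal bases of these pieces yields an orthonormal basis of $\mathcal{H}(V)$ by Schwartz-class eigenvectors of $B$ with eigenvalues in $\{\pm\sqrt{2m}:m\ge 0\}$. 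Since $|\pm\sqrt{2m}|\to\infty$ and each eigenspace is finite-dimensional, the resolvent of (the closure of) $B$ sends an orthonormal basis to a sequence converging to $0$ in norm, hence is compact.

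For (i), essential self-adjointness of $B$ follows from the standard criterion that it suffices to show the ranges of $B\pm i$ are dense in $\mathcal{H}(V)$. Writing any $\xi\in\mathcal{H}(V)$ as a norm-convergent sum $\xi=\sum_{m,\pm}c_{m,\pm}\xi_{m,\pm}$ against the orthonormal eigenbasis $\{\xi_{m,\pm}\}$ constructed above, the vector $\eta=\sum_{m,\pm}\tfrac{c_{m,\pm}}{\pm\sqrt{2m}\mp i}\xi_{m,\pm}$ lies in the domain of the closure of $B$ (since the eigenvalues of $B$ are bounded away from $\pm i$, the coefficients are square-summable) and satisfies $(B\pm i)\eta=\xi$. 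This gives surjectivity of $B\pm i$ on an appropriate core, which is more than enough for essential self-adjointness; alternatively, one may invoke that a symmetric operator with an orthonormal basis of eigenvectors in its domain is automatically essentially self-adjoint.

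Finally, for (iv), any $\xi\in\ker B$ satisfies $B^2\xi=0$, so $\ker B\subseteq \ker B^2$, which by \cref{HG Prop 1.16}\,(3) is one-dimensional and spanned by $\xi_0(v):=e^{-\tfrac{1}{2}\|v\|^2}\cdot 1$. It remains to verify $B\xi_0=0$ by a direct computation: since $\partial_i\xi_0=-x_i\xi_0$ and $\widehat{e}_i(1)=e_i$ (because $1$ is even), we obtain
\[
(B\xi_0)(v)=\sum_{i=1}^n x_i e_i\,e^{-\tfrac{1}{2}\|v\|^2} \;+\; \sum_{i=1}^n \widehat{e}_i\bigl(-x_i e^{-\tfrac{1}{2}\|v\|^2}\bigr) \;=\; \sum_{i=1}^n x_i e_i\,e^{-\tfrac{1}{2}\|v\|^2} - \sum_{i=1}^n x_i e^{-\tfrac{1}{2}\|v\|^2}e_i \;=\; 0.
\]
The only subtlety I expect is tracking Clifford signs carefully in the definition of $\widehat{e}_i$ and its interaction with the gradient in the symmetry computation and in $B\xi_0=0$; once these are pinned down, the rest is pure finite-dimensional linear algebra on each eigenspace of $B^2$ plus standard spectral-theoretic bookkeeping.
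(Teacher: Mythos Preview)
Your argument is correct and follows essentially the same route as the paper: reduce everything to the finite-dimensional eigenspaces of $B^2$ furnished by \cref{HG Prop 1.16}, observe that $B$ preserves them, and diagonalize there. Two small points of comparison are worth noting. First, for part~(i) the paper simply cites an external result (a theorem of Shubin on essential self-adjointness for sums of this type), whereas you derive it directly from the existence of an orthonormal eigenbasis in the domain; your argument is more self-contained and perfectly adequate here. Second, for part~(ii) you only establish that the spectrum of $B$ is \emph{contained} in $\{\pm\sqrt{2m}\}$, while the paper adds one line---$B$ is an odd operator, so the grading involution interchanges the $+\sqrt{2m}$ and $-\sqrt{2m}$ eigenspaces---to conclude that both signs actually occur for each $m\ge 1$. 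If the statement is meant to assert equality rather than containment, you should include this observation.
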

\begin{proof}
    \begin{enumerate}
        \item Note that $B$ is formally self-adjoint operator, by being the sum of the (essentially) self-adjoint operators $C$ and $D$. That $B$ is essentially self adjoint now follows from Theorem 2.2 of \cite{ST}.

        \item Note that, if $v$ is an eigenfunction of $B^2$ with eigenvalue $\lambda$, then \[B^2(Bv)=B(B^2v)=B(\lambda v)= \lambda Bv,\]
        implying $B$ leaves the eigenspaces corresponding to every eigenvalue of $B^2$ invariant. By \cref{HG Prop 1.16}, these eigenspaces are finite dimensional, whence $B$, viewed as an operator on the eigenspace corresponding to $2m$, has an eigenvalue by elementary linear algebra which has to be a square root of $2m$ ($m \geq 0$). As $B$ is odd, the eigenvalues are symmetric about the origin, and the conclusion follows. 
        \item That $B$ has an orthonormal basis consisting of eigenfunction now follows from the corresponding fact about $B^2$. That $B$ has compact resolvents also follows as the absolute value of the eigenvalues of $B$ tend to infinity.
        \item This follows immediately from the corresponding result about $B^2$, and the fact that $B$ leaves the eigenspaces corresponding to every eigenvalue of $B^2$ invariant.
    \end{enumerate}
\end{proof}
Our next goal is to show:
\begin{thm} \label{thm: HG Thm 1.17}
	The composition of $\alpha_t$ and $\beta$ is given by:
% https://q.uiver.app/#q=WzAsNCxbMCwwLCJcdFxcbWF0aGNhbHtTfSJdLFsxLDAsIlx0XFxtYXRoY2Fse1N9IFxcaGF0e1xcb3RpbWVzfSBcdFxcbWF0aGNhbHtTfSJdLFsyLDAsIlx0XFxtYXRoY2Fse1N9IFxcaGF0e1xcb3RpbWVzfSBDXzAoViwgXFxvcGVyYXRvcm5hbWV7Q2xpZmZ9KFYpKSJdLFszLDAsIlx0XFxtYXRoY2Fse0t9KFxcbWF0aGNhbHtIfShWKSkiXSxbMCwxLCJcdFxcRGVsdGEiXSxbMSwyLCJcdFxcb3BlcmF0b3JuYW1le2lkfSBcXGhhdHtcXG90aW1lc30gXFxiZXRhIl0sWzIsMywiXHRcXGFscGhhX3QiLDAseyJzdHlsZSI6eyJib2R5Ijp7Im5hbWUiOiJkb3R0ZWQifX19XV0=
\[\begin{tikzcd}
	{	\mathcal{S}} & {	\mathcal{S} \hat{\otimes} 	\mathcal{S}} & {	\mathcal{S} \hat{\otimes} C_0(V, \operatorname{Cliff}(V))} & {	\mathcal{K}(\mathcal{H}(V))}
	\arrow["{	\Delta}", from=1-1, to=1-2]
	\arrow["{	\operatorname{id} \hat{\otimes} \beta}", from=1-2, to=1-3]
	\arrow["{	\alpha_t}", dotted, from=1-3, to=1-4]
\end{tikzcd}\]is asymptotically equivalent to the asymptotic morphism $\gamma_t
	: \mathcal{S} \dashrightarrow \mathcal{K}(H)$ defined by
	\[
	\gamma_t(f) = f(t^{-1}B),
	\]
	for all $t \ge 1$.
\end{thm}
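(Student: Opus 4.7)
The plan is to verify the asymptotic equivalence on a norm-dense generating set. Both the composite $\alpha_t \circ (1 \hat\otimes \beta) \circ \Delta$ and the family $\gamma_t$---which, for each fixed $t$, is a genuine graded $*$-homomorphism $\mathcal{S} \to \mathcal{K}(\mathcal{H}(V))$ because $B$ has compact resolvent (by the corollary to Proposition~\ref{HG Prop 1.16}), so that the functional calculus of $t^{-1}B$ takes values in $\mathcal{K}(\mathcal{H}(V))$---give rise to graded $*$-homomorphisms $\mathcal{S} \to \mathfrak{A}(\mathcal{K}(\mathcal{H}(V)))$, and two such $*$-homomorphisms that agree on a norm-dense $*$-subalgebra of $\mathcal{S}$ are equal. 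By Stone-Weierstrass the $*$-subalgebra generated by $u = e^{-x^2}$ and $v = xe^{-x^2}$ (these separate the points of $\mathbb{R}$ and vanish at infinity) is dense in $\mathcal{S} = C_0(\mathbb{R})$, so it suffices to check the identity on these two elements.

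For $f = u$ one has $(1 \hat\otimes \beta)\Delta(u) = u \hat\otimes \beta(u)$, and since $\beta(u)(c) = e^{-\|c\|^2} \in \operatorname{Cliff}^{\C}(V)$ is scalar-valued, $M_{\beta(u)_t} = e^{-t^{-2} C^2}$. Therefore
\[
\alpha_t\bigl((1 \hat\otimes \beta)\Delta(u)\bigr) = e^{-t^{-2} D^2}\, e^{-t^{-2} C^2}, \qquad \gamma_t(u) = e^{-t^{-2} B^2}.
\]
Invoking the identity $B^2 = C^2 + D^2 + N$ of \cref{lem: B2=C2+D2+N}, together with the fact that $N$ commutes with both $C^2$ and $D^2$ and is bounded (it acts on the finite-dimensional Clifford fibre), one may factor $e^{-t^{-2} B^2} = e^{-t^{-2} N}\, e^{-t^{-2}(C^2 + D^2)}$, with $\|e^{-t^{-2} N} - I\| \to 0$. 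The proof thus reduces to the Trotter-type norm estimate
\[
\bigl\| e^{-t^{-2} D^2} e^{-t^{-2} C^2} - e^{-t^{-2}(C^2 + D^2)} \bigr\| \longrightarrow 0 \qquad (t \to \infty).
\]

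This Trotter-like bound is the main analytic hurdle: $C^2$ and $D^2$ are unbounded operators with non-trivial commutator, so the classical Trotter product formula yields only strong, not norm, convergence. The decisive extra structure is that $C^2 + D^2 = \|v\|^2 - \Delta$ is precisely the quantum harmonic oscillator on $V$, whose heat semigroup admits an explicit Mehler integral kernel; direct comparison of this kernel with the product kernel of $e^{-t^{-2} D^2}$ (a Gaussian convolution) and $e^{-t^{-2} C^2}$ (pointwise multiplication) as $t \to \infty$---or equivalently a Duhamel expansion
\[
e^{-sD^2}e^{-sC^2} - e^{-s(C^2 + D^2)} = \int_0^s e^{-rD^2}\bigl[e^{-rC^2},\, D^2\bigr]\, e^{-(s-r)(C^2+D^2)}\, dr,
\]
with the commutator controlled on the Schwartz eigenbasis of $B$ from Proposition~\ref{HG Prop 1.16}---yields the required norm bound.

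Finally, for $f = v$, the formula $\Delta v = u \hat\otimes v + v \hat\otimes u$ gives
\[
\alpha_t\bigl((1 \hat\otimes \beta)\Delta(v)\bigr) = e^{-t^{-2} D^2} \cdot t^{-1}C\, e^{-t^{-2} C^2} + t^{-1}D\, e^{-t^{-2} D^2} \cdot e^{-t^{-2} C^2},
\]
whereas $\gamma_t(v) = t^{-1} B\, e^{-t^{-2} B^2} = t^{-1}(C + D)\, e^{-t^{-2} B^2}$. Combining the estimate established for $u$ with a commutator bound $\|[t^{-1}C,\, e^{-t^{-2} D^2}]\| \to 0$ (proved by the same Duhamel/Mehler-kernel argument and the analogous bound with $C$ and $D$ interchanged) converts $t^{-1}(C+D)\, e^{-t^{-2} B^2}$ into the sum displayed above up to an error vanishing in norm, completing the proof.
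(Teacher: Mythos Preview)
Your overall strategy---reducing to the generators $u$ and $v$, computing both sides, and peeling off the bounded factor $e^{-t^{-2}N}$ via \cref{lem: B2=C2+D2+N}---matches the paper exactly, and your identification of the key remaining estimate
\[
\bigl\| e^{-t^{-2} D^2} e^{-t^{-2} C^2} - e^{-t^{-2}(C^2 + D^2)} \bigr\| \longrightarrow 0
\]
(and its first-order analogue for $v$) is correct. The gap is that you do not actually prove this estimate: you offer a Duhamel expansion and a vague appeal to ``controlling the commutator on the Schwartz eigenbasis,'' but the integrand $e^{-rD^2}[e^{-rC^2},D^2]\,e^{-(s-r)(C^2+D^2)}$ contains the first-order differential operator $[e^{-rC^2},D^2]$ with unbounded coefficients, and turning the integral over $[0,s]$ into an operator-norm bound that vanishes as $s\to 0$ requires genuine work you have not supplied. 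The same objection applies to your claimed bound $\|[t^{-1}C,\,e^{-t^{-2}D^2}]\|\to 0$, where $t^{-1}C$ is itself unbounded.

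The paper sidesteps this difficulty entirely by using Mehler's formula (\cref{prop: mehler}) as an \emph{exact} operator identity: $e^{-s(C^2+D^2)} = e^{-\frac{1}{2}s_1 C^2}\,e^{-s_2 D^2}\,e^{-\frac{1}{2}s_1 C^2}$ with explicit $s_1,s_2$ in terms of hyperbolic functions of $s$. Since $s_1,s_2 \sim s$ as $s\to 0$, the spectral theorem reduces the comparison of each factor with $e^{-\frac{1}{2}sC^2}$ or $e^{-sD^2}$ to an elementary sup-norm estimate on $C_0(\mathbb{R})$ (\cref{kkk}), and the rearrangement of the three bounded factors into two is handled by the commutator lemma $\|[f(t^{-1}C),g(t^{-1}D)]\|\to 0$ for $f,g\in C_0(\mathbb{R})$ (\cref{lem: HG lem 1.12}). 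The case $f=v$ is treated the same way using the third and fourth estimates of \cref{kkk}. This route never requires estimating an unbounded commutator directly, which is exactly what your Duhamel argument would need.
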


For the proof, we need some knowledge of some well known results from unbounded operator theory:
\begin{prop}\label{prop: mehler}[Mehler's formula]
	For the Clifford operator $C$ and the Dirac operator $D$ defined
	above we have the following identities for $s > 0$ 
	\[
	e^{-s(C^{2} +D^{2})} =
	e^{-\frac{1}{2}s_1C^{2}}e^{-s_2D^{2}}e^{-\frac{1}{2}s_1C^{2}}
	\quad \text{and}\quad
	e^{-s(C^{2} +D^{2})} =
	e^{-\frac{1}{2}s_1D^{2}}e^{-s_2C^{2}}e^{-\frac{1}{2}s_1D^{2}}
	\]
	with
	\[
	s_1 = \frac{\cosh (2s) - 1}{\sinh (2s)} \qquad \text{and}\qquad 
	s_2 = \frac{\sinh(2s)}{2}
	\]
\end{prop}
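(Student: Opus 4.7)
The plan is to reduce the assertion to the standard (scalar) Mehler identity for the Euclidean harmonic oscillator and then to verify it by an explicit heat-kernel computation.

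First I would identify the operators $C^{2}$ and $D^{2}$ concretely on $\mathcal{H}(V)=L^{2}(V,\operatorname{Cliff}^{\C}(V))$. By the Clifford relation $v\cdot v=\lVert v\rVert^{2}$ one has $(C^{2}f)(v)=\lVert v\rVert^{2}f(v)$, while the anticommutation relations $\hat e_{i}\hat e_{j}+\hat e_{j}\hat e_{i}=-2\delta_{ij}$ give $D^{2}=-\Delta=-\sum_{i}\partial_{i}^{2}$ acting coordinatewise on the $\operatorname{Cliff}^{\C}(V)$-valued component. In particular $C^{2}+D^{2}$ is a Clifford-diagonal scalar operator, so it suffices to prove the formula for the scalar harmonic oscillator $H:=\lVert v\rVert^{2}-\Delta$ on $L^{2}(V)$. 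Since $H=\sum_{i}(x_{i}^{2}-\partial_{i}^{2})$ is a sum of commuting one-dimensional oscillators acting on independent variables, the semigroup $e^{-sH}$ factors as a tensor product over coordinates, and the same is true for $e^{-\frac{1}{2}s_{1}C^{2}}$ and $e^{-s_{2}D^{2}}$. The whole statement therefore reduces to the one-dimensional identity
\[
  e^{-s(x^{2}-d^{2}/dx^{2})}
  =e^{-\frac{1}{2}s_{1}x^{2}}\,e^{s_{2}\,d^{2}/dx^{2}}\,e^{-\frac{1}{2}s_{1}x^{2}}
\]
on $L^{2}(\mathbb{R})$ (and its counterpart with $x^{2}$ and $-d^{2}/dx^{2}$ interchanged).

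Next I would verify this one-dimensional identity by computing integral kernels and comparing. The kernel of $e^{-s_{2}\,d^{2}/dx^{2}}$ is the Gaussian heat kernel $K_{s_{2}}(x,y)=(4\pi s_{2})^{-1/2}\exp(-(x-y)^{2}/(4s_{2}))$; multiplication by $e^{-\frac{1}{2}s_{1}x^{2}}$ on either side then gives a composite kernel
\[
  K(x,y)=(4\pi s_{2})^{-1/2}\exp\!\Bigl(-\tfrac{1}{2}s_{1}x^{2}-\tfrac{(x-y)^{2}}{4s_{2}}-\tfrac{1}{2}s_{1}y^{2}\Bigr).
\]
A routine Gaussian-integral manipulation, using the stated values $s_{1}=(\cosh(2s)-1)/\sinh(2s)$ and $s_{2}=\sinh(2s)/2$, rewrites the exponent as
\[
  -\tfrac{1}{2\sinh(2s)}\bigl((x^{2}+y^{2})\cosh(2s)-2xy\bigr),
\]
which is precisely the exponent in Mehler's classical kernel for $e^{-sH}$; the prefactor also matches. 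Thus both sides agree as integral operators on $L^{2}(\mathbb{R})$.

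The second factorization (with $C^{2}$ and $D^{2}$ swapped) would then follow by conjugating the first identity by the Fourier transform $\mathcal{F}$ on $V$, since $\mathcal{F}$ intertwines $C^{2}$ with $D^{2}$ (up to a constant normalization), and $\mathcal{F}$ sends $e^{-tC^{2}}$ to $e^{-tD^{2}}$ and vice versa; this just swaps the two factors in the product. The main obstacle I foresee is purely technical: justifying that all the unbounded exponentials and their products are defined and make the claimed identity hold as operator identities, rather than merely formal or kernel-level ones. The right way to handle this is to interpret both sides as bounded operators whose integral kernels lie in a joint dense domain (the Schwartz class $\mathfrak{s}(V,\operatorname{Cliff}^{\C}(V))$ is invariant under each factor and is a core for $H$), then invoke uniqueness of the heat semigroup $e^{-sH}$ for the essentially self-adjoint operator $H$ on $\mathfrak{s}(V)$ to upgrade kernel equality to operator equality on all of $\mathcal{H}(V)$.
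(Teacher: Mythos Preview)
Your proof sketch is correct. The paper does not actually prove this proposition; it merely cites an external reference (Theorem~D.3.6 and Corollary~D.3.7 of \cite{RUF}). So there is nothing to compare against beyond noting that your approach---reducing to the scalar one-dimensional harmonic oscillator via the identifications $C^{2}=\lVert v\rVert^{2}$ and $D^{2}=-\Delta$, then matching the explicit Gaussian kernel against the classical Mehler kernel, and finally using the Fourier transform to swap $C^{2}$ and $D^{2}$---is the standard one and is carried out correctly here. Your exponent computation checks out (it helps to note $s_{1}=\tanh s$), and your remark about passing from kernel equality to operator equality via the Schwartz core is the right way to close the argument.
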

\begin{proof}
    See \cite{RUF} Theorem D.3.6 and Corollary D.3.7 for example.
\end{proof}

We also note the asymptotic conditions:

\begin{lem} \label{kkk}
	For an unbounded self-adjoint operator $X$ we have the following:
	\[
	\lim_{t \to \infty} \lVert {e^{-\frac{1}{2}\tau_1X^{2} } -
	e^{-\frac{1}{2}t^{-2}X^{2} }} \rVert = 0
	\]
	\[
	\lim_{t \to \infty} \lVert {e^{-\frac{1}{2}\tau_2X^{2} } -
	e^{-\frac{1}{2}t^{-2}X^{2} }} \rVert = 0
	\]
	\[
	\lim_{t \to \infty} \lVert{t^{-1}Xe^{-\frac{1}{2}\tau_1X^{2} } -
	t^{-1}Xe^{-\frac{1}{2}t^{-2}X^{2} }} \rVert= 0
	\]
	\[
	\lim_{t \to \infty} \lVert {t^{-1}Xe^{-\frac{1}{2}\tau_2X^{2} } -
	t^{-1}Xe^{-\frac{1}{2}t^{-2}X^{2} }} \rVert = 0
	\]
	where
	\[
	\tau_1 = \frac{\cosh(2t^{-2}) - 1}{\sinh(2t^{-2})} \qquad
	\text{and}\qquad  \tau_2 = \frac{\sinh(2t^{-2})}{2}
	\]
\end{lem}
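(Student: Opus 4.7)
The plan is to reduce all four limits to elementary estimates in the functional calculus of the self-adjoint operator $X$. Recall that for any bounded Borel function $f : \R \to \C$ we have
\[
\lVert f(X) \rVert \; \leq \; \sup_{\lambda \in \R} |f(\lambda)|,
\]
by the spectral theorem. Thus each of the four operator norm differences in the statement is controlled by the supremum over $\R$ of a single explicit scalar function of $\lambda$, and the whole lemma reduces to checking that four explicit $\sup$-norms tend to $0$.

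First I would carry out the Taylor expansion of $\tau_1$ and $\tau_2$ around $s = 0$, where $s = t^{-2}$. Using $\cosh(2s) - 1 = 2s^{2} + \tfrac{2s^{4}}{3} + \cdots$ and $\sinh(2s) = 2s + \tfrac{4s^{3}}{3} + \cdots$, a short computation gives
\[
\tau_1 \; = \; s - \tfrac{s^{3}}{3} + O(s^{5}), \qquad
\tau_2 \; = \; s + \tfrac{2 s^{3}}{3} + O(s^{5}),
\]
so that $|\tau_i - t^{-2}| = O(t^{-6})$ and, for $t$ large, $\min(\tau_i, t^{-2}) \geq \tfrac{1}{2} t^{-2}$. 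This estimate is the only analytic input that distinguishes the precise form of $\tau_1$ and $\tau_2$; everything after depends on just these two bounds.

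The remaining work is an elementary calculus exercise. Set $a = \tfrac{1}{2}\tau_i$, $b = \tfrac{1}{2} t^{-2}$. By the mean value theorem applied to the function $u \mapsto e^{-u\lambda^{2}}$, there exists $c$ between $a$ and $b$ with
\[
\bigl|e^{-a\lambda^{2}} - e^{-b\lambda^{2}}\bigr| \; = \; |a - b| \, \lambda^{2} \, e^{-c \lambda^{2}},
\]
and elementary maximisation gives $\sup_{\lambda} \lambda^{2} e^{-c\lambda^{2}} = (ec)^{-1}$ and $\sup_{\lambda} |\lambda|^{3} e^{-c\lambda^{2}} = (3/(2c))^{3/2} e^{-3/2}$. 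Applied with $|a-b| = O(t^{-6})$ and $c \geq \tfrac{1}{4} t^{-2}$ (for large $t$), these two $\sup$-estimates yield
\[
\sup_{\lambda \in \R} \bigl|e^{-\tfrac{1}{2}\tau_i \lambda^{2}} - e^{-\tfrac{1}{2} t^{-2} \lambda^{2}}\bigr| \; = \; O(t^{-4}),
\qquad
\sup_{\lambda \in \R} \bigl| t^{-1}\lambda \bigl(e^{-\tfrac{1}{2}\tau_i \lambda^{2}} - e^{-\tfrac{1}{2} t^{-2} \lambda^{2}}\bigr)\bigr| \; = \; O(t^{-4}),
\]
for $i = 1, 2$. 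Feeding these back through the spectral bound proves all four limits simultaneously.

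The main (and only) obstacle is to make the Taylor expansions of $\tau_1$ and $\tau_2$ sharp enough so that the cancellation with $t^{-2}$ is visible at order $t^{-6}$; a careless expansion stopping at $O(s^{2})$ would give $\tau_i - t^{-2} = O(t^{-4})$, which when divided by the very small quantity $\min(\tau_i, t^{-2}) \sim t^{-2}$ would only yield $O(t^{-2})$, still tending to zero but not robustly enough if one wanted quantitative rates. So one must compute the expansion of $\tau_1, \tau_2$ out to the third-order term. Everything else is just bookkeeping.
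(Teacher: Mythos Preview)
Your proof is correct and follows exactly the approach the paper sketches: reduce via the spectral theorem to sup-norm estimates on $C_0(\R)$, then carry out the calculus. You simply fill in the details the paper omits. One small remark: your final paragraph overstates the difficulty---even the cruder bound $|\tau_i - t^{-2}| = O(t^{-4})$ (which is what a first-order expansion honestly gives, since $\tau_i$ is odd in $s$) already yields $O(t^{-2}) \to 0$ after dividing by $c \sim t^{-2}$, so the third-order term is not actually needed for the lemma as stated.
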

\begin{proof}
    In light of the spectral theorem, it suffices to prove these when we replace $X$ by a real variable $x$, and the operator norm is replaced by the supremum norm on $C_0(\R)$. Then, all the limits become an easy exercise in calculus.
\end{proof}
With the relevant background mentioned, we are ready to proceed. We need another lemma, whose proof is similar to \cref{lem: HG Lem 1.9}.
\begin{lem} \label{lem: HG lem 1.12}
	Let $f, g \in C_0(\mathbb{R})$, then
	\[
	\lim_{t \to \infty} \lVert {[f(t^{-1}C), g(t^{-1}D)]} \rVert = 0
	\]
\end{lem}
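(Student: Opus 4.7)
The strategy is to realise $f(t^{-1}C)$ as a pointwise multiplication operator of the form $M_{(\phi_f)_t}$ for a suitable $\phi_f \in \cc(V)$, after which the lemma reduces to a direct application of \cref{lem: HG Lem 1.9}. To find $\phi_f$, I use the fibrewise spectral decomposition of $C$: on the fibre $\operatorname{Cliff}^{\C}(V)_v$ over $v \in V$, the operator $C$ acts by left Clifford multiplication by $v$, and since $v^2 = \|v\|^2$, this operator has spectrum $\{\pm\|v\|\}$ (for $v \neq 0$) with spectral projections $\tfrac{1}{2}(1 \pm v/\|v\|)$. Applying bounded Borel functional calculus fibrewise gives
\[
f(C) \;=\; M_{\phi_f}, \qquad \phi_f(v) := f_{\mathrm e}(\|v\|) + f_{\mathrm o}(\|v\|)\,\frac{v}{\|v\|} \quad (v \neq 0),
\]
where $f_{\mathrm e}, f_{\mathrm o} \in \s$ denote the even and odd parts of $f$, and $\phi_f(0) := f(0)$ by continuous extension. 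A straightforward rescaling argument then gives $f(t^{-1}C) = M_{(\phi_f)_t}$, with $(\phi_f)_t(v) = \phi_f(t^{-1}v)$.

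Next, I verify that $\phi_f \in \cc(V)$. Vanishing at infinity follows from the pointwise bound $\|\phi_f(v)\| \le |f_{\mathrm e}(\|v\|)| + |f_{\mathrm o}(\|v\|)|$, since $f_{\mathrm e}, f_{\mathrm o}$ lie in $C_0(\R)$. Continuity away from the origin is clear. Continuity at $v = 0$ uses two observations: $f_{\mathrm o}(0) = 0$ (since $f_{\mathrm o}$ is continuous and odd), and $\|(f_{\mathrm o}(\|v\|)/\|v\|)\, v\|_{\operatorname{Cliff}^{\C}(V)} = |f_{\mathrm o}(\|v\|)|$, because the Clifford element $v/\|v\|$ has operator norm one; thus the apparent singularity at $v = 0$ is cancelled. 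With $\phi_f \in \cc(V)$ in hand, \cref{lem: HG Lem 1.9} applied to the pair $(g, \phi_f)$ yields $\|[g(t^{-1}D), M_{(\phi_f)_t}]\| \to 0$ for every $g \in \s$. Since, on homogeneous components of $f$ and $g$, the graded commutator $[f(t^{-1}C), g(t^{-1}D)]$ agrees with $[g(t^{-1}D), f(t^{-1}C)]$ up to a sign $\pm 1$, and both commutators depend bilinearly on $f$ and $g$, the conclusion follows.

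The only genuinely delicate point in the argument is the continuity of $\phi_f$ at the origin, which rests on the coincidence of the Clifford norm of $v$ with its Euclidean norm. Once this identification is observed, the rest is formal: the proof becomes ``compute $f(t^{-1}C)$ as multiplication by an element of $\cc(V)$, then invoke \cref{lem: HG Lem 1.9}''. In particular, no separate Stone--Weierstrass reduction on $f$ is required, because the fibrewise description of $f(C)$ is valid for all $f \in \s$ simultaneously.
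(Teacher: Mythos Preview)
Your argument is correct, and it takes a different (and cleaner) route than the one the paper indicates. The paper does not spell out a proof but merely says it is ``similar to \cref{lem: HG Lem 1.9}'', meaning one would redo the Stone--Weierstrass reduction to resolvents and then compute commutators like $[(t^{-2}C^2+1)^{-1}, g(t^{-1}D)]$ by hand. You instead recognise that $f(t^{-1}C)$ is literally a multiplication operator $M_{(\phi_f)_t}$ with $\phi_f \in \cc(V)$, and then invoke \cref{lem: HG Lem 1.9} as a black box rather than repeating its proof. This is more economical. It is worth noting that your $\phi_f$ is precisely $\beta(f)$, the image of $f$ under the Bott map of \cref{defi: Bott map}: the paper already proves (in the lemma immediately following the definition of $C$) that $M_{\beta(f)} = f(C)$, and the very definition of $\beta$ as a graded $*$-homomorphism $\s \to \cc(V)$ guarantees $\beta(f) \in \cc(V)$. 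So your fibrewise spectral computation and the continuity check at the origin, while correct, are rederiving facts already available in the paper; you could have shortened the argument to ``$f(t^{-1}C) = M_{\beta(f)_t}$ with $\beta(f) \in \cc(V)$, now apply \cref{lem: HG Lem 1.9} and swap the arguments of the graded commutator via the homogeneous decomposition.'' Either way, your reduction is a genuine simplification over redoing the resolvent estimate.
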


\begin{lem} \label{kkkkkkkk}
	For operators $C$ and $D$ defined before, we have
	\begin{align*}
	    e^{-t^{-2}B^{2} } &\sim_{\text{asy}} e^{-t^{-2}C^{2} }e^{-t^{-2}D^{2} } \\
 t^{-1}Be^{-t^{-2}B^{2} } &\sim _{\text{asy}}
	t^{-1}(C+D)e^{-t^{-2}C^{2} }e^{-t^{-2}D^{2} }
	\end{align*}
\end{lem}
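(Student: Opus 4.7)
The plan is to deduce both asymptotic equivalences from \cref{prop: mehler} (Mehler's formula), the comparison estimates in \cref{kkk}, the commutator control in \cref{lem: HG lem 1.12}, and the decomposition $B^2 = C^2 + D^2 + N$ from \cref{lem: B2=C2+D2+N}. The key observation that makes the proof go through is that $N$ is a \emph{bounded} operator whose spectrum lies in $\{2p - n : 0 \le p \le n\}$ (by \cref{lem: N=2p-n}) and which commutes with $C^2$ and $D^2$, so $e^{-t^{-2}N} \to 1$ in operator norm at rate $O(t^{-2})$.

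For the first equivalence, commutativity of $N$ with $C^2 + D^2$ gives $e^{-t^{-2}B^2} = e^{-t^{-2}N}\, e^{-t^{-2}(C^2 + D^2)}$, so $e^{-t^{-2}N}$ may be dropped up to a norm-null error. Applying Mehler's formula with $s = t^{-2}$ rewrites the remaining factor as
\[
e^{-t^{-2}(C^2+D^2)} = e^{-\tfrac12 \tau_1 C^2}\, e^{-\tau_2 D^2}\, e^{-\tfrac12 \tau_1 C^2},
\]
and the first two estimates of \cref{kkk} replace $\tau_1, \tau_2$ by $t^{-2}$ at the cost of further norm-null errors (using that all the exponentials involved are contractions). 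Applying \cref{lem: HG lem 1.12} twice to $e^{-x^2/2}, e^{-x^2} \in C_0(\R)$ then lets me commute the rightmost $e^{-\tfrac12 t^{-2} C^2}$ past $e^{-t^{-2} D^2}$ and merge it with the leftmost factor, yielding $e^{-t^{-2}C^2}e^{-t^{-2}D^2}$.

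For the second equivalence, I split $B = C + D$ and treat the two resulting summands by invoking the two different forms of Mehler's formula. For $t^{-1}C\, e^{-t^{-2}B^2}$ I use the form with $D^2$-exponentials on the outside,
\[
e^{-t^{-2}(C^2+D^2)} = e^{-\tfrac12 \tau_1 D^2}\, e^{-\tau_2 C^2}\, e^{-\tfrac12 \tau_1 D^2},
\]
so that the unbounded factor $t^{-1}C$ sits adjacent to an exponential of $C^2$; the product $t^{-1}C\, e^{-\tau_2 C^2}$ is bounded uniformly in $t$ by the spectral estimate $\sup_x |x|e^{-cx^2} < \infty$. The third and fourth estimates of \cref{kkk} are tailored precisely to replacing $\tau_1, \tau_2$ by $t^{-2}$ inside such hybrid expressions, and \cref{lem: HG lem 1.12} lets me rearrange the boundedly-rescaled $D$-exponentials to assemble the target. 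The $t^{-1}D$ summand is handled symmetrically using the other form of Mehler. To remove the $e^{-t^{-2}N}$ prefactor before all of this, I write $t^{-1}C(e^{-t^{-2}N}-1) = (e^{-t^{-2}N}-1)t^{-1}C + t^{-1}[C, e^{-t^{-2}N}]$ and control the two pieces using, respectively, the uniform boundedness of $t^{-1}C \cdot e^{-t^{-2}(C^2+D^2)}$ established above and the elementary fact that $[C, e^{-t^{-2}N}] = O(t^{-2})$ in norm (since $N$ has finite spectrum and $[C, N]$ is bounded).

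The principal obstacle is the bookkeeping around the unbounded operators $t^{-1}C$ and $t^{-1}D$: naked norm comparisons fail, and every rearrangement must keep these factors adjacent to Gaussian-damped exponentials of their own squares. Choosing which of the two Mehler forms to invoke for each summand, and the order in which to apply the comparison estimates of \cref{kkk} against the asymptotic commutators of \cref{lem: HG lem 1.12}, is where the real care lies; each individual step is governed by a lemma already in place, but assembling them without ever introducing a spurious unbounded factor into a product that is not damped on both sides requires discipline.
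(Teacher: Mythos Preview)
Your approach is essentially the paper's: factor off $e^{-t^{-2}N}$ using $B^2=C^2+D^2+N$, apply Mehler's formula to $e^{-t^{-2}(C^2+D^2)}$, replace $\tau_1,\tau_2$ by $t^{-2}$ via \cref{kkk}, and use \cref{lem: HG lem 1.12} to reorder. For the second identity the paper merely says ``the other proof is similar,'' so your more careful treatment of the two summands $t^{-1}C$ and $t^{-1}D$, each paired with the Mehler form that puts the matching Gaussian adjacent to it, is a genuine improvement in exposition.

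There is one small slip. You write that $[C,N]$ is bounded, but it is not: $C=\sum_i x_i e_i$ with $x_i$ the unbounded coordinate multiplication, and $[e_i,N]\ne 0$ on the Clifford factor, so $[C,N]=\sum_i x_i[e_i,N]$ is unbounded. Fortunately you do not need this. Since $N$ commutes with $C^2$ (this is the content of \cref{lem: B2=C2+D2+N} you already invoked), $e^{-t^{-2}N}$ commutes with $e^{-\frac{1}{2}\tau_1 C^2}$. Writing
\[
t^{-1}C\,(e^{-t^{-2}N}-1)\,e^{-\frac{1}{2}\tau_1 C^2}
= \bigl(t^{-1}C\,e^{-\frac{1}{2}\tau_1 C^2}\bigr)\,(e^{-t^{-2}N}-1),
\]
the first factor is uniformly bounded in $t$ by the spectral estimate you already cite, and the second is $O(t^{-2})$ in norm. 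The same trick (with $D^2$ in place of $C^2$) handles the $t^{-1}D$ summand. With this correction your argument is complete and matches the paper's route.
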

\begin{proof}
    From \cref{lem: B2=C2+D2+N}, we conclude that \[
    e^{-t^{-2}B^2}=e^{-{t^{-2}}(C^2+D^2)}e^{-t^{-2}N},
    \] and therefore, \cref{prop: mehler} implies that 
    \[
        e^{-t^{-2}B^2} =
	e^{-\frac{1}{2}\tau_1C^{2}}e^{-\tau_2D^{2}}e^{-\frac{1}{2}\tau_1C^{2}}e^{-t^{-2}N}
    ,\]
    where $\tau_1$ and $\tau_2$ are as in \cref{kkk}. It follows from \cref{kkk} that 
    \[
        e^{-t^{-2}B^2} \sim e^{-\frac{1}{2}t^{-2}C^2}e^{-t^{-2}D^2}e^{-\frac{1}{2}t^{-2}C^2}e^{-t^{-2}N},
    \] whence \cref{lem: HG lem 1.12} shows that 
    \[
         e^{-t^{-2}B^2} \sim  e^{-t^{-2}C^2} e^{-t^{-2}D^2},
    \] since the operators $ e^{-t^{-2}N}$ converge in norm to the identity operator.

    The other proof is similar.
\end{proof}
\begin{rem}
    The statements of the last lemma still hold good if we swap $C$ and $D$.
\end{rem}
We now prove \cref{thm: HG Thm 1.17}; it is also covered in \cite{SLB}, and given here for convenience.
\begin{proof} [Proof of \cref{thm: HG Thm 1.17}]
     Since $\s$ is generated by $u$ and $v$, it suffices to check that
	 \[
	\alpha(\operatorname{id} \hat{\otimes} \beta)(\Delta(f))
	\sim_{\text{asy}} \gamma_t(f),
	\]
	for $f = u$ and $f = v$.

	For $f = u$,
	\begin{align*}
		\alpha(\operatorname{id} \hat{\otimes}  \beta)(\Delta(u))
		&= \alpha(\operatorname{id} \hat{\otimes} \beta)(u
		\hat{\otimes} u)\\
		&= \alpha(u \hat{\otimes} \beta(u))\\
		&= \alpha(u \hat{\otimes} u(C))\\
		&= u(t^{-1}D)M_{u(C)_t}\\
		&= u(t^{-1}D)u(t^{-1}C)\\
		&=e^{-t^{-2}D^{2} }e^{-t^{-2}C^{2}}
	\end{align*}
	and
	\[
	\gamma_t(u) = u(t^{-1}B) = e^{-t^{-2}B^{2} },
	\]
	and these are both asymptotically equivalent by \cref{kkkkkkkk}.

	For $f = v$,
	\begin{align*}
		\alpha(\operatorname{id} \hat{\otimes}  \beta)(\Delta(u))
		&= \alpha(\operatorname{id} \hat{\otimes} \beta)(u
		\hat{\otimes} v + v \hat{\otimes} u)\\
		&= \alpha(u \hat{\otimes} \beta(v) + v \hat{\otimes}
		\beta(u))\\
		&= \alpha(u \hat{\otimes} v(C) + v \hat{\otimes} u(C))\\
		&\sim_\text{asy} \alpha(u \hat{\otimes} v(C)) + \alpha(v
		\hat{\otimes} u(C))\\
		&= u(t^{-1}D)M_{v(C)_t} + v(t^{-1}D)M_{u(C)_t}\\
		&= u(t^{-1}D)v(t^{-1}C) + v(t^{-1}D)u(t^{-1}C)\\
		&= e^{-t^{-2}D^{2} }t^{-1}Ce^{-t^{-2}C^{2} } +
		t^{-1}De^{-t^{-2}D^{2} }e^{-t^{-2}C^{2} }\\
		&= t^{-1}(C+D)e^{-t^{-2}C^{2} }e^{-t^{-2}D^{2} }
	\end{align*}
	and
	\[
	\gamma_t(v) = v(t^{-1}B) = t^{-1}Be^{-t^{-2}B^{2} }
	\]
	Then by \cref{kkkkkkkk} these are asymptotically equivalent.
	Hence the composition of $\alpha$ and $\beta$ is asymptotically
	equivalent to $\gamma_t$.
\end{proof}
We finally are able to prove \cref{thm: BP}. 

\begin{cor}
	The composition $\alpha_{*}\beta_{*}$ of the induced homomorphisms
	\[
	\beta_{*} : K_0(\mathbf{C}) \to K_0(C_0(V,
	\operatorname{Cliff}(V)))
	\]
	and
	\[
	\alpha_{*} : K_0(C_0(V, \operatorname{Cliff}(V))) \to
	K_0(\mathbf{C})
	\]
	is the identity homomorphism.
\end{cor}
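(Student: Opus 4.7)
The goal is to show that $\alpha_*\beta_*$ is the identity on $K_0(\mathbf{C}) \cong \Z$. Since this group is cyclic, generated by the class $1$ of \cref{cor: K-theory ungraded}, it suffices to verify $\alpha_*\beta_*(1) = 1$. By definition $\beta_*(1) = 1 \times b$, which \cref{prop: 1 x = x} identifies with $b \in K_0^{\operatorname{sp}}(\cc(V))$ via the canonical isomorphism $\mathbf{C} \hat\otimes \cc(V) \cong \cc(V)$. Thus the task reduces to computing $\alpha_*(b)$ and showing it equals $1 \in K_0(\mathbf{C})$.

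The plan is to unpack $\alpha_*(b)$ through \cref{prop: legend functoriality}. After the canonical identifications $\mathbf{C} \hat\otimes \com(\h(V)) \cong \com(\h(V))$ and a choice of unitary $\h(V) \cong \hat{l^2}$, the class $\alpha_*([\beta])$ is represented by the composite asymptotic morphism
\[
\s \xrightarrow{\triangle} \s \hat\otimes \s \xrightarrow{1 \hat\otimes \beta} \s \hat\otimes \cc(V) \xrightarrow{\alpha_t} \com(\h(V)).
\]
This is exactly the composite studied in \cref{thm: HG Thm 1.17}, which identifies it asymptotically with the family $\gamma_t(f) = f(t^{-1}B)$, where $B$ is the Bott--Dirac operator. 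Since each $\gamma_t$ is a genuine graded Real $*$-homomorphism, part (iii) of \cref{prop: legend functoriality} yields $\alpha_*(b) = [\gamma_1] = [\Phi_B]$, i.e.\ the class of the functional calculus of $B$.

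From the harmonic-oscillator analysis, $B$ is odd, self-adjoint, Real and has compact resolvents, with one-dimensional kernel spanned by the \emph{even} Schwartz vector $e^{-\frac{1}{2}\lVert v \rVert^2}\cdot 1$. Hence \cref{lem: unbounded} produces a path in $\K(\mathbf{C})$ from $\Phi_B$ to the graded Real $*$-homomorphism $f \mapsto f(0)P_0$, where $P_0$ is the rank-one even projection onto $\ker B$. Applying \cref{cor: K-theory ungraded} (with $P_1 = 0$), this class is $[P_0] - [0] = 1 \in K_0(\mathbf{C})$, as required.

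The only delicate bookkeeping concerns the unpacking of $\alpha_*(b)$ in the second paragraph. The definition in \cref{prop: legend functoriality} factors through $\mathfrak{A}\mathbf{C}$ and implicitly invokes the isomorphism $\mathfrak{A}(\mathbf{C} \hat\otimes \com(\h(V))) \cong \mathfrak{A}\mathbf{C} \hat\otimes \com(\h(V))$, the natural identification $\mathbf{C} \hat\otimes \com(\h(V)) \cong \com(\h(V))$, and a unitary $\h(V) \cong \hat{l^2}$; one must verify that these identifications conspire so that the displayed composite really represents $\alpha_*(b)$ after inverting $q_*$ and evaluating at any sufficiently large $t$. Granting this, the remainder is the standard observation that a Dirac-type operator with one-dimensional even kernel represents the generator of $K_0(\mathbf{C})$.
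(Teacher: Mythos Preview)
Your proof is correct and follows essentially the same route as the paper: both reduce to computing the class of the composite $\alpha_t\circ(1\hat\otimes\beta)\circ\triangle$, invoke \cref{thm: HG Thm 1.17} to identify it asymptotically with $\gamma_t(f)=f(t^{-1}B)$, apply \cref{prop: legend functoriality}(iii) to pass to $\Phi_B$, and then use \cref{lem: unbounded} together with \cref{cor: K-theory ungraded} to recognise the resulting class as $1$. Your additional paragraph on the bookkeeping behind the unpacking of $\alpha_*(b)$ is a welcome clarification that the paper leaves implicit.
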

\begin{proof}
	By \cref{thm: HG Thm 1.17}, the composition of $\alpha_{*}\beta_{*}$ is
	equivalent to the asymptotic morphism $\gamma$ given by
	$\gamma_t(f) = f(t^{-1}B)$. Since $f$ is in $\mathcal{S}$, each
	$\gamma_t$ is a $*$-homomorphism, and the asymptotic morphism $\gamma$ is homotopic to the
	$*$-homomorphism mapping $f$ to $f(B)$. (\cref{prop: legend functoriality} (iii)). Now, in light of \cref{cor: K-theory ungraded}, it suffices to define
	a homotopy between the functional calculus of $B$, $\Phi_B: \s \rightarrow \com(\h (V))$ and the map $\theta: \mathcal{S} \to
	\mathcal{K}(\mathcal{H}(V))$ defined by $\theta(f) = f(0)p$ where $p$
	is a rank 1 projection (by Corollary 3.2.15) onto the kernel of
	$B$. But this we have already done in the proof of \cref{lem: unbounded}.
\end{proof}
\subsection{The Periodicity Results}
Finally, we show the well known $2$-fold and $8$-fold periodicity of complex and Real $K$-theory. The advantage of working with Clifford algebras now comes handy; these periodicity results follow from well-known periodicity of Clifford algebras (see Theorem 1.2.6 of \cite{SCH}.)
Before, proceeding, we set up some notation.
\begin{defi}
    Let $\R^{p,q}$ denote the pseudo-Euclidean vector space $\R^{p+q}$, with the involution $\kappa_{p,q}$ given by $\kappa_{p,q}(x_1,x_2,...,x_{p+q})=(-x_1,-x_2,...,-x_p,x_{p+1},x_{p+2},...,x_{p+q})$. Set $Cl_{p,q}:=\operatorname{Cliff}(\R_{p,q},\kappa_{p,q})$, and $\mathbf{C}l_{p,q}:=\operatorname{Cliff}^{\C}(\R_{p,q},\kappa_{p,q})$.  
\end{defi}
\begin{prop}
    Let $A$ be a $\Z/2\Z$-graded $C^*$-algebra. Then, for all $n \in \N$, there is a natural isomorphism between $K_0(A \hat\otimes \mathbf{C}l_{n,0})$ and $K_n(A)$.
\end{prop}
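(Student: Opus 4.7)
The plan is to chain together \cref{higher K-group}, Bott periodicity \cref{thm: BP} (applied with a judiciously chosen pseudo-Euclidean space), the Clifford tensor product formula, and stability, establishing the isomorphism
\[
K_0(A \hat\otimes \mathbf{C}l_{n,0}) \;\cong\; K_0(A \hat\otimes \mathbf{C}l_{n,0} \hat\otimes \cc(\R^{0,n})) \;\cong\; K_0(A \hat\otimes C_0(\R^n) \hat\otimes \com(\h)) \;\cong\; K_0(A \hat\otimes C_0(\R^n)) \;\cong\; K_n(A).
\]

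The first step is to apply Bott periodicity to the graded Real $C^{*}$-algebra $A \hat\otimes \mathbf{C}l_{n,0}$ with $V = \R^{0,n}$. The reason for this specific choice is that the involution $\kappa_{0,n}$ on $\R^n$ is the identity, so $\cc(\R^{0,n})$ decomposes cleanly as a graded Real tensor product
\[
\cc(\R^{0,n}) \;=\; C_0(\R^n, \mathbf{C}l_{0,n}) \;\cong\; C_0(\R^n) \hat\otimes \mathbf{C}l_{0,n},
\]
where $C_0(\R^n)$ carries the trivial grading and pointwise-conjugation Real structure (no coordinate reflection, because $\kappa_{0,n}$ is trivial). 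This decomposition is precisely what makes the choice $V = \R^{0,n}$ (instead of $V = \R^{n,0}$) work cleanly.

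The second step is to use (the complexification of) the Clifford tensor product formula \cref{lem: tensor product Cliff} together with the obvious identification $\R^{n,0} \oplus \R^{0,n} \cong \R^{n,n}$ as pseudo-Euclidean vector spaces, yielding $\mathbf{C}l_{n,0} \hat\otimes \mathbf{C}l_{0,n} \cong \mathbf{C}l_{n,n}$. I would then invoke the standard fact (a graded-Real version of the construction in the proof of \cref{prop: cliff c* alg}) that $\mathbf{C}l_{n,n}$ is graded Real $*$-isomorphic to $\com(\h)$, where $\h := \Lambda^* \R^n \otimes_\R \C$ is the $2^n$-dimensional complex graded Real Hilbert space (grading by parity of wedge degree, Real structure from the real form $\Lambda^* \R^n$), with the isomorphism implemented by the Clifford action. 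Combining,
\[
A \hat\otimes \mathbf{C}l_{n,0} \hat\otimes \cc(\R^{0,n}) \;\cong\; A \hat\otimes C_0(\R^n) \hat\otimes \com(\h).
\]

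Finally, stability \cref{stab and les group}(1) eliminates the factor $\com(\h)$, leaving $K_0(A \hat\otimes C_0(\R^n))$, which equals $K_n(A)$ by \cref{higher K-group}. Naturality in $A$ passes through every stage: Bott is natural by \cref{thm: BP}, the Clifford identifications are intrinsic (independent of $A$), stability is natural, and \cref{higher K-group} is natural.

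The main obstacle will be carefully checking that the Real structures line up throughout, especially for the identification $\mathbf{C}l_{n,n} \cong \com(\h)$ as graded Real $C^{*}$-algebras; the underlying complex and graded isomorphism is classical, but one must verify that the involution $\kappa_{n,n}$ corresponds on the right-hand side to the Real structure on $\com(\h)$ induced by the real form $\Lambda^*\R^n \subset \h$. Since the real Clifford algebra $Cl_{n,n}$ is classically $\R$-$*$-isomorphic to $\operatorname{End}_\R(\Lambda^*\R^n)$, complexifying yields the desired Real isomorphism. The other identifications (factor swapping, the $\cc(\R^{0,n})$ decomposition) amount to straightforward coordinate bookkeeping once the Real structures are written out explicitly.
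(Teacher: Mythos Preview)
Your proposal is correct and follows essentially the same route as the paper's own proof: apply Bott periodicity with $V=\R^{0,n}$, use the triviality of $\kappa_{0,n}$ to split $\cc(\R^{0,n}) \cong C_0(\R^n)\hat\otimes\mathbf{C}l_{0,n}$, combine the Clifford factors via \cref{lem: tensor product Cliff} to get $\mathbf{C}l_{n,n}\cong M_{2^n}(\mathbf{C})$, then invoke stability and \cref{higher K-group}. Your additional care in verifying that the Real structure on $\mathbf{C}l_{n,n}$ matches the one on $\com(\Lambda^*\R^n\otimes_\R\C)$ is a point the paper leaves implicit, so your write-up is if anything slightly more thorough.
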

\begin{proof}
    We have the following series of isomorphisms, with each one being natural:
    \begin{align*}
        K_0(A \hat\otimes \mathbf{C}l_{n,0}) &\cong K_0(A \hat\otimes \mathbf{C}l_{n,0} \hat\otimes \mathcal{C}(\mathbf{C}l_{0,n})) ~\text{( \cref{thm: BP}})\\ &\cong 
        K_0(A \hat\otimes \mathbf{C}l_{n,0} \hat\otimes C_0(\R^n) \hat\otimes \mathbf{C}l_{0,n}) ~\text{(since $\kappa_{0,n}=1$)}\\
        &\cong  K_0(A\hat\otimes C_0(\R^n) \hat\otimes \mathbf{C}l_{n,0}  \hat\otimes \mathbf{C}l_{0,n}) ~\text{(flip isomorphism)} \\
        &\cong K_0(A\hat\otimes C_0(\R^n) \hat\otimes \mathbf{C}l_{n,n}) ~\text{(\cref{lem: tensor product Cliff})}\\
        &\cong K_0(A\hat\otimes C_0(\R^n) \hat\otimes M_{2^n}(\mathbf{C})) \\
        &\cong K_0(A\hat\otimes C_0(\R^n))~\text{(K-theory stability)} \\
        &\cong K_n(A) ~\text{(\cref{higher K-group})}.
    \end{align*}
\end{proof}
\begin{cor}
    Let $A$ be a $\Z/2\Z$-graded $C^*$-algebra. If $A$ is complex, then there are natural isomorphisms $K_{n}(A) \cong K_{n+2}(A)$, and if $A$ is Real, then there are natural isomorphisms $K_{n}(A) \cong K_{n+8}(A)$.
\end{cor}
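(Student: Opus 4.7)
The plan is to combine the natural isomorphism $K_n(A) \cong K_0(A \hat\otimes \mathbf{C}l_{n,0})$ from the preceding proposition with the well-known periodicity of Clifford algebras as graded (Real) $C^*$-algebras. First I would use \cref{lem: tensor product Cliff} to decompose $\mathbf{C}l_{n+k,0} \cong \mathbf{C}l_{n,0} \hat\otimes \mathbf{C}l_{k,0}$; combined with the preceding proposition this gives $K_{n+k}(A) \cong K_0(A \hat\otimes \mathbf{C}l_{n,0} \hat\otimes \mathbf{C}l_{k,0})$, so with $k=2$ in the complex case and $k=8$ in the Real case it suffices to show that tensoring with $\mathbf{C}l_{k,0}$ does not change $K_0$ naturally in the first factor.

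In the complex case, the standard low-dimensional Clifford classification identifies $\mathbf{C}l_{2,0}$ with $\mathcal{K}(\mathcal{H}_0)$ as graded complex $C^*$-algebras, where $\mathcal{H}_0 = \mathbf{C} \oplus \mathbf{C}$ carries its canonical even/odd grading; concretely, this is $M_2(\mathbf{C})$ with its diagonal/off-diagonal grading. The K-theoretic stability corollary \cref{stab and les group} (i) then yields the natural isomorphism $K_0(B) \cong K_0(B \hat\otimes \mathbf{C}l_{2,0})$ for any graded complex $C^*$-algebra $B$, and specialising to $B = A \hat\otimes \mathbf{C}l_{n,0}$ produces the 2-fold periodicity.

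In the Real case, the corresponding ingredient is Theorem 1.2.6 of \cite{SCH}, which asserts that $\mathbf{C}l_{8,0}$, as a $\Z/2\Z$-graded Real $C^*$-algebra, is isomorphic to $\mathcal{K}(\mathcal{H})$ for a suitable finite-dimensional graded Real Hilbert space $\mathcal{H}$; this is the Morita-theoretic manifestation of the $8$-periodicity of the real Clifford algebras $Cl_{n,0}$. Another application of \cref{stab and les group} (i) then gives $K_0(B) \cong K_0(B \hat\otimes \mathbf{C}l_{8,0})$, and specialising to $B = A \hat\otimes \mathbf{C}l_{n,0}$ completes the 8-fold periodicity.

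The main obstacle is the Real case identification of $\mathbf{C}l_{8,0}$ with a graded Real compact-operator algebra: the complex analogue $k=2$ is immediate from the Clifford relations, but for $k=8$ one needs a careful bookkeeping of both the grading and the Real structure through the full real Clifford classification, which is precisely the content invoked from \cite{SCH}. All other ingredients, namely the preceding proposition, the Clifford tensor product decomposition, and K-theoretic stability, are already developed in the paper, so once the algebraic identification of $\mathbf{C}l_{8,0}$ is granted the proof is entirely formal.
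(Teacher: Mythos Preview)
Your proposal is correct and follows essentially the same route as the paper: identify $\mathbf{C}l_{2,0}$ (respectively $\mathbf{C}l_{8,0}$) with a graded (Real) matrix algebra $\mathcal{K}(\mathcal{H})$ via the Clifford periodicity from \cite{SCH}, then invoke K-theory stability and the preceding proposition. The paper's proof is terser and leaves implicit both the tensor decomposition $\mathbf{C}l_{n+k,0}\cong\mathbf{C}l_{n,0}\hat\otimes\mathbf{C}l_{k,0}$ and the fact that the grading on the matrix identification is inner, but these are exactly the points you spell out, so there is no substantive difference.
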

\begin{proof}
    This follows from the periodicity properties of Real and complex Clifford algebras, and K-theory stability. Indeed, we have $\mathbf{C}l_{8,0} \cong M_{16}(\mathbf{C})$ as Real $C^*$-algebras. In the complex case, we have $\mathbf{C}l_{2,0} \cong M_{2}(\mathbf{C})$ as complex $C^*$-algebras.
\end{proof}
\bibliographystyle{abbrv}
\bibliography{ref}
\end{document}